\documentclass{amsart}

\usepackage{amssymb, amscd, amsthm, mathrsfs}

\newtheorem{MainThm}{Theorem}

\newtheorem{Thm}{Theorem}[subsection]
\newtheorem{Prop}[Thm]{Proposition}

\newtheorem{Lem}[Thm]{Lemma}

\newtheorem{PropApp}{Proposition}[section] 
\theoremstyle{remark}
\newtheorem{Rmk}[Thm]{Remark}
\newtheorem*{Notation}{Notation}
\newtheorem*{Ack}{Acknowledgement}

\numberwithin{equation}{subsection}

\newcommand{\Order}{\mathcal{O}}
\newcommand{\into}{\hookrightarrow}
\newcommand{\onto}{\twoheadrightarrow}
\newcommand{\isomto}{\overset{\sim}{\to}}
\newcommand{\isomfrom}{\overset{\sim}{\leftarrow}}
\newcommand{\compose}{\mathrel{\circ}}
\newcommand{\tensor}{\otimes}

\newcommand{\closure}[1]{\overline{#1}}
\newcommand{\N}{\mathbb{N}}
\newcommand{\Z}{\mathbb{Z}}
\newcommand{\Q}{\mathbb{Q}}
\newcommand{\F}{\mathbb{F}}
\newcommand{\Adele}{\mathbb{A}}

\newcommand{\et}{\mathrm{et}}
\newcommand{\zar}{\mathrm{zar}}

\newcommand{\fppf}{\mathrm{fppf}}

\newcommand{\rat}{\mathrm{rat}}

\newcommand{\perf}{\mathrm{perf}}

\newcommand{\Alg}{\mathrm{Alg}}
\newcommand{\PDual}{\mathrm{PD}}
\newcommand{\CDual}{\mathrm{CD}}
\newcommand{\SDual}{\mathrm{SD}}
\newcommand{\LDual}{\mathrm{LD}}

\newcommand{\Gm}{\mathbf{G}_{m}}
\newcommand{\Ga}{\mathbf{G}_{a}}
\newcommand{\Et}{\mathrm{Et}}
\newcommand{\FEt}{\mathrm{FEt}}
\newcommand{\FGEt}{\mathrm{FGEt}}
\newcommand{\Fin}{\mathrm{Fin}}

\newcommand{\uc}{\mathrm{uc}}
\newcommand{\ind}{\mathrm{ind}}
\newcommand{\pro}{\mathrm{pro}}
\newcommand{\tor}{\mathrm{tor}}

\newcommand{\sm}{\mathrm{sm}}
\newcommand{\set}{\mathrm{set}}
\newcommand{\Ind}{\mathrm{I}}
\newcommand{\Pro}{\mathrm{P}}
\newcommand{\Loc}{\mathrm{L}}
\newcommand{\NS}{\mathrm{NS}}
\newcommand{\ca}{\mathrm{ca}}

\newcommand{\dlog}{\mathrm{dlog}}
\newcommand{\ideal}[1]{\mathfrak{#1}}
\newcommand{\alg}[1]{\mathbf{#1}}
\newcommand{\dirlim}{\varinjlim}
\newcommand{\invlim}{\varprojlim}

\newcommand{\finvlim}[1][]{\mathop{\text{``$\invlim$''}}_{#1}}

\newcommand{\var}{\;\cdot\;}
\mathchardef\mhyphen="2D

\DeclareMathOperator{\Gal}{Gal}
\DeclareMathOperator{\Hom}{Hom}

\DeclareMathOperator{\Ker}{Ker}
\DeclareMathOperator{\Coker}{Coker}
\DeclareMathOperator{\Ext}{Ext}
\DeclareMathOperator{\Spec}{Spec}
\DeclareMathOperator{\Ab}{Ab}
\DeclareMathOperator{\Set}{Set}
\DeclareMathOperator{\Res}{Res}

\let\Im\relax
\DeclareMathOperator{\Im}{Im}
\DeclareMathOperator{\sheafhom}{\alg{Hom}}
\DeclareMathOperator{\sheafext}{\alg{Ext}}
\DeclareMathOperator{\Pic}{Pic}
\DeclareMathOperator{\Tr}{Tr}
\DeclareMathOperator{\Ch}{Ch}

\newcommand{\divis}{\mathrm{div}}
\newcommand{\Height}{\mathrm{Height}}
\newcommand{\Tran}{\mathrm{Tran}}
\newcommand{\CT}{\mathrm{CT}}
\newcommand{\ctensor}{\mathbin{\Hat{\tensor}}}

\DeclareFontFamily{U}{wncy}{}
\DeclareFontShape{U}{wncy}{m}{n}{<->wncyr10}{}
\DeclareSymbolFont{mcy}{U}{wncy}{m}{n}
\DeclareMathSymbol{\Sha}{\mathord}{mcy}{"58}

\newcommand{\BetweenThmAndList}{\leavevmode}

\hyphenation{Gro-then-dieck}
\hyphenation{ex-plana-to-ry}
\hyphenation{self--ex-plana-to-ry}

\title[Duality for cohomology of curves]
{Duality for cohomology of curves with coefficients in abelian varieties}
\author{Takashi Suzuki}
\thanks{The author is a Research Fellow of Japan Society for the Promotion of Science}
\address{
	Department of Mathematics, Chuo University,
	1-13-27 Kasuga, Bunkyo-ku, Tokyo 112-8551, JAPAN
}
\email{tsuzuki@gug.math.chuo-u.ac.jp}
\date{January 2, 2019}
\subjclass[2010]{Primary: 11G10; Secondary: 11R58, 14F20}
\keywords{Abelian varieties; duality; Grothendieck topologies}


\begin{document}

\begin{abstract}
	In this paper, we formulate and prove a duality
	for cohomology of curves over perfect fields of positive characteristic
	with coefficients in N\'eron models of abelian varieties.
	This is a global function field version of the author's previous work on local duality
	and Grothendieck's duality conjecture.
	It generalizes the perfectness of the Cassels-Tate pairing in the finite base field case.
	The proof uses the local duality mentioned above,
	Artin-Milne's global finite flat duality, the non-degeneracy of the height pairing
	and finiteness of crystalline cohomology.
	All these ingredients are organized under the formalism of the rational \'etale site developed earlier.
\end{abstract}

\maketitle

\tableofcontents


\section{Introduction}

\subsection{Aim of the paper}
Let $X$ be a proper smooth geometrically connected curve
over a perfect field $k$ of characteristic $p > 0$.
Let $A$ be an abelian variety over the function field $K$ of $X$,
with dual $A^{\vee}$.
We denote the N\'eron models of $A$ and $A^{\vee}$ over $X$ by
$\mathcal{A}$ and $\mathcal{A}^{\vee}$, respectively.
Let $\mathcal{A}_{0}^{\vee}$ be the maximal open subgroup scheme of $\mathcal{A}^{\vee}$
with connected fibers.
We view these group schemes over $X$ as representable sheaves on $X$
in the flat topology and hence in the \'etale topology.
In this paper, we formulate and prove a duality that relates the two \'etale cohomology complexes
	\[
			R \Gamma(X, \mathcal{A}),
		\quad
			R \Gamma(X, \mathcal{A}_{0}^{\vee})
	\]
to each other.
($R \Gamma(X, \mathcal{A})$ is a complex whose $n$-th cohomology is $H^{n}(X, \mathcal{A})$.)

This is the global function field version of the author's work \cite{Suz14}
on duality for cohomology of local fields with coefficients in abelian varieties
that solved Grothendieck's duality conjecture \cite[IX, Conj.\ 1.3]{Gro72}.
The complexes above are not viewed as complexes of mere abelian groups,
but as complexes of sheaves on the ind-rational pro-\'etale site $\Spec k^{\ind\rat}_{\pro\et}$
defined in \cite{Suz14}.
In particular, this duality treats the structure of $H^{n}(X, \mathcal{A})$
as the perfection (inverse limit along Frobenii) of a smooth group scheme over the base field $k$,
which is related to Artin-Milne's duality \cite{AM76} for cohomology of $X$
with coefficients in finite flat group schemes.
The object $H^{1}(X, \mathcal{A})$ has unipotent connected part (which is $p$-power torsion)
and is the sheafified version of the Tate-Shafarevich group of $A / K$,
which we call the \emph{Tate-Shafarevich scheme}.
See Prop.\ \ref{prop: first cohom is Tate Shafarevich} and the preceding paragraph
for a more precise (but a bit subtle) relationship to Tate-Shafarevich groups.
The above duality generalizes the perfectness of the Cassels-Tate pairing
in the finite base field case.
The duality for the part $H^{0}(X, \mathcal{A}) = A(K)$ includes the non-degeneracy of the height pairing.
When showing the finite-dimensionality of the Tate-Shafarevich scheme,
we will use the finiteness of crystalline cohomology of proper smooth surfaces over $k$.

Our duality extends a partial result of Milne \cite[III, Thm.\ 11.6]{Mil06}
on duality $T_{p} \Gamma(K, A^{\vee}) \leftrightarrow H^{2}(X, \mathcal{A})$.
He also pointed out that the part killed by $p$ (not the whole $p$-primary part)
of the Tate-Shafarevich group can be infinite when $k$ is algebraically closed (\cite[III, Rmk.\ 9.9]{Mil06}).
This phenomenon has been studied by Vvedenski\u\i\ (\cite{Vve81} for example),
which, in our formulation, can be explained by the connected part of the Tate-Shafarevich scheme.
It appears from \cite[Rmk.\ 7]{Vve78} that
Vvedenski\u\i\ at least once imagined a possibility to construct a duality theory
similar to ours.

The most interesting part of this duality theory lies in $p$-torsion.
The prime-to-$p$ part is essentially classical
(cf.\ \cite{Ray95}).
When $k$ has zero characteristic, which is out of scope of this paper,
one should probably take (non-semistable) degenerations of Hodge structures into account.

When the first version of this paper was uploaded to arXiv,
\v{C}esnavi\v{c}ius pointed the author to the preprint \cite{DH18} by Demarche-Harari,
which was written independently at almost the same time.
They develop compact support flat cohomology,
a key technical tool that we also develop,
in a way very similar to us.
The setting and the details are different.
For more details, see \S \ref{sec: Organization}.


\subsection{Statement of the main theorem}
Now we formulate our duality.
Let $\Ab(X_{\fppf})$ and $\Ab(k^{\ind\rat}_{\pro\et})$ be the categories of sheaves of abelian groups
on the fppf site $X_{\fppf}$ and the ind-rational pro-\'etale site
$\Spec k^{\ind\rat}_{\pro\et}$ \cite[\S 2.1]{Suz14}, respectively.
We define a left exact functor
	\[
			\alg{\Gamma}(X, \var)
		\colon
			\Ab(X_{\fppf})
		\to
			\Ab(k^{\ind\rat}_{\pro\et})
	\]
by sending an fppf sheaf $F$ on $X$
to the pro-\'etale sheafification of the presheaf $k' \mapsto F(X \times_{k} k')$
on $\Spec k^{\ind\rat}_{\pro\et}$,
where $k'$ runs through ind-rational $k$-algebras.
Denote its right derived functor by
	\[
			R \alg{\Gamma}(X, \var)
		\colon
			D(X_{\fppf})
		\to
			D(k^{\ind\rat}_{\pro\et})
	\]
and set $\alg{H}^{n}(X, \var) = H^{n} R \alg{\Gamma}(X, \var)$.
The complex of sheaves $R \alg{\Gamma}(X, \mathcal{A})$ on $\Spec k^{\ind\rat}_{\pro\et}$
is the main object of study.
Denote the derived sheaf-Hom functor for $\Spec k^{\ind\rat}_{\pro\et}$ by
$R \sheafhom_{k^{\ind\rat}_{\pro\et}}$.
For $G \in D(k^{\ind\rat}_{\pro\et})$, its Serre dual \cite[\S 2.4]{Suz14} is defined by
	\[
			G^{\SDual}
		=
			R \sheafhom_{k^{\ind\rat}_{\pro\et}}(G, \Z).
	\]
The Poincar\'e biextension $\mathcal{A}_{0}^{\vee} \tensor^{L} \mathcal{A} \to \Gm[1]$
as a morphism in $D(X_{\fppf})$,
the cup product and the degree map of the Picard scheme induce morphisms
	\[
			R \alg{\Gamma}(X, \mathcal{A}_{0}^{\vee})
		\tensor^{L}
			R \alg{\Gamma}(X, \mathcal{A})
		\to
			R \alg{\Gamma}(X, \Gm)[1]
		\to
			\alg{H}^{1}(X, \Gm)
		\to
			\Z
	\]
in $D(k^{\ind\rat}_{\pro\et})$.
Hence we have a morphism
	\[
			R \alg{\Gamma}(X, \mathcal{A})
		\to
			R \alg{\Gamma}(X, \mathcal{A}_{0}^{\vee})^{\SDual}.
	\]
Its Serre dual
	\begin{equation} \label{eq: duality morphism}
			R \alg{\Gamma}(X, \mathcal{A}_{0}^{\vee})^{\SDual \SDual}
		\to
			R \alg{\Gamma}(X, \mathcal{A})^{\SDual}
	\end{equation}
is our duality morphism.
Let
	\[
			V
		=
			V \alg{H}^{1}(X, \mathcal{A}_{0}^{\vee})_{\divis}
		=
			\bigl(
				T \alg{H}^{1}(X, \mathcal{A}_{0}^{\vee})_{\divis}
			\bigr) \tensor \Q
	\]
be the rational Tate module of the maximal divisible subsheaf of $\alg{H}^{1}(X, \mathcal{A}_{0}^{\vee})$.

\begin{MainThm} \label{mainthm}
	There exist canonical morphisms
		\[
				R \alg{\Gamma}(X, \mathcal{A})^{\SDual}
			\to
				V
			\to
				R \alg{\Gamma}(X, \mathcal{A}_{0}^{\vee})^{\SDual \SDual}[1]
		\]
	such that the triangle
		\[
				V[-1]
			\to
				R \alg{\Gamma}(X, \mathcal{A}_{0}^{\vee})^{\SDual \SDual}
			\to
				R \alg{\Gamma}(X, \mathcal{A})^{\SDual}
			\to
				V
		\]
	is distinguished in $D(k^{\ind\rat}_{\pro\et})$.
\end{MainThm}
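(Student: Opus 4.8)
The plan is to reduce the global duality statement to inputs that are either already established in the author's local duality paper or are "classical" (finite flat duality, the height pairing, finiteness of crystalline cohomology), all reorganized through the rational étale site formalism. The main theorem asserts that the double-Serre-dual duality morphism \eqref{eq: duality morphism} fails to be an isomorphism only by the controlled factor $V$, the rational Tate module of the divisible part of $\alg{H}^{1}(X,\mathcal{A}_{0}^{\vee})$. So the skeleton of the proof is: (1) establish the duality morphism is an isomorphism "away from $V$," and (2) identify the obstruction as exactly $V$.

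First I would set up a local-global comparison. One localizes $X$ at its closed points $x$, passing to the henselizations (or completions) $K_x$, and uses the author's local duality \cite{Suz14} for $R\alg{\Gamma}(K_x, A)$ versus $R\alg{\Gamma}(K_x, A^{\vee})$ on $\Spec k(x)^{\ind\rat}_{\pro\et}$, together with the induction/corestriction along $\Spec k(x)\to\Spec k$. The key technical device — matching what Demarche–Harari call compact support flat cohomology — is a triangle relating $R\alg{\Gamma}(X,\mathcal{A})$, the product (or restricted product) of the local cohomologies $\bigoplus_x R\alg{\Gamma}(K_x, A)$, and a "compactly supported" complex $R\alg{\Gamma}_c(X,\mathcal{A})$. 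Poincaré duality on the curve for $\Gm$ (the $\alg{H}^1(X,\Gm)\to\Z$ degree map, i.e. global class field theory for $X/k$ in the rational étale formulation) then pairs $R\alg{\Gamma}_c(X,\mathcal{A})$ against $R\alg{\Gamma}(X,\mathcal{A}_0^{\vee})$. Combining the local dualities over all $x$ with this compact-support Poincaré duality gives that the cone of \eqref{eq: duality morphism} is controlled by global objects: the failure of the Néron model to be the "right" sheaf (which is where $\mathcal{A}_0^{\vee}$ versus $\mathcal{A}^{\vee}$, and the component groups, enter) and the non-torsion part of $\alg{H}^1$.

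Next I would handle the three "non-duality" contributions separately. For $\alg{H}^0(X,\mathcal{A})=A(K)$: its Serre dual should be matched against the relevant piece of $R\alg{\Gamma}(X,\mathcal{A}_0^{\vee})^{\SDual\SDual}$ using the non-degeneracy of the Néron–Tate height pairing (in the function-field case this is a theorem; over finite $k$ it is the classical statement, and over general perfect $k$ one uses the structure of the Mordell–Weil group as a finitely generated object suitably sheafified). For the torsion of $\alg{H}^1(X,\mathcal{A})$ and the relation to $\alg{H}^2$: one invokes Artin–Milne's global finite flat duality \cite{AM76}, applied to the $p^n$-torsion finite flat group schemes $\mathcal{A}[p^n]$ (and the prime-to-$p$ étale parts, which are essentially classical), passing to the limit. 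The upshot is that the torsion/divisible interplay produces a Tate module, and the genuinely non-trivial divisible part — after tensoring with $\Q$ — survives as $V$. To know $V$ is finite-dimensional (so that the triangle makes sense with $V$ a reasonable object rather than something pathological), one uses finiteness of crystalline cohomology of a proper smooth surface: spread $A$ out to an abelian scheme over an open $U\subset X$, take a smooth proper model, and bound $T_p\alg{H}^1(X,\mathcal{A}_0^{\vee})_{\divis}$ by $H^2_{\mathrm{cris}}$ of that surface. Finally I would assemble the maps: the morphism $R\alg{\Gamma}(X,\mathcal{A})^{\SDual}\to V$ comes from the projection onto the divisible-part Tate module after dualizing, and $V\to R\alg{\Gamma}(X,\mathcal{A}_0^{\vee})^{\SDual\SDual}[1]$ from the connecting map of the torsion-vs-divisible triangle for $\alg{H}^1(X,\mathcal{A}_0^{\vee})$, Serre-dualized twice; octahedron gives the distinguished triangle.

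\textbf{The main obstacle} I expect is the careful construction and manipulation of the compact-support complex $R\alg{\Gamma}_c(X,\mathcal{A})$ on $\Spec k^{\ind\rat}_{\pro\et}$ and the verification that the compact-support Poincaré duality pairing is a genuine perfect pairing of complexes of sheaves on this site — not just after taking sections over a point. This requires keeping track of the behavior at the finitely many bad points (component groups of $\mathcal{A}$ and $\mathcal{A}^{\vee}$, and the asymmetry between $\mathcal{A}^{\vee}$ and $\mathcal{A}_0^{\vee}$), reconciling fppf-local behavior with pro-étale sheafification, and ensuring the local duality of \cite{Suz14} glues compatibly with corestriction along residue field extensions. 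A secondary difficulty is controlling the $p$-part: the relevant $\Gm$-duality and finite flat duality must be used at finite level $p^n$ and then passed to the limit in a way that commutes with Serre duality and with the functor $\alg{\Gamma}(X,\var)$, and one must show the "$\invlim$–$\Ext$" pathologies all land inside $V$ and nowhere else.
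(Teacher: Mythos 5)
Your overall plan — local duality at the closed points, compact support fppf cohomology over open curves paired against $\alg{H}^{1}(X,\Gm)\to\Z$, Artin--Milne finite flat duality fed through a mod-$n$ limit, the height pairing for the degree-zero piece, and a crystalline finiteness input to rule out pathology in $\alg{H}^{1}$ — is indeed the paper's approach, and these are the right ingredients. But there are two genuine problems.

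The serious gap is canonicity. You write ``octahedron gives the distinguished triangle,'' but the theorem does not merely assert that \emph{some} distinguished triangle with third vertex $V$ exists: it asserts that the two connecting morphisms
\[
R\alg{\Gamma}(X,\mathcal{A})^{\SDual}\to V
\quad\text{and}\quad
V\to R\alg{\Gamma}(X,\mathcal{A}_{0}^{\vee})^{\SDual\SDual}[1]
\]
are \emph{canonical}. A mapping cone in a triangulated category is defined only up to non-unique isomorphism, and the octahedron axiom is an existence statement, not a uniqueness one; it does not produce a canonical choice of connecting maps, nor does it by itself verify that a particular candidate pair of maps makes the triangle distinguished. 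Your description of the two maps (``projection onto the divisible-part Tate module after dualizing'' and ``connecting map of the torsion-vs-divisible triangle \dots Serre-dualized twice'') does not match any obviously-canonical construction, and even if you fixed such a description you would still have to prove that precisely those maps sit in a distinguished triangle. The paper gets around this by exploiting a degree separation: $R\alg{\Gamma}(X,\mathcal{A}_{0}^{\vee})^{\SDual\SDual}\tensor\Q\to R\alg{\Gamma}(X,\mathcal{A})^{\SDual}\tensor\Q$ lives in degrees $\le 0$, while $R\invlim_{n}R\alg{\Gamma}(X,\mathcal{A}_{0}^{\vee})^{\SDual\SDual}\to R\invlim_{n}R\alg{\Gamma}(X,\mathcal{A})^{\SDual}$ lives in degrees $\ge 1$; because $\Hom(E[1],G)=0$ when $E$ is in non-positive degrees and $G$ in non-negative ones, each of those two rationalized/completed triangles has a \emph{unique} completion, and the integral statement is then pieced together from them via the $\Adele^{\infty}$ comparison triangle. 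This uniqueness mechanism is the heart of the argument and is missing from your plan. Without it you would only obtain the weaker, non-canonical statement that \emph{a} cone of the duality morphism is isomorphic to $V$.

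A secondary, more technical error: you say the crystalline input comes from ``spread $A$ out to an abelian scheme over an open $U\subset X$, take a smooth proper model, and bound \dots by $H^{2}_{\mathrm{cris}}$ of that surface.'' A smooth compactification of the total space of an abelian scheme over a curve has dimension $\dim A+1$, so it is a surface only when $A$ is an elliptic curve. The reduction to a surface in general requires an extra step: by a space-filling-curve argument one finds a proper smooth curve $C/K$ with $K$-rational point whose Jacobian $J$ dominates $A$, and Poincar\'e reducibility lets one split $V_{p}\alg{H}^{1}(X,\mathcal{A})$ off from $V_{p}\alg{H}^{1}(X,\mathcal{J})$; then a proper flat regular model $\mathcal{C}/X$ of $C/K$ \emph{is} a proper smooth surface over $k$, and $H^{1}(X,\mathcal{J})\cong H^{2}(\mathcal{C},\Gm)$ is the Brauer group one controls with crystalline cohomology. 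Finally, the crystalline argument does not show ``$V$ is finite-dimensional'' so much as it shows the stronger and more pertinent fact that $V_{p}\alg{H}^{1}(X,\mathcal{A})$ is a \emph{constant} functor on algebraically closed extensions of $k$, equivalently that $\alg{H}^{1}(X,\mathcal{A})$ has no connected divisible unipotent part, i.e.\ lies in $\Loc^{f}\Alg_{\uc}/k$; that is the statement needed to make the rest of the formal machinery run.
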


Both of the objects $R \alg{\Gamma}(X, \mathcal{A}_{0}^{\vee})^{\SDual \SDual}$
and $R \alg{\Gamma}(X, \mathcal{A})^{\SDual}$ are concentrated in degrees $-1, 0, 1, 2$,
while $V \alg{H}^{1}(X, \mathcal{A}_{0}^{\vee})_{\divis}$ has no connected part,
is uniquely divisible and concentrated in the single degree zero.
Hence our duality morphism is ``close'' to be an isomorphism.
The divisible part of the usual Tate-Shafarevich group, when $k$ is finite, is conjectured to be zero.
But our Tate-Shafarevich \emph{scheme}, $\alg{H}^{1}(X, \mathcal{A})$,
has non-zero divisible part in general,
which might be called the space of ``transcendental cycles with coefficients in the N\'eron model''.

Concrete consequences of this theorem will be explained at
\S \ref{sec: Main theorem}.
We will also give a version for cohomology of dense open subschemes of $X$
in \S \ref{sec: Duality for Neron models over open curves}
and explain the link of Thm.\ \ref{mainthm} to the known duality theory in the finite base field case
in \S \ref{sec: Link to the finite base field case}.
A very small amount of explicit calculations is given in the course of
Rmk.\ \ref{rmk: connected divisible part}.

A small remark is that the first cohomology classifies torsors or principal bundles.
The Tate-Shafarevich scheme $\alg{H}^{1}(X, \mathcal{A})$ might alternatively be called
the moduli of $G$-bundles on $X$ and denoted by $\mathrm{Bun}_{G}(X)$,
where $G = \mathcal{A}$.
We do not pursue this viewpoint, merely mentioning that
our crucial point is to evaluate functors on ind-rational $k$-algebras only.


\subsection{Outline of proof}

Here is a rough outline of the proof.
The first deep input is Artin-Milne's global duality \cite[Cor.\ (4.9)]{AM76}
for a finite flat group scheme $N$ over $X$:
	\[
			R \alg{\Gamma}(X, N^{\CDual})
		\longleftrightarrow
			R \alg{\Gamma}(X, N),
	\]
where $N^{\CDual}$ is the Cartier dual of $N$,
the cohomology is taken in the fppf topology and
we ignored the shift of degrees for simplicity.
Artin-Milne uses the \'etale site of all perfect $k$-schemes
and we will bring their result to our site $\Spec k^{\ind\rat}_{\pro\et}$
by restriction and pro-\'etale sheafification.
At each closed point $x \in X$,
we have Bester's local finite flat duality \cite[Thm.\ 3.1]{Bes78}
in the form stated in \cite[Thm.\ (5.2.1.2)]{Suz14}:
	\[
			R \alg{\Gamma}_{x}(\Hat{\Order}_{x}, N^{\CDual})
		\longleftrightarrow
			R \alg{\Gamma}(\Hat{\Order}_{x}, N),
	\]
where $\Hat{\Order}_{x}$ is the completed local ring of $X$ at $x$,
and the fppf cohomology functor $R \alg{\Gamma}(\Hat{\Order}_{x}, \var)$
and its version with support $R \alg{\Gamma}_{x}(\Hat{\Order}_{x}, \var)$
with values in $D(k^{\ind\rat}_{\pro\et})$
are as defined in \cite[\S 3.3]{Suz14}.
(\cite[Thm.\ (5.2.1.2)]{Suz14} fits better in the setting of the present paper
as it is a derived categorical and sheaf-theoretic version of
the duality isomorphism of profinite abelian groups in \cite[Thm.\ 3.1]{Bes78}.
Also \cite[Thm.\ (5.2.1.2)]{Suz14} corrects some inaccuracies in Bester's paper;
see \cite[Rmk.\ 5.2.1.5]{Suz14}.)
Hence we can pass from $X$ to a dense open subscheme $U$ of $X$:
	\begin{equation} \label{eq: finite flat duality over open curve}
			R \alg{\Gamma}(U, N^{\CDual})
		\longleftrightarrow
			R \alg{\Gamma}_{c}(U, N),
	\end{equation}
where $R \alg{\Gamma}_{c}(U, \var)$ is the fppf cohomology with compact support
with values in $D(k^{\ind\rat}_{\pro\et})$ that we define and study in this paper.
The fppf cohomology with compact support, as usual abelian groups,
is already defined in \cite[III, \S 0]{Mil06}.
We need some clarification about the definition given in \cite[III, \S 0]{Mil06},
more than just bringing it to $D(k^{\ind\rat}_{\pro\et})$.
This point was simultaneously found by Demarche-Harari \cite{DH18};
see \S \ref{sec: Organization} for the details.
The duality \eqref{eq: finite flat duality over open curve} implies,
taking $U$ small enough so that $A$ has good reduction over $U$, a duality
	\[
			R \alg{\Gamma}(U, \mathcal{A}_{0}^{\vee}[n])
		\longleftrightarrow
			R \alg{\Gamma}_{c}(U, \mathcal{A}[n])
	\]
for any $n \ge 1$.
At each $x \in X$, we have local duality for abelian varieties \cite[Rmk.\ (4.2.10)]{Suz14}
	\[
			R \alg{\Gamma}_{x}(\Hat{\Order}_{x}, \mathcal{A}_{0}^{\vee})
		\longleftrightarrow
			R \alg{\Gamma}(\Hat{\Order}_{x}, \mathcal{A}).
	\]
Hence we can pass from $U$ to $X$ and take the limit in $n$:
	\begin{equation} \label{eq: Q mod Z duality for Neron}
			R \invlim_{n} \bigl(
					R \alg{\Gamma}(X, \mathcal{A}_{0}^{\vee})
				\tensor^{L}
					\Z / n \Z
			\bigr)
		\longleftrightarrow
				R \alg{\Gamma}(X, \mathcal{A})
			\tensor^{L}
				\Q / \Z.
	\end{equation}

So far we have treated essentially torsion objects.
To pass to the desired integral statement
	\[
			R \alg{\Gamma}(X, \mathcal{A}_{0}^{\vee})
		\longleftrightarrow
			R \alg{\Gamma}(X, \mathcal{A})
	\]
up to $V \alg{H}^{1}(X, \mathcal{A}_{0}^{\vee})_{\divis}$,
we need to study the structure of $\alg{H}^{n}(X, \mathcal{A})$ for all $n$.
(Of course we need the double dual $\SDual \SDual$ for the precise statement.)
The Lang-N\'eron theorem (the arbitrary base field version of the Mordell-Weil theorem)
shows that the group of rational points $\alg{\Gamma}(X, \mathcal{A})$
is an \'etale group with finitely generated group of geometric points
extended by an abelian variety over $k$, the $K / k$-trace of $A$.
The part
	\[
			(\pi_{0}
				\alg{\Gamma}(X, \mathcal{A}_{0}^{\vee})
			) / \text{torsion}
		\longleftrightarrow
			(\pi_{0}
				\alg{\Gamma}(X, \mathcal{A})
			) / \text{torsion}
	\]
is given by the height pairing.

For higher cohomology,
the duality theorems used so far and finiteness statements therein give some information.
But this information does not exclude the possibility that
$\alg{H}^{1}(X, \mathcal{A})$ has connected divisible unipotent ind-algebraic part
(such as the direct limit of the groups of Witt vectors $\dirlim_{n} W_{n}$)
since $\alg{H}^{2}_{x}(\Hat{\Order}_{x}, \mathcal{A})$ does have such part.
We need here a global finiteness result to eliminate this possibility.
That is, we compare $\alg{H}^{1}(X, \mathcal{A})$
with the Brauer group of a proper smooth surface over $k$
up to some discrepancy of bounded torsion
(i.e.\ discrepancy killed by multiplication by some positive integer)
and hence with the part $F = p$ of the second rational crystalline cohomology of the surface.
This part does not have connected part
by finiteness of crystalline cohomology,
which proves the desired property of $\alg{H}^{1}(X, \mathcal{A})$.
Dualizing, this in turn shows that $\alg{H}^{2}(X, \mathcal{A})$ has no connected part.

Then we can pass to the rational statement
	\[
			R \invlim_{n} R \alg{\Gamma}(X, \mathcal{A}_{0}^{\vee})
		\longleftrightarrow
			R \alg{\Gamma}(X, \mathcal{A}) \tensor \Q,
	\]
where the derived inverse limit is for multiplication by positive integers $n$.
The non-degeneracy of the height pairing shows that
this rational duality pairing is perfect
up to $V \alg{H}^{1}(X, \mathcal{A}_{0}^{\vee})_{\divis}$.
Then we can finally pass to the desired integral statement, proving the main theorem.

Two remarks are in order.
The reader may have an impression from the above outline that
the duality theorem of this paper is closely related to
duality for crystalline cohomology and Milne's flat duality for surfaces \cite{Mil76}.
Probably it will likely be possible to reduce our theorem to those duality theories
when $A$ is the Jacobian of a proper smooth curve over $K$ with a $K$-rational point
or has semistable reduction everywhere.
But there are differences between these special cases and the general case,
coming from isogenies of $p$-power degrees and wild ramification.
Without our formulation, it could be difficult to formulate a duality statement
that is invariant under isogenies and/or admits Galois descent.

The local duality \cite{Suz14} used above is
where the site $\Spec k^{\ind\rat}_{\pro\et}$ plays a crucial role
(see \cite[\S 1.2]{Suz14} for a little more details).
This should be compared with Milne's approach \cite[III, \S 11]{Mil06}
that uses the \'etale site of all perfect $k$-schemes.


\subsection{Organization}
\label{sec: Organization}

Up to the point \eqref{eq: Q mod Z duality for Neron} above, we will know that
$\alg{H}^{n}(X, \mathcal{A})$ for $n \ge 1$
is an ind-algebraic group of so-called ``cofinite type''.
This is a basic finiteness condition for ind-algebraic groups.
\S \ref{sec: Ind-objects of cofinite type} develops the notion of cofinite type objects
in the setting of ind-categories of general artinian abelian categories.
\S \ref{sec: Ind-algebraic groups of cofinite type} studies cofinite type objects
in the special case of the category of ind-algebraic groups.
\S \ref{sec: The ind-rational pro-etale site and some derived limits} briefly reviews
the ind-rational pro-\'etale site and study some derived limits
such as the one $R \invlim_{n}$ mentioned above.

In \S \ref{sec: Local duality without relative sites}
(\S \ref{sec: Premorphisms of sites} to be mentioned soon),
we basically review the local duality \cite{Suz14}.
Additionally, we improve and simplify the formulation.
We eliminate the relative fppf site
($\Spec K_{\fppf} / k^{\ind\rat}_{\et}$ in the notation of \cite[\S 3.3]{Suz14})
and instead formulate the same duality result using the usual fppf site for local fields.
We can still define the key notion of the structure morphism of a local field,
which targets the ind-rational \'etale site of the residue field.
This newer version is not, however, a morphism of sites, but only a ``premorphism of sites'',
which does not necessarily have an exact pullback functor.
A premorphism of sites is not really new and nothing but a morphism of topologies
in the terminology of \cite[Def.\ 2.4.2]{Art62}.
(We change the terminology since the direction of morphisms is somewhat confusing.)
Its definition is recalled and some basic site-theoretic propositions
are given in the preceding \S \ref{sec: Premorphisms of sites}.
The comparison between the two formulations is given in
Appendix \ref{sec: Comparison of local duality with and without relative sites}.
We do use the local duality results of \cite{Suz14} and do not reprove them.

As explained above, we will need fppf cohomology of curves with compact support
$R \alg{\Gamma}_{c}(U, \var)$
as a complex of sheaves on $\Spec k^{\ind\rat}_{\pro\et}$.
We will develop this machinery in
\S \ref{sec: The fppf site of a curve over the rational etale site of the base}.
There are actually two versions of compact support fppf cohomology currently known,
as explained in \cite[III, Rmk.\ 0.6 (b)]{Mil06}.
Their difference is whether we use henselian local rings $\Order_{x}^{h}$
or their completions $\Hat{\Order}_{x}$ for the local components.
The latter version is what we need in the duality theory of this paper.
But this latter version has a problem about covariant functoriality in $U$
coming from the difference between two versions of local cohomology with support
$R \alg{\Gamma}_{x}(\Order_{x}^{h}, \var)$ and $R \alg{\Gamma}_{x}(\Hat{\Order}_{x}, \var)$.
We will see this problem in Rmk.\ \ref{rmk: henselian and completed versions}.
The difference between the two versions of local cohomology with support vanishes
if the coefficient sheaf is representable by a smooth or finite flat group scheme,
by a Greenberg approximation argument.
However, we need various mapping cone constructions
in order to establish basics about compact support cohomology including covariant functoriality,
which forces us to work on the level of complexes and, consequently,
to explicitly take injective resolutions everywhere.
For non-representable sheaves,
the difference between the two versions of local cohomology with support is unavoidable.
Hence we need to keep all track of this difference from the beginning to build the theory.
Some part of this consideration is purely local,
which is the content of the preceding \S \ref{sec: Henselizations and completions}.
After these subsections, we can and will ignore the difference
since we are interested in smooth or finite flat group schemes only.

Let us mention here that the same problem of compact support fppf cohomology is realized and solved
by Demarche-Harari \cite{DH18} independently at almost the same time.
Their setting is over a finite base field $k = \F_{q}$,
cohomology is viewed as usual complexes of abelian groups, not sheaves,
and coefficient sheaves are affine group schemes for the most part.
Their method to establish covariant functoriality of compact support fppf cohomology
is very similar to our method in
\S \ref{sec: Henselizations and completions}--\ref{sec: The fppf site of a curve over the rational etale site of the base}.

Back to this paper,
\S \ref{sec: Duality for finite flat coefficients over open curves}
proves \eqref{eq: finite flat duality over open curve}.
\S \ref{sec: Mod n duality for Neron models and preliminary calculations} begins
by proving \eqref{eq: Q mod Z duality for Neron} (or the statement before limit in $n$).
Various consequences and further structural results on $\alg{H}^{n}(X, \mathcal{A})$ are given:
the structure of $\alg{\Gamma}(X, \mathcal{A})$
(Prop.\ \ref{prop: structure of cohomology of Neron models});
the structure of $\alg{H}^{1}(X, \mathcal{A})$ and its relation to the Tate-Shafarevich group
(Prop.\ \ref{prop: structure of cohomology of Neron models},
\ref{prop: first cohom is Tate Shafarevich},
\ref{prop: H1 is locally algebraic});
how the duality pairing interacts with N\'eron component groups at the closed points of $X$
(Prop.\ \ref{prop: component group compatibitily morphism});
and the relation to the height pairing
(Prop.\ \ref{prop: our duality contains height pairing}).

The properties of
$R \alg{\Gamma}(X, \mathcal{A})$, $R \alg{\Gamma}(X, \mathcal{A}_{0}^{\vee})$
and the pairing between them proven up to this point,
together with the preliminaries in \S \ref{sec: Site theoretic foundations and local duality},
allow us to deduce the main theorem as a result of formal calculations.
This is done in \S \ref{sec: Formal steps towards duality for Neron models}.
The outline of proof above basically explains that
any choice of the mapping cone of the duality morphism \eqref{eq: duality morphism}
is concentrated in degree zero with cohomology isomorphic to
$V \alg{H}^{1}(X, \mathcal{A}_{0}^{\vee})_{\divis}$.
In addition, we heavily use derived limit arguments, the structure of $\alg{H}^{n}(X, \mathcal{A})$
and the non-degeneracy of the height pairing,
to show that the mapping cone can be taken \emph{canonically}.

\S \ref{sec: Main theorem} just collects all the results obtained so far
to state them as a single theorem.
This finishes the proof of Thm.\ \ref{mainthm}.
A relation to Milne's result \cite[Thm.\ 11.6]{Mil06} is also explained.
The intersection of the connected part and the divisible part
of the Tate-Shafarevich scheme $\alg{H}^{1}(X, \mathcal{A})$
is an interesting but difficult finite \'etale $p$-group.
We briefly explain this in Rmk.\ \ref{rmk: connected divisible part}.

\S \ref{sec: Duality for Neron models over open curves} gives a duality for open $U \subset X$:
	\[
			R \alg{\Gamma}(U, \mathcal{A}_{0}^{\vee})
		\longleftrightarrow
			R \alg{\Gamma}_{c}(U, \mathcal{A})
	\]
up to the same obstruction $V \alg{H}^{1}(X, \mathcal{A}_{0}^{\vee})_{\divis}$.
Again, it requires some work to ensure that
the mapping cone of this duality morphism can be taken canonically.

Applying the derived global section $R \Gamma(k, \var)$
to the complex of sheaves $R \alg{\Gamma}(X, \mathcal{A})$
recovers the usual complex of abelian groups $R \Gamma(X, \mathcal{A})$.
In a non-derived categorical language, this means that there is a spectral sequence
	\[
			E_{2}^{i j}
		=
			H^{i}(k, \alg{H}^{j}(X, \mathcal{A}))
		\Longrightarrow
			H^{i + j}(X, \mathcal{A}).
	\]
In \S \ref{sec: Link to the finite base field case},
when the base field $k$ is finite,
we apply $R \Gamma(k, \var)$ to translate our duality theorem
into the classical duality theorems.
We explain the relations to the finiteness of Tate-Shafarevich groups,
Kato-Trihan's arithmetic cohomology \cite{KT03},
the Cassels-Tate pairing
and the Weil-\'etale cohomology $R \Gamma(X_{W}, \mathcal{A})$.

\begin{Ack}
	The author expresses his deep gratitude to Kazuya Kato.
	It was his suggestion to globalize the results of \cite{Suz14}
	and study Tate-Shafarevich groups as sheaves.
	The author thanks Tetsuji Shioda for his encouragement
	to include explicit calculations,
	though the author's calculations in Rmk.\ \ref{rmk: connected divisible part}
	are essentially no more than quotes of other researchers' calculations.
	Also, an earlier version of this paper proved the finite-dimensionality of $\alg{H}^{1}(X, \mathcal{A})$
	by reducing it to the semistable case,
	using K\"unnemann's projective regular models \cite{Kun98}
	and relating $\alg{H}^{1}(X, \mathcal{A})$ to the Brauer group of a higher-dimensional variety.
	Michael Larsen's suggestion to use Jacobians here
	made it possible to use surfaces only and reduce the length of the proof,
	which the author is grateful.
	Matthew Emerton's idea (personal communication)
	to relate our duality with Brauer-Manin obstructions
	is regrettably not treated in this paper to save pages.
	This will be discussed elsewhere.
	The author is grateful to K\c{e}stutis \v{C}esnavi\v{c}ius
	for his information of the work \cite{DH18},
	to Cyril Demarche and David Harari for their encouragement to submit this paper independently,
	to Fabien Trihan for his information of the relation
	between Prop.\ \ref{prop: duality for arithmetic cohom} of this paper
	and \cite[Cor.\ 4.11, Rmk.\ 4.13]{TV17},
	to Thomas Geisser for discussions about finiteness of Tate-Shafarevich groups
	modulo divisible subgroups when $k$ is finite,
	and to the referees for their careful comments.
	This work was supported by JSPS KAKENHI Grant Number JP18J00415.
\end{Ack}

\begin{Notation}
	We fix two universes $\mathcal{U}_{0}$ and $\mathcal{U}$
	such that $\N \in \mathcal{U}_{0} \in \mathcal{U}$.
	The categories of $\mathcal{U}$-small sets and $\mathcal{U}$-small abelian groups
	are denoted by $\Set$ and $\Ab$, respectively.
	For an abelian category $\mathcal{A}$,
	the category of complexes are denoted by $\Ch(\mathcal{A})$.
	Its homotopy category is $K(\mathcal{A})$
	and derived category $D(\mathcal{A})$.
	We denote the full subcategory of bounded, bounded below and bounded above complexes
	by $D^{b}(\mathcal{A})$, $D^{+}(\mathcal{A})$ and $D^{-}(\mathcal{A})$, respectively.
	Similar notation applies to $\Ch(\mathcal{A})$ and $K(\mathcal{A})$.
	The mapping cone of a morphism $A \to B$ in $\Ch(\mathcal{A})$ is denoted by $[A \to B]$.
	If $A \to B$ is a morphism in a triangulated category
	together with a certain canonical choice of a mapping cone,
	then this mapping cone is also denoted by $[A \to B]$ by abuse of notation.
	If we say $A \to B \to C$ is a distinguished triangle in a triangulated category,
	we implicitly assume that a morphism $C \to A[1]$ to the shift of $A$ is given,
	and the triangle $A \to B \to C \to A[1]$ is distinguished.
\end{Notation}


\section{Site-theoretic foundations and local duality}
\label{sec: Site theoretic foundations and local duality}

\subsection{Ind-objects of cofinite type}
\label{sec: Ind-objects of cofinite type}
Let $p$ be a prime number.
Let $\mathcal{A}$ be an artinian ($\mathcal{U}_{0}$-)small abelian category
such that any object of $\mathcal{A}$ is killed by some power of $p$.
Note that no non-zero object of $\mathcal{A}$ is divisible,
which will be used frequently later on.
Denote its indcategory by $\Ind \mathcal{A}$
(or more precisely, the $\mathcal{U}_{0}$-indcategory,
where index sets are $\mathcal{U}_{0}$-small).
The category $\Ind \mathcal{A}$ is abelian by
\cite[Thm.\ 8.6.5 (i)]{KS06}.
Note that objects of $\mathcal{A}$ need not be artinian in $\Ind \mathcal{A}$.
For instance, the additive algebraic group scheme $\Ga$ over a field of characteristic $p$
contains the ind-finite-\'etale subgroup $\closure{\F_{p}}$.
We say that an object $A \in \Ind \mathcal{A}$ is of \emph{cofinite type}
if the $p^{n}$-torsion part $A[p^{n}]$ is in $\mathcal{A}$ for all $n \ge 0$.
In this case, the equality $A = \dirlim A[p^{n}]$ gives a presentation of $A$
as a filtered direct limit of objects of $\mathcal{A}$.
Denote by $\Ind^{f} \mathcal{A}$
the full subcategory of $\Ind \mathcal{A}$ consisting of objects of cofinite type.

\begin{Prop} \label{prop: cofinite type mod n is finite type}
	For any $A \in \Ind^{f} \mathcal{A}$,
	we have $A / p^{n} A \in \mathcal{A}$ for all $n \ge 0$.
\end{Prop}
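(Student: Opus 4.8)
The plan is to exploit the presentation $A = \dirlim_n A[p^n]$ together with the fact that $\mathcal{A}$ is artinian. First I would observe that, since $A$ is $p$-primary torsion, multiplication by $p^n$ fits into an exact sequence $0 \to A[p^n] \to A \xrightarrow{p^n} A \to A/p^n A \to 0$ in $\Ind\mathcal{A}$. So the goal is to understand the image of $p^n \colon A \to A$, equivalently the cokernel. Writing $A$ as the filtered union of the subobjects $A_m := A[p^m] \in \mathcal{A}$, I would try to show that the ascending chain $p^n A_m \subseteq p^n A_{m+1} \subseteq \cdots$ (all subobjects of $A[p^{?}]$, but more importantly their union is $p^n A$) stabilizes, so that $p^n A = p^n A_m$ for $m$ large, and hence $A/p^n A = \bigl(\dirlim_m A_m\bigr)/p^n A_m$; one then needs $A_{m'}/p^n A_m$ to stabilize as $m' \to \infty$ as well.

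The cleanest route is probably to reduce first to the case $n = 1$: if $A/pA \in \mathcal{A}$ then, since $A/p^n A$ is a successive extension of copies of $(p^i A)/(p^{i+1} A)$ for $i = 0, \dots, n-1$, each of which is a quotient of $A/pA$, it lies in $\mathcal{A}$ (a subquotient of an object of $\mathcal{A}$, using that $\mathcal{A}$ is closed under subquotients and finite extensions inside $\Ind\mathcal{A}$, which holds because it is abelian and artinian). For $n=1$: the surjection $A[p^{m+1}] \twoheadrightarrow p A[p^{m+1}] \subseteq A[p^m]$ given by multiplication by $p$ shows $pA = \dirlim_m p A[p^{m+1}]$ is a subobject of $A[p^m]$-type pieces; more precisely $pA \cap A[p] = A[p^2]/A[p]$-ish, and one wants the chain $A[p]/(pA \cap A[p])$, $\dots$ to be eventually constant. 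I would make this precise by noting $A/pA$ is a quotient of $A/A[p] \cong pA$... and instead argue directly: $A/pA$ carries a surjection from $A[p]$? No — rather, $\ker(A \xrightarrow{p} A) = A[p]$ maps onto nothing; the right statement is that $A/pA$ and $A[p]$ have the same ``length'' heuristically. Concretely, for each $m$ the snake lemma applied to $0 \to A_m \to A_{m+1} \xrightarrow{p} A_{m+1} \to \cdots$ gives $A_{m+1}/p A_{m+1}$ and $A_{m+1}[p] = A_1$; since $A_1 \in \mathcal{A}$ has finite length, $A_{m+1}/pA_{m+1}$ has length bounded by something in terms of $A_1$ — and then $A/pA = \dirlim_m A_m/pA_m$ (after checking $pA \cap A_m = pA_m$, which may require passing to a cofinal subsystem or a small diagram chase) is a filtered colimit of objects of $\mathcal{A}$ of uniformly bounded length, hence eventually constant, hence in $\mathcal{A}$.

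The main obstacle I anticipate is the interchange of the colimit with the cokernel, i.e.\ justifying $A/p^n A = \dirlim_m (A_m/p^n A_m)$ and, within that, controlling $p^n A \cap A_m$ versus $p^n A_m$: filtered colimits are exact in $\Ind\mathcal{A}$, so $\dirlim_m (A_m/p^n A_m) = (\dirlim_m A_m)/p^n(\dirlim_m A_m) = A/p^n A$ actually comes for free from exactness of filtered colimits applied to the presentation $A_m \xrightarrow{p^n} A_m \to A_m/p^n A_m \to 0$. So the real content is: the objects $A_m/p^n A_m$ form a filtered system in $\mathcal{A}$ whose colimit is in $\mathcal{A}$. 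For that I would use the bound on lengths: $\operatorname{length}(A_m/pA_m) \le \operatorname{length}(A[p])$ uniformly in $m$ (from the four-term sequence $0 \to A_1 \to A_m \xrightarrow{p} A_m \to A_m/pA_m \to 0$, which forces $\operatorname{length}(A_m/pA_m) = \operatorname{length}(A_m[p]) = \operatorname{length}(A_1)$ since $A_m$ has finite length — wait, $A_m$ need not have finite length a priori, but it is an object of $\mathcal{A}$, which is artinian; I should check whether artinian objects in a small artinian category have finite length — if $\mathcal{A}$ is artinian and, as is implicit, also noetherian, or at least if every object has finite length, this is immediate; the paper's hypotheses, together with ``no non-zero object is divisible,'' strongly suggest objects of $\mathcal{A}$ have finite length). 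Granting finite length in $\mathcal{A}$, the system $(A_m/pA_m)_m$ has bounded length, a filtered system of finite-length objects with bounded length stabilizes, and its colimit is the stable value, which lies in $\mathcal{A}$. Then induct on $n$ as above. I would double-check the one delicate point — that $\mathcal{A}$ being ``artinian'' in the paper's sense entails finite length — and if not, replace the length argument by a direct stabilization argument for the ascending chain of images of the transition maps $A_m/pA_m \to A_{m'}/pA_{m'}$, which are epimorphisms with kernels forming a descending chain, hence stabilize by artinianness.
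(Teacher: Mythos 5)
Your outline --- reduce to $n = 1$, realize $A/pA$ as a filtered colimit indexed by $m$ of objects coming from the $A[p^m]$, and argue that this colimit stabilizes --- matches the paper's, but the stabilization step has a genuine gap that the paper's proof avoids. Your main argument invokes lengths, and so requires every object of $\mathcal{A}$ to have finite length; but ``artinian'' gives only DCC on subobjects, which does not imply ACC, and the remaining hypotheses do not close the gap. Concretely, let $\mathcal{A}$ be the category of artinian $\F_{p}[T]$-modules on which $T$ acts locally nilpotently (so in particular every object is killed by $p$). The module $V = \bigoplus_{i \ge 0} \F_{p} e_{i}$ with the truncated left shift $T e_{i} = e_{i-1}$, $T e_{0} = 0$ lies in this $\mathcal{A}$: its only $T$-invariant subspaces are $\Ker T^{n}$, a well-ordered ascending chain, so DCC holds; $V$ is not divisible; yet $V$ has infinite length. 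So the length bound you want simply is not available from the stated hypotheses. Your proposed fallback --- that the transition maps $A_{m}/p A_{m} \to A_{m'}/p A_{m'}$ are epimorphisms with descending kernels --- is false. For $A = \Z/p^{2}\Z \oplus \Q_{p}/\Z_{p}$ one checks directly that $p A[p^{2}] = A[p]$, so the transition map $A[p]/p A[p] = A[p] \to A[p^{2}]/p A[p^{2}] = A[p^{2}]/A[p]$ is the zero map; it is neither injective nor surjective.

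The paper proves the stabilization using DCC alone. It first identifies the image of $A[p^{m}]$ in $A/pA$ as $A[p^{m}]/p A[p^{m+1}]$ (one checks $A[p^{m}] \cap pA = p A[p^{m+1}]$), giving an \emph{ascending} chain of subobjects of $A/pA$, each in $\mathcal{A}$, whose union is $A/pA$. It then observes that the equality $A[p^{m-1}] + p A[p^{m+1}] = A[p^{m}]$, which governs stabilization at stage $m$, is exactly surjectivity of the multiplication-by-$p$ map $A[p^{m+1}]/A[p^{m}] \to A[p^{m}]/A[p^{m-1}]$, which is always injective. Thus stabilization of the ascending chain is converted into stabilization of the \emph{descending} chain
$\cdots \overset{p}{\into} A[p^{m+1}]/A[p^{m}] \overset{p}{\into} A[p^{m}]/A[p^{m-1}] \overset{p}{\into} \cdots$
of objects of $\mathcal{A}$, and the latter holds by artinianness directly. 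This conversion of the ascending chain into a descending one is the idea your proof is missing; without it, some form of noetherianity (for example, finite length) is genuinely required and is not given.
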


\begin{proof}
	Let $A \in \Ind^{f} \mathcal{A}$.
	To show $A / p^{n} A \in \mathcal{A}$, it is enough to show this for $n = 1$.
	The object $A / p A$ is the union of the increasing sequence of subobjects
	$A[p^{m}] / p (A[p^{m + 1}])$ each in $\mathcal{A}$.
	It is enough to show that this sequence stabilizes,
	or $A[p^{m - 1}] + p (A[p^{m + 1}]) = A[p^{m}]$ for large $m$.
	This is equivalent that the decreasing sequence
		\[
				\cdots
			\overset{p}{\into}
				A[p^{m + 1}] / A[p^{m}]
			\overset{p}{\into}
				A[p^{m}] / A[p^{m - 1}]
			\overset{p}{\into}
				\cdots
		\]
	stabilizes.
	It indeed does since each object in the sequence belongs to the artinian category $\mathcal{A}$.
\end{proof}

\begin{Prop}
	The category $\Ind^{f} \mathcal{A}$ is an abelian subcategory of $\Ind \mathcal{A}$.
\end{Prop}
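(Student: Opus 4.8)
The plan is to show that $\Ind^{f}\mathcal{A}$ is closed under the formation of kernels, cokernels, finite direct sums, and subquotients inside $\Ind\mathcal{A}$; since $\Ind\mathcal{A}$ is already abelian, this suffices to make $\Ind^{f}\mathcal{A}$ an abelian subcategory. The defining condition is that $A[p^{n}]\in\mathcal{A}$ for all $n$; equivalently, by the presentation $A=\dirlim A[p^{n}]$, that $A$ is a filtered colimit of objects of $\mathcal{A}$ along monomorphisms with the extra finiteness that each $p^{n}$-torsion piece is small. The point is to track this condition through the four operations.

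First I would handle finite direct sums, which is immediate: $(A\oplus B)[p^{n}]=A[p^{n}]\oplus B[p^{n}]\in\mathcal{A}$. Next, kernels: if $f\colon A\to B$ is a morphism in $\Ind^{f}\mathcal{A}$, then $(\Ker f)[p^{n}]=\Ker f\cap A[p^{n}]=\Ker\bigl(A[p^{n}]\to B[p^{n}]\bigr)$, which lies in $\mathcal{A}$ because $\mathcal{A}$ is abelian. For cokernels, write $C=\Coker f$. One has a surjection $B\onto C$, and I want to bound $C[p^{n}]$. The cleanest route is to use Prop.\ \ref{prop: cofinite type mod n is finite type}: from the exact sequence $A\to B\to C\to 0$ one gets, after killing $p^{n}$, a right-exact sequence $A/p^{n}A\to B/p^{n}B\to C/p^{n}C\to 0$, so $C/p^{n}C$ is a quotient of $B/p^{n}B\in\mathcal{A}$, hence in $\mathcal{A}$. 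Then I use the standard fact that for an object $C\in\Ind\mathcal{A}$ (where $\mathcal{A}$ has no nonzero divisible objects and everything is $p$-power torsion), $C[p^{n}]\in\mathcal{A}$ for all $n$ if and only if $C/p^{n}C\in\mathcal{A}$ for all $n$; equivalently $C$ is of cofinite type iff its "Pontryagin-type" finiteness on quotients holds. Concretely: the snake lemma applied to multiplication by $p$ on $0\to C[p]\to C\to pC\to 0$ and its iterates relates $C[p^{n}]$ to $C/p^{n}C$ via the images; since $C$ is a filtered union of objects of $\mathcal{A}$, $C[p^{n}]$ is such a union, and its finiteness follows once we know a cofinal system of the pieces $C_{i}[p^{n}]$ has bounded length, which the control on $C/p^{n}C$ provides by an argument parallel to the proof of Prop.\ \ref{prop: cofinite type mod n is finite type}.

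For subobjects (hence, together with cokernels, all subquotients): if $B'\subseteq B$ with $B\in\Ind^{f}\mathcal{A}$, then $B'[p^{n}]=B'\cap B[p^{n}]$ is a subobject of $B[p^{n}]\in\mathcal{A}$, hence in $\mathcal{A}$ since $\mathcal{A}$ is closed under subobjects. Thus the image of any morphism in $\Ind^{f}\mathcal{A}$, being a subobject of the target, is again in $\Ind^{f}\mathcal{A}$, and kernels and cokernels computed in $\Ind\mathcal{A}$ stay in $\Ind^{f}\mathcal{A}$. Finally, I would remark that the inclusion $\Ind^{f}\mathcal{A}\into\Ind\mathcal{A}$ is exact essentially by construction of these kernels and cokernels, so $\Ind^{f}\mathcal{A}$ is a genuine abelian subcategory, not merely an abelian category with a faithful exact functor to $\Ind\mathcal{A}$.

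\textbf{Main obstacle.} The only non-formal point is the step for cokernels: bounding $C[p^{n}]$ directly is awkward because torsion does not behave well under colimits of the relevant kind, so one must go through $C/p^{n}C$ and then re-extract control on the torsion. This is exactly the content packaged in Prop.\ \ref{prop: cofinite type mod n is finite type} together with a converse-flavored argument of the same artinian-stabilization type; I expect the write-up to spend most of its effort there, everything else being a one-line application of the fact that $\mathcal{A}$ is an abelian subcategory closed under subobjects and quotients.
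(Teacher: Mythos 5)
Your handling of kernels, finite direct sums, and subobjects (hence images) is correct and matches the paper. But the cokernel step has a genuine gap: the "standard fact" you invoke --- that for $C\in\Ind\mathcal{A}$ one has $C[p^n]\in\mathcal{A}$ for all $n$ if and only if $C/p^nC\in\mathcal{A}$ for all $n$ --- is false. Only the forward direction holds (that is Prop.\ \ref{prop: cofinite type mod n is finite type}). For the converse, take $\mathcal{A}$ to be finite abelian $p$-groups and $C=\bigoplus_{i\ge 1}\Q_p/\Z_p$ in $\Ind\mathcal{A}$: then $C/p^nC=0\in\mathcal{A}$ for every $n$, yet $C[p^n]$ is an infinite direct sum of $\Z/p^n\Z$, not in $\mathcal{A}$. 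Your appeal to ``an argument parallel to the proof of Prop.\ \ref{prop: cofinite type mod n is finite type}'' cannot close this gap, since that proposition's stabilization argument already presupposes that the torsion pieces $A[p^m]$ lie in the artinian category $\mathcal{A}$, which is exactly what is not yet known for $C$.

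The fix, which is what the paper actually does, is to control $C[p^n]$ not from $C$ alone but from the short exact sequence $0\to \Im(f)\to B\to C\to 0$. Multiplication by $p^n$ gives a six-term exact sequence
\[
0\to \Im(f)[p^n]\to B[p^n]\to C[p^n]\to \Im(f)/p^n\Im(f)\to B/p^nB\to C/p^nC\to 0.
\]
Once you know $\Im(f)\in\Ind^f\mathcal{A}$ (your subobject argument gives this), you have $\Im(f)[p^n],B[p^n]\in\mathcal{A}$ and, by Prop.\ \ref{prop: cofinite type mod n is finite type} applied to $\Im(f)$, also $\Im(f)/p^n\Im(f)\in\mathcal{A}$. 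Hence $C[p^n]$ is an extension of a subobject of $\Im(f)/p^n\Im(f)$ by a quotient of $B[p^n]$, so $C[p^n]\in\mathcal{A}$. The paper runs precisely this two-step d\'evissage --- first on $0\to\Ker(f)\to A\to\Im(f)\to 0$ to get the image, then on $0\to\Im(f)\to B\to\Coker(f)\to 0$ to get the cokernel --- each time using the six-term sequence together with Prop.\ \ref{prop: cofinite type mod n is finite type} for the piece already known to be of cofinite type.
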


\begin{proof}
	Let $f \colon A \to B$ be a morphism in $\Ind^{f} \mathcal{A}$.
	Then $\Ker(f)[p^{n}]$ for any $n$ is the kernel of the restriction $A[p^{n}] \to B[p^{n}]$ of $f$.
	Hence $\Ker(f)[p^{n}] \in \mathcal{A}$, thus $\Ker(f) \in \Ind^{f} \mathcal{A}$.
	Therefore in the exact sequence
	$0 \to \Ker(f) \to A \to \Im(f) \to 0$,
	the kernel and cokernel of $p^{n}$ on the first two terms are in $\mathcal{A}$.
	Hence $\Im(f) \in \Ind^{f} \mathcal{A}$.
	The same argument for $0 \to \Im(f) \to B \to \Coker(f) \to 0$ implies that
	$\Coker(f) \in \Ind^{f} \mathcal{A}$.
	Hence $\Ind^{f} \mathcal{A}$ is an abelian subcategory of $\Ind \mathcal{A}$.
\end{proof}

\begin{Prop}
	For any $A \in \Ind^{f} \mathcal{A}$,
	the decreasing sequence
		\[
				\cdots
			\into
				p^{2} A
			\into
				p A
			\into
				A
		\]
	stabilizes.
\end{Prop}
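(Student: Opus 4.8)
The plan is to show that the decreasing sequence $\cdots \into p^2 A \into pA \into A$ stabilizes by reducing it to the stabilization of a sequence of objects of the artinian category $\mathcal{A}$. First I would observe that, by Prop.\ \ref{prop: cofinite type mod n is finite type}, each quotient $A / p^n A$ lies in $\mathcal{A}$, and these quotients form an increasing tower under the natural surjections $A / p^{n+1} A \onto A / p^n A$; dually, the kernels $p^n A / p^{n+1} A$ of these surjections fit into short exact sequences. The key point is that the inclusions $p^{n+1} A \into p^n A$ stabilize if and only if the successive quotients $p^n A / p^{n+1} A$ vanish for large $n$, so it suffices to bound these quotients and then show the bounding quantity is eventually zero.

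The main step is to identify $p^n A / p^{n+1} A$ with an object of $\mathcal{A}$ on which multiplication by $p$ acts, and to exhibit a decreasing (or increasing) chain in $\mathcal{A}$ that must stabilize. Concretely, I would consider the maps $p^n A / p^{n+1} A \overset{p}{\to} p^{n+1} A / p^{n+2} A$ induced by multiplication by $p$; these are surjective by construction (every element of $p^{n+1} A$ is $p$ times an element of $p^n A$). So we get a sequence of epimorphisms in $\Ind \mathcal{A}$. If I can show each $p^n A / p^{n+1} A$ lies in $\mathcal{A}$, then this is a sequence of epimorphisms between artinian objects, hence the kernels stabilize and the objects themselves stabilize in cardinality/length; combined with the fact (noted in \S \ref{sec: Ind-objects of cofinite type}) that no non-zero object of $\mathcal{A}$ is divisible, together with an argument that the stable value is a divisible object inside $\mathcal{A}$, this forces the stable value to be zero, which is exactly what we need. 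To see $p^n A / p^{n+1} A \in \mathcal{A}$: from the exact sequence $0 \to p^n A / p^{n+1} A \to A / p^{n+1} A \to A / p^n A \to 0$ and Prop.\ \ref{prop: cofinite type mod n is finite type}, the middle and right terms are in $\mathcal{A}$, hence so is the kernel, since $\mathcal{A}$ is an abelian subcategory closed under subobjects of its objects (being artinian).

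Alternatively, and perhaps more cleanly, I would mimic the proof of Prop.\ \ref{prop: cofinite type mod n is finite type}: relate $p^n A / p^{n+1} A$ to a subquotient built from the $A[p^m]$, and use that the relevant chain of subobjects of a fixed object of $\mathcal{A}$ must stabilize by artinianness. The hard part — or rather the one subtle point — is the final passage from ``the chain of epimorphisms $p^n A/p^{n+1}A \onto p^{n+1}A/p^{n+2}A$ stabilizes'' to ``these objects are eventually zero.'' For this I would argue that once the chain stabilizes, say at step $N$, the stable object $Q = p^N A / p^{N+1} A$ satisfies $pQ = Q$ (multiplication by $p$ is onto), so $Q$ is $p$-divisible; but $Q \in \mathcal{A}$ and $\mathcal{A}$ contains no non-zero divisible object, forcing $Q = 0$, hence $p^N A = p^{N+1} A$ and the original sequence stabilizes at $N$.
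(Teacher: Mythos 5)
Your reduction to showing $p^n A / p^{n+1} A = 0$ for large $n$, and your observation that each $p^n A / p^{n+1} A \in \mathcal{A}$ (being the kernel of $A/p^{n+1}A \onto A/p^n A$, a morphism in $\mathcal{A}$), are both fine. However, the two steps that do the actual work have genuine gaps, and the paper uses a different mechanism.

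First, the inference ``this is a sequence of epimorphisms between artinian objects, hence the kernels stabilize'' does not hold. Artinianness of $\mathcal{A}$ gives DCC on subobjects of a fixed object; it does \emph{not} force a chain of epimorphic \emph{images} $B_0 \onto B_1 \onto B_2 \onto \cdots$ to stabilize (the kernels $\Ker(B_0 \to B_n)$ form an \emph{increasing} chain, whose termination would require noetherianness). A concrete counterexample lives in the very category $\mathcal{A} = \Alg_{\uc}/k$ used later in the paper: the Artin--Schreier map $\Ga \onto \Ga$, $x \mapsto x^p - x$, is a strict epimorphism with kernel $\F_p$, and iterating it gives an infinite chain of strict surjections between artinian objects killed by $p$, which never stabilizes.

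Second, even granting stabilization, your final step is incorrect. Once $B_n \to B_{n+1}$ is an isomorphism for $n \ge N$, you conclude ``$pQ = Q$'' for $Q = B_N$ and deduce divisibility. But the map in question is multiplication by $p$ \emph{viewed as a morphism $B_N \to B_{N+1}$ between two different objects}, not the endomorphism $p \colon B_N \to B_N$. The latter is always zero: $p \cdot (p^N A / p^{N+1} A) = p^{N+1} A / p^{N+1} A = 0$. So $Q$ is never $p$-divisible unless it is zero, and the stabilization you assumed gives you no divisibility at all; the ``no non-zero divisible objects in $\mathcal{A}$'' fact cannot be brought to bear this way.

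The paper's proof avoids both pitfalls by a compactness argument. One identifies $p^n A / p^{n+1} A \cong A/(A[p^n] + pA)$, notes that $A/pA \in \mathcal{A}$ while $A/pA = \dirlim_n (A[p^n] + pA)/pA$ in $\Ind\mathcal{A}$ (since $A = \dirlim_n A[p^n]$), and uses that an object of $\mathcal{A}$ is compact in $\Ind\mathcal{A}$: the identity on $A/pA$ must factor through some finite stage, forcing $A = A[p^n] + pA$ and hence $p^n A = p^{n+1} A$ for that $n$. Your sketched ``alternative'' (mimicking the proof of Prop.~\ref{prop: cofinite type mod n is finite type}) is in the right spirit, but the main argument as written does not close.
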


\begin{proof}
	For any $n$, we have $p^{n} \colon A / (A[p^{n}] + p A) \isomto p^{n} A / p^{n + 1} A$.
	The isomorphism
		\[
				A / p A
			\cong
				\dirlim_{n}
					(A[p^{n}] + p A) / p A
		\]
	in $\Ind \mathcal{A}$ factors through $(A[p^{n}] + p A) / p A$ for some $n$
	since $A / p A \in \mathcal{A}$.
	Hence $A / p A \cong (A[p^{n}] + p A) / p A$ for all large $n$.
	For such $n$, we have $A = A[p^{n}] + p A$ and $p^{n + 1} A = p^{n} A$.
	Hence the sequence stabilizes.
\end{proof}

For $A \in \Ind^{f} \mathcal{A}$, we define
	\begin{gather*}
				A_{\divis}
			=
				\bigcap_{n} p^{n} A
			\quad
				(= p^{n} A \text{ for some $n$}),
		\\
				A_{/ \divis}
			=
				A / A_{\divis}.
	\end{gather*}

\begin{Prop} \label{prop: divisible subgroup}
	If $A \in \Ind^{f} \mathcal{A}$,
	then $A_{\divis} \in \Ind^{f} \mathcal{A}$ is divisible
	and $A_{/ \divis} \in \mathcal{A}$.
	The sequence
		\[
			0 \to A_{\divis} \to A \to A_{/ \divis} \to 0
		\]
	is exact.
	The object $A_{\divis}$ is the largest divisible sub-object of $A$.
	The object $A_{/ \divis}$ is the largest quotient of $A$ that belongs to $\mathcal{A}$.
\end{Prop}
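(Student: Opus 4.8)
The plan is to assemble the three preceding propositions together with the remark, made at the start of this subsection, that $\mathcal{A}$ contains no non-zero divisible object. The exactness of the sequence $0 \to A_{\divis} \to A \to A_{/\divis} \to 0$ is immediate from the definition of $A_{/\divis}$ as the cokernel of the inclusion $A_{\divis} \into A$. For the remaining assertions, first use the stabilization of the sequence $\cdots \into p^{2} A \into p A \into A$ to fix an integer $N$ with $A_{\divis} = p^{N} A = p^{n} A$ for all $n \ge N$. Since $\Ind^{f} \mathcal{A}$ is an abelian subcategory of $\Ind \mathcal{A}$, the object $p^{N} A = \Im(p^{N} \colon A \to A)$ lies in $\Ind^{f} \mathcal{A}$; and multiplication by $p$ on it is surjective, because $p \cdot A_{\divis} = p^{N + 1} A = p^{N} A = A_{\divis}$. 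As every object in sight is $p$-primary torsion, this surjectivity is precisely divisibility, so $A_{\divis}$ is divisible. Likewise $A_{/\divis} = A / p^{N} A$ lies in $\mathcal{A}$ by Prop.\ \ref{prop: cofinite type mod n is finite type}.

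It then remains to verify the two maximality statements. For the first, let $B \into A$ be a divisible sub-object. Divisibility gives $B = p^{n} B$ for every $n$, and $p^{n} B \subseteq p^{n} A$ since $B \subseteq A$, so $B \subseteq \bigcap_{n} p^{n} A = A_{\divis}$; thus $A_{\divis}$ is the largest divisible sub-object of $A$. For the second, let $A \onto C$ be a surjection with $C \in \mathcal{A}$. The image of $A_{\divis}$ in $C$ is a quotient of a divisible object, hence divisible, and is a sub-object of $C$, hence lies in $\mathcal{A}$; therefore it vanishes. Consequently the surjection kills $A_{\divis}$ and factors through $A_{/\divis}$, so $A_{/\divis}$ is the largest quotient of $A$ belonging to $\mathcal{A}$.

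I do not anticipate any genuine difficulty: once the stabilization of $\{p^{n} A\}$ and Prop.\ \ref{prop: cofinite type mod n is finite type} are in hand, everything is formal. The only points that warrant a moment's care are the repeated use, valid because we are in a $p$-primary setting, that ``divisible'' is the same as ``$p$-divisible'' and is inherited by quotients, and the fact that a divisible object of $\mathcal{A}$ must be zero --- this last fact being exactly what powers both maximality claims.
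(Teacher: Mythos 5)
Your proof is essentially the paper's (the paper just writes ``Obvious'') and the overall structure is sound. One step, however, is stated imprecisely: in the second maximality claim you write that the image of $A_{\divis}$ in $C$ ``is a sub-object of $C$, hence lies in $\mathcal{A}$.'' As a bare implication this is false --- the paper's own example $\closure{\F_{p}} \subseteq \Ga$ shows that a subobject in $\Ind\mathcal{A}$ of an object of $\mathcal{A}$ need not lie in $\mathcal{A}$. The desired vanishing nevertheless follows immediately by a more direct route: $C \in \mathcal{A}$ is killed by $p^{m}$ for some $m$, hence so is any subobject of $C$, and a $p$-divisible object killed by $p^{m}$ is zero. (Alternatively, one can first observe that the image lies in $\Ind^{f}\mathcal{A}$ because $A_{\divis} \to C$ is a morphism in the abelian subcategory $\Ind^{f}\mathcal{A}$, and then use the $p^{m}$-torsion condition to land in $\mathcal{A}$; but the direct argument is shorter and does not even need the image to belong to $\mathcal{A}$.) Everything else --- the stabilization argument, the identification $A_{/\divis} = A/p^{N}A \in \mathcal{A}$ via Prop.~\ref{prop: cofinite type mod n is finite type}, and the first maximality claim --- is correct as written.
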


\begin{proof}
	Obvious.
\end{proof}

\begin{Prop}
	The category $\Ind^{f} \mathcal{A}$ is artinian.
\end{Prop}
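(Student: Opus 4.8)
The plan is to show that any object $A \in \Ind^{f} \mathcal{A}$ has no infinite strictly decreasing chain of subobjects, by combining the results already proven. The key structural fact is the decomposition from Prop.\ \ref{prop: divisible subgroup}: the exact sequence $0 \to A_{\divis} \to A \to A_{/\divis} \to 0$ with $A_{/\divis} \in \mathcal{A}$. Since $\mathcal{A}$ is artinian by hypothesis and $\Ind^{f}\mathcal{A}$ is an abelian subcategory of $\Ind\mathcal{A}$ (by the preceding propositions), it suffices to prove that $A_{\divis}$ is artinian in $\Ind^{f}\mathcal{A}$. So I would reduce immediately to the case where $A = A_{\divis}$ is divisible.

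First I would record that a divisible object $A$ of $\Ind^{f}\mathcal{A}$ is determined, up to the relevant filtration, by its torsion structure $A = \dirlim_{n} A[p^{n}]$, with each $A[p^n] \in \mathcal{A}$; moreover divisibility means $p \colon A \to A$ is surjective, so multiplication by $p$ induces surjections $A[p^{n+1}] \onto A[p^n]$ for each $n$. The essential claim is then: a decreasing chain $\cdots \subseteq A_2 \subseteq A_1 \subseteq A_0 = A$ of subobjects in $\Ind^{f}\mathcal{A}$ must stabilize. Passing to $p^{n}$-torsion, each $A_i[p^n] \in \mathcal{A}$ and the chains $\cdots \subseteq A_2[p^n] \subseteq A_1[p^n] \subseteq A_0[p^n]$ stabilize (say at index $N(n)$) because $\mathcal{A}$ is artinian. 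The subtlety is that $N(n)$ could in principle grow with $n$, so I would need a uniform bound. Here I would exploit the divisible structure: if $A_i \subsetneq A_{i-1}$ strictly, then since $A_{i-1}/A_i$ is a nonzero object of $\Ind^{f}\mathcal{A}$ it has nonzero $p$-torsion (no nonzero object of $\Ind\mathcal{A}$ built from $\mathcal{A}$ is $p$-divisible-and-torsion-free in a way that kills all $p^n$-torsion — indeed $(A_{i-1}/A_i)[p] \neq 0$ because the category consists of $p^\infty$-torsion objects), so already $A_i[p^{m}] \subsetneq A_{i-1}[p^{m}]$ for some $m$. The real point: taking the maximal $m$ appearing is unnecessary if I instead argue that the chain $(A_i[p^n])_i$ stabilizing for each fixed $n$, combined with the fact that $A_0/A_1$ — being a quotient inside an artinian-after-passing-to-torsion setup — is finitely generated in the appropriate sense, forces finiteness.

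The cleaner route, which I expect to be the one that works, is to argue by contradiction with a single strictly decreasing chain and use that $A/A_1 \in \Ind^{f}\mathcal{A}$ is a quotient of the divisible object $A$, hence divisible, hence (being a nonzero divisible object with $A/A_1 = \dirlim (A/A_1)[p^n]$) its first torsion $(A/A_1)[p]$ is nonzero and lies in $\mathcal{A}$. Iterating: each successive quotient $A_{i-1}/A_i$ is divisible and nonzero, so contributes a nonzero element to the layer structure; but the whole of $A[p]$ lies in $\mathcal{A}$, which is artinian, hence has finite length, and the images of the $A_i[p]$ form a decreasing chain of subobjects of $A[p]$ which must therefore stabilize after finitely many steps — say $A_i[p] = A_{i+1}[p] = \cdots$ for $i \geq r$. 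Then for $i \geq r$ the quotient $A_{i}/A_{i+1}$ is a divisible object whose $p$-torsion vanishes; but a divisible object $B \in \Ind^f\mathcal{A}$ with $B[p] = 0$ satisfies $B = p^n B$ for all $n$ and $B[p^n] = 0$ for all $n$ (by induction, using $B[p^{n+1}]/B[p^n] \hookrightarrow B[p]$ via $p^n$), hence $B = \dirlim B[p^n] = 0$. So $A_i = A_{i+1}$ for all $i \geq r$, i.e.\ the chain stabilizes. The main obstacle is exactly this interplay between the filtration by $p$-power torsion and the divisibility; once one sees that killing all $p$-torsion in a subquotient of a divisible object kills the subquotient, the argument collapses to the artinianness of $\mathcal{A}$ applied to $A[p]$.
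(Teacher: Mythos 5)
Your reduction is to a divisible ambient $A$, but the chain terms $A_1, A_2, \dots$ are merely subobjects of $A$ and need not themselves be divisible, and this is where the argument breaks. You assert that each successive quotient $A_{i-1}/A_i$ is divisible; this is true for $A/A_1$ (a quotient of the divisible $A$), but not for $i \geq 2$. Consequently the later claim --- that once $A_i[p]$ stabilizes, $A_i/A_{i+1}$ is a divisible object with vanishing $p$-torsion, hence zero --- fails: the snake lemma applied to $0 \to A_{i+1} \to A_i \to A_i/A_{i+1} \to 0$ only gives an injection $(A_i/A_{i+1})[p] \into A_{i+1}/pA_{i+1}$, which vanishes precisely when $A_{i+1}$ is divisible. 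For a concrete counterexample take $\mathcal{A}$ the category of finite abelian $p$-groups, $A = \Q_{p}/\Z_{p}$, and $A_i = A[p^{10-i}]$ for $i = 0, \dots, 10$: every $A_i[p]$ equals $\Z/p\Z$, yet each quotient $A_i/A_{i+1} \cong \Z/p\Z$ is nonzero with nonzero $p$-torsion.

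The paper's proof reduces instead to decreasing chains of divisible objects, not merely subobjects of a fixed divisible object. Given an arbitrary chain $B_0 \supset B_1 \supset \cdots$, the chain $(B_i)_{\divis}$ is decreasing with divisible terms, and for a divisible $A_n$ the sequence $0 \to A_n[p] \to A_n[p^2] \overset{p}{\to} A_n[p] \to 0$ is exact (surjectivity on the right is exactly divisibility), which is what lets one propagate stabilization of $A_n[p]$ to stabilization of $A_n[p^m]$ for every $m$ and hence of $A_n$ itself. Once the divisible parts stabilize at a common value, the residual chain lies inside an object of $\mathcal{A}$, and $\mathcal{A}$-objects are artinian in $\Ind^{f}\mathcal{A}$ by the opening observation of the paper's proof. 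Your argument would be repaired by making exactly this move: replace the chain $A_i$ by the chain $(A_i)_{\divis}$ before running the torsion argument, and then dispose of the residual chain using the artinianness of $\mathcal{A}$.
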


\begin{proof}
	By the previous proposition,
	an object of $\mathcal{A}$ is artinian in $\Ind^{f} \mathcal{A}$
	since an object of $\Ind^{f} \mathcal{A}$ embeddable into an object of $\mathcal{A}$
	does not have divisible part and hence itself is in $\mathcal{A}$.
	It is enough to show that a decreasing sequence
	$A_{0} \supset A_{1} \supset \cdots$
	of divisible objects in $\Ind^{f} \mathcal{A}$ stabilizes.
	Since $A_{n}[p] \in \mathcal{A}$ is artinian for any $n$,
	there is some $m$ such that $A_{m}[p] = A_{m + 1}[p] = \cdots$.
	From the exact sequence
		\[
			0 \to A_{n}[p] \to A_{n}[p^{2}] \overset{p}{\to} A_{n}[p] \to 0,
		\]
	we know that $A_{m}[p^{2}] = A_{m + 1}[p^{2}] = \cdots$ for the same $m$.
	Inductively, we have $A_{m} = A_{m + 1} = \cdots$.
\end{proof}

Denote the pro-category of $\mathcal{A}$ by $\Pro \mathcal{A}$
and the ind-category of $\Pro \mathcal{A}$ by $\Ind \Pro \mathcal{A}$.
For $A \in \Ind^{f} \mathcal{A}$,
we define
	\begin{gather*}
				T A
			=
				\invlim_{n} A[p^{n}]
			\in
				\Pro \mathcal{A},
			\quad
		\\
				V A
			=
				\dirlim_{m} T A
			\in
				\Ind \Pro \mathcal{A},
	\end{gather*}
where the direct limit in the second definition is for multiplication by $p^{m}$.
We call $T A$ the Tate module of $A$ and $V A$ the rational Tate module of $A$.
For example, if $A$ is $\Q_{p} / \Z_{p}$ in the category of torsion abelian groups,
then $T A = \Z_{p}$ in the category of profinite abelian groups
and $V A = \Q_{p}$ as an ind-object of profinite abelian groups.
For general $\mathcal{A}$, if $A \in \mathcal{A}$,
then the system $\{A[p^{n}]\}$ that defines $TA$ has essentially zero transition morphisms,
hence $TA = 0$.
For each $m$, we consider the morphism
$TA \to A$ in $\Ind \Pro \mathcal{A}$ given by $(a_{n})_{n} \mapsto a_{m}$.
They form a morphism $VA \to A$ in $\Ind \Pro \mathcal{A}$.

\begin{Prop}
	The functor $A \mapsto VA$ from $\Ind^{f} \mathcal{A}$ to $\Ind \Pro \mathcal{A}$ is exact.
\end{Prop}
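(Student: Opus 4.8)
The plan is to reduce the exactness of $A \mapsto VA$ to the exactness of $A \mapsto TA$ on a suitable class of short exact sequences, since $VA$ is a filtered (hence exact) direct limit of copies of $TA$ along multiplication-by-$p$ maps. So the real content is: given a short exact sequence $0 \to A' \to A \to A'' \to 0$ in $\Ind^{f} \mathcal{A}$, I want the sequence $0 \to TA' \to TA \to TA'' \to 0$ to be exact in $\Pro \mathcal{A}$. Applying the snake lemma to the multiplication-by-$p^{n}$ maps on the three terms gives, for each $n$, an exact sequence
	\[
			0
		\to
			A'[p^{n}]
		\to
			A[p^{n}]
		\to
			A''[p^{n}]
		\to
			A'/p^{n}A'
		\to
			A/p^{n}A
		\to
			A''/p^{n}A''
		\to
			0,
	\]
and I want to pass to the inverse limit over $n$. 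The point is that $\Pro \mathcal{A}$-exactness of an inverse system of short exact sequences is automatic once we know a Mittag-Leffler-type condition, and here every term of every system lies in the artinian category $\mathcal{A}$ (using Prop.\ \ref{prop: cofinite type mod n is finite type} for the quotient terms $A/p^{n}A$), so all the relevant images stabilize.

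Concretely, first I would observe that the kernel of $A[p^{n+1}] \to A''[p^{n+1}]$ is $A'[p^{n+1}]$ by left-exactness of torsion, so the systems $\{A'[p^{n}]\}_n \to \{A[p^{n}]\}_n$ are levelwise injective with levelwise cokernel the image of $\{A[p^{n}]\}$ in $\{A''[p^{n}]\}$. To identify that image with $\{A''[p^{n}]\}$ in the pro-category, note that for fixed $n$ the map $A[p^{n+m}] \to A''[p^{n}]$ (multiply by $p^{m}$ then project) has image containing the image of $A[p^{n+m}] \to A''[p^{n+m}] \xrightarrow{p^m} A''[p^n]$; as $m \to \infty$ this image increases inside $A''[p^n] \in \mathcal{A}$, so it stabilizes, and a diagram chase shows the stable value is all of $A''[p^n]$ because any $a'' \in A''[p^n]$ lifts to some $a \in A$, and $p^n a \in A'$ is killed by $p^{n'}$ for $n'$ large (as $A' \in \Ind^f\mathcal{A}$ means $A'$ is a union of its torsion), so $a \in A[p^{n+n'}]$. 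Hence $\{A[p^n]\} \to \{A''[p^n]\}$ is an epimorphism in $\Pro\mathcal{A}$, and the sequence of pro-objects $0 \to TA' \to TA \to TA'' \to 0$ is exact. Tensoring with the multiplication-by-$p^m$ direct system (equivalently, applying the exact functor $\dirlim$) gives exactness of $0 \to VA' \to VA \to VA'' \to 0$ in $\Ind\Pro\mathcal{A}$.

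The main obstacle is the surjectivity statement $\{A[p^n]\} \twoheadrightarrow \{A''[p^n]\}$ in the pro-category: levelwise the maps $A[p^n] \to A''[p^n]$ need not be surjective, and one must genuinely use that $A' \in \Ind^f\mathcal{A}$ is a torsion object (so obstructions to lifting $p^n$-torsion of $A''$ are themselves torsion, and get absorbed at a higher level) together with artinianness of $\mathcal{A}$ to guarantee the images stabilize. Everything else — left-exactness of $T$, the reduction from $V$ to $T$ via the filtered colimit, and additivity — is formal. I would present the argument by first treating short exact sequences (which suffices for exactness of an additive functor between abelian categories) and packaging the stabilization as a short lemma, rather than chasing the full six-term snake sequence, which carries more information than needed.
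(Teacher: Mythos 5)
There is a genuine gap. The claim that $\{A[p^{n}]\} \to \{A''[p^{n}]\}$ is an epimorphism in $\Pro\mathcal{A}$, i.e.\ that $0 \to TA' \to TA \to TA'' \to 0$ is exact, is false. Take $\mathcal{A}$ to be the category of finite abelian $p$-groups and consider $0 \to \Z/p\Z \to \Q_{p}/\Z_{p} \overset{p}{\to} \Q_{p}/\Z_{p} \to 0$. Then $T(\Z/p\Z) = 0$ (essentially zero transitions), and the induced map of Tate modules is multiplication by $p$ on $\Z_{p}$, whose cokernel in $\Pro\mathcal{A}$ is $\Z/p\Z \ne 0$. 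Your diagram chase fails at the last step: you lift $a'' \in A''[p^{n}]$ to $a \in A[p^{n+n'}]$, but what this produces in $A''[p^{n}]$ after composing with the transition $A''[p^{n+n'}] \overset{p^{n'}}{\to} A''[p^{n}]$ is $p^{n'} a''$, not $a''$ (in the example: $0$ rather than the chosen generator). The appeal to artinianness of $\mathcal{A}$ to stabilize the \emph{increasing} chain of images is also unwarranted: artinian means DCC, not ACC, and the intended application $\mathcal{A} = \Alg_{\uc}/k$ is genuinely non-noetherian (e.g.\ $\Ga$ contains the infinite increasing chain of finite subgroups $\F_{p^{n}}$).

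The paper's proof does not try to make $T$ right-exact. It takes the same six-term snake sequence and applies $\invlim_{n}$ in $\Pro\mathcal{A}$ --- which is exact with no Mittag-Leffler hypothesis, since filtered inverse limits in a pro-category are exact by construction --- to obtain
	\[
		0 \to TA' \to TA \to TA'' \to A'_{/ \divis} \to A_{/ \divis} \to A''_{/ \divis} \to 0
	\]
in $\Pro\mathcal{A}$. The cokernel of $TA \to TA''$ is thus $\Ker(A'_{/ \divis} \to A_{/ \divis})$, generally nonzero, but the three right-hand terms lie in $\mathcal{A}$ and are therefore killed by a power of $p$. Only after taking the filtered colimit over multiplication by $p^{m}$ do they vanish, leaving the exact sequence $0 \to VA' \to VA \to VA'' \to 0$. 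So the passage from $T$ to $V$ is not the purely formal tensoring-at-the-end step you treat it as; it is precisely what repairs the failure of right-exactness of $T$.
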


\begin{proof}
	Let $0 \to A \to B \to C \to 0$ be exact in $\Ind^{f} \mathcal{A}$.
	For any $n$, we have an exact sequence
		\[
			0 \to A[p^{n}] \to B[p^{n}] \to C[p^{n}]
			\to A / p^{n} A \to B / p^{n} B \to C / p^{n} C \to 0
		\]
	in $\mathcal{A}$.
	The inverse limit in $n$ in $\Pro \mathcal{A}$ gives an exact sequence
		\[
			0 \to TA \to TB \to TC
			\to A_{/ \divis} \to B_{/ \divis} \to C_{/ \divis} \to 0
		\]
	in $\Pro \mathcal{A}$.
	(Note that Mittag-Leffler conditions are not relevant here
	since filtered inverse limits in a pro-category are exact by definition.)
	Since each $(\var)_{/ \divis} \in \mathcal{A}$ is killed by multiplication by some power of $p$,
	the direct limit in multiplication by $m$ gives the desired exact sequence
	$0 \to VA \to VB \to VC \to 0$ in $\Ind \Pro \mathcal{A}$.
\end{proof}

\begin{Prop} \label{prop: Tate module exact sequence}
	Let $A \in \Ind^{f} \mathcal{A}$.
	Then we have an exact sequence
		\[
			0 \to TA \overset{p^{n}}{\to} TA \to A_{\divis} \cap A[p^{n}] \to 0
		\]
	in $\Pro \mathcal{A}$ for any $n \ge 0$ and an exact sequence
		\[
			0 \to TA \to VA \to A_{\divis} \to 0
		\]
	in $\Ind \Pro \mathcal{A}$.
	We have $TA = T(A_{\divis})$ and $VA = V(A_{\divis})$.
\end{Prop}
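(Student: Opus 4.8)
The plan is to reduce to the case that $A$ is divisible, settle that case by a levelwise computation of pro-objects, and then pass to $VA$ by a filtered colimit; I would prove the statements in a slightly different order than they are listed. First I would establish $TA = T(A_{\divis})$ and $VA = V(A_{\divis})$. Applying the $p^{n}$-torsion functor to $0 \to A_{\divis} \to A \to A_{/ \divis} \to 0$ and using that $A_{\divis}$ is divisible, so $A_{\divis}/p^{n}A_{\divis} = 0$, gives short exact sequences $0 \to A_{\divis}[p^{n}] \to A[p^{n}] \to A_{/ \divis}[p^{n}] \to 0$ in $\mathcal{A}$ compatible with the multiplication-by-$p$ transition maps; since filtered inverse limits in $\Pro \mathcal{A}$ are exact, this yields $0 \to T(A_{\divis}) \to TA \to T(A_{/ \divis}) \to 0$, and $T(A_{/ \divis}) = 0$ because $A_{/ \divis} \in \mathcal{A}$ is killed by a power of $p$, so its defining system has essentially zero transition maps. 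Hence $TA = T(A_{\divis})$, and taking $\dirlim_{m}$ along multiplication by $p^{m}$ yields $VA = V(A_{\divis})$.

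Next I would prove $0 \to TA \overset{p^{n}}{\to} TA \to A_{\divis} \cap A[p^{n}] \to 0$. Because $A_{\divis} \cap A[p^{n}] = (A_{\divis})[p^{n}]$, and both multiplication by $p^{n}$ and the canonical map $TA \to A[p^{n}]$ are compatible with the identification $TA = T(A_{\divis})$, it suffices to treat $A = A_{\divis}$ divisible. In that case, for each $j \ge n$, divisibility gives a four-term exact sequence in $\mathcal{A}$
\[
		0 \to A[p^{n}] \to A[p^{j}] \overset{p^{n}}{\to} A[p^{j}] \to A[p^{j}]/A[p^{j - n}] \to 0 ,
\]
the kernel of $p^{n}$ on $A[p^{j}]$ being $A[p^{n}]$ and its image being $p^{n}A[p^{j}] = A[p^{j - n}]$ (any $y \in A[p^{j - n}]$ is $p^{n}z$ with $z \in A[p^{j}]$ automatically). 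These are compatible with the multiplication-by-$p$ transition maps, so form a four-term exact sequence of pro-objects. Now the system $\{A[p^{n}]\}_{j}$ has transition maps equal to multiplication by $p$, which is nilpotent on $A[p^{n}]$, so this pro-object vanishes; and the system $\{A[p^{j}]/A[p^{j - n}]\}_{j}$ has transition maps induced by multiplication by $p$, which are isomorphisms (surjective by divisibility, injective since $p x \in A[p^{j - n}]$ forces $x \in A[p^{j + 1 - n}]$), so this pro-object is the constant one with value $A[p^{n}]$. Passing to pro-limits gives $0 \to TA \overset{p^{n}}{\to} TA \to A[p^{n}] \to 0$, which is the claim since here $A[p^{n}] = A_{\divis} \cap A[p^{n}]$; the sequence for general $A$ then follows together with $TA = T(A_{\divis})$.

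For $0 \to TA \to VA \to A_{\divis} \to 0$ in $\Ind \Pro \mathcal{A}$, write $VA = \dirlim_{m} TA$ along multiplication by $p$, with structure map $\sigma \colon TA \to VA$ into the zeroth term. Exactness of filtered colimits gives $\Ker(\sigma) = \dirlim_{m} \Ker(p^{m} \colon TA \to TA)$ and $\Coker(\sigma) = \dirlim_{m} \Coker(p^{m} \colon TA \to TA)$; by the sequence just proved, $\Ker(p^{m}) = 0$ and $\Coker(p^{m}) = A_{\divis} \cap A[p^{m}]$, and a short diagram chase identifies the transition maps of this last system with the inclusions $A_{\divis} \cap A[p^{m}] \into A_{\divis} \cap A[p^{m + 1}]$. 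Hence $\Ker(\sigma) = 0$ and $\Coker(\sigma) = \bigcup_{m}(A_{\divis} \cap A[p^{m}]) = A_{\divis}$, the last equality because $A_{\divis} \in \Ind^{f} \mathcal{A}$ is the union of its $p^{m}$-torsion.

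The step I expect to be the main obstacle is the divisible case of the first exact sequence: one must correctly recognize the two auxiliary $j$-indexed pro-systems --- the ``kernel'' system as pro-zero (nilpotent transition maps) and the ``cokernel'' system as essentially constant (isomorphism transition maps) --- and check that the map surviving in the middle of the limiting sequence is genuinely multiplication by $p^{n}$ on $TA$ and not a reindexing of it (the latter arising if one instead used the maps $A[p^{j + n}] \overset{p^{n}}{\to} A[p^{j}]$ between different indices). Everything else is formal, using exactness of filtered inverse limits in $\Pro \mathcal{A}$ and of filtered colimits in $\Ind \Pro \mathcal{A}$.
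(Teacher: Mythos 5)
Your argument is correct, and it takes a genuinely different route from the paper's. The paper plugs the short exact sequence $0 \to A[p^{n}] \to A \overset{p^{n}}{\to} p^{n}A \to 0$ into the six-term exact sequence $0 \to TA' \to TB' \to TC' \to A'_{/\divis} \to B'_{/\divis} \to C'_{/\divis} \to 0$ established in the proof of the preceding proposition, together with the vanishings $T(A[p^{n}]) = 0$ and $T(A/p^{n}A) = 0$; it reads the first short exact sequence directly off the beginning of that six-term sequence, and then deduces $TA = T(A_{\divis})$ at the very end. You instead prove $TA = T(A_{\divis})$ first (via the snake lemma applied to $p^{n}$ acting on $0 \to A_{\divis} \to A \to A_{/\divis} \to 0$), reduce the first exact sequence to the divisible case, and then carry out an explicit levelwise computation of pro-systems, recognizing the ``kernel'' system as pro-zero and the ``cokernel'' system as essentially constant. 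Both proofs ultimately rest on the same basic fact that filtered inverse limits in $\Pro\mathcal{A}$ and filtered colimits in $\Ind\Pro\mathcal{A}$ are exact, but the paper's version is shorter because it leverages the previous proposition's long exact sequence rather than re-deriving what is needed at the level of inverse systems; your version is more self-contained and makes the identification $\Coker(p^{n}\colon TA \to TA) = A_{\divis} \cap A[p^{n}]$ completely concrete, at the cost of more bookkeeping. The reindexing concern you flag at the end is real and you handle it correctly by taking the endomorphism $p^{n}$ on $A[p^{j}]$ at a fixed level $j$ rather than a map between levels $j{+}n$ and $j$.
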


\begin{proof}
	The multiplication-by-$p^{n}$ map gives an exact sequence
	$0 \to A[p^{n}] \to A \to p^{n} A \to 0$.
	We have $T(A[p^{n}]) = 0$ and $A[p^{n}]_{/ \divis} = A[p^{n}]$
	since $A[p^{n}] \in \mathcal{A}$ has no divisible part.
	Hence the long exact sequence in the proof of the previous proposition for this sequence is
		\[
					0
				\to
					TA
				\to
					T(p^{n} A)
				\to
					A[p^{n}]
				\to
					A_{/ \divis}
				\to
					(p^{n} A)_{/ \divis}
				\to
					0.
		\]
	in $\Pro \mathcal{A}$.
	Since $A / p^{n} A \in \mathcal{A}$, we have
	$T(p^{n} A) = TA$.
	From this, we get the first exact sequence.
	The direct limit in $n$ gives the second exact sequence.
	We have $T(A_{/ \divis}) = 0$ since $A_{/ \divis} \in \mathcal{A}$.
	It follows that $TA = T(A_{\divis})$ and $VA = V(A_{\divis})$.
\end{proof}

Next, let $\mathcal{A}$ be an artinian abelian category such that
any object is killed by multiplication by some positive integer (not necessarily a prime power).
For each prime $p$, denote by $\mathcal{A}_{p}$
the full subcategory of $\mathcal{A}$ of objects killed by a power of $p$.
This is an artinian abelian category.
Any object of $\mathcal{A}$ can be canonically decomposed as $A = \bigoplus_{p} A_{p}$,
where each summand $A_{p}$ belongs to $\mathcal{A}_{p}$
and $A_{p} = 0$ for almost all $p$.
We call $A_{p}$ the $p$-primary part of $A$.
Consider the ind-category $\Ind \mathcal{A}$.
Any object of $\Ind \mathcal{A}$ can be canonically decomposed as
$A = \bigoplus_{p} A_{p}$ (a filtered direct limit of finite partial sums),
where each summand $A_{p}$ belongs to $\Ind \mathcal{A}_{p} := \Ind(\mathcal{A}_{p})$.

We say that an object $A \in \Ind \mathcal{A}$ is of \emph{cofinite type}
if the $n$-torsion part $A[n]$ is in $\mathcal{A}$ for all $n \ge 1$.
In this case, the equality $A = \dirlim A[n]$ gives a presentation of $A$
as a filtered direct limit of objects of $\mathcal{A}$.
Denote by $\Ind^{f} \mathcal{A}$
the full subcategory of $\Ind \mathcal{A}$ consisting of objects of cofinite type.
An object $A \in \Ind \mathcal{A}$ is of cofinite type if and only if
$A_{p}$ is of cofinite type for all $p$.
Note that $\Ind^{f} \mathcal{A}$ is not necessarily artinian.
For instance, the object $\bigoplus_{p} \Z / p \Z$
in the category of torsion abelian groups of cofinite type is not artinian.
For $A \in \Ind^{f} \mathcal{A}$, we define
	\[
			A_{\divis}
		=
			\bigoplus_{p} (A_{p})_{\divis}
		\in
			\Ind \mathcal{A}
		\quad
			A_{/ \divis}
		=
			A / A_{\divis}
		\in
			\Ind \mathcal{A},
	\]
where $(A_{p})_{\divis} \in \Ind \mathcal{A}_{p}$ is previously defined.
We also define
	\[
			T A
		=
			\invlim_{n} A[n]
		\in
			\Pro \mathcal{A},
		\quad
			V A
		=
			\dirlim_{m} T A
		\in
			\Ind \Pro \mathcal{A},
	\]
where the inverse limit in the first definition is over multiplication by $n \ge 1$
and the direct limit in the second is over multiplication by $m \ge 1$.
Also define $T_{p} A = T(A_{p})$, $V_{p} A = V(A_{p})$.

\begin{Prop} \BetweenThmAndList
	\begin{enumerate}
		\item
			For any $A \in \Ind^{f} \mathcal{A}$ and $n \ge 1$,
			we have $A / n A \in \mathcal{A}$.
		\item
			The category $\Ind^{f} \mathcal{A}$ is an abelian subcategory of $\Ind \mathcal{A}$.
		\item
			If $A \in \Ind^{f} \mathcal{A}$,
			then $A_{\divis}$ is the largest divisible sub-object of $A$,
			and $A_{/ \divis}$ is the largest quotient of $A$
			such that $(A_{/ \divis})_{p}$ belongs to $\mathcal{A}_{p}$ for all $p$.
			The sequence
				\[
					0 \to A_{\divis} \to A \to A_{/ \divis} \to 0
				\]
			is an exact sequence in $\Ind^{f} \mathcal{A}$.
		\item
			Let $A \in \Ind^{f} \mathcal{A}$.
			Then we have exact sequences
				\[
					0 \to TA \overset{n}{\to} TA \to A_{\divis} \cap A[n] \to 0
				\]
			in $\Pro \mathcal{A}$ for any $n \ge 1$ and
				\[
					0 \to TA \to VA \to A_{\divis} \to 0
				\]
			in $\Ind \Pro \mathcal{A}$.
			We have $TA = T(A_{\divis}) = \prod_{p} T_{p} A$ and $VA = V(A_{\divis})$.
	\end{enumerate}
\end{Prop}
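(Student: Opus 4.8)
The plan is to deduce all four assertions from their prime-power analogues, proved above for the subcategories $\mathcal{A}_p$, by means of the canonical primary decomposition $A = \bigoplus_p A_p$ with $A_p \in \Ind \mathcal{A}_p$. Two elementary remarks are used throughout. First, each operation in the statement — the functors $(\var)[n]$ and $(\var)/n(\var)$, the subobject $(\var)_{\divis}$, the quotient $(\var)_{/\divis}$, and $T$ and $V$ — respects the decomposition; in particular $A[n] = \bigoplus_p A_p[n]$ and $A_{\divis} = \bigoplus_p (A_p)_{\divis}$. Second, for $p \nmid n$ multiplication by $n$ is an automorphism of every object of $\Ind \mathcal{A}_p$, so that $A_p[n] = A_p/nA_p = 0$ in that case and $A[n] = \bigoplus_{p \mid n} A_p[n]$, $A/nA = \bigoplus_{p \mid n} A_p/nA_p$ are \emph{finite} direct sums.

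Granting this, (1) is immediate: if $A$ is of cofinite type then so is each $A_p$, hence $A_p[n] \in \mathcal{A}_p$, and $A_p/nA_p \in \mathcal{A}_p$ by Prop.\ \ref{prop: cofinite type mod n is finite type} applied over $\mathcal{A}_p$ with the prime power being the $p$-part of $n$; a finite direct sum of objects of $\mathcal{A}$ lies in $\mathcal{A}$. For (2): a morphism $f$ of $\Ind^f \mathcal{A}$ decomposes as $\bigoplus_p f_p$ with $f_p \colon A_p \to B_p$, and kernels, images and cokernels are formed componentwise; the single-prime statement that $\Ind^f \mathcal{A}_p$ is an abelian subcategory of $\Ind \mathcal{A}_p$ makes each component of cofinite type over $\mathcal{A}_p$, and the finite-sum description of the $[n]$-torsion then makes the direct sum of cofinite type. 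Hence $\Ind^f \mathcal{A}$ is closed under kernels, images and cokernels in $\Ind \mathcal{A}$.

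For (3), I would note from the definitions — using functoriality of the primary decomposition together with the prime-to-$p$ invertibility above — that a subobject $D \subseteq A$ is divisible if and only if each $D_p \subseteq A_p$ is, and that a quotient $A \onto Q$ satisfies $Q_p \in \mathcal{A}_p$ for all $p$ if and only if it factors through every $(A_p)_{/\divis}$; combined with Prop.\ \ref{prop: divisible subgroup} over each $\mathcal{A}_p$ this yields the two maximality claims, and since $(A_{\divis})[n] = \bigoplus_{p \mid n} (A_p)_{\divis}[n]$ is a finite sum of objects of $\mathcal{A}_p$ we get $A_{\divis} \in \Ind^f \mathcal{A}$. Exactness of $0 \to A_{\divis} \to A \to A_{/\divis} \to 0$ in $\Ind^f \mathcal{A}$ then follows from componentwise exactness together with (2).

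Finally, for (4) the crux is the identification $TA = \prod_p T_p A$ in $\Pro \mathcal{A}$, where the (a priori infinite) product is interpreted as the filtered inverse limit $\invlim_S \bigoplus_{p \in S} T_p A$ over finite sets $S$ of primes; unwinding both sides, this is the Chinese-remainder statement that a compatible system $(a_n)_n$ with $a_n \in A[n] = \bigoplus_{p \mid n} A_p[n]$ is the same datum as a family, indexed by $p$, of compatible systems in $T_p A$. This unwinding is the one step I expect to require genuine care; everything else is bookkeeping. Granting it, each sequence of Prop.\ \ref{prop: Tate module exact sequence} over $\mathcal{A}_p$ (with the integer there taken to be the $p$-part of $n$) contributes a summand: since multiplication by $n$ is invertible on $\prod_{p \nmid n} T_p A$ and $A_{\divis} \cap A[n] = \bigoplus_{p \mid n}\bigl((A_p)_{\divis} \cap A_p[n]\bigr)$, the sequence $0 \to TA \overset{n}{\to} TA \to A_{\divis} \cap A[n] \to 0$ is the direct sum of a trivially exact complex and the \emph{finite} family of single-prime sequences for $p \mid n$, hence exact; the second sequence follows on passing to the direct limit over multiplication by $m$. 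The equalities $TA = T(A_{\divis})$ and $VA = V(A_{\divis})$ come from $TA = \prod_p T_p A$ together with the single-prime identities $T_p A = T_p(A_{\divis})$, the second then by applying $\dirlim_m$.
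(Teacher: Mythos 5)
Your proposal is correct and takes essentially the same approach as the paper: reduce each assertion to its $p$-primary analogue via the canonical decomposition $A = \bigoplus_p A_p$ and invoke the prime-power results already established. The paper's own proof is a single sentence recording exactly this reduction, so your write-up is simply a more detailed version of the same argument.
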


\begin{proof}
	All the statements are reduced to the $p$-primary parts for primes $p$,
	which have already been proved.
\end{proof}

Contrary to the $p$-primary case,
$A_{/ \divis}$ might not be in $\mathcal{A}$ for general $A \in \Ind^{f} \mathcal{A}$.
An example is $\bigoplus_{p} \Z / p \Z$.
This is related to the Mittag-Leffler condition.
Here is a general fact.

\begin{Prop} \label{prop: divisibly ML objects}
	Let $B$ be an object of any abelian category $\mathcal{B}$.
	Consider the inverse system in $\mathcal{B}$
	given by multiplication maps by positive integers on $B$.
	This system is Mittag-Leffler if and only if
	$B$ has a divisible subobject $B'$ such that
	$B / B'$ is killed by multiplication by some positive integer.
	In this case, $B'$ is the maximal divisible subobject of $B$,
	and we say that $B$ is \emph{divisibly ML}.
\end{Prop}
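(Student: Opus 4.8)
The plan is to argue both directions by unwinding the definition of the Mittag-Leffler condition for the inverse system $\{B \xrightarrow{n} B\}$ indexed by positive integers ordered by divisibility. Write $nB$ for the image of multiplication by $n$; the transition map from the copy at $mn$ to the copy at $n$ is multiplication by $m$, which sends $mnB$ into $nB$. The system is Mittag-Leffler precisely when, for each fixed $n$, the decreasing chain of subobjects $\{m \cdot (mnB)\}_{m} = \{mnB\}_{m}$ stabilizes, i.e.\ there is an $N$ (depending on $n$) with $NnB = N'nB$ for all multiples $N'$ of $N$. Equivalently, setting $B' = \bigcap_{n} nB$ (the eventual image, which for the Mittag-Leffler system is attained at some stage), the condition is that for every $n$ there is $N$ with $B' \cap (\text{image at } n) $ already equal to the stable image $NB$ sitting inside $nB$; I will phrase this cleanly as: the system is Mittag-Leffler iff there exists an integer $N$ and, for the chain indexed by $n$, stabilization.

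\textbf{($\Leftarrow$)} Suppose $B' \subseteq B$ is divisible with $B/B'$ killed by a positive integer $d$. Then $dB \subseteq B'$. Since $B'$ is divisible, $nB' = B'$ for every $n$, so for every $n$ we get $B' = n(dB') \subseteq n(dB) \subseteq nB \subseteq B$, whence $B' \subseteq ndB \subseteq dB \subseteq B'$ is forced once $n$ is large enough — more simply, $dmB \supseteq dm B' = B'$ and $dmB \subseteq dB$, and $dB \subseteq B'$, so $dmB = B'$ for all $m\ge 1$. Thus for any index $n$, the subobjects $m\cdot(mnB) = mnB$ with $dn \mid mn$ all equal $dnB \supseteq B' $; in fact once $d\mid m$ we have $mnB \supseteq m n B' = B'$ and $mnB \subseteq dnB$, and $dnB \subseteq B'$, giving $mnB = B'$. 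Hence the chain stabilizes at value $B'$ for every $n$, so the system is Mittag-Leffler. I would also note this identifies the eventual image: $\bigcap_n nB = B'$, so $B'$ is forced to be the maximal divisible subobject (any divisible subobject is contained in every $nB$).

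\textbf{($\Rightarrow$)} Suppose the system is Mittag-Leffler. Let $B' = \bigcap_{n\ge 1} nB$; Mittag-Leffler gives that this intersection is attained, i.e.\ $B' = N_0 B$ for some $N_0$, and moreover $B' = nB'$: indeed applying stabilization at each index shows $m\cdot(mB) $ is eventually $B'$, and multiplication by $m$ maps the eventual stage for index $m$ onto $B'$ inside $mB$, forcing $mB' = B'$ for all $m$ (here one uses that the stable image at index $m$, call it $I_m\subseteq mB$, satisfies $m I_m' = $ image of $I_m'$ under the transition $=B'$ and $I_m'$ itself stabilizes to $B'$, so $mB' = B'$). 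Thus $B'$ is divisible. Finally, $B' = N_0 B \supseteq N_0 B$ shows $N_0 \cdot B \subseteq B'$, so $N_0\cdot(B/B') = 0$, i.e.\ $B/B'$ is killed by $N_0$. This completes the equivalence, and again $B'$ is visibly the maximal divisible subobject.

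\textbf{Main obstacle.} The routine part is the image-chasing; the one genuine subtlety — the step to get right carefully — is proving $B'$ is \emph{divisible} in the forward direction, since in a general abelian category one cannot manipulate elements and must argue with the stabilized images $I_n := \bigcap_m m\cdot(mnB)\subseteq nB$ and the surjections $I_{mn}\twoheadrightarrow I_n$ induced by transition maps. The clean formulation is: Mittag-Leffler $\Rightarrow$ for each $n$ the transition $\mathrm{im}(B\xrightarrow{mn}B)\to \mathrm{im}(B\xrightarrow{n}B)$ has image independent of large $m$, call it $B'_n$; the compatible surjections $B'_{mn}\twoheadrightarrow B'_n$ together with $B'_{mn}=B'_n$ (comparing indices $n$ and $mn$, both eventual) give $m B'_n = B'_n$, and $B'_n = B'$ for all $n$ since $B'_n \subseteq B' = \bigcap_k kB$ and conversely $B'\subseteq B'_n$. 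I expect this to be the only place where care beyond bookkeeping is needed.
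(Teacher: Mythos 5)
Your argument is substantively correct, and since the paper gives no proof beyond ``Elementary,'' this is the natural way to fill it in: unwind the Mittag--Leffler condition for the specific system and compare with the definition of divisibility. Both implications are established, and the identification of $B'$ as the maximal divisible subobject is right. However, the write-up contains a notational error that obscures the (simple) structure of the argument. You write the chain of images at index $n$ as $\{m \cdot (mnB)\}_m = \{mnB\}_m$; the asserted equality is false in general, since $m \cdot (mnB) = m^2nB$. More to the point, the image of the transition $B_{mn} \xrightarrow{\times m} B_n$ applied to the full source object $B$ is just $mB$, so the chain whose stabilization is the ML condition at index $n$ is $\{mB\}_m$ --- the same chain for every $n$. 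Once this is stated cleanly the whole proposition collapses to: ML $\iff$ $\{mB\}_m$ stabilizes. Your $(\Leftarrow)$ argument then works as written ($dB \subseteq B'$ from $B/B'$ being $d$-torsion, $B' = dB' \subseteq dB$ from divisibility, so $B' = dB$, and $mB = B'$ for $d \mid m$). For $(\Rightarrow)$, the detour through the stable images $I_m$ and the surjectivity of transition maps onto stable images is unnecessary: once you have $B' = N_0 B$ with $mB = N_0 B$ for all multiples $m$ of $N_0$, divisibility of $B'$ is the one-line computation $mB' = m(N_0 B) = (mN_0)B = N_0 B = B'$, using only stabilization and functoriality of multiplication. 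So what you flag as the ``main obstacle'' is not an obstacle at all; the whole thing is genuinely elementary once the images are written correctly.
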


\begin{proof}
	Elementary.
\end{proof}

\begin{Prop}
	Let $A \in \Ind^{f} \mathcal{A}$.
	Then $A$ is divisibly ML if and only if
	$A_{/ \divis} \in \mathcal{A}$ if and only if
	$A_{p}$ is divisible for almost all $p$.
\end{Prop}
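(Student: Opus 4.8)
The plan is to reduce everything, as in the preceding propositions, to a statement about the $p$-primary parts $A_p \in \Ind^f \mathcal{A}_p$, and then to match up the three conditions one pair at a time. Recall that $A = \bigoplus_p A_p$ canonically, with $A_p = 0$ for almost all $p$ being false in general — that is exactly the point of the example $\bigoplus_p \Z/p\Z$ — so the three conditions differ precisely because of the behavior across infinitely many primes.

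First I would handle the implication ``$A$ divisibly ML $\iff$ $A_p$ divisible for almost all $p$''. By Prop.\ \ref{prop: divisibly ML objects}, $A$ is divisibly ML if and only if $A$ has a divisible subobject $A'$ with $A/A'$ killed by some positive integer $N$. Given such $A'$, the maximality statement in Prop.\ \ref{prop: divisibly ML objects} forces $A' = A_{\divis} = \bigoplus_p (A_p)_{\divis}$, and then $A/A_{\divis} = A_{/\divis} = \bigoplus_p (A_p)_{/\divis}$ is killed by $N$; since $(A_p)_{/\divis}$ is $p$-primary, it must vanish for every prime $p \nmid N$, i.e.\ $A_p$ is divisible for all but finitely many $p$. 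Conversely, if $A_p$ is divisible for $p \notin S$ with $S$ finite, then $A_{/\divis} = \bigoplus_{p \in S}(A_p)_{/\divis}$ is a finite direct sum of objects each (by Prop.\ \ref{prop: divisible subgroup}, applied to $A_p$) lying in $\mathcal{A}_p \subset \mathcal{A}$, hence killed by a single integer, so $A$ is divisibly ML by Prop.\ \ref{prop: divisibly ML objects}.

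This same bookkeeping immediately gives ``$A_{/\divis} \in \mathcal{A}$ $\iff$ $A_p$ divisible for almost all $p$'': if $A_p$ is divisible for $p \notin S$ with $S$ finite, then $A_{/\divis} = \bigoplus_{p \in S}(A_p)_{/\divis}$ is a finite sum of objects of $\mathcal{A}$, hence in $\mathcal{A}$; conversely, if $A_{/\divis} \in \mathcal{A}$, then $A_{/\divis}$ is killed by some $N$, so $(A_p)_{/\divis} = 0$ for $p \nmid N$, i.e.\ $A_p$ is divisible. Combining the two equivalences closes the triangle. I do not expect any genuine obstacle here; the only thing to be careful about is invoking the maximality clause of Prop.\ \ref{prop: divisibly ML objects} to identify the divisible subobject witnessing the ML condition with $A_{\divis}$, so that one may pass between ``some divisible subobject with finitely-killed quotient'' and the canonical decomposition $\bigoplus_p(A_p)_{\divis}$, and the fact (from Prop.\ \ref{prop: divisible subgroup}) that $(A_p)_{/\divis}$ is honestly an object of $\mathcal{A}_p$, not merely of $\Ind \mathcal{A}_p$.
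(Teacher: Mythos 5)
Your proof is correct and takes essentially the same approach as the paper: the paper's entire argument is the observation that $A_{/\divis} = \bigoplus_p (A_p)_{/\divis}$, from which it says ``the result follows''; you have simply spelled out the routine consequences of that identity.
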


\begin{proof}
	We have
		\[
				A_{/ \divis}
			=
					\bigoplus_{p} A_{p}
				\Big/
					\bigoplus_{p} (A_{p})_{\divis}
			=
				\bigoplus_{p} (A_{p})_{/ \divis}.
		\]
	The result follows from this.
\end{proof}

Note that divisibly ML objects in $\Ind^{f} \mathcal{A}$ do not form an abelian subcategory.
For instance, the direct sum of the exact sequences
$0 \to \Z / p \Z \to \Q_{p} / \Z_{p} \to \Q_{p} / \Z_{p} \to 0$
for primes $p$ gives an exact sequence
$0 \to \bigoplus_{p} \Z / p \Z \to \Q / \Z \to \Q / \Z \to 0$,
where the last two terms are divisible but the first term is not divisibly ML.


\subsection{Ind-algebraic groups of cofinite type}
\label{sec: Ind-algebraic groups of cofinite type}
Let $k$ ($\in \mathcal{U}_{0}$) be a perfect field of characteristic $p > 0$.
We quickly recall some notation about perfections of algebraic groups from \cite[\S 2.1]{Suz14}.
Let $\Alg / k$ be the category of quasi-algebraic groups over $k$
(commutative, as assumed throughout the paper)
in the sense of Serre \cite{Ser60}.
Recall that a quasi-algebraic group is the perfection (inverse limit along Frobenii)
of a (not necessarily connected) algebraic group \cite[\S 1.2, D\'ef.\ 2; \S 1.4, Prop.\ 10]{Ser60}.
For example, the perfection of the additive group $\Spec k[x]$
is $\Spec k[x, x^{1 / p}, x^{1 / p^{2}}, \dots]$,
which we simply call the additive group and denote by $\Ga$ by abuse of notation.
We say that a quasi-algebraic group is a unipotent group,
a torus or a (semi-)abelian variety if it is the perfection of such a group.
We call an object of the pro-category $\Pro \Alg / k$ a pro-algebraic group,
an object of the ind-category $\Ind \Alg / k$ an ind-algebraic group
and an object of the ind-pro-category $\Ind \Pro \Alg / k = \Ind (\Pro \Alg / k)$
an ind-pro-algebraic group.
Let $\Alg_{\uc} / k$ be the full subcategory of $\Alg / k$
consisting of groups whose identity component is unipotent.
Let $\Loc \Alg / k$ be the category of perfections of smooth group schemes over $k$.
A finitely generated \'etale group is an \'etale group
with a finitely generated group of geometric points.
A lattice is a finitely generated \'etale group with no torsion.
Let $\FEt / k \subset \FGEt / k \subset \Et / k$ be
the categories of finite \'etale groups,
finitely generated \'etale groups,
\'etale groups, respectively, over $k$.
For any prime $l$ (possibly equal to $p$),
we denote the full subcategory of $\FEt / k$ of groups of $l$-power order by $\FEt_{l} / k$.
For $A \in \Loc \Alg / k$, we denote its identity component by $A_{0} \in \Alg / k$
and set $\pi_{0}(A) = A / A_{0} \in \Et / k$
(\cite[II, \S 5, Prop.\ 1.8]{DG70b} plus perfection,
noting that perfection does not change the underlying topological space).
The endofunctor $A \mapsto A_{0}$ on $\Alg / k$
extends to endofunctors on $\Pro \Alg / k$, $\Ind \Alg / k$ and $\Ind \Pro \Alg / k$,
still denoted by $A \mapsto A_{0}$.
We say that $A \in \Ind \Pro \Alg / k$ is connected if $A = A_{0}$.
For $A \in \Ind \Pro \Alg / k$, we define $\pi_{0}(A) = A / A_{0} \in \Ind \Pro \FEt / k$.

The category $\Alg_{\uc} / k$ is an artinian abelian category
such that any object is killed by multiplication by some positive integer.
Hence we can apply the results and notation in the previous subsection to $\mathcal{A} = \Alg_{\uc} / k$.

\begin{Prop}
	For any $A \in \Ind^{f} \Alg_{\uc} / k$,
	we have $A_{0}, \pi_{0}(A) \in \Ind^{f} \Alg_{\uc} / k$.
\end{Prop}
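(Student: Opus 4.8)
The plan is to show that for $A \in \Ind^{f} \Alg_{\uc} / k$, both the identity component $A_{0}$ and the component group $\pi_{0}(A)$ are again of cofinite type in $\Ind \Alg_{\uc} / k$. Since $A$ is already in $\Ind^{f} \Alg_{\uc} / k$, we know $A[n] \in \Alg_{\uc} / k$ for every $n \ge 1$; the task is to extract the corresponding finiteness for $A_{0}$ and $\pi_{0}(A)$. The natural route is to write $A = \dirlim A[n]$ and use that the endofunctors $B \mapsto B_{0}$ and $B \mapsto \pi_{0}(B)$ on $\Ind \Pro \Alg / k$ (recalled in the preceding paragraphs) commute with filtered direct limits, so that $A_{0} = \dirlim A[n]_{0}$ and $\pi_{0}(A) = \dirlim \pi_{0}(A[n])$, exhibiting each as an ind-object of $\Alg_{\uc} / k$.

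First I would fix $n \ge 1$ and analyze the torsion subobject $(A_{0})[n]$. The inclusion $A_{0} \into A$ gives $(A_{0})[n] = A_{0} \cap A[n]$, which is a subobject of $A[n] \in \Alg_{\uc} / k$; since $\Alg_{\uc} / k$ is an abelian category closed under subobjects inside $\Ind \Alg_{\uc} / k$ (because any subobject of an object of an artinian abelian category embeddable therein lies in it — this is exactly the mechanism used in the proof that $\Ind^{f}\mathcal{A}$ is abelian), we get $(A_{0})[n] \in \Alg_{\uc} / k$, hence $A_{0} \in \Ind^{f} \Alg_{\uc} / k$. Next, for $\pi_{0}(A)$, apply the functor $\pi_{0}$ to the exact sequence $0 \to A_{0} \to A \to \pi_{0}(A) \to 0$; combined with the snake lemma applied to multiplication by $n$, the torsion $\pi_{0}(A)[n]$ sits in an exact sequence with $A[n]/A_{0}[n]$ (a subquotient of $A[n] \in \Alg_{\uc}/k$) on one side and a subobject of $A_{0}/nA_{0}$ on the other. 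By Proposition~\ref{prop: cofinite type mod n is finite type} applied to $A_{0} \in \Ind^{f}\Alg_{\uc}/k$, we have $A_{0}/nA_{0} \in \Alg_{\uc}/k$, so $\pi_{0}(A)[n]$ is an extension of two objects of $\Alg_{\uc}/k$, hence lies in $\Alg_{\uc}/k$; therefore $\pi_{0}(A) \in \Ind^{f} \Alg_{\uc} / k$.

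The main obstacle I anticipate is bookkeeping the exact sequences correctly: one must be careful that $\pi_{0}$ and $(\var)_{0}$ are not exact in general (they only preserve surjections, not injections, since a subgroup of a connected group need not be connected), so the clean display $0 \to A_{0} \to A \to \pi_{0}(A) \to 0$ does give short exactness, but applying $- \otimes^{L} \Z/n\Z$ or the snake lemma requires honest control of the connecting maps. The cleanest packaging is probably to first establish $A_{0} \in \Ind^{f}\Alg_{\uc}/k$ directly from $(A_{0})[n] = A_{0} \cap A[n] \in \Alg_{\uc}/k$ as above, then deduce $\pi_{0}(A) \in \Ind^{f}\Alg_{\uc}/k$ from the already-proven fact that $\Ind^{f}\Alg_{\uc}/k$ is an abelian subcategory of $\Ind\Alg_{\uc}/k$, closed under cokernels: since $A_{0}$ and $A$ are both in $\Ind^{f}\Alg_{\uc}/k$, so is $\pi_{0}(A) = \Coker(A_{0} \to A)$. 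This reduces the whole proposition to the single observation $(A_{0})[n] = A_{0} \cap A[n]$, together with invoking the abelian-subcategory property proven earlier, so the proof should be short.
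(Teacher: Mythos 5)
Your argument for $A_{0} \in \Ind^{f} \Alg_{\uc} / k$ has a genuine gap. You assert that $\Alg_{\uc} / k$ is closed under subobjects inside $\Ind \Alg_{\uc} / k$, i.e.\ that any subobject in $\Ind \Alg_{\uc} / k$ of an object of $\Alg_{\uc} / k$ again lies in $\Alg_{\uc} / k$. That is false, and the paper flags exactly this pitfall when introducing $\Ind \mathcal{A}$: the ind-finite-\'etale subgroup $\closure{\F_{p}}$ of $\Ga$ is not quasi-algebraic, so objects of $\mathcal{A}$ are not even artinian in $\Ind \mathcal{A}$. What the preceding propositions actually supply is narrower: kernels of morphisms \emph{between objects of} $\mathcal{A}$ lie in $\mathcal{A}$, and an object \emph{already known to be of cofinite type} that embeds into an object of $\mathcal{A}$ has trivial divisible part and therefore lies in $\mathcal{A}$. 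Neither applies directly to $(A_{0})[n] = A_{0} \cap A[n]$: since $(A_{0})[n]$ is $n$-torsion, knowing it is of cofinite type is the same as knowing it is in $\Alg_{\uc} / k$, which is exactly what is in question, so invoking either mechanism is circular.

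The fix, and the paper's actual argument, is to use the particular description of this subobject: $(A_{0})[p^{n}]$ is the kernel of $A[p^{n}] \to \pi_{0}(A)[p^{n}]$. Since $\pi_{0}(A)$ is \'etale, so is $\pi_{0}(A)[p^{n}]$; the image of $A[p^{n}]$ in it is simultaneously a quasi-algebraic quotient of $A[p^{n}]$ and a subgroup of an \'etale group, hence finite \'etale. Thus $(A_{0})[p^{n}]$ is the kernel of a morphism from $A[p^{n}]$ to a finite \'etale group, both in $\Alg_{\uc} / k$, so it lies in $\Alg_{\uc} / k$ because the embedding of $\Alg_{\uc} / k$ into $\Ind \Alg_{\uc} / k$ is exact. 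This finite-image step is the content your proposal is missing.

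Granting $A_{0} \in \Ind^{f} \Alg_{\uc} / k$, your closing observation for $\pi_{0}(A)$ is correct and is a genuine shortcut over the paper: $\pi_{0}(A) = \Coker(A_{0} \to A)$ is the cokernel of a morphism between objects of $\Ind^{f} \Alg_{\uc} / k$, hence lies in $\Ind^{f} \Alg_{\uc} / k$ because the latter was already shown to be an abelian subcategory of $\Ind \Alg_{\uc} / k$. The paper instead re-derives this by chasing the six-term $[p^{n}]$-torsion/cotorsion exact sequence together with the ``cofinite type mod $n$ is finite type'' proposition, which is equivalent but more laborious.
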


\begin{proof}
	We may assume that $A$ is unipotent.
	Consider the sequence
		\begin{align*}
					0
			&	\to
					A_{0}[p^{n}]
				\to
					A[p^{n}]
				\to
					\pi_{0}(A)[p^{n}]
			\\
			&	\to
					A_{0} / p^{n} A_{0}
				\to
					A / p^{n} A
				\to
					\pi_{0}(A) / p^{n} \pi_{0}(A)
				\to
					0
		\end{align*}
	for any $n \ge 0$.
	Since $A[p^{n}]$ is quasi-algebraic and $\pi_{0}(A)[p^{n}]$ is \'etale,
	the image of $A[p^{n}] \to \pi_{0}(A)[p^{n}]$ is finite
	and hence $A_{0}[p^{n}]$ is quasi-algebraic.
	Therefore $A_{0} \in \Ind^{f} \Alg_{\uc} / k$
	and so $A_{0} / p^{n} A_{0}$ is quasi-algebraic
	by Prop.\ \ref{prop: cofinite type mod n is finite type}.
	This implies that $\pi_{0}(A)[p^{n}]$ is quasi-algebraic (i.e.\ finite \'etale)
	and $\pi_{0}(A) \in \Ind^{f} \Alg_{\uc} / k$.
\end{proof}

We define $\Loc^{f} \Alg_{\uc} / k$ to be the full subcategory of $\Ind^{f} \Alg_{\uc} / k$
of objects whose identity component is quasi-algebraic (i.e.\ belongs to $\Alg_{\uc} / k$).
Equivalently, an object $A \in \Loc^{f} \Alg_{\uc} / k$ is
an extension of a torsion \'etale group of cofinite type ($= \pi_{0}(A)$)
by a connected unipotent quasi-algebraic group ($= A_{0}$).
Such objects can naturally be viewed as objects of $\Loc \Alg / k$;
see \cite[\S 2.1, Footnote 3]{Suz13}.

\begin{Prop}
	Let $A \in \Ind^{f} \Alg_{\uc} / k$.
	Then $(A_{0})_{\divis} = (A_{\divis})_{0}$.
	We denote these isomorphic objects by $A_{0 \divis}$.
\end{Prop}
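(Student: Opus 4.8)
The plan is to prove that the operations of taking the maximal divisible subobject and taking the identity component commute on $A \in \Ind^{f} \Alg_{\uc} / k$. First I would recall that, by the preceding proposition, $A_{0}, \pi_{0}(A) \in \Ind^{f} \Alg_{\uc} / k$, so both sides of the claimed equality make sense, and $(A_{0})_{\divis}$, $(A_{\divis})_{0}$ are divisible subobjects of $A_{0}$ and of $A_{\divis}$ respectively. Since a divisible subobject of a divisible object is again divisible and is contained in it, and since $A_{0} \into A$ and $A_{\divis} \into A$, both $(A_{0})_{\divis}$ and $(A_{\divis})_{0}$ are divisible connected subobjects of $A$; in fact each is the largest such, so they must coincide. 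Let me make this precise in both directions.

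For the inclusion $(A_{\divis})_{0} \subseteq (A_{0})_{\divis}$: the object $(A_{\divis})_{0}$ is connected, hence a subobject of $A_{0}$, and it is divisible because the identity-component functor is exact on $\Ind \Pro \Alg / k$ (so multiplication by $n$ on $(A_{\divis})_{0}$ is surjective since it is on $A_{\divis}$). By Prop.\ \ref{prop: divisible subgroup} (applied via the $p$-primary decomposition), $(A_{0})_{\divis}$ is the largest divisible subobject of $A_{0}$, so $(A_{\divis})_{0} \subseteq (A_{0})_{\divis}$. For the reverse inclusion $(A_{0})_{\divis} \subseteq (A_{\divis})_{0}$: the object $(A_{0})_{\divis}$ is a divisible subobject of $A$, hence contained in the largest divisible subobject $A_{\divis}$; since $(A_{0})_{\divis}$ is also connected (being a subobject of $A_{0}$), it lands in $(A_{\divis})_{0}$. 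Combining the two inclusions gives the equality.

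The only point requiring care is the exactness (or at least the preservation of divisibility and of the subobject relation) of the functor $A \mapsto A_{0}$ on $\Ind^{f} \Alg_{\uc} / k$, which I would invoke from the discussion preceding the excerpt's Prop.\ \ref{prop: cofinite type mod n is finite type} (the identity-component endofunctor on $\Alg / k$ extends to the ind-pro categories, and it is exact because $\pi_{0}$ is exact on $\Et$-valued parts and short exact sequences of quasi-algebraic groups restrict to identity components). Granting that, the argument is a formal chase of universal properties. I would also note the alternative route: pass to $p$-primary parts and use $A_{\divis} = p^{n} A$ for $n$ large (Prop.\ \ref{prop: divisible subgroup}), whence $(A_{\divis})_{0} = (p^{n} A)_{0} = p^{n} (A_{0}) = (A_{0})_{\divis}$ using that $p^{n} (A_{0}) = (p^{n} A)_{0}$, which is immediate from exactness of the identity-component functor applied to $0 \to A[p^{n}] \to A \to p^{n} A \to 0$. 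This second approach is probably the cleanest to write down.

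The main obstacle, such as it is, is purely bookkeeping: confirming that $A \mapsto A_{0}$ is exact on the relevant ind-pro category and that it commutes with multiplication-by-$n$ maps in the way needed; once that is in hand the statement is essentially formal, which is presumably why the paper may well dispatch it with a one-line proof. I expect no genuine difficulty, only the need to cite the right structural facts about $\pi_{0}$ and identity components in $\Ind \Pro \Alg / k$.
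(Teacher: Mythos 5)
Your first approach has a genuine gap in the inclusion $(A_{0})_{\divis} \subseteq (A_{\divis})_{0}$: you assert that $(A_{0})_{\divis}$ is connected ``being a subobject of $A_{0}$,'' but subobjects of connected quasi-algebraic groups need not be connected --- the finite subgroup $\Z/p\Z = \Ga[F-1] \subset \Ga$ is the standard counterexample. The correct reason $(A_{0})_{\divis}$ is connected, and the one the paper uses, is that $(A_{0})_{\divis} = p^{n} A_{0}$ for $n$ large (since $A_{0} \in \Ind^{f}\Alg_{\uc}/k$ by the preceding proposition), so it is the image of the connected group $A_{0}$ under multiplication by $p^{n}$.

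There is also a smaller imprecision running through both of your arguments: the identity-component functor is right exact but \emph{not} exact on $\Alg/k$, hence not on $\Ind\Pro\Alg/k$ either. The Artin--Schreier sequence $0 \to \Z/p\Z \to \Ga \overset{F-1}{\to} \Ga \to 0$ kills left-exactness, since applying $(\var)_{0}$ gives $0 \to 0 \to \Ga \overset{F-1}{\to} \Ga \to 0$, which is not exact in the middle. Fortunately, right-exactness --- preservation of surjections --- is all you actually invoke: it gives both ``multiplication by $n$ is surjective on $(A_{\divis})_{0}$'' and ``$p^{n}(A_{0}) = (p^{n}A)_{0}$,'' so the conclusions you draw are correct even though the stated justification overclaims.

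Your second ``alternative route'' is sound once these points are repaired, and is genuinely different from (and arguably cleaner than) the paper's proof: you take $n$ large enough that simultaneously $A_{\divis} = p^{n}A$ and $(A_{0})_{\divis} = p^{n}A_{0}$ (possible since both $A$ and $A_{0}$ lie in $\Ind^{f}\Alg_{\uc}/k$), and then $(A_{\divis})_{0} = (p^{n}A)_{0} = p^{n}(A_{0}) = (A_{0})_{\divis}$ by right-exactness of $(\var)_{0}$. The paper instead proves the two maximality halves separately --- $(A_{0})_{\divis}$ connected via the same $p^{n}$-image argument, and $(A_{\divis})_{0}$ divisible via a snake-lemma computation on $0 \to (A_{\divis})_{0} \to A_{\divis} \to \pi_{0}(A_{\divis}) \to 0$ showing $(A_{\divis})_{0}/p(A_{\divis})_{0}$ is both \'etale and connected --- and then concludes by the universal properties. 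Your direct computation replaces the snake lemma by a one-line chain of equalities, which buys a shorter proof at no cost.
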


\begin{proof}
	We may assume that $A$ is unipotent.
	It is enough to show that $(A_{0})_{\divis}$ is connected and $(A_{\divis})_{0}$ is divisible.
	Take a power $p^{n}$ of $p$ that kills $(A_{0})_{/ \divis} \in \Alg_{\uc} / k$.
	Then multiplication by $p^{n}$ gives a surjection
	$A_{0} \onto (A_{0})_{\divis}$.
	Hence $(A_{0})_{\divis}$ is connected.
	On the other hand, the exact sequence
	$0 \to (A_{\divis})_{0} \to A_{\divis} \to \pi_{0}(A_{\divis}) \to 0$
	and the snake lemma gives a surjection
	$\pi_{0}(A_{\divis})[p] \onto (A_{\divis})_{0} / p ((A_{\divis})_{0})$.
	This implies that $(A_{\divis})_{0} / p ((A_{\divis})_{0})$ is \'etale and connected,
	hence zero.
	Thus $(A_{\divis})_{0}$ is divisible.
\end{proof}

An example of a connected divisible group in $\Ind^{f} \Alg_{\uc} / k$
is the direct limit $\dirlim_{n} W_{n}$ of (perfections of)
groups of $p$-typical Witt vectors of finite length.
We have $T \dirlim_{n} W_{n} = \invlim_{n} W_{n}$,
which is the group $W$ of $p$-typical Witt vectors of infinite length.

\begin{Prop} \label{prop: criteiron for algebraicity of identity component}
	Let $A \in \Ind^{f} \Alg_{\uc} / k$.
	Then the following are equivalent.
	\begin{enumerate}
		\item \label{item: A is in Loc f}
			$A \in \Loc^{f} \Alg_{\uc} / k$.
		\item \label{item: A has trivial div 0 part}
			$A_{0 \divis} = 0$.
		\item \label{item: A has ind pro etale p adic Tate module}
			$V_{p} A \in \Ind \Pro \FEt / k$.
			(Recall that $V_{p} A$ is defined in \S \ref{sec: Ind-objects of cofinite type}.)
		\item \label{item: A has constant p adic Tate module on alg cl fields}
			$V_{p} A(k')$ as a functor on algebraically closed fields $k'$ over $k$ is constant.
	\end{enumerate}
\end{Prop}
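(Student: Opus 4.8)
The statement compares four characterizations of when the identity component of a cofinite-type ind-unipotent group is quasi-algebraic. Since we may assume $A$ is unipotent, the whole argument reduces to the interplay between the connected divisible part $A_{0\divis}$ and the $p$-adic Tate module. The plan is to prove the cycle of implications $\eqref{item: A is in Loc f} \Rightarrow \eqref{item: A has trivial div 0 part} \Rightarrow \eqref{item: A has ind pro etale p adic Tate module} \Rightarrow \eqref{item: A has constant p adic Tate module on alg cl fields} \Rightarrow \eqref{item: A is in Loc f}$, using the exact sequence $0 \to TA \to VA \to A_{\divis} \to 0$ of Prop.\ \ref{prop: Tate module exact sequence} and the decomposition $A_{0\divis} = (A_0)_{\divis} = (A_{\divis})_0$ throughout.

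For $\eqref{item: A is in Loc f} \Rightarrow \eqref{item: A has trivial div 0 part}$: if $A_0$ is quasi-algebraic then $A_0 \in \Alg_{\uc}/k$ has no divisible subobject (no non-zero object of $\Alg_{\uc}/k$ is divisible, as recalled at the start of \S \ref{sec: Ind-objects of cofinite type}), so $(A_0)_{\divis} = 0$. For $\eqref{item: A has trivial div 0 part} \Rightarrow \eqref{item: A has ind pro etale p adic Tate module}$: since $A_{0\divis} = 0$, the divisible part $A_{\divis}$ is \'etale, hence $T A = T(A_{\divis})$ is a pro-(finite \'etale) object; as $V A$ is the direct limit over multiplication by $p^m$ of copies of $TA$, it lies in $\Ind\Pro\FEt/k$. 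The implication $\eqref{item: A has ind pro etale p adic Tate module} \Rightarrow \eqref{item: A has constant p adic Tate module on alg cl fields}$ is the observation that a pro-finite-\'etale group, evaluated on algebraically closed fields, gives a constant functor (a finite \'etale $k$-group has all its geometric points already over $\closure{k}$, and this persists under pro-limits and the $V$-direct-limit).

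The substantive implication — and the expected main obstacle — is $\eqref{item: A has constant p adic Tate module on alg cl fields} \Rightarrow \eqref{item: A is in Loc f}$. Here I would argue by contraposition: suppose $A \notin \Loc^f\Alg_{\uc}/k$, so $A_{0\divis} \ne 0$ is a non-zero connected divisible unipotent quasi-algebraic group (it is quasi-algebraic because $(A_0)_{\divis}$ is a subobject of $A_0$ which, while possibly not itself algebraic... — actually one must check $A_{0\divis} \in \Alg_{\uc}/k$; this follows since it is divisible and connected, and after killing $(A_0)_{/\divis}$ by some $p^n$ the map $A_0 \onto A_{0\divis}$ exhibits it as a quotient, but $A_0$ need not be algebraic, so instead use that $A_{0\divis}$ is a connected divisible subgroup of the cofinite-type object $A_0$, forcing it into $\Alg_{\uc}/k$ by the artinian structure). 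A non-zero connected divisible unipotent quasi-algebraic group over a perfect field has positive dimension, hence its Tate module $T(A_{0\divis})$ is a non-zero pro-algebraic group of positive dimension — concretely, it receives a surjection from something like $W$ (the Witt vectors), via an isogeny argument reducing $A_{0\divis}$ to a product of $\dirlim_n W_n$'s up to finite pieces. A positive-dimensional pro-unipotent group cannot have constant points on algebraically closed fields: already $\Ga(k') = k'$ is non-constant. Since $T(A_{0\divis})$ is a direct summand (up to isogeny) of $TA$ contributing to $V_p A$, the functor $V_p A(k')$ is non-constant, contradicting \eqref{item: A has constant p adic Tate module on alg cl fields}.

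The delicate points to pin down are: (i) that $A_{0\divis}$, when non-zero, really is a positive-dimensional quasi-algebraic group and not some exotic pro-algebraic object — this uses that $A \in \Ind^f\Alg_{\uc}/k$ so $A_0$ is built from quasi-algebraic pieces and its divisible part stabilizes as $p^n A_0$; (ii) the structure theory identifying a connected divisible unipotent quasi-algebraic group, up to isogeny, with a finite product of copies of $\dirlim_n W_n$, so that its Tate module is (up to isogeny) a power of $W$; and (iii) tracking that $T$ and $V$ are exact enough (Prop.\ before Prop.\ \ref{prop: Tate module exact sequence}) that the non-constancy coming from the $A_{0\divis}$-summand is not cancelled by the rest. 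I expect (ii) to be where the real work lies, though it may be citable from the Witt-vector structure theory of unipotent groups in \cite{Ser60} or \cite{DG70b}; everything else is formal manipulation of the exact sequences already established in \S \ref{sec: Ind-objects of cofinite type}.
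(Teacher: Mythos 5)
Your argument for $\eqref{item: A is in Loc f}\Rightarrow\eqref{item: A has trivial div 0 part}\Rightarrow\eqref{item: A has ind pro etale p adic Tate module}\Rightarrow\eqref{item: A has constant p adic Tate module on alg cl fields}$ is correct and essentially matches the paper (the paper proves $\eqref{item: A has trivial div 0 part}\Leftrightarrow\eqref{item: A has ind pro etale p adic Tate module}$ directly via the surjection $(V_p A)_0 = (V_p(A_{\divis}))_0 \onto A_{0\divis}$, but this is the same ingredient). The substantive divergence is in $\eqref{item: A has constant p adic Tate module on alg cl fields}\Rightarrow\eqref{item: A is in Loc f}$, where your contraposition via Witt-vector structure theory differs from the paper's route, and this is where there are genuine gaps.

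First, the claim that $A_{0\divis}\in\Alg_{\uc}/k$ is simply false, and the ``artinian structure'' does not save it: the paper's own running example $\dirlim_n W_n$ is connected, divisible, of cofinite type, and satisfies $A_{0\divis} = A$, but is not quasi-algebraic. $\Ind^f\Alg_{\uc}/k$ being artinian means decreasing chains stabilize; it does not force divisible connected subobjects into $\Alg_{\uc}/k$ — quite the opposite, the whole point of Prop.\ \ref{prop: divisible subgroup} is that divisible objects live strictly in the ind-category. Second, the assertion that $T(A_{0\divis})$ is a direct summand (even up to isogeny) of $TA = T(A_{\divis})$ is not justified. From $0 \to A_{0\divis} \to A_{\divis} \to \pi_0(A_{\divis}) \to 0$ and exactness of $V$, you get $V(A_{0\divis})$ as a subobject of $VA$ with ind-pro-finite-\'etale quotient, not a summand; and non-constancy of a subsheaf on algebraically closed points does not automatically propagate to the ambient sheaf, so the contradiction doesn't close as stated.

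The paper avoids both issues by running the argument in the opposite direction. It uses the surjection $V_p A \onto A_{\divis}$ from Prop.\ \ref{prop: Tate module exact sequence}: after reducing to $k = \closure{k}$ and $A$ divisible, constancy of $V_p A(k')$ on perfect $k'$ descends along the surjection to constancy of $A(k')$, hence of $A_0(k')$. Then one reduces to $A$ connected and writes $A = \dirlim_n A_n$ with $A_n = (A[p^n])_0 \in \Alg_{\uc}/k$; the generic point of $A_n$ gives a section of $A$ over a perfect field, which by constancy is a $k$-point, hence lands in some $A_m$ as a $k$-point, forcing $A_n \into A_m$ to be generically constant and thus $A_n = 0$. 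This entirely sidesteps the structure theory of divisible unipotent groups that your argument relies on. Your approach could perhaps be repaired — the intuition that $A_{0\divis}\ne 0$ forces a positive-dimensional $W$-like piece in the Tate module is sound — but as written, the repair would require both a correct treatment of $A_{0\divis}$ as an ind-algebraic (not quasi-algebraic) group and an argument that non-constancy survives passing from the subobject $V(A_{0\divis})$ to $VA$, which is not straightforward and is more work than the paper's direct route.
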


\begin{proof}
	We may assume that $A$ is unipotent.
	The equivalence between \eqref{item: A is in Loc f} and \eqref{item: A has trivial div 0 part}
	follows from the previous proposition.
	We know that $V_{p} A = V_{p} (A_{\divis})$ surjects onto $A_{\divis}$.
	Hence $(V_{p} A)_{0} = (V_{p}(A_{\divis}))_{0}$ surjects onto $A_{0 \divis}$.
	This shows that \eqref{item: A has ind pro etale p adic Tate module} is equivalent to
	\eqref{item: A has trivial div 0 part}.
	Obviously \eqref{item: A has ind pro etale p adic Tate module} implies
	\eqref{item: A has constant p adic Tate module on alg cl fields}.
	
	We show that \eqref{item: A has constant p adic Tate module on alg cl fields} implies
	\eqref{item: A has trivial div 0 part}.
	We may assume that $k = \closure{k}$ and $A$ is divisible.
	Then the assumption implies that
	$V_{p} A(k')$ as a functor on arbitrary perfect fields $k'$ over $k$ is constant.
	The surjection $V_{p} A \onto A$ implies that
	$A(k')$ as a functor on perfect fields $k'$ over $k$ is constant.
	This implies that $A_{0}(k') $ as a functor on perfect fields $k'$ over $k$ is constant.
	Hence we may further assume that $A$ is connected.
	We then want to show that $A = 0$
	if $A(k')$ as a functor on perfect fields $k'$ over $k$ is constant.
	Let $A_{n} = (A[p^{n}])_{0} \in \Alg_{\uc} / k$.
	Then $A = \dirlim_{n} A_{n}$.
	It follows that the generic point of $A_{n}$ for any $n$ maps to a $k$-value point of $A$
	and hence to a $k$-valued point of $A_{m}$ for some $m \ge n$.
	This means that the injective morphism $A_{n} \into A_{m}$ of quasi-algebraic groups is generically constant.
	Therefore $A_{n} = 0$ for any $n$ and $A = 0$.
\end{proof}

\begin{Prop}
	The category $\Loc^{f} \Alg_{\uc} / k$ is an abelian subcategory of $\Ind^{f} \Alg_{\uc} / k$.
\end{Prop}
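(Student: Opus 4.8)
The goal is to show that $\Loc^{f}\Alg_{\uc}/k$ is closed under kernels, cokernels, and extensions inside the abelian category $\Ind^{f}\Alg_{\uc}/k$ (it is already closed under finite direct sums, being defined by a condition on identity components). The cleanest route is to use the criterion of Prop.\ \ref{prop: criteiron for algebraicity of identity component}: an object $A\in\Ind^{f}\Alg_{\uc}/k$ lies in $\Loc^{f}\Alg_{\uc}/k$ if and only if $A_{0\divis}=0$, equivalently $V_{p}A\in\Ind\Pro\FEt/k$. Since the functor $A\mapsto V_{p}A = V(A_{p})$ from $\Ind^{f}\Alg_{\uc}/k$ to $\Ind\Pro\Alg_{\uc}/k$ is exact (by the exactness of $A\mapsto VA$ proved in the previous subsection, applied to the $p$-primary part), and since $\Ind\Pro\FEt/k$ is an abelian subcategory of $\Ind\Pro\Alg_{\uc}/k$, the stability of the condition ``$V_{p}A\in\Ind\Pro\FEt/k$'' under kernels, cokernels, and extensions is immediate.

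The plan is therefore as follows. First I would recall that $\Ind^{f}\Alg_{\uc}/k$ is an abelian subcategory of $\Ind\Alg_{\uc}/k$ (already established), so kernels and cokernels of morphisms between objects of $\Loc^{f}\Alg_{\uc}/k$, computed in $\Ind\Alg_{\uc}/k$, already lie in $\Ind^{f}\Alg_{\uc}/k$; one only needs to check the extra condition on identity components. Second, I would invoke the exactness of $A\mapsto V_{p}A$: given a morphism $f\colon A\to B$ in $\Loc^{f}\Alg_{\uc}/k$, the sequence $0\to V_{p}(\Ker f)\to V_{p}A\to V_{p}(\Im f)\to 0$ and $0\to V_{p}(\Im f)\to V_{p}B\to V_{p}(\Coker f)\to 0$ are exact in $\Ind\Pro\Alg_{\uc}/k$, with $V_{p}A, V_{p}B\in\Ind\Pro\FEt/k$. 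Since $\Ind\Pro\FEt/k$ is closed under subobjects and quotients inside $\Ind\Pro\Alg_{\uc}/k$ (étale-ness is inherited by sub- and quotient objects, and $\Ind$, $\Pro$ preserve this), we conclude $V_{p}(\Ker f), V_{p}(\Coker f)\in\Ind\Pro\FEt/k$, hence $\Ker f,\Coker f\in\Loc^{f}\Alg_{\uc}/k$ by the criterion. Third, for an extension $0\to A'\to A\to A''\to 0$ with $A',A''\in\Loc^{f}\Alg_{\uc}/k$: by exactness $0\to V_{p}A'\to V_{p}A\to V_{p}A''\to 0$ is exact, and $\Ind\Pro\FEt/k$ is closed under extensions in $\Ind\Pro\Alg_{\uc}/k$, so $V_{p}A\in\Ind\Pro\FEt/k$ and thus $A\in\Loc^{f}\Alg_{\uc}/k$. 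Together with closure under finite sums, this shows $\Loc^{f}\Alg_{\uc}/k$ is an abelian subcategory.

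The only point requiring slight care—and the main obstacle, such as it is—is the claim that $\Ind\Pro\FEt/k$ is an abelian subcategory of $\Ind\Pro\Alg_{\uc}/k$, closed under sub/quotient objects and extensions. Closure under extensions is not automatic for $\Ind\Pro$ of an arbitrary abelian subcategory, but here $\FEt/k$ sits inside $\Alg_{\uc}/k$ as the full subcategory of objects that are \emph{finite}, equivalently whose identity component is trivial; passing to $\Pro$ one gets profinite étale groups, and to $\Ind$ one gets the objects of $\Ind\Pro\Alg_{\uc}/k$ with trivial identity component—a condition manifestly stable under kernels, cokernels, and extensions since $A\mapsto A_{0}$ is an exact additive endofunctor on $\Ind\Pro\Alg_{\uc}/k$ and $A_{0}=0$ is preserved under the relevant operations (using that $\Ga$ has no nontrivial extension contributing a connected part in a sequence of two étale objects). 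Alternatively, and perhaps more simply, one can avoid $\Ind\Pro\FEt/k$ entirely and argue directly with the condition $A_{0\divis}=0$: since $A\mapsto A_{0\divis}$ is the composite $A\mapsto (A_{\divis})_{0}=(A_{0})_{\divis}$, and the divisible-part functor together with the identity-component functor interact well with the exact sequences above, one checks directly that $A_{0\divis}=0$ is inherited by kernels, cokernels, and extensions. I would present the $V_{p}$-argument as the main line and remark that the direct argument via $A_{0\divis}$ is equally available.
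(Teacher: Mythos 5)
Your proposal is correct and takes the same route as the paper, which simply cites the equivalence of conditions \eqref{item: A is in Loc f} and \eqref{item: A has ind pro etale p adic Tate module} (or \eqref{item: A has constant p adic Tate module on alg cl fields}) from Prop.\ \ref{prop: criteiron for algebraicity of identity component} and leaves the routine bookkeeping implicit; you have spelled it out via exactness of $V_{p}$ and stability of the \'etale condition in $\Ind\Pro\Alg_{\uc}/k$.

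One sub-claim in your last paragraph should be corrected: the identity-component functor $A\mapsto A_{0}$ is \emph{not} exact on $\Alg_{\uc}/k$ (nor on $\Ind\Pro\Alg_{\uc}/k$). For instance, embed $\Z/p\Z\oplus\Ga$ into $\Ga\oplus\Ga$ by Artin--Schreier on the first factor and the identity on the second; passing to identity components fails exactness at the middle term of the resulting short exact sequence, since the kernel of $(\Ga\oplus\Ga)_{0}\to(\Ga)_{0}$ is all of $\Z/p\Z\oplus\Ga$, not its identity component. Fortunately you do not need exactness of $(\var)_{0}$: for closure of $\Ind\Pro\FEt/k$ under extensions, if $0\to A'\to G\to A''\to 0$ has $A'$ and $A''$ \'etale, then $G_{0}$ is connected, its image in $A''$ is a connected subobject of an \'etale object and hence zero, so $G_{0}\subset A'$, which again forces $G_{0}=0$; closure under sub- and quotient objects is immediate for the same reason. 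With that repair the argument is complete and matches the paper's intent.
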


\begin{proof}
	This follows from the equivalence between \eqref{item: A is in Loc f} and
	\eqref{item: A has ind pro etale p adic Tate module}
	(or \eqref{item: A has constant p adic Tate module on alg cl fields})
	of the previous proposition.
\end{proof}

\begin{Prop} \label{prop: connected divisible part}
	Let $A \in \Ind^{f} \Alg_{\uc} / k$
	and define $A_{0 \cap \divis} := A_{0} \cap A_{\divis}$.
	Then $A_{0 \cap \divis} / A_{0 \divis} \in \FEt_{p} / k$.
	We have exact sequences
		\begin{gather*}
				0 \to A_{0 \cap \divis} \to A_{0} \to (A_{/ \divis})_{0} \to 0,
			\\
				0 \to A_{0 \cap \divis} \to A_{\divis} \to (\pi_{0} A)_{\divis} \to 0.
		\end{gather*}
\end{Prop}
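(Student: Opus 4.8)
The plan is to reduce to the unipotent case and then read off everything from the two subobjects $A_{0}$ and $A_{\divis}$ of $A$ together with the identity-component/$\pi_{0}$ dichotomy. First I would reduce to $A$ unipotent: all objects and maps occurring in the statement are compatible with the primary decomposition $A = \bigoplus_{\ell} A_{\ell}$, and for $\ell \ne p$ the group $A_{\ell}$ is torsion étale (a connected unipotent quasi-algebraic group killed by a power of $\ell$ is, being also killed by a power of $p$, zero), so it contributes nothing to $A_{0}$, $A_{0 \cap \divis}$ or $A_{0 \divis}$, nothing to the first sequence, and only an identity map to the second. So we may assume $A$ is unipotent, hence killed by a power of $p$. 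For part (1), recall that $A_{0 \divis} = (A_{0})_{\divis} = (A_{\divis})_{0}$, so $A_{0 \divis}$ is a connected subobject of $A$ lying in both $A_{0}$ and $A_{\divis}$, hence in $A_{0 \cap \divis}$; since the identity component of a subobject lies in that of the ambient object, $(A_{0 \cap \divis})_{0} \subseteq (A_{\divis})_{0} = A_{0 \divis}$, and therefore $(A_{0 \cap \divis})_{0} = A_{0 \divis}$. Thus $A_{0 \cap \divis} / A_{0 \divis} = \pi_{0}(A_{0 \cap \divis})$ is étale; and as it equals $A_{0 \cap \divis} / (A_{0})_{\divis}$, it is a subobject of $(A_{0})_{/ \divis}$, which lies in $\Alg_{\uc} / k$ by Prop.\ \ref{prop: divisible subgroup}, so (having no divisible part) it is quasi-algebraic. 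An étale quasi-algebraic group killed by a power of $p$ is precisely an object of $\FEt_{p} / k$, which proves (1).

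For the first exact sequence I would consider the composite $A_{0} \into A \onto A_{/ \divis}$: its kernel is $A_{0} \cap A_{\divis} = A_{0 \cap \divis}$, and it remains to identify the image with $(A_{/ \divis})_{0}$. The image is a quotient of $A_{0}$, hence connected, so it sits inside $(A_{/ \divis})_{0}$; conversely the cokernel is $A / (A_{0} + A_{\divis})$, a quotient of $\pi_{0}(A)$ and so étale, whence the connected object $(A_{/ \divis})_{0}$ maps to zero in it and lies in the image. This yields $0 \to A_{0 \cap \divis} \to A_{0} \to (A_{/ \divis})_{0} \to 0$. For the second exact sequence I would pick $N$ large enough that the decreasing sequences $\{p^{n} A\}$ and $\{p^{n} \pi_{0}(A)\}$ both stabilize, so that $A_{\divis} = p^{N} A$ and $(\pi_{0} A)_{\divis} = p^{N} \pi_{0}(A)$; then the surjection $A \onto \pi_{0}(A)$ restricts to a surjection $A_{\divis} = p^{N} A \onto p^{N} \pi_{0}(A) = (\pi_{0} A)_{\divis}$ with kernel $A_{\divis} \cap A_{0} = A_{0 \cap \divis}$, giving $0 \to A_{0 \cap \divis} \to A_{\divis} \to (\pi_{0} A)_{\divis} \to 0$.

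The only step that is not pure bookkeeping is the surjectivity of $A_{0} \to (A_{/ \divis})_{0}$ in the first sequence; everything else rests on the standard facts that quotients of connected objects are connected, quotients of étale objects are étale, and there is no nonzero morphism from a connected object to an étale one. I do not expect any real obstacle here.
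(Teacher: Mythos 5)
Your proof is correct and follows essentially the same route as the paper. Both arguments hinge on the identity $A_{0\divis} = (A_{0})_{\divis} = (A_{\divis})_{0}$ from the preceding proposition and on the two natural embeddings of $A_{0\cap\divis}/A_{0\divis}$ into $(A_{0})_{/\divis}$ and into $\pi_{0}(A_{\divis})$; the paper concludes finiteness by bounding the $p$-power torsion via $(A_{0})_{/\divis}$ and then using that $\pi_{0}(A_{\divis})$ is torsion \'etale of cofinite type (so its $p^{n}$-torsion is finite), whereas you deduce \'etaleness from $\pi_{0}(A_{0\cap\divis})$ and quasi-algebraicity from the containment in $(A_{0})_{/\divis}$, then invoke ``\'etale $+$ quasi-algebraic $+$ $p$-power killed $\Rightarrow \FEt_{p}$''. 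These are small reorganizations of the same facts; do note that the ``subobject of a quasi-algebraic group with no divisible part is quasi-algebraic'' step implicitly uses that $A_{0\cap\divis}/A_{0\divis} \in \Ind^{f}\Alg_{\uc}/k$ (via $\Ind^{f}$ being an abelian subcategory), and the stabilization of $p^{n}\pi_{0}(A)$ in your second sequence uses $\pi_{0}(A)\in\Ind^{f}\Alg_{\uc}/k$ from the earlier proposition. The paper leaves both exactness claims as ``clear''; your details for them are correct.
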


\begin{proof}
	Since $A_{0} / A_{0 \divis} = (A_{0})_{/ \divis}$ is quasi-algebraic unipotent,
	it is killed by some power $p^{n}$ of $p$.
	Hence $A_{0 \cap \divis} / A_{0 \divis}$ is a subgroup of
	the $p^{n}$-torsion part of $A_{\divis} / A_{0 \divis} = \pi_{0}(A_{\divis})$.
	But $\pi_{0}(A_{\divis})$ is a torsion \'etale group of cofinite type.
	Hence its $p^{n}$-torsion part is finite.
	Therefore $A_{0 \cap \divis} / A_{0 \divis}$ is finite.
	The exactness of the sequences is clear.
\end{proof}

The group $A_{0 \cap \divis}$ in the proposition is in general non-zero
even if $A \in \Loc^{f} \Alg_{\uc} / k$.
It is $\Z / p \Z$ if $A$ is the cokernel of the diagonal embedding of
$\Z / p \Z$ into $\Ga \oplus (\Q_{p} / \Z_{p})$.
In particular, $A_{0 \cap \divis}$ can be neither connected nor divisible.
On the other hand, $A_{0 \divis}$ is always connected and divisible.

\begin{Prop}
	For any $A \in \Ind^{f} \Alg_{\uc} / k$,
	we have $\pi_{0}(A)_{/ \divis} = \pi_{0}(A_{/ \divis})$.
	Denote these isomorphic objects by $\pi_{0} A_{/ \divis}$.
	The kernel of $A \onto \pi_{0} A_{/ \divis}$ is
	$A_{0 + \divis} := A_{0} + A_{\divis}$.
\end{Prop}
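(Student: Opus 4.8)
The plan is to prove the two claimed equalities of functors $\pi_0(A)_{/\divis} = \pi_0(A_{/\divis})$ and $\Ker(A \onto \pi_0 A_{/\divis}) = A_{0+\divis}$ by chasing the standard exact sequences through the already-established structural results, in particular Prop.\ \ref{prop: connected divisible part} and the preceding propositions on $A_{0\divis}$, $A_{\divis}$ and $\pi_0$.

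First I would establish $\pi_0(A_{/\divis}) = \pi_0(A)_{/\divis}$. Start from the exact sequence $0 \to A_{\divis} \to A \to A_{/\divis} \to 0$ and apply the functor $A \mapsto \pi_0(A)$, which fits into the exact sequence $(A_{\divis})_0 \to A_0 \to (A_{/\divis})_0 \to 0$ (right exactness of the identity-component functor on $\Ind^f \Alg_{\uc}/k$) and hence a right-exact sequence $\pi_0(A_{\divis}) \to \pi_0(A) \to \pi_0(A_{/\divis}) \to 0$. So $\pi_0(A_{/\divis})$ is the cokernel of $\pi_0(A_{\divis}) \to \pi_0(A)$. By the second exact sequence in Prop.\ \ref{prop: connected divisible part}, the image of $A_{\divis} \to A \to \pi_0(A)$ is $(\pi_0 A)_{\divis}$ (the map $A_{\divis} \onto (\pi_0 A)_{\divis}$ there has kernel $A_{0\cap\divis} \subset A_0$). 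Hence $\pi_0(A_{/\divis}) = \pi_0(A)/(\pi_0 A)_{\divis} = \pi_0(A)_{/\divis}$, using that $(\pi_0 A)_{\divis}$ is the maximal divisible subobject (Prop.\ \ref{prop: divisible subgroup}) and that for the torsion étale group of cofinite type $\pi_0(A)$ the quotient by its divisible part is exactly $\pi_0(A)_{/\divis}$.

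Second I would identify the kernel of the surjection $A \onto \pi_0 A_{/\divis}$. This surjection is the composite $A \onto \pi_0(A) \onto \pi_0(A)_{/\divis}$, so its kernel is the preimage in $A$ of $(\pi_0 A)_{\divis}$ under $A \onto \pi_0(A)$. That preimage obviously contains $A_0$. It also contains $A_{\divis}$, since the image of $A_{\divis}$ in $\pi_0(A)$ is $(\pi_0 A)_{\divis}$ by Prop.\ \ref{prop: connected divisible part}. Hence the kernel contains $A_0 + A_{\divis} = A_{0+\divis}$. Conversely, if $x \in A$ maps into $(\pi_0 A)_{\divis}$, then since $A_{\divis} \onto (\pi_0 A)_{\divis}$ is surjective there is $y \in A_{\divis}$ with the same image, so $x - y \in \Ker(A \to \pi_0(A)) = A_0$, giving $x \in A_0 + A_{\divis}$. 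Thus the kernel is exactly $A_{0+\divis}$, and one notes in passing that $A_{0+\divis}$ is a subobject of $A$ (being the sum of two subobjects inside the abelian category $\Ind^f \Alg_{\uc}/k$).

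The only mildly delicate point — the main obstacle, such as it is — is the right-exactness of $A \mapsto \pi_0(A)$ and the identification of the image of $A_{\divis}$ in $\pi_0(A)$ with $(\pi_0 A)_{\divis}$: both are already packaged in Prop.\ \ref{prop: connected divisible part} (its second exact sequence and the finiteness of $A_{0\cap\divis}/A_{0\divis}$), so once that is invoked the argument is a direct diagram chase. I would therefore expect the author's proof to read simply ``Clear'' or to consist of the two short chases above.
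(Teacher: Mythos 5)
Your proof is correct and is essentially the diagram chase the author dismisses as ``Obvious'': once one knows from Prop.\ \ref{prop: connected divisible part} that the image of $A_{\divis}$ in $\pi_{0}(A)$ is exactly $(\pi_{0}A)_{\divis}$, both the identification $\pi_{0}(A)_{/\divis} = \pi_{0}(A_{/\divis})$ and the computation of the kernel as $A_{0} + A_{\divis}$ fall out by right-exactness of $\pi_{0}$. One small stylistic remark: you justify right-exactness of $\pi_{0}$ by first asserting right-exactness of $(\var)_{0}$; it is cleaner (and avoids worrying about whether $(\var)_{0}$ is right exact) to observe directly that for $0 \to B \to A \to C \to 0$ the kernel of $\pi_{0}(A) \to \pi_{0}(C)$ is $(B + A_{0})/A_{0}$, which is precisely the image of $\pi_{0}(B) \to \pi_{0}(A)$.
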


\begin{proof}
	Obvious.
\end{proof}

For example, if $A = \Ga \oplus \Q / \Z \oplus \Z / p \Z$,
then $A_{0 + \divis} = \Ga \oplus \Q / \Z$ and $\pi_{0} A_{\divis} = \Z / p \Z$.
To see a more non-trivial example,
one may work out the subgroup $A$ of $\dirlim_{n} W_{n} = \{(\dots, x_{-2}, x_{-1}, x_{0}) \mid x_{n} = 0 \text{ for almost all } n \}$
defined by $(F - 1) x_{n} = (F - 1)^{2} x_{-1} = 0$ for all $n \le -2$,
where $F$ is the Frobenius.
Then $A_{0} \cong \Ga$ is defined by $x_{n} = 0$ for all $n \le -1$,
$A_{\divis} \cong \Q_{p} / \Z_{p}$ by $(F - 1) x_{n} = 0$ for all $n \le 0$,
$A_{0 + \divis}$ by $(F - 1) x_{n} = 0$ for all $n \le -1$,
and $\pi_{0} A_{/ \divis} \cong \Z / p \Z$.

Here is a connection diagram between the named subgroups of $A \in \Ind^{f} \Alg_{\uc} / k$:
	\[
		\begin{CD}
				0
			@> \text{conn div} >>
				A_{0 \divis}
			@> \FEt_{p} >>
				A_{0 \cap \divis}
			@> \text{\'et div} >>
				A_{\divis}
			@.
			\\
			@.
			@.
			@V \text{conn alg} VV
			@VV \text{conn alg} V
			@.
			\\
			@.
			@.
				A_{0}
			@>> \text{\'et div} >
				A_{0 + \divis}
			@>> \bigoplus_{l} \FEt_{l} >
				A.
		\end{CD}
	\]
All the arrows are inclusions.
The label of an arrow means that the subquotient there is of that type.
For example, $A_{0 + \divis} / A_{\divis}$ is connected quasi-algebraic.
The symbol $\bigoplus_{l} \FEt_{l}$ means that
the (sub)quotient there is a direct sum of
finite \'etale $l$-primary groups for primes $l$ (with $l = p$ allowed),
which is finite if and only if $A$ is divisibly ML.
The upper and lower sides of the square have the same subquotient,
and the left and right sides of the square have the same subquotient.
The two step subquotient $A_{0} / A_{0 \divis}$ is connected quasi-algebraic and,
similarly, $A_{\divis} / A_{0 \divis}$ is \'etale divisible.
In particular, we have $\pi_{0}(A_{0 + \divis}) = \pi_{0}(A)_{\divis}$
and $(A_{0 + \divis})_{/ \divis} = (A_{/ \divis})_{0}$.
Later we will see in Prop.\ \ref{prop: structure of cohomology of Neron models}
that the cohomology object $G$ in any positive degree
of the complex $R \alg{\Gamma}(X, \mathcal{A})$ mentioned in Introduction
belongs to $\Ind^{f} \Alg_{\uc} / k$.
With some more effort, we will see in Thm.\ \ref{thm: duality for Neron} that
$G_{0 \divis} = 0$ (i.e.\ $G \in \Loc^{f} \Alg_{\uc} / k$)
and $G / G_{0 + \divis}$ is finite (i.e.\ $G$ is divisibly ML).


\subsection{The ind-rational pro-\'etale site and some derived limits}
\label{sec: The ind-rational pro-etale site and some derived limits}
We quickly recall some definitions and notation
about the ind-rational (pro-)\'etale site from \cite[\S 2.1]{Suz14}.
We say that a $k$-algebra is \emph{rational}
if it is a finite direct product of perfections (direct limit along Frobenii)
of finitely generated fields over $k$,
and \emph{ind-rational} if it is a filtered union of rational $k$-subalgebras.
We denote the category of rational (resp.\ ind-rational) $k$-algebras with $k$-algebra homomorphisms
by $k^{\rat}$ (resp.\ $k^{\ind\rat}$).
(More precisely, we only consider $\mathcal{U}_{0}$-small ind-rational $k$-algebras.
Hence $k^{\ind\rat}$ is a $\mathcal{U}$-small $\mathcal{U}_{0}$-category.)
We can endow $k^{\ind\rat}$ with the \'etale topology.
That is, an \'etale covering of $k' \in k^{\ind\rat}$ is a finite family $\{k'_{i}\}$ of \'etale $k'$-algebras
such that the product $\prod k'_{i}$ is faithfully flat over $k'$.
The resulting site is the \emph{ind-rational \'etale site} $\Spec k^{\ind\rat}_{\et}$.
We can also endow $k^{\ind\rat}$ with the pro-\'etale topology of Bhatt-Scholze \cite{BS15}.
That is, a pro-\'etale covering of $k' \in k^{\ind\rat}$ is a finite family $\{k'_{i}\}$ of $k'$-algebras
such that each $k'_{i}$ is a filtered direct limit of \'etale $k'$-algebras
and the product $\prod k'_{i}$ is faithfully flat over $k'$.
The resulting site is the \emph{ind-rational pro-\'etale site} $\Spec k^{\ind\rat}_{\pro\et}$.
The category of sheaves of sets (resp.\ abelian groups) on $\Spec k^{\ind\rat}_{\pro\et}$ is
denoted by $\Set(k^{\ind\rat}_{\pro\et})$ (resp.\ $\Ab(k^{\ind\rat}_{\pro\et})$).
(Here, the target categories $\Set$ and $\Ab$ for sheaves are
the categories of $\mathcal{U}$-small sets and abelian groups.)
The category of complexes in $\Ab(k^{\ind\rat}_{\pro\et})$ is denoted by $\Ch(k^{\ind\rat}_{\pro\et})$,
its homotopy category by $K(k^{\ind\rat}_{\pro\et})$ and
its derived category by $D(k^{\ind\rat}_{\pro\et})$.
See \cite[Chap.\ 18]{KS06} for details about unbounded derived categories of sheaves on sites.
The cohomology of $k' \in k^{\ind\rat}$ with coefficients in $G \in D(k^{\ind\rat}_{\pro\et})$
is denoted by $R \Gamma(k'_{\pro\et}, G)$,
with $n$-th cohomology $H^{n}(k'_{\pro\et}, G)$ when $G \in \Ab(k^{\ind\rat}_{\pro\et})$.
The sheaf-Hom, $n$-th sheaf-Ext and derived sheaf-Hom functors are denoted by
$\sheafhom_{k^{\ind\rat}_{\pro\et}}$, $\sheafext_{k^{\ind\rat}_{\pro\et}}^{n}$
and $R \sheafhom_{k^{\ind\rat}_{\pro\et}}$, respectively.
Similar notation applies to $\Spec k^{\ind\rat}_{\et}$.
For $G \in D(k^{\ind\rat}_{\pro\et})$, we define its \emph{Serre dual} \cite[\S 2.4]{Suz14} to be
	\[
			G^{\SDual}
		=
			R \sheafhom_{k^{\ind\rat}_{\pro\et}}(G, \Z).
	\]
See the list of examples of Serre duals
in the paragraph after the proof of \cite[Prop.\ (2.4.1)]{Suz14}.
A key example there is $\Ga^{\SDual} \cong \Ga[-2]$, which comes from
$\Ga \isomto \sheafext_{k^{\ind\rat}_{\pro\et}}^{1}(\Ga, \Z / p \Z)$
defined by sending $c \in \Ga$ to the pullback of the Artin-Schreier extension class
$0 \to \Z / p \Z \to \Ga \to \Ga \to 0$
by the multiplication-by-$c$ map $\Ga \to \Ga$ to the final term.
In particular, $\Ga$ is Serre reflexive.
We say that $G$ is \emph{Serre reflexive}
if the canonical morphism $G \to G^{\SDual \SDual}$ is an isomorphism.
For $G \in \Ab(k^{\ind\rat}_{\pro\et})$, we denote its torsion part by $G_{\tor}$
and set $G_{/ \tor} = G / G_{\tor}$.
We say that a sheaf $F \in \Set(k^{\ind\rat}_{\et})$ is locally of finite presentation
if it commutes with filtered direct limits as a functor $k^{\ind\rat} \to \Set$ (\cite[\S 2.4]{Suz14}).
In this case, $F$ is automatically a sheaf for the pro-\'etale topology.

The natural Yoneda functor $\Alg / k \to \Ab(k^{\ind\rat}_{\pro\et})$
extends to an additive functor
$\Ind \Pro \Alg / k \to \Ab(k^{\ind\rat}_{\pro\et})$.
(Here, as before, we consider
the $\mathcal{U}_{0}$-procategory of $\Alg / k$ and
the $\mathcal{U}_{0}$-indcategory of the $\mathcal{U}_{0}$-category $\Pro \Alg / k$.)
This functor is exact by \cite[Prop.\ (2.1.2) (e)]{Suz14},
which is fully faithful and induces a fully faithful embedding
$D^{b}(\Ind \Pro \Alg / k) \into D(k^{\ind\rat}_{\pro\et})$
by \cite[Prop.\ (2.3.4)]{Suz14}.
If $G \in \Ind \Pro \Alg / k$ is an extension of an object of $\Ind \Alg_{\uc} / k$ by an object of $\Pro \Alg / k$,
then we denote
	\[
			G^{\SDual'}
		=
			\sheafext_{k^{\ind\rat}_{\pro\et}}^{1}(G, \Q / \Z).
	\]
By \cite[Prop.\ (2.4.1) (b)]{Suz14}, we have $G^{\SDual} = G^{\SDual'}[-2]$.
The same proposition shows that
if $G$ is connected unipotent proalgebraic or ind-algebraic,
then $G^{\SDual'}$ is connected unipotent ind-algebraic or proalgebraic, respectively,
and $G \isomto G^{\SDual' \SDual'}$.
In particular, $G$ is Serre reflexive.
By \cite[Prop.\ (2.4.1) (b)]{Suz14},
if $G$ is semi-abelian, then $G^{\SDual'}$ is the Pontryagin dual of the Tate module $T G$.
If $G$ is not necessarily connected,
then we denote $G_{0}^{\SDual'} = (G_{0})^{\SDual'}$.
If $G$ is an extension of a torsion \'etale group by a pro-finite-\'etale group,
then its Pontryagin dual is denoted by $G^{\PDual}$,
which can be given by $\sheafhom_{k^{\ind\rat}_{\pro\et}}(G, \Q / \Z)$,
or by $G^{\SDual}[1]$ by \cite[Prop.\ (2.4.1) (b)]{Suz14}.
For example, if $G = \Q_{p}$, then $G^{\PDual} \cong \Q_{p}$.
If the torsion part of $G \in \Ab(k^{\ind\rat}_{\pro\et})$ is such a group,
then we denote $G_{\tor}^{\PDual} = (G_{\tor})^{\PDual}$.
If $G$ is a lattice over $k$, then its dual lattice is denoted by $G^{\LDual}$,
which can be given by $\sheafhom_{k^{\ind\rat}_{\pro\et}}(G, \Z)$.
If the torsion-free quotient of $G \in \Ab(k^{\ind\rat}_{\pro\et})$ is such a group,
then we denote $G_{/ \tor}^{\LDual} = (G_{/ \tor})^{\LDual}$.

For an abelian ($\mathcal{U}$-)category,
we say that it has exact products
if the product $\prod_{\lambda \in \Lambda} A_{\lambda}$ of
any family of objects $\{A_{\lambda}\}$ (with $\Lambda \in \mathcal{U}$) exists
and for any family of surjections $\{A_{\lambda} \onto B_{\lambda}\}_{\lambda \in \Lambda}$
(with $\Lambda \in \mathcal{U}$),
the morphism $\prod A_{\lambda} \to \prod B_{\lambda}$ is surjective.

\begin{Prop} \label{prop: proetale topos has enough projectives}
	The category $\Ab(k^{\ind\rat}_{\pro\et})$ has enough projectives and exact products.
	For every sequence
		\[
				\cdots
			\overset{\varphi_{3}}{\to}
				A_{3}
			\overset{\varphi_{2}}{\to}
				A_{2}
			\overset{\varphi_{1}}{\to}
				A_{1}
		\]
	in $\Ab(k^{\ind\rat}_{\pro\et})$,
	its derived inverse limit is represented by the complex
	$\prod_{n} A_{n} \to \prod_{n} A_{n}$ in degrees $0$ and $1$,
	where the morphism sends $(a_{n})_{n}$ to $(a_{n} - \varphi_{n}(a_{n + 1}))_{n}$.
	If the sequence is Mittag-Leffler,
	then its derived inverse limit is zero in positive degrees.
\end{Prop}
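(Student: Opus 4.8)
The plan is to deduce the statement from two classical homological-algebra facts adapted to the category $\Ab(k^{\ind\rat}_{\pro\et})$: the existence of enough projectives, and the well-known explicit model $R\invlim = [\prod\to\prod]$ for derived inverse limits along $\N$. First I would prove that $\Ab(k^{\ind\rat}_{\pro\et})$ has exact products. Products exist because the forgetful functor to presheaves creates limits, and sheafification is not needed for products in a topos of sheaves on a site; so $\prod_\lambda A_\lambda$ is computed sectionwise, and surjectivity of $\prod A_\lambda\to\prod B_\lambda$ must be checked locally. Here I would use that the pro-\'etale site $\Spec k^{\ind\rat}_{\pro\et}$ has a basis of weakly contractible (w-contractible) objects in the sense of Bhatt--Scholze: on such an object every epimorphism of sheaves is surjective on sections, so a product of epimorphisms, evaluated there, is a product of surjections of abelian groups, which is surjective. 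Hence exactness of products follows. (Alternatively one invokes that the topos is locally weakly contractible, which is exactly the property \cite{BS15} establish for pro-\'etale sites.)

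Next I would establish enough projectives. For each $k'\in k^{\ind\rat}$ let $\Z_{k'}$ denote the sheafification of the free abelian group on the representable presheaf $h_{k'}$; then $\Hom(\Z_{k'},F)=F(k')$ naturally. If $k'$ is w-contractible, then $F\mapsto F(k')$ is exact, so $\Z_{k'}$ is projective. Since the w-contractible objects form a basis, for any sheaf $F$ the canonical map $\bigoplus_{k' \text{ w-contr}, s\in F(k')}\Z_{k'}\to F$ is an epimorphism, exhibiting enough projectives. (This is the sheaf-of-abelian-groups analogue of the standard fact that a topos with enough points, or with a w-contractible basis, has enough projectives after passing to abelian group objects in the pro-\'etale setting.)

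With exact products and enough projectives in hand, the derived inverse limit $R\invlim_n$ along the diagram $(A_n,\varphi_n)$ is computed as follows. I would first replace the tower by a tower of projectives $(P_n)$, levelwise quasi-isomorphic, with all transition maps epimorphisms — this is possible because we have enough projectives and exact products let us correct the transition maps. For a tower of epimorphisms the naive inverse limit is exact in the relevant sense, and the standard two-term complex $\prod_n P_n \overset{\id - \varphi}{\to} \prod_n P_n$ (in degrees $0$ and $1$, with $(a_n)_n\mapsto(a_n-\varphi_n(a_{n+1}))_n$) computes $R\invlim$; here exactness of products is exactly what guarantees that this complex has the correct cohomology and that the construction is independent of the chosen resolution. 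Transporting back along the quasi-isomorphism gives the asserted model $\prod_n A_n\to\prod_n A_n$. Finally, if the original tower is Mittag-Leffler, then the map $\prod A_n\to\prod A_n$ is surjective: given $(b_n)_n$, solve $a_n-\varphi_n(a_{n+1})=b_n$ inductively, using that the images of $A_{n+m}\to A_n$ stabilize so that partial sums converge in the product; hence $R\invlim$ is concentrated in degree $0$, i.e. vanishes in positive degrees.

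The main obstacle I expect is the first step — proving that $\Ab(k^{\ind\rat}_{\pro\et})$ genuinely has enough projectives and exact products. Ordinary (\'etale) topoi have neither, and this is precisely why the pro-\'etale topology was introduced; so the argument must lean essentially on the existence of a basis of weakly contractible objects for $\Spec k^{\ind\rat}_{\pro\et}$, which is a nontrivial input from \cite{BS15} specialized to this site. Once that structural fact is available, the Mittag-Leffler vanishing and the explicit $[\prod\to\prod]$ model are routine.
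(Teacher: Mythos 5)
Your proposal is essentially sound and leans on the same key structural input the paper uses: the existence, for every $k' \in k^{\ind\rat}$, of an ind-\'etale faithfully flat cover $k' \to k''$ with $k''$ w-contractible, so that $\Z[\Spec k'']$ is projective and these generate. You flag this as the nontrivial point, but there is one sub-step you gloss over: one must check that such a $k''$ still lies in $k^{\ind\rat}$, which the paper gets from the fact that an ind-\'etale algebra over an ind-rational $k$-algebra is again ind-rational (\cite[Prop.~2.1.2]{Suz13}). Without this, the Bhatt--Scholze cover might escape the underlying category of the site and $\Z[\Spec k'']$ would not even be an object of $\Ab(k^{\ind\rat}_{\pro\et})$.

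Beyond that, your route differs from the paper's in two minor but genuine ways. First, you prove exactness of products directly, by evaluating sectionwise on w-contractible objects; the paper instead proves enough projectives first and then deduces exact products formally (in any abelian category with products, enough projectives forces products to be exact, since one lifts a map $P\to\prod B_\lambda$ coordinatewise). Both arguments work; yours is more concrete, the paper's is shorter given enough projectives. Second, for the telescope model of $R\invlim_n$ and the Mittag--Leffler vanishing, the paper simply invokes \cite[Rmk.~A.3.6]{Nee01} or \cite[Lem.~2.2]{Roo06} (and \cite[Lem.~A.3.15]{Nee01} or \cite{Roo06} for ML), which are exactly statements about abelian categories with exact products. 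Your hands-on replacement by a tower of projectives with surjective transitions, followed by an element-chase, can be made to work, but as written the ML step is a bit loose: ``solve inductively, using that the images stabilize so that partial sums converge in the product'' is not literally an argument — the usual proof first replaces each $A_n$ by its stable image to obtain surjective transition maps and only then solves $\varphi_n(a_{n+1})=a_n-b_n$ by induction, and in the sheaf setting this chase must be performed on sections over a w-contractible basis and reassembled. This is routine once exact products are available, but it is precisely the kind of bookkeeping the paper avoids by citing Roos/Neeman. In short: your argument is correct in outline and identifies the right inputs, but could be tightened at the two points noted.
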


\begin{proof}
	For any $k' \in k^{\ind\rat}$,
	there exists an ind-\'etale faithfully flat homomorphism $k' \to k''$
	to a w-contractible ring $k''$ \cite[Def.\ 2.4.1]{BS15} by \cite[Lem.\ 2.4.9]{BS15}.
	An ind-\'etale algebra over an ind-rational $k$-algebra is ind-rational over $k$
	by \cite[Prop.\ 2.1.2]{Suz13},
	so $k'' \in k^{\ind\rat}$.
	By the definition of w-contractibility,
	we know that the sheaf of free abelian groups $\Z[\Spec k'']$
	generated by the representable sheaf of sets $\Spec k''$
	is a projective object of $\Ab(k^{\ind\rat}_{\pro\et})$.
	The natural morphism $\Z[\Spec k''] \to \Z[\Spec k']$ is surjective.
	Since any object of $\Ab(k^{\ind\rat}_{\pro\et})$ is
	a quotient of a direct sum (with index set $\in \mathcal{U}$)
	of objects of the form $\Z[\Spec k']$ with $k' \in k^{\ind\rat}$,
	it follows that $\Ab(k^{\ind\rat}_{\pro\et})$ has enough projectives.
	
	The category $\Ab(k^{\ind\rat}_{\pro\et})$ has products.
	Having enough projectives, it has exact products.
	The stated complex calculates the derived limit
	by \cite[Rmk.\ A.3.6]{Nee01} or \cite[Lem.\ 2.2]{Roo06}.
	The statement about Mittag-Leffler sequences follows from
	\cite{Roo06} or \cite[Lem.\ A.3.15]{Nee01}.
\end{proof}

Let $\Pro \Ab(k^{\ind\rat}_{\pro\et})$ be the procategory of $\Ab(k^{\ind\rat}_{\pro\et})$.
(Here we consider the $\mathcal{U}$-procategory
since $\Ab(k^{\ind\rat}_{\pro\et})$ is a $\mathcal{U}$-category.)
A filtered inverse system $\{A_{\lambda}\}_{\lambda}$ in $\Ab(k^{\ind\rat}_{\pro\et})$
as an object of $\Pro \Ab(k^{\ind\rat}_{\pro\et})$
is denoted by $\finvlim[\lambda] A_{\lambda}$.
The category $\Ab(k^{\ind\rat}_{\pro\et})$ is a Grothendieck category
and hence has enough injectives.
Therefore the procategory $\Pro \Ab(k^{\ind\rat}_{\pro\et})$ also has enough injectives
by \cite[Exercise 13.6 (ii)]{KS06}.
As $\Pro \Ab(k^{\ind\rat}_{\pro\et})$ has exact products (and exact filtered inverse limits),
it follows from \cite[Thm.\ 14.4.4]{KS06} that
any additive functor from $\Pro \Ab(k^{\ind\rat}_{\pro\et})$ to an abelian category
admits a right derived functor on (unbounded) derived categories,
which can be calculated by K-injective (or homotopically injective) complexes
in $\Pro \Ab(k^{\ind\rat}_{\pro\et})$.
In particular, the inverse limit functor
$\invlim \colon \Pro \Ab(k^{\ind\rat}_{\pro\et}) \to \Ab(k^{\ind\rat}_{\pro\et})$
admits a right derived functor
$R \invlim \colon D(\Pro \Ab(k^{\ind\rat}_{\pro\et})) \to D(k^{\ind\rat}_{\pro\et})$.
For a more detailed treatment of $R \invlim$,
see \cite[Prop.\ 13.3.15, Cor.\ 13.3.16, Example 13.3.17 (i)]{KS06}%
\footnote{
	For comparison of derived functors of inverse limits
	defined for pro-objects and for inverse systems,
	see \cite[Cor.\ 7.3.7]{Pro99} for example.
	This reference assumes that the abelian category in question has exact products.
	This assumption is satisfied for our category $\Ab(k^{\ind\rat}_{\pro\et})$
	by the proposition above.
}

For a sheaf $A \in \Ab(k^{\ind\rat}_{\pro\et})$,
the multiplication maps by integers $n \ge 1$ yield a pro-object $\finvlim[n] A$
with index set $\Z_{\ge 1}$ ordered by divisibility.
Note that there is a cofinal map $\sigma$ from the set of positive integers with the usual ordering $\le$
to the set of positive integers with the divisibility ordering.
For example, we can take $\sigma(n)$ to be the $n$-th power of the product of the first $n$ primes.
Hence the pro-object $\finvlim[n] A$ is isomorphic to the sequence
$\cdots \to A_{\sigma(2)} \to A_{\sigma(1)}$.
The assignment $A \mapsto \finvlim[n] A$ defines an exact functor
$\Ab(k^{\ind\rat}_{\pro\et}) \to \Pro \Ab(k^{\ind\rat}_{\pro\et})$
and hence a triangulated functor
$D(k^{\ind\rat}_{\pro\et}) \to D(\Pro \Ab(k^{\ind\rat}_{\pro\et}))$.
By composing it with $R \invlim$,
we have a triangulated endofunctor on $D(k^{\ind\rat}_{\pro\et})$,
which we denote by $A \mapsto R \invlim_{n} A$.
The objects $\finvlim[n] A$ and $R \invlim_{n} A$ are uniquely divisible.
We have a projection $\finvlim[n] A \to A$ to the $n = 1$ term in $D(\Pro \Ab(k^{\ind\rat}_{\pro\et}))$.
This induces a morphism $R \invlim_{n} A \to A$ in $D(k^{\ind\rat}_{\pro\et})$.
It is an isomorphism if and only if $A$ is uniquely divisible.
We have $R \invlim_{n} A = 0$ if $A$ is killed by multiplication by some positive integer.

For $A \in \Ab(k^{\ind\rat}_{\pro\et})$,
we define $A_{\divis}$ to be the image of the natural morphism $\invlim_{n} A \to A$.
It is the maximal divisible subsheaf of $A$.
We denote $A_{/ \divis} = A / A_{\divis}$.
We define
	\[
			TA
		=
			\invlim_{n} A[n],
		\quad
			VA
		=
			(TA) \tensor \Q,
		\quad
			\Hat{A}
		=
			\invlim_{n}(A \tensor \Z / n \Z),
	\]
where $A[n]$ is the $n$-torsion part of $A$.
If $A \in \Ind^{f} \Alg_{\uc} / k$,
then \S \ref{sec: Ind-objects of cofinite type} gives an object
$TA  \in \Pro \Alg_{\uc} / k$,
while the above definition gives an object
$TA \in \Ab(k^{\ind\rat}_{\pro\et})$.
They are compatible under the Yoneda functor
$\Pro \Alg_{\uc} / k \into \Ab(k^{\ind\rat}_{\pro\et})$.
A similar relation holds for $VA$.
We denote $TA_{\divis} = T(A_{\divis})$, $VA_{\divis} = V(A_{\divis})$.
If $A_{\divis}$ is a torsion \'etale group,
we denote $A_{\divis}^{\PDual} = (A_{\divis})^{\PDual}$.
The notation $A_{/ \divis}^{\wedge}$ means $(A_{/ \divis})^{\wedge}$.
For $A \in \FGEt / k$, we denote $A_{/ \tor}^{\LDual \wedge} = (A_{/ \tor}^{\LDual})^{\wedge}$.
(The general rule here is that
we read the subscript first and then the superscript,
from the inside to the outside.
Sticking to one such convention reduces excessive usage of parentheses.
But we are not strictly consistent with this rule,
as the ind-rational pro-\'etale site $\Spec k^{\ind\rat}_{\pro\et}$ or
the open subscheme $\mathcal{A}_{0}^{\vee} \subset \mathcal{A}^{\vee}$ of the N\'eron model shows.)

\begin{Prop} \label{prop: derived limit for multiplication}
	For any $A \in \Ab(k^{\ind\rat}_{\pro\et})$,
	we have $R^{m} \invlim_{n} A = 0$ for $m \ge 2$.
	If $A$ is divisibly ML (Prop.\ \ref{prop: divisibly ML objects}),
	then $R \invlim_{n} A = \invlim_{n} A$
	and $A_{/ \divis} \isomto \Hat{A}$.
	If $A_{\divis}$ is torsion, then $VA \isomto \invlim_{n} A$.
\end{Prop}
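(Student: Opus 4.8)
The plan is to prove the three assertions in turn, in each case reducing to the description of $R \invlim$ from Prop.\ \ref{prop: proetale topos has enough projectives} together with the short exact sequences attached to multiplication by integers. For the first assertion, recall from the discussion preceding the statement that, via the cofinal reindexing $\sigma$, the pro-object $\finvlim[n] A$ is isomorphic to the sequence $\cdots \to A_{\sigma(2)} \to A_{\sigma(1)}$ in $\Ab(k^{\ind\rat}_{\pro\et})$. Hence $R \invlim_{n} A$ is represented by the two-term complex $\prod_{n} A \to \prod_{n} A$ in degrees $0$ and $1$ by Prop.\ \ref{prop: proetale topos has enough projectives}, so it is concentrated in degrees $\le 1$; in particular $R^{m} \invlim_{n} A = 0$ for $m \ge 2$.

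For the second assertion, assume $A$ is divisibly ML. By Prop.\ \ref{prop: divisibly ML objects} the inverse system of multiplication maps on $A$ is Mittag-Leffler, hence so is its reindexing to a sequence, and Prop.\ \ref{prop: proetale topos has enough projectives} then gives $R^{m} \invlim_{n} A = 0$ for all $m \ge 1$, i.e.\ $R \invlim_{n} A = \invlim_{n} A$. For the identification $A_{/ \divis} \isomto \Hat{A}$, I would choose a positive integer $N$ killing $A_{/ \divis}$, which exists because $A_{\divis}$ is the maximal divisible subsheaf of $A$ (Prop.\ \ref{prop: divisibly ML objects}). Since $A_{\divis}$ is divisible, $A_{\divis} = n A_{\divis} \subseteq n A$ for every $n \ge 1$; and if $N \mid n$, then $n A \subseteq A_{\divis}$ because $A_{/ \divis}$ is killed by $N$. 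Hence $n A = A_{\divis}$, so $A \tensor \Z / n \Z = A / n A = A_{/ \divis}$ for all $n$ divisible by $N$, with identity transition maps. Therefore $\Hat{A} = \invlim_{n}(A \tensor \Z / n \Z) = A_{/ \divis}$, and this isomorphism is realized by the canonical map $A_{/ \divis} \to \Hat{A}$ (which exists because $A_{\divis} \subseteq n A$ for all $n$).

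For the third assertion, assume $A_{\divis}$ is torsion. First I would establish the exact sequence
	\[
			0 \to TA \to \invlim_{n} A \to A_{\divis} \to 0
	\]
in $\Ab(k^{\ind\rat}_{\pro\et})$ by taking the inverse limit over $(\Z_{\ge 1}, \mid)$ of the short exact sequences $0 \to A[n] \to A \to n A \to 0$, compatible via multiplication by $n' / n$: left exactness of $\invlim$ identifies $TA = \invlim_{n} A[n]$ with the kernel of the structure map $\invlim_{n} A \to A$, while the image of this structure map is $A_{\divis}$ by definition. Since $\tensor \Q$ is exact and $\invlim_{n} A$ is uniquely divisible, applying $\tensor \Q$ kills $A_{\divis}$ (being torsion) and yields the desired isomorphism $V A = (TA) \tensor \Q \isomto \invlim_{n} A$, which is the canonical map $VA \to \invlim_{n} A$.

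I expect the main obstacle to be the careful construction of the exact sequence $0 \to TA \to \invlim_{n} A \to A_{\divis} \to 0$ at the level of sheaves: one must verify that the kernel of $\invlim_{n} A \to A$ is exactly $TA$ and that its image is $A_{\divis}$ as defined, for which it is convenient to realize the pro-system $\{n A\}_{n}$ as the image of the morphism from the multiplication pro-system to the constant pro-system with value $A$, so that $\invlim_{n}\{n A\} \cong \bigcap_{n} n A \into A$ and the composite $\invlim_{n} A \to \bigcap_{n} n A \into A$ recovers the structure map. A minor additional point is to invoke the comparison, recalled in the discussion above, between $R \invlim$ of a pro-object and $R \invlim$ of its reindexing as a sequence; beyond these, the argument is routine.
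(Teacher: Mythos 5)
Your proof is correct and follows essentially the same route as the paper's: reindex the pro-object to a sequence and invoke Prop.\ \ref{prop: proetale topos has enough projectives} for the first two parts, then tensor the exact sequence $0 \to TA \to \invlim_{n} A \to A_{\divis} \to 0$ with $\Q$ for the third. You spell out two details that the paper leaves implicit — that $A/nA$ stabilizes at $A_{/\divis}$ for $N \mid n$ in the divisibly ML case, and the derivation of the exact sequence from the towers $0 \to A[n] \to A \to nA \to 0$ together with the definition of $A_{\divis}$ as the image of $\invlim_{n} A \to A$ — but the strategy is identical.
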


\begin{proof}
	As above, represent the pro-object $\finvlim[n] A$
	as a sequence $\cdots \to A_{\sigma(2)} \to A_{\sigma(1)}$.
	Then the vanishing of the derived limit in degree $\ge 2$ follows from the previous proposition.
	If $A$ is divisibly ML, then $R \invlim_{n} A = \invlim_{n} A$
	by the previous proposition.
	We then have $\Hat{A} = (A_{/ \divis})^{\wedge} = A_{/ \divis}$.
	For general $A$, we have an exact sequence
	$0 \to TA \to \invlim_{n} A \to A_{\divis} \to 0$.
	Tensoring by $\Q$, we have an exact sequence
	$0 \to VA \to \invlim_{n} A \to A_{\divis} \tensor \Q \to 0$.
	Hence $VA = \invlim_{n} A$ if $A_{\divis} \tensor \Q = 0$.
\end{proof}

\begin{Prop} \label{prop: inverse limit of multiplication for divisibly ML}
	Let $A \in \Ind^{f} \Alg_{\uc} / k$ be divisibly ML.
	Then the morphism $VA \to A$ induces an isomorphism
	$R \invlim_{n} A = V A$ in $D(k^{\ind\rat}_{\pro\et})$.
\end{Prop}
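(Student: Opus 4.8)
The plan is to combine the two halves of Prop.\ \ref{prop: derived limit for multiplication}. Since $A$ is divisibly ML, that proposition gives $R \invlim_{n} A = \invlim_{n} A$; so the content is to identify this sheaf with $V A$ and to check that the identification is the one induced by the given morphism $V A \to A$.

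For the identification, the key observation is that an object of cofinite type is a torsion sheaf, since $A = \dirlim_{n} A[n]$. Hence the maximal divisible subsheaf $A_{\divis}$ is torsion, so $A_{\divis} \tensor \Q = 0$, and the last sentence of Prop.\ \ref{prop: derived limit for multiplication} yields an isomorphism $V A \isomto \invlim_{n} A$; combined with $R \invlim_{n} A = \invlim_{n} A$ this gives $R \invlim_{n} A \cong V A$. To see that this is the morphism induced by $V A \to A$, i.e.\ that the composite $V A \isomto \invlim_{n} A \to A$ (the last arrow being the canonical projection to the ``$n = 1$'' term) is the morphism $V A \to A$ of \S\ref{sec: Ind-objects of cofinite type}, I would unwind the construction: the isomorphism $V A \isomto \invlim_{n} A$ is obtained by tensoring $0 \to T A \to \invlim_{n} A \to A_{\divis} \to 0$ with $\Q$, and for $t = (t_{d})_{d} \in T A$ the element $t / m \in V A$ is carried to the unique $m$-division of $t$ in $\invlim_{n} A$, whose ``$n = 1$'' component is $t_{m}$; this is exactly the image of $t / m$ under $V A = \dirlim_{m} T A \to A$, $(a_{d})_{d} \mapsto a_{m}$.

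Alternatively one can avoid this bookkeeping: any two morphisms $V A \to \invlim_{n} A$ lifting $V A \to A$ along the canonical $\invlim_{n} A \to A$ differ by a morphism into $\Ker(\invlim_{n} A \to A) = T A$, and $\Hom(V A, T A) = 0$, since $V A$ is uniquely divisible while $T A = \invlim_{n} A[n]$ has no nonzero divisible subsheaf (each $A[n]$ being killed by $n$). Thus the existence of the isomorphism $R \invlim_{n} A \cong V A$ is immediate from Prop.\ \ref{prop: derived limit for multiplication} together with the fact that cofinite-type objects are torsion; the only slightly delicate point is matching this abstract isomorphism with the specified morphism $V A \to A$, and that is the step I would be most careful about.
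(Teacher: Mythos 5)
Your proof is correct and takes essentially the same route as the paper: both invoke the two halves of Prop.\ \ref{prop: derived limit for multiplication} after observing that $A$ (hence $A_{\divis}$) is torsion, giving $VA \cong \invlim_{n} A = R\invlim_{n} A$. You additionally verify, via the vanishing $\Hom(VA, TA) = 0$, that this abstract isomorphism is the one induced by the specified morphism $VA \to A$ — a compatibility the paper's one-line proof leaves implicit.
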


\begin{proof}
	Recall that any object of $\Alg_{\uc} / k$ is killed by multiplication by some positive integer.
	Hence $A_{\divis}$ is torsion as a sheaf over $\Spec k^{\ind\rat}_{\pro\et}$.
	(Recall that this means $A(k')$ is a torsion abelian group for any $k' \in k^{\ind\rat}$.
	See \cite[Exp.\ IX, D\'ef.\ 1.1, Prop.\ 1.2 (iii)]{AGV73}.)
	Now the result follows from the previous proposition.
\end{proof}

We denote $\Adele^{\infty} = \Hat{\Z} \tensor_{\Z} \Q \in \Ind \Pro \FEt / k$
(which is the adele ring of $\Q$ without components of infinite places).
We have $\Adele^{\infty} = V(\Q / \Z)$.
The assumption that $A$ is divisibly ML in the above proposition cannot be dropped.
For instance, applying $R \invlim_{n}$ to the exact sequence
$0 \to \bigoplus_{p} \Z / p \Z \to \Q / \Z \to \Q / \Z \to 0$,
we know that $R \invlim_{n} \bigoplus_{p} \Z / p \Z$ is isomorphic to
$[\Adele^{\infty} \to \Adele^{\infty}][-1]$,
or $[\Hat{\Z} \to \Hat{\Z}][-1] \tensor \Q$, or
$(\prod_{p} \Z / p \Z) / (\bigoplus_{p} \Z / p \Z) [-1]$.

\begin{Prop} \label{prop: inverse limit of multiplication for finite gen etale}
	Let $A \in \FGEt / k$.
	Then $R \invlim_{n} A = A \tensor_{\Z} \Adele^{\infty} / \Q[-1]$
	in $D(k^{\ind\rat}_{\pro\et})$.
\end{Prop}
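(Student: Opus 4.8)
The plan is to apply $R\invlim_{n}$ to the short exact sequence of sheaves $0 \to A \to A \tensor_{\Z} \Q \to A \tensor_{\Z} \Q/\Z \to 0$ and to read off the answer from the propositions of this subsection. First I would reduce to the case that $A$ is a lattice. The torsion subsheaf $A_{\tor}$ is a finite \'etale group, hence killed by multiplication by some positive integer, so $R\invlim_{n} A_{\tor} = 0$; applying $R\invlim_{n}$ to $0 \to A_{\tor} \to A \to A_{/\tor} \to 0$ then gives $R\invlim_{n} A \isomto R\invlim_{n} A_{/\tor}$. Moreover $\Adele^{\infty}/\Q$ is a $\Q$-vector space, hence uniquely divisible, so $A_{\tor} \tensor_{\Z}(\Adele^{\infty}/\Q) = 0$, and since $A_{/\tor}$ is $\Z$-flat, right exactness of tensoring with $\Adele^{\infty}/\Q$ gives $A \tensor_{\Z}(\Adele^{\infty}/\Q) \isomto A_{/\tor} \tensor_{\Z}(\Adele^{\infty}/\Q)$. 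Hence it suffices to treat the lattice $A_{/\tor}$, and we may assume $A$ is a lattice.

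For a lattice $A$ the sequence $0 \to A \to A \tensor_{\Z}\Q \to A \tensor_{\Z}\Q/\Z \to 0$ is exact ($A$ being $\Z$-flat), and applying $R\invlim_{n}$ produces a distinguished triangle
\[
    R\invlim_{n} A
  \to
    R\invlim_{n}(A \tensor \Q)
  \to
    R\invlim_{n}(A \tensor \Q/\Z)
  \to
    (R\invlim_{n} A)[1].
\]
The sheaf $A \tensor \Q$ is uniquely divisible, so the middle term is $A \tensor \Q$ in degree $0$. The sheaf $A \tensor \Q/\Z$ is divisible, hence divisibly ML, so the last term is $\invlim_{n}(A \tensor \Q/\Z)$ by Prop.\ \ref{prop: derived limit for multiplication}; being torsion, it is identified by the same proposition with $V(A \tensor \Q/\Z)$. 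Since $\Adele^{\infty} = V(\Q/\Z)$ and $A$ becomes the constant sheaf $\Z^{r}$ over a finite separable extension of $k$, one checks that $V(A \tensor \Q/\Z) = A \tensor_{\Z}\Adele^{\infty}$, again in degree $0$. Thus $R\invlim_{n} A$ is the shifted mapping cone $[\,A \tensor_{\Z}\Q \to A \tensor_{\Z}\Adele^{\infty}\,][-1]$ of a morphism of degree-$0$ sheaves.

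The one step I expect to need real care is the identification of this connecting morphism $A \tensor \Q \to A \tensor \Adele^{\infty}$. It is the image under $R\invlim_{n}$ of the projection $A \tensor \Q \onto A \tensor \Q/\Z$, so by naturality of the maps from the defining towers to their first terms, its composite with the canonical surjection $A \tensor_{\Z}\Adele^{\infty} = V(A \tensor \Q/\Z) \onto A \tensor \Q/\Z$ is that projection. Now any two lifts of the projection along this surjection differ by a morphism from $A \tensor \Q$ to the kernel $T(A \tensor \Q/\Z)$, which is a torsion-free pro-finite-\'etale sheaf; such a morphism vanishes, since the image of the uniquely divisible sheaf $A \tensor \Q$ would be a divisible subsheaf of $T(A \tensor \Q/\Z)$, which has no nonzero divisible subsheaf. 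Hence the connecting morphism is $\id_{A} \tensor (\Q \into \Adele^{\infty})$, a monomorphism with cokernel $A \tensor_{\Z}(\Adele^{\infty}/\Q)$ (again by $\Z$-flatness of $A$), and therefore
\[
    R\invlim_{n} A
  =
    [\,A \tensor_{\Z}\Q \to A \tensor_{\Z}\Adele^{\infty}\,][-1]
  =
    A \tensor_{\Z}\Adele^{\infty}/\Q[-1]
\]
in $D(k^{\ind\rat}_{\pro\et})$, as claimed. Apart from this identification of the connecting morphism, the argument is a formal assembly of the preceding results.
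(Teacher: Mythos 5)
Your proof is correct and follows essentially the same route as the paper's: reduce to the torsion-free case, apply $R\invlim_{n}$ to $0 \to A \to A\tensor\Q \to A\tensor\Q/\Z \to 0$, and identify the outer terms using Prop.\ \ref{prop: inverse limit of multiplication for divisibly ML}. The paper compresses the last step to "From this, the result follows"; your careful identification of the second map in the triangle as $\id_A \tensor (\Q \into \Adele^{\infty})$ (via the uniqueness-of-lift argument over $T(A\tensor\Q/\Z)$) is exactly the detail the paper leaves implicit.
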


\begin{proof}
	We may assume that $A$ is torsion-free.
	We have an exact sequence
	$0 \to A \to A \tensor \Q \to A \tensor \Q / \Z \to 0$.
	Applying $R \invlim_{n}$ and using the previous proposition,
	we obtain a distinguished triangle
	$R \invlim_{n} A \to A \tensor \Q \to A \tensor \Adele^{\infty}$.
	From this, the result follows.
\end{proof}

For $A \in D(k^{\ind\rat}_{\pro\et})$,
there is a canonical choice of a mapping cone of the above defined morphism
$\finvlim[n] A \to A$.
We denote this cone by $\finvlim[n](A \tensor^{L} \Z / n \Z)$.
It is an object of $D(\Pro \Ab(k^{\ind\rat}_{\pro\et}))$.
(This construction can also be understood more conceptually
using the derived functor of the procategory extension of the two variable functor $\tensor$,
at least when $A$ is assumed to be bounded above.)

\begin{Prop} \label{prop: cohomology of formal limit of A tensor Z mod n}
	Let $A \in D(k^{\ind\rat}_{\pro\et})$.
	The distinguished triangle
		\[
				\finvlim[n] A
			\to
				A
			\to
				\finvlim[n](A \tensor^{L} \Z / n \Z)
		\]
	induces an exact sequence
		\[
				0
			\to
				\finvlim[n]
					(H^{i}(A) \tensor \Z / n \Z)
			\to
				H^{i}
				\bigl(
					\finvlim[n](A \tensor^{L} \Z / n \Z)
				\bigr)
			\to
				\finvlim[n]
					H^{i + 1}(A)[n]
			\to
				0
		\]
	in $\Pro \Ab(k^{\ind\rat}_{\pro\et})$ for any $i \in \Z$.
\end{Prop}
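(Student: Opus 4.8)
The plan is to take the long exact cohomology sequence of the given distinguished triangle inside the abelian category $\Pro \Ab(k^{\ind\rat}_{\pro\et})$ and to identify the three relevant terms by a levelwise computation in the pro-category.

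First I would record the key exactness input: the functor $A \mapsto \finvlim[n] A$ from $\Ab(k^{\ind\rat}_{\pro\et})$ to $\Pro \Ab(k^{\ind\rat}_{\pro\et})$ is exact, so the induced triangulated functor $D(k^{\ind\rat}_{\pro\et}) \to D(\Pro \Ab(k^{\ind\rat}_{\pro\et}))$ commutes with the cohomology-object functors $H^{i}$. In particular $H^{i}(\finvlim[n] A) = \finvlim[n] H^{i}(A)$ for any complex $A$, the transition maps of the pro-object on the right being the multiplication-by-integer maps on $H^{i}(A)$. Under this identification, the first morphism $\finvlim[n] A \to A$ of the triangle --- which by construction is the projection to the $n = 1$ term, hence as a pro-morphism is given at level $n$ by multiplication by $n$ --- induces on $H^{i}$ the pro-morphism
\[
	u_{i} \colon \finvlim[n] H^{i}(A) \to H^{i}(A)
\]
that at level $n$ is multiplication by $n$, with $H^{i}(A)$ on the target viewed as the constant pro-object.

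Next, the long exact sequence of the triangle $\finvlim[n] A \to A \to \finvlim[n](A \tensor^{L} \Z / n \Z) \to \finvlim[n] A[1]$ breaks, for each $i$, into a short exact sequence
\[
	0
	\to
	\Coker(u_{i})
	\to
	H^{i}\bigl(\finvlim[n](A \tensor^{L} \Z / n \Z)\bigr)
	\to
	\Ker(u_{i + 1})
	\to
	0
\]
in $\Pro \Ab(k^{\ind\rat}_{\pro\et})$. It then remains to compute $\Coker(u_{i})$ and $\Ker(u_{i + 1})$. Since $u_{i}$ is a morphism of pro-objects indexed by the single cofiltered poset $(\Z_{\ge 1}, \mid)$ and is given levelwise, its kernel and cokernel are computed levelwise (a standard fact about pro-categories of abelian categories): at level $n$ the kernel of multiplication by $n$ on $H^{i + 1}(A)$ is $H^{i + 1}(A)[n]$, with transition maps the multiplication maps on torsion subgroups, so $\Ker(u_{i + 1}) = \finvlim[n] H^{i + 1}(A)[n]$; and the cokernel of multiplication by $n$ on $H^{i}(A)$ is $H^{i}(A) / n H^{i}(A) = H^{i}(A) \tensor \Z / n \Z$, with transition maps the natural reductions, so $\Coker(u_{i}) = \finvlim[n](H^{i}(A) \tensor \Z / n \Z)$. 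Substituting these identifications into the short exact sequence above yields the claim. (Levelwise, this recovers the usual Bockstein short exact sequences attached to the multiplication-by-$n$ triangles on $A$.)

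The main obstacle is bookkeeping rather than substance: one has to be scrupulous that the first arrow of the triangle really is the ``projection to $n = 1$'' pro-morphism, so that the map it induces on $H^{i}$ is multiplication by $n$ at level $n$ rather than, say, the identity; and one must confirm that forming kernels and cokernels in $\Pro \Ab(k^{\ind\rat}_{\pro\et})$ commutes with passing to levels for morphisms over a fixed index, so that the transition maps in the resulting pro-objects are exactly the ones implicit in the symbols $\finvlim[n] H^{i+1}(A)[n]$ and $\finvlim[n](H^{i}(A) \tensor \Z/n\Z)$ appearing in the statement.
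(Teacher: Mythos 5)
Your proof is correct, and it carefully unpacks what the paper dismisses as ``Obvious'': the long exact cohomology sequence of the pro-triangle in $\Pro\Ab(k^{\ind\rat}_{\pro\et})$, together with the levelwise computation of $\Ker(u_{i+1})$ and $\Coker(u_i)$, does yield exactly the stated short exact sequence, and your two ``bookkeeping'' cautions (that $\finvlim[n] A \to A$ is multiplication by $n$ at level $n$, and that kernels and cokernels of levelwise pro-morphisms over a fixed index poset are computed levelwise) are precisely the facts that make the identification work. An equivalent and equally natural organization is to observe directly that $H^i\bigl(\finvlim[n](A \tensor^L \Z/n\Z)\bigr) = \finvlim[n] H^i(A \tensor^L \Z/n\Z)$ (since levelwise constructions commute with $H^i$) and then apply the ordinary Bockstein short exact sequence $0 \to H^i(A)/n \to H^i(A \tensor^L \Z/n\Z) \to H^{i+1}(A)[n] \to 0$ for each $n$; your route via the LES of the pro-triangle is the same computation read in the other order.
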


\begin{proof}
	Obvious.
\end{proof}

\begin{Prop} \label{prop: all mod n isom imply completion isom}
	If a morphism $A \to B$ in $D(k^{\ind\rat}_{\pro\et})$ induces an isomorphism
		\[
			A \tensor^{L} \Z / n \Z \isomto B \tensor^{L} \Z / n \Z
		\]
	for any $n \ge 1$, then it also induces an isomorphism
		\[
				\finvlim[n](A \tensor^{L} \Z / n \Z)
			\isomto
				\finvlim[n](B \tensor^{L} \Z / n \Z).
		\]
\end{Prop}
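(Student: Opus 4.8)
The plan is to unwind the definition of $\finvlim[n](A \tensor^{L} \Z / n \Z)$ and thereby reduce the assertion to an elementary fact about pro-objects. By construction this object is the pro-object attached to the inverse system $\{A \tensor^{L} \Z / n \Z\}_{n}$, indexed by $\Z_{\ge 1}$ ordered by divisibility, with transition maps induced by the surjections $\Z / n \Z \onto \Z / m \Z$ for $m \mid n$. Once this is made precise, the statement becomes: a morphism of inverse systems over a fixed index category that is an isomorphism at every index induces an isomorphism on the associated pro-objects, which is immediate by inverting the maps level-wise.

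To set up the reduction I would represent $A$ by a complex $P$ suited to computing $\tensor^{L}$ (a K-flat complex, or a bounded-above complex of projectives when $A$ is bounded above, using Prop.\ \ref{prop: proetale topos has enough projectives}). Then $\finvlim[n] A$ is the pro-complex constant equal to $P$, with transition map from index $n$ to index $m$ (for $m \mid n$) given by multiplication by $n / m$, and the morphism $\finvlim[n] A \to A$ used in the definition is the projection to the $n = 1$ term; hence its canonical mapping cone $\finvlim[n](A \tensor^{L} \Z / n \Z)$ is the pro-complex $n \mapsto [P \overset{n}{\to} P]$, whose term at index $n$ represents $A \tensor^{L} \Z / n \Z$ and whose transition maps represent the morphisms induced by $\Z / n \Z \onto \Z / m \Z$. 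Lifting $f \colon A \to B$ to a morphism of such complexes $P \to Q$, the induced morphism $\finvlim[n](A \tensor^{L} \Z / n \Z) \to \finvlim[n](B \tensor^{L} \Z / n \Z)$ is, at index $n$, a representative of $f \tensor^{L} \Z / n \Z$.

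Now the hypothesis says precisely that every $f \tensor^{L} \Z / n \Z$ is an isomorphism in $D(k^{\ind\rat}_{\pro\et})$; since the two pro-systems share one index category and the forward maps commute with all transition maps, inverting the resulting commutative squares of isomorphisms gives a two-sided inverse in $D(\Pro \Ab(k^{\ind\rat}_{\pro\et}))$, so the morphism of pro-objects is an isomorphism. I expect the only real obstacle to be the bookkeeping in the middle step, namely checking that the canonical choice of cone is functorial enough that $f$ genuinely induces the level-wise morphism described; this is handled exactly by carrying a functorial resolution throughout, after which all cones are honest mapping cones of honest morphisms of complexes and everything is strictly functorial. (Alternatively one could compute cohomology objects via Prop.\ \ref{prop: cohomology of formal limit of A tensor Z mod n}, but that still rests on the same level-wise principle together with the universal coefficient sequences, so the direct argument seems shorter.)
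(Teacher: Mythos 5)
Your overall plan is sound and rests on the same underlying fact that the paper's one-line proof via Prop.\ \ref{prop: cohomology of formal limit of A tensor Z mod n} exploits: filtered formal inverse limits are exact in $\Pro \Ab(k^{\ind\rat}_{\pro\et})$, hence commute with cohomology of complexes, so a morphism of pro-complexes that is a quasi-isomorphism at every index is a quasi-isomorphism. The one step that would not survive as written is the claim that ``inverting the resulting commutative squares of isomorphisms gives a two-sided inverse in $D(\Pro \Ab(k^{\ind\rat}_{\pro\et}))$.'' Each $f \tensor^{L} \Z/n\Z$ is invertible in $D(k^{\ind\rat}_{\pro\et})$, but its inverse is not a chain map, and $D(\Pro \Ab(k^{\ind\rat}_{\pro\et}))$ is not the pro-category of $D(k^{\ind\rat}_{\pro\et})$, so you cannot paste level-wise inverses into a morphism of pro-complexes. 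The repair is a local rewording: since each chain-level map $[P \overset{n}{\to} P] \to [Q \overset{n}{\to} Q]$ is a quasi-isomorphism, its mapping cone is acyclic, so the formal inverse limit of these cones is an acyclic complex of pro-objects by exactness of $\finvlim$, and therefore the morphism of formal limits is a quasi-isomorphism; equivalently, pass to cohomology and use $H^{i}\bigl(\finvlim[n](A \tensor^{L} \Z/n\Z)\bigr) = \finvlim[n] H^{i}(A \tensor^{L} \Z/n\Z)$, which is precisely what Prop.\ \ref{prop: cohomology of formal limit of A tensor Z mod n} records and is the route the paper actually takes. With that adjustment, your functorial-resolution setup gives a complete proof, essentially matching the paper's.
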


\begin{proof}
	This follows from the previous proposition.
\end{proof}

If $A \in D(k^{\ind\rat}_{\pro\et})$,
we can apply $R \invlim$ to $\finvlim[n] (A \tensor^{L} \Z / n \Z)$.
We denote the result by
	\[
			R \invlim_{n}(A \tensor^{L} \Z / n \Z)
		\in
			D(k^{\ind\rat}_{\pro\et}).
	\]

\begin{Prop} \label{prop: all mod n isom imply derived completion isom}
	For $A \in D(k^{\ind\rat}_{\pro\et})$,
	we have a distinguished triangle
		\[
				R \invlim_{n} A
			\to
				A
			\to
				R \invlim_{n}(A \tensor^{L} \Z / n \Z).
		\]
	If a morphism $A \to B$ in $D(k^{\ind\rat}_{\pro\et})$ induces an isomorphism
		\[
			A \tensor^{L} \Z / n \Z \isomto B \tensor^{L} \Z / n \Z
		\]
	for any $n \ge 1$, then it also induces an isomorphism
		\[
				R \invlim_{n}(A \tensor^{L} \Z / n \Z)
			\isomto
				R \invlim_{n}(B \tensor^{L} \Z / n \Z).
		\]
\end{Prop}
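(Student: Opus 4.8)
The plan is to deduce both assertions by applying the triangulated functor $R \invlim \colon D(\Pro \Ab(k^{\ind\rat}_{\pro\et})) \to D(k^{\ind\rat}_{\pro\et})$ to the pro-object level facts already established, so that essentially no new work is needed.

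For the distinguished triangle, I would begin with the triangle
	\[
			\finvlim[n] A
		\to
			A
		\to
			\finvlim[n](A \tensor^{L} \Z / n \Z)
	\]
in $D(\Pro \Ab(k^{\ind\rat}_{\pro\et}))$ from Prop.\ \ref{prop: cohomology of formal limit of A tensor Z mod n}, in which the middle term is the constant pro-object attached to $A$ and the first map is the projection to the $n = 1$ component. Applying $R \invlim$ and using that it is triangulated gives a distinguished triangle in $D(k^{\ind\rat}_{\pro\et})$ whose outer two terms are $R \invlim_{n} A$ and $R \invlim_{n}(A \tensor^{L} \Z / n \Z)$ by the very definitions of these objects. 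The remaining point is that $R \invlim$ carries the constant pro-object $A$ back to $A$: after the cofinal reindexing $\sigma$ recalled earlier in this subsection, the constant pro-object is represented by the constant sequence $\cdots \to A \to A$ with identity transition maps, which is Mittag-Leffler, so Prop.\ \ref{prop: proetale topos has enough projectives} identifies its derived limit with its ordinary limit, namely $A$. This produces the triangle $R \invlim_{n} A \to A \to R \invlim_{n}(A \tensor^{L} \Z / n \Z)$.

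For the second assertion I would simply invoke Prop.\ \ref{prop: all mod n isom imply completion isom}: under the hypothesis, the induced map $\finvlim[n](A \tensor^{L} \Z / n \Z) \to \finvlim[n](B \tensor^{L} \Z / n \Z)$ is an isomorphism in $D(\Pro \Ab(k^{\ind\rat}_{\pro\et}))$, and applying the functor $R \invlim$, which preserves isomorphisms, yields the desired isomorphism $R \invlim_{n}(A \tensor^{L} \Z / n \Z) \isomto R \invlim_{n}(B \tensor^{L} \Z / n \Z)$. The whole argument is formal given the two quoted propositions together with the fact that $R \invlim$ is triangulated; the only step that is not completely automatic is the identification of $R \invlim$ of the constant pro-object with $A$, which is why I would spell out the reduction to a Mittag-Leffler sequence rather than lean on a general statement about constant pro-systems over directed index sets.
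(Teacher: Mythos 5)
Your proposal is correct and takes essentially the same approach as the paper, whose proof is simply ``This follows from the previous proposition'': apply the triangulated functor $R\invlim$ to the distinguished triangle of Prop.\ \ref{prop: cohomology of formal limit of A tensor Z mod n} and the isomorphism of Prop.\ \ref{prop: all mod n isom imply completion isom}. The one point you flag — that $R\invlim$ of the constant pro-object on $A$ is $A$ itself — is indeed used silently by the paper (already in the sentence ``This induces a morphism $R\invlim_n A \to A$''); your Mittag-Leffler justification via Prop.\ \ref{prop: proetale topos has enough projectives} is stated there only for a sequence in $\Ab(k^{\ind\rat}_{\pro\et})$ rather than for an unbounded complex, but the extension (e.g.\ via exactness and full faithfulness of the constant pro-object functor, which is left adjoint to $\invlim$) is routine and the same gloss appears in the paper.
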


\begin{proof}
	This follows from the previous proposition.
\end{proof}

\begin{Prop} \label{prop: cohomologies of derived completion}
	Let $A \in D(k^{\ind\rat}_{\pro\et})$.
	Assume that
		\[
				R^{i} \invlim_{n}(H^{j}(A) \tensor \Z / n \Z)
			=
				R^{i} \invlim_{n}(H^{j}(A)[n])
			=
				0
		\]
	for any $i \ge 1$ and $j \in \Z$.
	Then we have an exact sequence
		\[
				0
			\to
				H^{i}(A)^{\wedge}
			\to
				H^{i} R \invlim_{n}(A \tensor^{L} \Z / n \Z)
			\to
				T H^{i + 1}(A)
			\to
				0.
		\]
	for all $i \in \Z$.
\end{Prop}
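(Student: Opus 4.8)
The plan is to apply the derived inverse limit functor $R\invlim$ to the short exact sequences supplied by Prop.\ \ref{prop: cohomology of formal limit of A tensor Z mod n}, and to read off the assertion from the resulting long exact sequences; the two hypotheses are precisely what is needed to make all the relevant higher derived limits vanish. Write $C = \finvlim[n](A \dtensor \Z / n \Z) \in D(\Pro \Ab(k^{\ind\rat}_{\pro\et}))$, so that by definition $R \invlim_{n}(A \dtensor \Z / n \Z) = R \invlim\, C$. For each $i \in \Z$, Prop.\ \ref{prop: cohomology of formal limit of A tensor Z mod n} gives a short exact sequence in $\Pro \Ab(k^{\ind\rat}_{\pro\et})$
\[
	0 \to \finvlim[n](H^{i}(A) \tensor \Z / n \Z) \to H^{i}(C) \to \finvlim[n] H^{i + 1}(A)[n] \to 0.
\]

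First I would apply $R \invlim$ to this sequence. By hypothesis $R^{p} \invlim$ of both outer pro-objects vanishes for $p \ge 1$, so the long exact sequence shows $R^{p} \invlim H^{i}(C) = 0$ for all $p \ge 1$ and all $i$, and that taking $\invlim$ leaves a short exact sequence
\[
	0 \to \invlim_{n}(H^{i}(A) \tensor \Z / n \Z) \to \invlim H^{i}(C) \to \invlim_{n} H^{i + 1}(A)[n] \to 0.
\]
By the definitions of $(\var)^{\wedge}$ and $T$ recalled earlier this is exactly
\[
	0 \to H^{i}(A)^{\wedge} \to \invlim H^{i}(C) \to T H^{i + 1}(A) \to 0.
\]

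What remains is to identify $\invlim H^{i}(C)$ with $H^{i} R \invlim\, C$. Every pro-object in play is indexed by $(\Z_{\ge 1}, \mid)$, hence — via the cofinal reindexing $\sigma$ recalled before Prop.\ \ref{prop: derived limit for multiplication} — isomorphic to a countable inverse system, so Prop.\ \ref{prop: proetale topos has enough projectives} shows that $R \invlim$ has cohomological dimension $\le 1$ on the objects involved. Consequently the hypercohomology spectral sequence $E_{2}^{p, q} = R^{p} \invlim H^{q}(C) \Rightarrow H^{p + q} R \invlim\, C$ converges, and by the vanishing established above its $E_{2}$-page is concentrated in the column $p = 0$; hence it degenerates and yields $H^{i} R \invlim\, C \cong \invlim H^{i}(C)$. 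Combined with the last displayed short exact sequence, this proves the proposition. The only step that is not pure bookkeeping is this last convergence-and-degeneration argument; once one knows the higher $R \invlim$ of the cohomology pro-objects of $C$ vanish, the rest is formal.
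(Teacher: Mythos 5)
Your proof is correct and follows essentially the same route as the paper's own proof: use the two vanishing hypotheses to kill $R^{p}\invlim$ of $H^{i}(C)$ for $p \ge 1$ via the short exact sequence from Prop.\ \ref{prop: cohomology of formal limit of A tensor Z mod n}, identify $H^{i} R\invlim C$ with $\invlim H^{i}(C)$, and then apply $\invlim$ to that short exact sequence. The only difference is cosmetic: you make explicit the convergence/degeneration of the hypercohomology spectral sequence (justified by the cohomological dimension $\le 1$ of $\invlim$ on sequences, from Prop.\ \ref{prop: proetale topos has enough projectives}), whereas the paper states the identification $H^{i} R\invlim C \cong \invlim H^{i}(C)$ without spelling out the spectral-sequence bookkeeping.
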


\begin{proof}
	By Prop.\ \ref{prop: cohomology of formal limit of A tensor Z mod n},
	we know that applying $R^{i} \invlim$ to
	$H^{j} \finvlim[n](A \tensor^{L} \Z / n \Z)$
	results zero for any $i \ge 1$ and $j \in \Z$.
	Hence the $i$-th cohomology of $R \invlim_{n}(A \tensor^{L} \Z / n \Z)$
	is given by applying $\invlim$ to the $i$-th cohomology of $\finvlim[n](A \tensor^{L} \Z / n \Z)$
	for any $i \in \Z$.
	Applying $\invlim$ to the exact sequence in
	Prop.\ \ref{prop: cohomology of formal limit of A tensor Z mod n},
	we get the required exact sequence.
\end{proof}

\begin{Prop} \label{prop: direct inverse adelic rationalization triangle}
	For any $A \in D(k^{\ind\rat}_{\pro\et})$,
	the natural morphism
		\[
				\Bigl(
					R \invlim_{n}
						(A \tensor^{L} \Z / n \Z)
				\Bigr) \tensor \Q
			\to
				R \invlim_{n}(A \tensor^{L} \Q / \Z)
		\]
	is an isomorphism.
	Denote these isomorphic objects by
		$
			A \ctensor \Adele^{\infty}
		$
	We have a natural distinguished triangle
		\[
				R \invlim_{n} A
			\to
				A \tensor \Q
			\to
				A \ctensor \Adele^{\infty}.
		\]
	The assignment $A \mapsto A \ctensor \Adele^{\infty}$ is a triangulated endofunctor
	on $D(k^{\ind\rat}_{\pro\et})$
	that sends $\Z$ to $\Adele^{\infty}$.
\end{Prop}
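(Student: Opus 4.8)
The plan is to build everything on top of the already-established machinery of Proposition \ref{prop: all mod n isom imply derived completion isom} and Proposition \ref{prop: cohomologies of derived completion}, so that the only genuinely new content is the comparison between tensoring with $\Q$ and passing from $\Z/n\Z$-coefficients to $\Q/\Z$-coefficients. First I would treat the isomorphism claim. Note that $\Q/\Z = \dirlim_{n} \Z/n\Z$ as a filtered colimit along the transition maps $\Z/n\Z \overset{m/n}{\to} \Z/m\Z$ for $n \mid m$. Since $A \dtensor (\var)$ and formation of the pro-object $\finvlim[n]$ both commute with this filtered colimit in the appropriate sense — more precisely, the pro-system $\finvlim[n](A \dtensor \Z/n\Z)$, after applying the cofinal reindexing by $\sigma$ as in \S\ref{sec: The ind-rational pro-etale site and some derived limits}, is isomorphic to the pro-system obtained by the same construction with $\Q/\Z$ coefficients but with the index maps multiplied through — one gets that $R\invlim_{n}(A \dtensor \Q/\Z)$ is computed by the same double complex $\prod$-construction, with each term being $A_{\sigma(n)} \dtensor \Q/\Z$ instead of $A_{\sigma(n)} \dtensor \Z/n\Z$. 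Rationalizing $\finvlim[n](A \dtensor \Z/n\Z)$ kills the torsion that distinguishes $\Z/n\Z$ from being a summand of $\Q/\Z$ and identifies the two pro-systems up to unique divisibility; then one applies $R\invlim$, using that $-\tensor\Q$ is exact and commutes with the (degreewise) products appearing in the explicit complex computing $R\invlim$ of Proposition \ref{prop: proetale topos has enough projectives}. This gives the first isomorphism.

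Next I would construct the distinguished triangle. Start from the distinguished triangle of Proposition \ref{prop: all mod n isom imply derived completion isom}, namely
\[
        R \invlim_{n} A
    \to
        A
    \to
        R \invlim_{n}(A \tensor^{L} \Z / n \Z)
    \to
        R \invlim_{n} A \,[1],
\]
and apply the exact functor $-\tensor\Q$ to it; this is a triangulated functor on $D(k^{\ind\rat}_{\pro\et})$ since $\Q$ is flat over $\Z$. One obtains a distinguished triangle
\[
        (R \invlim_{n} A)\tensor\Q
    \to
        A\tensor\Q
    \to
        (R \invlim_{n}(A \tensor^{L} \Z / n \Z))\tensor\Q.
\]
The third term is $A \ctensor \Adele^{\infty}$ by the isomorphism just proved. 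It remains to identify the first term with $R\invlim_{n} A$ itself, i.e.\ to show that $R\invlim_{n} A$ is already uniquely divisible, so that $(R\invlim_{n} A)\tensor\Q = R\invlim_{n} A$. But this was already recorded in \S\ref{sec: The ind-rational pro-etale site and some derived limits}: the objects $\finvlim[n] A$ and $R\invlim_{n} A$ are uniquely divisible because multiplication by any integer $m$ on the pro-system $\finvlim[n] A$ is an isomorphism (it is a shift on the index set, cofinal in both directions). Hence the natural map $R\invlim_{n} A \to (R\invlim_{n} A)\tensor\Q$ is an isomorphism, and substituting it into the triangle yields the asserted triangle $R\invlim_{n} A \to A\tensor\Q \to A\ctensor\Adele^{\infty}$.

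Finally, the functoriality and triangulatedness of $A \mapsto A\ctensor\Adele^{\infty}$ follow because it is defined as the third term of a functorial distinguished triangle whose first two terms, $A \mapsto R\invlim_{n} A$ and $A \mapsto A\tensor\Q$, are triangulated endofunctors; equivalently it is the composite of $A \mapsto \finvlim[n](A\dtensor\Q/\Z)$ (triangulated, being the cone of a natural transformation of triangulated functors into $D(\Pro\Ab(k^{\ind\rat}_{\pro\et}))$) with the triangulated functor $R\invlim$. For the normalization $\Z \ctensor \Adele^{\infty} = \Adele^{\infty}$, take $A = \Z$: then $R\invlim_{n}\Z = 0$ since multiplication by $n$ on $\Z$ is injective with cokernel $\Z/n\Z$ and the pro-system $\finvlim[n]\Z$ has zero inverse limit and vanishing $R^{1}\invlim$ by the Mittag-Leffler-type computation — actually more directly, $R\invlim_{n}(\Z\dtensor\Q/\Z) = R\invlim_{n}\Q/\Z$, and since the pro-system $\finvlim[n]\Q/\Z$ has transition maps that are multiplication by $m/n$, which on $\Q/\Z$ are surjective, it is Mittag-Leffler, so $R\invlim_{n}\Q/\Z = \invlim_{n}\Q/\Z$; one computes this inverse limit to be $\Hat{\Z}\tensor\Q = \Adele^{\infty}$ directly (an element of the inverse limit is a compatible system $(x_n \bmod \Z) $ which is exactly an element of $(\invlim_n \tfrac1n\Z/\Z)$-flavored object, giving $\Adele^{\infty}$), or quote that $\Adele^{\infty} = V(\Q/\Z)$ from \S\ref{sec: The ind-rational pro-etale site and some derived limits} together with Proposition \ref{prop: derived limit for multiplication} applied to the torsion sheaf $\Q/\Z$. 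I expect the main obstacle to be the first step: making rigorous the claim that rationalizing the $\Z/n\Z$-coefficient pro-system agrees with the $\Q/\Z$-coefficient pro-system. The cleanest route is to avoid manipulating pro-objects by hand and instead argue at the level of cohomology pro-sheaves via Proposition \ref{prop: cohomology of formal limit of A tensor Z mod n}, checking that on each $H^{i}$ the two pro-systems become isomorphic after $\tensor\Q$ because $\Tor$- and torsion-terms are bounded $n$-torsion in each degree; but one must be a little careful that the comparison is compatible with $R\invlim$, which is where Proposition \ref{prop: proetale topos has enough projectives}'s explicit two-term complex for derived limits, combined with exactness of $\tensor\Q$, does the work.
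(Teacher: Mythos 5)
Your construction of the distinguished triangle by tensoring $R\invlim_n A \to A \to R\invlim_n(A\tensor^{L}\Z/n\Z)$ with $\Q$ and using the unique divisibility of $R\invlim_n A$ is correct, and it is exactly half of the argument. But your treatment of the isomorphism $\bigl(R\invlim_n(A\tensor^{L}\Z/n\Z)\bigr)\tensor\Q \to R\invlim_n(A\tensor^{L}\Q/\Z)$ has a genuine gap, as you yourself flag at the end. The proposed termwise comparison of the two pro-systems ``after rationalization'' is misleading: each slot $A\tensor^{L}\Z/n\Z$ dies after $\tensor\Q$, so there is no sensible identification of pro-systems ``with the index maps multiplied through''; the entire point is that $\tensor\Q$ does not commute with $R\invlim$. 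The missing idea, which the paper uses, is to construct a \emph{second} triangle with the same first two terms and the same first morphism: apply $R\invlim_n$ to the rationalization triangle $A\to A\tensor\Q\to A\tensor^{L}\Q/\Z$, and observe that since $A\tensor\Q$ is uniquely divisible one has $R\invlim_n(A\tensor\Q)=A\tensor\Q$, yielding $R\invlim_n A\to A\tensor\Q\to R\invlim_n(A\tensor^{L}\Q/\Z)$. Comparing this triangle with the one you obtained by $\tensor\Q$ produces the isomorphism and the distinguished triangle simultaneously, with no pro-object manipulations.

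Separately, your parenthetical claim that $R\invlim_n\Z=0$ is false, and would in fact contradict the proposition: the triangle $R\invlim_n\Z\to\Q\to\Z\ctensor\Adele^{\infty}$ would then give $\Z\ctensor\Adele^{\infty}=\Q\neq\Adele^{\infty}$. The system $\finvlim[n]\Z$ is not Mittag-Leffler (after the cofinal reindexing the images are $\sigma(n)\Z$, strictly decreasing), and indeed the triangle $R\invlim_n\Z\to\Z\to R\invlim_n(\Z/n\Z)=\Hat{\Z}$ shows $R\invlim_n\Z\cong[\Z\to\Hat{\Z}][-1]$, concentrated in degree $1$ with nonzero cohomology $\Hat{\Z}/\Z$. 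Your pivot to computing $R\invlim_n(\Q/\Z)$ directly, via Mittag-Leffler of the $\Q/\Z$-system and the identification $\invlim_n(\Q/\Z)=V(\Q/\Z)=\Adele^{\infty}$ from Prop.\ \ref{prop: derived limit for multiplication}, is the correct route for the normalization.
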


\begin{proof}
	We have a distinguished triangle
		\[
				R \invlim_{n} A
			\to
				A
			\to
				R \invlim_{n}(A \tensor^{L} \Z / n \Z)
		\]
	and hence a distinguished triangle
		\[
				R \invlim_{n} A
			\to
				A \tensor \Q
			\to
				\Bigl(
					R \invlim_{n}
						(A \tensor^{L} \Z / n \Z)
				\Bigr) \tensor \Q.
		\]
	Also we have a distinguished triangle
		\[
				A
			\to
				A \tensor \Q
			\to
				A \tensor^{L} \Q / \Z
		\]
	and hence a distinguished triangle
		\[
				R \invlim_{n} A
			\to
				A \tensor \Q
			\to
				R \invlim_{n}(A \tensor^{L} \Q / \Z).
		\]
	Comparing the two triangles, we get the result.
\end{proof}

The boundedness conditions in the following two propositions might be unnecessary.

\begin{Prop}
	For $A \in D^{+}(k^{\ind\rat}_{\pro\et})$, there is a canonical isomorphism
		\[
				R \invlim_{n}(A \tensor^{L} \Z / n \Z)
			=
				R \sheafhom_{k}(\Q / \Z, A)[1].
		\]
\end{Prop}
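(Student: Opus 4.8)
The plan is to realise both sides as the derived inverse limit of the tower $\{R\sheafhom_{k}(\Z/n\Z,A)\}_{n}$ indexed by $(\Z_{\ge 1},\mid)$, using nothing deeper than the two-term free resolution $[\Z\overset{n}{\to}\Z]$ of $\Z/n\Z$.

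First I would note that applying the triangulated functor $R\sheafhom_{k}(\var,A)$ to the short exact sequence $0\to\Z\overset{n}{\to}\Z\to\Z/n\Z\to 0$ identifies $R\sheafhom_{k}(\Z/n\Z,A)$ with the fibre of $A\overset{n}{\to}A$, that is, with $(A\dtensor\Z/n\Z)[-1]$. The delicate point is to make this natural in $n$ along $(\Z_{\ge 1},\mid)$ with the correct transition maps on both sides. By its construction in \S\ref{sec: The ind-rational pro-etale site and some derived limits} as a cone of $\finvlim[n]A\to A$, the pro-object $\finvlim[n](A\dtensor\Z/n\Z)$ carries, for $n\mid m$, the transition maps induced by the reductions $\Z/m\Z\onto\Z/n\Z$; whereas $\Q/\Z=\dirlim_{n}\Z/n\Z$ is presented by the maps $\Z/n\Z\overset{m/n}{\to}\Z/m\Z$. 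A chain-level computation on the complexes $[\Z\overset{n}{\to}\Z]$ shows that under the identification above, $\Z/n\Z\overset{m/n}{\to}\Z/m\Z$ corresponds to the reduction $\Z/m\Z\onto\Z/n\Z$ (up to sign). Hence the isomorphisms above assemble into an isomorphism of pro-objects
\[
	\finvlim[n]R\sheafhom_{k}(\Z/n\Z,A)\;\cong\;\Bigl(\finvlim[n](A\dtensor\Z/n\Z)\Bigr)[-1]
\]
in $D(\Pro \Ab(k^{\ind\rat}_{\pro\et}))$.

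Applying $R\invlim$, the right-hand side becomes $\bigl(R\invlim_{n}(A\dtensor\Z/n\Z)\bigr)[-1]$ by definition. For the left-hand side I would replace $(\Z_{\ge 1},\mid)$ by a cofinal sequence $\sigma(1)\mid\sigma(2)\mid\cdots$ as in \S\ref{sec: The ind-rational pro-etale site and some derived limits}, so that $\Q/\Z$ fits in the telescope short exact sequence of sheaves
\[
	0\longrightarrow\bigoplus_{j}\Z/\sigma(j)\Z\overset{1-\mathrm{shift}}{\longrightarrow}\bigoplus_{j}\Z/\sigma(j)\Z\longrightarrow\Q/\Z\longrightarrow 0,
\]
and apply $R\sheafhom_{k}(\var,A)$. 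Using $R\sheafhom_{k}\bigl(\bigoplus_{j}\Z/\sigma(j)\Z,A\bigr)\cong\prod_{j}R\sheafhom_{k}(\Z/\sigma(j)\Z,A)$ together with exactness of products in $\Ab(k^{\ind\rat}_{\pro\et})$ (Prop.\ \ref{prop: proetale topos has enough projectives}), this gives a distinguished triangle exhibiting $R\sheafhom_{k}(\Q/\Z,A)$ as the homotopy fibre of $1-\mathrm{shift}^{\ast}$ on $\prod_{j}R\sheafhom_{k}(\Z/\sigma(j)\Z,A)$ --- which is exactly $R\invlim_{j}R\sheafhom_{k}(\Z/\sigma(j)\Z,A)$ by (the derived-category form of) Prop.\ \ref{prop: proetale topos has enough projectives}. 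Combining with the previous paragraph and shifting by $1$ yields the asserted isomorphism $R\invlim_{n}(A\dtensor\Z/n\Z)\cong R\sheafhom_{k}(\Q/\Z,A)[1]$; the hypothesis $A\in D^{+}$ enters only to guarantee that $R\sheafhom_{k}(\Q/\Z,\var)$ is computed by a bounded-below injective resolution with no convergence issues.

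The hard part will be the naturality assertion in the second paragraph: verifying, at the level of the explicit two-term complexes, that ``multiplication by $m/n$'' on the system presenting $\Q/\Z$ is dual to ``reduction mod $n$'' on the tower $\finvlim[n](A\dtensor\Z/n\Z)$. This index-and-sign bookkeeping is routine but unavoidable, and it is precisely what makes the resulting isomorphism canonical and compatible with the distinguished triangle of Prop.\ \ref{prop: all mod n isom imply derived completion isom} and with the functoriality of $A\mapsto R\invlim_{n}(A\dtensor\Z/n\Z)$. Everything else is formal manipulation of distinguished triangles and filtered (co)limits.
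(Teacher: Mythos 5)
Your proof is correct, and it takes a genuinely different organizational route from the paper's, although the substance is closely related. The paper leverages the already-established isomorphism $R\sheafhom_{k}(\Q,A)\cong R\invlim_{n} A$ (from the proof of \cite[Prop.\ (2.2.3)]{Suz14}), and then simply compares the triangle defining $R\invlim_{n}(A\tensor^{L}\Z/n\Z)$ with the triangle induced by $0\to\Z\to\Q\to\Q/\Z\to 0$ after applying $R\sheafhom_{k}(\var,A)$; the cones of two matched morphisms agree. You instead bypass $\Q$ altogether and compute $R\sheafhom_{k}(\Q/\Z,A)$ head-on: you identify $R\sheafhom_{k}(\Z/n\Z,A)\cong(A\tensor^{L}\Z/n\Z)[-1]$ via the two-term resolution, check that transition maps match, and then use a telescope for $\Q/\Z$ together with exactness of products to recognize $R\sheafhom_{k}(\Q/\Z,A)$ as $R\invlim_{n}R\sheafhom_{k}(\Z/n\Z,A)$. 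What the paper's route buys is brevity: the single cited fact absorbs all the derived-limit bookkeeping, and the final step is a pure comparison of triangles. What your route buys is self-containment and a slightly more direct identification of $R\sheafhom_{k}(\Q/\Z,A)$ without passing through $\Q$; the price, as you correctly anticipate, is the chain-level verification that the reduction-induced transitions on $\finvlim[n](A\tensor^{L}\Z/n\Z)$ correspond to the $\sheafhom$-dual of the inclusions $\Z/n\Z\into\Z/m\Z$. One small caveat: your explanation for where the $D^{+}$ hypothesis enters is slightly off --- exactness of products in $\Ab(k^{\ind\rat}_{\pro\et})$ already controls the derived $\invlim$ issues; the boundedness is rather what the cited argument in \cite{Suz14} needs, and it would be worth pinning down the precise point at which your telescope argument uses it (if at all, since with K-injective resolutions and exact products much of your computation looks insensitive to boundedness).
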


\begin{proof}
	We have
		\[
				R \sheafhom_{k}(\Q, A)
			=
				R \invlim_{n} R \sheafhom_{k}(\Z, A)
			=
				R \invlim_{n} A
		\]
	by the proof of \cite[Prop.\ (2.2.3)]{Suz14}
	(which needs the bounded below condition).
	Comparing the two distinguished triangles
		\begin{gather*}
					R \invlim_{n} A
				\to
					A
				\to
					R \invlim_{n}(A \tensor^{L} \Z / n \Z),
			\\
					R \sheafhom_{k}(\Q, A)
				\to
					R \sheafhom_{k}(\Z, A)
				\to
					R \sheafhom_{k}(\Q / \Z, A)[1],
		\end{gather*}
	we get the result.
\end{proof}

\begin{Prop} \label{prop: derived completion is RHom from Q mod Z}
	For $A \in D^{-}(k^{\ind\rat}_{\pro\et})$
	and $B \in D^{+}(k^{\ind\rat}_{\pro\et})$,
	there is a canonical isomorphism
		\[
				R \invlim_{n} \bigl(
					R \sheafhom_{k}(A, B) \tensor^{L} \Z / n \Z
				\bigr)
			=
				R \sheafhom_{k}(A \tensor^{L} \Q / \Z, B)[1].
		\]
\end{Prop}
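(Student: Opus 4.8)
The plan is to reduce the statement to the previous proposition by means of the derived tensor--hom adjunction. Set $C = R \sheafhom_{k}(A, B)$. Since $A$ is concentrated in degrees $\le a$ for some $a$ and $B$ in degrees $\ge b$ for some $b$, the object $C$ is concentrated in degrees $\ge b - a$; in particular $C \in D^{+}(k^{\ind\rat}_{\pro\et})$, so the previous proposition applies to $C$ and gives a canonical isomorphism
	\[
			R \invlim_{n}(C \dtensor \Z / n \Z)
		=
			R \sheafhom_{k}(\Q / \Z, C)[1]
		=
			R \sheafhom_{k}\bigl(\Q / \Z, R \sheafhom_{k}(A, B)\bigr)[1].
	\]

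Next I would invoke the derived tensor--hom adjunction in $D(k^{\ind\rat}_{\pro\et})$, which supplies a canonical isomorphism
	\[
			R \sheafhom_{k}\bigl(\Q / \Z, R \sheafhom_{k}(A, B)\bigr)
		\cong
			R \sheafhom_{k}(\Q / \Z \dtensor A, B)
		\cong
			R \sheafhom_{k}(A \dtensor \Q / \Z, B),
	\]
the last step being the commutativity of $\dtensor$. Here $A \dtensor \Q / \Z \in D^{-}(k^{\ind\rat}_{\pro\et})$, so the right-hand side again lies in $D^{+}$ and the expression is well-posed. Composing the two chains of isomorphisms, and noting that by definition $C \dtensor \Z / n \Z = R \sheafhom_{k}(A, B) \dtensor \Z / n \Z$ for every $n$ so that $R \invlim_{n}(C \dtensor \Z / n \Z)$ is precisely the left-hand side of the assertion, yields the desired canonical isomorphism.

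I expect no genuine obstacle here; the two points that require a moment's care are the boundedness bookkeeping --- which is exactly why the hypotheses $A \in D^{-}$ and $B \in D^{+}$ are imposed, since the previous proposition rests on a bounded-below argument and the adjunction must be applied to objects for which both sides are defined --- and the verification that every isomorphism used is functorial, so that the composite is canonical: the isomorphism furnished by the previous proposition is constructed by comparing the two evident distinguished triangles and is natural in its argument, while the derived tensor--hom adjunction is natural in all variables. Nothing beyond the standard formalism of unbounded derived categories of sheaves on sites (\cite[Chap.\ 18]{KS06}) is needed.
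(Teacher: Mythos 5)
Your proof is correct and follows the same approach as the paper: observe that $R \sheafhom_{k}(A,B)$ is bounded below, apply the previous proposition to get $R \sheafhom_{k}(\Q/\Z, R\sheafhom_{k}(A,B))[1]$, and finish with the derived tensor--Hom adjunction. The only difference is that you spell out the boundedness bookkeeping and the canonicity, which the paper leaves implicit.
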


\begin{proof}
	We know that $R \sheafhom_{k}(A, B)$ is bounded below.
	By the previous proposition, the left-hand side can be written as
		\[
			R \sheafhom_{k} \bigl(
				\Q / \Z,
				R \sheafhom_{k}(A, B)
			\bigr)[1].
		\]
	This is also the right-hand side by the derived tensor-Hom adjunction.
\end{proof}


\subsection{Premorphisms of sites}
\label{sec: Premorphisms of sites}

For a site $S$, we denote its category of sheaves of sets (resp.\ abelian groups)
by $\Set(S)$ (resp.\ $\Ab(S)$).
The category of complexes in $\Ab(S)$ is $\Ch(S)$,
its homotopy category $K(S)$ and derived category $D(S)$.
We say $A \in \Ch(S)$ is \emph{K-limp}
if $R \Gamma(X, A) = \Gamma(X, A)$ for any object $X \in S$,
where $\Gamma$ in the right-hand side is applied term-wise.
See also \cite[Cor.\ 5.17]{Spa88} and \cite[Appendix A]{Sch17}.
This is not a ``K''-version of limp sheaf as defined in \cite[Tag 072Y]{Sta18}
since $X$ is not allowed to be an arbitrary sheaf of sets but only a representable one.
K-injectives are K-limp.

The following three propositions are essentially well-known,
at least for bounded below complexes of sheaves or just sheaves;
see \cite[III, \S 1, \S 2]{Mil80}.

\begin{Prop} \label{prop: sheaf-Hom to K-injective is K-limp}
	Let $S$ be a site and $A, J \in \Ch(S)$ complexes such that $J$ is K-injective.
	Then the total complex of the sheaf-Hom double complex
	$\sheafhom_{S}(A, J)$ is K-limp.
\end{Prop}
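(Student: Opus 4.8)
The plan is to reduce the assertion to a statement about the behavior of $R\Gamma(X,-)$ on the individual terms of the sheaf-Hom double complex, using the fact that $J$ is K-injective term-wise split into shifted copies of its components in a suitable sense. First I would recall the construction: the total complex $\Tot\,\sheafhom_{S}(A,J)$ has $n$-th term $\prod_{q-p=n}\sheafhom_{S}(A^{p},J^{q})$, with the usual sign-twisted differential. To show this total complex is K-limp, I must show that for every representable $X\in S$ the natural map $\Gamma(X,\Tot\,\sheafhom_{S}(A,J))\to R\Gamma(X,\Tot\,\sheafhom_{S}(A,J))$ is an isomorphism in $D(\Ab)$. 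The cleanest route is to exhibit $\Tot\,\sheafhom_{S}(A,J)$ as (a limit of) complexes each of which is visibly K-limp, and to check that $R\Gamma(X,-)$ commutes with the relevant limit.

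The key observation is that for a \emph{single} injective sheaf $I$ and any sheaf $F$, the sheaf $\sheafhom_{S}(F,I)$ is flasque in the strong sense that $R\Gamma(X,\sheafhom_{S}(F,I))=\Gamma(X,\sheafhom_{S}(F,I))=\Hom_{S}(\Z[X]\tensor F, I)$; this is the classical computation (see \cite[III, \S 1]{Mil80}) that $\sheafhom_{S}(F,I)$ has no higher cohomology on representables because $\Hom_{S}(-,I)$ is exact and $\Z[X]\tensor -$ preserves the exact sequences of a Čech-type resolution of $\Z[X]$. Applying this termwise, each ``column'' $\sheafhom_{S}(A^{p},J^{q})$ has vanishing higher cohomology on every representable. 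A bounded-below (in each of the two indices, or after truncation) total complex of sheaves with vanishing higher cohomology on representables is K-limp by the standard spectral-sequence / hypercohomology argument: the hypercohomology spectral sequence $E_{1}^{s,t}=H^{t}(X,C^{s})\Rightarrow H^{s+t}(X,\Tot C)$ degenerates to the $t=0$ row, giving $R\Gamma(X,\Tot C)=\Gamma(X,\Tot C)$.

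The main obstacle is that $A$ and $J$ are \emph{unbounded} complexes, so the total complex involves a genuine product $\prod_{q-p=n}$ and the naive spectral-sequence argument does not immediately apply; one must control the interaction of $R\Gamma(X,-)$ with this infinite product and with the totalization of an unbounded double complex. The plan to handle this: since $J$ is K-injective, filter $J$ by its stupid (brutal) truncations $\sigma_{\ge m}J$ and $\sigma_{\le m}J$; for the brutal truncations the double complex $\sheafhom_{S}(A,\sigma_{\le m}J)$ is bounded in the $J$-direction, so its total complex is a finite product of K-limp complexes and hence K-limp, and $\Tot\,\sheafhom_{S}(A,J)=R\invlim_{m}\Tot\,\sheafhom_{S}(A,\sigma_{\le m}J)$ — here I would invoke that $J$ K-injective forces the transition maps to be termwise split surjective, so the $\invlim$ is already derived and $R\Gamma(X,-)$ commutes with it by the explicit $\prod\to\prod$ model of $R\invlim$ from Prop.\ \ref{prop: proetale topos has enough projectives} (for the site $\Spec k^{\ind\rat}_{\pro\et}$; for a general site one uses that a product of K-limp complexes is K-limp, which follows because $R\Gamma(X,-)$ is computed by a fixed injective resolution and products of injectives are injective). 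Since each $\Tot\,\sheafhom_{S}(A,\sigma_{\le m}J)$ is K-limp and $R\Gamma(X,-)$ commutes with the (derived) inverse limit, the limit $\Tot\,\sheafhom_{S}(A,J)$ is K-limp as well. The delicate point to get right in writing this up is precisely the termwise-split-surjectivity of $\sigma_{\ge m}J\to\sigma_{\ge m-1}J$-type maps and the resulting Mittag-Leffler vanishing, together with checking that ``K-limp'' is preserved under such limits on an arbitrary site; once those are nailed down the rest is formal.
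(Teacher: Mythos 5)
Your plan breaks down at the base case, before the truncations and limits come in. The spectral sequence $E_{1}^{s,t} = H^{t}(X, C^{s}) \Rightarrow H^{s+t}(X, C)$ you want to invoke converges only when $C$ is bounded below, and a complex of sheaves each of which is acyclic on representables need not be K-limp if it is unbounded below (an unbounded-below exact complex of injectives has zero derived global sections, but $\Gamma(X, \var)$ of it can fail to be exact). This is exactly the situation you land in: the degree-$n$ term of the total complex of $\sheafhom_{S}(A, \sigma_{\le m}J)$ is $\prod_{p \le m - n} \sheafhom_{S}(A^{p}, J^{p+n})$, which is nonzero for every $n$ as soon as $A$ is unbounded below, so the total complex and each of its rows $\sheafhom_{S}(A, J^{q})[-q]$ remain genuinely unbounded and the ``finite product/filtration of K-limp pieces'' is not available; the gap already appears for $J = I[0]$ a single injective, where $\sheafhom_{S}(A, I)$ is an unbounded complex of acyclics and termwise acyclicity alone gives nothing. (Also note $\sigma_{\le m}J$ is bounded \emph{above} only, not bounded, so the truncated double complex does not even have finitely many rows.)

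The paper's proof never touches the double complex. Since $J$ is K-injective, $\sheafhom_{S}(A, J) = R \sheafhom_{S}(A, J)$; by \cite[Prop.\ 18.6.6]{KS06}, $R \Gamma(X, R \sheafhom_{S}(A, J)) = R \Hom_{S/X}(A, J)$ where $S/X$ is the localization of $S$ at $X$; and restriction to $S/X$ preserves K-injectives because its left adjoint (extension by zero) is exact, so $R \Hom_{S/X}(A, J) = \Hom_{S/X}(A, J) = \Gamma(X, \sheafhom_{S}(A, J))$. If you want a hands-on replacement for your intended base case $J = I[0]$, take a K-flat resolution $P \to A$; then $\sheafhom_{S}(P, I)$ is K-injective by tensor-Hom adjunction against K-flatness, and both $\sheafhom_{S}(\var, I)$ and $\Gamma(X, \sheafhom_{S}(\var, I)) = \Hom_{S/X}(\var, I)$ are exact and carry the quasi-isomorphism $P \to A$ to quasi-isomorphisms, giving $R \Gamma(X, \sheafhom_{S}(A, I)) = \Gamma(X, \sheafhom_{S}(A, I))$. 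Either way, the reduction to degree-by-degree acyclicity plus a spectral sequence cannot be made to work when $A$ is unbounded.
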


\begin{proof}
	Note that
	$\sheafhom_{S}(A, J) = R \sheafhom_{S}(A, J)$.
	By \cite[Prop.\ 18.6.6]{KS06}, we have
	$R \Gamma(X, R \sheafhom_{S}(A, J)) = R \Hom_{S / X}(A, J)$
	for any $X \in S$,
	where $S / X$ is the localization of $S$ at $X$.
	Hence
		\begin{align*}
			&
					R \Gamma(X, \sheafhom_{S}(A, J))
				=
					R \Hom_{S / X}(A, J)
			\\
			&	=
					\Hom_{S / X}(A, J)
				=
					\Gamma(X, \sheafhom_{S}(A, J)).
		\end{align*}
\end{proof}

Let $f^{-1}$ be a functor from the underlying category of a site $S$
to the underlying category of another site $S'$.
The right composition with $f^{-1}$ defines a functor
from the category of presheaves of sets on $S'$ to
the category of presheaves of sets on $S$,
which is the pushforward functor for presheaves.
If this functor sends sheaves to sheaves,
$f^{-1}$ is called a continuous functor in the terminology of \cite[Exp.\ III, Def.\ 1.1]{AGV72a}.
We say that $f^{-1}$ then defines a continuous map of sites $f \colon S' \to S$.
The pushforward functor $\Set(S') \to \Set(S)$ or $\Ab(S') \to \Ab(S)$ is denoted by $f_{\ast}$.
The left adjoint to $f_{\ast} \colon \Set(S') \to \Set(S)$ exists by \cite[Exp.\ III, Prop.\ 1.2]{AGV72a},
which we denote by $f^{\ast\set} \colon \Set(S) \to \Set(S')$.
The left adjoint to $f_{\ast} \colon \Ab(S') \to \Ab(S)$ exists by \cite[Exp.\ III, Prop.\ 1.7]{AGV72a},
which we denote by $f^{\ast} \colon \Set(S) \to \Set(S')$.
They are called pullback functors.
The continuous map $f \colon S' \to S$ is called a morphism of sites
if the pullback $f^{\ast\set} \colon \Set(S) \to \Set(S')$ for sheaves of sets is exact.
In this case, the pullback for sheaves of abelian groups $f^{\ast} \colon \Ab(S) \to \Ab(S')$ and
$f^{\ast\set}$ are compatible with forgetting group structures
by \cite[III, Prop.\ 1.7, 4)]{AGV72a},
so we do not have to distinguish $f^{\ast}$ and $f^{\ast\set}$.
In general, we use $f^{\ast}$ to mean the pullback functor for sheaves of abelian groups.

\begin{Prop} \label{prop: K-limp is push injective}
	Let $f \colon S' \to S$ be a continuous map of sites.
	For any K-limp complex $A'$ in $\Ab(S')$,
	we have $R f_{\ast} A' = f_{\ast} A'$ in $D(S)$.
\end{Prop}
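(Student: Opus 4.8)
Let $f\colon S'\to S$ be a continuous map of sites and let $A'$ be a K-limp complex in $\Ab(S')$; we must show $Rf_*A' = f_*A'$ in $D(S)$, i.e.\ the canonical map $f_*A'\to Rf_*A'$ is an isomorphism.

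The plan is to compute $Rf_*A'$ by choosing a K-injective resolution $A'\to J'$ in $\Ch(S')$ (possible since $\Ab(S')$ is a Grothendieck category, hence has enough injectives and K-injective resolutions), so that $Rf_*A' = f_*J'$ by definition. The mapping cone $C' = [A'\to J']$ is then an acyclic K-injective complex, hence is K-limp (K-injectives are K-limp, as noted in the excerpt), and also $A'$ is K-limp by hypothesis; therefore $C'$ is K-limp as the cone of a map between K-limp complexes — or, more directly, $C'$ is acyclic and K-injective, so $R\Gamma(X,C') = \Gamma(X,C')$ and also $R\Gamma(X,C')$ is acyclic, whence $\Gamma(X,C')$ is acyclic for every $X\in S'$. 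I would then argue that it suffices to show $f_*C'$ is acyclic in $\Ab(S)$, since the triangle $f_*A'\to f_*J'\to f_*C'\to$ would then force $f_*A'\isomto f_*J' = Rf_*A'$.

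First I would recall that for a continuous map of sites, the pushforward on presheaves is computed sectionwise, and $f_*$ on sheaves is the composite of presheaf-pushforward (which preserves sheaves by continuity) with the forgetful functor; in particular, for a representable object $X\in S$, we have $(f_*B')(X) = B'(f^{-1}X)$ for any sheaf (or complex of sheaves) $B'$ on $S'$ — here $f^{-1}X\in S'$ is the image of $X$ under the underlying functor. Hence, applied termwise, $(f_*C')(X) = C'(f^{-1}X) = \Gamma(f^{-1}X, C')$, and we just saw this complex of abelian groups is acyclic. Since a complex of sheaves on $S$ is acyclic iff its stalks, or equivalently its sections over a generating family, are acyclic — and the representables generate — the complex $f_*C'$ is acyclic in $\Ab(S)$. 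This gives the desired isomorphism.

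The one point that needs care — and the likely main obstacle — is the last sentence: checking that $f_*C'$ is acyclic in $\Ab(S)$ from the fact that $\Gamma(X, f_*C') = \Gamma(f^{-1}X, C')$ is acyclic for every representable $X\in S$. Acyclicity of a complex of sheaves is a local condition, and representable objects $X$ need not form a conservative family of points; however, sheafification is exact, and the cohomology sheaves $\mathcal H^i(f_*C')$ are the sheafifications of the presheaf cohomologies $X\mapsto H^i(\Gamma(X,f_*C'))$, which vanish identically. Hence $\mathcal H^i(f_*C') = 0$ for all $i$, so $f_*C'$ is acyclic. (This is exactly the standard argument that a complex of sheaves with vanishing presheaf cohomology is acyclic; it uses only that sheafification commutes with finite limits and filtered colimits.) With that in hand the proof is complete: the distinguished triangle $f_*A'\to f_*J'\to f_*C'\to f_*A'[1]$ in $D(S)$, together with $f_*C'\simeq 0$, yields $f_*A' \isomto Rf_*A'$.
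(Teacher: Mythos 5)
Your proof is correct and takes essentially the same approach as the paper's: replace $A'$ by a K-injective complex $J'$, show the mapping cone $C'$ has acyclic sections over each object of $S'$ (using K-limpness of $A'$ together with K-injectivity, hence K-limpness, of $J'$), deduce that $f_*C'$ is acyclic via exactness of sheafification, and conclude from the pushforward triangle. One small slip to flag: the cone $C' = [A'\to J']$ is \emph{not} in general K-injective when $A'$ itself is not K-injective, so the parenthetical ``more directly, $C'$ is acyclic and K-injective'' is not a valid shortcut; the property you actually need --- and which you also correctly derive --- is only that $C'$ is K-limp, as the cone of a morphism between two K-limp complexes.
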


\begin{proof}
	Let $A' \isomto I'$ be a K-injective replacement.
	Let $B'$ be the mapping cone of $A' \to I'$,
	which is an exact complex and hence $R f_{\ast} B' = 0$ in $D(S)$.
	For any $X' \in S'$, consider the distinguished triangle
	$\Gamma(X', A') \to \Gamma(X', I') \to \Gamma(X', B')$.
	In $D(\Ab)$, we have $\Gamma(X', A') = R \Gamma(X', A')$ by assumption
	and $\Gamma(X', I') = R \Gamma(X', I')$ by the K-injectivity of $I'$.
	Hence $\Gamma(X', B') = R \Gamma(X', B')$,
	which is zero in $D(\Ab)$ since $B'$ is exact.
	This means that $B'$ is also exact as a complex of presheaves.
	Hence $f_{\ast} B'$ is exact.
	Consider the distinguished triangle
	$f_{\ast} A' \to f_{\ast} I' \to f_{\ast} B'$.
	We have $f_{\ast} I' = R f_{\ast} I'$ and $f_{\ast} B' = R f_{\ast} B' = 0$,
	so $f_{\ast} A' = R f_{\ast} A'$.
\end{proof}

If $S$ and $S'$ are sites defined by pretopologies,
and if $f^{-1}$ is a functor from the underlying category of $S$ to that of $S'$
that sends coverings to coverings and
$f^{-1}(Y \times_{X} Z) = f^{-1}(Y) \times_{f^{-1}(X)} f^{-1}(Z)$
whenever $Y \to X$ appears in a covering family,
then $f^{-1}$ is called a morphism of topologies in the terminology of \cite[Def.\ 2.4.2]{Art62}
and defines a continuous map $f \colon S' \to S$.
We call such a continuous map a \emph{premorphism of sites}.
In this case, $f_{\ast}$ sends acyclic sheaves
(i.e.\ those $A'$ with $H^{n}(X', A') = 0$ for any object $X'$ of $S'$ and $n \ge 1$) to acyclic sheaves
and hence induces the Leray spectral sequence
$R \Gamma(X, R f_{\ast} A') = R \Gamma(f^{-1}(X), A')$
for any $X \in S$ and $A' \in \Ab(S')$ \cite[\S II.4]{Art62}.
Here is a slight generalization.

\begin{Prop} \label{prop: push sends K limp to K limp}
	Let $f \colon S' \to S$ be a premorphism of sites defined by pretopologies.
	Then $f_{\ast}$ sends K-limp complexes to K-limp complexes.
	For any $X \in S$ and $A' \in D(S')$, we have an isomorphism
		\[
				R \Gamma(X, R f_{\ast} A')
			=
				R \Gamma(f^{-1} X, A'),
		\]
	where $f^{-1} \colon S \to S'$ is the underlying functor of $f$.
\end{Prop}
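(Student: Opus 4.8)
\emph{Strategy.} The plan is to deduce both assertions from one: that for every $B' \in D(S')$ and every $X \in S$ there is a natural isomorphism $R \Gamma(X, R f_{\ast} B') \cong R \Gamma(f^{-1} X, B')$. Granting this, the first assertion is immediate. If $A' \in \Ch(S')$ is K-limp then $R f_{\ast} A' = f_{\ast} A'$ in $D(S)$ by Prop.\ \ref{prop: K-limp is push injective} (which needs only that $f$ is a continuous map), so plugging $A'$ into the isomorphism gives
	\[
			R \Gamma(X, f_{\ast} A')
		=
			R \Gamma(X, R f_{\ast} A')
		=
			R \Gamma(f^{-1} X, A')
		=
			\Gamma(f^{-1} X, A')
		=
			\Gamma(X, f_{\ast} A'),
	\]
where the third equality is the K-limpness of $A'$ and the outer ones are just the fact that $f_{\ast}$ is computed termwise on presheaves; this says $f_{\ast} A'$ is K-limp. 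Since the second displayed isomorphism of the proposition \emph{is} precisely the assertion I am isolating, everything reduces to proving that isomorphism for an arbitrary complex $B'$.

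\emph{Reducing to K-limpness of $f_{\ast} I'$.} I would fix a K-injective resolution $B' \to I'$ all of whose terms $I'^{n}$ are injective objects of $\Ab(S')$ (such resolutions exist). Then $R f_{\ast} B' = f_{\ast} I'$ in $D(S)$ by definition of the derived functor, while on the other side $R \Gamma(f^{-1} X, B') = R \Gamma(f^{-1} X, I') = \Gamma(f^{-1} X, I') = \Gamma(X, f_{\ast} I')$, using that $I'$ is K-injective and, again, that $f_{\ast}$ is the presheaf pushforward termwise. Hence it is enough to show that $f_{\ast} I'$ is K-limp in $\Ab(S)$: once that is known, $R \Gamma(X, R f_{\ast} B') = R \Gamma(X, f_{\ast} I') = \Gamma(X, f_{\ast} I') = R \Gamma(f^{-1} X, B')$, and naturality is clear.

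\emph{K-limpness of $f_{\ast} I'$.} Each $I'^{n}$, being injective, is an acyclic sheaf on $S'$, and since $f$ is a premorphism of sites defined by pretopologies, $f_{\ast}$ carries acyclic sheaves to acyclic sheaves (\cite[\S II.4]{Art62}); thus every term of $f_{\ast} I'$ is acyclic on $S$. Moreover, for any $X \in S$ and any covering family $\{X_{i} \to X\}$, the functor $f^{-1}$ sends it to a covering family $\{f^{-1} X_{i} \to f^{-1} X\}$ and commutes with the iterated fibre products $X_{i_{0}} \times_{X} \cdots \times_{X} X_{i_{p}}$ occurring in its \v{C}ech nerve; therefore the \v{C}ech double complex of $f_{\ast} I'$ attached to $\{X_{i} \to X\}$ is identified with that of $I'$ attached to $\{f^{-1} X_{i} \to f^{-1} X\}$, and because the terms of $I'$ are acyclic and $I'$ is K-injective its product-total complex is a resolution of $\Gamma(f^{-1} X, I') = \Gamma(X, f_{\ast} I')$. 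Having every term acyclic and every such \v{C}ech complex computing the sections, one concludes $f_{\ast} I'$ is K-limp via the theory of K-limp (K-flasque) complexes (\cite[Cor.\ 5.17]{Spa88}, \cite[Appendix A]{Sch17}).

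\emph{Main obstacle.} The delicate point is exactly this last step in the \emph{unbounded} setting. For a single sheaf, or for a bounded below complex, the identity $R \Gamma(X, R f_{\ast} B') = R \Gamma(f^{-1} X, B')$ is the classical Leray isomorphism for a premorphism and requires no \v{C}ech bookkeeping at all (\cite[\S II.4]{Art62}); the whole purpose of the K-limp formalism here is to push it through for unbounded $B'$, and the technical heart is verifying that $f_{\ast}$ of a K-injective complex is genuinely K-limp — not merely a complex whose terms are acyclic, which by itself would not suffice — i.e.\ controlling the convergence of the \v{C}ech (or hypercover) spectral sequence for the unbounded complex $I'$.
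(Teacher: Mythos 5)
Your reduction is sound and mirrors the paper's own logical ordering: the Leray-type isomorphism $R\Gamma(X, Rf_{\ast} B') \cong R\Gamma(f^{-1}X, B')$ for arbitrary $B' \in D(S')$ does imply the K-limpness assertion, and you further correctly reduce proving that isomorphism to showing that $f_{\ast} I'$ is K-limp for a K-injective resolution $I'$. But there you stop: you flag the K-limpness of $f_{\ast} I'$ as the ``main obstacle,'' acknowledge that term-wise acyclicity of $f_{\ast} I'$ alone would not suffice, and gesture at \v{C}ech or hypercover spectral-sequence convergence without giving any argument for it. That step is the whole content of the proposition in the unbounded setting. The \v{C}ech double-complex identification you describe is fine, but it does not by itself deliver $R\Gamma(X, f_{\ast} I') = \Gamma(X, f_{\ast} I')$ for an unbounded complex without a convergence argument you never supply; as written, the proof has a genuine gap precisely where you say it does.

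The paper avoids the \v{C}ech analysis entirely and takes a shorter route through adjunction. By \cite[Lem.\ 3.7.2]{Suz13}, the pullback $f^{\ast}\colon\Ab(S)\to\Ab(S')$ admits a left derived functor $Lf^{\ast}\colon D(S)\to D(S')$ on the unbounded derived categories, which is left adjoint to $Rf_{\ast}$ and satisfies $Lf^{\ast}\Z[X]=\Z[f^{-1}X]$ for every object $X\in S$ (the free abelian sheaf on a representable is carried to the free abelian sheaf on its image, with no higher derived terms). The isomorphism is then a one-line adjunction computation:
\[
		R\Gamma(X, Rf_{\ast} A')
	=	R\Hom_{S}(\Z[X], Rf_{\ast} A')
	=	R\Hom_{S'}(Lf^{\ast}\Z[X], A')
	=	R\Hom_{S'}(\Z[f^{-1}X], A')
	=	R\Gamma(f^{-1}X, A'),
\]
valid for all $A'$ with no boundedness hypothesis and no control of spectral-sequence convergence. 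The K-limpness of $f_{\ast}$ applied to K-limp complexes then follows by exactly the bookkeeping you already wrote down. If you wish to salvage your \v{C}ech strategy you would have to genuinely prove the K-limpness of $f_{\ast} I'$; the adjunction argument simply sidesteps the issue you yourself identified as the technical heart.
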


\begin{proof}
	By \cite[Lem.\ 3.7.2]{Suz13}, we know that
	$f^{\ast} \colon \Ab(S) \to \Ab(S')$ admits a left derived functor
	$L f^{\ast} \colon D(S) \to D(S')$,
	which is left adjoint to $R f_{\ast} \colon D(S') \to D(S)$ and
	sends the sheaf $\Z[X]$ of free abelian groups generated by any $X \in S$ to $\Z[f^{-1} X]$.
	For any $A' \in \Ch(S')$ and $X \in S$, we have
		\begin{align*}
				R \Gamma(X, R f_{\ast} A')
		&	=
				R \Hom_{S}(\Z[X], R f_{\ast} A')
			=
				R \Hom_{S'}(L f^{\ast} \Z[X], A')
		\\
		&	=
				R \Hom_{S'}(\Z[f^{-1} X], A')
			=
				R \Gamma(f^{-1} X, A').
		\end{align*}
	If $A'$ is K-limp, then $R f_{\ast} A' = f_{\ast} A'$ by Prop.\ \ref{prop: K-limp is push injective}
	and $R \Gamma(f^{-1} X, A') = \Gamma(f^{-1} X, A')$.
	Hence $R \Gamma(X, f_{\ast} A') = \Gamma(f^{-1} X, A') = \Gamma(X, f_{\ast} A')$,
	and so $f_{\ast} A'$ is K-limp.
\end{proof}


\subsection{Local duality without relative sites}
\label{sec: Local duality without relative sites}

Let $\Hat{K}_{x}$ be a complete discrete valuation field
with perfect residue field $k_{x}$ ($\in \mathcal{U}_{0}$) of characteristic $p > 0$.
The ring of integers is denoted by $\Hat{\Order}_{x}$ with maximal ideal $\Hat{\ideal{p}}_{x}$.
By the fppf site $\Spec \Hat{\Order}_{x, \fppf}$ of $\Hat{\Order}_{x}$,
we mean the category of ($\mathcal{U}_{0}$-small) $\Hat{\Order}_{x}$-algebras
endowed with the fppf topology.
The same applies to the fppf site $\Spec \Hat{K}_{x, \fppf}$ of $\Hat{K}_{x}$.
Sheaves take values in $\Set$ or $\Ab$ (of $\mathcal{U}$-small sets or abelian groups).
The Hom functor and the sheaf-Hom functor for $\Spec \Hat{\Order}_{x, \fppf}$ is denoted by
$\Hom_{\Hat{\Order}_{x}}$ and $\sheafhom_{\Hat{\Order}_{x}}$, respectively.
Their derived functors are denoted by
$\Ext_{\Hat{\Order}_{x}}^{n}$, $\sheafext_{\Hat{\Order}_{x}}^{n}$,
$R \Hom_{\Hat{\Order}_{x}}$, $R \sheafhom_{\Hat{\Order}_{x}}$.
Similar notation applies to $\Spec \Hat{K}_{x, \fppf}$.
We denote $k_{x}^{\ind\rat} = (k_{x})^{\ind\rat}$,
$\Spec k^{\ind\rat}_{x, \et} = \Spec (k_{x})^{\ind\rat}_{\et}$ and
$\Spec k^{\ind\rat}_{x, \pro\et} = \Spec (k_{x})^{\ind\rat}_{\pro\et}$.

The ring $\Hat{\Order}_{x}$ has a natural structure of a $W(k_{x})$-algebra of pro-finite-length,
where $W$ denotes the ring of $p$-typical Witt vectors of infinite length.
In equal characteristic, this structure factors through $k_{x}$
so that $\Hat{\Order}_{x}$ is pro-finite-length over $k_{x}$.
In mixed characteristic, it is finite free over $W(k_{x})$.
For $k'_{x} \in k^{\ind\rat}_{x}$, we define
	\begin{gather*}
				\Hat{\alg{O}}_{x}(k'_{x})
			=
				W(k'_{x}) \Hat{\tensor}_{W(k_{x})} \Hat{\Order}_{x}
			=
				\invlim_{n} \bigl(
					W_{n}(k'_{x}) \tensor_{W_{n}(k_{x})} \Hat{\Order}_{x} / \Hat{\ideal{p}}_{x}^{n}
				\bigr),
		\\
				\Hat{\alg{K}}_{x}(k'_{x})
			=
					\Hat{\alg{O}}_{x}(k'_{x})
				\tensor_{\Hat{\Order}_{x}}
					\Hat{K}_{x}.
	\end{gather*}

\begin{Prop}
	The functors $k'_{x} \mapsto \Hat{\alg{O}}_{x}(k'_{x}), \Hat{\alg{K}}_{x}(k'_{x})$ define premorphisms of sites
		\begin{gather*}
					\pi_{\Hat{\Order}_{x}}
				\colon
					\Spec \Hat{\Order}_{x, \fppf}
				\to
					\Spec k^{\ind\rat}_{x, \et},
			\\
					\pi_{\Hat{K}_{x}}
				\colon
					\Spec \Hat{K}_{x, \fppf}
				\to
					\Spec k^{\ind\rat}_{x, \et}.
		\end{gather*}
\end{Prop}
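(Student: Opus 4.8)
The plan is to verify the three defining conditions of a premorphism of sites (a morphism of topologies in the sense of \cite[Def.\ 2.4.2]{Art62}) for the functor $k'_{x} \mapsto \Hat{\alg{O}}_{x}(k'_{x})$: that it is a well-defined functor from $k^{\ind\rat}_{x}$ to $\Hat{\Order}_{x}$-algebras, that it sends \'etale covering families of $\Spec k^{\ind\rat}_{x, \et}$ to fppf covering families, and that it commutes with the fibre products $k'_{x, i} \tensor_{k'_{x}} k'_{x, j}$ occurring in such families. The statement for $\Hat{\alg{K}}_{x}$ then follows formally: since $\Hat{\alg{K}}_{x}(\var) = \Hat{\alg{O}}_{x}(\var) \tensor_{\Hat{\Order}_{x}} \Hat{K}_{x}$ and the base change $\var \tensor_{\Hat{\Order}_{x}} \Hat{K}_{x}$ (inverting a uniformizer) is flat and compatible with all fibre products, it is itself a premorphism $\Spec \Hat{K}_{x, \fppf} \to \Spec \Hat{\Order}_{x, \fppf}$, and $\pi_{\Hat{K}_{x}}$ is the composite, a premorphism since composites of premorphisms are. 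Well-definedness and $\mathcal{U}_{0}$-smallness are immediate from the description $\Hat{\alg{O}}_{x}(k'_{x}) = \invlim_{n}\bigl(W_{n}(k'_{x}) \tensor_{W_{n}(k_{x})} \Hat{\Order}_{x} / \Hat{\ideal{p}}_{x}^{n}\bigr)$, a countable limit of $\Hat{\Order}_{x}$-algebras functorial in $k'_{x}$, which is $\Hat{\ideal{p}}_{x}$-adically complete.

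The observation that makes the remaining two conditions manageable is that every \'etale algebra over an object of $k^{\ind\rat}_{x}$ is automatically \emph{finite} \'etale. Indeed an \'etale $k'_{x}$-algebra, being of finite presentation, descends to an \'etale algebra over some rational subalgebra $k''_{x} \subseteq k'_{x}$; but $k''_{x}$ is a finite product of fields, over which an \'etale algebra is a finite product of finite separable field extensions, hence module-finite; and module-finiteness is preserved under the base change back to $k'_{x}$. Thus an \'etale covering family $\{k'_{x} \to k'_{x, i}\}$ consists of finite \'etale (in particular finite projective) $k'_{x}$-algebras, jointly faithfully flat.

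With this reduction, I would argue as follows. At each truncated level, $W_{n}$ carries finite \'etale algebras to finite \'etale algebras and satisfies the base-change identity $W_{n}(B \tensor_{A} C) = W_{n}(B) \tensor_{W_{n}(A)} W_{n}(C)$ for $A \to B$ finite \'etale, both being consequences of van der Kallen's theorem that $W_{n}$ preserves \'etale morphisms together with the topological invariance of the finite \'etale site along the nilpotent thickening $W_{n}(C) \onto C$ (whose kernel $V W_{n-1}(C)$ is nilpotent); compare \cite{Suz14} and the references therein. Base-changing along $W_{n}(k_{x}) \to \Hat{\Order}_{x} / \Hat{\ideal{p}}_{x}^{n}$ and using stability of finite \'etale morphisms and of fibre products under base change, one finds that at level $n$ the algebra $\Hat{\alg{O}}_{x}(k'_{x, i})$ reduces to a finite \'etale algebra over the level-$n$ reduction of $\Hat{\alg{O}}_{x}(k'_{x})$, compatibly in $n$; passing to the limit and invoking the topological invariance of the finite \'etale site of the $\Hat{\ideal{p}}_{x}$-adically complete ring $\Hat{\alg{O}}_{x}(k'_{x})$, the family $\{\Hat{\alg{O}}_{x}(k'_{x}) \to \Hat{\alg{O}}_{x}(k'_{x, i})\}$ is finite \'etale and jointly faithfully flat, hence an fppf covering. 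The fibre-product identity $\Hat{\alg{O}}_{x}(k'_{x, i} \tensor_{k'_{x}} k'_{x, j}) \cong \Hat{\alg{O}}_{x}(k'_{x, i}) \tensor_{\Hat{\alg{O}}_{x}(k'_{x})} \Hat{\alg{O}}_{x}(k'_{x, j})$ (for $k'_{x} \to k'_{x, i}$ \'etale) is obtained the same way: it holds at each level $n$ by the Witt base-change identity and the usual rearrangement of iterated tensor products, and it survives $\invlim_{n}$ because $\Hat{\alg{O}}_{x}(k'_{x, i})$ is finite projective over $\Hat{\alg{O}}_{x}(k'_{x})$, so the tensor product is finite over the $\Hat{\ideal{p}}_{x}$-complete ring $\Hat{\alg{O}}_{x}(k'_{x, j})$ and is therefore already complete and computed levelwise.

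The one delicate step is precisely this passage from the truncated Witt levels to the limit: the rings $W_{n}(k'_{x}) \tensor_{W_{n}(k_{x})} \Hat{\Order}_{x}/\Hat{\ideal{p}}_{x}^{n}$ are far from Noetherian, so one cannot quote the usual local flatness criterion or an Elkik-type approximation statement for general smooth or \'etale morphisms; it is the \emph{finiteness} of the \'etale covers over $k^{\ind\rat}_{x}$ established in the second paragraph, together with the explicit presentation of $\Hat{\alg{O}}_{x}(\var)$ as an honest $\invlim_{n}$, that makes the limit behave well, via the rigidity of finite projective modules over adically complete rings. All of the requisite Witt-vector and completion bookkeeping is already set up in the local-duality foundations of \cite{Suz14}, so in practice this proposition is deduced from, rather than re-proving, the results established there.
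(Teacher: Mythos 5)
Your proof is correct, but it takes a genuinely different route from the paper's. The paper disposes of the whole proposition in three lines by citing the proof of \cite[Prop.~2.3.1]{Suz13}, which already establishes that $\Hat{\alg{O}}_{x}(k'_{x}) \to \Hat{\alg{O}}_{x}(k''_{x})$ is (faithfully flat) \'etale when $k'_{x} \to k''_{x}$ is, together with the base-change identity $\Hat{\alg{O}}_{x}(k''_{x}) \tensor_{\Hat{\alg{O}}_{x}(k'_{x})} \Hat{\alg{O}}_{x}(k'''_{x}) \cong \Hat{\alg{O}}_{x}(k''_{x} \tensor_{k'_{x}} k'''_{x})$ for arbitrary $k'''_{x}$. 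You instead re-derive these facts from scratch, and your route is sound: the reduction to finite \'etale covers (valid because an \'etale algebra of finite presentation over an ind-rational $k$-algebra descends to a finite product of perfect fields, over which \'etale forces module-finiteness) is exactly the right hypothesis to make the Witt-vector base-change theorem applicable, and the passage to the limit is legitimate because $\Hat{\alg{O}}_{x}(k'_{x})$ is $\Hat{\ideal{p}}_{x}$-adically complete ($\Hat{\ideal{p}}_{x}$ being finitely generated), hence the pair $(\Hat{\alg{O}}_{x}(k'_{x}), \Hat{\ideal{p}}_{x})$ is henselian and the finite \'etale site is rigid along the tower of reductions; finite projectivity of the resulting covers then ensures that the limit commutes with tensor product. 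What your approach buys is self-containedness and an explicit reason why the non-Noetherian completion causes no trouble; what the paper buys by citation is brevity and the fact that \cite{Suz13} has already done the bookkeeping in the generality needed (including the reduction of $\pi_{\Hat{K}_{x}}$ to $\pi_{\Hat{\Order}_{x}}$ by the localization $j$, which you also observe). One small formulation slip: Artin's fibre-product axiom requires $f^{-1}(Y \times_{X} Z) \cong f^{-1}(Y) \times_{f^{-1}(X)} f^{-1}(Z)$ for $Y \to X$ a member of a covering and $Z$ \emph{arbitrary}; you stated it only for $Z = k'_{x,j}$ another member of the covering. Your completeness-plus-finite-projectivity argument actually works for arbitrary $k'''_{x}$ (only finiteness of $\Hat{\alg{O}}_{x}(k'_{x,i})$ over $\Hat{\alg{O}}_{x}(k'_{x})$ is used, not any property of the second factor), so this is a cosmetic issue, but you should phrase the conclusion in the required generality.
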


\begin{proof}
	From the proof of \cite[Prop.\ 2.3.1]{Suz13}, we know that
	if $k'_{x} \to k''_{x}$ is a (faithfully flat) \'etale homomorphism in $k_{x}^{\ind\rat}$,
	then $\Hat{\alg{O}}_{x}(k'_{x}) \to \Hat{\alg{O}}_{x}(k''_{x})$ is (faithfully flat) \'etale,
	and if $k'''_{x} \in k_{x}^{\ind\rat}$ is another object,
	then $\Hat{\alg{O}}_{x}(k''_{x}) \tensor_{\Hat{\alg{O}}_{x}(k'_{x})} \Hat{\alg{O}}_{x}(k'''_{x})$
	is isomorphic to $\Hat{\alg{O}}_{x}(k''_{x} \tensor_{k'_{x}} k'''_{x})$.
	Hence $\pi_{\Hat{\Order}_{x}}$ is a premorphism of sites.
	So is $\pi_{\Hat{K}_{x}}$.
\end{proof}

The scheme morphism $j \colon \Spec \Hat{K}_{x} \into \Spec \Hat{\Order}_{x}$
defines a morphism of sites
	\[
			j
		\colon
			\Spec \Hat{K}_{x, \fppf}
		\to
			\Spec \Hat{\Order}_{x, \fppf}.
	\]
Its pullback functor $j^{\ast}$
is the restriction functor for sheaves from $\Hat{\Order}_{x}$ to $\Hat{K}_{x}$.
This restriction functor is frequently omitted by abuse of notation,
so a sheaf (or a complex of sheaves) $F$ on $\Spec \Hat{\Order}_{x, \fppf}$
restricted to $\Spec \Hat{K}_{x, \fppf}$
will frequently be written by just $F$.
More generally, if $f \colon Z \to Y$ is a morphism of schemes,
then the restriction $f^{\ast} F$ of a sheaf (or a complex of sheaves) $F$ on $Y_{\fppf}$
will frequently be written by just $F$.
But the important point is that $f$ is a localization morphism
\cite[III, \S 5]{AGV72a}.
Recall from loc.\ cit.\ that for a site $S$ and its object $V$,
the category $S / V$ of pairs $(W, g)$ ($W$ an object of $S$ and $g \colon W \to V$ a morphism in $S$)
is equipped with the natural induced topology.
The functor $j_{V} \colon (W, g) \mapsto W$ defines a continuous map of sites $S \to S / V$
whose pushforward functor $\Set(S) \to \Set(S / V)$ is the restriction functor.
In our situation, by $f$ being a localization morphism, we mean that
$f \colon Z \to Y$ belongs to the underlying category of $Y_{\fppf}$
and $Z_{\fppf}$ can be identified with $Y_{\fppf} / Z$.
Hence $f^{\ast}$ admits an exact left adjoint $f_{!} \colon \Ab(Z_{\fppf}) \to \Ab(Y_{\fppf})$
by \cite[IV, Prop.\ 11.3.1]{AGV72a}.
Hence $f^{\ast}$ send (K-)injectives to (K-)injectives.
This will be needed to pass from sheaf or complex level statements to derived categorical statements.

We have the pushforward functors
	\begin{gather*}
				(\pi_{\Hat{\Order}_{x}})_{\ast}
			\colon
				\Ab(\Hat{\Order}_{x, \fppf})
			\to
				\Ab(k^{\ind\rat}_{x, \et}),
		\\
				(\pi_{\Hat{K}_{x}})_{\ast}
			\colon
				\Ab(\Hat{K}_{x, \fppf})
			\to
				\Ab(k^{\ind\rat}_{x, \et})
	\end{gather*}
by $\pi_{\Hat{\Order}_{x}}$ and $\pi_{\Hat{K}_{x}}$.
Let $\Ab(k^{\ind\rat}_{x, \et}) \to \Ab(k^{\ind\rat}_{x, \pro\et})$
be the pro-\'etale sheafification functor.
The composite functors of these functors are denoted by
	\begin{gather*}
				\alg{\Gamma}(\Hat{\Order}_{x}, \var)
			\colon
				\Ab(\Hat{\Order}_{x, \fppf})
			\to
				\Ab(k^{\ind\rat}_{x, \pro\et}),
		\\
				\alg{\Gamma}(\Hat{K}_{x}, \var)
			\colon
				\Ab(\Hat{K}_{x, \fppf})
			\to
				\Ab(k^{\ind\rat}_{x, \pro\et}).
	\end{gather*}
We have the right derived functors
	\begin{gather*}
				R \alg{\Gamma}(\Hat{\Order}_{x}, \var)
			\colon
				D(\Hat{\Order}_{x, \fppf})
			\to
				D(k^{\ind\rat}_{x, \pro\et}),
		\\
				R \alg{\Gamma}(\Hat{K}_{x}, \var)
			\colon
				D(\Hat{K}_{x, \fppf})
			\to
				D(k^{\ind\rat}_{x, \pro\et}).
	\end{gather*}
Since sheafification is exact,
$R \alg{\Gamma}(\Hat{\Order}_{x}, \var)$ is the composite of
$R (\pi_{\Hat{\Order}_{x}})_{\ast}$
and the pro-\'etale sheafification functor
$D(k^{\ind\rat}_{x, \et}) \to D(k^{\ind\rat}_{x, \pro\et})$.
The same is true for $R \alg{\Gamma}(\Hat{K}_{x}, \var)$.
We denote $\alg{H}^{n}(\Hat{\Order}_{x}, \var) = H^{n} R \alg{\Gamma}(\Hat{\Order}_{x}, \var)$
and similarly $\alg{H}^{n}(\Hat{K}_{x}, \var)$.
By Prop.\ \ref{prop: comparison with relative site formalism},
these functors are compatible with the functors
$R \Tilde{\alg{\Gamma}}(\Hat{\Order}_{x}, \var)$ and
$R \Tilde{\alg{\Gamma}}(\Hat{K}_{x}, \var)$
in the notation of \cite[paragraph before Prop.\ (3.3.8)]{Suz14}.

The pro-\'etale sheafification in general makes it hard to calculate the derived global section
$R \Gamma(k'_{x, \pro\et}, \var)$ of the objects
$R \alg{\Gamma}(\Hat{\Order}_{x}, G)$ and $R \alg{\Gamma}(\Hat{K}_{x}, G)$
at each $k'_{x} \in k_{x}^{\ind\rat}$ for a general complex $G$.
In general, it is not clear whether the natural morphisms
	\begin{gather*}
				R \Gamma(\Hat{\Order}_{x}, G)
			\to
				R \Gamma \bigl(
					k_{x, \pro\et},
					R \alg{\Gamma}(\Hat{\Order}_{x}, G)
				\bigr)
		\\
				R \Gamma(\Hat{K}_{x}, G)
			\to
				R \Gamma \bigl(
					k_{x, \pro\et},
					R \alg{\Gamma}(\Hat{K}_{x}, G)
				\bigr)
	\end{gather*}
are isomorphisms or not.
The first (resp.\ second) morphism is an isomorphisms
if $R \alg{\Gamma}(\Hat{\Order}_{x}, G)$ (resp.\ $R \alg{\Gamma}(\Hat{K}_{x}, G)$) is P-acyclic
in the sense of \cite[\S 2.4]{Suz14}.
This condition is satisfied for $G$ a smooth group scheme or a finite flat group scheme
by \cite[Prop.\ (3.4.2), (3.4.3)]{Suz14}.
This is why the notion of P-acyclicity is introduced in \cite[\S 2.4]{Suz14}.
The following proposition shows
we can eliminate P-acyclicity from \cite{Suz14}
to a certain extent.

\begin{Prop}
	Let $G \in D(\Hat{\Order}_{x, \fppf})$.
	For any w-contractible $k'_{x} \in k_{x}^{\ind\rat}$
	(\cite[Def. 2.4.1]{BS15}, which includes any algebraically closed field over $k_{x}$),
	we have
		\[
				R \Gamma \bigl(
					k'_{x, \pro\et},
					R \alg{\Gamma}(\Hat{\Order}_{x}, G)
				\bigr)
			=
				R \Gamma(\Hat{\alg{O}}_{x}(k'_{x}), G).
		\]
	A similar relation holds with $\Hat{\Order}_{x}$ replaced by $\Hat{K}_{x}$.
\end{Prop}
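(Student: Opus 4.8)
The plan is to unwind the definitions and to exploit the single defining property of a w-contractible algebra, namely that the pro-\'etale topology on $\Spec k'_{x}$ is trivial. First I would use the observation already recorded before the statement: since the pro-\'etale sheafification functor $\epsilon^{\ast} \colon \Ab(k^{\ind\rat}_{x, \et}) \to \Ab(k^{\ind\rat}_{x, \pro\et})$ is exact, $R \alg{\Gamma}(\Hat{\Order}_{x}, G)$ is computed by choosing a K-injective resolution $G \to I$ in $\Ab(\Hat{\Order}_{x, \fppf})$, forming the complex of \'etale sheaves $(\pi_{\Hat{\Order}_{x}})_{\ast} I$ (which represents $R (\pi_{\Hat{\Order}_{x}})_{\ast} G$), and applying $\epsilon^{\ast}$. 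Thus $R \alg{\Gamma}(\Hat{\Order}_{x}, G) = \epsilon^{\ast} \bigl( (\pi_{\Hat{\Order}_{x}})_{\ast} I \bigr)$ in $D(k^{\ind\rat}_{x, \pro\et})$.

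Next I would invoke w-contractibility of $k'_{x}$ twice. On the one hand, $\Z[\Spec k'_{x}]$ is a projective object of $\Ab(k^{\ind\rat}_{x, \pro\et})$ by Prop.\ \ref{prop: proetale topos has enough projectives}, so the functor $\Gamma(k'_{x, \pro\et}, \var) = \Hom_{k^{\ind\rat}_{x, \pro\et}}(\Z[\Spec k'_{x}], \var)$ is exact and hence computes $R \Gamma(k'_{x, \pro\et}, \var)$ termwise on complexes. On the other hand, every faithfully flat ind-\'etale cover of $\Spec k'_{x}$ splits; since an ind-\'etale algebra over an ind-rational $k$-algebra is again ind-rational, every pro-\'etale covering sieve of $\Spec k'_{x}$ in $k^{\ind\rat}_{x, \pro\et}$ is the maximal sieve. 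Consequently, for any presheaf $P$ on $k^{\ind\rat}_{x, \pro\et}$ the pro-\'etale sheafification $a P$ satisfies $(a P)(k'_{x}) = P(k'_{x})$; applying this termwise to the complex of presheaves underlying $(\pi_{\Hat{\Order}_{x}})_{\ast} I$ gives $R \Gamma(k'_{x, \pro\et}, R \alg{\Gamma}(\Hat{\Order}_{x}, G)) = \Gamma \bigl( k'_{x, \pro\et}, \epsilon^{\ast}((\pi_{\Hat{\Order}_{x}})_{\ast} I) \bigr) = \bigl( (\pi_{\Hat{\Order}_{x}})_{\ast} I \bigr)(k'_{x})$.

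Finally, by the very definition of the pushforward along the premorphism $\pi_{\Hat{\Order}_{x}}$, whose underlying functor sends $k'_{x}$ to $\Hat{\alg{O}}_{x}(k'_{x})$, one has $\bigl( (\pi_{\Hat{\Order}_{x}})_{\ast} I \bigr)(k'_{x}) = I(\Hat{\alg{O}}_{x}(k'_{x}))$; and since $\Hat{\alg{O}}_{x}(k'_{x})$ is an object of $\Spec \Hat{\Order}_{x, \fppf}$ and $I$ is a K-injective resolution of $G$ there, $I(\Hat{\alg{O}}_{x}(k'_{x})) = R \Gamma(\Hat{\alg{O}}_{x}(k'_{x}), G)$. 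Stringing these identifications together yields the claimed isomorphism, canonically since every step is. The argument for $\Hat{K}_{x}$ is identical, with $\pi_{\Hat{\Order}_{x}}$ replaced by $\pi_{\Hat{K}_{x}}$ and $\Hat{\alg{O}}_{x}(k'_{x})$ by $\Hat{\alg{K}}_{x}(k'_{x})$. The only substantive point — the place where the argument could fail for a non-w-contractible $k'_{x}$ — is the middle paragraph: one must be certain that pro-\'etale sheafification does not change sections over $\Spec k'_{x}$, i.e.\ that w-contractibility genuinely forces all pro-\'etale covering sieves of $\Spec k'_{x}$ to be maximal; this is where the hypothesis is used in an essential way, beyond merely providing enough projectives.
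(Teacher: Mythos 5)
Your proposal is correct and follows essentially the same line of reasoning as the paper's own proof. Both arguments rest on the same three facts: that w-contractibility makes $\Gamma(k'_{x, \pro\et}, \var)$ exact (via projectivity of $\Z[\Spec k'_{x}]$, as in Prop.\ \ref{prop: proetale topos has enough projectives}); that every pro-\'etale covering of $\Spec k'_{x}$ splits, so sheafification leaves sections over $k'_{x}$ unchanged; and that a K-injective (hence K-limp) resolution $I$ of $G$ has the property that $(\pi_{\Hat{\Order}_{x}})_{\ast} I$ computes $R(\pi_{\Hat{\Order}_{x}})_{\ast} G$ and is itself K-limp. The only cosmetic difference is that the paper packages the last step through Prop.\ \ref{prop: push sends K limp to K limp} (first identifying $R\Gamma(k'_{x, \et}, R(\pi_{\Hat{\Order}_{x}})_{\ast} G) = R\Gamma(\Hat{\alg{O}}_{x}(k'_{x}), G)$, then separately handling the \'etale-to-pro-\'etale comparison) and therefore also notes that $\Gamma(k'_{x}, \var)$ is exact on $\Ab(k^{\ind\rat}_{x, \et})$, whereas you trace through the K-injective resolution explicitly and so bypass the intermediate \'etale-cohomology statement. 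The content is the same.
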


\begin{proof}
	We have
		\[
				R \Gamma \bigl(
					k'_{x, \et},
					R (\pi_{\Hat{\Order}_{x}})_{\ast} G
				\bigr)
			=
				R \Gamma(\Hat{\alg{O}}_{x}(k'_{x}), G)
		\]
	by Prop.\ \ref{prop: push sends K limp to K limp}.
	Hence it is enough to show that
	$R \Gamma(k'_{x, \pro\et}, \Tilde{F}) = R \Gamma(k'_{x, \et}, F)$
	for any $F \in D(k^{\ind\rat}_{\et})$,
	where $\Tilde{F}$ is the pro-\'etale sheafification of $F$.
	The section functor $\Gamma(k'_{x}, \var)$ is exact
	on $\Ab(k^{\ind\rat}_{x, \pro\et})$ and $\Ab(k^{\ind\rat}_{x, \et})$
	since $k'_{x}$ is w-contractible.
	Hence it is enough to show that
	$\Gamma(k'_{x}, \Tilde{F}) = \Gamma(k'_{x}, F)$
	for any $F \in \Ab(k^{\ind\rat}_{x, \et})$.
	Sheafification is given by applying
	the zeroth \v{C}ech cohomology presheaf functor $\underline{\Check{H}}^{0}$ twice
	(\cite[III, Rmk.\ 2.2 (c)]{Mil80}).
	Any pro-\'etale covering of a w-contractible scheme has a section by definition
	and the \v{C}ech complex for a covering with section is null-homotopic.
	Hence for any presheaf $F$ on $\Spec k'_{x, \pro\et}$, we have
		\[
				\Gamma(k'_{x}, \underline{\Check{H}}^{0} \underline{\Check{H}}^{0}(F))
			=
				\Gamma(k'_{x}, \underline{\Check{H}}^{0}(F))
			=
				\Gamma(k'_{x}, F).
		\]
\end{proof}

We define a functor by the mapping fiber construction
	\[
			\alg{\Gamma}_{x}(\Hat{\Order}_{x}, \var)
		:=
			\bigl[
					\alg{\Gamma}(\Hat{\Order}_{x}, \var)
				\to
					\alg{\Gamma}(\Hat{K}_{x}, j^{\ast} \var)
			\bigr][-1]
		\colon
			\Ch(\Hat{\Order}_{x, \fppf})
		\to
			\Ch(k^{\ind\rat}_{x, \pro\et})
	\]
on the category of complexes of sheaves.
This is an additive functor that commutes with the translation functors,
i.e.\ a functor of additive categories with translation
in the terminology of \cite[Def.\ 10.1.1]{KS06}.
It induces a functor on the homotopy categories by \cite[Prop.\ 11.2.9]{KS06}.
We have its right derived functor
	\[
			R \alg{\Gamma}_{x}(\Hat{\Order}_{x}, \var)
		=
			\bigl[
					R \alg{\Gamma}(\Hat{\Order}_{x}, \var)
				\to
					R \alg{\Gamma}(\Hat{K}_{x}, j^{\ast} \var)
			\bigr][-1]
		\colon
			D(\Hat{\Order}_{x, \fppf})
		\to
			D(k^{\ind\rat}_{x, \pro\et})
	\]
by \cite[Thm.\ 14.3.1 (vi)]{KS06}.
We set $\alg{H}_{x}^{n}(\Hat{\Order}_{x}, \var) = H^{n} R \alg{\Gamma}_{x}(\Hat{\Order}_{x}, \var)$.
Note that $\alg{H}_{x}^{0} \ne \alg{\Gamma}_{x}$ in this definition.
(It can be shown that they define the same right derived functors
by the same method as \cite[Prop.\ (3.3.3)]{Suz14}.)
By Prop.\ \ref{prop: comparison with relative site support cohmology},
this is compatible with the functor denoted by
$R \Tilde{\alg{\Gamma}}_{x}(\Hat{\Order}_{x}, \var)$
in \cite[paragraph before Prop.\ (3.3.8)]{Suz14}.
We frequently omit the restriction functor $j^{\ast}$ from this type of formulas
by abuse of notation.

Let $A, B \in \Ch(\Hat{\Order}_{x, \fppf})$.
For any $k'_{x} \in k^{\ind\rat}_{x}$,
the functoriality of $\alg{\Gamma}_{x}(\Hat{\Order}_{x}, \var)$ gives a morphism
	\[
			\Hom_{\Hat{\alg{O}}_{x}(k'_{x})}(A, B)
		\to
			\Hom_{k^{\ind\rat}_{x, \pro\et} / k'_{x}} \bigl(
				\alg{\Gamma}_{x}(\Hat{\Order}_{x}, A),
				\alg{\Gamma}_{x}(\Hat{\Order}_{x}, B)
			\bigr)
	\]
in $\Ch(\Ab)$,
where $\Hom_{k^{\ind\rat}_{x, \pro\et} / k'_{x}}$ is the Hom functor
for the category of sheaves on the localization of $\Spec k^{\ind\rat}_{x, \pro\et}$ at $k'_{x}$.
This is functorial on  $k'_{x}$, so we have a morphism
	\begin{equation} \label{eq: functoriality morphism over integers}
			\alg{\Gamma} \bigl(
				\Hat{\Order}_{x},
				\sheafhom_{\Hat{\Order}_{x}}(A, B)
			\bigr)
		\to
			\sheafhom_{k^{\ind\rat}_{x, \pro\et}} \bigl(
				\alg{\Gamma}_{x}(\Hat{\Order}_{x}, A),
				\alg{\Gamma}_{x}(\Hat{\Order}_{x}, B)
			\bigr)
	\end{equation}
in $\Ch(k^{\ind\rat}_{x, \pro\et})$.
The right derived functor of the left-hand side (as a functor on $A, B$) is
	\[
		R \alg{\Gamma} \bigl(
			\Hat{\Order}_{x},
			R \sheafhom_{\Hat{\Order}_{x}}(A, B)
		\bigr)
	\]
by Prop.\ \ref{prop: sheaf-Hom to K-injective is K-limp},
\ref{prop: K-limp is push injective}
and the theorem on derived functors of composition \cite[Prop.\ 10.3.5 (ii)]{KS06}.
Hence we have a morphism
	\begin{equation} \label{eq: derived functoriality morphism over integers}
			R \alg{\Gamma} \bigl(
				\Hat{\Order}_{x}, R \sheafhom_{\Hat{\Order}_{x}}(A, B)
			\bigr)
		\to
			R \sheafhom_{k^{\ind\rat}_{x, \pro\et}} \bigl(
				R \alg{\Gamma}_{x}(\Hat{\Order}_{x}, A),
				R \alg{\Gamma}_{x}(\Hat{\Order}_{x}, B)
			\bigr)
	\end{equation}
in $D(k^{\ind\rat}_{x, \pro\et})$,
functorial on $A, B \in D(\Hat{\Order}_{x, \fppf})$,
by the universal property of the right derived functor.
(Note that the right-hand side of this is not the right derived functor of
the right-hand side of \eqref{eq: functoriality morphism over integers}.
The problem is that $\pi_{\Hat{\Order}_{x}}$ is only a premorphism of sites
and hence its pushforward functor might not send (K-)injectives to (K-)injectives.
Sheafification also might not send (K-)injectives to (K-)injectives.)
Similarly, we have a morphism
	\begin{equation} \label{eq: functoriality morphism over integers, variant}
			\alg{\Gamma}_{x} \bigl(
				\Hat{\Order}_{x},
				\sheafhom_{\Hat{\Order}_{x}}(A, B)
			\bigr)
		\to
			\sheafhom_{k^{\ind\rat}_{x, \pro\et}} \bigl(
				\alg{\Gamma}(\Hat{\Order}_{x}, A),
				\alg{\Gamma}_{x}(\Hat{\Order}_{x}, B)
			\bigr)
	\end{equation}
in $\Ch(k^{\ind\rat}_{x, \pro\et})$, functorial on
$A, B \in \Ch(\Hat{\Order}_{x, \fppf})$.
Deriving, we have a morphism
	\begin{equation} \label{eq: derived functoriality morphism over integers, variant}
			R \alg{\Gamma}_{x} \bigl(
				\Hat{\Order}_{x}, R \sheafhom_{\Hat{\Order}_{x}}(A, B)
			\bigr)
		\to
			R \sheafhom_{k^{\ind\rat}_{x, \pro\et}} \bigl(
				R \alg{\Gamma}(\Hat{\Order}_{x}, A),
				R \alg{\Gamma}_{x}(\Hat{\Order}_{x}, B)
			\bigr)
	\end{equation}
in $D(k^{\ind\rat}_{x, \pro\et})$,
functorial on $A, B \in D(\Hat{\Order}_{x, \fppf})$.
Also we have a morphism
	\begin{equation} \label{eq: functoriality morphism over local field}
			\alg{\Gamma} \bigl(
				\Hat{K}_{x},
				\sheafhom_{\Hat{K}_{x}}(A, B)
			\bigr)
		\to
			\sheafhom_{k^{\ind\rat}_{x, \pro\et}} \bigl(
				\alg{\Gamma}(\Hat{K}_{x}, A),
				\alg{\Gamma}(\Hat{K}_{x}, B)
			\bigr)
	\end{equation}
in $\Ch(k^{\ind\rat}_{x, \pro\et})$, functorial on
$A, B \in \Ch(\Hat{K}_{x, \fppf})$.
Deriving, we have a morphism
	\begin{equation} \label{eq: derived functoriality morphism over local field}
			R \alg{\Gamma} \bigl(
				\Hat{K}_{x}, R \sheafhom_{\Hat{K}_{x}}(A, B)
			\bigr)
		\to
			R \sheafhom_{k^{\ind\rat}_{x, \pro\et}} \bigl(
				R \alg{\Gamma}(\Hat{K}_{x}, A),
				R \alg{\Gamma}(\Hat{K}_{x}, B)
			\bigr)
	\end{equation}
in $D(k^{\ind\rat}_{x, \pro\et})$,
functorial on $A, B \in D(\Hat{K}_{x, \fppf})$.
These morphisms are compatible with the morphisms in \cite[Prop.\ (3.3.8)]{Suz14}
by Prop.\ \ref{prop: comparison with relative site functoriality morphisms}.

With the comparison results in
Appendix \ref{sec: Comparison of local duality with and without relative sites},
we can translate the results of \cite{Suz14} to our setting.
In particular, by \cite[Prop.\ (3.4.2) (a), (3.4.3) (e); \S 4.1]{Suz14},
any term in the localization triangle
	\[
			R \alg{\Gamma}(\Hat{\Order}_{x}, \Gm)
		\to
			R \alg{\Gamma}(\Hat{K}_{x}, \Gm)
		\to
			R \alg{\Gamma}_{x}(\Hat{\Order}_{x}, \Gm)[1]
	\]
is concentrated in degree zero, where we have an exact sequence
	\[
			0
		\to
			\Hat{\alg{O}}_{x}^{\times}
		\to
			\Hat{\alg{K}}_{x}^{\times}
		\to
			\Z
		\to
			0.
	\]
We call the (iso)morphisms
	\begin{equation} \label{eq: local trace morphism}
			R \alg{\Gamma}(\Hat{K}_{x}, \Gm)
		\to
			R \alg{\Gamma}_{x}(\Hat{\Order}_{x}, \Gm)[1]
		=
			\Z
	\end{equation}
the \emph{trace (iso)morphisms} (at $x$).

Let $A$ be an abelian variety over $\Hat{K}_{x}$ with dual $A^{\vee}$.
By the Barsotti-Weil formula \cite[Chap.\ III, Thm.\ (18.1)]{Oor66},
we have a canonical isomorphism
$A^{\vee} \cong \sheafext_{\Hat{K}_{x}}^{1}(A, \Gm)$ defined by the Poincar\'e bundle.
With $\sheafhom_{\Hat{K}_{x}}(A, \Gm) = 0$,
we have a canonical morphism
$A^{\vee} \to R \sheafhom_{\Hat{K}_{x}}(A, \Gm)[1]$ in $D(\Hat{K}_{x, \fppf})$.
Hence we have a morphism
	\begin{align*}
				R \alg{\Gamma}(\Hat{K}_{x}, A^{\vee})
		&	\to
				R \alg{\Gamma} \bigl(
					\Hat{K}_{x},
					R \sheafhom_{\Hat{K}_{x}}(A, \Gm)
				\bigr)[1]
		\\
		&	\to
				R \sheafhom_{k^{\ind\rat}_{x, \pro\et}}(
					R \alg{\Gamma}(\Hat{K}_{x}, A), R \alg{\Gamma}(\Hat{K}_{x}, \Gm)
				)[1]
		\\
		&	\to
				R \sheafhom_{k^{\ind\rat}_{x, \pro\et}}(
					R \alg{\Gamma}(\Hat{K}_{x}, A), \Z
				)[1]
			=
				R \alg{\Gamma}(\Hat{K}_{x}, A)^{\SDual_{x}}[1]
	\end{align*}
by \eqref{eq: derived functoriality morphism over local field}
and \eqref{eq: local trace morphism},
where $\SDual_{x} = R \sheafhom_{k^{\ind\rat}_{x, \pro\et}}(\var, \Z)$.
Its Serre dual, when $A$ is replaced by $A^{\vee}$, is denoted by
	\[
			\theta_{A}
		\colon
			R \alg{\Gamma}(\Hat{K}_{x}, A^{\vee})^{\SDual_{x} \SDual_{x}}
		\to
			R \alg{\Gamma}(\Hat{K}_{x}, A)^{\SDual_{x}}[1].
	\]
These morphisms agree with the morphisms defined in \cite[\S 4.1]{Suz14}.
Hence \cite[Thm.\ (4.1.2)]{Suz14} implies that $\theta_{A}$ is an isomorphism.

\begin{Prop} \label{prop: structure of local cohom of abelian varieties}
	Let $A$ be an abelian variety over $\Hat{K}_{x}$ with N\'eron model $\mathcal{A}$.
	Let $\mathcal{A}_{0}$ be the maximal open subgroup scheme of $\mathcal{A}$
	with connected fibers
	and $\mathcal{A}_{x}$ the special fiber of $\mathcal{A}$ over $x = \Spec k_{x}$.
	Then $R \alg{\Gamma}(\Hat{\Order}_{x}, \mathcal{A})$ is concentrated in degree $0$;
	$R \alg{\Gamma}(\Hat{\Order}_{x}, \mathcal{A}_{0})$ in degree $0$;
	$R \alg{\Gamma}_{x}(\Hat{\Order}_{x}, \mathcal{A})$ in degree $2$;
	and $R \alg{\Gamma}_{x}(\Hat{\Order}_{x}, \mathcal{A}_{0})$ in degrees $1, 2$.
	We have
		\begin{gather*}
					\alg{\Gamma}(\Hat{\Order}_{x}, \mathcal{A})
				=
					\alg{\Gamma}(\Hat{K}_{x}, A)
				\in
					\Pro \Alg / k_{x},
				\quad
					\alg{\Gamma}(\Hat{\Order}_{x}, \mathcal{A}_{0})
				=
					\alg{\Gamma}(\Hat{K}_{x}, A)_{0},
			\\
					\alg{H}_{x}^{1}(\Hat{\Order}_{x}, \mathcal{A}_{0})
				=
					\pi_{0}(\mathcal{A}_{x})
				\in
					\FEt / k_{x},
			\\
					\alg{H}_{x}^{2}(\Hat{\Order}_{x}, \mathcal{A})
				=
					\alg{H}_{x}^{2}(\Hat{\Order}_{x}, \mathcal{A}_{0})
				=
					\alg{H}^{1}(\Hat{K}_{x}, A)
				\in
					\Ind \Alg_{\uc} / k_{x}.
		\end{gather*}
	The isomorphic groups in the third line are divisible.
\end{Prop}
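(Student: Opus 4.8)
The plan is to deduce every assertion from the defining distinguished triangle
\[
		R \alg{\Gamma}_{x}(\Hat{\Order}_{x}, \var)
	\to
		R \alg{\Gamma}(\Hat{\Order}_{x}, \var)
	\to
		R \alg{\Gamma}(\Hat{K}_{x}, j^{\ast} \var)
	\to
		R \alg{\Gamma}_{x}(\Hat{\Order}_{x}, \var)[1]
\]
of the mapping-fibre functor, applied to $\var = \mathcal{A}$ and $\var = \mathcal{A}_{0}$, by feeding in two kinds of input: the known behaviour of $R \alg{\Gamma}(\Hat{\Order}_{x}, \var)$ on smooth group schemes and of $R \alg{\Gamma}(\Hat{K}_{x}, \var)$ on abelian varieties — imported from \cite{Suz14} through the comparison of Appendix~\ref{sec: Comparison of local duality with and without relative sites} — together with elementary properties of N\'eron models. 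First I would settle the two middle (degree-zero) terms. For each $k'_{x} \in k^{\ind\rat}_{x}$ the ring $\Hat{\alg{O}}_{x}(k'_{x})$ is a formally smooth complete local $\Hat{\Order}_{x}$-algebra (a complete discrete valuation ring when $k'_{x}$ is a field, a finite product of such when $k'_{x}$ is a finite product of fields, a filtered union of these in general), so the N\'eron mapping property gives $\mathcal{A}(\Hat{\alg{O}}_{x}(k'_{x})) = A(\Hat{\alg{K}}_{x}(k'_{x}))$, functorial in $k'_{x}$ and compatible with filtered presentations since $\mathcal{A}$ and $A$ are of finite presentation; hence $\alg{\Gamma}(\Hat{\Order}_{x}, \mathcal{A}) = \alg{\Gamma}(\Hat{K}_{x}, A)$, which by \cite{Suz14} is the Greenberg realization of $\mathcal{A}$ and lies in $\Pro \Alg / k_{x}$. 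The reduction morphism presents it as an extension of (the perfection of) $\mathcal{A}_{x}$ by the connected pro-unipotent kernel of reduction; a point of $\mathcal{A}(\Hat{\alg{O}}_{x}(k'_{x}))$ factors through the open subscheme $\mathcal{A}_{0}$ exactly when the image of the closed point lies in $(\mathcal{A}_{x})_{0}$, so $\alg{\Gamma}(\Hat{\Order}_{x}, \mathcal{A}_{0})$ is the preimage of $(\mathcal{A}_{x})_{0}$; this preimage is connected, hence equals $\alg{\Gamma}(\Hat{\Order}_{x}, \mathcal{A})_{0} = \alg{\Gamma}(\Hat{K}_{x}, A)_{0}$, and lifting components after an \'etale base change (by smoothness of $\mathcal{A}$) identifies the quotient sheaf with $\pi_{0}(\mathcal{A}_{x}) \in \FEt / k_{x}$. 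Finally, since $\mathcal{A}$ and $\mathcal{A}_{0}$ are smooth group schemes over $\Hat{\Order}_{x}$, \cite[Prop.\ (3.4.2)]{Suz14} gives that $R \alg{\Gamma}(\Hat{\Order}_{x}, \mathcal{A})$ and $R \alg{\Gamma}(\Hat{\Order}_{x}, \mathcal{A}_{0})$ are concentrated in degree $0$.

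Next I would record the generic term. Since $\mathcal{A}_{0}$ and $\mathcal{A}$ have the same generic fibre $A$, we have $R \alg{\Gamma}(\Hat{K}_{x}, j^{\ast} \mathcal{A}_{0}) = R \alg{\Gamma}(\Hat{K}_{x}, j^{\ast} \mathcal{A}) = R \alg{\Gamma}(\Hat{K}_{x}, A)$, and the local theory of abelian varieties in \cite{Suz14} (see \cite[Thm.\ (4.1.2), Rmk.\ (4.2.10)]{Suz14}) tells us this complex is concentrated in degrees $0$ and $1$, with $\alg{\Gamma}(\Hat{K}_{x}, A) \in \Pro \Alg / k_{x}$ in degree $0$ and $\alg{H}^{1}(\Hat{K}_{x}, A) \in \Ind \Alg_{\uc} / k_{x}$ in degree $1$, the latter divisible. (Divisibility is a local statement over $\Spec k^{\ind\rat}_{x, \pro\et}$ that can be checked on algebraically closed $k'_{x}$, where $\Hat{\alg{K}}_{x}(k'_{x})$ is a complete discretely valued field with algebraically closed residue field and multiplication by any $n \ge 1$ is surjective on $H^{1}(\Hat{\alg{K}}_{x}(k'_{x}), A)$.)

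Then I would run the long exact cohomology sequence of the triangle. For $\var = \mathcal{A}_{0}$ the morphism on $H^{0}$ is the injection $\alg{\Gamma}(\Hat{\Order}_{x}, \mathcal{A}_{0}) \into \alg{\Gamma}(\Hat{K}_{x}, A)$ with cokernel $\pi_{0}(\mathcal{A}_{x})$, so the sequence yields $\alg{H}_{x}^{0} = 0$, $\alg{H}_{x}^{1}(\Hat{\Order}_{x}, \mathcal{A}_{0}) = \pi_{0}(\mathcal{A}_{x})$, $\alg{H}_{x}^{2}(\Hat{\Order}_{x}, \mathcal{A}_{0}) = \alg{H}^{1}(\Hat{K}_{x}, A)$ and $\alg{H}_{x}^{n} = 0$ for $n \ge 3$, i.e.\ $R \alg{\Gamma}_{x}(\Hat{\Order}_{x}, \mathcal{A}_{0})$ is concentrated in degrees $1, 2$. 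For $\var = \mathcal{A}$ the morphism on $H^{0}$ is the N\'eron isomorphism $\alg{\Gamma}(\Hat{\Order}_{x}, \mathcal{A}) \isomto \alg{\Gamma}(\Hat{K}_{x}, A)$, so the sequence collapses to $\alg{H}_{x}^{2}(\Hat{\Order}_{x}, \mathcal{A}) = \alg{H}^{1}(\Hat{K}_{x}, A)$ with all other $\alg{H}_{x}^{n}$ zero; this gives concentration in degree $2$ and the chain of identifications $\alg{H}_{x}^{2}(\Hat{\Order}_{x}, \mathcal{A}) = \alg{H}_{x}^{2}(\Hat{\Order}_{x}, \mathcal{A}_{0}) = \alg{H}^{1}(\Hat{K}_{x}, A)$, from which membership in $\Ind \Alg_{\uc} / k_{x}$ and divisibility are inherited.

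The step I expect to be the main obstacle is not the cohomology bookkeeping of the last paragraph (which is mechanical once the ingredients are in hand) but assembling those ingredients with the right conventions: ensuring the cited facts from \cite{Suz14} — degree-zero concentration of $R \alg{\Gamma}(\Hat{\Order}_{x}, \var)$ on smooth group schemes, the degree $0, 1$ concentration and pro-/ind-algebraic classification of $R \alg{\Gamma}(\Hat{K}_{x}, A)$, and the divisibility of $\alg{H}^{1}(\Hat{K}_{x}, A)$ — are genuinely available in the present relative-site-free formulation, and verifying the geometric input on the N\'eron model, namely that the kernel of reduction on $\mathcal{A}$ is connected pro-unipotent, that $\alg{\Gamma}(\Hat{\Order}_{x}, \mathcal{A}_{0})$ is precisely the preimage of $(\mathcal{A}_{x})_{0}$, and that the resulting quotient sheaf is $\pi_{0}(\mathcal{A}_{x})$.
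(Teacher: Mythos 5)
Your proposal is correct and takes essentially the same route as the paper: cite the relevant results of \cite{Suz14} for the degree-zero concentration of $R \alg{\Gamma}(\Hat{\Order}_{x}, \var)$ on smooth group schemes and for the structure of $R \alg{\Gamma}(\Hat{K}_{x}, A)$, then run the long exact sequence of the localization triangle. The one place you are slightly less precise than the paper is divisibility of $\alg{H}^{1}(\Hat{K}_{x}, A)$: the paper derives it from $\alg{H}^{2}(\Hat{K}_{x}, A[n]) = 0$ (a cited fact), whereas you assert the surjectivity of multiplication by $n$ without naming its source; this is the same fact in disguise (via the Kummer sequence for $A[n]$), so it is not a gap, but you should anchor it. Your extra re-derivation of the N\'eron-mapping-property identifications and the description of the reduction map is sound, though the paper gets the same conclusions directly from \cite[Prop.\ (3.4.2) (a), (3.4.3) (d)]{Suz14}.
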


\begin{proof}
	The group $\alg{H}^{1}(\Hat{K}_{x}, A)$ is divisible
	since $\alg{H}^{2}(\Hat{K}_{x}, A[n]) = 0$ for any $n \ge 1$
	by \cite[Prop.\ (3.4.3) (b)]{Suz14}.
	The statements about $R \alg{\Gamma}(\Hat{\Order}_{x}, \mathcal{A})$,
	$R \alg{\Gamma}(\Hat{\Order}_{x}, \mathcal{A}_{0})$ follow from
	\cite[Prop.\ (3.4.2) (a), (3.4.3) (d)]{Suz14}.
	These propositions, the localization triangle
		\[
				R \alg{\Gamma}_{x}(\Hat{\Order}_{x}, \mathcal{A})
			\to
				R \alg{\Gamma}(\Hat{\Order}_{x}, \mathcal{A})
			\to
				R \alg{\Gamma}(\Hat{K}_{x}, A)
		\]
	and the similar triangle for $\mathcal{A}_{0}$ imply the rest of the statements.
\end{proof}

We recall \cite[Rmk.\ (4.2.10)]{Suz14}.
Let $\mathcal{A}$ and $\mathcal{A}^{\vee}$ be the N\'eron models of $A$ and $A^{\vee}$, respectively.
Let $\mathcal{A}_{0}^{\vee}$ be the maximal open subgroup scheme of $\mathcal{A}^{\vee}$
with connected fibers.
The Poincar\'e biextension $A^{\vee} \tensor^{L} A \to \Gm[1]$
as a morphism in $D(\Hat{K}_{x, \fppf})$
canonically extends to a biextension
$\mathcal{A}_{0}^{\vee} \tensor^{L} \mathcal{A} \to \Gm[1]$
as a morphism in $D(\Hat{\Order}_{x, \fppf})$
by \cite[IX, 1.4.3]{Gro72}.
Hence we have a morphism
$\mathcal{A}_{0}^{\vee} \to R \sheafhom_{\Hat{\Order}_{x}}(\mathcal{A}, \Gm[1])$.
With the functoriality morphism \eqref{eq: functoriality morphism over integers, variant}
and the trace isomorphism \eqref{eq: local trace morphism},
we have morphisms
	\begin{align*}
				R \alg{\Gamma}_{x}(\Hat{\Order}_{x}, \mathcal{A}_{0}^{\vee})
		&	\to
				R \sheafhom_{k^{\ind\rat}_{x, \pro\et}} \bigl(
					R \alg{\Gamma}(\Hat{\Order}_{x}, \mathcal{A}),
					R \alg{\Gamma}_{x}(\Hat{\Order}_{x}, \Gm[1])
				\bigr)
		\\
		&	=
				R \sheafhom_{k^{\ind\rat}_{x, \pro\et}} \bigl(
					R \alg{\Gamma}(\Hat{\Order}_{x}, \mathcal{A}),
					\Z
				\bigr)
			=
				R \alg{\Gamma}(\Hat{\Order}_{x}, \mathcal{A})^{\SDual_{x}}.
	\end{align*}

\begin{Prop} \label{prop: local duality}
	The diagram
		\[
			\begin{CD}
					R \alg{\Gamma}(\Hat{\Order}_{x}, \mathcal{A}_{0}^{\vee})
				@>>>
					R \alg{\Gamma}(\Hat{K}_{x}, A^{\vee})
				@>>>
					R \alg{\Gamma}_{x}(\Hat{\Order}_{x}, \mathcal{A}_{0}^{\vee})[1]
				\\
				@VVV
				@VVV
				@VVV
				\\
					R \alg{\Gamma}_{x}(\Hat{\Order}_{x}, \mathcal{A})^{\SDual_{x}}
				@>>>
					R \alg{\Gamma}(\Hat{K}_{x}, A)^{\SDual_{x}}[1]
				@>>>
					R \alg{\Gamma}(\Hat{\Order}_{x}, \mathcal{A})^{\SDual_{x}}[1]
			\end{CD}
		\]
	is a morphism of distinguished triangles.
	Applying $\SDual_{x} \SDual_{x}$, the induced diagram
		\[
			\begin{CD}
					R \alg{\Gamma}(\Hat{\Order}_{x}, \mathcal{A}_{0}^{\vee})^{\SDual_{x} \SDual_{x}}
				@>>>
					R \alg{\Gamma}(\Hat{K}_{x}, A^{\vee})^{\SDual_{x} \SDual_{x}}
				@>>>
					R \alg{\Gamma}_{x}(\Hat{\Order}_{x}, \mathcal{A}_{0}^{\vee})[1]
				\\
				@VVV
				@VV \theta_{A} V
				@VVV
				\\
					R \alg{\Gamma}_{x}(\Hat{\Order}_{x}, \mathcal{A})^{\SDual_{x}}
				@>>>
					R \alg{\Gamma}(\Hat{K}_{x}, A)^{\SDual_{x}}[1]
				@>>>
					R \alg{\Gamma}(\Hat{\Order}_{x}, \mathcal{A})^{\SDual_{x}}[1]
			\end{CD}
		\]
	is an isomorphism of distinguished triangles.
\end{Prop}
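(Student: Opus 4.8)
The plan is to deduce both assertions from the single biextension $\mathcal{A}_{0}^{\vee} \tensor^{L} \mathcal{A} \to \Gm[1]$ over $\Hat{\Order}_{x}$, its restriction to the Poincar\'e biextension over $\Hat{K}_{x}$, and the local duality isomorphism $\theta_{A}$ coming from \cite[Thm.\ (4.1.2)]{Suz14}; the argument is the translation of \cite[Rmk.\ (4.2.10)]{Suz14} into the present relative-site-free formalism, and in the write-up I would route the more delicate reflexivity bookkeeping through the comparison of Appendix \ref{sec: Comparison of local duality with and without relative sites}.

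First I would fix the two rows of the first diagram. Its top row is the localization triangle for $\mathcal{A}_{0}^{\vee}$, which is distinguished by the mapping-fiber definition of $R \alg{\Gamma}_{x}(\Hat{\Order}_{x}, \var)$ together with $j^{\ast} \mathcal{A}_{0}^{\vee} = A^{\vee}$; its bottom row is obtained by applying $\SDual_{x}$ to the localization triangle for $\mathcal{A}$ and rotating, hence is also distinguished. The three vertical morphisms all descend from the biextension: plugging $\mathcal{A}_{0}^{\vee} \to R \sheafhom_{\Hat{\Order}_{x}}(\mathcal{A}, \Gm[1])$ and its restriction into the functoriality morphisms \eqref{eq: functoriality morphism over integers}, \eqref{eq: functoriality morphism over integers, variant}, \eqref{eq: functoriality morphism over local field} and composing with the trace \eqref{eq: local trace morphism} (which by construction is the connecting map of the $\Gm$-localization triangle) produces the left, middle and right verticals respectively. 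Commutativity of the two squares then amounts to the compatibility of these functoriality morphisms with the two localization triangles in play — the one for the pair $(\Hat{\Order}_{x}, \Hat{K}_{x})$ and the one for $\Gm$ — and, since all three verticals descend from the one biextension over $\Hat{\Order}_{x}$, this is the same formal compatibility that underlies \cite[Prop.\ (3.3.8)]{Suz14}. This gives the first assertion.

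For the second assertion I would apply $(\var)^{\SDual_{x} \SDual_{x}}$ to the morphism of distinguished triangles just obtained. By Prop.\ \ref{prop: structure of local cohom of abelian varieties} the object $R \alg{\Gamma}(\Hat{\Order}_{x}, \mathcal{A}_{0}^{\vee})$ is a connected unipotent proalgebraic group in degree $0$, and $R \alg{\Gamma}_{x}(\Hat{\Order}_{x}, \mathcal{A})$ is concentrated in degree $2$ with value $\alg{H}^{1}(\Hat{K}_{x}, A) \in \Ind \Alg_{\uc}/k_{x}$, so that $R \alg{\Gamma}_{x}(\Hat{\Order}_{x}, \mathcal{A})^{\SDual_{x}}$ is again a connected unipotent proalgebraic group in degree $0$; these, together with $R \alg{\Gamma}_{x}(\Hat{\Order}_{x}, \mathcal{A}_{0}^{\vee})$ and the Serre duals $R \alg{\Gamma}(\Hat{K}_{x}, A)^{\SDual_{x}}$ and $R \alg{\Gamma}(\Hat{\Order}_{x}, \mathcal{A})^{\SDual_{x}}$, are Serre reflexive by the reflexivity statements recalled in \S \ref{sec: The ind-rational pro-etale site and some derived limits} applied degreewise (together with the results of \cite{Suz14}). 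Hence $(\var)^{\SDual_{x} \SDual_{x}}$ of the first diagram is precisely the second diagram. Its middle vertical is identified with $\theta_{A}$: by the symmetry of the Poincar\'e biextension and $A^{\vee\vee} = A$, the middle vertical $\psi$ of the first diagram factors as $\theta_{A}$ precomposed with the canonical reflexivity morphism, so $\psi^{\SDual_{x} \SDual_{x}} = \theta_{A}$ under the identifications above, and $\theta_{A}$ is an isomorphism by \cite[Thm.\ (4.1.2)]{Suz14}. The left vertical is an isomorphism since, both source and target being concentrated in degree $0$, it is the perfect pairing $\alg{\Gamma}(\Hat{K}_{x}, A^{\vee})_{0} \isomto \sheafext^{1}_{k^{\ind\rat}_{x, \pro\et}}(\alg{H}^{1}(\Hat{K}_{x}, A), \Q/\Z)$ of the identity-component case of local duality \cite[\S 4]{Suz14}. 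The five lemma in triangulated categories then forces the right vertical — which is the local duality with support $R \alg{\Gamma}_{x}(\Hat{\Order}_{x}, \mathcal{A}_{0}^{\vee})[1] \isomto R \alg{\Gamma}(\Hat{\Order}_{x}, \mathcal{A})^{\SDual_{x}}[1]$ — to be an isomorphism as well.

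The main obstacle I expect is the bookkeeping around Serre reflexivity and the $\SDual_{x} \SDual_{x}$-identifications, not any of the diagram chases: one must check that every local cohomology complex that appears — in particular $R \alg{\Gamma}_{x}(\Hat{\Order}_{x}, \mathcal{A}_{0}^{\vee})$, whose $\alg{H}^{1}$ is finite \'etale and whose $\alg{H}^{2} = \alg{H}^{1}(\Hat{K}_{x}, A^{\vee})$ is divisible ind-unipotent — is genuinely Serre reflexive, and that the transpose-symmetry of the Poincar\'e biextension is compatible with all the functoriality morphisms, so that $\psi^{\SDual_{x} \SDual_{x}}$ equals $\theta_{A}$ on the nose rather than merely up to an unidentified automorphism. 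Both points are already settled in \cite{Suz14}, which is why in the end I would invoke \cite[Rmk.\ (4.2.10)]{Suz14} via the comparison appendix rather than re-establish them here.
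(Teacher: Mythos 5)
Your overall strategy --- translate \cite[Rmk.\ (4.2.10)]{Suz14} through the comparison of Appendix \ref{sec: Comparison of local duality with and without relative sites} --- is exactly what the paper does, and your handling of the first assertion via \cite[Prop.\ (3.3.8)]{Suz14} matches the paper's proof. The difference lies in how the second assertion is obtained, and your sketch of that step has a genuine error.

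You assert that $R\alg{\Gamma}(\Hat{\Order}_{x},\mathcal{A}_{0}^{\vee})$ is ``a connected unipotent proalgebraic group in degree $0$'' and that it is Serre reflexive. Neither is correct. By Prop.\ \ref{prop: structure of local cohom of abelian varieties} this object is $\alg{\Gamma}(\Hat{K}_{x},A^{\vee})_{0}$, which is an extension of the identity component of the special fiber of $\mathcal{A}^{\vee}$ (in general an extension of an abelian variety by a connected affine group) by the kernel of reduction; the abelian-variety constituent is not unipotent and, by \cite[Prop.\ (2.4.1)~(c)]{Suz14}, is not Serre reflexive --- its double Serre dual is its Tate module placed in degree $-1$. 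That is precisely why the top-left entry of the second diagram carries the $\SDual_{x}\SDual_{x}$: applying $\SDual_{x}\SDual_{x}$ to the first diagram does not leave it unchanged. As a consequence your identification of the left vertical of the second diagram with the pairing $\alg{\Gamma}(\Hat{K}_{x},A^{\vee})_{0}\isomto\sheafext^{1}(\alg{H}^{1}(\Hat{K}_{x},A),\Q/\Z)$ is off by the double-dualization: the source is $R\alg{\Gamma}(\Hat{\Order}_{x},\mathcal{A}_{0}^{\vee})^{\SDual_{x}\SDual_{x}}$, which in general has a Tate module in degree $-1$ rather than being the connected group in degree $0$.

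More importantly, your five-lemma route is not what the paper uses, and it sidesteps the real subtlety rather than resolving it. To make the five lemma bite you must first independently establish that the middle vertical of the second diagram \emph{is} $\theta_{A}$, and that the left vertical is a known isomorphism; you acknowledge the first as ``the main obstacle'' and propose to settle it by the symmetry of the Poincar\'e biextension, but that compatibility is exactly the delicate point. The paper handles this at one stroke by invoking the \emph{uniqueness} statement in \cite[Prop.\ (4.2.3)]{Suz14}: once the first diagram is a morphism of distinguished triangles (by \cite[Prop.\ (3.3.8)]{Suz14}) and its terms are identified with those of \cite[Prop.\ (4.2.3)]{Suz14}, uniqueness forces the morphisms to coincide with the ones already treated there, and then \cite[Prop.\ (4.2.7)]{Suz14} together with \cite[Thm.\ (4.1.2)]{Suz14} gives the isomorphism of the second diagram without any independent re-identification of the verticals. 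Since in the end you also fall back to invoking \cite[Rmk.\ (4.2.10)]{Suz14}, your proof lands in the same place; but the intermediate argument as written is not self-contained, and the structural claims about $R\alg{\Gamma}(\Hat{\Order}_{x},\mathcal{A}_{0}^{\vee})$ should be corrected.
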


\begin{proof}
	By \cite[Prop.\ (3.3.8)]{Suz14}, we know that
	the first diagram is a morphism of distinguished triangles.
	The terms in the first diagram can be identified with
	the terms in the first diagram of \cite[Prop.\ (4.2.3)]{Suz14}.
	The uniqueness stated in \cite[Prop.\ (4.2.3)]{Suz14} shows that
	the morphisms in the first diagram here and
	the morphisms in the first diagram of \cite[Prop.\ (4.2.3)]{Suz14} are equal.
	Hence the second diagram is an isomorphism of distinguished triangles
	by \cite[Prop.\ (4.2.7), Thm.\ (4.1.2)]{Suz14}.
\end{proof}

Assume that $k_{x}$ is a finite extension of another perfect field $k$.
We have a finite \'etale morphism $f_{x} \colon \Spec k_{x} \to \Spec k$.
This induces a morphism of sites
	\[
			f_{x}
		\colon
			\Spec k^{\ind\rat}_{x, \pro\et}
		\to
			\Spec k^{\ind\rat}_{\pro\et}
		\quad
	\]
Its pushforward functor is the Weil restriction functor $\Res_{k_{x} / k}$,
which is exact (\cite[Lem.\ 6.1.17]{BS15}).
We denote the composites of $\alg{\Gamma}(\Hat{\Order}_{x}, \var)$,
$R \alg{\Gamma}(\Hat{\Order}_{x}, \var)$ and $(f_{x})_{\ast} = \Res_{k_{x} / k}$ by
	\begin{gather*}
				\alg{\Gamma}(\Hat{\Order}_{x} / k, \var)
			\colon
				\Ab(\Hat{\Order}_{x, \fppf})
			\to
				\Ab(k^{\ind\rat}_{\pro\et}),
		\\
				R \alg{\Gamma}(\Hat{\Order}_{x} / k, \var)
			\colon
				D(\Hat{\Order}_{x, \fppf})
			\to
				D(k^{\ind\rat}_{\pro\et}),
	\end{gather*}
respectively, with cohomology
$\alg{H}^{n}(\Hat{\Order}_{x} / k, \var) = H^{n} R \alg{\Gamma}(\Hat{\Order}_{x} / k, \var)$.
Similar functors
	\begin{gather*}
				\alg{\Gamma}(\Hat{K}_{x} / k, \var)
			\colon
				\Ab(\Hat{K}_{x, \fppf})
			\to
				\Ab(k^{\ind\rat}_{\pro\et}),
		\\
				\alg{\Gamma}_{x}(\Hat{\Order}_{x} / k, \var)
			\colon
				\Ch(\Hat{\Order}_{x, \fppf})
			\to
				\Ch(k^{\ind\rat}_{\pro\et})
	\end{gather*}
and their derived functor are defined.
We have
	\[
			(f_{x})_{\ast}
			R \sheafhom_{k^{\ind\rat}_{x, \pro\et}}(\var, \Z)
		=
			R \sheafhom_{k^{\ind\rat}_{\pro\et}} \bigl(
				(f_{x})_{\ast}(\var), \Z
			\bigr)
	\]
by the duality for finite \'etale morphisms \cite[V, Prop.\ 1.13]{Mil80}
(which is for the \'etale topology; the same proof works for the pro-\'etale topology).
In other words, we have
	\[
			\Res_{k_{x} / k} \compose \SDual_{x}
		=
			\SDual \compose \Res_{k_{x} / k},
	\]
where $\SDual = R \sheafhom_{k^{\ind\rat}_{\pro\et}}(\var, \Z)$ as before.
Hence applying $\Res_{k_{x} / k}$ to the duality statements above over $k_{x}$ defines
some new duality statements over $k$.
Doing this for $\theta_{A}$ for instance defines a new isomorphism
	\[
			R \alg{\Gamma}(\Hat{K}_{x} / k, A^{\vee})^{\SDual \SDual}
		\isomto
			R \alg{\Gamma}(\Hat{K}_{x} / k, A)^{\SDual}
	\]
in $D(k^{\ind\rat}_{\pro\et})$.
(Here we are not using any specific property of our duality isomorphism.
We are using nothing but the obvious fact that
if a morphism $C \to D$ in $D(k^{\ind\rat}_{x, \pro\et})$ is an isomorphism,
then the induced morphism $\Res_{k_{x} / k} C \to \Res_{k_{x} / k} D$ is an isomorphism.)
More explicitly, this morphism comes from the morphisms
	\begin{align*}
				R \alg{\Gamma}(\Hat{K}_{x} / k, A^{\vee})
		&	\to
				R \alg{\Gamma} \bigl(
					\Hat{K}_{x} / k,
					R \sheafhom_{\Hat{K}_{x}}(A, \Gm)
				\bigr)[1]
		\\
		&	\to
				R \sheafhom_{k^{\ind\rat}_{\pro\et}}(
					R \alg{\Gamma}(\Hat{K}_{x} / k, A),
					R \alg{\Gamma}(\Hat{K}_{x} / k, \Gm)
				)[1]
		\\
		&	\to
				R \sheafhom_{k^{\ind\rat}_{\pro\et}}(
					R \alg{\Gamma}(\Hat{K}_{x} / k, A), \Z
				)[1]
			=
				R \alg{\Gamma}(\Hat{K}_{x} / k, A)^{\SDual}[1]
	\end{align*}
The trace morphism in this situation used here is
	\begin{equation} \label{eq: local trace morphism, Weil restricted}
			R \alg{\Gamma}(\Hat{K}_{x} / k, \Gm)
		\to
			R \alg{\Gamma}_{x}(\Hat{\Order}_{x} / k, \Gm)[1]
		=
			\Res_{k_{x} / k} \Hat{\alg{K}}_{x}^{\times} / \Hat{\alg{O}}_{x}^{\times}
		=
			\Res_{k_{x} / k} \Z
		\to
			\Z,
	\end{equation}
where the last morphism is
	\[
			\Z[\Hom_{k}(k_{x}, \closure{k})]
		\cong
			\Z^{[k_{x} : k]}
		\overset{\text{sum}}{\onto}
			\Z
	\]
on geometric points.


\subsection{Henselizations and completions}
\label{sec: Henselizations and completions}
Let $\Order_{x}^{h}$ be an excellent henselian discrete valuation ring
of equal characteristic $p > 0$
with perfect residue field $k_{x}$ ($\in \mathcal{U}_{0}$).
We denote the maximal ideal by $\ideal{p}_{x}^{h}$ and the fraction field by $K_{x}^{h}$.
The corresponding objects after completion are denoted by
$\Hat{\Order}_{x}$, $\Hat{\ideal{p}}_{x}$ and $\Hat{K}_{x}$.

We make a variant of the constructions in the previous subsection
using henselizations instead of completions.
For $k'_{x} \in k_{x}^{\ind\rat}$,
the henselization of the (non-local) ring $k'_{x} \tensor_{k_{x}} \Order_{x}^{h}$
at the ideal $k'_{x} \tensor_{k_{x}} \ideal{p}_{x}^{h}$ (\cite[Chap.\ XI, \S 2]{Ray70b})
is denoted by $(k'_{x} \tensor_{k_{x}} \Order_{x}^{h})^{h}$.
We define
	\[
			\alg{O}_{x}^{h}(k'_{x})
		=
			(k'_{x} \tensor_{k_{x}} \Order_{x}^{h})^{h},
		\quad
			\alg{K}_{x}^{h}(k'_{x})
		=
			\alg{O}_{x}^{h}(k'_{x}) \tensor_{\Order_{x}^{h}} K_{x}^{h}.
	\]
The functors $\alg{O}_{x}^{h}$ and $\alg{K}_{x}^{h}$ define premorphisms of sites
	\[
				\pi_{\Order_{x}^{h}}
			\colon
				\Spec \Order_{x, \fppf}^{h}
			\to
				\Spec k^{\ind\rat}_{x, \et},
		\quad
				\pi_{K_{x}^{h}}
			\colon
				\Spec K_{x, \fppf}^{h}
			\to
				\Spec k^{\ind\rat}_{x, \et}.
	\]
The pro-\'etale sheafifications of their pushforward functors are denoted by
	\begin{gather*}
				\alg{\Gamma}(\Order_{x}^{h}, \var),
			\colon
				\Ab(\Order_{x, \fppf}^{h})
			\to
				\Ab(k^{\ind\rat}_{x, \pro\et}),
		\\
				\alg{\Gamma}(K_{x}^{h}, \var)
			\colon
				\Ab(K_{x, \fppf}^{h})
			\to
				\Ab(k^{\ind\rat}_{x, \pro\et}).
	\end{gather*}
We set
	\begin{gather*}
				\alg{\Gamma}_{x}(\Order_{x}^{h}, \var)
			=
				\bigl[
						\alg{\Gamma}(\Order_{x}^{h}, \var),
					\to
						\alg{\Gamma}(K_{x}^{h}, j^{\ast} \var)
				\bigr][-1]
			\colon
		\\
				\Ch(\Order_{x, \fppf}^{h})
			\to
				\Ch(k^{\ind\rat}_{x, \pro\et}),
	\end{gather*}
where $j \colon \Spec K_{x}^{h} \to \Spec \Order_{x}^{h}$ is the natural morphism
inducing a morphism on the fppf sites.
We have their right derived functors
	\begin{gather*}
				R \alg{\Gamma}(\Order_{x}^{h}, \var),
			\colon
				D(\Order_{x, \fppf}^{h})
			\to
				D(k^{\ind\rat}_{x, \pro\et}),
		\\
				R \alg{\Gamma}(K_{x}^{h}, \var),
			\colon
				D(K_{x, \fppf}^{h})
			\to
				D(k^{\ind\rat}_{x, \pro\et}),
		\\
				R \alg{\Gamma}_{x}(\Order_{x}^{h}, \var)
			=
				\bigl[
						R \alg{\Gamma}(\Order_{x}^{h}, \var)
					\to
						R \alg{\Gamma}(K_{x}^{h}, j^{\ast} \var)
				\bigr][-1]
			\colon
		\\
				D(\Order_{x, \fppf}^{h})
			\to
				D(k^{\ind\rat}_{x, \pro\et}).
	\end{gather*}
Again, we frequently omit the restriction functor $j^{\ast}$ by abuse of notation.

We say that a sheaf $F \in \Set(\Order_{x, \fppf}^{h})$ is locally of finite presentation
if it commutes with filtered direct limits as a functor on the category of $\Order_{x}^{h}$-algebras.

\begin{Prop} \label{prop: cohom of henselization is fin pres}
	For any sheaf $A \in \Ab(\Order_{x, \fppf}^{h})$ locally of finite presentation and $n \ge 0$,
	the sheaf $R^{n} (\pi_{\Order_{x}^{h}})_{\ast} A \in \Ab(k^{\ind\rat}_{x, \et})$
	is locally of finite presentation.
	In particular, we have
	$\alg{H}^{n}(\Order_{x}^{h}, A) = R^{n} (\pi_{\Order_{x}^{h}})_{\ast} A$.
	That is, $\alg{H}^{n}(\Order_{x}^{h}, A)$ is the \'etale sheafification of the presheaf
		\[
				k'_{x}
			\mapsto
				H^{n}(\alg{O}_{x}^{h}(k'_{x}), A).
		\]
	A similar statement holds when $\Order_{x}^{h}$ is replaced by $K_{x}^{h}$.
\end{Prop}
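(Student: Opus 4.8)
The plan is to identify $R^{n}(\pi_{\Order_{x}^{h}})_{\ast} A$ with an \'etale sheafification coming from the premorphism-of-sites formalism of \S\ref{sec: Premorphisms of sites}, and then to verify that this sheafification is locally of finite presentation by treating separately the \'etale sheafification operation and a limit argument for fppf cohomology. The hardest input is the latter.

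Since $\pi_{\Order_{x}^{h}}$ is a premorphism of sites, Prop.\ \ref{prop: push sends K limp to K limp} gives
\[
	R \Gamma(k'_{x}, R (\pi_{\Order_{x}^{h}})_{\ast} A)
	=
	R \Gamma(\alg{O}_{x}^{h}(k'_{x}), A)
\]
for every $k'_{x} \in k_{x}^{\ind\rat}$. Representing $R (\pi_{\Order_{x}^{h}})_{\ast} A$ by a K-injective complex of \'etale sheaves and passing to cohomology sheaves (the cohomology sheaves of a K-limp complex being the sheafifications of its presheaf cohomology), I would deduce that $R^{n}(\pi_{\Order_{x}^{h}})_{\ast} A$ is the \'etale sheafification of the presheaf
\[
	P^{n} \colon k'_{x} \longmapsto H^{n}(\alg{O}_{x}^{h}(k'_{x}), A),
\]
and similarly with $\Order_{x}^{h}$ replaced by $K_{x}^{h}$. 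Granting for the moment that $R^{n}(\pi_{\Order_{x}^{h}})_{\ast} A$ is locally of finite presentation, it is automatically a pro-\'etale sheaf (\S\ref{sec: The ind-rational pro-etale site and some derived limits}), so the pro-\'etale sheafification built into $\alg{\Gamma}(\Order_{x}^{h}, \var)$ leaves it unchanged; this yields $\alg{H}^{n}(\Order_{x}^{h}, A) = R^{n}(\pi_{\Order_{x}^{h}})_{\ast} A$ together with the remaining assertions. So the whole proposition reduces to showing that $R^{n}(\pi_{\Order_{x}^{h}})_{\ast} A$ is locally of finite presentation.

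I would then reduce this to the same statement for the presheaf $P^{n}$. The class of presheaves on $\Spec k^{\ind\rat}_{x, \et}$ commuting with filtered direct limits is stable under \'etale sheafification: the plus construction is objectwise a filtered colimit over \'etale coverings of finite limits (\v{C}ech $H^{0}$'s), and since \'etale algebras are of finite presentation, every \'etale covering of $\dirlim_{\alpha} k'_{\alpha}$ descends to some $k'_{\alpha}$, so the plus construction — and hence its iterate — commutes with filtered direct limits. It thus suffices to show $P^{n}$ is locally of finite presentation. Here I would first note that $k'_{x} \mapsto \alg{O}_{x}^{h}(k'_{x})$ commutes with filtered direct limits: tensoring with $\Order_{x}^{h}$ over $k_{x}$ commutes with filtered colimits of rings, and the henselization of a couple, being an ind-\'etale construction, commutes with filtered colimits; consequently so does $k'_{x} \mapsto \alg{K}_{x}^{h}(k'_{x}) = \alg{O}_{x}^{h}(k'_{x}) \tensor_{\Order_{x}^{h}} K_{x}^{h}$. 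Thus for $k'_{x} = \dirlim_{\alpha} k'_{\alpha}$ in $k_{x}^{\ind\rat}$, the scheme $\Spec \alg{O}_{x}^{h}(k'_{x})$ is the filtered inverse limit of the $\Spec \alg{O}_{x}^{h}(k'_{\alpha})$ along affine transition morphisms of quasi-compact quasi-separated schemes, and since $A$ is locally of finite presentation one concludes $H^{n}(\alg{O}_{x}^{h}(k'_{x}), A) = \dirlim_{\alpha} H^{n}(\alg{O}_{x}^{h}(k'_{\alpha}), A)$, i.e.\ $P^{n}$ is locally of finite presentation; the $K_{x}^{h}$ case is identical.

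The main obstacle is the input used at the very end: that fppf cohomology with coefficients in an arbitrary locally-of-finite-presentation abelian sheaf commutes with filtered inverse limits of quasi-compact quasi-separated schemes along affine transition morphisms. The delicate point is that the fppf site is not small, so the small-site limit formalism does not apply verbatim; the argument instead computes $R \Gamma_{\fppf}$ through fppf hypercoverings built from finitely presented affine schemes, which descend to a finite level of the inverse system by standard spreading-out, and then commutes the cohomology of the resulting complex past the filtered colimit. With that input in hand, the rest is formal.
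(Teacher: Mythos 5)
Your proposal is correct and follows essentially the same route as the paper's proof: identify $R^{n}(\pi_{\Order_{x}^{h}})_{\ast} A$ as the \'etale sheafification of the presheaf $k'_{x} \mapsto H^{n}(\alg{O}_{x}^{h}(k'_{x}), A)$, observe that $\alg{O}_{x}^{h}$ commutes with filtered direct limits so the presheaf is locally of finite presentation, and then use that sheafification preserves this property; the paper's proof is just a terse version of exactly these steps. Your proposal also usefully makes explicit the care needed around the fppf site not being small and the preservation of local finite presentation under the plus construction, points the paper passes over silently.
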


\begin{proof}
	The sheaf $\alg{O}_{x}^{h}$ is locally of finite presentation
	by the construction of henselization.
	Let $A \in \Ab(\Order_{x, \fppf}^{h})$ be locally of finite presentation.
	Let $k'_{x} = \bigcup k'_{x, \lambda} \in k^{\ind\rat}$ with $k'_{x, \lambda} \in k^{\rat}_{x}$.
	Then for any $n \ge 0$, we have
		\[
				H^{n}(\alg{O}_{x}^{h}(k'_{x}), A)
			=
				\dirlim_{\lambda}
					H^{n}(\alg{O}_{x}^{h}(k'_{x, \lambda}), A).
		\]
	By sheafification, we know that $R^{n} (\pi_{\Order_{x}^{h}})_{\ast} A$ is locally of finite presentation.
	The same proof works for $K_{x}^{h}$.
\end{proof}

Define a functor $\Ch(\Order_{x, \fppf}^{h}) \to \Ch(k^{\ind\rat}_{x, \pro\et})$
of additive categories with translation by
	\[
			o_{x}(\Order_{x}^{h}, \var)
		=
			\bigl[
					\alg{\Gamma}_{x}(\Order_{x}^{h}, \var)
				\to
					\alg{\Gamma}_{x}(\Hat{\Order}_{x}, f^{\ast} \var)
			\bigr],
	\]
where $f \colon \Spec \Hat{\Order}_{x} \to \Spec \Order_{x}^{h}$
is the natural scheme morphism inducing a morphism on the fppf sites.
We call its right derived functor
	\[
			R o_{x}(\Order_{x}^{h}, \var)
		=
			\bigl[
					R \alg{\Gamma}_{x}(\Order_{x}^{h}, \var)
				\to
					R \alg{\Gamma}_{x}(\Hat{\Order}_{x}, f^{\ast} \var)
			\bigr]
	\]
the \emph{obstruction for cohomological approximation} (at $x$).
Define $D(\Order_{x, \fppf}^{h})_{\ca}$ to be the kernel of the functor $R o_{x}(\Order_{x}^{h}, \var)$,
i.e.\ the full subcategory of $D(\Order_{x, \fppf}^{h})$
consisting of objects $A$ with $R o_{x}(\Order_{x}^{h}, A) = 0$, or
	\[
			R \alg{\Gamma}_{x}(\Order_{x}^{h}, A)
		=
			R \alg{\Gamma}_{x}(\Hat{\Order}_{x}, f^{\ast} A).
	\]
Such an object $A$ is said to \emph{satisfy cohomological approximation}.

\begin{Prop} \label{prop: cohom with closed support doesnt change under completion}
	Any smooth group scheme or finite flat group scheme over $\Order_{x}^{h}$
	satisfies cohomological approximation.
\end{Prop}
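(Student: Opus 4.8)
The plan is to unwind the definitions, reduce the assertion to a statement about a single excellent henselian discrete valuation ring and its completion, and then settle that statement by a Greenberg approximation argument.

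First I would observe that, by construction, $R o_{x}(\Order_{x}^{h}, A)$ is the mapping cone of $R \alg{\Gamma}_{x}(\Order_{x}^{h}, A) \to R \alg{\Gamma}_{x}(\Hat{\Order}_{x}, f^{\ast} A)$; applying the octahedral axiom to the mapping-fiber descriptions of the two functors $R \alg{\Gamma}_{x}$, this cone is identified with the total mapping cone of the commuting square
	\[
		\begin{CD}
				R \alg{\Gamma}(\Order_{x}^{h}, A)
			@>>>
				R \alg{\Gamma}(K_{x}^{h}, A)
			\\
			@VVV
			@VVV
			\\
				R \alg{\Gamma}(\Hat{\Order}_{x}, A)
			@>>>
				R \alg{\Gamma}(\Hat{K}_{x}, A).
		\end{CD}
	\]
So it suffices to show this square is homotopy cartesian. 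Since a smooth or finite flat group scheme is locally of finite presentation, I would then invoke Prop.\ \ref{prop: cohom of henselization is fin pres} together with its analogue over $\Hat{\Order}_{x}$ and $\Hat{K}_{x}$ (the proposition of \S\ref{sec: Local duality without relative sites} computing cohomology over w-contractible base rings) to reduce to the following: for a w-contractible $k'_{x} \in k_{x}^{\ind\rat}$, writing $R = \alg{O}_{x}^{h}(k'_{x})$ and $\Hat{R} = \Hat{\alg{O}}_{x}(k'_{x})$, the natural morphism $R \Gamma_{x}(R, A) \to R \Gamma_{x}(\Hat{R}, A)$ of fppf cohomologies with support at the closed point is an isomorphism. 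Here $R$ is an excellent henselian discrete valuation ring (henselization preserves excellence, and $k'_{x} \tensor_{k_{x}} \Order_{x}^{h}$ is a filtered colimit of excellent rings), $\Hat{R}$ is its completion, the two share the residue field, and $\Spec \Hat{R} \to \Spec R$ is an isomorphism on every infinitesimal neighbourhood of the closed point.

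For $A$ finite flat — and, more generally, for torsion coefficients — I would deduce the isomorphism from the excision (rigidity) property of cohomology with support at the closed point of an excellent henselian discrete valuation ring: $R \Gamma_{x}(R, A)$ is computed, through the Greenberg realization, by a pro-system of perfections of algebraic groups over the residue field built from the base changes $A \tensor_{R} R / \ideal{p}^{n}$ alone, and $R / \ideal{p}^{n} = \Hat{R} / \Hat{\ideal{p}}^{n}$, so that the same pro-system computes $R \Gamma_{x}(\Hat{R}, A)$. For a general smooth $A$ I would first reduce, by devissage along the connected--étale sequence, the structure theory of smooth affine group schemes over a discrete valuation ring, and the abelian-scheme case, to the building blocks: $A$ finite flat or étale, $A = \Ga$, $A = \Gm$ (or a torus), and $B$ an abelian scheme. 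The unipotent case is again governed by the infinitesimal neighbourhoods; for $A = \Gm$ the object $R \Gamma_{x}(R, \Gm)$ is assembled from the valuation and the residue map, hence depends only on the residue field; and for an abelian scheme $B$ I would use $B(\Frac R) = B(R)$ and the vanishing of unramified $H^{1}$ (Lang's theorem, valid over the henselization as well as over the completion) to express $R \Gamma_{x}(R, B)$ in terms of the formal group and the special fibre, where $R$ and $\Hat{R}$ give the same answer. At every step, Greenberg's approximation theorem — available precisely because $A$ is of finite presentation — is what licenses replacing points of $A$ over $R$ or $\Frac R$ by points over the completion, or over finite-length quotients, without changing cohomology with support.

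The hard part will be the non-torsion smooth case, especially abelian schemes: in contrast to the finite flat case, which is a clean instance of excision, here there are honest discrepancies between $R$ and $\Hat{R}$ — for instance $R^{\times} \subsetneq \Hat{R}^{\times}$, or $B(R) \subsetneq B(\Hat{R})$ — on both the integral and the generic side, and one has to verify that these cancel in the relative object $R \Gamma_{x}$. Setting up the devissage so that each building block is manifestly insensitive to completion, and applying Greenberg approximation cleanly at each stage, is where the real work lies.
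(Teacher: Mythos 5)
Your approach diverges from the paper's in two essential ways, and the devissage step contains a genuine gap.

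First, the direction of the reduction between smooth and finite flat is reversed. You propose to handle finite flat coefficients directly via a rigidity/excision argument and then reduce smooth coefficients to ``building blocks'' including finite flat ones. The paper instead reduces the finite flat case \emph{to} the smooth case: any finite flat group scheme $N$ over $\Order_x^h$ embeds as a closed subgroup of a smooth affine group scheme $G$ (B\'egueri, \cite[Prop.\ 2.2.1]{Beg81}), and $G/N$ is again smooth affine, so the two-out-of-three property of the triangle $R o_x(\Order_x^h, N) \to R o_x(\Order_x^h, G) \to R o_x(\Order_x^h, G/N)$ settles the finite flat case once the smooth case is known. This eliminates the need to justify the rigidity claim that $R\Gamma_x$ for finite flat coefficients ``depends only on the infinitesimal neighbourhoods''; that claim is plausible but would require its own argument.

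Second, and more seriously, the proposed devissage of an arbitrary smooth group scheme over $\Order_x^h$ into \'etale, $\Ga$, $\Gm$/tori, finite flat, and abelian scheme pieces is not available. There is no Chevalley-type structure theorem over a discrete valuation ring; a smooth group scheme over $\Order_x^h$ need not be affine, and even a N\'eron (lft) model of a semi-abelian variety with bad reduction does not decompose into those blocks. You flag this as ``where the real work lies,'' but as described the devissage is a non-starter. The paper handles the smooth case \emph{uniformly} without any such decomposition: one first passes from $A$ smooth to $A$ smooth \emph{separated} by quotienting by the schematic closure $N$ of the identity in the generic fiber (an \'etale group with trivial generic fiber, for which both sides of $R o_x$ are visibly equal to $N \times_{\Order_x^h} k_x$). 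For $A$ smooth separated, the identities $\alg{H}^n(\Hat{\Order}_x, A) = 0$ for $n \ge 1$ and $\alg{H}_x^0(\Hat{\Order}_x, A) = 0$ give closed formulas $\alg{H}_x^1 = \alg{\Gamma}(\Hat{K}_x, A)/\alg{\Gamma}(\Hat{\Order}_x, A)$ and $\alg{H}_x^n = \alg{H}^{n-1}(\Hat{K}_x, A)$ for $n \ge 2$; Greenberg approximation shows these (and the corresponding henselian-side sheaves) are locally of finite presentation, which, by \cite[the second paragraph after Prop.\ (2.4.1)]{Suz14}, reduces the assertion to checking on algebraically closed fields $k'$. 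At that point the statements $\Gamma(K_x^h, A)/\Gamma(\Order_x^h, A) = \Gamma(\Hat{K}_x, A)/\Gamma(\Hat{\Order}_x, A)$ and $H^n(K_x^h, A) = H^n(\Hat{K}_x, A)$ ($n \ge 1$) are precisely the Greenberg approximation argument of \cite[Prop.\ (3.2.8), (3.2.9)]{Suz14} (equivalently \cite[I, Rmk.\ 3.10]{Mil06} together with \cite[Prop.\ 3.5 (b)]{Ces15}), applied once and for all.

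Your initial reduction --- that the total cone of the commuting square is what must vanish, and that Greenberg approximation is the underlying engine --- is correct and matches the paper. But the intermediate strategy of case analysis is the wrong route; you should instead quotient by the identity closure to reach the separated case and use the local finite-presentation property to reduce to geometric points.
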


\begin{proof}
	Any finite flat group scheme $N$ is a closed subgroup scheme of
	some smooth affine group scheme $G$ by \cite[Prop.\ 2.2.1]{Beg81}.
	The fppf quotient $H = G / N$ is a smooth affine group scheme by descent.
	We have a distinguished triangle
	$R o_{x}(\Order_{x}^{h}, N) \to R o_{x}(\Order_{x}^{h}, G) \to R o_{x}(\Order_{x}^{h}, H)$.
	If two of the terms are zero,
	then so is the other.
	Hence the finite flat case is reduced to the smooth case.
	
	Assume that $A$ is a smooth group scheme over $\Order_{x}^{h}$.
	Let $N \subset A$ be the schematic closure of the identity section of $A \times_{\Order_{x}^{h}} K_{x}^{h}$.
	Then $N$ is an \'etale group scheme over $\Order_{x}^{h}$ with trivial generic fiber
	and $A / N$ is a separated smooth group scheme over $\Order_{x}^{h}$
	by \cite[Prop.\ 3.3.5]{Ray70a}.
	The both objects $R \alg{\Gamma}_{x}(\Order_{x}^{h}, N)$
	and $R \alg{\Gamma}_{x}(\Hat{\Order}_{x}, N)$ are isomorphic to
	$N \times_{\Order_{x}^{h}} k_{x}$.
	
	Hence we may assume that $A$ is smooth separated.
	Then we have $\alg{H}_{x}^{0}(\Hat{\Order}_{x}, A) = 0$.
	Also $\alg{H}^{n}(\Hat{\Order}_{x}, A) = 0$ for $n \ge 1$
	by \cite[Prop.\ (3.4.2) (a)]{Suz14}.
	Hence
		\begin{gather*}
					\alg{H}_{x}^{1}(\Hat{\Order}_{x}, A)
				=
						\alg{\Gamma}(\Hat{K}_{x}, A)
					/
						\alg{\Gamma}(\Hat{\Order}_{x}, A),
			\\
					\alg{H}_{x}^{n}(\Hat{\Order}_{x}, A)
				=
					\alg{H}^{n - 1}(\Hat{K}_{x}, A)
		\end{gather*}
	for $n \ge 2$.
	These sheaves are locally of finite presentation (even before pro-\'etale sheafification)
	by the Greenberg approximation argument \cite[Prop.\ (3.2.8), (3.2.9)]{Suz14}.
	On the other hand, the sheaves $\alg{H}^{n}(\Order_{x}^{h}, A)$, $\alg{H}^{n}(K_{x}^{h}, A)$
	and thus $\alg{H}_{x}^{n}(\Order_{x}^{h}, A)$ are locally of finite presentation for $n \ge 0$
	by the previous proposition.
	Therefore it is enough to show that the morphism
		\[
				R \Gamma_{x}(\Order_{x}^{h}, A)
			\to
				R \Gamma_{x}(\Hat{\Order}_{x}, A)
		\]
	is an isomorphism when $k$ is algebraically closed
	by \cite[the second paragraph after Prop.\ (2.4.1)]{Suz14}.
	The statements to prove are
		\begin{gather*}
						\Gamma(K_{x}^{h}, A)
					/
						\Gamma(\Order_{x}^{h}, A)
				=
						\Gamma(\Hat{K}_{x}, A)
					/
						\Gamma(\Hat{\Order}_{x}, A),
			\\
					H^{n}(K_{x}^{h}, A)
				=
					H^{n}(\Hat{K}_{x}, A),
				\quad
					n \ge 1.
		\end{gather*}
	They can be proven in the same way as
	the Greenberg approximation argument \cite[Prop.\ (3.2.8), (3.2.9)]{Suz14}
	(or equivalently, by the same argument as \cite[I, Rmk.\ 3.10]{Mil06}
	combined with \cite[Prop.\ 3.5 (b)]{Ces15}).
\end{proof}

Perhaps any group scheme locally of finite type over $\Order_{x}^{h}$
might satisfy cohomological approximation
since the Greenberg approximation itself holds in this generality.
We do not pursue this point.
But see \cite[Lem.\ 2.6, Rmk.\ 2.7]{DH18} for a related result.

If $k_{x}$ is a finite extension of another perfect field $k$
and $f \colon \Spec k_{x} \to \Spec k$ is the natural morphism,
then the composite of $\alg{\Gamma}(\Order_{x}^{h}, \var)$
and $f_{\ast}$ is denoted by
	\[
			\alg{\Gamma}(\Order_{x}^{h} / k, \var)
		\colon
			\Ch(\Order_{x, \fppf}^{h})
		\to
			\Ch(k^{\ind\rat}_{\pro\et})
	\]
with right derived functor
	\[
			R \alg{\Gamma}(\Order_{x}^{h} / k, \var)
		\colon
			D(\Order_{x, \fppf}^{h})
		\to
			D(k^{\ind\rat}_{\pro\et}).
	\]
Similar notation applies to other objects, defining
$\alg{\Gamma}(K_{x}^{h} / k, \var)$,
$\alg{\Gamma}_{x}(\Order_{x}^{h} / k, \var)$,
$o_{x}(\Order_{x}^{h} / k, \var)$
and their derived functors.


\subsection{The fppf site of a curve over the rational \'etale site of the base}
\label{sec: The fppf site of a curve over the rational etale site of the base}

Let $U$ be a smooth geometrically connected curve
over a perfect field $k$ ($\in \mathcal{U}_{0}$) of characteristic $p > 0$,
with smooth compactification $X$ and function field $K$.
By the fppf site $U_{\fppf}$ of $U$,
we mean the category of ($\mathcal{U}_{0}$-small) $U$-schemes
endowed with the fppf topology.
For $k' \in k^{\ind\rat}$, we denote $U_{k'} = U \times_{k} k'$.
The functor sending $k' \in k^{\ind\rat}$ to $U_{k'}$ defines a premorphism of sites
	\[
			\pi_{U}
		\colon
			U_{\fppf}
		\to
			\Spec k^{\ind\rat}_{\et}.
	\]
We define a left exact functor
	\[
			\alg{\Gamma}(U, \var)
		\colon
			\Ab(U_{\fppf})
		\to
			\Ab(k^{\ind\rat}_{\pro\et})
	\]
by the composite of the pushforward functor $\pi_{U \ast}$ and the pro-\'etale sheafification.
We have its right derived functor
	\[
			R \alg{\Gamma}(U, \var)
		\colon
			D(U_{\fppf})
		\to
			D(k^{\ind\rat}_{\pro\et}),
	\]
with cohomologies $\alg{H}^{n} = H^{n} R \alg{\Gamma}$.
For any $A \in \Ab(U_{\fppf})$ and $n \ge 0$,
the sheaf $\alg{H}^{n}(U, A)$ on $\Spec k^{\ind\rat}_{\pro\et}$ is
the pro-\'etale sheafification of the presheaf
	\[
			k'
		\mapsto
			H^{n}(U_{k'}, A).
	\]

Let $\Spec k^{\perf}_{\et}$ be the category of ($\mathcal{U}_{0}$-small) perfect $k$-schemes
endowed with the \'etale topology.
The structure morphism $U \to \Spec k$ induces a morphism of sites
	\[
			\pi_{U}^{\perf} \colon U_{\fppf}
		\to
			\Spec k^{\perf}_{\et},
	\]
which is defined by the functor that sends a perfect $k$-scheme $S$ to $U \times_{k} S$.
In \cite[(3.1)]{AM76}, this is denoted by $\pi^{f, p}$.

\begin{Prop} \label{prop: comparison with Artin Milne}
	Let
		\[
				f
			\colon
				\Spec k^{\perf}_{\et}
			\to
				\Spec k^{\ind\rat}_{\et}
		\]
	be the premorphism of sites defined by the identity functor.
	Then we have $\pi_{U} = f \compose \pi_{U}^{\perf}$.
	The functor $f_{\ast}$ is exact.
	The functors $\alg{\Gamma}(U, \var)$ and $R \alg{\Gamma}(U, \var)$
	are the pro-\'etale sheafifications of $f_{\ast} \pi_{U \ast}^{\perf}$ and $f_{\ast} R \pi_{U \ast}^{\perf}$,
	respectively.
\end{Prop}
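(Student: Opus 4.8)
The plan is to check the three assertions in turn: the identity $\pi_{U} = f \compose \pi_{U}^{\perf}$ is an unwinding of underlying functors, exactness of $f_{\ast}$ is the one point of substance, and the statement about $R \alg{\Gamma}(U, \var)$ then follows formally. For the first assertion, recall that a continuous (pre)morphism of sites is determined by its underlying functor in the opposite direction; so I would just note that the underlying functor of $\pi_{U}$ sends $k' \in k^{\ind\rat}$ to $U \times_{k} k' = U_{k'}$, that of $f$ is the inclusion $k' \mapsto \Spec k'$ of ind-rational $k$-algebras among perfect $k$-schemes, and that of $\pi_{U}^{\perf}$ sends a perfect $k$-scheme $S$ to $U \times_{k} S$. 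Hence the underlying functor of $f \compose \pi_{U}^{\perf}$ sends $k'$ to $U \times_{k} \Spec k' = U_{k'}$, agreeing with that of $\pi_{U}$, and this agreement is plainly compatible with covering families and with the fiber products that enter the definition of a premorphism. In particular $\pi_{U \ast} = f_{\ast} \compose \pi_{U \ast}^{\perf}$ on sheaves of abelian groups.

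Next, exactness of $f_{\ast} \colon \Ab(k^{\perf}_{\et}) \to \Ab(k^{\ind\rat}_{\et})$. This functor is restriction along $k' \mapsto \Spec k'$, so $(f_{\ast} A')(k') = A'(\Spec k')$; being a right adjoint it is left exact, and it remains to see that it preserves epimorphisms. Given a surjection $A' \onto B'$ in $\Ab(k^{\perf}_{\et})$, an object $k' \in k^{\ind\rat}$, and a section $b \in B'(\Spec k')$, surjectivity on the perfect \'etale site gives an \'etale covering $\{T_{j} \to \Spec k'\}$ together with sections of $A'$ over the $T_{j}$ restricting to $b$. I would refine this covering by the affine opens of the $T_{j}$: each is the spectrum of an \'etale $k'$-algebra, which is ind-rational over $k$ by \cite[Prop.\ 2.1.2]{Suz13} (as already used in the proof of Prop.\ \ref{prop: proetale topos has enough projectives}), and since $\Spec k'$ is quasi-compact finitely many of these suffice to cover it, forming an \'etale covering in $\Spec k^{\ind\rat}_{\et}$ over which $b$ lifts to $f_{\ast} A'$. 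Hence $f_{\ast} A' \to f_{\ast} B'$ is a sheaf epimorphism, so $f_{\ast}$ is exact.

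Finally, the identification of the functors. By definition $\alg{\Gamma}(U, \var)$ is the pro-\'etale sheafification of $\pi_{U \ast}$, hence of $f_{\ast} \compose \pi_{U \ast}^{\perf}$ by the first step; writing $\widetilde{(\var)}$ for the (exact) pro-\'etale sheafification $\Ab(k^{\ind\rat}_{\et}) \to \Ab(k^{\ind\rat}_{\pro\et})$, this says $\alg{\Gamma}(U, \var) = \widetilde{(\var)} \compose f_{\ast} \compose \pi_{U \ast}^{\perf}$, where the first two functors are exact (the second by the previous step). Since $\Ab(U_{\fppf})$ is a Grothendieck category, $\pi_{U \ast}^{\perf}$ admits a right derived functor $R \pi_{U \ast}^{\perf}$ computed by K-injective resolutions, and because $\widetilde{(\var)} \compose f_{\ast}$ is exact it commutes with the passage to right derived functors; therefore
	\[
			R \alg{\Gamma}(U, \var)
		=
			\widetilde{(\var)} \compose f_{\ast} \compose R \pi_{U \ast}^{\perf},
	\]
which is precisely the pro-\'etale sheafification of $f_{\ast} R \pi_{U \ast}^{\perf}$, as claimed.

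The only step I expect to require real care is the exactness of $f_{\ast}$ — concretely, the refinement that keeps the covering inside ind-rational $k$-algebras, which is what makes restriction from the big perfect \'etale site to $\Spec k^{\ind\rat}_{\et}$ behave so well — together with the routine unbounded-derived-functor bookkeeping in the last step; everything else is a direct unwinding of definitions.
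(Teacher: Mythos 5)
Your proof is correct, and since the paper's own proof consists of the single word ``Obvious,'' you have simply supplied the details the author leaves to the reader. The only point of genuine content is exactly the one you isolate — exactness of $f_{\ast}$ via refining an \'etale covering of $\Spec k'$ in $\Spec k^{\perf}_{\et}$ by finitely many affine opens whose rings are \'etale over $k'$ and hence ind-rational over $k$ — and the remaining steps (matching underlying functors, and pushing the exact composite $\widetilde{(\var)} \compose f_{\ast}$ through $R \pi_{U \ast}^{\perf}$) are the direct unwindings you say they are.
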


\begin{proof}
	Obvious.
\end{proof}

In this sense, our constructions are
pro-\'etale sheafifications of restrictions (from all perfect schemes to only ind-rational algebras)
of the constructions in \cite{AM76}.
In the next section, we will translate results in \cite{AM76} to our setting in this way.

For a closed point $x \in X$,
let $k_{x}$, $\Order_{x}^{h}$, $\Hat{\Order}_{x}$, $K_{x}^{h}$, $\Hat{K}_{x}$
be the residue field, the henselian local ring, the completed local ring, their fraction fields,
respectively, at $x$.
The results and notation in the previous two subsections apply to $\Hat{\Order}_{x}$ and $\Order_{x}^{h}$.

Assume $x \in U$.
For any $k' \in k^{\ind\rat}$, we have
	$
			\alg{O}_{x}^{h}(k' \tensor_{k} k_{x})
		=
			(k' \tensor_{k} \Order_{x}^{h})^{h}.
	$
Hence the morphism $\Spec \Order_{x}^{h} \to U$ induces a morphism
$\Spec \alg{O}_{x}^{h}(k' \tensor_{k} k_{x}) \to U_{k'}$.
This induces a homomorphism
	\[
			\Gamma(U_{k'}, A)
		\to
			\Gamma(\alg{O}_{x}^{h}(k' \tensor_{k} k_{x}), A)
	\]
for any $A \in \Ab(U_{\fppf})$.
Thus we have a morphism
	\[
			\alg{\Gamma}(U, \var)
		\to
			\alg{\Gamma}(\Order_{x}^{h} / k, \var)
	\]
of left exact functors $\Ab(U_{\fppf}) \to \Ab(k^{\ind\rat}_{\pro\et})$
and a morphism
	\[
			R \alg{\Gamma}(U, \var)
		\to
			R \alg{\Gamma}(\Order_{x}^{h} / k, \var)
	\]
of triangulated functors $D(U_{\fppf}) \to D(k^{\ind\rat}_{\pro\et})$.

Assume $x \not\in U$.
For any $k' \in k^{\ind\rat}$, we similarly have a morphism
$\Spec \alg{K}_{x}^{h}(k' \tensor_{k} k_{x}) \to U_{k'}$.
This induces a homomorphism
	\[
			\Gamma(U_{k'}, A)
		\to
			\Gamma(\alg{K}_{x}^{h}(k' \tensor_{k} k_{x}), A)
	\]
for any $A \in \Ab(U_{\fppf})$.
Thus we have a morphism
	\[
			\alg{\Gamma}(U, \var)
		\to
			\alg{\Gamma}(K_{x}^{h} / k, \var)
	\]
of left exact functors $\Ab(U_{\fppf}) \to \Ab(k^{\ind\rat}_{\pro\et})$
and a morphism
	\[
			R \alg{\Gamma}(U, \var)
		\to
			R \alg{\Gamma}(K_{x}^{h} / k, \var)
	\]
of triangulated functors $D(U_{\fppf}) \to D(k^{\ind\rat}_{\pro\et})$.

For a dense open subscheme $V \subset U$, we have a morphism
	\[
			\alg{\Gamma}(U, \var)
		\to
			\alg{\Gamma}(V, \var)
	\]
of left exact functors $\Ab(U_{\fppf}) \to \Ab(k^{\ind\rat}_{\pro\et})$
and a morphism
	\[
			R \alg{\Gamma}(U, \var)
		\to
			R \alg{\Gamma}(V, \var)
	\]
of triangulated functors $D(U_{\fppf}) \to D(k^{\ind\rat}_{\pro\et})$.
Let $Z = U \setminus V$, which is a finite set of closed points of $X$.
We define a functor of additive categories with translation by
	\begin{gather*}
				\alg{\Gamma}_{Z}(U, \var)
			=
				\bigl[
						\alg{\Gamma}(U, \var)
					\to
						\alg{\Gamma}(V, \var)
				\bigr][-1]
			\colon
		\\
				\Ch(U_{\fppf})
			\to
				\Ch(k^{\ind\rat}_{\pro\et}).
	\end{gather*}
We have its right derived functor
	\begin{gather*}
				R \alg{\Gamma}_{Z}(U, \var)
			=
				\bigl[
						R \alg{\Gamma}(U, \var)
					\to
						R \alg{\Gamma}(V, \var)
				\bigr][-1]
			\colon
		\\
				D(U_{\fppf})
			\to
				D(k^{\ind\rat}_{\pro\et}).
	\end{gather*}
When $Z = \{x\}$, these are also denoted by $\alg{\Gamma}_{x}(U, \var)$
and $R \alg{\Gamma}_{x}(U, \var)$.

\begin{Prop} \label{prop: excision}
	Let $V \subset U$ be a dense open subscheme and set $Z = U \setminus V$.
	The diagram
		\[
			\begin{CD}
					\alg{\Gamma}(U, \var)
				@>>>
					\alg{\Gamma}(V, \var)
				\\
				@VVV
				@VVV
				\\
					\bigoplus_{x \in U \setminus V}
						\alg{\Gamma}(\Order_{x}^{h} / k, \var)
				@>>>
					\bigoplus_{x \in U \setminus V}
						\alg{\Gamma}(K_{x}^{h} / k, \var)
			\end{CD}
		\]
	of left exact functors $\Ab(U_{\fppf}) \to \Ab(k^{\ind\rat}_{\pro\et})$ is commutative.
	The induced morphism
		\[
				R \alg{\Gamma}_{Z}(U, \var)
			\to
				\bigoplus_{x \in U \setminus V}
					R \alg{\Gamma}_{x}(\Order_{x}^{h} / k, \var)
		\]
	of triangulated functors $D(U_{\fppf}) \to D(k^{\ind\rat}_{\pro\et})$
	is an isomorphism.
\end{Prop}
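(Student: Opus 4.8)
The plan is to verify commutativity of the square by hand, then derive and form the mapping fibres of the two rows, and finally check the isomorphism by reducing it — through the premorphism-of-sites formalism — to the classical excision theorem for fppf cohomology of a curve over an algebraically closed field. So, first I would prove commutativity. For $k' \in k^{\ind\rat}$ and $A \in \Ab(U_{\fppf})$, before pro-\'etale sheafification the four vertices evaluate to $\Gamma(U_{k'}, A)$, $\Gamma(V_{k'}, A)$, $\bigoplus_{x} \Gamma(\alg{O}_x^h(k' \tensor_k k_x), A)$ and $\bigoplus_x \Gamma(\alg{K}_x^h(k' \tensor_k k_x), A)$, where $\alg{O}_x^h(k' \tensor_k k_x) = (k' \tensor_k \Order_x^h)^h$, and every arrow is restriction along an evident scheme morphism: $V_{k'} \into U_{k'}$; $\Spec \alg{O}_x^h(k' \tensor_k k_x) \to U_{k'}$ induced by $\Spec \Order_x^h \to U$; $\Spec \alg{K}_x^h(k' \tensor_k k_x) \to V_{k'}$; and the localization $\Spec \alg{K}_x^h(k' \tensor_k k_x) \to \Spec \alg{O}_x^h(k' \tensor_k k_x)$. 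The two composites $\Gamma(U_{k'}, A) \to \bigoplus_x \Gamma(\alg{K}_x^h(k' \tensor_k k_x), A)$ coincide because the two induced maps $\Spec \alg{K}_x^h(k' \tensor_k k_x) \to U_{k'}$ — one factoring through $V_{k'}$, one through $\Spec \alg{O}_x^h(k' \tensor_k k_x)$ — are literally the same morphism of schemes. Pro-\'etale sheafification preserves this, and passing to right derived functors and taking the mapping fibres of the two rows (exactly as in the construction of $R \alg{\Gamma}_Z(U, \var)$ itself, via \cite[Thm.\ 14.3.1]{KS06}) yields the morphism of the statement.

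Next I would reduce the isomorphism claim to stalks. Both sides are pro-\'etale sheafifications of mapping-fibre complexes assembled from $R(\pi_U)_\ast$, $R(\pi_V)_\ast$ and from $(f_x)_\ast R(\pi_{\Order_x^h})_\ast$, $(f_x)_\ast R(\pi_{K_x^h})_\ast$; since sheafification is exact it suffices to prove the corresponding morphism in $D(k^{\ind\rat}_{\et})$ is an isomorphism, and since such a morphism is an isomorphism as soon as it becomes one after $R \Gamma(k'_{\et}, \var)$ for every algebraically closed $k' \in k^{\ind\rat}$ (the global-sections functor being exact for such $k'$), it is enough to treat that case. Applying Prop.\ \ref{prop: push sends K limp to K limp} to the premorphisms of sites $\pi_U, \pi_V, \pi_{\Order_x^h}, \pi_{K_x^h}$ and to the finite \'etale maps $f_x$ — using that $(f_x)_\ast$ is exact, that $k' \tensor_k k_x \cong \prod_{\sigma \in \Hom_k(k_x, k')} k'$, and that henselization commutes with the base change $k_x \to k'$ — identifies $R\Gamma(k'_{\et},\var)$ of the morphism with the excision morphism
	\[
			\bigl[
					R \Gamma(U_{k'}, A) \to R \Gamma(V_{k'}, A)
			\bigr][-1]
		\longrightarrow
			\bigoplus_{y \in Z_{k'}}
				\bigl[
						R \Gamma(\Order^h_{U_{k'}, y}, A)
					\to
						R \Gamma(\Frac \Order^h_{U_{k'}, y}, A)
				\bigr][-1],
	\]
where $Z_{k'} = Z \times_k k'$ is a finite set of closed points of the curve $U_{k'}$ with residue field $k'$, where $V_{k'} = U_{k'} \setminus Z_{k'}$, and where $\Order^h_{U_{k'}, y}$ is the henselian local ring of $U_{k'}$ at $y$.

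This last morphism is an isomorphism by the excision theorem for fppf cohomology of a curve: cohomology with support in a finite closed set of points decomposes as the direct sum of cohomologies with support at the individual points, and cohomology with support at a point depends only on the henselian local ring there (\cite[III, \S 0]{Mil06}, \cite[III]{Mil80}). Concretely, Mayer--Vietoris for the Zariski covering $\{W, U_{k'} \setminus Z_{k'}\}$ of $U_{k'}$ — with $W$ a disjoint union of small affine neighbourhoods of the points of $Z_{k'}$ — reduces the left-hand side to $\bigoplus_y R \Gamma_y(W_y, A)$, and the invariance of cohomology with support under \'etale neighbourhoods together with $\Order^h_{U_{k'}, y} = \invlim$ over those neighbourhoods yields the right-hand side.

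I expect the hard part to be precisely this last input in the stated generality of an arbitrary $A \in D(U_{\fppf})$: one must either cite excision at that level of generality or bootstrap it from the case of a single sheaf through the (possibly unbounded) truncation and K-injective machinery, and the non-formal content is exactly the henselian-versus-completed subtlety flagged in \S \ref{sec: Henselizations and completions}. Everything else — commutativity of the square and the stalkwise identification — is routine, resting on the exactness of $\Res_{k_x / k}$ and on Prop.\ \ref{prop: push sends K limp to K limp}; and in the applications $A$ will always be a shift of a smooth or finite flat group scheme, where in addition Prop.\ \ref{prop: cohom with closed support doesnt change under completion} allows one to replace $\Order_x^h$ by $\Hat{\Order}_x$.
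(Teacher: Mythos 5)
Your proof is correct and follows the same strategy as the paper's: after observing that the commutativity of the square is evident from the description of all four structure morphisms as restrictions along scheme maps at the level of sections over each $k'$, you derive, form the mapping fibres, and reduce the isomorphism claim to sections over suitable $k' \in k^{\ind\rat}$, where both sides become the classical cohomology-with-supports comparison between the curve $U_{k'}$ and the henselian local rings at the points of $Z$ above $k'$. The paper performs the reduction to $k' \in k^{\rat}$ (a finite product of perfect fields, then replacing $k$ by $k'$) rather than to algebraically closed $k'$, and then pushes the fppf comparison to the \'etale site --- using that $\Spec\Order_x^h$ is a filtered inverse limit of affine \'etale $U$-schemes and that cohomology commutes with such limits --- so as to cite Milne's \'etale excision \cite[III, Cor.\ 1.28]{Mil80} directly, rather than the Mayer--Vietoris plus \'etale-neighbourhood-limit argument you sketch; but these are presentational variants of the same proof.

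One point in your final paragraph should be corrected, because it reveals a conflation that could mislead you elsewhere. You identify the ``non-formal content'' of the last step with ``exactly the henselian-versus-completed subtlety flagged in \S\ref{sec: Henselizations and completions}.'' That is a different issue and is irrelevant to Prop.\ \ref{prop: excision}: the proposition compares $R\alg{\Gamma}_Z(U, \var)$ with $\bigoplus_x R\alg{\Gamma}_x(\Order_x^h / k, \var)$, and no completed local ring appears on either side. Excision from $U$ to $\Order_x^h$ holds for \emph{arbitrary} $A \in D(U_{\fppf})$, precisely because $\Spec\Order_x^h$ is a filtered inverse limit of \'etale neighbourhoods of $x$ and fppf (or \'etale) cohomology with supports commutes with such limits; this limit argument is the whole content of the last step, and the paper discharges it by pushing to the \'etale site and citing \cite[III, Cor.\ 1.28]{Mil80}. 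The henselian-versus-completed subtlety concerns the further passage $\Order_x^h \leadsto \Hat{\Order}_x$, which genuinely \emph{does} fail for general $A$ (Rmk.\ \ref{rmk: henselian and completed versions}) because $\Hat{\Order}_x$ admits no such limit description over $U$, and is handled separately by Prop.\ \ref{prop: cohom with closed support doesnt change under completion} under the restriction to smooth or finite flat coefficients. Reading the two as the same subtlety would suggest Prop.\ \ref{prop: excision} needs a coefficient restriction; it does not.
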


\begin{proof}
	The first statement about the commutativity is obvious.
	For the second statement, it is enough to treat the case that $Z$ is a singleton $\{x\}$.
	We want to show that the morphism
		\[
				R \alg{\Gamma}_{x}(U, \var)
			\to
				R \alg{\Gamma}_{x}(\Order_{x}^{h} / k, \var)
		\]
	is an isomorphism.
	Let $f_{x} \colon \Spec k_{x} \to \Spec k$ be the natural morphism.
	The above morphism is the pro-\'etale sheafification of
		\[
				[R \pi_{U \ast} \to R \pi_{V \ast}][-1]
			\to
				f_{x \ast}
				[R (\pi_{\Order_{x}^{h}})_{\ast} \to R (\pi_{K_{x}^{h}})_{\ast}][-1].
		\]
	Applying $R \Gamma(k', \var)$ for $k' \in k^{\ind\rat}$ to this morphism before sheafification,
	we have a morphism
		\[
				R \Gamma_{x}(U_{k'}, \var)
			\to
				R \Gamma_{x} \bigl(
					(k' \tensor_{k} \Order_{x}^{h})^{h}, \var
				\bigr),
		\]
	where the left-hand (resp.\ right-hand) side is the fppf cohomology of
	$U_{k'} = U \times_{k} k'$ (resp.\ $(k' \tensor_{k} \Order_{x}^{h})^{h}$)
	with support on $x \times_{k} k'$ (resp.\ the ideal generated by $k' \tensor_{k} \ideal{p}_{x}^{h}$)
	(\cite[III, Prop.\ 0.3]{Mil06}).
	It is enough to show that this morphism is an isomorphism for any $k'$.
	We may assume that $k' \in k^{\rat}$
	since cohomology and henselization commute with filtered inverse limits of schemes.
	Then $k'$ is a finite product of perfect fields.
	Hence we may assume that $k'$ is a perfect field.
	Replacing $k$ by $k'$, we may assume that $k' = k$.
	Hence we are reduced to showing that
		\[
				R \Gamma_{x}(U, \var)
			\isomto
				R \Gamma_{x}(\Order_{x}^{h}, \var)
		\]
	on $D(U_{\fppf})$.
	Since $\Spec \Order_{x}^{h}$ is a filtered inverse limit of affine \'etale $U$-schemes
	and cohomology commutes with such limits,
	we can push them forward to the \'etale sites.
	The statement to prove is thus
		\[
				R \Gamma_{x}(U_{\et}, \var)
			\isomto
				R \Gamma_{x}(\Order_{x, \et}^{h}, \var)
		\]
	on $D(U_{\et})$.
	This is the excision isomorphism of \'etale cohomology \cite[III, Cor.\ 1.28]{Mil80}.
\end{proof}

We define a functor of additive categories with translation by
	\begin{gather*}
				\alg{\Gamma}_{c}(U, \var)
			=
				\Bigl[
						\alg{\Gamma}(U, \var)
					\to
						\bigoplus_{x \not\in U}
							\alg{\Gamma}(\Hat{K}_{x} / k, \var)
				\Bigr][-1]
			\colon
		\\
				\Ch(U_{\fppf})
			\to
				\Ch(k^{\ind\rat}_{\pro\et}),
	\end{gather*}
where the sum is over all $x \in X \setminus U$.
We have its right derived functor
	\begin{gather*}
				R \alg{\Gamma}_{c}(U, \var)
			=
				\Bigl[
						R \alg{\Gamma}(U, \var)
					\to
						\bigoplus_{x \not\in U}
							R \alg{\Gamma}(\Hat{K}_{x} / k, \var)
				\Bigr][-1]
			\colon
		\\
				D(U_{\fppf})
			\to
				D(k^{\ind\rat}_{\pro\et}).
	\end{gather*}
Here we are working with unbounded complexes,
making the definition of compact support cohomology more involved than \cite{DH18}.
This is important in view of the definition of the pairing \eqref{eq: global cup product} below
(which uses derived tensor product $\tensor^{L}$).

For another dense open subscheme $V \subset U$,
unfortunately there is no obvious morphism from
$R \alg{\Gamma}_{c}(V, \var)$ to $R \alg{\Gamma}_{c}(U, \var)$,
and no natural distinguished triangle
	\[
			R \alg{\Gamma}_{c}(V, A)
		\to
			R \alg{\Gamma}_{c}(U, A)
		\to
			\bigoplus_{x \in U \setminus V}
				R \alg{\Gamma}(\Hat{\Order}_{x} / k, A),
	\]
unless $A \in D(U_{\fppf})$ satisfies cohomological approximation at all $x \in U \setminus V$
(see Prop.\ \ref{prop: cpt support cohom triangle} and
\ref{prop: cpt supp triangle after cohom approx} below).
We need to define a variant of $R \alg{\Gamma}_{c}(V, \var)$
that does admit a natural morphism to $R \alg{\Gamma}_{c}(U, \var)$.
We define a functor $\Ch(U_{\fppf}) \to \Ch(k^{\ind\rat}_{\pro\et})$
of additive categories with translation by
	\begin{align*}
				\alg{\Gamma}_{c}(V, U, \var)
		&	=
				\Bigl[
						\alg{\Gamma}_{c}(U, \var)
					\to
						\bigoplus_{x \in U \setminus V}
							\alg{\Gamma}(\Hat{\Order}_{x} / k, \var)
				\Bigr][-1]
		\\
		&	=
				\Bigl[
						\alg{\Gamma}(U, \var)
					\to
							\bigoplus_{x \not\in U}
								\alg{\Gamma}(\Hat{K}_{x} / k, \var)
						\oplus
							\bigoplus_{x \in U \setminus V}
								\alg{\Gamma}(\Hat{\Order}_{x} / k, \var)
				\Bigr][-1].
	\end{align*}
Here we used the fact that
for any two morphisms $A, B \to C$ of complexes in an additive category,
we have natural isomorphisms of complexes
	\[
			[A \oplus B \to C]
		\cong
			[A \to [B \to C]]
		\cong
			[B \to [A \to C]],
	\]
or dually, for any two morphisms $A \to B, C$ of complexes in an additive category,
we have natural isomorphisms of (mapping fiber) complexes
	\[
			[A \to B \oplus C][-1]
		\cong
			\bigl[
				[A \to B][-1] \to C
			\bigr][-1]
		\cong
			\bigl[
				[A \to C][-1] \to B
			\bigr][-1].
	\]
We have the right derived functor $D(U_{\fppf}) \to D(k^{\ind\rat}_{\pro\et})$
	\begin{align*}
				R \alg{\Gamma}_{c}(V, U, \var)
		&	=
				\Bigl[
						R \alg{\Gamma}_{c}(U, \var)
					\to
						\bigoplus_{x \in U \setminus V}
							R \alg{\Gamma}(\Hat{\Order}_{x} / k, \var)
				\Bigr][-1]
		\\
		&	=
				\Bigl[
						R \alg{\Gamma}(U, \var)
					\to
							\bigoplus_{x \not\in U}
								R \alg{\Gamma}(\Hat{K}_{x} / k, \var)
						\oplus
							\bigoplus_{x \in U \setminus V}
								R \alg{\Gamma}(\Hat{\Order}_{x} / k, \var)
				\Bigr][-1].
	\end{align*}
By definition, we have a distinguished triangle
	\[
			R \alg{\Gamma}_{c}(V, U, \var)
		\to
			R \alg{\Gamma}_{c}(U, \var)
		\to
			\bigoplus_{x \in U \setminus V}
				R \alg{\Gamma}(\Hat{\Order}_{x} / k, \var).
	\]
By this, we mean the values of these functors at any object form a distinguished triangle.

We also need to define a variant of $R \alg{\Gamma}_{c}(U, \var)$.
Define
	\[
			\alg{\Gamma}_{c}(U, V, \var)
		=
			\Bigl[
					\alg{\Gamma}(V, \var)
				\to
					\bigoplus_{x \notin U}
						\alg{\Gamma}(\Hat{K}_{x} / k, \var)
					\oplus
					\bigoplus_{x \in U \setminus V}
						\alg{\Gamma}_{x}(\Hat{\Order}_{x} / k, \var)[1]
			\Bigr][-1],
	\]
where the morphism from $\alg{\Gamma}(V, \var)$
to $\alg{\Gamma}_{x}(\Hat{\Order}_{x} / k, \var)[1]$
is the composite
	\[
			\alg{\Gamma}(V, \var)
		\to
			\alg{\Gamma}(K_{x}^{h} / k, \var)
		\to
			\alg{\Gamma}(\Hat{K}_{x} / k, \var)
		\to
			\alg{\Gamma}_{x}(\Hat{\Order}_{x} / k, \var)[1].
	\]
Its derived functor is
	\[
			R \alg{\Gamma}_{c}(U, V, \var)
		=
			\Bigl[
					R \alg{\Gamma}(V, \var)
				\to
					\bigoplus_{x \notin U}
						R \alg{\Gamma}(\Hat{K}_{x} / k, \var)
					\oplus
					\bigoplus_{x \in U \setminus V}
						R \alg{\Gamma}_{x}(\Hat{\Order}_{x} / k, \var)[1]
			\Bigr][-1].
	\]
For any $x \in U \setminus V$, the inclusion into the second summand defines morphisms
	\[
			\alg{\Gamma}_{x}(\Hat{\Order}_{x} / k, \var)
		\to
			\alg{\Gamma}_{c}(U, V, \var)
	\]
and
	\begin{equation} \label{eq: local to global support cohomology morphism}
			R \alg{\Gamma}_{x}(\Hat{\Order}_{x} / k, \var)
		\to
			R \alg{\Gamma}_{c}(U, V, \var).
	\end{equation}
The last morphism will be used later to connect trace morphisms in the local and global situations.

Let $Z = U \setminus V$.
Define
	\[
			o_{Z}(U, \var)
		=
			\bigr[
					\alg{\Gamma}_{Z}(U, \var)
				\to
					\bigoplus_{x \in U \setminus V}
						\alg{\Gamma}_{x}(\Hat{\Order}_{x} / k, \var)
			\bigr],
	\]
with derived functor
	\[
			R o_{Z}(U, \var)
		=
			\bigr[
					R \alg{\Gamma}_{Z}(U, \var)
				\to
					\bigoplus_{x \in U \setminus V}
						R \alg{\Gamma}_{x}(\Hat{\Order}_{x} / k, \var)
			\bigr].
	\]

We explain the notation to be used in the next proposition.
A commutative diagram of distinguished triangles in a triangulated category
	\[
		\begin{CD}
			A @>>> B @>>> C @>>> A[1] \\
			@VVV @VVV @VVV @VVV \\
			A' @>>> B' @>>> C' @>>> A'[1] \\
			@VVV @VVV @VVV @VVV \\
			A'' @>>> B'' @>>> C'' @>>> A''[1] \\
			@VVV @VVV @VVV @VVV \\
			A[1] @>>> B[1] @>>> C[1] @>>> A[2]
		\end{CD}
	\]
means a commutative diagram all of whose rows and columns are distinguished triangles
(where the right lower square is actually ``anti-commutative'' \cite[Diagram (10.5.5)]{KS06},
but we largely ignore commutative vs.\ anti-commutative issues,
which is especially harmless if $A$ or $C''$ is zero for example).
As usual, we will hide the shifted terms $B[1], A'' [1], A[2]$ etc.\ for brevity
and mention the remaining $3 \times 3$ diagram
as a commutative diagram of distinguished triangles.
A commutative diagram of distinguished triangles of triangulated functors
	\[
		\begin{CD}
			F @>>> G @>>> H \\
			@VVV @VVV @VVV \\
			F' @>>> G' @>>> H' \\
			@VVV @VVV @VVV \\
			F'' @>>> G'' @>>> H'' \\
		\end{CD}
	\]
means a commutative diagram of morphisms of triangulated functors
whose values at any object form a commutative diagram of distinguished triangles in the above sense.

\begin{Prop} \label{prop: cpt support cohom triangle}
	Let $V \subset U$ be a dense open subscheme and set $Z = U \setminus V$.
	The natural morphisms form a commutative diagram
		\[
			\begin{CD}
					\alg{\Gamma}_{c}(V, U, \var)
				@>>>
					\alg{\Gamma}_{c}(U, \var)
				@>>>
					\bigoplus_{x \in U \setminus V}
						\alg{\Gamma}(\Hat{\Order}_{x} / k, \var)
				\\
				@VVV
				@VVV
				@|
				\\
					\alg{\Gamma}_{c}(V, \var)
				@>>>
					\alg{\Gamma}_{c}(U, V, \var)
				@>>>
					\bigoplus_{x \in U \setminus V}
						\alg{\Gamma}(\Hat{\Order}_{x} / k, \var)
				\\
				@VVV
				@VVV
				@VVV
				\\
					o_{Z}(U, \var)
				@=
					o_{Z}(U, \var)
				@>>>
					0
			\end{CD}
		\]
	of distinguished triangles of triangulated functors
	$K(U_{\fppf}) \to K(k^{\ind\rat}_{\pro\et})$.
	With the isomorphism
		\[
				R o_{Z}(U, \var)
			\isomto
				\bigoplus_{x \in U \setminus V}
					R o_{x}(\Order_{x}^{h} / k, \var)
		\]
	coming from the previous proposition,
	we have a canonical commutative diagram
		\[
			\begin{CD}
					R \alg{\Gamma}_{c}(V, U, \var)
				@>>>
					R \alg{\Gamma}_{c}(U, \var)
				@>>>
					\bigoplus_{x \in U \setminus V}
						R \alg{\Gamma}(\Hat{\Order}_{x} / k, \var)
				\\
				@VVV
				@VVV
				@|
				\\
					R \alg{\Gamma}_{c}(V, \var)
				@>>>
					R \alg{\Gamma}_{c}(U, V, \var)
				@>>>
					\bigoplus_{x \in U \setminus V}
						R \alg{\Gamma}(\Hat{\Order}_{x} / k, \var)
				\\
				@VVV
				@VVV
				@VVV
				\\
					\bigoplus_{x \in U \setminus V}
						R o_{x}(\Order_{x}^{h} / k, \var)
				@=
					\bigoplus_{x \in U \setminus V}
						R o_{x}(\Order_{x}^{h} / k, \var)
				@>>>
					0
			\end{CD}
		\]
	of distinguished triangles of triangulated functors
	$D(U_{\fppf}) \to D(k^{\ind\rat}_{\pro\et})$.
\end{Prop}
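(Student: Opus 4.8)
The plan is to build the first diagram at the level of homotopy categories entirely from the definitions of the various mapping fiber constructions introduced just before the statement, and then to obtain the second diagram by applying the right derived functor, using the excision isomorphism (Prop.\ \ref{prop: excision}) to identify $R o_{Z}(U, \var)$ with $\bigoplus_{x} R o_{x}(\Order_{x}^{h} / k, \var)$. The key observation is purely formal: all six nontrivial entries of the first diagram are iterated mapping fibers of maps between the same basic functors $\alg{\Gamma}(U, \var)$, $\alg{\Gamma}(V, \var)$, $\bigoplus_{x \notin U} \alg{\Gamma}(\Hat{K}_{x}/k, \var)$, $\bigoplus_{x \in Z} \alg{\Gamma}(\Hat{\Order}_{x}/k, \var)$, $\bigoplus_{x \in Z} \alg{\Gamma}(K_{x}^{h}/k, \var)$ and $\bigoplus_{x \in Z} \alg{\Gamma}_{x}(\Hat{\Order}_{x}/k, \var)$, so the whole diagram is an instance of the ``$3\times 3$ mapping fiber lemma'': given a commutative square of complexes, the mapping fibers of its rows, of its columns, and the total mapping fiber all fit into a commutative $3\times 3$ array of distinguished triangles in $K$.

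First I would write down explicitly, using the two displayed ``reassociation of mapping fibers'' isomorphisms recalled in the text, that $\alg{\Gamma}_{c}(V, U, \var)$, $\alg{\Gamma}_{c}(U, \var)$, $\alg{\Gamma}_{c}(V, \var)$, $\alg{\Gamma}_{c}(U, V, \var)$ and $o_{Z}(U, \var)$ are all obtained from the single commutative square
\[
	\begin{CD}
			\alg{\Gamma}(U, \var)
		@>>>
			\alg{\Gamma}(V, \var)
		\\
		@VVV
		@VVV
		\\
			\bigoplus_{x \in Z} \alg{\Gamma}(\Order_{x}^{h}/k, \var)
		@>>>
			\bigoplus_{x \in Z} \alg{\Gamma}(K_{x}^{h}/k, \var)
	\end{CD}
\]
together with the square relating $\Order_{x}^{h}$-cohomology to $\Hat{\Order}_{x}$-cohomology and the localization maps at the missing points $x \notin U$; here one uses that $\alg{\Gamma}_{x}(\Hat{\Order}_{x}/k, \var) = [\alg{\Gamma}(\Hat{\Order}_{x}/k, \var) \to \alg{\Gamma}(\Hat{K}_{x}/k, \var)][-1]$ and $\alg{\Gamma}_{Z}(U, \var) = [\alg{\Gamma}(U, \var) \to \alg{\Gamma}(V, \var)][-1]$. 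Tracking which iterated fiber is which, the top row is the defining triangle of $\alg{\Gamma}_{c}(V, U, \var)$, the middle row is (after the reassociation isomorphism) the defining triangle of $\alg{\Gamma}_{c}(U, V, \var)$, the left column is the definition of $o_{Z}(U, \var)$ applied to $\alg{\Gamma}_{Z}(U, \var)$, the middle column is the mapping fiber presentation relating $\alg{\Gamma}_{c}(U, \var)$ and $\alg{\Gamma}_{c}(U, V, \var)$ through $o_{Z}$, and the right column is trivial. Commutativity of every square is immediate from functoriality of the restriction maps $\alg{\Gamma}(U, \var) \to \alg{\Gamma}(V, \var) \to \alg{\Gamma}(K_{x}^{h}/k, \var) \to \alg{\Gamma}(\Hat{K}_{x}/k, \var)$ and the compatibility of $\Order_{x}^{h}$-completion with these; the fact that all rows and columns are distinguished triangles in $K(k^{\ind\rat}_{\pro\et})$ is then the $3\times 3$ mapping fiber lemma for additive categories with translation, applied termwise.

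For the second diagram I would pass to derived functors: each entry's right derived functor is computed by the corresponding mapping fiber of the derived functors (this is exactly how $R\alg{\Gamma}_{c}$, $R\alg{\Gamma}_{Z}$, $Ro_{Z}$ etc.\ were defined above, via \cite[Thm.\ 14.3.1 (vi)]{KS06}), so the derived diagram is again a $3\times 3$ array of distinguished triangles, and its commutativity follows from that of the undeived one since all the structural morphisms are derived functorially. The only extra input is the identification $R o_{Z}(U, \var) \isomto \bigoplus_{x \in Z} R o_{x}(\Order_{x}^{h}/k, \var)$: by definition $R o_{Z}(U, \var) = [R\alg{\Gamma}_{Z}(U, \var) \to \bigoplus_{x} R\alg{\Gamma}_{x}(\Hat{\Order}_{x}/k, \var)]$, and Prop.\ \ref{prop: excision} gives $R\alg{\Gamma}_{Z}(U, \var) \isomto \bigoplus_{x} R\alg{\Gamma}_{x}(\Order_{x}^{h}/k, \var)$ compatibly with the maps to $\bigoplus_{x} R\alg{\Gamma}_{x}(\Hat{\Order}_{x}/k, \var)$, so the mapping fiber is $\bigoplus_{x}[R\alg{\Gamma}_{x}(\Order_{x}^{h}/k, \var) \to R\alg{\Gamma}_{x}(\Hat{\Order}_{x}/k, \var)] = \bigoplus_{x} R o_{x}(\Order_{x}^{h}/k, \var)$.

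The main obstacle is bookkeeping rather than mathematics: one must choose the reassociation isomorphisms of iterated mapping fibers consistently so that the squares literally commute on the nose in $K(k^{\ind\rat}_{\pro\et})$ (not merely up to homotopy), and verify that the word ``canonical'' in the derived statement is justified — i.e.\ that no choice of injective resolution intervenes in a way that breaks commutativity. This is handled by working throughout with the explicit mapping fiber functors on $\Ch$ (which are strictly functorial), deriving only at the very end via the universal property, exactly as in the construction of \eqref{eq: derived functoriality morphism over integers} above; the anti-commutativity of the bottom-right square of a $3\times 3$ array is harmless here since the relevant corner functor vanishes.
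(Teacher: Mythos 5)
Your plan is in the same spirit as the paper's — express all six nontrivial entries as iterated mapping fibers of a fixed small collection of functors, verify commutativity at the chain/homotopy level, then derive and invoke the excision identification $Ro_{Z}(U, \var) \isomto \bigoplus_{x} Ro_{x}(\Order_{x}^{h}/k, \var)$ — but the middle step as you have framed it does not quite work. The compact support cohomologies $\alg{\Gamma}_{c}(U, \var)$, $\alg{\Gamma}_{c}(V, \var)$, $\alg{\Gamma}_{c}(V, U, \var)$, $\alg{\Gamma}_{c}(U, V, \var)$ are mapping fibers involving the cohomology $E = \bigoplus_{x \notin U}\alg{\Gamma}(\Hat{K}_{x}/k, \var)$ at the points \emph{outside} $U$ and the \emph{completed} local cohomologies $C' = \bigoplus_{x \in Z}\alg{\Gamma}(\Hat{\Order}_{x}/k, \var)$, $D' = \bigoplus_{x \in Z}\alg{\Gamma}(\Hat{K}_{x}/k, \var)$, so they do not arise as row or column fibers of the single square you write down (whose bottom row is the henselian $\Order_{x}^{h}$, $K_{x}^{h}$ data); the henselian objects enter the proof only through the excision isomorphism at the end, not in the chain-level $3\times3$ array. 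Moreover, the desired array has two entries equal to $o_{Z}(U, \var)$ and two equal to $\bigoplus_{x}\alg{\Gamma}(\Hat{\Order}_{x}/k, \var)$, which is a special \emph{consequence} of the fiber calculations rather than something produced by the generic $3\times3$ lemma for a commutative square, so that lemma cannot be used as a black box to establish the proposition (and in the homotopy or derived category it would also fail to produce the \emph{canonical} diagram claimed). What the paper actually does is abbreviate $C = \alg{\Gamma}(U, \var)$, $D = \alg{\Gamma}(V, \var)$, $E$, $C'$, $D'$ as above, note they fit into a commutative partial diagram, and then write out all nine entries of the shifted array explicitly as iterated mapping cones in those five symbols — e.g.\ $\alg{\Gamma}_{c}(V,U,\var)[1] = [C \to E\oplus C']$, $\alg{\Gamma}_{c}(U,V,\var)[1] = [D \to E\oplus[C'\to D']]$, $o_{Z}(U,\var)[1] = [[C\to D]\to[C'\to D']]$ — and check directly that the resulting diagram of complexes commutes up to homotopy (the point you label as ``immediate from functoriality'' is exactly what the paper declares ``routine to check up to homotopy'' after exhibiting the array). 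You should replace your invocation of the $3\times3$ lemma with this explicit computation, and remove the henselian square from the role of basic input; with that repair your outline matches the paper's proof.
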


\begin{proof}
	For $A \in \Ch(U_{\fppf})$, let
		\begin{gather*}
					C
				=
					\alg{\Gamma}(U, A),
				\quad
					D
				=
					\alg{\Gamma}(V, A),
				\quad
					E
				=
					\bigoplus_{x \not\in U}
						\alg{\Gamma}(\Hat{K}_{x} / k, A),
			\\
					C'
				=
					\bigoplus_{x \in U \setminus V}
						\alg{\Gamma}(\Hat{\Order}_{x} / k, A),
				\quad
					D'
				=
					\bigoplus_{x \in U \setminus V}
						\alg{\Gamma}(\Hat{K}_{x} / k, A).
		\end{gather*}
	These are objects of $\Ch(k^{\ind\rat}_{\pro\et})$.
	We have a natural commutative diagram
		\[
			\begin{CD}
				C @>>> D @>>> E \\
				@VVV @VVV @. \\
				C' @>>> D' @.
			\end{CD}
		\]
	in $\Ch(k^{\ind\rat}_{\pro\et})$.
	The value at $A$ of the first diagram shifted by one can be written as
		\[
			\begin{CD}
					[C \to E \oplus C']
				@>>>
					[C \to E]
				@>>>
					C'[1]
				\\
				@VVV @VVV @|
				\\
					[D \to E \oplus D']
				@>>>
					\bigl[
						D \to E \oplus [C' \to D']
					\bigr]
				@>>>
					C'[1]
				\\
				@VVV @VVV @VVV
				\\
					\bigl[
						[C \to D] \to [C' \to D']
					\bigr]
				@=
					\bigl[
						[C \to D] \to [C' \to D']
					\bigr]
				@>>>
					0
			\end{CD}
		\]
	(which is actually a $4 \times 4$ diagram as we are omitting the shifted terms).
	It is routine to check that this diagram is commutative up to homotopy.
	The second diagram results from the first.
\end{proof}

We say that an object $A \in D(U_{\fppf})$
\emph{satisfies cohomological approximation} if $R o_{x}(\Order_{x}^{h}, A) = 0$ for any $x \in U$.
We denote by $D(U_{\fppf})_{\ca}$
the full subcategory of $D(U_{\fppf})$
consisting of objects satisfying cohomological approximation.
It is a triangulated subcategory.

\begin{Prop} \label{prop: cpt supp triangle after cohom approx}
	On $D(U_{\fppf})_{\ca}$, we have isomorphisms
		\[
				R \alg{\Gamma}_{c}(V, U, \var)
			=
				R \alg{\Gamma}_{c}(V, \var),
			\quad
				R \alg{\Gamma}_{c}(U, V, \var)
			=
				R \alg{\Gamma}_{c}(U, \var),
		\]
	a distinguished triangle
		\[
				R \alg{\Gamma}_{c}(V, \var)
			\to
				R \alg{\Gamma}_{c}(U, \var)
			\to
				\bigoplus_{x \in U \setminus V}
					R \alg{\Gamma}(\Hat{\Order}_{x} / k, \var)
		\]
	and a morphism
		\[
				R \alg{\Gamma}_{x}(\Hat{\Order}_{x} / k, \var)
			\to
				R \alg{\Gamma}_{c}(U, \var)
		\]
	for any $x \in U$ compatible with \eqref{eq: local to global support cohomology morphism}.
\end{Prop}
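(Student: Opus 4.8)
The plan is to deduce everything formally from the commutative diagram of distinguished triangles of triangulated functors in Prop.\ \ref{prop: cpt support cohom triangle}, by restricting it to the triangulated subcategory $D(U_{\fppf})_{\ca}$, where the obstruction terms vanish.

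First I would observe that $R o_{x}(\Order_{x}^{h} / k, A) = 0$ for every $A \in D(U_{\fppf})_{\ca}$ and every $x \in U$: by the definition of cohomological approximation $R o_{x}(\Order_{x}^{h}, A) = 0$, and the functor $R o_{x}(\Order_{x}^{h} / k, \var)$ is the composite of $R o_{x}(\Order_{x}^{h}, \var)$ with the exact Weil restriction $\Res_{k_{x} / k} = (f_{x})_{\ast}$, so $R o_{x}(\Order_{x}^{h} / k, A) = \Res_{k_{x} / k} R o_{x}(\Order_{x}^{h}, A) = 0$. Hence, on $D(U_{\fppf})_{\ca}$, the entire bottom row and the third column of the second (derived) diagram of Prop.\ \ref{prop: cpt support cohom triangle} vanish. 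Reading off the first column then gives the isomorphism $R \alg{\Gamma}_{c}(V, U, \var) \isomto R \alg{\Gamma}_{c}(V, \var)$, and reading off the second column gives $R \alg{\Gamma}_{c}(U, \var) \isomto R \alg{\Gamma}_{c}(U, V, \var)$; these are the first two claimed isomorphisms. The middle row of the same diagram, rewritten via the second of these isomorphisms, is then the distinguished triangle
\[
R \alg{\Gamma}_{c}(V, \var) \to R \alg{\Gamma}_{c}(U, \var) \to \bigoplus_{x \in U \setminus V} R \alg{\Gamma}(\Hat{\Order}_{x} / k, \var)
\]
of triangulated functors $D(U_{\fppf})_{\ca} \to D(k^{\ind\rat}_{\pro\et})$ (and the top row gives the same triangle via the first isomorphism, by commutativity of the diagram).

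For the final morphism, fix $x \in U$, choose a dense open subscheme $V \subset U$ with $x \notin V$ (for instance $V = U \setminus \{x\}$), and compose the morphism \eqref{eq: local to global support cohomology morphism} with the inverse of the isomorphism $R \alg{\Gamma}_{c}(U, \var) \isomto R \alg{\Gamma}_{c}(U, V, \var)$ just obtained:
\[
R \alg{\Gamma}_{x}(\Hat{\Order}_{x} / k, \var) \to R \alg{\Gamma}_{c}(U, V, \var) \isomfrom R \alg{\Gamma}_{c}(U, \var).
\]
By construction this is compatible with \eqref{eq: local to global support cohomology morphism}. I would then check, using the functoriality in $V$ of the complexes $\alg{\Gamma}_{c}(U, V, \var)$ and of the morphisms \eqref{eq: local to global support cohomology morphism}, that the resulting morphism does not depend on the chosen $V$. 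No part of the argument is deep; the only steps that demand any care are this last independence-of-$V$ bookkeeping and keeping track of the commutative-versus-anti-commutative squares in the $3 \times 3$ diagram of Prop.\ \ref{prop: cpt support cohom triangle}, which is largely harmless here since one corner of each relevant square vanishes.
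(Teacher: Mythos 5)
Your proof is correct and follows exactly the route the paper intends: the paper's own proof is simply ``Obvious from the previous proposition,'' and your argument supplies precisely the expected unwinding of the $3\times 3$ diagram of Prop.\ \ref{prop: cpt support cohom triangle} once one notes that $R o_{x}(\Order_{x}^{h}/k,\var)$ vanishes on $D(U_{\fppf})_{\ca}$ (by exactness of $\Res_{k_x/k}$). The only small remark is that your concern about independence of the auxiliary $V$ is more than the statement actually requires — it only asserts the existence of a morphism compatible with \eqref{eq: local to global support cohomology morphism}, not its uniqueness — but checking it is harmless and sensible.
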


\begin{proof}
	Obvious from the previous proposition.
\end{proof}

\begin{Prop} \label{prop: smooths and finite flats satisfy cohom approx}
	If $A$ is a smooth group scheme or a finite flat group scheme over $U$,
	then $A \in D(U_{\fppf})_{\ca}$.
\end{Prop}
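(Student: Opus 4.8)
The plan is to reduce the statement to the purely local result Prop.\ \ref{prop: cohom with closed support doesnt change under completion}, which asserts that a smooth or finite flat group scheme over an excellent henselian discrete valuation ring of equal characteristic $p$ with perfect residue field satisfies cohomological approximation. By definition, $A \in D(U_{\fppf})_{\ca}$ means that $R o_{x}(\Order_{x}^{h}, A) = 0$ for every closed point $x \in U$, where $R o_{x}(\Order_{x}^{h}, \var)$ is the functor of \S\ref{sec: Henselizations and completions} and $A$ is implicitly restricted along the natural morphism $g_{x} \colon \Spec \Order_{x}^{h} \to U$; this morphism lies in the underlying category of $U_{\fppf}$ (it exhibits $\Spec \Order_{x}^{h}$ as a filtered inverse limit of affine \'etale $U$-schemes), and the corresponding restriction functor identifies $\Ch(\Order_{x, \fppf}^{h})$ with $\Ch(U_{\fppf} / \Spec \Order_{x}^{h})$. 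So the whole statement is local at $x$.

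First I would check that the hypotheses of \S\ref{sec: Henselizations and completions} hold at each $x \in U$: since $U$ is of finite type over the field $k$, the local ring $\Order_{U, x}$ is excellent, hence so is its henselization $\Order_{x}^{h}$, which is a henselian discrete valuation ring of equal characteristic $p$; and its residue field $k_{x}$, being finite over the perfect field $k$, is perfect. Next, the restriction $g_{x}^{\ast} A$ is represented by the base change $A \times_{U} \Spec \Order_{x}^{h}$, since the pullback of a representable fppf sheaf along a morphism of schemes is computed by fibre product; and base change along $\Spec \Order_{x}^{h} \to U$ preserves smoothness and preserves the property of being finite flat. Hence $g_{x}^{\ast} A$ is a smooth (resp.\ finite flat) group scheme over $\Order_{x}^{h}$, so Prop.\ \ref{prop: cohom with closed support doesnt change under completion} gives $R o_{x}(\Order_{x}^{h}, g_{x}^{\ast} A) = 0$. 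As this holds for all $x \in U$, we conclude $A \in D(U_{\fppf})_{\ca}$.

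I do not expect any real obstacle: the substance of the argument was already carried out in Prop.\ \ref{prop: cohom with closed support doesnt change under completion} (the Greenberg approximation input, together with the mapping-cone reduction of the finite flat case to the smooth case), and that result is available to be cited. The only point deserving a word is bookkeeping: one should make explicit that the functor $R o_{x}(\Order_{x}^{h}, \var)$ appearing in the definition of $D(U_{\fppf})_{\ca}$ is the one applied to $g_{x}^{\ast} A$, and that this is compatible with the morphisms $R \alg{\Gamma}(U, \var) \to R \alg{\Gamma}(\Order_{x}^{h} / k, \var)$ and $R \alg{\Gamma}_{x}(U, \var) \to R \alg{\Gamma}_{x}(\Order_{x}^{h} / k, \var)$ constructed in \S\ref{sec: The fppf site of a curve over the rational etale site of the base} (and hence, via Prop.\ \ref{prop: excision}, with the component at $x$ of $R o_{Z}(U, \var)$ for any $V$ with $x \notin V$). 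Once this identification is recorded, the written proof is essentially a one-line invocation of Prop.\ \ref{prop: cohom with closed support doesnt change under completion}; in particular no mapping-cone manipulation is needed at the global level.
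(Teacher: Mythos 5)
Your proposal is correct and takes essentially the same approach as the paper, which simply cites Prop.\ \ref{prop: cohom with closed support doesnt change under completion}; your additional bookkeeping (checking excellence and perfectness of residue fields, and that base change to $\Order_{x}^{h}$ preserves smoothness and finite flatness) is exactly the implicit content of that one-line reduction.
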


\begin{proof}
	This follows from Prop.\ \ref{prop: cohom with closed support doesnt change under completion}.
\end{proof}

Let $B, C \in D(U_{\fppf})$.
To simplify the notation, we denote
$R \sheafhom_{\Order_{x}^{h}}$ for any $x \in U$ by
$[\var, \var]_{\Order_{x}^{h}}$.
Denote similarly $R \sheafhom_{U}$ by
$[\var, \var]_{U}$
and $R \sheafhom_{k^{\ind\rat}_{\pro\et}}$ by
$[\var, \var]_{k}$.
A similar construction to \eqref{eq: functoriality morphism over integers}
defines a morphism
	\[
			R \alg{\Gamma}(U, [B, C]_{U})
		\to
			\bigl[
				R \alg{\Gamma}_{c}(U, B),
				R \alg{\Gamma}_{c}(U, C)
			\bigr]_{k}.
	\]
This is equivalent to a morphism
	\begin{equation} \label{eq: global cup product}
			R \alg{\Gamma}(U, A) \tensor^{L} R \alg{\Gamma}_{c}(U, B)
		\to
			R \alg{\Gamma}_{c}(U, A \tensor^{L} B)
	\end{equation}
via the derived tensor-Hom adjunction \cite[Thm.\ 18.6.4 (vii)]{KS06}
and the change of variables
$[B, C]_{U} \leadsto A$ and $A \tensor^{L} B \leadsto C$.

For each $x \in U \setminus V$, we have natural morphisms
	\begin{align*}
		&
				R \alg{\Gamma}_{x}(\Order_{x}^{h} / k, [B, C]_{\Order_{x}^{h}})
			\to
				R \alg{\Gamma}_{x}(\Hat{\Order}_{x} / k, [B, C]_{\Order_{x}^{h}})
		\\
		&	\to
				\bigl[
					R \alg{\Gamma}(\Hat{\Order}_{x} / k, B),
					R \alg{\Gamma}_{x}(\Hat{\Order}_{x} / k, C)
				\bigr]_{k}
			\to
				\bigl[
					R \alg{\Gamma}(\Hat{\Order}_{x} / k, B),
					R \alg{\Gamma}_{c}(U, V, C)
				\bigr]_{k}
	\end{align*}
using the morphisms \eqref{eq: functoriality morphism over integers, variant} and
\eqref{eq: local to global support cohomology morphism}.
Using the morphisms in Prop.\ \ref{prop: cpt support cohom triangle},
we have morphisms
	\begin{gather*}
				R \alg{\Gamma}(U, [B, C]_{U})
			\to
				\bigl[
					R \alg{\Gamma}_{c}(U, B),
					R \alg{\Gamma}_{c}(U, C)
				\bigr]_{k}
			\to
				\bigl[
					R \alg{\Gamma}_{c}(U, B),
					R \alg{\Gamma}_{c}(U, V, C)
				\bigr]_{k},
		\\
				R \alg{\Gamma}(V, [B, C]_{V})
			\to
				\bigl[
					R \alg{\Gamma}_{c}(V, B),
					R \alg{\Gamma}_{c}(V, C)
				\bigr]_{k}
			\to
				\bigl[
					R \alg{\Gamma}_{c}(V, U, B),
					R \alg{\Gamma}_{c}(U, V, C)
				\bigr]_{k}.
	\end{gather*}

\begin{Prop} \label{prop: compatibility of local global pairings without cohom approx}
	The above morphisms give a morphism from the distinguished triangles
		\[
				\bigoplus_{x \in U \setminus V}
					R \alg{\Gamma}_{x}(\Order_{x}^{h} / k, [B, C]_{\Order_{x}^{h}})
			\to
				R \alg{\Gamma}(U, [B, C]_{U})
			\to
				R \alg{\Gamma}(V, [B, C]_{V})
		\]
	to the distinguished triangle
		\begin{align*}
				\bigoplus_{x \in U \setminus V}
					\bigl[
						R \alg{\Gamma}(\Hat{\Order}_{x} / k, B),
						R \alg{\Gamma}_{c}(U, V, C)
					\bigr]_{k}
		&	\to
				\bigl[
					R \alg{\Gamma}_{c}(U, B),
					R \alg{\Gamma}_{c}(U, V, C)
				\bigr]_{k}
		\\
		&	\to
				\bigl[
					R \alg{\Gamma}_{c}(V, U, B),
					R \alg{\Gamma}_{c}(U, V, C)
				\bigr]_{k}.
		\end{align*}
\end{Prop}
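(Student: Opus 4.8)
The plan is to verify compatibility on the level of complexes (in $K(k^{\ind\rat}_{\pro\et})$) and then derive the claimed morphism of distinguished triangles. The three source objects fit into the excision distinguished triangle of Prop.\ \ref{prop: excision}, and the three target objects are of the form $[\var, R \alg{\Gamma}_{c}(U, V, C)]_{k}$ applied to the three terms of the distinguished triangle
\[
		R \alg{\Gamma}_{c}(V, U, B)
	\to
		R \alg{\Gamma}_{c}(U, B)
	\to
		\bigoplus_{x \in U \setminus V}
			R \alg{\Gamma}(\Hat{\Order}_{x} / k, B)
\]
of Prop.\ \ref{prop: cpt support cohom triangle}, to which the exact functor $[\var, R \alg{\Gamma}_{c}(U, V, C)]_{k}$ turns into a distinguished triangle (in the opposite variance). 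So the target is genuinely a distinguished triangle, and it suffices to produce the three vertical arrows and check that the two resulting squares commute.

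First I would unwind the definitions. Fix $A, B \in \Ch(U_{\fppf})$ with injective (or suitably $K$-injective) resolutions so that all the $R$-functors are computed term-wise, and recall that all the relevant functors ($\alg{\Gamma}(U, \var)$, $\alg{\Gamma}(\Hat{K}_{x} / k, \var)$, $\alg{\Gamma}_{x}(\Hat{\Order}_{x} / k, \var)$, and their mapping-fiber combinations) are functors of additive categories with translation, and that the compact-support and with-support functors are defined by explicit mapping fibers in the category of complexes. The three arrows in the statement are built from: the global functoriality morphism \eqref{eq: functoriality morphism over integers}-type pairing $R \alg{\Gamma}(U, [B,C]_U) \to [R\alg{\Gamma}_c(U,B), R\alg{\Gamma}_c(U,C)]_k$; the local functoriality morphism \eqref{eq: functoriality morphism over integers, variant}; and the morphism \eqref{eq: local to global support cohomology morphism} $R\alg{\Gamma}_x(\Hat{\Order}_x/k,\var) \to R\alg{\Gamma}_c(U,V,\var)$. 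Each of these is itself obtained by deriving a morphism of complexes that exists before passing to the derived category, so the whole diagram can be assembled at the level of $\Ch(k^{\ind\rat}_{\pro\et})$ (modulo signs from the translation structure).

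The key steps, in order, are: (1) write down, for the local term, the composite morphism of complexes
\[
		\alg{\Gamma}_{x}(\Order_{x}^{h} / k, \sheafhom_{\Order_x^h}(B, C))
	\to
		\alg{\Gamma}_{x}(\Hat{\Order}_{x} / k, \sheafhom_{\Order_x^h}(B, C))
	\to
		\sheafhom_{k}\bigl(\alg{\Gamma}(\Hat{\Order}_x/k,B),\, \alg{\Gamma}_x(\Hat{\Order}_x/k,C)\bigr)
	\to
		\sheafhom_{k}\bigl(\alg{\Gamma}(\Hat{\Order}_x/k,B),\, \alg{\Gamma}_c(U,V,C)\bigr),
\]
using \eqref{eq: functoriality morphism over integers, variant} and the inclusion \eqref{eq: local to global support cohomology morphism} of the second summand of $\alg{\Gamma}_c(U,V,C)$; (2) write down the analogous global morphisms over $U$ and over $V$, factoring each functoriality pairing through $R\alg{\Gamma}_c(U,V,C)$ via the canonical morphism $R\alg{\Gamma}_c(U,C) \to R\alg{\Gamma}_c(U,V,C)$ of Prop.\ \ref{prop: cpt support cohom triangle} (and the morphism $R\alg{\Gamma}_c(V,C)$ to the same object, which factors through $R\alg{\Gamma}_c(V,U,C) \to R\alg{\Gamma}_c(U,C) \to R\alg{\Gamma}_c(U,V,C)$); (3) check that the square involving the connecting map of the excision triangle (namely $R\alg{\Gamma}(V,[B,C]_V) \to \bigoplus_x R\alg{\Gamma}_x(\Order_x^h/k,[B,C]_{\Order_x^h})[1]$) commutes — this is where one must track the mapping-fiber shift and verify that the two ways of landing in $[R\alg{\Gamma}_c(V,U,B), R\alg{\Gamma}_c(U,V,C)]_k[1]$ agree; (4) conclude, by the triangulated five-lemma / the axiom that a morphism between two sides of distinguished triangles extends to the third, that the resulting diagram is a morphism of distinguished triangles.

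The main obstacle I expect is step (3): the compatibility of the \emph{boundary} morphisms. Both the excision triangle and the target triangle come from mapping-fiber constructions, and the functoriality pairings relate $\alg{\Gamma}$-with-support on one side to $\alg{\Gamma}_c$-type objects on the other; the sign/homotopy bookkeeping when one passes a cup-product-type pairing through a connecting homomorphism is exactly the kind of thing that is ``routine but delicate.'' Concretely, I would unravel $R\alg{\Gamma}_c(U,V,C)$ as the iterated mapping fiber $[\,R\alg{\Gamma}(V,C) \to \bigoplus_{x\notin U} R\alg{\Gamma}(\Hat K_x/k,C) \oplus \bigoplus_{x\in U\setminus V} R\alg{\Gamma}_x(\Hat{\Order}_x/k,C)[1]\,][-1]$ and check directly that the pairing $R\alg{\Gamma}(V,[B,C]_V) \tensor^L R\alg{\Gamma}_c(V,U,B) \to R\alg{\Gamma}_c(U,V,C)$ restricts, on the local summand of $R\alg{\Gamma}_c(V,U,B)$ (which is $\bigoplus_x R\alg{\Gamma}(\Hat{\Order}_x/k,B)$ up to shift), to the local pairing of step (1). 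Everything else — the two commuting squares not involving the boundary, the fact that the target is a distinguished triangle, the exactness of $[\var, R\alg{\Gamma}_c(U,V,C)]_k$ — follows formally from the earlier propositions and the construction of the functoriality morphisms as derived functors of complex-level morphisms, and can be dispatched with the word ``routine.''
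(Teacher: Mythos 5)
Your proposal is correct and follows essentially the same strategy as the paper: reduce to commutativity up to homotopy at the level of complexes, identify the relevant cup-product-type morphisms (the functoriality pairings \eqref{eq: functoriality morphism over integers, variant} and \eqref{eq: local to global support cohomology morphism}), unravel the mapping-fiber definitions of the compact-support functors, and verify the squares by explicit homotopy bookkeeping -- the paper carries this out in detail for the left square and then exhibits the needed homotopy (the diagonal projection), leaving the other two squares as ``similar.''

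One small remark: your step (4) -- invoking the axiom that a morphism between two sides of distinguished triangles extends to the third -- is out of place here. The proposition asserts that the \emph{given} canonical arrows form a morphism of distinguished triangles; the extension axiom would produce \emph{some} third arrow making the diagram commute, not necessarily the canonical one, so it cannot be used to conclude. Once you have constructed all three vertical morphisms and checked commutativity of all three squares (including the one with the connecting morphisms, which you address in step (3)), there is nothing left to conclude -- the result is immediate -- so step (4) should simply be dropped.
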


\begin{proof}
	We only prove the commutativity of the square
		\[
			\begin{CD}
					R \alg{\Gamma}_{x}(\Order_{x}^{h} / k, [B, C]_{\Order_{x}^{h}})
				@>>>
					R \alg{\Gamma}(U, [B, C]_{U})
				\\
				@VVV
				@VVV
				\\
					\bigl[
						R \alg{\Gamma}(\Hat{\Order}_{x} / k, B),
						R \alg{\Gamma}_{c}(U, V, C)
					\bigr]_{k}
				@>>>
					\bigl[
						R \alg{\Gamma}_{c}(U, B),
						R \alg{\Gamma}_{c}(U, V, C)
					\bigr]_{k}
			\end{CD}
		\]
	for $x \in U \setminus V$.
	There are two more squares whose commutativity has to be proven.
	They can be treated similarly, so we omit their treatment.
	It is enough to show that the diagram
		\[
			\begin{CD}
					\alg{\Gamma}_{x}(\Order_{x}^{h} / k, [B, C]_{\Order_{x}^{h}})
				@>>>
					\alg{\Gamma}(U, [B, C]_{U})
				\\
				@VVV
				@VVV
				\\
					\bigl[
						\alg{\Gamma}(\Hat{\Order}_{x} / k, B),
						\alg{\Gamma}_{c}(U, V, C)
					\bigr]_{k}^{c}
				@>>>
					\bigl[
						\alg{\Gamma}_{c}(U, B),
						\alg{\Gamma}_{c}(U, V, C)
					\bigr]_{k}^{c}
			\end{CD}
		\]
	in $K(k^{\ind\rat}_{\pro\et})$ for $x \in U \setminus V$ and $B, C \in K(U_{\fppf})$ is commutative,
	where $[\var, \var]_{k}^{c}$ is the sheaf-Hom complex functor $\sheafhom_{k^{\ind\rat}_{\pro\et}}$.
	By the tensor-Hom adjunction, it is enough to show that the morphism
		\begin{equation} \label{eq: cup of local coh and cpt supp coh}
			\begin{aligned}
				&
							\alg{\Gamma}_{x}(\Order_{x}^{h} / k, A)
						\tensor
							\alg{\Gamma}_{c}(U, B)
					\to
							\alg{\Gamma}_{x}(\Order_{x}^{h} / k, A)
						\tensor
							\alg{\Gamma}(\Hat{\Order}_{x} / k, B)
				\\
				&	\to
						\alg{\Gamma}_{x}(\Hat{\Order}_{x} / k, A \tensor B)
					\to
						\alg{\Gamma}_{c}(U, V, A \tensor B)
			\end{aligned}
		\end{equation}
	and the morphism
		\[
					\alg{\Gamma}(U, A)
				\tensor
					\alg{\Gamma}_{c}(U, B)
			\to
				\alg{\Gamma}_{c}(U, A \tensor B)
			\to
				\alg{\Gamma}_{c}(U, V, A \tensor B)
		\]
	in $\Ch(k^{\ind\rat}_{\pro\et})$ are compatible up to homotopy via
		\[
				\alg{\Gamma}_{x}(\Order_{x}^{h} / k, A)
			\isomfrom
				\alg{\Gamma}_{x}(U, A)
			\to
				\alg{\Gamma}(U, A)
		\]
	(where $\isomfrom$ is a quasi-isomorphism).
	It is routine to check that the morphism \eqref{eq: cup of local coh and cpt supp coh},
	the morphism
		\begin{align*}
						\alg{\Gamma}_{x}(U, A)
					\tensor
						\alg{\Gamma}_{c}(U, B)
			&	\to
						\alg{\Gamma}_{x}(U, A)
					\tensor
						\alg{\Gamma}(U, B)
				\to
					\alg{\Gamma}_{x}(U, A \tensor B)
			\\
			&	\to
					\alg{\Gamma}_{x}(\Hat{\Order}_{x} / k, A \tensor B)
				\to
					\alg{\Gamma}_{c}(U, V, A \tensor B)
		\end{align*}
	and the morphism
		\begin{align*}
			&
						\alg{\Gamma}_{c}(U, A)
					\tensor
						\alg{\Gamma}_{c}(U, B)
				\to
						\alg{\Gamma}_{c}(U, A)
					\tensor
						\alg{\Gamma}(U, B)
			\\
			&	\to
					\alg{\Gamma}_{c}(U, A \tensor B)
				\to
					\alg{\Gamma}_{c}(U, V, A \tensor B)
		\end{align*}
	are all compatible (without homotopy) via
		\[
				\alg{\Gamma}_{x}(\Order_{x}^{h} / k, A)
			\isomfrom
				\alg{\Gamma}_{x}(U, A)
			\to
				\alg{\Gamma}_{c}(U, A).
		\]
	Hence we need to show that the diagram
		\[
			\begin{CD}
						\alg{\Gamma}_{c}(U, A)
					\tensor
						\alg{\Gamma}_{c}(U, B)
				@>>>
						\alg{\Gamma}(U, A)
					\tensor
						\alg{\Gamma}_{c}(U, B)
				\\
				@VVV
				@VVV
				\\
						\alg{\Gamma}_{c}(U, A)
					\tensor
						\alg{\Gamma}(U, B)
				@>>>
						\alg{\Gamma}_{c}(U, A \tensor B)
			\end{CD}
		\]
	in $\Ch(k^{\ind\rat}_{\pro\et})$ is commutative up to homotopy.
	The left upper term is the total complex of the three term complex in degrees $0, 1, 2$
	of double complexes
		\begin{align*}
						\alg{\Gamma}(U, A)
					\tensor
						\alg{\Gamma}(U, B)
			&	\to
						\alg{\Gamma}(U, A)
					\tensor
						\bigoplus_{x \not\in U}
							\alg{\Gamma}(\Hat{K}_{x} / k, B)
					\oplus
						\bigoplus_{x \not\in U}
							\alg{\Gamma}(\Hat{K}_{x} / k, A)
					\tensor
						\alg{\Gamma}(U, B)
			\\
			&	\to
						\bigoplus_{x \not\in U}
							\alg{\Gamma}(\Hat{K}_{x} / k, A)
					\tensor
						\bigoplus_{x \not\in U}
							\alg{\Gamma}(\Hat{K}_{x} / k, B).
		\end{align*}
	The right lower term is the total complex of the two term complex in degrees $0, 1$
	of double complexes
		\[
				\alg{\Gamma}(U, A \tensor B)
			\to
				\bigoplus_{x \not\in U}
					\alg{\Gamma}(\Hat{K}_{x} / k, A \tensor B).
		\]
	The required homotopy is given by the projection to the diagonal
		\[
						\bigoplus_{x \not\in U}
							\alg{\Gamma}(\Hat{K}_{x} / k, A)
					\tensor
						\bigoplus_{x \not\in U}
							\alg{\Gamma}(\Hat{K}_{x} / k, B)
				\to
						\bigoplus_{x \not\in U}
							\alg{\Gamma}(\Hat{K}_{x} / k, A \tensor B)
		\]
	in degree $2$ and zero in other degrees.
\end{proof}

\begin{Prop} \label{prop: compatibility of local global pairings after cohom approx}
	Let $A, B, C \in D(U_{\fppf})_{\ca}$.
	Let $A \to R \sheafhom_{U}(B, C)$, or equivalently,
	$A \tensor^{L} B \to C$, be a morphism in $D(U_{\fppf})$.
	Then the morphism in Prop.\ \ref{prop: compatibility of local global pairings without cohom approx}
	induces a morphism of distinguished triangles from
		\[
				\bigoplus_{x \in U \setminus V}
					R \alg{\Gamma}_{x}(\Hat{\Order}_{x} / k, A)
			\to
				R \alg{\Gamma}(U, A)
			\to
				R \alg{\Gamma}(V, A)
		\]
	to
		\begin{align*}
					\bigoplus_{x \in U \setminus V}
						\bigl[
							R \alg{\Gamma}(\Hat{\Order}_{x} / k, B),
							R \alg{\Gamma}_{c}(U, C)
						\bigr]_{k}
			&	\to
					\bigl[
						R \alg{\Gamma}_{c}(U, B),
						R \alg{\Gamma}_{c}(U, C)
					\bigr]_{k}
			\\
			&	\to
					\bigl[
						R \alg{\Gamma}_{c}(V, B),
						R \alg{\Gamma}_{c}(U, C)
					\bigr]_{k}.
		\end{align*}
\end{Prop}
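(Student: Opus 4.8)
The plan is to deduce this statement directly from its cohomological-approximation-free counterpart, Prop.\ \ref{prop: compatibility of local global pairings without cohom approx}, by precomposing with the given morphism $A \to R \sheafhom_{U}(B, C)$ and then rewriting the resulting triangles using the identifications that become available once $A$, $B$ and $C$ satisfy cohomological approximation. No new homotopy-theoretic work is needed beyond what went into Prop.\ \ref{prop: compatibility of local global pairings without cohom approx} and Prop.\ \ref{prop: cpt support cohom triangle}; the task is purely one of bookkeeping the canonical isomorphisms.

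First I would observe that the morphism $A \to R \sheafhom_{U}(B, C)$, together with its restrictions to $V$ and to each $\Spec \Order_{x}^{h}$ for $x \in U \setminus V$, induces by functoriality a morphism of distinguished triangles from
	\[
			\bigoplus_{x \in U \setminus V}
				R \alg{\Gamma}_{x}(\Order_{x}^{h} / k, A)
		\to
			R \alg{\Gamma}(U, A)
		\to
			R \alg{\Gamma}(V, A)
	\]
---which is distinguished because it is the triangle $R \alg{\Gamma}_{Z}(U, A) \to R \alg{\Gamma}(U, A) \to R \alg{\Gamma}(V, A)$ attached to the mapping-fibre functor $R \alg{\Gamma}_{Z}(U, \var)$, after applying the excision isomorphism $R \alg{\Gamma}_{Z}(U, \var) \isomto \bigoplus_{x} R \alg{\Gamma}_{x}(\Order_{x}^{h} / k, \var)$ of Prop.\ \ref{prop: excision}---to the source triangle of Prop.\ \ref{prop: compatibility of local global pairings without cohom approx} (with coefficient object $R \sheafhom_{U}(B, C)$, and its restrictions, in place of $A$). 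Composing this with the morphism of triangles supplied by Prop.\ \ref{prop: compatibility of local global pairings without cohom approx} gives a morphism of distinguished triangles from the displayed triangle above to
	\begin{align*}
				\bigoplus_{x \in U \setminus V}
					\bigl[
						R \alg{\Gamma}(\Hat{\Order}_{x} / k, B),
						R \alg{\Gamma}_{c}(U, V, C)
					\bigr]_{k}
		&	\to
				\bigl[
					R \alg{\Gamma}_{c}(U, B),
					R \alg{\Gamma}_{c}(U, V, C)
				\bigr]_{k}
		\\
		&	\to
				\bigl[
					R \alg{\Gamma}_{c}(V, U, B),
					R \alg{\Gamma}_{c}(U, V, C)
				\bigr]_{k},
	\end{align*}
where $[\var, \var]_{k}$ abbreviates $R \sheafhom_{k^{\ind\rat}_{\pro\et}}$ as in the preceding discussion.

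Then I would feed in the three hypotheses. Applying the exact Weil restriction $\Res_{k_{x} / k}$ to the defining identity $R \alg{\Gamma}_{x}(\Order_{x}^{h}, A) = R \alg{\Gamma}_{x}(\Hat{\Order}_{x}, A)$ coming from $R o_{x}(\Order_{x}^{h}, A) = 0$ yields $R \alg{\Gamma}_{x}(\Order_{x}^{h} / k, A) = R \alg{\Gamma}_{x}(\Hat{\Order}_{x} / k, A)$; by Prop.\ \ref{prop: cpt supp triangle after cohom approx}, $C \in D(U_{\fppf})_{\ca}$ gives $R \alg{\Gamma}_{c}(U, V, C) = R \alg{\Gamma}_{c}(U, C)$ and $B \in D(U_{\fppf})_{\ca}$ gives $R \alg{\Gamma}_{c}(V, U, B) = R \alg{\Gamma}_{c}(V, B)$. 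Substituting these into the source and target triangles above turns them into exactly the two triangles in the statement, so the composite morphism constructed above is the desired one. The only point requiring care---and the main potential obstacle---is verifying that these substitutions are compatible with \emph{all} the maps of the triangles simultaneously, i.e.\ that the relevant squares remain commutative; but this is precisely what the commutative diagram of distinguished triangles in Prop.\ \ref{prop: cpt support cohom triangle} (whose bottom row is $\bigoplus_{x} R o_{x}(\Order_{x}^{h} / k, \var)$, which vanishes on $D(U_{\fppf})_{\ca}$) records, together with the excision comparison of Prop.\ \ref{prop: excision}. Hence the argument is essentially formal.
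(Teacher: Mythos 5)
Your proof is correct and is essentially the approach the paper intends (the paper's own proof is the single sentence ``This follows from Prop.\ \ref{prop: cpt supp triangle after cohom approx}''). You have simply unfolded the steps implicit in that citation: precompose with the map of localization triangles induced by $A \to R\sheafhom_U(B,C)$, and then substitute $R\alg{\Gamma}_x(\Order_x^h/k, A) = R\alg{\Gamma}_x(\Hat{\Order}_x/k, A)$, $R\alg{\Gamma}_c(U,V,C) = R\alg{\Gamma}_c(U,C)$, and $R\alg{\Gamma}_c(V,U,B) = R\alg{\Gamma}_c(V,B)$, whose compatibility with all the structure maps is exactly what the canonical commutative diagram of Prop.\ \ref{prop: cpt support cohom triangle} guarantees once the $Ro_x$ row vanishes.
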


\begin{proof}
	This follows from Prop.\ \ref{prop: cpt supp triangle after cohom approx}.
\end{proof}

We say that a sheaf $F \in \Set(U_{\fppf})$ is locally of finite presentation
if $F(\invlim_{\lambda} U_{\lambda}) = \dirlim_{\lambda} F(U_{\lambda})$
for any filtered inverse system $\{U_{\lambda}\}$
of quasi-compact quasi-separated $U$-schemes with affine transition morphisms.

\begin{Prop} \label{prop: cohom of curve is fin pres}
	For any sheaf $A \in \Ab(U_{\fppf})$ locally of finite presentation and $n \ge 0$,
	the sheaf $R^{n} \pi_{U \ast} A \in \Ab(k^{\ind\rat}_{\et})$
	is locally of finite presentation.
	In particular, we have
	$\alg{H}^{n}(U, A) = R^{n} \pi_{U \ast} A$.
	That is, $\alg{H}^{n}(U, A)$ is the \'etale sheafification of the presheaf
		\[
				k'
			\mapsto
				H^{n}(U_{k'}, A).
		\]
\end{Prop}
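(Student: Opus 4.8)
The plan is to follow the proof of Prop.\ \ref{prop: cohom of henselization is fin pres} line by line, with the henselization functor replaced by the base-change functor $k' \mapsto U_{k'}$. The premorphism $\pi_{U}$ has underlying functor $k' \mapsto U_{k'} = U \times_{k} k'$, so on presheaves $\pi_{U \ast}$ is $F \mapsto (k' \mapsto F(U_{k'}))$ and it sends sheaves to sheaves. For each $k' \in k^{\ind\rat}$ the scheme morphism $U_{k'} \to U$ is a localization morphism of $U_{\fppf}$, i.e.\ $(U_{k'})_{\fppf} = U_{\fppf} / U_{k'}$, so the restriction functor $\Ab(U_{\fppf}) \to \Ab((U_{k'})_{\fppf})$ has an exact left adjoint and hence preserves injectives, exactly as recalled in \S\ref{sec: Local duality without relative sites}. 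Taking an injective resolution $A \to I^{\bullet}$ in $\Ab(U_{\fppf})$, the complex $\pi_{U \ast} I^{\bullet}$ then has $n$-th cohomology presheaf $k' \mapsto H^{n}(U_{k'}, A)$, so $R^{n} \pi_{U \ast} A$ is the \'etale sheafification of this presheaf; this is the ``that is'' reformulation in the statement. It remains to identify $\alg{H}^{n}(U, A)$ with $R^{n} \pi_{U \ast} A$, and for this it suffices to show that the \'etale sheaf $R^{n} \pi_{U \ast} A$ is locally of finite presentation: it is then automatically a sheaf for the pro-\'etale topology, and since pro-\'etale sheafification is exact, $R \alg{\Gamma}(U, \var)$ is $R \pi_{U \ast}$ followed by pro-\'etale sheafification, which therefore changes nothing on $R^{n} \pi_{U \ast} A$.

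Next I would show that the presheaf $P_{A}^{n} \colon k' \mapsto H^{n}(U_{k'}, A)$ is locally of finite presentation. Let $k' = \dirlim_{\lambda} k'_{\lambda}$ be a filtered direct limit in $k^{\ind\rat}$. Then $U_{k'} = \invlim_{\lambda} U_{k'_{\lambda}}$ is a filtered inverse limit of quasi-compact quasi-separated $U$-schemes (the curve $U$ being of finite type over $k$) whose transition morphisms are affine, being base changes along $U \to \Spec k$ of the affine morphisms $\Spec k'_{\lambda} \to \Spec k'_{\mu}$. Since $A$ is locally of finite presentation on $U_{\fppf}$, the limit theorem for fppf cohomology --- the same input used in the proof of Prop.\ \ref{prop: cohom of henselization is fin pres} and in Prop.\ \ref{prop: excision} (``cohomology commutes with such limits'') --- gives $H^{n}(U_{k'}, A) = \dirlim_{\lambda} H^{n}(U_{k'_{\lambda}}, A)$, that is, $P_{A}^{n}(k') = \dirlim_{\lambda} P_{A}^{n}(k'_{\lambda})$. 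Finally, \'etale sheafification on $\Spec k^{\ind\rat}_{\et}$ preserves this property: coverings there are finite families of finitely presented \'etale algebras, every such covering of $k' = \dirlim_{\lambda} k'_{\lambda}$ descends to some $k'_{\lambda}$, and finite limits commute with filtered direct limits, so the plus-construction --- and hence its iterate, sheafification --- of a locally-of-finite-presentation presheaf is again locally of finite presentation. Therefore $R^{n} \pi_{U \ast} A$ is locally of finite presentation, which completes the argument.

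The only genuine content is the limit theorem for fppf cohomology with locally-of-finite-presentation coefficients along the affine tower $\{U_{k'_{\lambda}}\}$; I would invoke it exactly as Prop.\ \ref{prop: cohom of henselization is fin pres} does, rather than reprove it. Everything else --- localization preserving injectives, sheafification preserving local finite presentation, exactness of pro-\'etale sheafification --- is formal and already in use in the paper.
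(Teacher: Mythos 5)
Your proposal is correct and follows essentially the same route as the paper: the paper's own proof just observes that $k' \mapsto U_{k'}$ turns filtered direct limits in $k^{\ind\rat}$ into filtered inverse limits of qcqs $U$-schemes with affine transition morphisms, and then says ``the same proof as Prop.\ \ref{prop: cohom of henselization is fin pres} works,'' which is precisely the expansion you give (localization morphisms preserving injectives, the presheaf $k'\mapsto H^n(U_{k'},A)$ being locally of finite presentation via the fppf limit theorem, \'etale sheafification preserving local finite presentation, and exactness of pro-\'etale sheafification). Your version simply makes explicit the steps the paper leaves to the reader.
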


\begin{proof}
	The functor $k' \mapsto U_{k'}$ from the opposite category of $k^{\ind\rat}$ to the category of $U$-schemes
	commutes with filtered inverse limits.
	The same proof as Prop.\ \ref{prop: cohom of henselization is fin pres} works.
\end{proof}

In particular, in the situation of this proposition, we have isomorphisms
	\[
			R \Gamma(k_{\pro\et}, R \alg{\Gamma}(U, A))
		=
			R \Gamma(k_{\et}, R \pi_{U, \ast} A)
		=
			R \Gamma(U, A).
	\]
and a spectral sequence
	\[
			E_{2}^{i j}
		=
			H^{i}(k_{\pro\et}, \alg{H}^{j}(U, A))
		\Longrightarrow
			H^{i + j}(U, A).
	\]

\begin{Rmk} \label{rmk: henselian and completed versions}
	Prop.\ \ref{prop: cohom with closed support doesnt change under completion} may be false
	if $A$ is replaced by a general fppf sheaf over $\Order_{x}^{h}$.
	In fact, whenever $\Order_{x}^{h}$ is not complete,
	there exists an fppf sheaf $A$ over $\Order_{x}^{h}$ such that the map
		$
				H_{x}^{0}(\Order_{x}^{h}, A)
			\to
				H_{x}^{0}(\Hat{\Order}_{x}, A)
		$
	is not surjective,
	where $H_{x}^{0}(\Order_{x}^{h}, A)$ is the kernel of
	$\Gamma(\Order_{x}^{h}, A) \to \Gamma(K_{x}^{h}, A)$
	and $H_{x}^{0}(\Hat{\Order}_{x}, A)$ is defined similarly.
	This shows that the distinguished triangle in
	Prop.\ \ref{prop: cpt supp triangle after cohom approx}
	and a similar long exact sequence stated in \cite[III, Rmk.\ 0.6 (b)]{Mil06}
	do not exist for general sheaves.
	Coefficients in smooth group schemes and finite flat group schemes as stated in
	Prop.\ \ref{prop: smooths and finite flats satisfy cohom approx}
	are sufficient in this paper and in \cite[III]{Mil06}.
	
	To give an example of such an fppf sheaf,
	consider the two ring homomorphisms
	$\Order_{x}^{h} \into \Order_{x}^{h}[t] \onto k_{x}$,
	where the first one is the inclusion and
	the second is the map $t \mapsto 0$ followed by the reduction map.
	Denote the morphisms
	$\Spec k_{x, \fppf} \to \Spec \Order_{x}^{h}[t]_{\fppf} \to \Spec \Order_{x, \fppf}^{h}$
	induced on the fppf sites by $i$ and $f$, respectively.
	Then a desired counterexample is given by $A = f_{!} i_{\ast} \Z$,
	where $f_{!}$ is the left adjoint of $f^{\ast}$ (\cite[II, Rmk.\ 3.18]{Mil80}).
	
	Indeed, let $f_{!}^{\mathrm{pre}}$ be the left adjoint of the presheaf pullback functor by $f$ and
	set $A^{\mathrm{pre}} = f_{!}^{\mathrm{pre}} i_{\ast} \Z$.
	Denote $\Hat{\Order}_{x} \otimes_{\Order_{x}^{h}} \Hat{\Order}_{x}$
	by $\Hat{\Order}_{x}^{\tensor 2}$.
	Then $\Gamma(\Hat{\Order}_{x}, A^{\mathrm{pre}})$ and
	$\Gamma(\Hat{\Order}_{x}^{\tensor 2}, A^{\mathrm{pre}})$ are
	the free abelian groups generated by the sets
	$\Hat{\ideal{p}}_{x} = \ideal{p}_{x}^{h} \Hat{\Order}_{x}$ and
	$\ideal{p}_{x}^{h} \Hat{\Order}_{x}^{\tensor 2}$, respectively.
	Consider the element of $\Gamma(\Hat{\Order}_{x}, A^{\mathrm{pre}})$
	corresponding to any element $c$ of $\Hat{\ideal{p}}_{x}$ not in $\Order_{x}^{h}$.
	Since $c \otimes 1$ and $1 \otimes c$ are distinct in $\Hat{\Order}_{x}^{\tensor 2}$,
	these elements do not glue in $\Gamma(\Hat{\Order}_{x}^{\tensor 2}, A^{\mathrm{pre}})$.
	They do not even after replacing $\Hat{\Order}_{x}^{\tensor 2}$ by its fppf cover
	since $f_{!}^{\mathrm{pre}}$ sends separated presheaves to separated presheaves
	by construction.
	Hence the corresponding element of
	$\Gamma(\Hat{\Order}_{x}, A) = H_{x}^{0}(\Hat{\Order}_{x}, A)$
	does not come from an element of $\Gamma(\Order_{x}^{h}, A) = H_{x}^{0}(\Order_{x}^{h}, A)$.
\end{Rmk}


\section{Global duality and its proof}
\label{sec: Global duality and its proof}

In the rest of this paper,
let $X$ be a proper smooth geometrically connected curve
over a perfect field $k$ ($\in \mathcal{U}_{0}$) of characteristic $p > 0$ with function field $K$.
We continue the notation in
\S \ref{sec: The fppf site of a curve over the rational etale site of the base}.
We denote $\Ab(k^{\ind\rat}_{\pro\et})$ by $\Ab(k)$
and $D(k^{\ind\rat}_{\pro\et})$ by $D(k)$.
We denote $\sheafhom_{k^{\ind\rat}_{\pro\et}}$ by $\sheafhom_{k}$
and use the notation $\sheafext_{k}^{n}$ and $R \sheafhom_{k}$ similarly.
Exact sequences and distinguished triangles of objects over $k$
are always considered in $\Ab(k)$ or $D(k)$ unless otherwise noted.

Let $A$ be an abelian variety over $K$ with N\'eron model $\mathcal{A}$ over $X$.
(Actually \S \ref{sec: Duality for finite flat coefficients over open curves}
and \ref{sec: Formal steps towards duality for Neron models} do not use $A$.)
The maximal open subgroup scheme of $\mathcal{A}$ with connected fibers is denoted by $\mathcal{A}_{0}$.
The fiber of $\mathcal{A}$ over any closed point $x = \Spec k_{x}$ of $X$
is denoted by $\mathcal{A}_{x}$.
The dual of $A$ is denoted by $A^{\vee}$,
with N\'eron model $\mathcal{A}^{\vee}$ and the corresponding subscheme $\mathcal{A}_{0}^{\vee}$
and the fibers $\mathcal{A}_{x}^{\vee}$.

\subsection{Duality for finite flat coefficients over open curves}
\label{sec: Duality for finite flat coefficients over open curves}

By Prop.\ \ref{prop: cohom of curve is fin pres},
the sheaf $\alg{H}^{n}(X, \Gm) \in \Ab(k^{\ind\rat}_{\pro\et})$ for any $n$ is
locally of finite presentation and
the \'etale sheafification of the presheaf
	\[
			k'
		\mapsto
			H^{n}(X_{k'}, \Gm).
	\]
Let $\Pic_{X} = \Pic_{X / k}$ be the perfection of the Picard scheme of $X$ over $k$.
This represents the sheaf $R^{1} \pi_{X \ast}^{\perf} \Gm$ on $\Spec k^{\perf}_{\et}$
and hence the sheaf $\alg{H}^{1}(X, \Gm)$ on $\Spec k^{\ind\rat}_{\pro\et}$.

\begin{Prop}
	We have
	$\alg{\Gamma}(X, \Gm) = \Gm$,
	$\alg{H}^{1}(X, \Gm) = \Pic_{X}$
	and $\alg{H}^{n}(X, \Gm) = 0$ for $n \ge 2$.
\end{Prop}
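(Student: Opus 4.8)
The plan is to reduce each of the three assertions to a statement about ordinary fppf cohomology of the base changes $X_{k'}$, $k' \in k^{\ind\rat}$, using that by Prop.\ \ref{prop: cohom of curve is fin pres} the sheaf $\alg{H}^{n}(X, \Gm) = R^{n} \pi_{X \ast} \Gm$ is locally of finite presentation and equals the (pro-)\'etale sheafification of the presheaf $k' \mapsto H^{n}_{\fppf}(X_{k'}, \Gm)$; since $\Gm$ is smooth, throughout one may replace $H^{n}_{\fppf}$ by $H^{n}_{\et}$. For degree zero, $X \to \Spec k$ is proper with $\pi_{X \ast} \Order_{X} = \Order_{\Spec k}$ (as $X$ is proper, smooth and geometrically connected), so flat base change along $k \to k'$ gives $\Gamma(X_{k'}, \Order_{X_{k'}}) = k'$ and hence $\Gamma(X_{k'}, \Gm) = (k')^{\times}$ for every $k' \in k^{\ind\rat}$. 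Thus the presheaf $k' \mapsto \Gamma(X_{k'}, \Gm)$ is already the representable sheaf $\Gm$, and applying $\pi_{X \ast}$ followed by pro-\'etale sheafification leaves it unchanged: $\alg{\Gamma}(X, \Gm) = \Gm$.

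For $\alg{H}^{1}(X, \Gm) = \Pic_{X}$ there is essentially nothing new to prove beyond what was recorded in the paragraph preceding the proposition: via Prop.\ \ref{prop: comparison with Artin Milne} the sheaf $\alg{H}^{1}(X, \Gm)$ is the pro-\'etale sheafification of (the restriction to $k^{\ind\rat}$ of) $R^{1} \pi_{X \ast}^{\perf} \Gm$, and since $H^{1}_{\fppf}(X_{k'}, \Gm) = H^{1}_{\et}(X_{k'}, \Gm) = \Pic(X_{k'})$, this sheaf is represented by the perfection $\Pic_{X} = \Pic_{X / k}$ of the relative Picard scheme. I would simply record this identification.

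For $\alg{H}^{n}(X, \Gm) = 0$ with $n \ge 2$: being locally of finite presentation, this sheaf vanishes as soon as its value on every algebraically closed field $k'$ over $k$ vanishes (cf.\ \cite[the second paragraph after Prop.\ (2.4.1)]{Suz14}), i.e.\ as soon as $H^{n}_{\et}(X_{k'}, \Gm) = 0$ for such $k'$. For $k'$ algebraically closed, $X_{k'}$ is a proper smooth curve over an algebraically closed field, so $H^{2}_{\et}(X_{k'}, \Gm) = \mathrm{Br}(X_{k'}) = 0$ by Tsen's theorem, and $H^{n}_{\et}(X_{k'}, \Gm) = 0$ for $n \ge 3$ by the standard vanishing of $\Gm$-cohomology of curves over separably closed fields in degrees $\ge 3$ (see \cite{Mil80}). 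I expect the only step that needs genuine external input to be this last vanishing for $n \ge 3$ --- it must be applied in a form that covers the $p$-primary part, where the Kummer-sequence reduction to torsion coefficients is unavailable --- and I would cite it rather than reprove it; everything else is bookkeeping with flat base change and the local finite presentation property.
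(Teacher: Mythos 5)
Your proof is correct and follows essentially the same route as the paper's: the paper declares the $n=0,1$ cases obvious (you fill in the flat base change and Picard scheme identification), and for $n \ge 2$ both you and the paper reduce via the locally-of-finite-presentation property to the classical vanishing of $H^{n}(X_{k'}, \Gm)$ over algebraically closed fields, citing Milne \cite{Mil80}.
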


\begin{proof}
	The statements for $n = 0, 1$ are obvious.
	For $n \ge 2$, it is a classical fact \cite[III, Example 2.22, Case (d)]{Mil80} that
	$H^{n}(X_{k'}, \Gm) = 0$ for any algebraically closed fields $k'$ over $k$.
	This implies $\alg{H}^{n}(X, \Gm) = 0$
	since this sheaf is locally of finite presentation.
\end{proof}

Let $U \subset X$ be a dense open subscheme.
The smooth group scheme $\Gm \in \Ab(U_{\fppf})$ satisfies cohomological approximation
by Prop.\ \ref{prop: smooths and finite flats satisfy cohom approx}.
Hence by Prop.\ \ref{prop: cpt supp triangle after cohom approx}
and the above proposition, we have morphisms
	\begin{equation} \label{eq: global trace morphism}
			R \alg{\Gamma}_{c}(U, \Gm)
		\to
			R \alg{\Gamma}(X, \Gm)
		\to
			\Pic_{X}[-1]
		\overset{\deg}{\to}
			\Z[-1].
	\end{equation}
We call the composite the (global) \emph{trace morphism}.

\begin{Prop}
	The global trace morphism
	$R \alg{\Gamma}_{c}(U, \Gm) \to \Z[-1]$
	and the local trace morphism
	$R \alg{\Gamma}_{x}(\Hat{\Order}_{x} / k, \Gm) \to \Z[-1]$
	at $x \in U$ in \eqref{eq: local trace morphism, Weil restricted} are compatible under the morphism
	$R \alg{\Gamma}_{x}(\Hat{\Order}_{x} / k, \Gm) \to R \alg{\Gamma}_{c}(U, \Gm)$
	in Prop.\ \ref{prop: cpt supp triangle after cohom approx}.
\end{Prop}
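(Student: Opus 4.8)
The plan is to trace both maps back through their construction and reduce the claimed compatibility to the analogous statement over $\Hat{\Order}_x$, where everything is concentrated in degree zero and becomes an explicit statement about the valuation map and the degree map on divisors. The global trace factors as $R\alg{\Gamma}_c(U,\Gm) \to R\alg{\Gamma}(X,\Gm) \to \Pic_X[-1] \overset{\deg}{\to} \Z[-1]$, while the morphism $R\alg{\Gamma}_x(\Hat{\Order}_x/k,\Gm) \to R\alg{\Gamma}_c(U,\Gm)$ of Prop.\ \ref{prop: cpt supp triangle after cohom approx} is, by construction, the composite of $R\alg{\Gamma}_x(\Hat{\Order}_x/k,\Gm) \to R\alg{\Gamma}_x(U,\Gm)$ (the inverse of the excision isomorphism of Prop.\ \ref{prop: excision}, using that $\Gm$ satisfies cohomological approximation) followed by $R\alg{\Gamma}_x(U,\Gm) \to R\alg{\Gamma}_c(U,\Gm)$. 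So the first step is to assemble the diagram whose commutativity I must check: it has $R\alg{\Gamma}_x(\Hat{\Order}_x/k,\Gm)$ in the upper left, routes through $R\alg{\Gamma}_x(U,\Gm)$ and $R\alg{\Gamma}(X,\Gm)$, and lands in $\Z[-1]$ two ways.

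\textbf{Reduction to the residue field.} Since $R\alg{\Gamma}_x(\Hat{\Order}_x/k,\Gm) = \Res_{k_x/k}\Z$ concentrated in degree $1$ (by the discussion following Prop.\ \ref{prop: structure of local cohom of abelian varieties}, coming from the exact sequence $0 \to \Hat{\alg{O}}_x^\times \to \Hat{\alg{K}}_x^\times \to \Z \to 0$), and the target $\Z[-1]$ is likewise in degree $1$, the whole assertion is a statement about a single morphism of sheaves on $\Spec k^{\ind\rat}_{\pro\et}$, namely $\Res_{k_x/k}\Z \to \Z$, and I only need to check it agrees with the prescribed sum map on geometric points of \eqref{eq: local trace morphism, Weil restricted}. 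By Prop.\ \ref{prop: cohom of curve is fin pres} all the sheaves involved are locally of finite presentation, so it suffices to evaluate on algebraically closed fields $k'$ over $k$ and even, base-changing, to take $k' = k = \closure{k}$; then $\Res_{k_x/k}\Z$ becomes $\Z^{[k_x:k]} = \bigoplus_{y \mapsto x} \Z$, one copy per point above $x$.

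\textbf{Identification of the two maps on $\Z^{[k_x:k]}$.} On one side, \eqref{eq: local trace morphism, Weil restricted} is by definition the sum map $\Z^{[k_x:k]} \onto \Z$. On the other side, I unwind the global route: the composite $H^1_x(U, \Gm) \to H^1(X,\Gm) = \Pic_X \overset{\deg}{\to} \Z$ sends the class of a uniformizer at a point $y$ above $x$ to the divisor class $[y]$, whose degree is $1$ (over $\closure{k}$, $[k_y : k] = 1$). The excision isomorphism $R\alg{\Gamma}_x(\Hat{\Order}_x,\Gm) \isomto R\alg{\Gamma}_x(U,\Gm)$, after passing to the \'etale site as in the proof of Prop.\ \ref{prop: excision}, matches the local valuation generator with exactly this divisorial class. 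So each standard generator of $\Z^{[k_x:k]}$ maps to $1 \in \Z$ under both composites, hence the two maps coincide. The bookkeeping of the excision/cohomological-approximation identifications — tracking that the generator of $H^1_x(\Hat{\Order}_x,\Gm)$ really is identified with the local component of the divisor sequence $0 \to \Gm \to \Gm_{\gen} \to \bigoplus_{x}\Z_x \to 0$ on $X$ — is the one place where some care is needed; that is where I expect the only real friction, though it is entirely formal once one writes the localization triangles for $\Gm$ on $U$, on $X$, and on the local rings side by side.
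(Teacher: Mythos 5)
Your proof is correct and follows essentially the same route as the paper's: both reduce the comparison to a statement about the two sheaf morphisms $\Res_{k_x/k}\Z \to \Z$ in a single degree, then evaluate on $\closure{k}$-points where the comparison becomes the elementary identification of the valuation map with the degree of a point divisor. The paper simply declares this last comparison ``obvious,'' whereas you spell out the generator-by-generator check; that is a harmless amount of extra detail, not a different approach.
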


\begin{proof}
	We are comparing the degree morphism
	$\Pic_{X} \to \Z$ and the valuation morphism
	$\Res_{k_{x} / k} \Hat{\alg{K}}_{x}^{\times} / \Hat{\alg{O}}_{x}^{\times} \to \Z$.
	It is enough to compare them on $\closure{k}$-points
	since $\Hat{\alg{K}}_{x}^{\times} / \Hat{\alg{O}}_{x}^{\times} \cong \Z$ is \'etale.
	Then the comparison is between the abstract group homomorphisms
	$\Pic(X_{\closure{k}}) \to \Z$ and
		$
				\bigoplus_{\closure{x} \to x}
					\Hat{K}_{\closure{x}}^{\times} / \Hat{\Order}_{\closure{x}}^{\times}
			\to
				\Z
		$,
	where the sum is over all $\closure{k}$-points of $X_{\closure{k}}$ lying over $x$.
	This is obvious.
\end{proof}

Tensoring $\Z / p \Z[-1]$, the global trace morphism induces a morphism
	\[
			R \alg{\Gamma}(X, \mu_{p})
		\to
			\Z / p \Z[-2].
	\]
Prop.\ \ref{prop: comparison with Artin Milne} and
\ref{prop: cohom of curve is fin pres}
allow us to translate the results of \cite{AM76} to our setting.
The kernel of the morphism
	\[
				\alg{H}^{1}(X, \Omega_{X}^{1})
			\overset{C - 1}{\to}
				\alg{H}^{1}(X, \Omega_{X}^{1}),
		\quad \text{or} \quad
				\Ga
			\overset{F - 1}{\to}
				\Ga
	\]
identifies $\alg{H}^{2}(X, \mu_{p})$ as $\Z / p \Z$
as explained in \cite[Introduction]{AM76},
where $C$ is the Cartier operator and $F$ is the Frobenius.
This gives a morphism $R \alg{\Gamma}(X, \mu_{p}) \to \Z / p \Z$.
This is equal to the above trace morphism since we have a commutative diagram
	\[
		\begin{CD}
				\alg{H}^{1}(X, \Gm)
			@>> p >
				\alg{H}^{1}(X, \Gm)
			@>> \dlog >
				\alg{H}^{1}(X, \Omega_{X}^{1})
			@>> C - 1 >
				\alg{H}^{1}(X, \Omega_{X}^{1})
			\\
			@VV \deg V
			@VV \deg V
			@VV \Res V
			@V \Res VV
			\\
				\Z
			@> p >>
				\Z
			@> \text{can} >>
				\Ga
			@> F^{-1} - 1 >>
				\Ga,
		\end{CD}
	\]
where $\Res$ denotes the residue map.

We need the following result of Milne \cite[III, Thm.\ 11.1]{Mil06},
which is the generalization for open curves
of the corresponding result of Artin-Milne \cite{AM76}.
Since \cite[\S 5.2]{Suz14} replaces Bester's local finite flat duality,
we give a proof based on \cite[Thm.\ (5.2.1.2)]{Suz14} for clarity.

\begin{Thm} \label{thm: duality for finite flat over open curve}
	Let $U \subset X$ be a dense open subscheme
	and $N$ a finite flat group scheme over $U$.
	Then $R \alg{\Gamma}(U, N) \in D^{b}(\Ind \Alg_{\uc})$ and
	$R \alg{\Gamma}_{c}(U, N) \in D^{b}(\Pro \Alg_{\uc})$,
	which are both concentrated in degrees $0, 1, 2$.
	Consider the morphism
		\[
				R \alg{\Gamma}(U, N^{\CDual}) \tensor^{L} R \alg{\Gamma}_{c}(U, N)
			\to
				R \alg{\Gamma}_{c}(U, \Gm)
			\to
				\Z[-1]
		\]
	given by the cup product morphism \eqref{eq: global cup product}
	and the trace morphism \eqref{eq: global trace morphism}.
	The induced morphism
		\[
				R \alg{\Gamma}(U, N^{\CDual})
			\to
				R \alg{\Gamma}_{c}(U, N)^{\SDual}[-1]
		\]
	in $D(k)$ is an isomorphism.
\end{Thm}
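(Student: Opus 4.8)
The plan is to glue Artin--Milne's finite flat duality over the proper curve $X$ (imported into our site via Prop.~\ref{prop: comparison with Artin Milne} and Prop.~\ref{prop: cohom of curve is fin pres}) to the Bester--Suzuki local finite flat duality \cite[Thm.\ (5.2.1.2)]{Suz14} at the points removed from $X$, using the excision and compact-support triangles of \S\ref{sec: The fppf site of a curve over the rational etale site of the base} and the local--global compatibility of cup products in Prop.~\ref{prop: compatibility of local global pairings after cohom approx}. The boundedness and degree bounds belong to the same classical circle of ideas: $R\alg{\Gamma}(U, N)$ is the pro-\'etale sheafification of $R\pi_{U\ast}N$, whose stalks at w-contractible $k'$ are the fppf cohomology groups $R\Gamma(U_{k'}, N)$, sitting in degrees $0,1,2$ by \cite{AM76}; and $R\alg{\Gamma}_{c}(U,N)$ is the fiber of $R\alg{\Gamma}(U,N)\to\bigoplus_{x\notin U}R\alg{\Gamma}(\Hat{K}_{x}/k, N)$, whose degree $-1$ contribution vanishes because $N(U)$ injects into $N$ of the local fields ($U$ integral, $N$ separated) and whose degree $3$ contribution vanishes by the Artin--Milne surjectivity of $\alg{H}^{2}(U,N)$ onto the local $\alg{H}^{2}$'s. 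One then reads off $R\alg{\Gamma}(U,N)\in D^{b}(\Ind\Alg_{\uc})$ and $R\alg{\Gamma}_{c}(U,N)\in D^{b}(\Pro\Alg_{\uc})$ from the structure results of \cite{AM76}, \cite[III]{Mil06}.

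First I would treat the case $N=M|_{U}$ for a finite flat group scheme $M$ over $X$. By Prop.~\ref{prop: smooths and finite flats satisfy cohom approx} both $M$ and $M^{\CDual}$ satisfy cohomological approximation, so Prop.~\ref{prop: cpt supp triangle after cohom approx} (with ``$U$''$:=X$, ``$V$''$:=U$) gives the distinguished triangle
\[
		R\alg{\Gamma}_{c}(U, M)
	\to
		R\alg{\Gamma}(X, M)
	\to
		\bigoplus_{x\in X\setminus U}R\alg{\Gamma}(\Hat{\Order}_{x}/k, M),
\]
while Prop.~\ref{prop: excision} combined with Prop.~\ref{prop: cohom with closed support doesnt change under completion} gives
\[
		\bigoplus_{x\in X\setminus U}R\alg{\Gamma}_{x}(\Hat{\Order}_{x}/k, M^{\CDual})
	\to
		R\alg{\Gamma}(X, M^{\CDual})
	\to
		R\alg{\Gamma}(U, M^{\CDual}).
\]
Applying $(\var)^{\SDual}[-1]$ to the first triangle and invoking (a) Artin--Milne's duality $R\alg{\Gamma}(X, M^{\CDual})\isomto R\alg{\Gamma}(X, M)^{\SDual}[-1]$, (b) the Bester--Suzuki isomorphism $R\alg{\Gamma}_{x}(\Hat{\Order}_{x}/k, M^{\CDual})\isomto R\alg{\Gamma}(\Hat{\Order}_{x}/k, M)^{\SDual}[-1]$ at each $x$ (using $\Res_{k_{x}/k}\compose\SDual_{x}=\SDual\compose\Res_{k_{x}/k}$ from \S\ref{sec: Local duality without relative sites} to pass from $k_{x}$ to $k$), and (c) the fact that these isomorphisms are induced by the cup products and traces used here and match across the two triangles --- which is the content of Prop.~\ref{prop: compatibility of local global pairings after cohom approx}, of the compatibility of the global trace with the local traces proved in the preceding proposition, and of the identification of the Artin--Milne and Bester pairings with cup-product pairings carried out in \cite{Suz14} --- one gets a morphism from the excision triangle for $M^{\CDual}$ to the $(\var)^{\SDual}[-1]$ of the compact-support triangle for $M$ that is an isomorphism on two of the three vertices. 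The five lemma forces $R\alg{\Gamma}(U, M^{\CDual})\to R\alg{\Gamma}_{c}(U, M)^{\SDual}[-1]$ to be an isomorphism.

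It remains to reduce a general finite flat $N$ over $U$ --- which need not extend to a finite flat group scheme over $X$ (e.g.\ $\mathcal{A}[n]$ for $A$ with bad reduction off $U$) --- to this case, by running Milne's d\'evissage \cite[III, \S 8, \S 11]{Mil06}. The $p^{i}$-torsion filtration of $N$ together with the five lemma applied to the cup-product-compatible short exact sequences reduces to $N$ killed by $p$; then $N$ embeds into a smooth affine group scheme over $U$ which --- after replacing it by a Weil restriction of a product of copies of $\Gm$ and of groups of Witt vectors, which do extend over $X$ --- yields a resolution $0\to N\to\mathcal{G}^{0}\to\mathcal{G}^{1}\to 0$ over $U$ by restrictions of smooth affine group schemes over $X$. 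Applying $R\sheafhom_{U}(\var,\Gm[1])$ expresses $N^{\CDual}$ through the Serre duals $\sheafext^{1}(\mathcal{G}^{i},\Gm)$ (ind/pro-algebraic groups that again extend over $X$), and the smooth/pro-algebraic coefficient version of the duality --- imported from \cite{AM76} and \cite[III, \S 3]{Mil06} by the same gluing as in the first step --- combined with compatibility of the pairing with the resolution triangle gives the duality for $N$. (Alternatively one first observes that the theorem holds for $(U,N)$ if and only if it holds for $(V, N|_{V})$ for every dense open $V\subset U$ --- itself the five-lemma argument above applied with local duality at $U\setminus V$ --- and shrinks $U$ to a chart where a convenient resolution is available.)

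The main obstacle is the bookkeeping of compatibilities: one must verify that the two abstract duality isomorphisms being glued (Artin--Milne over $X$ and Bester--Suzuki at each $x$) are induced by exactly the cup-product-and-trace pairings of the statement, and that the relevant squares built from the excision and compact-support triangles commute --- this is the substance of Prop.~\ref{prop: compatibility of local global pairings after cohom approx}, the trace-compatibility proposition, and the comparison appendix, and is where essentially all the genuine work lies. A secondary difficulty is the d\'evissage for $N$ not extending over $X$: because Cartier duality does not commute with resolutions by smooth affine groups, the reduction must be organized so that the pairing remains visibly compatible with the chosen resolution and its Serre dual.
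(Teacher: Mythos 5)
Your handling of the case $N=M|_{U}$ with $M$ finite flat over $X$ --- the five-lemma argument gluing Artin--Milne's duality over $X$ to Bester--Suzuki's local duality at the boundary points via the excision and compact-support triangles --- is exactly the paper's Steps~1 and~2, and your short-exact-sequence reduction is the paper's Step~3. The genuine divergence, and the gap, is in the final d\'evissage. You propose to reduce $N$ killed by $p$ to the case just handled by resolving it as $0 \to N \to \mathcal{G}^{0} \to \mathcal{G}^{1} \to 0$ with $\mathcal{G}^{i}$ smooth affine group schemes over $U$ \emph{that are restrictions of smooth affine group schemes over $X$}. Neither claim in that sentence is justified: B\'egueri-type embeddings produce a smooth affine group over the base at hand, but there is no reason the Weil restriction or Witt-vector group it lands in spreads out to $X$ as a smooth affine group when $N$ itself does not, and you offer no mechanism that would force this after shrinking. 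Moreover, even granting such a resolution, you would then need a compact-support duality theorem over the open curve with smooth affine coefficients (with Breen--Serre-type duals $\sheafext^{1}(\mathcal{G}^{i},\Gm)$) and its compatibility with the cup-product pairing of the statement; nothing of the sort is established in this paper, and it is not a smaller result than the one being proved.

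The paper instead uses a d\'evissage that stays entirely inside finite flat groups: filter $N_{K}$ so that each successive subquotient (or its Cartier dual) has height one, invoke \cite[III, Prop.\ B.4, B.5]{Mil06} to extend each such subquotient over $X$ as a finite flat group, spread out the filtration to $N_{V}$ for some dense open $V \subset U$, and then apply Steps~1--3. This is strictly lighter machinery than a smooth-affine resolution and avoids introducing a new coefficient class. Your parenthetical ``alternative'' (shrink $U$ to a chart where a convenient resolution is available) gestures in the right direction but still lacks the crucial height-one observation that makes the shrunken situation reducible to the $N = M|_{U}$ case. As written, the proof does not go through at the general-$N$ step.
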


\begin{proof}
	The proof proceeds by d\'evissage.
	
	Step 1: If $U = X$, then the theorem in this case is \cite[(0.3), (4.9)]{AM76}.
	
	Step 2: Let $V \subset U$ be a dense open subscheme.
	The theorem is true for $N$ over $U$ if and only if so is for $N$ over $V$.
	Indeed, the groups $N$, $N^{\CDual}$ and $\Gm$ satisfy cohomological approximation by
	Prop.\ \ref{prop: smooths and finite flats satisfy cohom approx}.
	The natural morphism $N^{\CDual} \to R \sheafhom_{U}(N, \Gm)$,
	the trace morphism $R \alg{\Gamma}_{c}(U, \Gm) \to \Z[-1]$
	and the morphism in Prop.\ \ref{prop: compatibility of local global pairings after cohom approx}
	give a morphism of distinguished triangles
		\[
			\begin{CD}
					\displaystyle{\bigoplus_{x \in U \setminus V}}
						R \alg{\Gamma}_{x}(\Hat{\Order}_{x} / k, N^{\CDual})
				@>>>
					R \alg{\Gamma}(U, N^{\CDual})
				@>>>
					R \alg{\Gamma}(V, N^{\CDual})
				\\
				@VVV
				@VVV
				@VVV
				\\
					\displaystyle{\bigoplus_{x \in U \setminus V}}
						R \alg{\Gamma}(\Hat{\Order}_{x} / k, N)^{\SDual}[-1]
				@>>>
					R \alg{\Gamma}_{c}(U, N)^{\SDual}[-1]
				@>>>
					R \alg{\Gamma}_{c}(V, N)^{\SDual}[-1].
			\end{CD}
		\]
	We know that $R \alg{\Gamma}_{x}(\Hat{\Order}_{x} / k, N^{\CDual})$ is in $D^{b}(\Ind \Alg_{\uc} / k)$
	concentrated in degree $2$ and
	and $R \alg{\Gamma}(\Hat{\Order}_{x} / k, N)$ is in $D^{b}(\Pro \Alg_{\uc} / k)$
	concentrated in degrees $0, 1$
	by \cite[Prop.\ (3.4.2) (b), Prop.\ (3.4.6)]{Suz14}.
	The left vertical morphism is an isomorphism by Bester's duality (\cite[Thm.\ (5.2.1.2)]{Suz14}).
	Hence the theorem for $N$ over $U$ and the theorem for $N$ over $V$ are equivalent.
	
	Step 3: If $0 \to N' \to N \to N'' \to 0$ is an exact sequence of finite flat group schemes over $U$
	and the theorem is true for $N'$ and $N''$, then so is for $N$.
	
	Step 4: The general case.
	The group $N_{K} = N \times_{X} K$ over $K$ has a filtration by finite flat subgroup schemes
	whose each successive subquotient or its dual is of height one.
	A finite flat group scheme over $K$ of height one or with dual of height one
	extends to $X$ as a finite flat group scheme by \cite[III, Prop.\ B.4, B.5]{Mil06}.
	By spreading out, we know that $N_{V} = N \times_{X} V$ over some dense open $V \subset U$
	has a filtration by finite flat subgroup schemes
	whose successive subquotients are finite flat and extendable to $X$ as finite flat group schemes.
	Hence the previous three steps imply the theorem.
\end{proof}

For how the above theorem is related to the corresponding duality statement \cite[III, Thm. 8.2]{Mil06}
in the finite base field case,
see Rmk.\ \ref{rmk: duality for finite flat over open curve over finite field} below.


\subsection{Mod $n$ duality for N\'eron models and preliminary calculations}
\label{sec: Mod n duality for Neron models and preliminary calculations}

Let $A$ be as in the beginning of this section.
The Poincar\'e biextension $A^{\vee} \tensor^{L} A \to \Gm[1]$
as a morphism in $D(K_{\fppf})$ canonically extends to
a biextension $\mathcal{A}_{0}^{\vee} \tensor^{L} \mathcal{A} \to \Gm[1]$
as a morphism in $D(X_{\fppf})$
by \cite[IX, 1.4.3]{Gro72}.
With this morphism, the cup product morphism \eqref{eq: global cup product}
and the trace morphism \eqref{eq: global trace morphism},
we have morphisms
	\begin{equation} \label{eq: duality pairing for Neron over X}
				R \alg{\Gamma}(X, \mathcal{A}_{0}^{\vee})
			\tensor^{L}
				R \alg{\Gamma}(X, \mathcal{A})
		\to
			R \alg{\Gamma}(X, \Gm[1])
		\to
			\Z.
	\end{equation}

\begin{Prop} \label{prop: mod n duality for Neron models}
	Let $n \ge 1$.
	Then $R \alg{\Gamma}(X, \mathcal{A}) \tensor^{L} \Z / n \Z$ and
	$R \alg{\Gamma}(X, \mathcal{A}_{0}) \tensor^{L} \Z / n \Z$
	are both in $D^{b}(\Alg_{\uc})$
	concentrated in degrees $-1, 0, 1$.
	Consider the morphism \eqref{eq: duality pairing for Neron over X}
	derived-tensored with $\Z / n \Z$:
		\[
					\bigl(
						R \alg{\Gamma}(X, \mathcal{A}_{0}^{\vee}) \tensor^{L} \Z / n \Z
					\bigr)
				\tensor^{L}
					\bigl(
						R \alg{\Gamma}(X, \mathcal{A}) \tensor^{L} \Z / n \Z
					\bigr)
			\to
				\Z / n \Z
			\to
				\Z[1],
		\]
	where the last morphism is the connecting morphism for the short exact sequence
	$0 \to \Z \to \Z \to \Z / n \Z \to 0$.
	The induced morphism
		\[
				R \alg{\Gamma}(X, \mathcal{A}_{0}^{\vee}) \tensor^{L} \Z / n \Z
			\to
				\bigl(
					R \alg{\Gamma}(X, \mathcal{A}) \tensor^{L} \Z / n \Z
				\bigr)^{\SDual}[1]
		\]
	in $D(k)$ is an isomorphism.
\end{Prop}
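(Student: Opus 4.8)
The strategy is to deduce this from the finite flat duality over open curves (Thm.~\ref{thm: duality for finite flat over open curve}) and the local duality for abelian varieties (Prop.~\ref{prop: local duality}), glued together by the excision and compact support machinery of \S\ref{sec: The fppf site of a curve over the rational etale site of the base}. First I would fix a dense open $U \subset X$ over which $A$, and hence $A^{\vee}$, has good reduction, so that $\mathcal{A}|_{U} = \mathcal{A}_{0}|_{U}$ and $\mathcal{A}^{\vee}|_{U} = \mathcal{A}_{0}^{\vee}|_{U}$ are abelian schemes and the Poincar\'e biextension $\mathcal{A}_{0}^{\vee} \dtensor \mathcal{A} \to \Gm[1]$ restricts on $n$-torsion to the Weil pairing, so that $\mathcal{A}^{\vee}[n]|_{U} \cong (\mathcal{A}[n]|_{U})^{\CDual}$. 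Since $\mathcal{A}$ and $\mathcal{A}_{0}^{\vee}$ are smooth group schemes they satisfy cohomological approximation (Prop.~\ref{prop: smooths and finite flats satisfy cohom approx}), so excision (Prop.~\ref{prop: excision}) and Prop.~\ref{prop: cpt supp triangle after cohom approx} give localization triangles
\[
\bigoplus_{x \notin U} R\alg{\Gamma}_{x}(\Hat{\Order}_{x} / k, \mathcal{A}_{0}^{\vee}) \to R\alg{\Gamma}(X, \mathcal{A}_{0}^{\vee}) \to R\alg{\Gamma}(U, \mathcal{A}_{0}^{\vee}),
\]
\[
R\alg{\Gamma}_{c}(U, \mathcal{A}) \to R\alg{\Gamma}(X, \mathcal{A}) \to \bigoplus_{x \notin U} R\alg{\Gamma}(\Hat{\Order}_{x} / k, \mathcal{A}).
\]
Feeding these into Prop.~\ref{prop: compatibility of local global pairings after cohom approx} with $B = \mathcal{A}$, $C = \Gm[1]$, post-composing with the global trace \eqref{eq: global trace morphism} and invoking its compatibility with the local traces \eqref{eq: local trace morphism, Weil restricted}, I get a morphism of distinguished triangles whose middle vertical arrow, after $\dtensor \Z/n\Z$, is the duality morphism of the proposition.

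It then remains to see that the two outer vertical arrows become isomorphisms after $\dtensor \Z/n\Z$. For the $U$-column: the Kummer sequences for the abelian schemes $\mathcal{A}|_{U}$ and $\mathcal{A}^{\vee}|_{U}$ give $R\alg{\Gamma}(U, \mathcal{A}_{0}^{\vee}) \dtensor \Z/n\Z \cong R\alg{\Gamma}(U, \mathcal{A}^{\vee}[n])[1]$ and $R\alg{\Gamma}_{c}(U, \mathcal{A}) \dtensor \Z/n\Z \cong R\alg{\Gamma}_{c}(U, \mathcal{A}[n])[1]$, and Thm.~\ref{thm: duality for finite flat over open curve} applied to $N = \mathcal{A}[n]|_{U}$, with $N^{\CDual} = \mathcal{A}^{\vee}[n]|_{U}$, identifies both sides with $R\alg{\Gamma}_{c}(U, \mathcal{A}[n])^{\SDual}$ (the shift $[1]$ on each side absorbing the $[-1]$ in that theorem against the $[1]$ appearing in the target of our morphism). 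For the local columns: by Prop.~\ref{prop: local duality} the morphism $R\alg{\Gamma}_{x}(\Hat{\Order}_{x}, \mathcal{A}_{0}^{\vee}) \to R\alg{\Gamma}(\Hat{\Order}_{x}, \mathcal{A})^{\SDual_{x}}$, which is the rightmost vertical arrow in the second diagram there and hence an isomorphism, gives, after $\Res_{k_{x}/k}$ (which commutes with Serre duals) and $\dtensor \Z/n\Z$ and the general identity $C^{\SDual} \dtensor \Z/n\Z = (C \dtensor \Z/n\Z)^{\SDual}[1]$ obtained by dualizing the multiplication-by-$n$ triangle on $C$, exactly the local column. The five lemma for triangulated categories then shows the middle arrow is an isomorphism.

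For the structural assertions, one reads off the triangles: $R\alg{\Gamma}(X, \mathcal{A})$ is concentrated in degrees $0,1,2$ (a proper curve has cohomological dimension $2$ and $\mathcal{A}$ is smooth), so $R\alg{\Gamma}(X,\mathcal{A}) \dtensor \Z/n\Z$ lives a priori in degrees $-1,\dots,2$, while the local terms $R\alg{\Gamma}_{x}(\Hat{\Order}_{x}, \mathcal{A}_{0}^{\vee})$, $R\alg{\Gamma}(\Hat{\Order}_{x}, \mathcal{A})$ and the compact support term over $U$ contribute only in low degrees after $\dtensor \Z/n\Z$, because the relevant local cohomology sheaves $\alg{H}^{1}(\Hat{K}_{x}, A)$ and $\alg{H}^{1}(\Hat{K}_{x}, A^{\vee})$ are divisible (Prop.~\ref{prop: structure of local cohom of abelian varieties}) and of cofinite type, so $\dtensor \Z/n\Z$ replaces them by their finite $n$-torsion (cf.\ Prop.~\ref{prop: cofinite type mod n is finite type}). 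Combining this with the finiteness already contained in Thm.~\ref{thm: duality for finite flat over open curve} for the proper curve puts $R\alg{\Gamma}(X, \mathcal{A}) \dtensor \Z/n\Z$ in $D^{b}(\Alg_{\uc})$ in degrees $-1,0,1$; the component-group exact sequence $0 \to \mathcal{A}_{0} \to \mathcal{A} \to \bigoplus_{x} i_{x \ast} \pi_{0}(\mathcal{A}_{x}) \to 0$, whose last term has cohomology concentrated in degree $0$, then transfers the assertion to $\mathcal{A}_{0}$.

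The main obstacle will be the compatibility bookkeeping in the first step: checking that the local and global cup products, the trace morphisms, and the excision triangles genuinely assemble into one commutative diagram of distinguished triangles, and that the middle arrow so produced coincides with the morphism induced by \eqref{eq: duality pairing for Neron over X} after $\dtensor \Z/n\Z$. Prop.~\ref{prop: compatibility of local global pairings after cohom approx} is tailored for this, but one still has to verify that the global Poincar\'e biextension restricts compatibly to the Weil pairing over $U$ and, via \cite[IX, 1.4.3]{Gro72}, to the Grothendieck extension over each $\Hat{\Order}_{x}$. A secondary but persistent nuisance is tracking the degree shifts through the Kummer sequences, the Serre duals, and the $[-1]$ in Thm.~\ref{thm: duality for finite flat over open curve}, and confirming the quasi-algebraicity of the cohomology sheaves from the divisibility inputs.
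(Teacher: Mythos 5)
Your proposal follows the same route as the paper: fix a good-reduction open $U$, form the localization and compact-support triangles, identify the $U$-column with Thm.~\ref{thm: duality for finite flat over open curve} via the Kummer sequence and the Weil pairing $\mathcal{A}^{\vee}[n]|_{U}\cong(\mathcal{A}[n]|_{U})^{\CDual}$, identify the local columns with Prop.~\ref{prop: local duality} after Weil restriction and $\tensor^{L}\Z/n\Z$ (using $(C^{\SDual_{x}})\tensor^{L}\Z/n\Z=(C\tensor^{L}\Z/n\Z)^{\SDual_{x}}[1]$), and conclude by the five lemma. This is the paper's argument.

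One point in your structural paragraph should be tightened. First, the $n$-torsion of $\alg{H}^{1}(\Hat{K}_{x},A)$ is quasi-algebraic (it lies in $\Alg_{\uc}/k_{x}$ by cofinite type), not finite. Second, the phrase ``the finiteness already contained in Thm.~\ref{thm: duality for finite flat over open curve} for the proper curve'' risks reading as an application of that theorem over $X$ itself, which is illegal: $\mathcal{A}[n]$ over $X$ is quasi-finite flat but not finite flat, because the N\'eron model is not proper at the bad fibers. The correct reasoning is the one the paper uses: the localization triangle sandwiches $R\alg{\Gamma}(X,\var)\tensor^{L}\Z/n\Z$ between objects of $D^{b}(\Ind\Alg_{\uc}/k)$ (the theorem gives this for $R\alg{\Gamma}(U,\var)_{n}$; the local supported complexes are already in $D^{b}(\Alg_{\uc}/k)$), while the compact-support triangle sandwiches it between objects of $D^{b}(\Pro\Alg_{\uc}/k)$; applying both triangles to both $\mathcal{A}$ and $\mathcal{A}_{0}^{\vee}$ places the global complexes in $D^{b}(\Ind\Alg_{\uc}/k)\cap D^{b}(\Pro\Alg_{\uc}/k)=D^{b}(\Alg_{\uc}/k)$, and reading off the degree ranges of the outer terms directly gives concentration in degrees $-1,0,1$ (so the separate cohomological-dimension estimate for $R\alg{\Gamma}(X,\mathcal{A})$ is not needed).
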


\begin{proof}
	We denote $(\var) \tensor^{L} \Z / n \Z$ by $(\var)_{n}$ to simplify the notation.
	Take a dense open subscheme $U \subset X$ over which $\mathcal{A}$ is an abelian scheme.
	We have a morphism of distinguished triangles
		\[
			\begin{CD}
					\displaystyle
					\bigoplus_{x \not\in U}
						R \alg{\Gamma}_{x}(\Hat{\Order}_{x} / k, \mathcal{A}_{0}^{\vee})_{n}
				@>>>
					R \alg{\Gamma}(X, \mathcal{A}_{0}^{\vee})_{n}
				@>>>
					R \alg{\Gamma}(U, \mathcal{A}_{0}^{\vee})_{n}
				\\
				@VVV
				@VVV
				@VVV
				\\
					\displaystyle
					\bigoplus_{x \not\in U}
						\bigl(
							R \alg{\Gamma}(\Hat{\Order}_{x} / k, \mathcal{A})_{n}
						\bigr)^{\SDual}[1]
				@>>>
					\bigl(
						R \alg{\Gamma}(X, \mathcal{A})_{n}
					\bigr)^{\SDual}[1]
				@>>>
					\bigl(
						R \alg{\Gamma}_{c}(U, \mathcal{A})_{n}
					\bigr)^{\SDual}[1]
			\end{CD}
		\]
	by the same method as the proof of the previous theorem.
	The right vertical morphism is an isomorphism by the previous proposition since
	$(\mathcal{A}^{\vee} \tensor^{L} \Z / n \Z)[-1]$ over $U$ is the $n$-torsion part of $\mathcal{A}$,
	which is finite flat over $U$.
	The left vertical morphism is an isomorphism
	by Prop.\ \ref{prop: local duality}.
	Hence so is the middle vertical morphism.
	The object $R \alg{\Gamma}_{x}(\Hat{\Order}_{x} / k, \mathcal{A}_{0}^{\vee})_{n}$
	is in $D^{b}(\Ind \Alg_{\uc} / k)$ concentrated in degrees $0, 1$
	and the object $R \alg{\Gamma}(\Hat{\Order}_{x} / k, \mathcal{A})_{n}$
	is in $D^{b}(\Pro \Alg_{\uc} / k)$ concentrated in degrees $-1, 0$
	by Prop.\ \ref{prop: structure of local cohom of abelian varieties}.
	The objects $R \alg{\Gamma}(U, \mathcal{A}_{0}^{\vee})_{n}$
	and $R \alg{\Gamma}_{c}(U, \mathcal{A})_{n}$ are in $D^{b}(\Ind \Alg_{\uc} / k)$
	and in $D^{b}(\Pro \Alg_{\uc} / k)$, respectively, and concentrated in degrees $-1, 0, 1$
	by the previous proposition.
	Hence the same is true for $R \alg{\Gamma}(X, \mathcal{A}_{0}^{\vee})_{n}$
	and $R \alg{\Gamma}(X, \mathcal{A})_{n}$.
	Therefore they are in both $D^{b}(\Pro \Alg_{\uc} / k)$ and $D^{b}(\Ind \Alg_{\uc} / k)$,
	hence in $D^{b}(\Alg_{\uc} / k)$.
\end{proof}

In the next proposition, we consider not necessarily perfect group schemes over $k$.
But we will soon apply perfection.

\begin{Prop}
	Let $\Res_{X / k} \mathcal{A}$ be the Weil restriction of $\mathcal{A}$
	as a functor on the category of (not necessarily perfect) $k$-schemes.
	Let $\Tr_{K / k} A$ be the $K / k$-trace of $A$
	(\cite[VIII, Thm.\ 8]{Lan83}, \cite[Def.\ 6.1]{Con09}),
	which is an abelian variety over $k$
	with canonical $K$-morphism $(\Tr_{K / k} A)_{K} \to A$.
	Let $\Tr_{K / k} A \to \Res_{X / k} \mathcal{A}$ be the $k$-morphism
	induced by the $X$-morphism $(\Tr_{K / k} A)_{X} \to \mathcal{A}$,
	which itself is induced by $(\Tr_{K / k} A)_{K} \to A$.
	
	Then $\Res_{X / k} \mathcal{A}$ is represented by a group scheme locally of finite type over $k$.
	The morphism $\Tr_{K / k} A \to (\Res_{X / k} \mathcal{A})_{0}$
	to the identity component has finite infinitesimal kernel and cokernel.
	The \'etale $k$-group $\pi_{0} \Res_{X / k} \mathcal{A}$ of connected components has
	finitely generated group of geometric points.
\end{Prop}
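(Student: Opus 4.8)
The plan is to establish the three assertions in turn, using Weil restriction theory, the N\'eron mapping property, the structure of the $K/k$-trace, and the Lang--N\'eron theorem. For a $k$-scheme $T$ write $X_{T} = X \times_{k} T$ and $T_{K} = T \times_{k} \Spec K$; recall that when $T$ is smooth over $k$ the morphism $X_{T} \to X$ is smooth, so the N\'eron mapping property gives $\mathcal{A}(X_{T}) = A(X_{T} \times_{X} \Spec K) = A(T_{K})$ (and similarly with $T = \closure{k}$ after passing to the limit over finite subextensions, giving $(\Res_{X/k}\mathcal{A})(\closure{k}) = A(K_{\closure{k}})$, where $K_{\closure{k}}$ is the function field of $X_{\closure{k}}$). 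For representability I would invoke that $\mathcal{A}$ is a smooth separated group scheme of finite type over $X$ which is quasi-projective over $X$ (N\'eron models of abelian varieties over Dedekind schemes are quasi-projective), and that $X \to \Spec k$ is proper, flat and of finite presentation; then $\Res_{X/k}\mathcal{A}$ is representable by a scheme over $k$ carrying the commutative group structure, by \cite[\S 7.6]{BLR}. Since this scheme is glued from Weil restrictions of quasi-compact open subschemes of $\mathcal{A}$ it is locally of finite type over $k$, and hence its identity component $(\Res_{X/k}\mathcal{A})_{0}$, being the identity component of a group scheme locally of finite type over a field, is of finite type over $k$.

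Next I would analyse the morphism $\phi \colon \Tr_{K/k}A \to (\Res_{X/k}\mathcal{A})_{0}$ of the statement. By the N\'eron mapping property it is given on smooth test schemes $T$ by sending $b \in (\Tr_{K/k}A)(T)$, viewed in $(\Tr_{K/k}A)_{K}(T_{K})$, to $\tau(b) \in A(T_{K}) = (\Res_{X/k}\mathcal{A})(T)$, where $\tau \colon (\Tr_{K/k}A)_{K} \to A$ is the canonical trace morphism; it factors through the identity component because $\Tr_{K/k}A$ is connected. The kernel of $\phi$ is a closed subgroup scheme of the abelian variety $\Tr_{K/k}A$; if $b$ is a geometric point of $\ker\phi$, then over $K_{\closure{k}}$ its image under $\tau$ vanishes, so $b$ lies in $\ker\tau$ after this extension of scalars. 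Since $\ker\tau$ is finite and infinitesimal by the structure theory of the $K/k$-trace \cite[\S 6]{Con09}, it has no section over a reduced ring, so $b$ is trivial; hence $\ker\phi$ has trivial reduced subscheme, is supported at the identity, and is therefore a finite infinitesimal $k$-group scheme.

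For the cokernel I would set $B = \phi(\Tr_{K/k}A) \subseteq (\Res_{X/k}\mathcal{A})_{0}$, the scheme-theoretic image, which is an abelian variety over $k$ as a quotient of $\Tr_{K/k}A$ by the finite group scheme $\ker\phi$. Then $C := (\Res_{X/k}\mathcal{A})_{0}/B$, the fppf cokernel of $\phi$, is a connected group scheme of finite type over $k$, and the point is to show $\dim C = 0$. Passing to $\closure{k}$ and using that $H^{1}_{\fppf}(\closure{k}, \var)$ vanishes on the finite-type $k$-group schemes that occur, one gets $C(\closure{k}) = (\Res_{X/k}\mathcal{A})_{0}(\closure{k})/B(\closure{k})$; since $(\Res_{X/k}\mathcal{A})(\closure{k}) = A(K_{\closure{k}})$, since $B(\closure{k})$ is the image $\tau\bigl((\Tr_{K/k}A)(\closure{k})\bigr)$, and since the $K/k$-trace is compatible with the base change $\closure{k}/k$, this exhibits $C(\closure{k})$ as a subgroup of $A(K_{\closure{k}}) \big/ \tau\bigl(\Tr_{K_{\closure{k}}/\closure{k}}(A_{K_{\closure{k}}})(\closure{k})\bigr)$, which is finitely generated by the Lang--N\'eron theorem \cite[\S 7]{Con09}. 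If $C$ had positive dimension, then $(C_{\closure{k}})_{\red}$ would be a positive-dimensional smooth connected algebraic group and would contain a positive-dimensional abelian variety or a copy of $\Ga$ or $\Gm$; but the groups of $\closure{k}$-points of these are not finitely generated (that of a positive-dimensional abelian variety is infinite and divisible, and $\Ga(\closure{k})$, $\Gm(\closure{k})$ clearly are not), whereas every subgroup of a finitely generated abelian group is finitely generated --- a contradiction. So $C$ is finite, and being connected over the perfect field $k$ it is infinitesimal.

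Finally, $\pi_{0}\Res_{X/k}\mathcal{A} = \Res_{X/k}\mathcal{A}/(\Res_{X/k}\mathcal{A})_{0}$ is an \'etale $k$-group scheme with group of geometric points $(\Res_{X/k}\mathcal{A})(\closure{k})/(\Res_{X/k}\mathcal{A})_{0}(\closure{k}) = A(K_{\closure{k}})/(\Res_{X/k}\mathcal{A})_{0}(\closure{k})$, a quotient of $A(K_{\closure{k}}) \big/ \tau\bigl(\Tr_{K_{\closure{k}}/\closure{k}}(A_{K_{\closure{k}}})(\closure{k})\bigr)$, hence finitely generated, again by Lang--N\'eron. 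I expect the cokernel assertion to be the main obstacle: the delicate step is to upgrade the Lang--N\'eron statement, which only controls a quotient of groups of points, to the \emph{finiteness} of the group scheme $C$, which is exactly where the observation about geometric points of positive-dimensional groups enters; a secondary point requiring care is checking the hypotheses (quasi-projectivity of $\mathcal{A}$ over $X$, properness and flatness of $X/k$) that make the Weil restriction representable.
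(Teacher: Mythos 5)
Your proof follows the same essential route as the paper's: kernel of $\phi$ is finite infinitesimal because $\ker\tau$ is (Conrad, Thm.\ 6.12), the cokernel is controlled via Lang--N\'eron (Conrad, Thm.\ 7.1), and a connected finite-type group scheme over $k$ with finitely generated group of geometric points must have trivial reduced part and hence be finite infinitesimal. You also correctly flag where the real work lies (upgrading the finite generation of points to finiteness of the cokernel scheme).

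There is, however, a genuine problem in your representability step. You cite \cite[\S 7.6]{BLR90} to conclude that $\Res_{X/k}\mathcal{A}$ is representable, but the Weil restriction theorem there (Thm.\ 4 in that section) assumes the base morphism $S' \to S$ is \emph{finite and locally free}. Here the relevant morphism is $X \to \Spec k$, a smooth projective curve of relative dimension $1$, which is proper and flat but certainly not finite, so that theorem does not apply. The functor in question is really a Hom-scheme / scheme of sections, $T \mapsto \mathcal{A}(X\times_k T)$, and its representability needs the stronger input of Grothendieck's existence theorem for Hilbert (and Hom) schemes applied to $X$ projective over $k$ and $\mathcal{A}$ quasi-projective over $X$ --- which is exactly what the paper invokes via \cite[\S 4.c]{Gro95} together with the quasi-projectivity of the N\'eron model \cite[6.4/1]{BLR90}. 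With the citation corrected, the rest of your argument stands; the later remark about gluing Weil restrictions of quasi-compact opens is also unnecessary, since the N\'eron model of an abelian variety is already of finite type over $X$.
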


\begin{proof}
	Since $X$ is proper over $k$ and $\mathcal{A}$ is quasi-projective over $X$ by \cite[6.4/1]{BLR90},
	the result on existence of Hilbert schemes \cite[\S 4.c]{Gro95} shows that
	$\Res_{X / k} \mathcal{A}$ is a group scheme locally of finite type over $k$.
	The kernel of $(\Tr_{K / k} A)_{K} \to A$ is a finite infinitesimal $K$-group
	by \cite[Thm.\ 6.12]{Con09}.
	It follows that $\Tr_{K / k} A \to \Res_{K / k} \mathcal{A}$ is injective on geometric points.
	Hence its kernel is a finite infinitesimal $k$-group.
	The group of $\closure{k}$-points of the cokernel of $\Tr_{K / k} A \to \Res_{K / k} \mathcal{A}$
	is $\mathcal{A}(X_{\closure{k}}) / (\Tr_{K / k} A)(\closure{k})$,
	which is finitely generated by the Lang-N\'eron theorem (\cite[Thm.\ 7.1]{Con09}).
	This implies that the connected group scheme
	$(\Res_{X / k} \mathcal{A})_{0} / \Tr_{K / k} A$ of finite type over $k$
	has finitely generated group of geometric points.
	It follows that $(\Res_{X / k} \mathcal{A})_{0} / \Tr_{K / k} A$ has trivial reduced part
	and hence is finite infinitesimal.
	Hence $\pi_{0} \Res_{X / k} \mathcal{A}$ has
	finitely generated group of geometric points.
\end{proof}

\begin{Prop} \label{prop: structure of cohomology of Neron models}
	The sheaf $\alg{\Gamma}(X, \mathcal{A})$ is an extension
	of a finitely generated \'etale group
	by an abelian variety over $k$.
	For $n \ge 1$, we have $\alg{H}^{n}(X, \mathcal{A}) \in \Ind^{f} \Alg_{\uc} / k$.
	The group $\alg{H}^{2}(X, \mathcal{A})$ is divisible.
	For $n \ge 3$, we have $\alg{H}^{n}(X, \mathcal{A}) = 0$.
\end{Prop}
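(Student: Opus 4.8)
\emph{Strategy.} The proposition bundles four assertions. I would prove them with two inputs: the mod $n$ duality just established (Prop.\ \ref{prop: mod n duality for Neron models}) together with the universal coefficient sequence, and a cohomological dimension argument based on the fact that $\mathcal{A}$ agrees with $j_{\ast} A$ on the small \'etale site of $X$, where $j \colon \Spec K \into X$ is the inclusion of the generic point.

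\emph{The sheaf $\alg{\Gamma}(X, \mathcal{A})$.} By the N\'eron mapping property the functor $k' \mapsto \mathcal{A}(X_{k'})$ on $k^{\ind\rat}$ is represented by the perfection of $\Res_{X / k} \mathcal{A}$; this perfection is locally of finite presentation, hence already a pro-\'etale sheaf, so it equals $\alg{\Gamma}(X, \mathcal{A})$. By the previous proposition $\Res_{X / k} \mathcal{A}$ is a group scheme locally of finite type over $k$ whose identity component receives a morphism from $\Tr_{K / k} A$ with finite infinitesimal kernel and cokernel and whose $\pi_{0}$ has finitely generated group of geometric points. Perfection is exact and annihilates finite infinitesimal groups, so applying it to the connected-component sequence and to $\Tr_{K / k} A \to (\Res_{X / k} \mathcal{A})_{0}$ presents $\alg{\Gamma}(X, \mathcal{A})$ as an extension of a finitely generated \'etale group by (the perfection of) the abelian variety $\Tr_{K / k} A$.

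\emph{Torsion and vanishing.} Since each $\alg{H}^{n}(X, \mathcal{A})$ is locally of finite presentation (Prop.\ \ref{prop: cohom of curve is fin pres}), it suffices to work on algebraically closed fields $\ell$ over $k$, i.e.\ with $H^{n}(X_{\ell}, \mathcal{A})$. The N\'eron property gives $\mathcal{A} = j_{\ast} A$ on $(X_{\ell})_{\et}$, and the function field $\ell(X)$ together with the completed local fields at the closed points of $X_{\ell}$ have cohomological dimension $\le 1$; hence $A$ has vanishing cohomology in degrees $\ge 2$ and torsion $H^{1}$ over all of them. Thus $R j_{\ast} A$ is concentrated in degrees $0, 1$ with $\mathcal{H}^{1} R j_{\ast} A$ a skyscraper sheaf, and the triangle $\mathcal{A} \to R j_{\ast} A \to (\mathcal{H}^{1} R j_{\ast} A)[-1]$, together with $R \Gamma(X_{\ell}, R j_{\ast} A) = R \Gamma(\ell(X), A)$, identifies $H^{n}(X_{\ell}, \mathcal{A})$ for $n \ge 2$ with a subquotient of $H^{n}(\ell(X), A)$ and of the finite sum of the $H^{1}$ of $A$ over the bad completed local fields, and for $n = 1$ with a subgroup of $H^{1}(\ell(X), A)$. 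This yields at once that $\alg{H}^{n}(X, \mathcal{A})$ is torsion for $n \ge 1$ and vanishes for $n \ge 3$. (One can equally run the localization triangle relating $X$ to the good reduction locus and invoke Prop.\ \ref{prop: structure of local cohom of abelian varieties} for the local terms.)

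\emph{Cofinite type, divisibility, and assembly.} By Prop.\ \ref{prop: mod n duality for Neron models}, $R \alg{\Gamma}(X, \mathcal{A}) \tensor^{L} \Z / m \Z$ lies in $D^{b}(\Alg_{\uc})$ and is concentrated in degrees $-1, 0, 1$. Feeding this into the short exact sequences $0 \to \alg{H}^{n}(X, \mathcal{A}) / m \to H^{n}(R \alg{\Gamma}(X, \mathcal{A}) \tensor^{L} \Z / m \Z) \to \alg{H}^{n + 1}(X, \mathcal{A})[m] \to 0$: for $n = 0, 1$ it shows $\alg{H}^{1}(X, \mathcal{A})[m]$ and $\alg{H}^{2}(X, \mathcal{A})[m]$ are subquotients of objects of $\Alg_{\uc} / k$, hence lie in $\Alg_{\uc} / k$; for $n = 2$ it shows $\alg{H}^{2}(X, \mathcal{A}) / m = 0$, so $\alg{H}^{2}(X, \mathcal{A})$ is divisible. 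Combining with the torsion property, $\alg{H}^{n}(X, \mathcal{A}) = \dirlim_{m} \alg{H}^{n}(X, \mathcal{A})[m]$ lies in $\Ind^{f} \Alg_{\uc} / k$ for $n = 1, 2$, while for $n \ge 3$ a torsion sheaf with vanishing $m$-torsion for all $m$ is zero. The main obstacle is the torsion/vanishing step: the mod $n$ duality controls $R \alg{\Gamma}(X, \mathcal{A})$ only after reduction modulo $m$, so ruling out uniquely divisible contributions in high degree genuinely requires the global geometric input ($\mathcal{A} = j_{\ast} A$ and the low cohomological dimension of function fields of curves over algebraically closed fields); everything else is then formal.
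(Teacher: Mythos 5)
Your proof follows essentially the same two-stage structure as the paper's: first establish that $\alg{\Gamma}(X, \mathcal{A})$ is the perfection of $\Res_{X/k}\mathcal{A}$ and that $\alg{H}^n(X, \mathcal{A})$ is torsion for $n \ge 1$ via a geometric argument reducing to algebraically closed base fields, then feed the mod-$m$ duality (Prop.\ \ref{prop: mod n duality for Neron models}) through the universal coefficient sequences to get cofinite type, divisibility, and vanishing. The only genuine difference of route is in the geometric step: you pass through $Rj_\ast A$ on the small \'etale site, while the paper takes the colimit over dense open $U$ of the excision triangles and invokes Prop.\ \ref{prop: structure of local cohom of abelian varieties} for the local terms — the route you yourself note as an alternative. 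Your route also delivers the vanishing $\alg{H}^n(X,\mathcal{A}) = 0$ for $n \ge 3$ directly (the paper extracts it from $H^{n-1}(C_m) = 0$ later), which is a mild economy.

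There is, however, one factual slip in the $Rj_\ast$ route that you should fix. You assert that $\mathcal{H}^1 R j_\ast A$ is a skyscraper sheaf supported at the bad reduction points. This is false: for $\ell$ algebraically closed and $x$ a point of \emph{good} reduction, $A(K_x^{sh})/n = 0$ while $H^1(K_x^{sh}, A[n]) \cong A[n]$ for $n$ prime to $p$ (the prime-to-$p$ inertia is $\Hat{\Z}^{(p')}$), so $H^1(K_x^{sh}, A)[n] \cong A[n] \ne 0$; thus $R^1 j_\ast A$ has nonzero stalks at \emph{all} closed points, and its $H^0$ is an infinite direct sum, not a finite one. The argument survives because the only facts actually used are (i) this sheaf has vanishing stalk at the generic geometric point, so over the one-dimensional $X_\ell$ every section is supported on a finite set of closed points with residue field $\ell$, hence $H^n(X_\ell, R^1 j_\ast A) = 0$ for $n \ge 1$; and (ii) its global sections are torsion — both true independently of finite support. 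Please replace "skyscraper / finite sum over bad points" by this correct justification, or else use the paper's excision-limit triangle $\bigoplus_{x \in X} R\Gamma_x(\Hat{\Order}_x, \mathcal{A}) \to R\Gamma(X, \mathcal{A}) \to R\Gamma(K, A)$, where the analogous phenomenon (the local terms contribute at all closed points, not just bad ones) is built into the limit.
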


\begin{proof}
	The statement about $\alg{\Gamma}(X, \mathcal{A})$ follows from the previous proposition
	since it has the same values as $\Res_{X / k} \mathcal{A}$ on ind-rational $k$-algebras
	and hence is represented by the perfection of $\Res_{X / k} \mathcal{A}$.
	
	We show that the sheaf $\alg{H}^{n}(X, \mathcal{A})$ for any $n \ge 1$ is torsion.
	By Prop.\ \ref{prop: cohom of curve is fin pres},
	it is an \'etale sheafification of the presheaf
	$k' \mapsto H^{n}(X_{k'}, \mathcal{A})$ and
	locally of finite presentation.
	Hence it is enough to show that the abstract group $H^{n}(X_{k'}, \mathcal{A})$ for $n \ge 1$ is torsion
	for any algebraically closed field $k'$ over $k$.
	Assume that $k = \closure{k}$.
	For any dense open $U \subset X$, by applying $R \Gamma(k, \var)$
	to the isomorphism in Prop.\ \ref{prop: excision},
	we have a distinguished triangle
		\[
				\bigoplus_{x \not\in U}
					R \Gamma_{x}(\Hat{\Order}_{x}, \mathcal{A})
			\to
				R \Gamma(X, \mathcal{A})
			\to
				R \Gamma(U, \mathcal{A}).
		\]
	Taking the limit in smaller and smaller $U$,
	we have a distinguished triangle
		\[
				\bigoplus_{x \in X}
					R \Gamma_{x}(\Hat{\Order}_{x}, \mathcal{A})
			\to
				R \Gamma(X, \mathcal{A})
			\to
				R \Gamma(K, A).
		\]
	We have $R \Gamma_{x}(\Hat{\Order}_{x}, \mathcal{A}) = H^{1}(\Hat{K}_{x}, A)[-2]$
	by Prop.\ \ref{prop: structure of local cohom of abelian varieties}.
	Since Galois cohomology groups are torsion in positive degrees,
	it follows that $H^{n}(X, \mathcal{A})$ is torsion for any $n \ge 1$.
	The same is true if $k$ is replaced by any algebraically closed field over $k$.
	Hence $\alg{H}^{n}(X, \mathcal{A})$ for $n \ge 1$ is torsion.
	
	Let $m \ge 1$.
	Denote $C_{m} = R \alg{\Gamma}(X, \mathcal{A}) \tensor^{L} \Z / m \Z$.
	For any $n \in \Z$, we have an exact sequence
		\[
				0
			\to
				\alg{H}^{n - 1}(X, \mathcal{A}) \tensor \Z / m \Z
			\to
				H^{n - 1}(C_{m})
			\to
				\alg{H}^{n}(X, \mathcal{A})[m]
			\to
				0,
		\]
	where $[m]$ denotes the $m$-torsion part.
	We have $H^{n - 1}(C_{m}) \in \Alg_{\uc} / k$
	by Prop.\ \ref{prop: mod n duality for Neron models}.
	We have $\alg{\Gamma}(X, \mathcal{A}) \tensor \Z / m \Z \in \FEt / k$
	by what we have shown above about the structure of $\alg{\Gamma}(X, \mathcal{A})$.
	Therefore $\alg{H}^{1}(X, \mathcal{A})[m] \in \Alg_{\uc} / k$.
	Since $m$ is arbitrary, the torsionness shown above then shows that
	$\alg{H}^{1}(X, \mathcal{A}) \in \Ind^{f} \Alg_{\uc} / k$.
	Hence $\alg{H}^{1}(X, \mathcal{A}) \tensor \Z / m \Z \in \Alg_{\uc} / k$
	by Prop.\ \ref{prop: cofinite type mod n is finite type}.
	Repeating the same argument,
	we have $\alg{H}^{2}(X, \mathcal{A}) \in \Ind^{f} \Alg_{\uc} / k$.
	Since $H^{n - 1}(C_{m}) = 0$ for $n \ge 3$ by Prop.\ \ref{prop: mod n duality for Neron models},
	we know that $\alg{H}^{2}(X, \mathcal{A})$ is divisible and
	$\alg{H}^{n}(X, \mathcal{A}) = 0$ for $n \ge 3$.
\end{proof}

For each closed point $x \in X$,
we regard the component group $\pi_{0}(\mathcal{A}_{x})$ to be an \'etale group over $k_{x}$.
Let $i_{x} \colon x \into X$ be the inclusion, where we identified $x = \Spec k_{x}$.
We have an exact sequence
	\[
			0
		\to
			\mathcal{A}_{0}
		\to
			\mathcal{A}
		\to
			\bigoplus_{x}
				i_{x \ast} \pi_{0}(\mathcal{A}_{x})
		\to
			0
	\]
in $\Ab(X_{\fppf})$.
The sheaf $i_{x \ast} \pi_{0}(\mathcal{A}_{x})$ for any $x$ is an \'etale scheme over $X$
if the \'etale group $\pi_{0}(\mathcal{A}_{x})$ over $k_{x}$ is constant
and an \'etale algebraic space over $X$ in general
by \cite[Prop. (3.3.6.1)]{Ray70a}.

\begin{Prop} \label{prop: cohomology of connected Neron and Neron}
	The above sequence induces a distinguished triangle
		\[
				R \alg{\Gamma}(X, \mathcal{A}_{0})
			\to
				R \alg{\Gamma}(X, \mathcal{A})
			\to
				\bigoplus_{x}
					\Res_{k_{x} / k} \pi_{0}(\mathcal{A}_{x}),
		\]
	where $\Res_{k_{x} / k}$ denotes the Weil restriction functor.
	In particular, we have an exact sequence
		\begin{align*}
					0
			&	\to
					\alg{\Gamma}(X, \mathcal{A}_{0})
				\to
					\alg{\Gamma}(X, \mathcal{A})
				\to
					\bigoplus_{x}
						\Res_{k_{x} / k} \pi_{0}(\mathcal{A}_{x})
			\\
			&	\to
					\alg{H}^{1}(X, \mathcal{A}_{0})
				\to
					\alg{H}^{1}(X, \mathcal{A})
				\to
					0
		\end{align*}
	and an isomorphism
		$
				\alg{H}^{2}(X, \mathcal{A}_{0})
			=
				\alg{H}^{2}(X, \mathcal{A})
		$.
	The group $\bigoplus_{x} \Res_{k_{x} / k} \pi_{0}(\mathcal{A}_{x})$ is finite \'etale over $k$.
\end{Prop}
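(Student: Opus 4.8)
The plan is to apply $R\alg{\Gamma}(X, \var)$ to the displayed short exact sequence $0 \to \mathcal{A}_{0} \to \mathcal{A} \to \bigoplus_{x} i_{x \ast}\pi_{0}(\mathcal{A}_{x}) \to 0$ in $\Ab(X_{\fppf})$. Since $\mathcal{A}$ has good reduction outside a finite set of closed points of $X$, only finitely many summands $i_{x \ast}\pi_{0}(\mathcal{A}_{x})$ are nonzero, so the sum is finite and the triangulated functor $R\alg{\Gamma}(X, \var)$ commutes with it. The proposition then reduces to the single computation
\[
    R\alg{\Gamma}(X, i_{x \ast}\pi_{0}(\mathcal{A}_{x})) = \Res_{k_{x}/k}\pi_{0}(\mathcal{A}_{x})
\]
in $D(k)$, concentrated in degree zero, together with its elementary consequences.

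For that computation I would argue as follows. The component group $\pi_{0}(\mathcal{A}_{x})$ is a finite \'etale group scheme over $k_{x}$, and $k_{x}/k$ is a finite separable extension since $X$ is smooth over the perfect field $k$; hence $i_{x \ast}\pi_{0}(\mathcal{A}_{x})$ is representable by an \'etale algebraic space over $X$ (as noted just above), so its fppf and \'etale cohomology over any $X_{k'}$ coincide, and for the closed immersion $x_{k'} \hookrightarrow X_{k'}$ the \'etale pushforward is exact. Therefore $\alg{H}^{n}(X, i_{x \ast}\pi_{0}(\mathcal{A}_{x}))$ is the pro-\'etale sheafification of $k' \mapsto H^{n}_{\et}(\Spec(k_{x} \otimes_{k} k'), \pi_{0}(\mathcal{A}_{x}))$. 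Evaluating on an algebraically closed field $k'$ over $k$ --- these generate the pro-\'etale topology --- the scheme $\Spec(k_{x} \otimes_{k} k')$ is a finite disjoint union of copies of $\Spec k'$, so the cohomology vanishes for $n \ge 1$ and for $n = 0$ returns exactly the value of $\Res_{k_{x}/k}\pi_{0}(\mathcal{A}_{x})$. Since a sheaf vanishing on a generating family is zero, this yields the claimed identification.

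Granting this, the distinguished triangle is immediate. Passing to cohomology and using that the third term sits in degree zero gives the four-term exact sequence in degrees $0, 1$ and the isomorphisms $\alg{H}^{n}(X, \mathcal{A}_{0}) \isomto \alg{H}^{n}(X, \mathcal{A})$ for $n \ge 2$, in particular for $n = 2$. Finally, $\Res_{k_{x}/k}\pi_{0}(\mathcal{A}_{x})$ is finite \'etale over $k$ because $\pi_{0}(\mathcal{A}_{x})$ is finite \'etale over the finite extension $k_{x}$ and Weil restriction along a finite separable extension preserves finite \'etale groups; the sum being finite, $\bigoplus_{x} \Res_{k_{x}/k}\pi_{0}(\mathcal{A}_{x})$ is finite \'etale over $k$.

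The hard part --- really the only nonroutine point --- is the cohomology computation: one must take care that $i_{x \ast}\pi_{0}(\mathcal{A}_{x})$ is only an \'etale algebraic space, not necessarily a scheme, over $X$, and that the vanishing of the higher cohomology has to be checked at the level of pro-\'etale sheaves, where the reduction to algebraically closed residue fields does the work, rather than termwise over a fixed $k'$. Everything downstream is formal manipulation of the long exact sequence.
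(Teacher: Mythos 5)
Your proof is correct, and since the paper's own ``proof'' of this proposition is literally the single word ``Obvious,'' there is no competing argument to compare against: what you wrote is the natural unpacking of the claim. You correctly reduce to the computation $R\alg{\Gamma}(X, i_{x\ast}\pi_{0}(\mathcal{A}_{x})) \cong \Res_{k_{x}/k}\pi_{0}(\mathcal{A}_{x})$, use exactness of \'etale pushforward along a closed immersion together with the identification of fppf and \'etale cohomology for a smooth algebraic space, and observe that $\Spec(k_{x}\otimes_{k}k')$ splits over an algebraically closed $k'$. The one spot worth tightening is the final sentence: algebraically closed fields do not ``generate'' $\Spec k^{\ind\rat}_{\pro\et}$ in any literal topos-theoretic sense, and the cleaner justification is the one the paper itself uses elsewhere (e.g.\ in the proof of Prop.\ \ref{prop: cohom with closed support doesnt change under completion}): the relevant presheaf $k'\mapsto H^{n}_{\et}(\Spec(k_{x}\otimes_{k}k'),\pi_{0}(\mathcal{A}_{x}))$ is locally of finite presentation, so by Prop.\ \ref{prop: cohom of curve is fin pres} its sheafification computes $\alg{H}^{n}$, and a locally-of-finite-presentation \'etale sheaf that vanishes on all algebraically closed overfields vanishes identically, by the criterion cited from \cite[the second paragraph after Prop.\ (2.4.1)]{Suz14}. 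For the degree-zero term one does not even need to invoke algebraically closed fields: the presheaf $k'\mapsto\Gamma(X_{k'},i_{x\ast}\pi_{0}(\mathcal{A}_{x}))=\pi_{0}(\mathcal{A}_{x})(k_{x}\otimes_{k}k')$ already equals the representable sheaf $\Res_{k_{x}/k}\pi_{0}(\mathcal{A}_{x})$ on the nose, so no sheafification is needed there.
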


\begin{proof}
	Obvious.
\end{proof}

Let $\mathcal{G}_{m}$ be the N\'eron (lft) model of $\Gm$ over $X$ (\cite[10.1/5]{BLR90}).
It fits in the canonical exact sequence
	\[
		0 \to \Gm \to \mathcal{G}_{m} \to \bigoplus_{x} i_{x \ast} \Z \to 0
	\]
of smooth group schemes over $X$,
where the sum is over all closed points $x \in X$.
At each $x$, we have Grothendieck's pairing \cite[IX, 1.2.1]{Gro72}
	\[
				\pi_{0}(\mathcal{A}_{x}^{\vee})
			\times
				\pi_{0}(\mathcal{A}_{x})
		\to
			\Q / \Z
	\]
over $k_{x}$.
Combining these two, in $D(X_{\fppf})$, we have morphisms
	\[
				\left(
					\bigoplus_{x}
						i_{x \ast}
						\pi_{0}(\mathcal{A}_{x}^{\vee})
				\right)
			\tensor^{L}
				\left(
					\bigoplus_{x}
						i_{x \ast}
						\pi_{0}(\mathcal{A}_{x})
				\right)
		\to
			\bigoplus_{x}
				i_{x \ast} \Q / \Z
		\to
			\bigoplus_{x}
				i_{x \ast} \Z[1]
		\to
			\Gm[2].
	\]
Denote $\Phi_{A, X} = \bigoplus_{x} i_{x \ast} \pi_{0}(\mathcal{A}_{x})$
and $\Phi_{A^{\vee}, X}$ similarly.
The above defines a morphism
	\[
			\Phi_{A^{\vee}, X} \tensor^{L} \Phi_{A, X}
		\to
			\Gm[2].
	\]
Recall again that we have morphisms
	\[
			\mathcal{A}_{0}^{\vee} \tensor^{L} \mathcal{A}
		\to
			\Gm[1],
		\quad
			\mathcal{A}^{\vee} \tensor^{L} \mathcal{A}_{0}
		\to
			\Gm[1]
	\]
defined by the canonical extensions of the Poincar\'e biextension.

\begin{Prop} \label{prop: Poincare pairing and Grothendieck pairing}
	The above morphisms
		\[
			\begin{CD}
					\mathcal{A}_{0}^{\vee}
				@>>>
					\mathcal{A}^{\vee}
				@>>>
					\Phi_{A^{\vee}, X}
				\\
				@VVV
				@VVV
				@VVV
				\\
					R \sheafhom_{X}(\mathcal{A}, \Gm)[1]
				@>>>
					R \sheafhom_{X}(\mathcal{A}_{0}, \Gm)[1]
				@>>>
					R \sheafhom_{X}(
						\Phi_{A, X},
						\Gm
					)[2]
			\end{CD}
		\]
	form a morphism of distinguished triangles in $D(X_{\fppf})$,
	where the horizontal triangles are the natural ones.
\end{Prop}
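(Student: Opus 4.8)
The plan is to show that, with $\gamma$ the morphism induced by Grothendieck's pairing, the two squares of the diagram commute in $D(X_{\fppf})$; compatibility with the connecting morphisms is then checked in the same local manner. Throughout I will pass freely between a morphism $\mathcal{F}\to R\sheafhom_{X}(\mathcal{G},\Gm[1])$ and the corresponding biextension-type morphism $\mathcal{F}\tensor^{L}\mathcal{G}\to\Gm[1]$ via tensor--Hom adjunction, and I will use repeatedly the fact that the canonical extension of a biextension of abelian varieties over $K$ to a biextension over $X$ of the connected-fiber N\'eron models is unique, by \cite[IX, 1.4.3]{Gro72} (applied over each $\Hat{\Order}_{x}$ and glued to $X$).

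First I would treat the left square. Its commutativity amounts to the equality of the two morphisms $\mathcal{A}_{0}^{\vee}\to R\sheafhom_{X}(\mathcal{A}_{0},\Gm[1])$ obtained, respectively, by restricting the canonical extension $\mathcal{A}^{\vee}\tensor^{L}\mathcal{A}_{0}\to\Gm[1]$ along $\mathcal{A}_{0}^{\vee}\into\mathcal{A}^{\vee}$ and by restricting the canonical extension $\mathcal{A}_{0}^{\vee}\tensor^{L}\mathcal{A}\to\Gm[1]$ along $\mathcal{A}_{0}\into\mathcal{A}$. Both are biextensions of $(\mathcal{A}_{0}^{\vee},\mathcal{A}_{0})$ by $\Gm$ over $X$ whose restriction to the generic point is the Poincar\'e biextension of $(A^{\vee},A)$, so they coincide by the cited uniqueness. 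Granting this, write $a',b'$ for the two arrows of the bottom triangle; since $b'\compose a'=0$ and $\beta\compose a=a'\compose\alpha$ by the square just proved, we get $b'\compose\beta\compose a=0$, so $b'\compose\beta$ factors through $\mathcal{A}^{\vee}\onto\Phi_{A^{\vee},X}$. This already gives a morphism of distinguished triangles for \emph{some} third map; the substance is that this third map is $\gamma$ and that the connecting square commutes.

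The remaining verification — and the main obstacle — is local. The sheaves $\Phi_{A,X}$, $\Phi_{A^{\vee},X}$ and also $R\sheafhom_{X}(\Phi_{A,X},\Gm)$ are supported on the finite set $S$ of closed points of bad reduction, so the right square and the connecting square may be tested after base change to $\Spec\Hat{\Order}_{x}$ for each $x\in S$, comparing Hom-groups in $D(X_{\fppf})$ with those in $D(\Hat{\Order}_{x,\fppf})$ via the excision isomorphism of Prop.\ \ref{prop: excision}. Over the complete discrete valuation ring $\Hat{\Order}_{x}$, the canonical extension $\mathcal{A}_{0}^{\vee}\tensor^{L}\mathcal{A}_{0}\to\Gm[1]$ of the Poincar\'e biextension of $(A^{\vee},A)$ fails to extend to $(\mathcal{A}^{\vee},\mathcal{A})$, and the obstruction — computed as the composite of the boundary of $0\to\mathcal{A}_{0}^{\vee}\to\mathcal{A}^{\vee}\to i_{x\ast}\pi_{0}(\mathcal{A}_{x}^{\vee})\to 0$, the boundary of $0\to\Z\to\Q\to\Q/\Z\to 0$, and the boundary of $0\to\Gm\to\mathcal{G}_{m}\to i_{x\ast}\Z\to 0$ — is, by definition, Grothendieck's pairing $\pi_{0}(\mathcal{A}_{x}^{\vee})\times\pi_{0}(\mathcal{A}_{x})\to\Q/\Z$; this is precisely \cite[IX, 1.2.1]{Gro72}. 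Unwinding these three boundary maps and tracking signs identifies the factored morphism above with $\gamma$ and gives the commutativity of the connecting square, which finishes the proof. I expect the bookkeeping of the three boundary maps against the canonical extensions of biextensions (and the sign conventions hidden in the rotations of the two triangles) to be the only genuinely delicate point.
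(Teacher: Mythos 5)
Your plan has the right overall shape — left square by uniqueness of the canonical extension, right square and connecting square by a local check at the bad points — but it stops exactly where the actual proof begins, and it also elides a technical step that the paper needs.

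The left-square argument is fine: both ways around give biextensions of $(\mathcal{A}_{0}^{\vee},\mathcal{A}_{0})$ by $\Gm$ over $X$ extending the Poincar\'e biextension, and SGA 7, IX, 1.4.3 gives uniqueness. The observation that $b'\compose a'=0$ forces \emph{some} factorization through $\Phi_{A^{\vee},X}$ is also correct. The problem is that you then say the identification of that factored map with $\gamma$, and the commutativity of the connecting square, follow by ``unwinding these three boundary maps and tracking signs,'' and you flag this as bookkeeping. It is not. Grothendieck's pairing is \emph{defined} in SGA 7, IX, 1.2.1 via an obstruction in biextension language; what one must show here is that the connecting morphism $\mathcal{A}^{\vee}\to R\sheafhom_{X}(\Phi_{A,X},\Gm)[2]$, assembled from the extensions $0\to\mathcal{A}_{0}\to\mathcal{A}\to\Phi_{A,X}\to 0$ and the Barsotti–Weil class, reproduces that pairing. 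That identification is the substance of the proposition; the paper proves it by an explicit manipulation of Yoneda extensions (building $\eta_{1},\eta_{2},\eta_{3}$ over $\Order_{x}^{sh}$, constructing $\mathcal{H}_{0}\times_{\mathcal{A}_{0}}\mathcal{H}'$, and showing $\eta_{1}\sim\eta_{3}\sim\eta_{2}$). Your proposal gestures at the cited SGA item and declares it definitional, which skips the step that needs proof.

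A second, independent gap: you work directly in $D(X_{\fppf})$, but the computation that reduces the right square to a comparison of Yoneda $\Ext$ classes relies on knowing
$\tau_{\le 2}R\sheafhom(i_{x\ast}N,\Gm)\cong i_{x\ast}N^{\PDual}[-2]$
and $\sheafhom(\mathcal{A},\Gm)=\sheafhom(\mathcal{A}_{0},\Gm)=0$. The paper establishes these only after reducing to the smooth site $X_{\sm}$; the proof of the first (using $R^{1}j_{x\ast}\Gm=0$ and $R\sheafhom(j_{x!}N,\Gm)=Rj_{x\ast}R\sheafhom(N,\Gm)$) is an \'etale/smooth computation, not an fppf one, and the Barsotti–Weil identification of $A^{\vee}(K_{x}^{sh})$ with $\Ext_{K_{x,\sm}^{sh}}^{1}(A,\Gm)$ is likewise phrased over the smooth site. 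Your local check would need this reduction or some fppf substitute for these identities, and that is not supplied. Finally, ``tested after base change to $\Spec\Hat{\Order}_{x}$ ... via the excision isomorphism of Prop.\ \ref{prop: excision}'' conflates cohomology with support (which is what that proposition handles) with $\Hom$-sets in the derived category; the paper instead uses the elementary fact that a morphism into $\Phi_{A,X}^{\PDual}$ is determined by its values on the strict henselizations $\Order_{x}^{sh}$, which is what the local check really requires.

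In short: right strategy, but the right square and the connecting square — which is where all the content lives — are left as asserted bookkeeping, and the smooth-site reduction that makes the required $\sheafext$ computations valid is missing.
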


To prove this, we need some notation and three lemmas.
Let $X_{\sm}$ be the smooth site of $X$,
i.e.\ the category of smooth schemes over $X$ with $X$-scheme morphisms
endowed with the \'etale (or smooth) topology.
Denote the sheaf-Hom functor for $X_{\sm}$ by $\sheafhom_{X_{\sm}}$
and $\sheafext_{X_{\sm}}^{n}$, $R \sheafhom_{X_{\sm}}$ similarly
(while $\sheafhom_{X}$ is still the sheaf-Hom for $X_{\fppf}$).

\begin{Lem}
	To prove Prop.\ \ref{prop: Poincare pairing and Grothendieck pairing},
	it is enough to show the modified statement in $D(X_{\sm})$
	where $R \sheafhom_{X}$ is replaced by $R \sheafhom_{X_{\sm}}$.
\end{Lem}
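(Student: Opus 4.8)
The plan is to transport the modified statement from the smooth site back to the fppf site by pulling back along the natural comparison morphism of sites. Write $\varepsilon\colon X_{\fppf}\to X_{\sm}$ for the continuous map whose underlying functor is the inclusion of $X_{\sm}$ into the underlying category of $X_{\fppf}$; since an \'etale covering of a smooth $X$-scheme is an fppf covering, and fiber products along covering maps are computed the same way, $\varepsilon$ is a premorphism of sites in the sense of \S\ref{sec: Premorphisms of sites}, with $\varepsilon_{\ast}$ the restriction functor $\Ab(X_{\fppf})\to\Ab(X_{\sm})$. As in the proof of Prop.\ \ref{prop: push sends K limp to K limp}, the left derived pullback $L\varepsilon^{\ast}\colon D(X_{\sm})\to D(X_{\fppf})$ exists, is triangulated, is left adjoint to $R\varepsilon_{\ast}$, and sends $\Z[Y]$ for any smooth $X$-scheme $Y$ to $\Z[Y]$. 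A triangulated functor carries a morphism of distinguished triangles to a morphism of distinguished triangles, so it suffices to check that $L\varepsilon^{\ast}$ applied to the modified diagram in $D(X_{\sm})$ reproduces, object for object and arrow for arrow, the diagram of Prop.\ \ref{prop: Poincare pairing and Grothendieck pairing} in $D(X_{\fppf})$.

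First I would treat the objects of the two horizontal triangles. The sheaves $\mathcal{A}$, $\mathcal{A}_{0}$, $\mathcal{A}^{\vee}$, $\mathcal{A}_{0}^{\vee}$ and $\Gm$ are represented by smooth group schemes over $X$, and $\Phi_{A,X}$, $\Phi_{A^{\vee},X}$ by \'etale algebraic spaces over $X$ (built from $i_{x\ast}$ of \'etale group schemes); in each case the representing functor is already an fppf sheaf, so $\varepsilon^{\ast}$ of its restriction to $X_{\sm}$ is the fppf sheaf it represents, by Yoneda and adjointness, and this sheaf is $L\varepsilon^{\ast}$-acyclic (resolve by free sheaves $\Z[\,\cdot\,]$, which are $\varepsilon^{\ast}$-acyclic). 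Hence $L\varepsilon^{\ast}$ fixes each of these objects. The defining exact sequence $0\to\mathcal{A}_{0}\to\mathcal{A}\to\Phi_{A,X}\to0$ and its dual are already exact in $\Ab(X_{\sm})$ (surjectivity holds \'etale-locally since $\mathcal{A}\to X$ is smooth), so $L\varepsilon^{\ast}$ carries the top natural triangle of the $X_{\sm}$-diagram to the top triangle of Prop.\ \ref{prop: Poincare pairing and Grothendieck pairing}, and likewise the bottom triangle once the $R\sheafhom$ objects are identified. The Poincar\'e biextension together with its canonical extension over $X$ (\cite[IX, 1.4.3]{Gro72}) and Grothendieck's pairing $\pi_{0}(\mathcal{A}_{x}^{\vee})\times\pi_{0}(\mathcal{A}_{x})\to\Q/\Z$ (\cite[IX, 1.2.1]{Gro72}) are formulated in terms of group schemes alone and are insensitive to the choice of topology, so the three vertical arrows pull back correctly as soon as their targets are identified.

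The remaining and decisive point is the identification
\[
		L\varepsilon^{\ast}\,R\sheafhom_{X_{\sm}}(F,\Gm)
	\isomto
		R\sheafhom_{X_{\fppf}}(F,\Gm)
\]
for $F\in\{\mathcal{A},\mathcal{A}_{0},\Phi_{A,X}\}$ (all complexes in sight are bounded below, so no subtlety with unboundedness arises). The input is Grothendieck's theorem that fppf and smooth (\'etale) cohomology of $X$-schemes agree for coefficients in a smooth group scheme, giving $R\varepsilon_{\ast}\Gm=\Gm$. Combined with the internal-Hom adjunction $R\varepsilon_{\ast}R\sheafhom_{X_{\fppf}}(L\varepsilon^{\ast}F,\Gm)=R\sheafhom_{X_{\sm}}(F,R\varepsilon_{\ast}\Gm)=R\sheafhom_{X_{\sm}}(F,\Gm)$, the claim reduces to showing that the counit $L\varepsilon^{\ast}R\varepsilon_{\ast}\to\id$ is an isomorphism on $R\sheafhom_{X_{\fppf}}(F,\Gm)$, i.e.\ that the sheaves $\sheafext^{q}_{X_{\fppf}}(F,\Gm)$ are pulled back from $X_{\sm}$.

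I expect this last point to be the main obstacle. It requires ruling out genuinely fppf-local contributions to these Ext-sheaves, using crucially that $F$ is represented by a smooth group scheme or an \'etale algebraic space; concretely I would d\'evisser $\mathcal{A}$ via $0\to\mathcal{A}_{0}\to\mathcal{A}\to\Phi_{A,X}\to0$ and a further filtration of $\mathcal{A}_{0}$ into abelian, toric and unipotent pieces, reducing to comparisons of $\sheafext^{q}(\,\cdot\,,\Gm)$ over the two sites for $\Gm$-, abelian-variety-, finitely-generated-\'etale-, and (the delicate char.\ $p$ case) unipotent coefficients, where the relevant fppf and smooth Ext-sheaves are known to coincide in the degrees that occur (cf.\ the computations underlying \cite[III]{Mil06}, \cite{AM76}). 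Once the identification above is in place, $L\varepsilon^{\ast}$ carries the modified morphism of distinguished triangles in $D(X_{\sm})$ onto the morphism of Prop.\ \ref{prop: Poincare pairing and Grothendieck pairing} in $D(X_{\fppf})$, so the former statement implies the latter, which is exactly the content of the lemma.
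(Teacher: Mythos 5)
Your plan goes in the opposite direction from the paper, and that choice creates a gap you yourself flag but do not fill. The paper applies the \emph{pushforward} $R f_{\ast}$ to the fppf diagram and uses only the internal-Hom adjunction
\[
		R f_{\ast}\, R \sheafhom_{X}(L f^{\ast} G, H)
	=
		R \sheafhom_{X_{\sm}}(G, R f_{\ast} H),
\]
together with $L f^{\ast} G = G$ (for $G$ a smooth group algebraic space, \cite[Prop.\ 4.2]{Suz18}) and $R f_{\ast} H = H$ (\cite[III, Rmk.\ 3.11 (b)]{Mil80}), to get $R f_{\ast}\, R \sheafhom_{X}(G, H) = R \sheafhom_{X_{\sm}}(G, H)$. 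Since the sources of all the morphisms in the diagram are smooth group algebraic spaces $P$ with $L f^{\ast} P = P$ and the targets are $R\sheafhom$-objects, the adjunction identifies the relevant Hom-sets $\Hom_{D(X_{\fppf})}(P, R\sheafhom_{X}(G,\Gm)[\bullet])$ with $\Hom_{D(X_{\sm})}(P, R\sheafhom_{X_{\sm}}(G,\Gm)[\bullet])$, and this bijection is precisely $R f_{\ast}$; so commutativity on the smooth site gives commutativity on the fppf site for free. No statement about the counit is needed.

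You instead apply the \emph{pullback} $L\varepsilon^{\ast}$ to the smooth diagram and then must show that $L\varepsilon^{\ast} R\sheafhom_{X_{\sm}}(F,\Gm)$ recovers $R\sheafhom_{X_{\fppf}}(F,\Gm)$. As you correctly observe, this reduces (via the paper's easy identity applied to $R\varepsilon_{\ast}$) to the counit $L\varepsilon^{\ast} R\varepsilon_{\ast} \to \id$ being an isomorphism on $R\sheafhom_{X_{\fppf}}(F,\Gm)$, i.e.\ to the fppf Ext-sheaves $\sheafext^{q}_{X_{\fppf}}(F,\Gm)$ agreeing with their pullbacks from the smooth site. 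You label this the main obstacle and sketch a d\'evissage through the abelian--toric--unipotent pieces of $\mathcal{A}_{0}$ with references to \cite{Mil06} and \cite{AM76}, but you do not carry it out, and in characteristic $p$ these Ext-sheaf comparisons for unipotent pieces are genuinely delicate. So there is a real gap: the ``decisive point'' is left unproved. The fix is to drop the counit claim entirely and argue via $R f_{\ast}$ and Hom-set adjunction as above, which is both shorter and avoids the hard cohomological comparison; the internal-Hom adjunction you already invoke is all that is needed once you reverse the direction.
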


\begin{proof}
	Let $f \colon X_{\fppf} \to X_{\sm}$ be the premorphism of sites defined by the identity functor.
	By \cite[Lem.\ 3.7.2]{Suz13}, the pullback functor
	$f^{\ast} \colon \Ab(X_{\sm}) \to \Ab(X_{\fppf})$ admits a left derived functor
	$L f^{\ast} \colon D(X_{\sm}) \to D(X_{\fppf})$,
	which is left adjoint to $R f_{\ast} \colon D(X_{\fppf}) \to D(X_{\sm})$
	and satisfies $L_{n} f^{\ast} \Z[Y] = 0$ for any smooth $X$-scheme $Y$ and $n \ge 1$
	(where $\Z[Y]$ is the sheaf of free abelian groups
	generated by the representable sheaf of sets $Y$).
	By \cite[Prop.\ 4.2]{Suz18}, we have
	$L f^{\ast} G = G$ for any smooth group algebraic space $G$ over $X$.
	Also $R f_{\ast} G = G$ since the fppf cohomology with coefficients in a smooth group algebraic space
	agrees with the \'etale cohomology \cite[III, Rmk.\ 3.11 (b)]{Mil80}.
	Therefore if $H$ is another smooth group algebraic space over $X$, then
		\begin{align*}
			&
					R f_{\ast}
					R \sheafhom_{X}(G, H)
				=
					R f_{\ast}
					R \sheafhom_{X}(L f^{\ast} G, H)
			\\
			&	=
					R \sheafhom_{X_{\sm}}(G, R f_{\ast} H)
				=
					R \sheafhom_{X_{\sm}}(G, H)
		\end{align*}
	by the derived tensor-Hom adjunction \cite[Prop.\ 3.1 (1)]{Suz18},
	which is applicable to our situation
	since the category of smooth schemes over $X$ has finite products.
	Applying $R f_{\ast}$ to the diagram in the statement,
	we know that the modified statement implies the original statement.
\end{proof}

\begin{Lem}
	Let $x \in X$ be a closed point
	and $N$ a finite \'etale group over $k_{x}$ with Pontryagin dual $N^{\PDual}$.
	Let $i_{x} \colon x_{\sm} \to X_{\sm}$ be the premorphism of sites
	defined by the inclusion $i_{x} \colon x \into X$.
	Then the truncation $\tau_{\le 2}$ of $R \sheafhom_{X_{\sm}}(i_{x \ast} N, \Gm)$ in degrees $\le 2$
	is canonically isomorphic to $i_{x \ast} N^{\PDual}[-2]$.
\end{Lem}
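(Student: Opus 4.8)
The plan is to compute $R\sheafhom_{X_{\sm}}(i_{x\ast}N,\Gm)$ in low degrees by reducing to a purely local computation at $x$, using the recollement attached to the open--closed decomposition given by $U = X \setminus \{x\}$ with open immersion $j \colon U \into X$ and the complement $x$. First I would observe that $i_{x\ast}N$ restricts to zero on $U$, hence so does $R\sheafhom_{X_{\sm}}(i_{x\ast}N,\Gm)$, so this complex is supported on $x$; concretely, the pushforward-along-a-closed-immersion adjunction yields a canonical identification
\[
		R\sheafhom_{X_{\sm}}(i_{x\ast}N,\Gm)
	=
		i_{x\ast} R\sheafhom_{x_{\sm}}(N, R i_{x}^{!}\Gm),
\]
where $R i_{x}^{!}$ is the right adjoint of $i_{x\ast}$ (these formal facts hold on the smooth site as on the small \'etale site, set up through the premorphism $i_{x}\colon x_{\sm} \to X_{\sm}$; alternatively the whole computation may be transported to $X_{\et}$, compatibly as in the previous lemma).

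Next I would compute $\tau_{\le 2} R i_{x}^{!}\Gm$. The localization triangle $i_{x\ast} R i_{x}^{!}\Gm \to \Gm \to Rj_{\ast}\Gm$, combined with the Weil-divisor exact sequence
\[
	0 \to \Gm \to j_{\ast}\Gm \to i_{x\ast}\Z \to 0
\]
(the right-hand map being the order-of-vanishing at $x$) and the vanishing $R^{1} j_{\ast}\Gm = 0$ (Hilbert 90: the stalk is $\Pic$ of the fraction field of a strictly henselian discrete valuation ring), shows that $R^{0} i_{x}^{!}\Gm = R^{2} i_{x}^{!}\Gm = 0$ and that $R^{1} i_{x}^{!}\Gm$ is canonically the constant sheaf $\Z$ on $x_{\sm}$; hence $\tau_{\le 1} R i_{x}^{!}\Gm = \Z[-1]$ and $\tau_{\ge 2} R i_{x}^{!}\Gm$ is concentrated in degrees $\ge 3$. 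Since $R\sheafhom_{x_{\sm}}(N,\var)$ has cohomological amplitude $[0,1]$ for $N$ finite \'etale (\'etale-locally $N$ is a sum of copies of $\Z/n\Z$, so this reduces to $\sheafext_{x_{\sm}}^{q}(\Z/n\Z,\var)=0$ for $q\ge 2$), only the $\tau_{\le 1}$-part of $R i_{x}^{!}\Gm$ contributes to $\tau_{\le 2}$, giving
\[
		\tau_{\le 2} R\sheafhom_{x_{\sm}}(N, R i_{x}^{!}\Gm)
	=
		\bigl( R\sheafhom_{x_{\sm}}(N, \Z) \bigr)[-1].
\]
Finally the short exact sequence $0 \to \Z \to \Q \to \Q/\Z \to 0$ and the finiteness of $N$ give $R\sheafhom_{x_{\sm}}(N,\Z) = N^{\PDual}[-1]$, whence $\tau_{\le 2} R\sheafhom_{X_{\sm}}(i_{x\ast}N,\Gm) = i_{x\ast} N^{\PDual}[-2]$; every identification used is functorial in $N$, which yields canonicity.

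The main obstacle I expect is bookkeeping rather than a deep point: one must keep careful track of the interplay between $\tau_{\le 2}$ and the internal Hom --- it is precisely the vanishing $R^{2} i_{x}^{!}\Gm = 0$ (equivalently Hilbert 90) that discards the Brauer-group contribution sitting in $R^{3} i_{x}^{!}\Gm$, and the amplitude bound $[0,1]$ on $R\sheafhom_{x_{\sm}}(N,\var)$ that prevents the higher part from leaking into degree $2$ --- and one must also handle the fact that $x$ is not itself an object of $X_{\sm}$, so the recollement and $R i_{x}^{!}$ have to be set up through the premorphism of sites $i_{x}\colon x_{\sm}\to X_{\sm}$; the safest route is to perform the computation on the small \'etale sites, where all the facts invoked are entirely standard.
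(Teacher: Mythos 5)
Your proof is correct and uses the same underlying ingredients as the paper (the divisor sequence $0 \to \Gm \to j_{x \ast}\Gm \to i_{x \ast}\Z \to 0$, the vanishing $R^{1} j_{x \ast}\Gm = 0$, and $R\sheafhom(N,\Z) = N^{\PDual}[-1]$), but it attacks the problem from the other side of the recollement. The paper never introduces $R i_{x}^{!}$: it starts from the triangle $j_{x !} N \to N \to i_{x \ast} N$ on $X_{\sm}$, applies $R \sheafhom_{X_{\sm}}(\var, \Gm)$, identifies the two unknown terms as $N^{\PDual} \tensor^{L} \Gm[-1]$ and $N^{\PDual} \tensor^{L} R j_{x \ast}\Gm[-1]$ (using that $j_{x}$ is a genuine localization morphism of $X_{\sm}$), and thereby computes $R\sheafhom_{X_{\sm}}(i_{x \ast}N, \Gm) = N^{\PDual} \tensor^{L} [\Gm \to R j_{x \ast}\Gm][-2]$ directly. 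You instead push the whole computation to the closed point via $R\sheafhom_{X_{\sm}}(i_{x \ast}N, \Gm) = i_{x \ast} R\sheafhom_{x_{\sm}}(N, R i_{x}^{!}\Gm)$ and then compute the purity complex $\tau_{\le 2} R i_{x}^{!}\Gm = \Z[-1]$. The trade-off is exactly the one you flag at the end: the $j_{x !}$-route works entirely with functors that unambiguously exist on $X_{\sm}$ (since $U$ is a smooth $X$-scheme, $j_{x}$ is a localization and $j_{x !}$ is unproblematic), whereas $x$ is not an object of $X_{\sm}$, so setting up $R i_{x}^{!}$ and the formula $R\sheafhom_{X}(i_{\ast}F, G) = i_{\ast}R\sheafhom_{x}(F, i^{!}G)$ on the big smooth site requires additional care that the paper simply avoids. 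Your suggestion to transfer to $X_{\et}$ would indeed make the purity computation standard, but one would then need a comparison statement bringing the result back to $X_{\sm}$ (the preceding lemma in the paper compares $X_{\fppf}$ and $X_{\sm}$, not $X_{\et}$), which is a further step rather than a given. So: correct, and a legitimately different organization of the argument, at the cost of a nontrivial formalism point that the paper's choice of triangle was designed to dodge.
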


\begin{proof}
	Let $j_{x} \colon U = X \setminus \{x\} \into X$
	and denote its extension-by-zero functor by $j_{x !} \colon D(U_{\sm}) \to D(X_{\sm})$.
	We have a distinguished triangle
		\[
				R \sheafhom_{X_{\sm}}(i_{x \ast} N, \Gm)
			\to
				R \sheafhom_{X_{\sm}}(N, \Gm)
			\to
				R \sheafhom_{X_{\sm}}(j_{x !} N, \Gm)
		\]
	in $D(X_{\sm})$
	(where $N$ is base-changed to $X$).
	We have
		\[
				R \sheafhom_{X_{\sm}}(N, \Gm)
			=
				N^{\PDual} \tensor^{L} \Gm[-1],
		\]
		\begin{align*}
			&
					R \sheafhom_{X_{\sm}}(j_{x !} N, \Gm)
				=
					R j_{x \ast}
					R \sheafhom_{U_{\sm}}(N, \Gm)
			\\
			&	=
					R j_{x \ast}
					(N^{\PDual} \tensor^{L} \Gm)[-1]
				=
					N^{\PDual} \tensor^{L} R j_{x \ast} \Gm[-1].
		\end{align*}
	Hence
		\[
				R \sheafhom_{X_{\sm}}(i_{x \ast} N, \Gm)
			=
					N^{\PDual}
				\tensor^{L}
					[\Gm \to R j_{x \ast} \Gm][-2].
		\]
	By definition, $j_{x \ast} \Gm$ is the N\'eron model over $X$ of $\Gm$ over $U$.
	Hence it fits in the exact sequence
		\[
			0 \to \Gm \to j_{x \ast} \Gm \to i_{x \ast} \Z \to 0.
		\]
	We have $R^{1} j_{x \ast} \Gm = 0$
	by the proof of \cite[III, Lem.\ C.10]{Mil06}.
	Hence 
		\[
				\tau_{\le 2}
				R \sheafhom_{X_{\sm}}(i_{x \ast} N, \Gm)
			=
					N^{\PDual}
				\tensor^{L}
					i_{x \ast} \Z[-2]
			=
				i_{x \ast} N^{\PDual}[-2].
		\]
\end{proof}

\begin{Lem}
	We have
		\[
				\sheafhom_{X_{\sm}}(\mathcal{A}, \Gm)
			=
				\sheafhom_{X_{\sm}}(\mathcal{A}_{0}, \Gm)
			=
				0.
		\]
\end{Lem}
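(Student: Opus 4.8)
The plan is to reduce both vanishing statements to the classical fact that every homomorphism from an abelian variety over a field to $\Gm$ is trivial. Since a section of $\sheafhom_{X_{\sm}}(\mathcal{A}, \Gm)$ or of $\sheafhom_{X_{\sm}}(\mathcal{A}_{0}, \Gm)$ is detected, via the sheaf property, on a covering by connected objects, and any connected object of $X_{\sm}$ is integral (being regular, as $X$ is regular), it suffices to show: for every nonempty connected $Y \in X_{\sm}$, every homomorphism of $Y$-group schemes $\phi \colon \mathcal{A} \times_{X} Y \to \Gm \times_{X} Y$ vanishes, and likewise with $\mathcal{A}_{0}$ in place of $\mathcal{A}$.

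First I would locate the generic fibre. As $Y \to X$ is smooth and nonempty, it is flat and locally of finite presentation, hence open; its image is therefore a dense open of the curve $X$, and the map $Y \to X$ is dominant. Being dominant from the integral scheme $Y$, it sends the generic point $\eta$ of $Y$ to the generic point $\Spec K$ of $X$. Writing $L = \kappa(\eta)$, we obtain $(\mathcal{A} \times_{X} Y) \times_{Y} \eta = A \times_{K} L$, an abelian variety over $L$; the same holds for $\mathcal{A}_{0}$, since $\mathcal{A}_{0}$ and $\mathcal{A}$ have the same generic fibre $A$ over $\Spec K$. Hence $\phi$ restricts over $\eta$ to a homomorphism from the abelian variety $A \times_{K} L$ to $\Gm$, which is zero because $A \times_{K} L$ is proper and geometrically connected while $\Gm$ is affine.

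Then I would promote this to $\phi = 0$. The preimage $\phi^{-1}(e)$ of the identity section $e$ of $\Gm \times_{X} Y$ is a closed subscheme of $\mathcal{A} \times_{X} Y$, because $\Gm$ is separated, and by the previous step it contains $(\mathcal{A} \times_{X} Y) \times_{Y} \eta$. Since $\mathcal{A} \times_{X} Y \to Y$ is smooth, in particular flat, and $Y$ is integral with generic point $\eta$, the generic fibre $(\mathcal{A} \times_{X} Y) \times_{Y} \eta$ is schematically dense in $\mathcal{A} \times_{X} Y$; hence $\phi^{-1}(e) = \mathcal{A} \times_{X} Y$, i.e.\ $\phi = 0$. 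The identical argument with $\mathcal{A}_{0}$ gives the second vanishing.

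The one step requiring care is the schematic density of the generic fibre of a flat morphism over an integral base; this is exactly what lets us pass from vanishing over $\eta$ to vanishing globally without assuming $\mathcal{A}$ itself to be separated or irreducible over $X$. Everything else is formal.
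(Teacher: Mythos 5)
Your proof is correct and is precisely the paper's one-line argument (``generically zero and hence zero'') carried out in detail: you show the generic fibre of $\mathcal{A}\times_X Y$ over an integral $Y\in X_{\sm}$ is an abelian variety, so the map to $\Gm$ vanishes there, and then use flatness of the N\'eron model to get schematic density of the generic fibre and conclude $\phi=0$. The extra care you take --- reducing to connected (hence integral) $Y$, checking $Y\to X$ is dominant so $\eta\mapsto\Spec K$, and justifying schematic density of the generic fibre via flatness over a domain --- is exactly the unpacking the paper's terse proof leaves to the reader, and it is all correct.
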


\begin{proof}
	Any morphism from $\mathcal{A}$ or $\mathcal{A}_{0}$ to $\Gm$
	over any smooth scheme over $X$ is generically zero and hence zero.
	This implies the result.
\end{proof}

Now we prove Prop.\ \ref{prop: Poincare pairing and Grothendieck pairing}.

\begin{proof}[Proof of Prop.\ \ref{prop: Poincare pairing and Grothendieck pairing}]
	The commutativity of the left square in the diagram in the statement is easy to see.
	To see the commutativity of the right square,
	the above three lemmas show that it is enough to check the commutativity of the diagram
		\[
			\begin{CD}
					\mathcal{A}^{\vee}
				@>>>
					\Phi_{A^{\vee}, X}
				\\
				@VVV
				@VVV
				\\
					\sheafext_{X_{\sm}}^{1}(\mathcal{A}_{0}, \Gm)
				@>>>
					\Phi_{A, X}^{\PDual},
			\end{CD}
		\]
	where we denoted
		\[
				\Phi_{A, X}^{\PDual}
			=
				\bigoplus_{x}
					i_{x \ast} (\pi_{0}(\mathcal{A}_{x})^{\PDual})
		\]
	Any morphism $\mathcal{A}^{\vee} \to \Phi_{A, X}^{\PDual}$ is determined by
	its values at $\Order_{x}^{sh}$ for all closed points $x \in X$,
	where $\Order_{x}^{sh}$ is the strict henselization of $X$ at $x$.
	Hence it is enough to show that the diagram
		\[
			\begin{CD}
					\mathcal{A}^{\vee}(\Order_{x}^{sh})
				@>>>
					\pi_{0}(\mathcal{A}_{x}^{\vee})(\closure{k_{x}})
				\\
				@VVV
				@VVV
				\\
					\Ext_{\Order_{x, \sm}^{sh}}^{1}(\mathcal{A}_{0}, \Gm)
				@>>>
					\Ext_{\Order_{x, \sm}^{sh}}^{2}(
						i_{x \ast} \pi_{0}(\mathcal{A}_{x}),
						\Gm
					)
			\end{CD}
		\]
	is commutative,
	where $\closure{k_{x}}$ is the algebraic closure of $k_{x}$.
	Let $K_{x}^{sh}$ be the fraction field of $\Order_{x}^{sh}$.
	Let $\xi \colon 0 \to \Gm \to H \to A \to 0$ be an extension as an element of
		\[
				\mathcal{A}^{\vee}(\Order_{x}^{sh})
			=
				A^{\vee}(K_{x}^{sh})
			=
				\Ext_{K_{x, \sm}^{sh}}^{1}(A, \Gm),
		\]
	where the second isomorphism is the Barsotti-Weil formula \cite[Chap.\ III, Thm.\ (18.1)]{Oor66}.
	Let $\mathcal{H}$ be the N\'eron (lft) model over $\Order_{x}^{sh}$ of $H$
	and $\mathcal{H}_{0}$ the maximal open subgroup scheme with connected fibers.
	The image of $\xi$ under the left vertical morphism is
	the extension $0 \to \Gm \to \mathcal{H}_{0} \to \mathcal{A}_{0} \to 0$
	(which is exact since $R^{1} j_{x \ast} \Gm = 0$
	and $\mathcal{H}_{0}$ is of finite type).
	Its image under the lower vertical morphism is the extension
		\[
				\eta_{1}
			\colon
				0
			\to
				\Gm
			\to
				\mathcal{H}_{0}
			\to
				\mathcal{A}
			\to
				i_{x \ast} \pi_{0}(\mathcal{A}_{x})
			\to
				0
		\]
	given by composing it with
	$0 \to \mathcal{A}_{0} \to \mathcal{A} \to i_{x \ast} \pi_{0}(\mathcal{A}_{x}) \to 0$.
	On the other hand, we have an exact sequence
		\[
				0
			\to
				i_{x \ast} \Z
			\to
				i_{x \ast} \pi_{0}(\mathcal{H}_{x})
			\to
				i_{x \ast} \pi_{0}(\mathcal{A}_{x})
			\to
				0.
		\]
	The image of the extension $\xi$ under the upper horizontal morphism
	followed by the right vertical morphism is the extension
		\[
				\eta_{2}
			\colon
				0
			\to
				\Gm
			\to
				\mathcal{G}_{m}
			\to
				i_{x \ast} \pi_{0}(\mathcal{H}_{x})
			\to
				i_{x \ast} \pi_{0}(\mathcal{A}_{x})
			\to
				0
		\]
	given by the composite with
	$0 \to \Gm \to \mathcal{G}_{m} \to i_{x \ast} \Z \to 0$.
	We need to show that $\eta_{1}$ and $\eta_{2}$ are equivalent.
	Denote by $\mathcal{H}'$ the inverse image of $\mathcal{A}_{0}$
	by $\mathcal{H} \onto \mathcal{A}$.
	Consider the extension
		\[
				\eta_{3}
			\colon
				0
			\to
				\Gm
			\to
				\mathcal{H}_{0} \times_{\mathcal{A}_{0}} \mathcal{H}'
			\to
				\mathcal{H}
			\to
				i_{x \ast} \pi_{0}(\mathcal{A}_{x})
			\to
				0,
		\]
	where the first morphism (from $\Gm$ to $\mathcal{H}_{0} \times_{\mathcal{A}_{0}} \mathcal{H}'$)
	is the inclusion into the first factor,
	the second the projection onto to the second factor
	and the third the natural morphism.
	We have a morphism $\eta_{3} \to \eta_{1}$ of extensions,
	where $\mathcal{H}_{0} \times_{\mathcal{A}_{0}} \mathcal{H}' \to \mathcal{H}_{0}$
	is the first projection.
	We also have a morphism $\eta_{3} \to \eta_{1}$,
	where $\mathcal{H}_{0} \times_{\mathcal{A}_{0}} \mathcal{H}' \to \mathcal{G}_{m}$
	is the subtraction map $(a, b) \mapsto a - b$.
	Therefore $\eta_{1}$ and $\eta_{2}$ are equivalent.
	This proves that the right square in the statement is commutative.
	
	We finally show that the hidden square
		\[
			\begin{CD}
					\Phi_{A^{\vee}, X}
				@>>>
					\mathcal{A}_{0}^{\vee}[1]
				\\
				@VVV
				@VVV
				\\
					R \sheafhom_{X}(
						\Phi_{A, X},
						\Gm
					)[2]
				@>>>
					R \sheafhom_{X}(\mathcal{A}, \Gm)[2]
			\end{CD}
		\]
	is commutative.
	Interchanging the variables, this is equivalent to showing that the diagram
		\[
			\begin{CD}
					\mathcal{A}
				@>>>
					R \sheafhom_{X}(
						\mathcal{A}_{0}^{\vee},
						\Gm
					)[1]
				\\
				@VVV
				@VVV
				\\
					\Phi_{A, X}
				@>>>
					R \sheafhom_{X}(\Phi_{A^{\vee, X}}, \Gm)[2]
			\end{CD}
		\]
	is commutative.
	This diagram is the same, up to replacing $A$ by $A^{\vee}$,
	as the diagram whose commutativity has just been proved.
\end{proof}

\begin{Prop} \label{prop: component group compatibitily morphism}
	The diagram in the previous proposition,
	after applying $R \alg{\Gamma}(X, \var)$,
	the cup product morphism \eqref{eq: global cup product}
	and the trace morphism \eqref{eq: global trace morphism},
	induce a morphism of distinguished triangles
		\[
			\begin{CD}
					R \alg{\Gamma}(X, \mathcal{A}_{0}^{\vee})
				@>>>
					R \alg{\Gamma}(X, \mathcal{A}^{\vee})
				@>>>
					\bigoplus_{x}
						\Res_{k_{x} / k}
						\pi_{0}(\mathcal{A}_{x}^{\vee})
				\\
				@VVV
				@VVV
				@VVV
				\\
					R \alg{\Gamma}(X, \mathcal{A})^{\SDual}
				@>>>
					R \alg{\Gamma}(X, \mathcal{A}_{0})^{\SDual}
				@>>>
					\bigoplus_{x}
						\Res_{k_{x} / k}
						\pi_{0}(\mathcal{A}_{x})^{\PDual}
			\end{CD}
		\]
	in $D(k)$.
	The right vertical morphism is the sum over $x \in X$ of
	the Weil restrictions of Grothendieck's pairings,
	which is an isomorphism \cite[Thm.\ C]{Suz14}.
\end{Prop}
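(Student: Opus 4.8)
The plan is to deduce the statement formally from Prop.\ \ref{prop: Poincare pairing and Grothendieck pairing} by applying the triangulated functor $R \alg{\Gamma}(X, \var)$ and then post-composing with the cup product morphism \eqref{eq: global cup product} and the trace morphism \eqref{eq: global trace morphism}. Since $X$ is proper there are no points outside $X$, so $R \alg{\Gamma}_{c}(X, \var) = R \alg{\Gamma}(X, \var)$, and via the derived tensor-Hom adjunction the cup product morphism reads
\[
		R \alg{\Gamma}(X, R \sheafhom_{X}(B, C))
	\to
		R \sheafhom_{k}(R \alg{\Gamma}(X, B), R \alg{\Gamma}(X, C)),
\]
contravariantly functorial in $B$ and covariantly in $C$. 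Taking $C = \Gm[1]$ (resp.\ $C = \Gm[2]$) and composing with the trace morphism $R \alg{\Gamma}(X, \Gm)[1] \to \Z$ (resp.\ its shift $R \alg{\Gamma}(X, \Gm)[2] \to \Z[1]$) converts the bottom row of the triangle of Prop.\ \ref{prop: Poincare pairing and Grothendieck pairing}, after $R \alg{\Gamma}(X, \var)$, into a sequence with terms $R \alg{\Gamma}(X, \mathcal{A})^{\SDual}$, $R \alg{\Gamma}(X, \mathcal{A}_{0})^{\SDual}$ and $R \alg{\Gamma}(X, \Phi_{A, X})^{\SDual}[1]$; by construction this is the $R \sheafhom_{k}(\var, \Z)$-dual, suitably rotated, of the triangle $R \alg{\Gamma}(X, \mathcal{A}_{0}) \to R \alg{\Gamma}(X, \mathcal{A}) \to \bigoplus_{x} \Res_{k_{x}/k} \pi_{0}(\mathcal{A}_{x})$ of Prop.\ \ref{prop: cohomology of connected Neron and Neron}, hence a distinguished triangle.

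Next I would identify $R \alg{\Gamma}(X, \Phi_{A, X})^{\SDual}[1]$ with $\bigoplus_{x} \Res_{k_{x}/k} \pi_{0}(\mathcal{A}_{x})^{\PDual}$: by Prop.\ \ref{prop: cohomology of connected Neron and Neron} the $k$-group $R \alg{\Gamma}(X, \Phi_{A, X}) = \bigoplus_{x} \Res_{k_{x}/k} \pi_{0}(\mathcal{A}_{x})$ is finite \'etale and concentrated in degree zero, so its Serre dual is its shifted Pontryagin dual $\bigoplus_{x} \Res_{k_{x}/k} \pi_{0}(\mathcal{A}_{x})^{\PDual}[-1]$, and the extra shift by $[1]$ yields exactly $\bigoplus_{x} \Res_{k_{x}/k} \pi_{0}(\mathcal{A}_{x})^{\PDual}$; here one uses $\Res_{k_{x}/k} \compose \SDual = \SDual \compose \Res_{k_{x}/k}$ (duality for finite \'etale morphisms) and $G^{\SDual} = G^{\PDual}[-1]$ for finite \'etale $G$. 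Likewise the top row, after $R \alg{\Gamma}(X, \var)$, becomes $R \alg{\Gamma}(X, \mathcal{A}_{0}^{\vee}) \to R \alg{\Gamma}(X, \mathcal{A}^{\vee}) \to \bigoplus_{x} \Res_{k_{x}/k} \pi_{0}(\mathcal{A}_{x}^{\vee})$, which is the first triangle of Prop.\ \ref{prop: cohomology of connected Neron and Neron} applied to $A^{\vee}$. Both rows are therefore distinguished triangles, and the vertical maps assemble into a morphism of distinguished triangles (the ``hidden'' square included): every operation used to pass from the morphism of triangles in Prop.\ \ref{prop: Poincare pairing and Grothendieck pairing} to the present diagram --- namely $R \alg{\Gamma}(X, \var)$, the cup product morphism, and the trace morphism --- is (compatible with) a morphism of triangulated functors, exactly as in the morphisms of distinguished triangles built in the proofs of Thm.\ \ref{thm: duality for finite flat over open curve} and Prop.\ \ref{prop: mod n duality for Neron models}.

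Finally I would pin down the right vertical morphism and see that it is an isomorphism. By the very construction, the pairing $\Phi_{A^{\vee}, X} \tensor^{L} \Phi_{A, X} \to \Gm[2]$ is assembled from Grothendieck's pairings $\pi_{0}(\mathcal{A}_{x}^{\vee}) \times \pi_{0}(\mathcal{A}_{x}) \to \Q/\Z$ and the maps $i_{x \ast} \Q/\Z \to i_{x \ast} \Z[1] \to \Gm[2]$, the last map being such that composing it with the global trace recovers the standard map $\Res_{k_{x}/k} \Z \to \Z$ (the local-global compatibility of the trace morphisms recorded just after \eqref{eq: global trace morphism}). Hence, summand by summand over $x$, the right vertical morphism is the Weil restriction $\Res_{k_{x}/k}$ of the adjoint of Grothendieck's pairing over $k_{x}$; by \cite[Thm.\ C]{Suz14} that pairing is perfect, so each adjoint is an isomorphism over $k_{x}$, and $\Res_{k_{x}/k}$, being exact \cite[Lem.\ 6.1.17]{BS15}, preserves isomorphisms. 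I do not expect a serious obstacle: the argument is essentially formal given Prop.\ \ref{prop: Poincare pairing and Grothendieck pairing}, the only points requiring genuine care being the bookkeeping of the degree shifts in the three ``dual'' terms and the verification that the component-group pairing matches (the adjoint of) Grothendieck's pairing, so that \cite[Thm.\ C]{Suz14} applies verbatim.
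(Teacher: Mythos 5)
Your proposal follows the same formal outline as the paper's proof: the existence of the morphism of distinguished triangles is a formal consequence of Prop.\ \ref{prop: Poincare pairing and Grothendieck pairing}, $R \alg{\Gamma}(X, \var)$, the cup product and trace morphisms, and the nontrivial content is pinning down the right vertical morphism. The degree bookkeeping for the Serre duals and the exactness argument for $\Res_{k_{x}/k}$ are correct.

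The one place where you deviate from the paper is the verification that the connecting morphism $\bigoplus_{x} i_{x*}\Z \to \Gm[1]$, after applying $R \alg{\Gamma}(X, \var)$ and the trace, is the summation map $\bigoplus_{x} \Res_{k_{x}/k}\Z \to \Z$. The paper argues this directly by identifying the exact sequence $0 \to \Gm \to \mathcal{G}_{m} \to \bigoplus_{x} i_{x*}\Z \to 0$ with the divisor exact sequence, so that the composite $\alg{\Gamma}(X, \bigoplus_{x}\Res_{k_{x}/k}\Z) \to \alg{H}^{1}(X, \Gm) = \Pic_{X} \overset{\deg}{\to} \Z$ is visibly the summation. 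You instead invoke the local-global trace compatibility proposition stated after \eqref{eq: global trace morphism}. That proposition, however, concerns the morphism $R\alg{\Gamma}_{x}(\Hat{\Order}_{x}/k, \Gm) \to R\alg{\Gamma}_{c}(U, \Gm)$ supplied by the compact-support/localization machinery, not the connecting morphism of the N\'eron lft model exact sequence. To make your citation apply you would need the additional (true but unstated) identification that, under $R\alg{\Gamma}(X, \var)$, the N\'eron model connecting morphism $i_{x*}\Z \to \Gm[1]$ coincides (up to sign) with the rotated localization triangle for $\Gm$ and $U = X\setminus\{x\}$; both compute the divisor class map $\Res_{k_{x}/k}\Z \to \Pic_{X}$. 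This is a genuine compatibility that deserves a sentence, and the paper's divisor-exact-sequence route sidesteps it entirely. Filling in that one observation would make your argument complete.
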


\begin{proof}
	The existence of the stated morphism of distinguished triangle is self-explanatory.
	To show the description of the right vertical morphism,
	it is enough to show that the morphism
	$\bigoplus_{x} i_{x \ast} \Z \to \Gm[1]$ after applying $R \alg{\Gamma}(X, \var)$
	can be identified with the summation map
	$\bigoplus_{x} \Res_{k_{x} / k} \Z \to \Z$.
	The group $\bigoplus_{x} i_{x \ast} \Z$ can be identified with the sheaf of divisors on $X$
	and the sequence
	$0 \to \Gm \to \mathcal{G}_{m} \to \bigoplus_{x} i_{x \ast} \Z \to 0$
	can be identified with the divisor exact sequence.
	Hence the composite
		\[
				\alg{\Gamma} \left(
					X, \bigoplus_{x} \Res_{k_{x} / k} \Z
				\right)
			\to
				\alg{H}^{1}(X, \Gm)
			\to
				\Z
		\]
	of the connecting morphism of the divisor exact sequence and the degree map
	is the summation map.
\end{proof}

If $k$ is algebraically closed or finite,
we denote the Tate-Shafarevich group of $A$ over $K$ by $\Sha(A / K)$,
which is the kernel of the natural homomorphism
from $H^{1}(K, A)$ to the direct sum of $H^{1}(\Hat{K}_{x}, A)$ over the closed points $x \in X$.
If $k$ is algebraically closed,
we also call $\Sha(A / K)$ the Tate-Shafarevich group of $\mathcal{A}$ over $X$
and denote it by $\Sha(\mathcal{A} / X)$.
If $k$ is a general perfect field with algebraic closure $\closure {k}$,
then the group $\Sha(\mathcal{A}_{\closure{k}} / X_{\closure{k}})$ has
a natural action of $G_{k} = \Gal(\closure{k} / k)$.
Let $\Sha(\mathcal{A}_{\closure{k}} / X_{\closure{k}})^{G_{k}}$ be the $G_{k}$-invariant part,
which is independent of the choice of an algebraic closure $\closure{k}$.
We use similar notation when $k$ is replaced by any perfect field $k'$ over $k$.
Consider the functor
	\[
			k'
		\mapsto
			\Sha(\mathcal{A}_{\closure{k'}} / X_{\closure{k'}})^{G_{k'}}
	\]
on perfect fields $k'$ over $k$,
which commutes with filtered direct limits.
(Note that $\Sha(\mathcal{A}_{\closure{k'}} / X_{\closure{k'}})$
cannot be written as ``$\Sha(A \times_{k} k' / K \tensor_{k} k')$'';
the latter does not even make sense if $k'$ is not algebraic over $k$
since the ring $K \tensor_{k} k'$ in this case is not a field.
The scheme $X_{\closure{k'}}$ has much more closed points than $X_{\closure{k}}$,
which significantly affect the definition of $\Sha(\mathcal{A}_{\closure{k'}} / X_{\closure{k'}})$.)
The above functor uniquely extends to a functor $k^{\ind\rat} \to \Ab$
that commutes with finite products and filtered direct limits.
We still denote this extended functor by
$k' \mapsto \Sha(\mathcal{A}_{\closure{k'}} / X_{\closure{k'}})^{G_{k'}}$
by abuse of notation.
It is obviously a sheaf for the \'etale topology.
It is moreover a sheaf for the pro-\'etale topology since it commutes with filtered direct limits.

\begin{Prop} \label{prop: first cohom is Tate Shafarevich}
	The sheaf $\alg{H}^{1}(X, \mathcal{A})$ on $\Spec k^{\ind\rat}_{\pro\et}$
	is canonically isomorphic to the sheaf
	$k' \mapsto \Sha(\mathcal{A}_{\closure{k'}} / X_{\closure{k'}})^{G_{k'}}$.
\end{Prop}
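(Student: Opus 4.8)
The plan is to realize $\alg{H}^{1}(X, \mathcal{A})$ as the kernel of a localization map and then to match that kernel with the Tate--Shafarevich sheaf. Both sheaves in the statement are locally of finite presentation on $\Spec k^{\ind\rat}_{\pro\et}$ --- the left-hand one by Prop.\ \ref{prop: cohom of curve is fin pres}, the right-hand one because it commutes with filtered direct limits by construction --- so, by the reduction recalled in the proof of Prop.\ \ref{prop: cohom with closed support doesnt change under completion} (a morphism of such sheaves is an isomorphism once it is one after evaluation at every algebraically closed field over $k$, cf.\ \cite[the second paragraph after Prop.\ (2.4.1)]{Suz14}), it is enough to exhibit a canonical morphism and to check it on algebraically closed fields.

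First I would set up the global localization triangle. Applying Prop.\ \ref{prop: excision} with $U = X$, using that the N\'eron model $\mathcal{A}$ satisfies cohomological approximation (Prop.\ \ref{prop: smooths and finite flats satisfy cohom approx}) and that $R \alg{\Gamma}_{x}(\Hat{\Order}_{x}, \mathcal{A})$ is concentrated in degree $2$ with $\alg{H}^{2}_{x}(\Hat{\Order}_{x}, \mathcal{A}) = \alg{H}^{1}(\Hat{K}_{x}, A)$ (Prop.\ \ref{prop: structure of local cohom of abelian varieties}), and then passing to the limit over dense open subschemes $V \subset X$ shrinking to the generic point --- as in the proof of Prop.\ \ref{prop: structure of cohomology of Neron models}, but keeping track of sheaves rather than abelian groups --- one obtains a distinguished triangle in $D(k)$
\[
		\bigoplus_{x \in X} \Res_{k_{x} / k} \alg{H}^{1}(\Hat{K}_{x}, A)[-2]
	\to
		R \alg{\Gamma}(X, \mathcal{A})
	\to
		R \alg{\Gamma}(K, A),
\]
where $R \alg{\Gamma}(K, A) = \dirlim_{V} R \alg{\Gamma}(V, \mathcal{A})$ is the generic-fibre cohomology. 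Since the first term is concentrated in a single degree, the associated long exact sequence of sheaves gives
\[
		0
	\to
		\alg{H}^{1}(X, \mathcal{A})
	\to
		\alg{H}^{1}(K, A)
	\to
		\bigoplus_{x \in X} \Res_{k_{x} / k} \alg{H}^{1}(\Hat{K}_{x}, A),
\]
the last map being the localization (restriction) map; hence $\alg{H}^{1}(X, \mathcal{A})$ is its kernel.

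Next I would produce the canonical morphism. For each perfect field $k'$ over $k$, pullback along the geometric fibre gives a map $H^{1}(X_{k'}, \mathcal{A}) \to H^{1}(X_{\closure{k'}}, \mathcal{A})^{G_{k'}}$; the same localization triangle applied over the algebraically closed field $\closure{k'}$ identifies $H^{1}(X_{\closure{k'}}, \mathcal{A})$ with $\Sha(\mathcal{A}_{\closure{k'}} / X_{\closure{k'}})$, so this map lands in $\Sha(\mathcal{A}_{\closure{k'}} / X_{\closure{k'}})^{G_{k'}}$. This is a morphism of presheaves in $k'$; since its target is already a pro-\'etale sheaf, it factors uniquely through the sheafification, which by Prop.\ \ref{prop: cohom of curve is fin pres} is $\alg{H}^{1}(X, \mathcal{A})$, giving the desired canonical morphism. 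Finally, for $k'$ algebraically closed the left side is unchanged by sheafification (sections over a w-contractible ring are unaffected, since pro-\'etale coverings split and the \v{C}ech complexes are null-homotopic), so $\alg{H}^{1}(X, \mathcal{A})(k') = H^{1}(X_{k'}, \mathcal{A}) = \Sha(\mathcal{A}_{k'} / X_{k'})$ by the localization triangle over $k'$; the right side equals $\Sha(\mathcal{A}_{k'} / X_{k'})$ since $\closure{k'} = k'$ and $G_{k'}$ is trivial; and under these identifications the morphism is the identity.

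The step I expect to be the main obstacle is the careful construction and limit-passage for the global localization triangle over a general base field $k$ --- in particular giving a clean meaning to $R \alg{\Gamma}(K, A)$, and verifying that the construction is compatible with base change along $\closure{k'} / k$ (so that $\mathcal{A}_{\closure{k'}}$ is still the relevant N\'eron model and the triangle over $\closure{k'}$ really computes $\Sha(\mathcal{A}_{\closure{k'}} / X_{\closure{k'}})$), so that what comes out is genuinely a morphism of sheaves onto the Tate--Shafarevich sheaf and not merely a pointwise bijection. Once the reduction to algebraically closed fields is available, the remaining verifications are routine.
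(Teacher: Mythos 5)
Your proof is correct, and its overall skeleton matches the paper's: observe that both sides are locally of finite presentation, so a canonical morphism of such sheaves can be checked to be an isomorphism after evaluation at algebraically closed fields. The paper's proof is considerably shorter because at that point it simply cites Milne's \cite[Lem.\ 11.5]{Mil06} for the identification $H^{1}(X_{\closure{k'}}, \mathcal{A}) \cong \Sha(\mathcal{A}_{\closure{k'}}/X_{\closure{k'}})$, whereas you re-derive that identification from the localization triangle combining Prop.\ \ref{prop: excision}, Prop.\ \ref{prop: smooths and finite flats satisfy cohom approx} and Prop.\ \ref{prop: structure of local cohom of abelian varieties} --- this is essentially the content of Milne's lemma, so there is no conflict, only a longer route. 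Your re-derivation does buy something: it makes transparent that the identification is functorial in $\closure{k'}$ and $G_{k'}$-equivariant, which is what you need for the pullback map to land in the invariants and to assemble into a morphism of presheaves.

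One remark on the step you flag as the main obstacle. You do not in fact need the global localization triangle as a distinguished triangle of sheaves in $D(k)$ over the general base $k$, nor do you need to justify the filtered colimit of triangles in $D(k)$ as $V$ shrinks to the generic point. All you use in the end is the pointwise localization sequence
\[
0 \to H^{1}(X_{\closure{k'}}, \mathcal{A}) \to H^{1}(K \tensor_{k} \closure{k'}, A) \to \bigoplus_{\closure{x}} H^{1}(\Hat{K}_{\closure{x}}, A)
\]
over each algebraically closed field $\closure{k'}$, which is exactly what Prop.\ \ref{prop: structure of cohomology of Neron models}'s proof establishes after applying $R \Gamma(\closure{k'}, \var)$, together with its compatibility with restriction along embeddings of algebraically closed fields. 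That functoriality is automatic from the construction (the triangle is built from scheme morphisms and N\'eron models commute with the relevant base changes since $X$ is proper smooth and $\closure{k'}/k$ is a colimit of finite separable extensions followed by a perfect closure), so the morphism of presheaves you want is already available without the sheaf-level limit. Once you drop that unnecessary intermediate step, the remaining argument is airtight.
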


\begin{proof}
	The sheaf $\alg{H}^{1}(X, \mathcal{A})$ is locally of finite presentation as seen before.
	It is enough to show that
	the group of $k'$-valued points of $\alg{H}^{1}(X, \mathcal{A})$ is canonically isomorphic to
	$\Sha(\mathcal{A}_{\closure{k'}} / X_{\closure{k'}})$
	for any algebraically closed field $k'$ over $k$.
	The former group is $H^{1}(X_{\closure{k'}}, \mathcal{A})$.
	That it is canonically isomorphic to the latter is \cite[Lem.\ 11.5]{Mil06}.
\end{proof}

\begin{Prop} \label{prop: Sha does not have divisible connected unipotent part}
	The group $V_{p} H^{1}(X_{k'}, \mathcal{A})$ as a functor on algebraically closed fields $k'$ over $k$
	is constant.
\end{Prop}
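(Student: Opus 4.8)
The statement to prove is Prop.\ \ref{prop: Sha does not have divisible connected unipotent part}:
$V_{p} H^{1}(X_{k'}, \mathcal{A})$ as a functor on algebraically closed fields $k'/k$ is constant.

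By Prop.\ \ref{prop: criteiron for algebraicity of identity component}, the equivalence of items \eqref{item: A has constant p adic Tate module on alg cl fields} and \eqref{item: A has trivial div 0 part}, this is the same as showing $\alg{H}^{1}(X, \mathcal{A})_{0\divis} = 0$, i.e.\ that $\alg{H}^{1}(X,\mathcal{A}) \in \Loc^{f} \Alg_{\uc}/k$. This is what the outline of proof calls "a global finiteness result" needed to eliminate connected divisible unipotent ind-algebraic part (coming from $\alg{H}^{2}_{x}(\Hat{\Order}_{x},\mathcal{A})$). So the plan is to prove the finite-dimensionality statement indicated in the introduction.

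The plan is as follows. First I would reduce to the case $k = \closure{k}$ and to studying $H^{1}(X,\mathcal{A})$ as an abstract group (a posteriori as the $\closure{k}$-points of the Tate--Shafarevich scheme via Prop.\ \ref{prop: first cohom is Tate Shafarevich}); it suffices to show $V_{p} H^{1}(X, \mathcal{A})$ is finite-dimensional over $\Q_{p}$, uniformly, which by the cofinite type structure (Prop.\ \ref{prop: structure of cohomology of Neron models}) amounts to showing $H^{1}(X,\mathcal{A})[p^{\infty}]_{\divis}$ has no connected unipotent part of positive dimension. Following Larsen's suggestion recorded in the Acknowledgement, I would use that $A$ is isogenous to a Jacobian: pick a proper smooth curve $C/K$ with a $K$-rational point whose Jacobian $J$ is isogenous to $A$ (available after a finite extension of $K$, which one handles by a restriction-of-scalars/transfer argument since isogenies and finite extensions change $H^{1}(X,\mathcal{A})$ only by bounded torsion, which does not affect the rational Tate module). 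Spreading $C$ out to a proper flat regular model $\mathcal{C}$ over a dense open $U\subset X$, and then to a proper regular (even projective, by choosing the model carefully) surface $\mathcal{S}$ over $k$ with generic fiber $C$, one relates $\Sha(\mathcal{A}/X) = \Sha(J/K)$ to the Brauer group $\Br(\mathcal{S})$ of the surface: there is a classical exact sequence (Grothendieck, Artin--Tate) relating $\Br(\mathcal{S})$, $\Sha(\Jac(C)/K)$, the Néron--Severi contributions of the fibers, and the component groups, with all discrepancies being finitely generated or of bounded torsion. Thus $V_{p}\Sha(\mathcal{A}/X) \cong V_{p}\Br(\mathcal{S})$ up to finite-dimensional discrepancy.

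Then I would invoke finiteness of crystalline cohomology: $V_{p}\Br(\mathcal{S})$ is controlled by the slope $<1$ part, more precisely the "$F=p$" part, of $H^{2}_{\mathrm{cris}}(\mathcal{S}/W)\tensor\Q$, which is a finite-dimensional $\Q_{p}$-vector space carrying a Frobenius. The key point is that this crystalline piece is an \emph{unramified} (étale) $\Q_{p}$-representation of bounded dimension — in particular, after base change to any larger algebraically closed field it does not grow — so $V_{p}\Br(\mathcal{S})$ and hence $V_{p} H^{1}(X,\mathcal{A})$ is a constant functor on algebraically closed fields $k'/k$. (For the prime-to-$p$ part the statement is classical and one can quote $\ell$-adic finiteness; the content is entirely in $p$.)

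The main obstacle is the bookkeeping in the second paragraph: making precise the chain of reductions — isogeny to a Jacobian over a possibly enlarged base, passage from the open curve $U$ to all of $X$, choice of a \emph{regular projective} surface model $\mathcal{S}$ whose fibers over the bad points of $X$ are controlled, and the exact sequence relating $\Br(\mathcal{S})$ to $\Sha$ — while tracking that every correction term is either finitely generated or killed by a fixed integer (so invisible to $V_{p}$) and that none of the intermediate steps secretly introduces a connected unipotent divisible part. Once that is in place, the appeal to crystalline finiteness (e.g.\ via Illusie--Raynaud or the slope spectral sequence) and to the étale-ness of the $F=p$ part is comparatively formal. One should also double check uniformity in $k'$ throughout, but since every object in sight is locally of finite presentation over $k$ (Prop.\ \ref{prop: cohom of curve is fin pres}), constancy on algebraically closed fields reduces to constancy after a single finitely generated field extension, which the crystalline input supplies.
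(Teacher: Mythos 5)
Your plan has the right overall shape---reduce to the Jacobian of a curve, pass to the Brauer group of a regular model surface, then apply crystalline finiteness---but the reduction to a Jacobian is wrong as stated. You claim $A$ is isogenous to the Jacobian $J$ of some curve, possibly after a finite extension of $K$. This is false in general: an abelian variety of dimension $\ge 4$ need not be isogenous to any Jacobian, over any extension of the base field. What is true, and what the paper uses, is that $A$ is a \emph{quotient} of a Jacobian. By Kato's theorem on space-filling curves there is a proper smooth geometrically connected curve $C$ over $K$ itself, \emph{with a $K$-rational point} (so no field extension is needed), together with a surjection $J \onto A$ from its Jacobian. Poincar\'e complete reducibility then gives a map $A \to J$ whose composite $A \to J \onto A$ is multiplication by some positive integer $m$; passing to N\'eron models, $V_p H^1(X, \mathcal{A})$ becomes a \emph{direct summand} of $V_p H^1(X, \mathcal{J})$. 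That summand statement is all one needs, and it avoids both the nonexistent isogeny and the entire restriction-of-scalars/bounded-torsion bookkeeping you flagged for a finite extension of $K$.

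Once $J$ is the Jacobian of a curve with a $K$-rational point, the passage to the Brauer group is also cleaner than your Artin--Tate outline suggests: for a Lipman proper flat regular model $\mathcal{C}/X$ of $C$ there is a canonical \emph{isomorphism} $H^1(X, \mathcal{J}) \cong H^2(\mathcal{C}, \Gm)$, with no discrepancy terms to track. The crystalline step in your last paragraph then matches the paper: Illusie's sequence
\[
0 \to \NS(\mathcal{C})\tensor\Q_p \to H^2(\mathcal{C}, \Q_p(1)) \to V_p H^2(\mathcal{C}, \Gm) \to 0,
\]
together with the identification of $H^2(\mathcal{C}, \Q_p(1))$ with the kernel of $F - p$ on $H^2_{\mathrm{crys}}(\mathcal{C}/W(k))\tensor\Q$, shows that $V_p H^2(\mathcal{C}, \Gm)$ is a finite-dimensional $\Q_p$-space whose dimension is insensitive to enlarging the algebraically closed base field.
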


\begin{proof}
	We may assume that $k = \closure{k}$.
	By \cite[Thm.\ 11]{Kat99},
	there exist a proper smooth geometrically connected curve $C$ over $K$ having a $K$-rational point
	and a surjective homomorphism $J \onto A$ from the Jacobian $J$ of $C$ over $K$.
	By Poincar\'e complete reducibility,
	there exists a homomorphism $A \to J$ over $K$ such that
	the composite $A \to J \onto A$ is multiplication by some positive integer $m$.
	Let $\mathcal{J}$ be the N\'eron model of $J$ over $X$.
	Then we have homomorphisms $\mathcal{A} \to \mathcal{J} \to \mathcal{A}$ over $X$
	whose composite is multiplication by $m$.
	Therefore $V_{p} H^{1}(X, \mathcal{A})$ is a direct factor of $V_{p} H^{1}(X, \mathcal{J})$.
	Hence it is enough to show that
	$V_{p} H^{1}(X, \mathcal{J})$ does not depend on the algebraically closed base field $k$.
	Let $\mathcal{C} / X$ be a proper flat regular model of $C / K$ (\cite{Lip78}).
	By \cite[\S 4.6]{Gro66},
	there exists a canonical isomorphism $H^{1}(X, \mathcal{J}) \cong H^{2}(\mathcal{C}, \Gm)$.
	Hence it is enough to show that
	$V_{p} H^{2}(\mathcal{C}, \Gm)$ does not depend on the algebraically closed base field $k$.
	Note that $H^{2}(\mathcal{C}, \Gm)$ is the Brauer group
	of the proper smooth surface $\mathcal{C}$ over $k$.
	
	By \cite[II, (5.8.5)]{Ill79},
	we have a canonical exact sequence
		\[
				0
			\to
				\NS(\mathcal{C}) \tensor \Q_{p}
			\to
				H^{2}(\mathcal{C}, \Q_{p}(1))
			\to
				V_{p} H^{2}(\mathcal{C}, \Gm)
			\to
				0,
		\]
	where $\NS$ denotes the N\'eron-Severi group
	and the middle term is
		\[
			\bigl(
				\invlim_{n}
					H^{2}(\mathcal{C}, \mu_{p^{n}})
			\bigr) \tensor \Q.
		\]
	The group $\NS(\mathcal{C})$ does not depend on $k$.
	By \cite[II, Thm.\ 5.5.3]{Ill79},
	there exists a canonical exact sequence
		\[
				0
			\to
				H^{2}(\mathcal{C}, \Q_{p}(1))
			\to
				H_{\mathrm{crys}}^{2}(\mathcal{C} / W(k)) \tensor \Q
			\overset{F - p}{\to}
				H_{\mathrm{crys}}^{2}(\mathcal{C} / W(k)) \tensor \Q
			\to
				0,
		\]
	where the last two groups are the rational crystalline cohomology.
	They are finite-dimensional over $W(k)[1 / p]$.
	Hence $H^{2}(\mathcal{C}, \Q_{p}(1))$ is finite-dimensional over $\Q_{p}$
	whose dimension does not depend on $k$.
	Therefore $V_{p} H^{2}(\mathcal{C}, \Gm)$ does not depend on $k$.
	This finishes the proof.
\end{proof}

\begin{Prop} \label{prop: H1 is locally algebraic}
	The group $\alg{H}^{1}(X, \mathcal{A})$ is in $\Loc^{f} \Alg_{\uc} / k$.
\end{Prop}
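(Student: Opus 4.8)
The plan is to combine the structural fact from Prop.~\ref{prop: structure of cohomology of Neron models}, which gives $\alg{H}^{1}(X, \mathcal{A}) \in \Ind^{f} \Alg_{\uc} / k$, with the criterion of Prop.~\ref{prop: criteiron for algebraicity of identity component}. Since $\alg{H}^{1}(X, \mathcal{A})$ already lies in $\Ind^{f} \Alg_{\uc} / k$, that criterion reduces the assertion $\alg{H}^{1}(X, \mathcal{A}) \in \Loc^{f} \Alg_{\uc} / k$ (condition \eqref{item: A is in Loc f}) to verifying condition \eqref{item: A has constant p adic Tate module on alg cl fields}, namely that $V_{p} \alg{H}^{1}(X, \mathcal{A})(k')$ is constant as a functor on algebraically closed fields $k'$ over $k$.

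First I would identify this functor with $k' \mapsto V_{p} H^{1}(X_{k'}, \mathcal{A})$. By Prop.~\ref{prop: cohom of curve is fin pres} the sheaf $\alg{H}^{1}(X, \mathcal{A})$ is locally of finite presentation with value $H^{1}(X_{k'}, \mathcal{A})$ at $k'$; moreover, for $k'$ algebraically closed (more generally w-contractible) the section functor $\Gamma(k', \var)$ on $\Spec k^{\ind\rat}_{\pro\et}$ is exact, $\Spec k'$ is quasi-compact, and hence taking $k'$-points commutes with the countable inverse limit of the $p^{n}$-torsion sheaves $\alg{H}^{1}(X, \mathcal{A})[p^{n}]$ (which lie in $\Alg_{\uc}/k$) defining $T_{p}$ in \S\ref{sec: Ind-objects of cofinite type}, and with the filtered colimit along multiplication by $p^{m}$ defining $V_{p}$. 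This gives $V_{p}\alg{H}^{1}(X, \mathcal{A})(k') = V_{p} H^{1}(X_{k'}, \mathcal{A})$. Constancy of the right-hand side in $k'$ is then exactly the content of Prop.~\ref{prop: Sha does not have divisible connected unipotent part}, so condition \eqref{item: A has constant p adic Tate module on alg cl fields} holds and we conclude $\alg{H}^{1}(X, \mathcal{A}) \in \Loc^{f} \Alg_{\uc} / k$.

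The only point I expect to require care is this identification of the sheaf-theoretic rational Tate module $V_{p}\alg{H}^{1}(X, \mathcal{A})$, evaluated at $k'$, with the concrete group $V_{p} H^{1}(X_{k'}, \mathcal{A})$ — in particular the commutation of passing to $k'$-points with the limit $\invlim_{n}$ defining $T_{p}$. Everything else is a direct invocation of Prop.~\ref{prop: structure of cohomology of Neron models}, Prop.~\ref{prop: criteiron for algebraicity of identity component} and Prop.~\ref{prop: Sha does not have divisible connected unipotent part}, so the proof will be short.
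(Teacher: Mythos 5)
Your proposal is correct and matches the paper's proof, which is exactly this two-step reduction: Prop.~\ref{prop: structure of cohomology of Neron models} puts $\alg{H}^{1}(X,\mathcal{A})$ in $\Ind^{f}\Alg_{\uc}/k$, and then criterion~\eqref{item: A has constant p adic Tate module on alg cl fields} of Prop.~\ref{prop: criteiron for algebraicity of identity component} together with Prop.~\ref{prop: Sha does not have divisible connected unipotent part} (via the identification $\alg{H}^{1}(X,\mathcal{A})(k') = H^{1}(X_{k'},\mathcal{A})$ for $k'$ algebraically closed, coming from Prop.~\ref{prop: cohom of curve is fin pres}/Prop.~\ref{prop: first cohom is Tate Shafarevich}) upgrades it to $\Loc^{f}\Alg_{\uc}/k$. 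The technical point you flag — commuting $V_{p}$ with $k'$-points — is a sensible worry but unproblematic, since the relevant inverse and direct limits are formal in $\Ind\Pro\Alg_{\uc}/k$ and the sheaf is locally of finite presentation, so evaluation at an algebraically closed $k'$ returns the naive construction applied to the abstract group $H^{1}(X_{k'},\mathcal{A})$.
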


\begin{proof}
	This follows from the previous propositions,
	Prop.\ \ref{prop: structure of cohomology of Neron models}
	and Prop.\ \ref{prop: criteiron for algebraicity of identity component}.
\end{proof}

\begin{Prop} \label{prop: our duality contains height pairing}
	The morphism \eqref{eq: duality pairing for Neron over X} induces a morphism
		\[
					\Gamma(X, \mathcal{A}_{0}^{\vee})
				\tensor
					\Gamma(X, \mathcal{A})
			\to
				\Z.
		\]
	This agrees with the height pairing \cite[III, \S 3]{MB85}.
\end{Prop}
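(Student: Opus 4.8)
The plan is to make the pairing \eqref{eq: duality pairing for Neron over X} explicit in degree zero and to recognise it as the classical Picard-degree formula for the height. Both the pairing under consideration and Moret-Bailly's pairing are morphisms of sheaves on $\Spec k^{\ind\rat}_{\pro\et}$ with values in the constant sheaf $\Z$, whose sources $\alg{\Gamma}(X, \mathcal{A}_{0}^{\vee})$ and $\alg{\Gamma}(X, \mathcal{A})$ are represented by perfections of group schemes locally of finite type over $k$ (Prop.\ \ref{prop: structure of cohomology of Neron models}) and are locally of finite presentation; since both pairings are manifestly defined over $k$, it suffices to compare them on $\closure{k}$-points, so I may assume $k = \closure{k}$ and work with genuine sections $P \in \mathcal{A}(X) = A(K)$ and $Q \in \mathcal{A}_{0}^{\vee}(X)$.

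First I would unwind \eqref{eq: duality pairing for Neron over X}. By the construction of the cup product \eqref{eq: global cup product} (a variant of \eqref{eq: functoriality morphism over integers} combined with the tensor--Hom adjunction) and by the definition of the trace morphism \eqref{eq: global trace morphism} as $R\alg{\Gamma}(X, \Gm) \to \Pic_{X}[-1] \overset{\deg}{\to} \Z[-1]$, the degree-zero part of the pairing sends $(Q, P)$ to $\deg$ of the class $Q \cup P \in H^{1}(X, \Gm) = \Pic(X)$. Viewing $P$ and $Q$ as elements of $\Hom_{D(X_{\fppf})}(\Z, \mathcal{A})$ and $\Hom_{D(X_{\fppf})}(\Z, \mathcal{A}_{0}^{\vee})$, this class is the composite
\[
		\Z
	=
		\Z \tensor^{L} \Z
	\xrightarrow{\,Q \tensor P\,}
		\mathcal{A}_{0}^{\vee} \tensor^{L} \mathcal{A}
	\to
		\Gm[1],
\]
the last arrow being the canonical extension to the N\'eron models of the Poincar\'e biextension via \cite[IX, 1.4.3]{Gro72}. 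The standard dictionary between biextensions by $\Gm$ and cup products (cf.\ \cite[VII, VIII]{Gro72}) identifies this composite with the class in $H^{1}(X, \Gm) = \Pic(X)$ of the $\Gm$-torsor on $X$ obtained by restricting the biextension along $(Q, P) \colon X \to \mathcal{A}_{0}^{\vee} \times_{X} \mathcal{A}$. Hence the pairing value $\langle Q, P \rangle$ equals the degree on $X$ of this line bundle.

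Finally I would invoke \cite[III, \S 3]{MB85}: Moret-Bailly defines the height pairing $A^{\vee}(K) \times A(K) \to \Z$ as precisely the degree on $X$ of the line bundle obtained by pulling back, along the pair of N\'eron-model sections, the canonical biextension of $(\mathcal{A}_{0}^{\vee}, \mathcal{A})$ by $\Gm$ --- the same object furnished by Grothendieck's \cite[IX, 1.4.3]{Gro72} and used in \eqref{eq: duality pairing for Neron over X}. Comparing with the formula obtained above, and using the symmetry of the Poincar\'e biextension to interchange the two factors if necessary, the two pairings coincide.

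The main obstacle is the middle step: checking that the abstract derived-categorical cup product \eqref{eq: global cup product}, defined through the functoriality morphism, the tensor--Hom adjunction and explicit injective resolutions, restricts on $\alg{H}^{0} \tensor \alg{H}^{0}$ to the elementary ``restrict the biextension along a pair of sections'' recipe, and that the biextension entering \eqref{eq: duality pairing for Neron over X} is literally the one underlying Moret-Bailly's definition rather than an isogenous or sign-twisted variant. This compatibility is morally standard but demands a genuine diagram chase with biextensions; once carried out over $k = \closure{k}$, nothing further from the structural results of the paper is needed.
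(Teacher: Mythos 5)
Your proposal is correct and follows essentially the same route as the paper's proof: factor the degree-zero pairing through $H^1(X, \Gm) = \Pic(X) \overset{\deg}{\to} \Z$, identify the class of $(Q,P)$ with the pullback of the extended Poincar\'e biextension along the pair of N\'eron-model sections, and invoke Moret-Bailly. The paper is terser and treats the compatibility of the abstract cup product with the ``restrict the biextension along a pair of sections'' recipe as routine, whereas you explicitly flag it as the step needing verification---a fair observation, but one the paper elides rather than resolves differently.
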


\begin{proof}
	This morphism is equal to the morphism
		\[
					\Gamma(X, \mathcal{A}_{0}^{\vee})
				\tensor
					\Gamma(X, \mathcal{A})
			\to
				H^{1}(X, \Gm)
			\to
				\Z.
		\]
	Hence a more explicit description can be given as follows.
	Let $P$ be the extension of the Poincar\'e bundle to $\mathcal{A}_{0}^{\vee} \times_{X} \mathcal{A}$.
	Let $f \colon X \to \mathcal{A}_{0}^{\vee}$ and $g \colon X \to \mathcal{A}$ be sections over $X$.
	By pulling back $P$ by $f \times g \colon X \to \mathcal{A}_{0}^{\vee} \times_{X} \mathcal{A}$,
	we have a line bundle on $X$.
	Its degree is the value of the pairing at $(f, g)$.
	This pairing is equal to the height pairing by \cite[III, \S 3]{MB85}.
\end{proof}

\begin{Prop}
	Consider the morphism
		\[
					\pi_{0}(\alg{\Gamma}(X, \mathcal{A}_{0}^{\vee}))_{/ \tor}
				\times
					\pi_{0}(\alg{\Gamma}(X, \mathcal{A}))_{/ \tor}
			\to
				\Z
		\]
	coming from \eqref{eq: duality pairing for Neron over X}.
	The induced morphism
		\[
				\pi_{0}(\alg{\Gamma}(X, \mathcal{A}_{0}^{\vee}))_{/ \tor}
			\to
				\pi_{0}(\alg{\Gamma}(X, \mathcal{A}))_{/ \tor}^{\LDual}
		\]
	is injective with finite cokernel.
\end{Prop}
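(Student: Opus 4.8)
The plan is to reduce the assertion to the non-degeneracy of the N\'eron--Tate height pairing, supplied by Prop.~\ref{prop: our duality contains height pairing}.

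First I would identify the two groups in question as lattices over $k$. By Prop.~\ref{prop: structure of cohomology of Neron models} applied to both $A$ and $A^{\vee}$, the sheaves $\alg{\Gamma}(X, \mathcal{A})$ and $\alg{\Gamma}(X, \mathcal{A}^{\vee})$ are extensions of finitely generated \'etale groups by abelian varieties over $k$; by Prop.~\ref{prop: cohomology of connected Neron and Neron} the inclusion $\alg{\Gamma}(X, \mathcal{A}_{0}^{\vee}) \into \alg{\Gamma}(X, \mathcal{A}^{\vee})$ has cokernel a subgroup of the finite \'etale group $\bigoplus_{x} \Res_{k_{x} / k} \pi_{0}(\mathcal{A}_{x}^{\vee})$, so $\alg{\Gamma}(X, \mathcal{A}_{0}^{\vee})$ is again such an extension and the induced map $\pi_{0}(\alg{\Gamma}(X, \mathcal{A}_{0}^{\vee}))_{/ \tor} \to \pi_{0}(\alg{\Gamma}(X, \mathcal{A}^{\vee}))_{/ \tor}$ is injective with finite cokernel. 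Write $L = \pi_{0}(\alg{\Gamma}(X, \mathcal{A}))_{/ \tor}$ and $L' = \pi_{0}(\alg{\Gamma}(X, \mathcal{A}_{0}^{\vee}))_{/ \tor}$; these are lattices over $k$, and the pairing of the statement is a bilinear pairing $L' \times L \to \Z$ whose right adjoint lands in the lattice $L^{\LDual}$.

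Next I would observe that, for lattices, a pairing $L' \times L \to \Z$ induces $L' \to L^{\LDual}$ injective with finite cokernel if and only if the induced $\Q$-bilinear pairing $L' \tensor \Q \times L \tensor \Q \to \Q$ is non-degenerate in both variables: left non-degeneracy gives injectivity of $L' \to L^{\LDual}$, while non-degeneracy in both variables forces $\dim_{\Q} L' \tensor \Q = \dim_{\Q} L \tensor \Q$, so that $L' \tensor \Q \to L^{\LDual} \tensor \Q$ is an isomorphism and the cokernel of $L' \to L^{\LDual}$ is finite. Moreover a morphism of lattices over $k$ is injective with finite cokernel as soon as it is so after base change to an algebraic closure $\closure{k}$ (both conditions can be tested on geometric points, and $L^{\LDual}$ base-changes correctly). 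So it suffices to verify the two-sided non-degeneracy on $\closure{k}$-points.

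Finally, over $\closure{k}$ the pairing is, by Prop.~\ref{prop: our duality contains height pairing} applied over $\closure{k}$, identified with the height pairing of \cite[III, \S 3]{MB85} between $\mathcal{A}_{0}^{\vee}(X_{\closure{k}})$ and $\mathcal{A}(X_{\closure{k}})$, passed to the quotients by torsion and by the abelian-variety parts of $\alg{\Gamma}(X, \mathcal{A}_{0}^{\vee})$ and $\alg{\Gamma}(X, \mathcal{A})$. By the proposition on $\Res_{X / k} \mathcal{A}$ preceding Prop.~\ref{prop: structure of cohomology of Neron models}, these abelian-variety parts are, up to isogeny, the perfections of the $K / k$-traces $\Tr_{K / k} A^{\vee}$ and $\Tr_{K / k} A$, i.e.\ the ``constant'' points; since the height of a constant point or a torsion point vanishes, the pairing descends to $L'_{\closure{k}} \times L_{\closure{k}}$, where it becomes the N\'eron--Tate pairing on the Mordell--Weil groups modulo torsion and trace. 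Its non-degeneracy in both variables (see \cite[III, \S 3]{MB85}, together with the Lang--N\'eron theory; cf.\ \cite[Thm.\ 7.1]{Con09}) yields the claim. The main obstacle is exactly this last step: one must invoke the function-field N\'eron--Tate theory in the precise form that the kernel of the height pairing, modulo torsion, is the group of constant points, and match ``constant points'' with the abelian-variety subobject of $\alg{\Gamma}(X, \mathcal{A})$ produced by the Lang--N\'eron argument; the rest is formal lattice algebra plus the bookkeeping of $\pi_{0}$ and torsion.
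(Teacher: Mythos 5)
Your argument coincides with the paper's: the paper proves this by citing Prop.~\ref{prop: our duality contains height pairing} (identification with the Moret--Bailly height pairing) together with the non-degeneracy of the height pairing \cite[Thm.\ 9.15]{Con09}, and the lattice algebra, rank comparison, and base change to $\closure{k}$ that you spell out are exactly the implicit content of that citation. One small correction: the non-degeneracy result you need is \cite[Thm.\ 9.15]{Con09}, not \cite[Thm.\ 7.1]{Con09}, the latter being the Lang--N\'eron finiteness theorem already used in Prop.~\ref{prop: structure of cohomology of Neron models}.
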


\begin{proof}
	This follows from the previous proposition
	and the non-degeneracy of the height pairing \cite[Thm.\ 9.15]{Con09}.
\end{proof}

We summarize the results obtained so far.

\begin{Prop} \label{prop: results before the formal steps}
	Let $C = R \alg{\Gamma}(X, \mathcal{A})$
	and $D = R \alg{\Gamma}(X, \mathcal{A}_{0}^{\vee})$,
	which are objects of $D(k)$.
	The sheaf $H^{0} C$ is an extension of a finitely generated \'etale group
	by an abelian variety;
	$H^{1} C \in \Loc^{f} \Alg / k$;
	$H^{2} C \in \Ind^{f} \Alg / k$, which is divisible;
	and $H^{n} C = 0$ for other values of $n$.
	The same are true for $D$.
	There is a canonical pairing $C \tensor^{L} D \to \Z$ in $D(k)$.
	The induced morphism
		\[
				(\pi_{0} H^{0} D)_{/ \tor}
			\to
				(\pi_{0} H^{0} C)_{/ \tor}^{\LDual}
		\]
	is injective with finite cokernel.
	For any $n \ge 1$, the induced morphism
		\[
				D \tensor^{L} \Z / n \Z
			\to
				(C \tensor^{L} \Z / n \Z)^{\SDual}[1]
		\]
	is an isomorphism.
\end{Prop}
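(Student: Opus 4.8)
The plan is to assemble this proposition from the structural results of \S\ref{sec: Mod n duality for Neron models and preliminary calculations} together with the mod-$n$ duality already proved, the only genuine work being the bookkeeping needed to pass from the full N\'eron models to $\mathcal{A}$ and to $\mathcal{A}_{0}^{\vee}$.

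First I would record the structure of $C = R\alg{\Gamma}(X, \mathcal{A})$ directly: Prop.\ \ref{prop: structure of cohomology of Neron models} gives that $H^{0} C = \alg{\Gamma}(X, \mathcal{A})$ is an extension of a finitely generated \'etale group by an abelian variety, that $H^{2} C$ is divisible and lies in $\Ind^{f} \Alg_{\uc} / k$, and that $H^{n} C = 0$ for $n \notin \{0, 1, 2\}$, while Prop.\ \ref{prop: H1 is locally algebraic} gives $H^{1} C \in \Loc^{f} \Alg_{\uc} / k$. Applying the same two propositions to $A^{\vee}$ in place of $A$ yields the analogous statements for $R\alg{\Gamma}(X, \mathcal{A}^{\vee})$.

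Next I would deduce the structure of $D = R\alg{\Gamma}(X, \mathcal{A}_{0}^{\vee})$ from that of $R\alg{\Gamma}(X, \mathcal{A}^{\vee})$ using the distinguished triangle of Prop.\ \ref{prop: cohomology of connected Neron and Neron} (for $A^{\vee}$), whose third term $\bigoplus_{x} \Res_{k_{x} / k} \pi_{0}(\mathcal{A}_{x}^{\vee})$ is finite \'etale and concentrated in degree $0$. From the resulting long exact sequence, $H^{0} D$ is the kernel of a morphism from an extension of a finitely generated \'etale group by an abelian variety to a finite \'etale group; since a connected closed subgroup of an abelian variety is an abelian variety, this kernel contains the abelian part and has finitely generated \'etale quotient, hence is again such an extension. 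Similarly $H^{1} D$ sits in an exact sequence $0 \to E \to H^{1} D \to H^{1} R\alg{\Gamma}(X, \mathcal{A}^{\vee}) \to 0$ with $E$ a quotient of $\bigoplus_{x} \Res_{k_{x} / k} \pi_{0}(\mathcal{A}_{x}^{\vee})$, hence finite \'etale; as finite \'etale groups lie in $\Loc^{f} \Alg_{\uc} / k$ and this is an abelian subcategory of $\Ind^{f} \Alg_{\uc} / k$ (\S\ref{sec: Ind-algebraic groups of cofinite type}), we get $H^{1} D \in \Loc^{f} \Alg_{\uc} / k$. Finally $H^{2} D = H^{2} R\alg{\Gamma}(X, \mathcal{A}^{\vee})$ is divisible in $\Ind^{f} \Alg_{\uc} / k$ and $H^{n} D = 0$ for $n \notin \{0, 1, 2\}$.

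For the pairing and its consequences, the canonical map $C \tensor^{L} D \to \Z$ is the morphism \eqref{eq: duality pairing for Neron over X}, up to the commutativity isomorphism of $\tensor^{L}$ on $D(k)$. The claim on degree-zero parts is precisely the Proposition immediately preceding the present one (a consequence of Prop.\ \ref{prop: our duality contains height pairing} and the non-degeneracy of the height pairing), combined with the identifications $H^{0} C = \alg{\Gamma}(X, \mathcal{A})$ and $H^{0} D = \alg{\Gamma}(X, \mathcal{A}_{0}^{\vee})$; and the mod-$n$ isomorphism $D \tensor^{L} \Z / n \Z \to (C \tensor^{L} \Z / n \Z)^{\SDual}[1]$ is exactly Prop.\ \ref{prop: mod n duality for Neron models}, whose duality morphism is the one induced by \eqref{eq: duality pairing for Neron over X} after derived-tensoring with $\Z / n \Z$. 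The only step requiring any care --- and hence the ``main obstacle'' in an otherwise routine collation --- is the passage from $\mathcal{A}^{\vee}$ to $\mathcal{A}_{0}^{\vee}$ in the previous paragraph, namely checking that the finite-\'etale discrepancy measured by Grothendieck's component groups does not take us out of the classes ``extension of a finitely generated \'etale group by an abelian variety'' and $\Loc^{f} \Alg_{\uc} / k$; both closure properties are immediate from the abelian-subcategory statements of \S\ref{sec: Ind-algebraic groups of cofinite type} and from basic facts about subgroups of abelian varieties.
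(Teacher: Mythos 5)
Your proof is correct and takes essentially the same route the paper does implicitly: the proposition is stated as a summary and carries no separate proof, being simply a collation of Prop.\ \ref{prop: structure of cohomology of Neron models}, Prop.\ \ref{prop: H1 is locally algebraic}, Prop.\ \ref{prop: cohomology of connected Neron and Neron} (applied to $A^{\vee}$), the height-pairing proposition immediately preceding it, and Prop.\ \ref{prop: mod n duality for Neron models}, exactly as you lay out. The only piece requiring any thought is the transfer from $\mathcal{A}^{\vee}$ to $\mathcal{A}_{0}^{\vee}$ through the finite \'etale component group term, and you handle it correctly via the long exact sequence and the abelian-subcategory facts.
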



\subsection{Formal steps towards duality for N\'eron models}
\label{sec: Formal steps towards duality for Neron models}

Throughout this subsection,
we fix two objects $C, D \in D(k)$ and a morphism $C \tensor^{L} D \to \Z$,
and assume the following:
\begin{enumerate}
	\item \label{item: structure of cohom of C}
		The sheaf $H^{0} C$ is an extension of a finitely generated \'etale group
		by an abelian variety;
		$H^{1} C \in \Loc^{f} \Alg / k$;
		$H^{2} C \in \Ind^{f} \Alg / k$, which is divisible;
		and $H^{n} C = 0$ for other values of $n$.
	\item \label{item: structure of cohom of D}
		The same are true for $D$.
	\item \label{item: height pairing for C and D}
		The morphism
			\[
					(\pi_{0} H^{0} D)_{/ \tor}
				\to
					(\pi_{0} H^{0} C)_{/ \tor}^{\LDual}
			\]
		induced from $C \tensor^{L} D \to \Z$ is injective 
		and its cokernel $\delta_{\Height}$ is finite.
	\item \label{item: mod n duality for C and D}
		For any $n \ge 1$, the induced morphism
			\[
					D \tensor^{L} \Z / n \Z
				\to
					(C \tensor^{L} \Z / n \Z)^{\SDual}[1]
			\]
		is an isomorphism.
\end{enumerate}
It follows from \eqref{item: structure of cohom of C} that
$C \tensor^{L} \Z / n \Z \in D^{b}(\Alg_{\uc} / k)$ for any $n \ge 1$.
The same is true for $D$.
Hence the isomorphism in \eqref{item: mod n duality for C and D} belongs to $D^{b}(\Alg_{\uc} / k)$.

The goal of this subsection is to prove that there exist canonical morphisms
	\[
			C^{\SDual}
		\to
			V H^{1} D
		\to
			D^{\SDual \SDual}[1]
	\]
such that the triangle
	\[
			V H^{1} D[-1]
		\to
			D^{\SDual \SDual}
		\to
			C^{\SDual}
		\to
			V H^{1} D
	\]
is distinguished (Prop.\ \ref{prop: formal duality for Neron}).
In order to prove this, we first extract as much information as possible
from the limit in $n$ of the isomorphism \eqref{item: mod n duality for C and D}
(Prop.\ \ref{prop: duality for Neron after derived completion} to
\ref{prop: adelic duality for Neron}).
Then we describe each cohomology object of
$D^{\SDual \SDual}$ and $C^{\SDual}$ (Prop.\ \ref{prop: each term of duality for Neron}).
They are concentrated in degrees $-1, 0, 1, 2$.
With these two steps and \eqref{item: height pairing for C and D},
we can show that there exist canonical distinguished triangles
	\begin{gather*}
				D^{\SDual \SDual} \tensor \Q
			\to
				C^{\SDual} \tensor \Q
			\to
				V H^{1} D,
		\\
				R \invlim_{n} D^{\SDual \SDual}
			\to
				R \invlim_{n} C^{\SDual}
			\to
				V H^{1} D
	\end{gather*}
(Prop.\ \ref{prop: each part of Q tensor duality} to
\ref{prop: duality triangle tensor Q} for the first triangle and
Prop.\ \ref{prop: each part of inverse by multiplication duality}
to \ref{prop: duality triangle after inverse multiplication} for the second).
These are easier to establish than the integral statement
since $D^{\SDual \SDual} \tensor \Q$ and $C^{\SDual} \tensor \Q$
are concentrated in degrees $-1$, $0$,
and $R \invlim_{n} D^{\SDual \SDual}$ and $R \invlim_{n} C^{\SDual}$
are concentrated in degrees $1$, $2$.
Observe that these ranges of degrees have no intersection.
This disjointness puts a strong restriction on possible choices of a mapping cone
of $D^{\SDual \SDual} \to C^{\SDual}$.
With this, we can get the desired canonical distinguished triangle.

We begin with taking limits:

\begin{Prop} \label{prop: duality for Neron after derived completion}
	The morphism $D \to C^{\SDual}$ induces an isomorphism
		\[
				R \invlim_{n}(D \tensor^{L} \Z / n \Z)
			\isomto
				R \invlim_{n}(C^{\SDual} \tensor^{L} \Z / n \Z).
		\]
\end{Prop}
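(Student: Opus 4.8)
The plan is to deduce this from the general criterion Prop.\ \ref{prop: all mod n isom imply derived completion isom}, applied to the morphism $D \to C^{\SDual}$ that is adjoint, via the derived tensor--Hom adjunction and the commutativity of $\tensor^{L}$, to the given pairing $C \tensor^{L} D \to \Z$. To invoke that criterion I must verify that this morphism induces an isomorphism $D \tensor^{L} \Z / n \Z \isomto C^{\SDual} \tensor^{L} \Z / n \Z$ for every $n \ge 1$, and this is precisely what hypothesis \eqref{item: mod n duality for C and D} supplies once the two sides are correctly identified.

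The first ingredient is the natural isomorphism
\[
	(C \tensor^{L} \Z / n \Z)^{\SDual}[1]
	\cong
	C^{\SDual} \tensor^{L} \Z / n \Z
\]
in $D(k)$, functorial in $C$. It follows from the fact that $\Z / n \Z$ is a perfect complex with $R \sheafhom_{k}(\Z / n \Z, \Z) \cong \Z / n \Z[-1]$ (both clear from the free resolution $\Z \overset{n}{\to} \Z$ of $\Z / n \Z$): indeed
\[
	(C \tensor^{L} \Z / n \Z)^{\SDual}
	=
	R \sheafhom_{k} \bigl( C, R \sheafhom_{k}(\Z / n \Z, \Z) \bigr)
	=
	R \sheafhom_{k}(C, \Z) \tensor^{L} \Z / n \Z [-1],
\]
where the first equality is the tensor--Hom adjunction and the second uses perfectness of $\Z / n \Z$ to move it past $R \sheafhom_{k}(C, -)$.

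It remains to check that under this identification the morphism $D \tensor^{L} \Z / n \Z \to (C \tensor^{L} \Z / n \Z)^{\SDual}[1]$ of \eqref{item: mod n duality for C and D} --- adjoint to the mod-$n$ reduction of the pairing followed by the Bockstein $\Z / n \Z \to \Z[1]$ --- coincides with $(D \to C^{\SDual}) \tensor^{L} \Z / n \Z$. This amounts to a compatibility between the tensor--Hom adjunction and derived base change along $\Z \to \Z / n \Z$, which unwinds to the associativity of the cup product together with the identification of the Bockstein as a connecting morphism; I expect this diagram chase --- in particular the bookkeeping of signs and of the Tor-term in $\Z / n \Z \tensor^{L}_{\Z} \Z / n \Z$ --- to be the only point requiring genuine care, everything else being formal. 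Granting it, hypothesis \eqref{item: mod n duality for C and D} states exactly that $D \to C^{\SDual}$ becomes an isomorphism after $\tensor^{L} \Z / n \Z$ for all $n \ge 1$, whence Prop.\ \ref{prop: all mod n isom imply derived completion isom} gives the asserted isomorphism $R \invlim_{n}(D \tensor^{L} \Z / n \Z) \isomto R \invlim_{n}(C^{\SDual} \tensor^{L} \Z / n \Z)$.
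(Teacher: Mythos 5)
Your proof is correct and takes the same approach as the paper: establish the natural isomorphism $(C \tensor^{L} \Z / n \Z)^{\SDual}[1] \cong C^{\SDual} \tensor^{L} \Z / n \Z$ (via $R\sheafhom_{k}(\Z/n\Z,\Z)\cong\Z/n\Z[-1]$), feed hypothesis \eqref{item: mod n duality for C and D} through it, and invoke the pro-limit criterion. The compatibility of that identification with the morphism in \eqref{item: mod n duality for C and D} that you flag as requiring care is indeed what the paper implicitly uses, and it does check out — your instinct to isolate it is sound.
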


\begin{proof}
	For any $n \ge 1$, we have
		\[
				C^{\SDual} \tensor^{L} \Z / n \Z
			=
				(C \tensor^{L} \Z / n \Z)^{\SDual}[1].
		\]
	Hence Prop.\ \ref{prop: all mod n isom imply completion isom} implies the result.
\end{proof}

To calculate the right-hand side,
it is easier to write it with torsion objects only:

\begin{Prop} \label{prop: Q mod Z duality for Neron models}
	We have
		\[
				R \invlim_{n} \bigl(
					R \sheafhom_{k}(C, \Z) \tensor^{L} \Z / n \Z
				\bigr)
			=
				R \sheafhom_{k}(C \tensor^{L} \Q / \Z, \Q / \Z).
		\]
	Hence the isomorphism in the previous proposition can also be written as
		\[
				R \invlim_{n}(D \tensor^{L} \Z / n \Z)
			\isomto
				R \sheafhom_{k}(C \tensor^{L} \Q / \Z, \Q / \Z).
		\]
\end{Prop}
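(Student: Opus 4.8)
The plan is to obtain this as an essentially formal consequence of Proposition~\ref{prop: derived completion is RHom from Q mod Z}, applied with $A = C$ and $B = \Z$, together with the vanishing of the derived internal Hom from a torsion object into $\Q$.

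First I would record that, by the standing assumption~\eqref{item: structure of cohom of C}, the complex $C$ is concentrated in degrees $0, 1, 2$, so that $C \in D^{-}(k^{\ind\rat}_{\pro\et})$ while $\Z \in D^{+}(k^{\ind\rat}_{\pro\et})$; hence Proposition~\ref{prop: derived completion is RHom from Q mod Z} applies and furnishes a canonical isomorphism
\[
    R \invlim_{n} \bigl( R \sheafhom_{k}(C, \Z) \tensor^{L} \Z / n \Z \bigr)
  =
    R \sheafhom_{k}(C \tensor^{L} \Q / \Z, \Z)[1].
\]
It then remains to identify the right-hand side with $R \sheafhom_{k}(C \tensor^{L} \Q / \Z, \Q / \Z)$. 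For this I would apply $R \sheafhom_{k}(C \tensor^{L} \Q / \Z, \var)$ to the distinguished triangle $\Z \to \Q \to \Q / \Z$, obtaining a distinguished triangle
\[
    R \sheafhom_{k}(C \tensor^{L} \Q / \Z, \Z)
  \to
    R \sheafhom_{k}(C \tensor^{L} \Q / \Z, \Q)
  \to
    R \sheafhom_{k}(C \tensor^{L} \Q / \Z, \Q / \Z),
\]
and then show that the middle term vanishes; the isomorphism of the third term with the shift of the first is then immediate, and combining it with the previous display gives the stated equality. The last assertion of the Proposition follows at once, since $C^{\SDual} = R \sheafhom_{k}(C, \Z)$ and the previous proposition identifies $R \invlim_{n}(D \tensor^{L} \Z / n \Z)$ with $R \invlim_{n}(C^{\SDual} \tensor^{L} \Z / n \Z)$.

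The one point needing an argument, and thus the main (if small) obstacle, is the vanishing $R \sheafhom_{k}(C \tensor^{L} \Q / \Z, \Q) = 0$. I would deduce it by base change along the morphism of sheaves of rings $\Z \to \Q$: since $\Q / \Z \tensor_{\Z} \Q = 0$, the complex $M := C \tensor^{L} \Q / \Z$ satisfies $M \tensor^{L}_{\Z} \Q = 0$; on the other hand, restriction of scalars along $\Z \to \Q$ has the exact left adjoint $\Q \tensor_{\Z} \var$ (exactness because $\Q$ is flat over $\Z$), so it carries injective sheaves of $\Q$-modules to injective sheaves of $\Z$-modules, and computing $R \sheafhom_{k}(M, \Q)$ by an injective resolution of $\Q$ in sheaves of $\Q$-modules shows that $R \sheafhom_{k}(M, \Q)$ agrees with the derived internal Hom, computed in sheaves of $\Q$-modules, from $M \tensor^{L}_{\Z} \Q = 0$ to $\Q$, which is zero. (Equivalently, one may observe that $M$ is bounded with torsion cohomology sheaves and invoke that $\Q$ is uniquely divisible.) Once this vanishing is in place, the remainder is bookkeeping with the shift $[1]$ and with the boundedness hypotheses.
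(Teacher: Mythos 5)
Your argument is correct, but it follows a genuinely different route from the paper's. You invoke Proposition~\ref{prop: derived completion is RHom from Q mod Z} with $A = C$, $B = \Z$, which immediately gives $R\invlim_n(R\sheafhom_k(C,\Z)\tensor^L\Z/n\Z) = R\sheafhom_k(C\tensor^L\Q/\Z,\Z)[1]$, and then you convert the target $\Z[1]$ into $\Q/\Z$ by establishing the vanishing $R\sheafhom_k(C\tensor^L\Q/\Z,\Q) = 0$ once, via base change along $\Z\to\Q$ (or equivalently via torsion-versus-uniquely-divisible). The paper instead re-derives everything from scratch: it invokes the external reference \cite[Prop.\ (2.2.3)]{Suz14} to write $R\sheafhom_k(C\tensor^L\Q/\Z,\Q/\Z)$ as $R\invlim_n R\sheafhom_k(C\tensor^L\Z/n\Z,\Q/\Z)$, carries out the $\Q$-vanishing term-by-term for each $n$ (where it is an elementary ``killed by $n$ versus uniquely divisible'' statement), rewrites each term as $R\sheafhom_k(C,\Z)\tensor^L\Z/n\Z$, and then takes the limit. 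Both approaches ultimately rest on the same ingredient (\cite[Prop.\ (2.2.3)]{Suz14}, which also underlies Prop.~\ref{prop: derived completion is RHom from Q mod Z}), but yours is the more modular presentation: it re-uses a proposition the paper already proved rather than unrolling its proof, and it isolates the vanishing $R\sheafhom_k(M,\Q)=0$ into one clean statement with a direct argument. The only price is that the base-change/scalars argument is slightly heavier machinery than the paper's elementary mod-$n$ observation; either suffices, since $M\tensor^L\Q = C\tensor^L(\Q/\Z\tensor^L\Q) = 0$ follows from $\Q$ being flat with $\Q/\Z\tensor\Q=0$. The final assertion of the Proposition is handled correctly in both.
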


\begin{proof}
	We have
		\[
				R \sheafhom_{k}(C \tensor^{L} \Q / \Z, \Q / \Z)
			=
				R \invlim_{n}
				R \sheafhom_{k}(C \tensor^{L} \Z / n \Z, \Q / \Z)
		\]
	by \cite[Prop.\ (2.2.3)]{Suz14} (or its proof) in the notation therein.
	For any $n \ge 1$, we have
		\begin{align*}
					R \sheafhom_{k}(C \tensor^{L} \Z / n \Z, \Q / \Z)
			&	=
					R \sheafhom_{k}(C \tensor^{L} \Z / n \Z, \Z[1])
			\\
			&	=
					R \sheafhom_{k}(C, \Z) \tensor^{L} \Z / n \Z
		\end{align*}
	since derived-tensoring $\Z / n \Z$ kills uniquely divisibles.
	The result follows by taking the limit.
\end{proof}

\begin{Prop} \label{prop: each part of Q mod Z duality for Neron models}
	The groups $H^{2} C, H^{2} D \in \Ind^{f} \Alg / k$ are \'etale.
	The groups $H^{1} C, H^{1} D \in \Loc^{f} \Alg / k$ are divisibly ML.
	The isomorphism in Prop.\ \ref{prop: Q mod Z duality for Neron models} yields, on cohomology,
	the following duality pairings and morphisms:
		\begin{enumerate}
			\item \label{item: formal duality between TGamma of A and H2 of A dual}
				Pontryagin duality between
					$
							T (H^{0} C)_{0}
						\in
							\Pro \FEt / k
					$
				and
					$
							H^{2} D
						\in
							\Ind \FEt / k
					$.
			\item \label{item: formal duality between TGamma of A dual and H2 of A}
				Pontryagin duality between
					$
							T (H^{0} D)_{0}
						\in
							\Pro \FEt / k
					$
				and
					$
							H^{2} C
						\in
							\Ind \FEt / k
					$.
			\item \label{item: formal duality between torsion MW of A dual and finite quotient of H1 A}
				Pontryagin duality between
					$
							(\pi_{0} H^{0} C)_{\tor},
							\pi_{0}(H^{1} D)_{/ \divis}
						\in
							\FEt / k
					$.
			\item \label{item: formal duality between torsion MW of A and finite quotient of H1 A dual}
				Pontryagin duality between
					$
							(\pi_{0} H^{0} D)_{\tor},
							\pi_{0} (H^{1} C)_{/ \divis}
						\in
							\FEt / k
					$.
			\item \label{item: formal transcendental pairing}
				An injection
					$
							(\pi_{0} H^{1} C)_{\divis}^{\PDual}
						\into
							T (H^{1} D)_{\divis}
					$
				in $\Pro \FEt / k$ whose cokernel $\delta_{\Tran}$ is finite.
			\item \label{item: formal Cassels Tate pairing}
				A surjection
					$
							(H^{1} C)_{0}^{\SDual'}
						\onto
							((H^{1} D)_{/ \divis})_{0}
					$
				of connected unipotent quasi-algebraic groups
				whose kernel $\delta_{\CT}$ is finite.
			\item \label{item: formal discriminant exact sequence}
				An exact sequence
				$0 \to \delta_{\Tran} \to \delta_{\Height} \to \delta_{\CT} \to 0$
				in $\FEt / k$.
		\end{enumerate}
\end{Prop}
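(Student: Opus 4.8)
The plan is to unpack the isomorphism of Prop.\ \ref{prop: Q mod Z duality for Neron models}, namely
	\[
			R \invlim_{n}(D \tensor^{L} \Z / n \Z)
		\isomto
			R \sheafhom_{k}(C \tensor^{L} \Q / \Z, \Q / \Z),
	\]
by computing both sides cohomology-object by cohomology-object and matching. First I would record structural facts: since $H^{2}C, H^{2}D$ are divisible objects of $\Ind^{f}\Alg/k$, Prop.\ \ref{prop: divisible subgroup} (or its integer version) forces them to be \'etale, because the only divisible objects in $\Ind^{f}\Alg_{\uc}/k$ that are also killed by multiplication by some positive integer in each $p$-primary layer are \'etale (a divisible connected unipotent group like $\dirlim_{n}W_{n}$ is not killed by any single integer, while $H^{2}C \in \Ind^{f}\Alg/k$ is; more precisely use that $H^2C$ is divisible and, being a subquotient picture of $C \tensor^{L}\Z/n\Z \in D^{b}(\Alg_{\uc}/k)$, its $p^{n}$-torsion is finite \'etale). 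For $H^{1}C, H^{1}D$, divisible ML-ness (Prop.\ \ref{prop: divisibly ML objects}, \ref{prop: inverse limit of multiplication for divisibly ML}) follows from $H^{1}C \in \Loc^{f}\Alg/k$ together with the fact that $(H^{1}C)_{/\divis}$, being the largest quotient in $\mathcal{A}$, must be finite here: indeed $H^1C \tensor \Z/n\Z \in \Alg_{\uc}/k$ for all $n$ forces only finitely many primes to contribute to $(H^1C)_{/\divis}$, i.e.\ it is divisibly ML.

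Next I would compute the right-hand side. Using the distinguished triangle $H^{0}C[0] \to \tau_{\le 1}C \to H^{1}C[-1]$ and then $\tau_{\le 1}C \to C \to H^{2}C[-2]$, apply $\var \tensor^{L}\Q/\Z$ and then $R\sheafhom_{k}(\var, \Q/\Z)$. The key inputs are: $R\sheafhom_{k}(G \tensor^{L}\Q/\Z, \Q/\Z) = R\sheafhom_{k}(TG, \Q/\Z)[1]$-type formulas and the list of Serre duals recalled in \S\ref{sec: The ind-rational pro-etale site and some derived limits} --- for $G$ semi-abelian, $G \tensor^{L}\Q/\Z = (TG)[1]$ so its dual is $(TG)^{\PDual}[-1]$; for $G$ connected unipotent quasi-algebraic, $G\tensor^{L}\Q/\Z$ has the $p$-divisible tail contributing $G^{\SDual'}$ shifted appropriately; for $G$ finite \'etale, $G\tensor^{L}\Q/\Z \simeq G[1]$ with dual $G^{\PDual}[-1]$; for $G$ \'etale divisible, $G\tensor^{L}\Q/\Z \simeq G \oplus (TG)[1]$. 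Splicing these triangles and reading off cohomology in each degree, together with the decomposition $H^{0}C$ into abelian variety $\oplus$ f.g.\ \'etale via $\pi_{0}$, and $H^1C$, $H^1D$ via the connection diagram $A_{0\divis} = 0 \subset A_{0} = (H^1C)_0$, $A_{0+\divis}$, $A_{\divis}$ of \S\ref{sec: Ind-algebraic groups of cofinite type}, produces exactly the seven items. Items \eqref{item: formal duality between TGamma of A and H2 of A dual}, \eqref{item: formal duality between TGamma of A dual and H2 of A} come from the abelian-variety part of $H^0$ paired against $H^2$ (Tate module of $(H^0)_0$ versus \'etale $H^2$); items \eqref{item: formal duality between torsion MW of A dual and finite quotient of H1 A}, \eqref{item: formal duality between torsion MW of A and finite quotient of H1 A dual} from the torsion of $\pi_0 H^0$ against $\pi_0(H^1)_{/\divis}$ in the middle degree; item \eqref{item: formal Cassels Tate pairing} from $(H^1C)_0$ against the connected part of $(H^1D)_{/\divis}$; and item \eqref{item: formal transcendental pairing} from the divisible \'etale part $(\pi_0 H^1 C)_{\divis}$ against $T(H^1D)_{\divis}$.

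The subtlety --- and the main obstacle --- is that the degree-$0$ and degree-$1$ cohomology of the two sides do not split into clean direct summands; the connecting maps in the long exact sequences genuinely mix the finite \'etale discrepancies $\delta_{\Tran}$, $\delta_{\CT}$, $\delta_{\Height}$. Concretely, after applying $R\sheafhom_{k}(\var,\Q/\Z)$ to the $\tau_{\le 1}$-triangle, the term $H^{0}$ on the left mixes the expected $T(\var)$ pieces from $H^1D$ with a boundary coming from $(\pi_0 H^1 C)_{\divis}^{\PDual}$ sitting inside $T(H^1D)_{\divis}$; identifying this boundary as an honest injection with finite cokernel, and then identifying that cokernel $\delta_{\Tran}$ with a sub of the height-pairing discrepancy $\delta_{\Height}$ from hypothesis \eqref{item: height pairing for C and D}, is where one must argue carefully. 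I would handle this by a diagram chase comparing the long exact sequence in cohomology of $R\invlim_{n}(D\tensor^{L}\Z/n\Z)$ with that of $R\sheafhom_{k}(C\tensor^{L}\Q/\Z,\Q/\Z)$, using Prop.\ \ref{prop: cohomologies of derived completion} and Prop.\ \ref{prop: Tate module exact sequence} to split $T(\var)$-terms off, reducing the mixing to a single snake-lemma square whose cokernels are the three $\delta$'s; item \eqref{item: formal discriminant exact sequence} is then precisely the statement that this square is (anti)commutative with the stated kernels and cokernels, i.e.\ the exact sequence $0 \to \delta_{\Tran} \to \delta_{\Height} \to \delta_{\CT} \to 0$ is the snake sequence. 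The finiteness of each $\delta$ follows because all are subquotients of finite \'etale groups: $\delta_{\Height}$ is finite by hypothesis, $\delta_{\Tran}$ is a subobject (cokernel of a map of Tate modules of cofinite-type \'etale groups, hence pro-finite-\'etale, but bounded by the finite $\delta_{\Height}$), and $\delta_{\CT}$ is the quotient.
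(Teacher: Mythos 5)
Your first step has a genuine gap. You claim the structural facts --- that $H^{2}C$, $H^{2}D$ are \'etale and that $H^{1}C$, $H^{1}D$ are divisibly ML --- can be ``recorded'' up front, independently of the isomorphism of Prop.~\ref{prop: Q mod Z duality for Neron models}, from the hypotheses that $H^{2}C \in \Ind^{f}\Alg/k$ is divisible and $H^{1}C \in \Loc^{f}\Alg/k$ is of cofinite type. This is false. The group $\dirlim_{n}W_{n}$ is connected, divisible, unipotent, and lies in $\Ind^{f}\Alg_{\uc}/k$ (the paper points this out explicitly just before Prop.~\ref{prop: criteiron for algebraicity of identity component}); nothing in the abstract hypotheses rules out $H^{2}C \cong \dirlim_{n}W_{n}$. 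Your fallback remark that the $p^{n}$-torsion of $H^{2}C$ is ``finite \'etale'' also does not hold from cofinite type alone --- $(\dirlim_{n}W_{n})[p^{n}] = W_{n}$ is quasi-algebraic but far from finite \'etale, and your cited Prop.~\ref{prop: divisible subgroup} says nothing about divisible objects being \'etale. Likewise $\bigoplus_{p}\Z/p\Z$ is in $\Loc^{f}\Alg_{\uc}/k$ (its identity component is trivial) and is of cofinite type, but is \emph{not} divisibly ML, so the divisibly-ML claim for $H^{1}C$ is not forced either. In the paper these facts are not a priori: \'etale-ness of $H^{2}C$ is extracted from the isomorphism by observing that $H^{0}Z$ has trivial identity component (because $H^{1}D \in \Loc^{f}\Alg_{\uc}/k$ makes $T(H^{1}D)_{\divis}$ pro-finite-\'etale), forcing $(H^{2}C)_{0}^{\SDual'}=0$ inside $H^{0}Y\cong H^{0}Z$; and divisibly-ML-ness is derived from comparing the exact sequences for $H^{1}Y$ and $H^{1}Z$ using the \emph{finiteness} of $(\pi_{0}H^{0}C)_{\tor}$ and $(\pi_{0}H^{0}D)_{\tor}$. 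You must reorganize: the structural facts and the pairings come out of the same cohomology-by-cohomology comparison, not before it.

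The rest of your plan is in the right spirit and essentially mirrors the paper: compute $H^{i}$ of both sides using truncation triangles, split off the ``clean'' dualities in degrees where things decouple, and handle the mixing of $\delta_{\Tran}$, $\delta_{\Height}$, $\delta_{\CT}$ by a snake-lemma argument on a pair of exact sequences (the paper packages this as Lemma~\ref{lem: discriminant exact sequence} applied to \eqref{eq: source of discriminant ex seq, mapping from MW} and \eqref{eq: source of discriminant ex seq, mapping to MW}). Some of your quoted $\tensor^{L}\Q/\Z$ formulas are off, though: for $G$ an abelian variety, $G\tensor^{L}\Q/\Z$ is $(TG\tensor\Q/\Z)[1]$, not $(TG)[1]$; and for $G$ \'etale divisible torsion, $G\tensor^{L}\Q/\Z \simeq G[1]$, not $G\oplus (TG)[1]$ (compute $\Q_{p}/\Z_{p}\tensor^{L}\Q/\Z$ to see this). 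These would cause errors if pushed through the computation literally.
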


The suggestive subscripts $\CT$ and $\Tran$ will be explained after
Thm.\ \ref{thm: duality for Neron}.
The directions of the morphisms in \eqref{item: formal transcendental pairing}
and \eqref{item: formal Cassels Tate pairing} may look wrong,
but they are indeed correct.

\begin{proof}
	We set
		\[
				X = C \tensor^{L} \Q / \Z,
			\quad
				Y = R \sheafhom_{k}(X, \Q / \Z),
			\quad
				Z = R \invlim_{n}(D \tensor^{L} \Z / n \Z).
		\]
	We are going to write down the effects of the isomorphism $Z \isomto Y$ on their cohomology objects.
	For any $i$, we have
		\begin{gather*}
					0
				\to
					(H^{i} C) \tensor \Q / \Z
				\to
					H^{i} X
				\to
					(H^{i + 1} C)_{\tor}
				\to
					0,
			\\
					0
				\to
					(H^{-i + 1} X)_{0}^{\SDual'}
				\to
					H^{i} Y
				\to
					\pi_{0}(H^{-i} X)^{\PDual}
				\to
					0,
			\\
					0
				\to
					(H^{i} D)^{\wedge}
				\to
					H^{i} Z
				\to
					T(H^{i + 1} D)
				\to
					0,
		\end{gather*}
	where the second line comes from \cite[Prop.\ (2.4.1) (b)]{Suz14} and \cite[III, Thm.\ 0.14]{Mil06},
	and the third line comes from Prop.\ \ref{prop: cohomologies of derived completion}
	and \cite[Prop.\ (2.1.2) (f)]{Suz14}.
	We have $H^{-1} X = (H^{0} C)_{\tor}$.
	From the structure of $H^{0} C$ and the torsionness of $H^{1} C$ and $H^{2} C$,
	we have
		\begin{gather*}
					0
				\to
					T(H^{0} C)_{0} \tensor \Q / \Z
				\to
					H^{-1} X
				\to
					(\pi_{0} H^{0} C)_{\tor}
				\to
					0,
			\\
					0
				\to
					\pi_{0}(H^{0} C)_{/ \tor} \tensor \Q / \Z
				\to
					H^{0} X
				\to
					H^{1} C
				\to
					0,
			\\
					H^{1} X
				=
					H^{2} C,
		\end{gather*}
	and $H^{i} X = 0$ for other values of $i$.
	In particular, $H^{-1} X$ has trivial identity component.
	Using these, we have
		\begin{gather*}
					H^{-1} Y
				=
					\pi_{0}(H^{2} C)^{\PDual},
			\\
					0
				\to
					(H^{2} C)_{0}^{\SDual}
				\to
					H^{0} Y
				\to
					\pi_{0}(H^{0} X)^{\PDual}
				\to
					0,
			\\
					0
				\to
					(H^{0} X)_{0}^{\SDual'}
				\to
					H^{1} Y
				\to
					(H^{-1} X)^{\PDual}
				\to
					0,
		\end{gather*}
	and $H^{i} Y = 0$ for other values of $i$.
	On the other hand, we know that
	$T H^{0} D = T(H^{0} D)_{0}$ and
	$(H^{0} D)^{\wedge} = \pi_{0}(H^{0} D)^{\wedge}$
	from the structure of $H^{0} D$
	and that $(H^{2} D)^{\wedge} = 0$ from the divisibility of $H^{2} D$.
	Hence
		\begin{gather*}
					H^{-1} Z
				=
					T (H^{0} D)_{0},
			\\
					0
				\to
					\pi_{0}(H^{0} D)^{\wedge}
				\to
					H^{0} Z
				\to
					T(H^{1} D)_{\divis}
				\to
					0,
			\\
					0
				\to
					(H^{1} D)^{\wedge}
				\to
					H^{1} Z
				\to
					T H^{2} D
				\to
					0,
		\end{gather*}
	and $H^{i} Z = 0$ for other values of $i$.
	
	Since $H^{1} D \in \Loc^{f} \Alg_{\uc} / k$,
	we know that $H^{0} Z \cong H^{0} Y$ has trivial identity component.
	Hence so does $H^{2} C$.
	Thus $H^{2} C$ is \'etale.
	Since $H^{1} C \in \Loc^{f} \Alg_{\uc} / k$,
	we know that the identify component of $H^{0} X$ is quasi-algebraic.
	Hence so is $H^{1} Y \cong H^{1} Z$,
	and so is $T H^{2} D$.
	Therefore $H^{2} D$ is \'etale.
	
	Hence $H^{i} Z \cong H^{i} Y$ has trivial identity component for $i \ne 1$.
	Comparing $H^{-1}$, $H^{0}$, $(H^{1})_{0}$, $\pi_{0} H^{1}$ of $Y$ and $Z$, we have
		\begin{gather*}
					T(H^{0} D)_{0}
				\cong
					(H^{2} C)^{\PDual},
			\\
					0
				\to
					(\pi_{0} H^{0} D)^{\wedge}
				\to
					(\pi_{0} H^{0} X)^{\PDual}
				\to
					T(H^{1} D)_{\divis}
				\to
					0,
			\\
					((H^{1} D)^{\wedge})_{0}
				\cong
					(H^{0} X)_{0}^{\SDual'},
			\\
					0
				\to
					\pi_{0}((H^{1} D)^{\wedge})
				\to
					(H^{-1} X)^{\PDual}
				\to
					T H^{2} D
				\to
					0,
		\end{gather*}
	respectively.
	We have \eqref{item: formal duality between TGamma of A dual and H2 of A} from the first line.
	Comparing the last line with the exact sequence for $H^{-1} X$ given above
	and using the finiteness of $(\pi_{0} H^{0} C)_{\tor}$,
	we obtain \eqref{item: formal duality between TGamma of A and H2 of A dual},
	\eqref{item: formal duality between torsion MW of A dual and finite quotient of H1 A}
	and that $H^{1} D$ is divisibly ML.
	From the second line,
	we have $(\pi_{0} H^{0} X)_{/ \divis} = (\pi_{0} H^{0} D)_{\tor}^{\PDual}$,
	which is finite.
	On the other hand, the exact sequence for $H^{0} X$ given above gives
	$(\pi_{0} H^{0} X)_{/ \divis} = (\pi_{0} H^{1} C)_{/ \divis}$.
	Hence we obtain \eqref{item: formal duality between torsion MW of A and finite quotient of H1 A dual}
	and that $H^{1} C$ is divisibly ML.
	By Prop.\ \ref{prop: derived limit for multiplication},
	we have $(H^{1} D)^{\wedge} = (H^{1} D)_{/ \divis}$.
	
	From the second line, we have
		\begin{equation} \label{eq: source of discriminant ex seq, mapping from MW}
				0
			\to
				(\pi_{0} H^{0} D)_{/ \tor}^{\wedge}
			\to
				(\pi_{0} H^{0} X)_{\divis}^{\PDual}
			\to
				T(H^{1} D)_{\divis}
			\to
				0,
		\end{equation}
	where $(\pi_{0} H^{0} X)_{\divis}^{\PDual} = ((\pi_{0} H^{0} X)_{\divis})^{\PDual}$.
	Back to the relation between $X$ and $C$, we have
		\[
				0
			\to
				(\pi_{0} H^{0} C)_{/ \tor} \tensor \Q / \Z
			\to
				(H^{0} X)_{0 + \divis}
			\to
				(H^{1} C)_{0 + \divis}
			\to
				0.
		\]
	The long exact sequence of $\sheafext_{k}^{\var}(\var, \Q / \Z)$
	for this short exact sequence and
		$
				((H^{1} D)_{/ \divis})_{0}
			\cong
				(H^{0} X)_{0}^{\SDual'}
		$
	yields
		\begin{equation} \label{eq: source of discriminant ex seq, mapping to MW}
			\begin{aligned}
						0
				&	\to
						(\pi_{0} H^{1} C)_{\divis}^{\PDual}
					\to
						(\pi_{0} H^{0} X)_{\divis}^{\PDual}
					\to
						(\pi_{0} H^{0} C)_{/ \tor}^{\LDual \wedge}
				\\
				&	\to
						(H^{1} C)_{0}^{\SDual'}
					\to
						((H^{1} D)_{/ \divis})_{0}
					\to
						0.
			\end{aligned}
		\end{equation}
	We will apply the lemma below to the two exact sequences
	\eqref{eq: source of discriminant ex seq, mapping from MW}
	and \eqref{eq: source of discriminant ex seq, mapping to MW}.
	The composite
		\[
				(\pi_{0} H^{0} D)_{/ \tor}^{\wedge}
			\into
				(\pi_{0} H^{0} X)_{\divis}^{\PDual}
			\to
				(\pi_{0} H^{0} C)_{/ \tor}^{\LDual \wedge}
		\]
	is the completion of the injective morphism
	$(\pi_{0} H^{0} D)_{/ \tor} \into (\pi_{0} H^{0} C)_{/ \tor}^{\LDual}$
	with finite cokernel $\delta_{\Height}$.
	Hence we can apply the lemma, yielding
	\eqref{item: formal transcendental pairing},
	\eqref{item: formal Cassels Tate pairing} and
	\eqref{item: formal discriminant exact sequence}.
\end{proof}

\begin{Lem} \label{lem: discriminant exact sequence}
	Let
		\begin{gather*}
				0 \to F \to W \to E' \to 0,
			\\
				0 \to E \to W \to F' \to G \to G' \to 0
		\end{gather*}
	be exact sequences in an abelian category
	such that the composite $F \to W \to F'$ is injective.
	Then the composite $E \to W \to E'$ is injective, and we have an exact sequence
		\[
			0 \to \Coker(E \into E') \to \Coker(F \into F') \to \Ker(G \onto G') \to 0.
		\]
\end{Lem}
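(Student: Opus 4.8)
The lemma is purely formal, and the plan is to prove it by working entirely inside the subobject lattice of $W$ (appealing to the Freyd--Mitchell embedding theorem if one prefers to chase elements). Write the two given sequences as
\[
	0 \to F \overset{\alpha}{\to} W \overset{\beta}{\to} E' \to 0,
	\qquad
	0 \to E \overset{\gamma}{\to} W \overset{\delta}{\to} F' \overset{\epsilon}{\to} G \overset{\zeta}{\to} G' \to 0,
\]
and identify $F$ with the subobject $\Im \alpha \subseteq W$ and $E$ with the subobject $\Im \gamma = \Ker \delta \subseteq W$. Then $E' = W / F$; moreover $\delta$ factors as $W \onto W / E \isomto \Im \delta \into F'$, where $\Im \delta = \Ker \epsilon$ by exactness at $F'$, so $F' / \Im \delta \cong \Im \epsilon = \Ker \zeta = \Ker(G \onto G')$ by exactness at $G$ and surjectivity of $\zeta$.

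First I would dispose of the injectivity claim. Since $\delta \alpha$ is a monomorphism and $\alpha$ is a monomorphism, the restriction of $\delta$ to the subobject $F = \Im \alpha$ is a monomorphism; hence $F \cap \Ker \delta = 0$, i.e.\ $E \cap F = 0$ inside $W$. Viewing $\Ker(\beta \gamma) \subseteq E$ as $\gamma^{-1}(\Ker \beta) = E \cap \Im \alpha = E \cap F$, we get $\Ker(\beta \gamma) = 0$, so the composite $E \to W \to E'$ is injective.

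Next I would compute both cokernels as quotients of $W$. The image of $\beta \gamma \colon E \to W / F$ is $(E + F) / F$, so $\Coker(E \into E') = W / (E + F)$. On the other side, $\delta \alpha(F)$ is the image of $F$ in $W / E \cong \Im \delta$, namely $(E + F) / E$, and along the filtration $\delta \alpha(F) \subseteq \Im \delta \subseteq F'$ one has $\Im \delta / \delta \alpha(F) \cong (W / E) / ((E + F) / E) = W / (E + F)$ together with $F' / \Im \delta \cong \Ker(G \onto G')$. The short exact sequence attached to this two-step filtration of $F'/\delta\alpha(F)$ then reads
\[
	0 \to W / (E + F) \to \Coker(F \into F') \to \Ker(G \onto G') \to 0,
\]
which is exactly the asserted sequence after substituting $W / (E + F) = \Coker(E \into E')$.

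The only point needing care is the bookkeeping of identifications: one must check that the left-hand map in this sequence is the natural one induced on cokernels and that the right-hand map is the one induced by $\epsilon$. This is routine once the subobjects of $W$ have been named, and the argument uses nothing beyond exactness of the two sequences and the hypothesis that $F \to W \to F'$ is injective; so I do not expect any genuine obstacle here.
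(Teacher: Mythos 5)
Your proof is correct, and the paper itself gives no argument (it simply writes ``Elementary''), so there is nothing to compare against. Your route through the subobject lattice of $W$ — computing both cokernels as the quotient $W/(E+F)$, respectively as the middle term of the two-step filtration $\delta\alpha(F)\subseteq\Im\delta\subseteq F'$ of $F'$ — is the natural way to extract the asserted three-term sequence, and the identifications $\Im\delta/\delta\alpha(F)\cong W/(E+F)$ and $F'/\Im\delta\cong\Ker\zeta$ are exactly the bookkeeping one would do. Your closing remark about checking that the maps are the intended ones is a reasonable caution, but note that the lemma as stated only asserts the existence of an exact sequence with those three terms, so for the lemma itself this verification is not strictly required; it only matters when one later needs the maps to be compatible with other constructions, as in the application following the lemma.
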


\begin{proof}
	Elementary.
\end{proof}

If the groups $(H^{1} C)_{0 \cap \divis}$, $(H^{1} D)_{0 \cap \divis}$
(which are in general finite \'etale $p$-groups by Prop.\ \ref{prop: connected divisible part}) are zero,
then the morphisms in \eqref{item: formal transcendental pairing}, \eqref{item: formal Cassels Tate pairing}
are simplified to more symmetric expressions
	\[
			(H^{1} C)_{\divis}^{\PDual}
		\into
			T (H^{1} D)_{\divis},
		\quad
			(H^{1} C)_{0}^{\SDual'}
		\onto
			(H^{1} D)_{0}.
	\]
We do not assume these conditions.
See Rmk.\ \ref{rmk: connected divisible part} for a little more details about this point.

\begin{Prop} \label{prop: cohom of derived completion of Neron}
	The cohomology objects $H^{n}$ of the complex
	$R \invlim_{n}(D \tensor^{L} \Z / n \Z)$
	are described as follows:
		\begin{gather*}
					H^{-1}
				=
					T(H^{0} D)_{0}
			\\
					0
				\to
					(\pi_{0} H^{0} D)^{\wedge}
				\to
					H^{0}
				\to
					T (H^{1} D)_{\divis}
				\to
					0,
			\\
					0
				\to
					(H^{1} D)_{/ \divis}
				\to
					H^{1}
				\to
					T H^{2} D
				\to
					0,
		\end{gather*}
	and $H^{n} = 0$ for $n \ne -1, 0, 1$.
	The cohomology objects $H^{n}$ of the complex
	$R \invlim_{n} (C^{\SDual} \tensor^{L} \Z / n \Z)$
	are described as follows:
		\begin{gather*}
					H^{-1}
				=
					(H^{2} C)^{\PDual},
			\\
					0
				\to
					(\pi_{0} H^{1} C)^{\PDual}
				\to
					H^{0}
				\to
					(\pi_{0} H^{0} C)_{/ \tor}^{\LDual \wedge}
				\to
					\delta_{\CT}
				\to
					0,
			\\
					0
				\to
					((H^{1} D)_{/ \divis})_{0}
				\to
					H^{1}
				\to
					(H^{0} C)_{\tor}^{\PDual}
				\to
					0,
		\end{gather*}
	and $H^{n} = 0$ for $n \ne -1, 0, 1$.
\end{Prop}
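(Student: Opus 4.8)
The plan is to read off both blocks of formulas from the computations already carried out in the proof of Prop.\ \ref{prop: each part of Q mod Z duality for Neron models}. Recall from there the objects $X = C \tensor^{L} \Q / \Z$, $Y = R \sheafhom_{k}(X, \Q / \Z)$ and $Z = R \invlim_{n}(D \tensor^{L} \Z / n \Z)$. Since $C^{\SDual} = R \sheafhom_{k}(C, \Z)$, the proof of Prop.\ \ref{prop: Q mod Z duality for Neron models} identifies $R \invlim_{n}(C^{\SDual} \tensor^{L} \Z / n \Z)$ with $Y$, and Prop.\ \ref{prop: duality for Neron after derived completion} provides an isomorphism $Z \isomto Y$; so it suffices to describe the cohomology objects of $Z$ (for the first block) and of $Y$ (for the second).

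First I would handle $Z$. The cited proof already records $H^{-1} Z = T(H^{0} D)_{0}$, the short exact sequences $0 \to (\pi_{0} H^{0} D)^{\wedge} \to H^{0} Z \to T(H^{1} D)_{\divis} \to 0$ and $0 \to (H^{1} D)^{\wedge} \to H^{1} Z \to T H^{2} D \to 0$, and the vanishing of $H^{n} Z$ otherwise. It also shows $H^{1} D$ is divisibly ML, so Prop.\ \ref{prop: derived limit for multiplication} gives $(H^{1} D)^{\wedge} = (H^{1} D)_{/ \divis}$; substituting this yields precisely the stated first block. Next, for $Y$, the same proof records $H^{-1} Y = \pi_{0}(H^{2} C)^{\PDual}$, the short exact sequence $0 \to (H^{0} X)_{0}^{\SDual'} \to H^{1} Y \to (H^{-1} X)^{\PDual} \to 0$, and $H^{0} Y = \pi_{0}(H^{0} X)^{\PDual}$ (the would-be subobject of $H^{0} Y$ coming from $(H^{2} C)_{0}$ vanishing because $H^{2} C$ is shown to be \'etale there). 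Using $H^{2} C$ \'etale again gives $H^{-1} Y = (H^{2} C)^{\PDual}$, and using $(H^{0} X)_{0}^{\SDual'} = ((H^{1} D)_{/ \divis})_{0}$ and $H^{-1} X = (H^{0} C)_{\tor}$, both also established in that proof, the sequence for $H^{1} Y$ becomes the claimed one for $H^{1}$ of $R \invlim_{n}(C^{\SDual} \tensor^{L} \Z / n \Z)$.

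It then remains to rewrite $H^{0} Y = \pi_{0}(H^{0} X)^{\PDual}$ in the asserted four-term form. For this I would Pontryagin-dualize the filtration $0 \to (\pi_{0} H^{0} X)_{\divis} \to \pi_{0} H^{0} X \to (\pi_{0} H^{0} X)_{/ \divis} \to 0$, use the identification $(\pi_{0} H^{0} X)_{/ \divis} = (\pi_{0} H^{1} C)_{/ \divis}$ from the cited proof, and splice the result against the initial segment $0 \to (\pi_{0} H^{1} C)_{\divis}^{\PDual} \to (\pi_{0} H^{0} X)_{\divis}^{\PDual} \to (\pi_{0} H^{0} C)_{/ \tor}^{\LDual \wedge} \to \delta_{\CT} \to 0$ of the exact sequence \eqref{eq: source of discriminant ex seq, mapping to MW}; the two-step extension of $(\pi_{0} H^{1} C)_{/ \divis}^{\PDual}$ by $(\pi_{0} H^{1} C)_{\divis}^{\PDual}$ that emerges is $(\pi_{0} H^{1} C)^{\PDual}$, by exactness of $\sheafhom_{k}(\var, \Q / \Z)$ on the torsion \'etale group $\pi_{0} H^{1} C$ (\cite[Prop.\ (2.4.1)]{Suz14}), which produces $0 \to (\pi_{0} H^{1} C)^{\PDual} \to H^{0} \to (\pi_{0} H^{0} C)_{/ \tor}^{\LDual \wedge} \to \delta_{\CT} \to 0$. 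The only genuine work is this last diagram chase; the substantive content of the proposition was already obtained while proving Prop.\ \ref{prop: each part of Q mod Z duality for Neron models}, so I do not anticipate a serious obstacle — the one point requiring mild care is matching up the connecting maps in the splicing so that the extension really is $(\pi_{0} H^{1} C)^{\PDual}$ and not merely some extension of that shape.
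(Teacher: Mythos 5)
Your proposal is correct and follows the paper's argument almost exactly: read off the cohomology of $Z$ and $Y$ from what was already established in the proof of Prop.\ \ref{prop: each part of Q mod Z duality for Neron models}, then make the same substitutions ($(H^1 D)^{\wedge} = (H^1 D)_{/\divis}$ via divisible ML; $H^2 C$ \'etale; $(H^0 X)_0^{\SDual'} = ((H^1 D)_{/\divis})_0$; $H^{-1} X = (H^0 C)_{\tor}$). The only point of divergence is the four-term sequence for $H^0$ of $R\invlim_n(C^{\SDual}\tensor^L\Z/n\Z)$. There the paper simply re-runs the derivation of \eqref{eq: source of discriminant ex seq, mapping to MW} without restricting to the $(\var)_{0+\divis}$ parts---applying $\sheafext_k^\var(\var,\Q/\Z)$ to $0 \to (\pi_0 H^0 C)_{/\tor}\tensor\Q/\Z \to H^0 X \to H^1 C \to 0$ together with $(H^0 X)_0^{\SDual'} \cong ((H^1 D)_{/\divis})_0$---yielding the untruncated five-term sequence
\[
0 \to (\pi_0 H^1 C)^{\PDual} \to (\pi_0 H^0 X)^{\PDual} \to (\pi_0 H^0 C)_{/\tor}^{\LDual\wedge} \to (H^1 C)_0^{\SDual'} \to ((H^1 D)_{/\divis})_0 \to 0,
\]
from which the stated four-term sequence falls out because the kernel of the final surjection is by definition $\delta_{\CT}$. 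You instead splice the truncated \eqref{eq: source of discriminant ex seq, mapping to MW} with the Pontryagin dual of the divisible filtration of $\pi_0 H^0 X$. That also works, and the ``mild care'' you flag resolves cleanly: the spliced composite $(\pi_0 H^0 X)^{\PDual} \onto (\pi_0 H^0 X)_{\divis}^{\PDual} \to (\pi_0 H^0 C)_{/\tor}^{\LDual\wedge}$ is exactly the Pontryagin dual of the inclusion $(\pi_0 H^0 C)_{/\tor}\tensor\Q/\Z \into \pi_0 H^0 X$ (which factors through $(\pi_0 H^0 X)_{\divis}$ since the source is divisible), so its kernel is the Pontryagin dual of the cokernel $\pi_0 H^1 C$ with the correct extension class, not merely an extension of the right shape. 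The paper's route is marginally cleaner because it sidesteps this compatibility check, but the content of the two arguments is the same.
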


\begin{proof}
	This is mostly given in the proof of Prop.\ \ref{prop: each part of Q mod Z duality for Neron models}.
	We show here only the second line for the second complex.
	Similar to \eqref{eq: source of discriminant ex seq, mapping to MW},
	we have an exact sequence
		\[
			\begin{aligned}
						0
				&	\to
						(\pi_{0} H^{1} C)^{\PDual}
					\to
						(\pi_{0} H^{0} X)^{\PDual}
					\to
						(\pi_{0} H^{0} C)_{/ \tor}^{\LDual \wedge}
				\\
				&	\to
						(H^{1} C)_{0}^{\SDual'}
					\to
						((H^{1} D)_{/ \divis})_{0}
					\to
						0.
			\end{aligned}
		\]
	The $H^{0}$ of the second complex in the statement is $(\pi_{0} H^{0} X)^{\PDual}$.
	The kernel of the last surjection is $\delta_{\CT}$.
	Hence the get the desired exact sequence.
\end{proof}

Tensoring $\Q$ to the isomorphism in Prop.\ \ref{prop: duality for Neron after derived completion},
we have an isomorphism
	\begin{equation} \label{eq: adelic duality for Neron}
			D \ctensor \Adele^{\infty}
		\isomto
			C^{\SDual} \ctensor \Adele^{\infty},
	\end{equation}
where $\Adele^{\infty} = \Hat{\Z} \tensor_{\Z} \Q \in \Ind \Pro \FEt / k$
as in \S \ref{sec: The ind-rational pro-etale site and some derived limits}.

\begin{Prop} \label{prop: each part of adelic duality}
	The cohomology objects $H^{n}$ of the complex
	$D \ctensor \Adele^{\infty}$
	are described as follows:
		\begin{gather*}
					H^{-1}
				=
					V (H^{0} D)_{0},
			\\
					0
				\to
						(\pi_{0} H^{0} D)_{/ \tor}
					\tensor
						\Adele^{\infty}
				\to
					H^{0}
				\to
					V (H^{1} D)_{\divis}
				\to
					0,
			\\
					H^{1}
				=
					V H^{2} D,
		\end{gather*}
	and $H^{n} = 0$ for other degrees.
	The cohomology objects $H^{n}$ of the complex
	$C^{\SDual} \ctensor \Adele^{\infty}$
	are described as follows:
		\begin{gather*}
					H^{-1}
				=
					(V H^{2} C)^{\PDual},
			\\
					0
				\to
					(V (H^{1} C)_{\divis})^{\PDual}
				\to
					H^{0}
				\to
						(\pi_{0} H^{0} C)_{/ \tor}^{\LDual}
					\tensor
						\Adele^{\infty}
				\to
					0,
			\\
					H^{1}
				=
					(V (H^{0} C)_{0})^{\PDual},
		\end{gather*}
	and $H^{n} = 0$ for other degrees.
\end{Prop}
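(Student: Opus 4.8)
The plan is to read off the cohomology by rationalizing computations that have already been made. By Prop.~\ref{prop: direct inverse adelic rationalization triangle} we have $D \ctensor \Adele^{\infty} = \bigl( R \invlim_{n}(D \tensor^{L} \Z / n \Z) \bigr) \tensor \Q$ and, in the same way, $C^{\SDual} \ctensor \Adele^{\infty} = \bigl( R \invlim_{n}(C^{\SDual} \tensor^{L} \Z / n \Z) \bigr) \tensor \Q$. Since $(\var) \tensor \Q$ is exact, $H^{n}$ of each of these complexes is obtained by applying $(\var) \tensor \Q$ to the cohomology object described in Prop.~\ref{prop: cohom of derived completion of Neron}; in particular both are concentrated in degrees $-1, 0, 1$, and the task reduces to rationalizing each subquotient appearing there.

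First I would record the elementary rationalization rules that cover every term. If $B$ is an abelian variety over $k$ then $(T B) \tensor \Q = V B$, and likewise $(T A) \tensor \Q = V A$ for $A \in \Ind^{f} \Alg_{\uc} / k$ a divisible torsion \'etale group. If $M$ is a finitely generated \'etale group then $M^{\wedge} = M \tensor_{\Z} \Hat{\Z}$, so $M^{\wedge} \tensor \Q = M_{/ \tor} \tensor_{\Z} \Adele^{\infty}$, and similarly $M_{/ \tor}^{\LDual \wedge} \tensor \Q = M_{/ \tor}^{\LDual} \tensor_{\Z} \Adele^{\infty}$. Finally, any sheaf killed by multiplication by a positive integer has trivial $(\var) \tensor \Q$; this kills $(H^{1} D)_{/ \divis}$ and $((H^{1} D)_{/ \divis})_{0}$ (both in $\Alg_{\uc} / k$, since $H^{1} C, H^{1} D$ are divisibly ML by Prop.~\ref{prop: each part of Q mod Z duality for Neron models}), the finite group $\delta_{\CT}$, and the finite component groups $(\pi_{0} H^{0} C)_{\tor}$, $(\pi_{0} H^{0} D)_{\tor}$. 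Feeding these into the three short exact sequences and identifications for $R \invlim_{n}(D \tensor^{L} \Z / n \Z)$ (so $(\pi_{0} H^{0} D)^{\wedge} \tensor \Q = (\pi_{0} H^{0} D)_{/ \tor} \tensor \Adele^{\infty}$, $T(H^{1} D)_{\divis} \tensor \Q = V (H^{1} D)_{\divis}$, $T(H^{0} D)_{0} \tensor \Q = V (H^{0} D)_{0}$, $T H^{2} D \tensor \Q = V H^{2} D$) gives exactly the stated description of $D \ctensor \Adele^{\infty}$.

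For $C^{\SDual} \ctensor \Adele^{\infty}$ the one genuinely delicate point is the behaviour under $(\var) \tensor \Q$ of the Pontryagin-dual terms $(H^{2} C)^{\PDual}$, $(\pi_{0} H^{1} C)^{\PDual}$, $(H^{0} C)_{\tor}^{\PDual}$ and of $(\pi_{0} H^{0} C)_{/ \tor}^{\LDual \wedge}$. Here I would use: that a divisible torsion \'etale group $A$ satisfies $A^{\PDual} = T A$, hence $A^{\PDual} \tensor \Q = V A$, together with the self-duality $\Q_{l}^{\PDual} \cong \Q_{l}$ of rational Tate modules (\S~\ref{sec: The ind-rational pro-etale site and some derived limits}), which identifies $V A$ with $(V A)^{\PDual}$; that by Prop.~\ref{prop: criteiron for algebraicity of identity component} the group $(H^{1} C)_{\divis}$ has trivial identity component, while by Prop.~\ref{prop: connected divisible part} it differs from $\pi_{0}(H^{1} C)_{\divis}$ only by a finite $p$-group, which dies under $V$; and that $(H^{0} C)_{\tor}^{\PDual} \tensor \Q$ is the rational Tate module of the dual abelian variety of $(H^{0} C)_{0}$, which via the Weil pairing is $(V (H^{0} C)_{0})^{\PDual}$. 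This yields $(H^{2} C)^{\PDual} \tensor \Q = (V H^{2} C)^{\PDual}$, $(\pi_{0} H^{1} C)^{\PDual} \tensor \Q = (V (H^{1} C)_{\divis})^{\PDual}$, $(H^{0} C)_{\tor}^{\PDual} \tensor \Q = (V (H^{0} C)_{0})^{\PDual}$, $(\pi_{0} H^{0} C)_{/ \tor}^{\LDual \wedge} \tensor \Q = (\pi_{0} H^{0} C)_{/ \tor}^{\LDual} \tensor \Adele^{\infty}$ and $\delta_{\CT} \tensor \Q = 0$; assembling these into the three exact sequences of Prop.~\ref{prop: cohom of derived completion of Neron} gives the displayed description of $C^{\SDual} \ctensor \Adele^{\infty}$.

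The main obstacle is exactly this last bookkeeping: keeping straight which of $T$, $V$, $(\var)^{\PDual}$, $(\var)^{\LDual}$, $(\var)^{\wedge}$ commute with $(\var) \tensor \Q$ and which do not, and checking that the error terms split off in Prop.~\ref{prop: cohom of derived completion of Neron} (finite groups and connected unipotent quasi-algebraic groups) are precisely the ones annihilated by rationalization. As a consistency check --- not needed for the proof --- the two descriptions must correspond term by term under the isomorphism \eqref{eq: adelic duality for Neron}; for instance $V (H^{0} D)_{0} = (V H^{2} C)^{\PDual}$ also follows from the Pontryagin duality $T (H^{0} D)_{0} \cong (H^{2} C)^{\PDual}$ already produced in Prop.~\ref{prop: each part of Q mod Z duality for Neron models}.
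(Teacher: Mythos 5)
Your proof is correct and takes exactly the paper's route: the paper's own proof is the one-liner ``This follows from the previous proposition,'' and what you have written is precisely the bookkeeping that line compresses --- apply the exact functor $(\var)\tensor\Q$ to the cohomology objects of $R\invlim_n(D\tensor^L\Z/n\Z)$ and $R\invlim_n(C^{\SDual}\tensor^L\Z/n\Z)$ as described in Prop.~\ref{prop: cohom of derived completion of Neron}, using that finite groups, quasi-algebraic unipotent groups, and the discrepancies of Prop.~\ref{prop: connected divisible part} are killed by rationalization, while $T$, $(\var)^\wedge$, and $(\var)^{\PDual}$ on the remaining pieces rationalize to $V$, $\tensor\,\Adele^\infty$, and $(V\,\var)^{\PDual}$ respectively.
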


\begin{proof}
	This follows from the previous proposition.
\end{proof}

\begin{Prop} \label{prop: adelic duality for Neron}
	The isomorphism \eqref{eq: adelic duality for Neron} induces Pontryagin duality between
	$V (H^{0} D)_{0}$ and $V H^{2} C$
	and between $V H^{2} D$ and $V (H^{0} C)_{0}$.
	In the previous proposition,
	both of the sequence for $H^{0}$ of $D \ctensor \Adele^{\infty}$
	and the sequence for $H^{0}$ of $C \ctensor \Adele^{\infty}$
	canonically split.
	The parts
		$
				(\pi_{0} H^{0} D)_{/ \tor}
			\tensor
				\Adele^{\infty}
		$
	and
		$
				(\pi_{0} H^{0} C)_{/ \tor}
			\tensor
				\Adele^{\infty}
		$
	are Pontryagin dual to each other.
	The parts $V (H^{1} D)_{\divis}$ and $V (H^{1} C)_{\divis}$ are Pontryagin dual to each other.
\end{Prop}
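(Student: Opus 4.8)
The plan is to read off all the assertions of Proposition \ref{prop: adelic duality for Neron} from the cohomology description in Proposition \ref{prop: each part of adelic duality} together with the fact that $D \ctensor \Adele^{\infty} \isomto C^{\SDual} \ctensor \Adele^{\infty}$ is an isomorphism in $D(k)$ that is natural in all the constructions, so that it induces isomorphisms $H^{n}(D \ctensor \Adele^{\infty}) \isomto H^{n}(C^{\SDual} \ctensor \Adele^{\infty})$ compatible with the exact sequences describing each side. Comparing the two descriptions degree by degree: in degree $-1$ we get $V(H^{0}D)_{0} \isomto (VH^{2}C)^{\PDual}$, i.e.\ Pontryagin duality between $V(H^{0}D)_{0}$ and $VH^{2}C$; in degree $1$ we get $VH^{2}D \isomto (V(H^{0}C)_{0})^{\PDual}$, i.e.\ Pontryagin duality between $VH^{2}D$ and $V(H^{0}C)_{0}$.

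For the degree-zero part, first I would observe that the sequence
	\[
			0
		\to
				(\pi_{0} H^{0} D)_{/ \tor}
			\tensor
				\Adele^{\infty}
		\to
			H^{0}(D \ctensor \Adele^{\infty})
		\to
			V (H^{1} D)_{\divis}
		\to
			0
	\]
has sub-object an object of $\Ind \Pro \FEt / k$ that is torsion-free in the sense of having a lattice times $\Adele^{\infty}$ shape, while the quotient $V(H^{1}D)_{\divis}$ is $p$-divisible-type (it lies in $\Ind \Pro \FEt/k$ but is built from the connected-unipotent-free part $(H^{1}D)_{\divis}$ and is uniquely divisible with no ``$\Adele^{\infty}$-lattice'' summand of the first kind). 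More concretely, $(\pi_{0}H^{0}D)_{/\tor}\tensor \Adele^{\infty}$ is, away from $p$, $L\tensor\Adele^{\infty}$ for a lattice $L$, while $V(H^{1}D)_{\divis} = V(H^{1}D)$ has trivial prime-to-$p$ part exactly when\,\dots — rather than chase this, the clean argument is that the surjection $H^{0}(D\ctensor\Adele^{\infty}) \onto V(H^{1}D)_{\divis}$ can be characterized intrinsically: $V(H^{1}D)_{\divis}$ is the maximal quotient on which multiplication by every $n$ is invertible and which has no pro-finite-\'etale sub-\dots . The cheapest route is to produce the splitting by the same mechanism that produced the exact sequence: the sequence for $H^{0}$ of $D \ctensor \Adele^{\infty}$ arises from the exact triangle $R\invlim_{n}D \to D\tensor\Q \to D\ctensor\Adele^{\infty}$ (Prop.\ \ref{prop: direct inverse adelic rationalization triangle}) applied to the pieces of $D$, and $H^{0}$ picks up $H^{0}(D)\tensor\Q$-contributions (which vanish after $\ctensor\Adele^{\infty}$ in the relevant range since $H^{0}D$ is finitely generated \'etale $\oplus$ abelian variety) plus a $T(\var)\tensor\Q = V(\var)$-contribution from $H^{1}D$; tracking which summand of $H^{0}D$ and $H^{1}D$ each term comes from gives a canonical direct sum decomposition $H^{0}(D\ctensor\Adele^{\infty}) = \bigl((\pi_{0}H^{0}D)_{/\tor}\tensor\Adele^{\infty}\bigr) \oplus V(H^{1}D)_{\divis}$, because the first summand is exactly the image of $(\pi_{0}H^{0}D)_{/\tor}\tensor\Q$ under the natural map and the second is exactly $VH^{1}(D_{\divis})$, and these two are complementary by construction.

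Once both degree-zero sequences split canonically, the isomorphism $H^{0}(D\ctensor\Adele^{\infty}) \isomto H^{0}(C^{\SDual}\ctensor\Adele^{\infty})$ must respect the two distinguished summands, because on the $D$-side the summand $(\pi_{0}H^{0}D)_{/\tor}\tensor\Adele^{\infty}$ is characterized as the part built from $H^{0}$, and on the $C^{\SDual}$-side the matching summand $(\pi_{0}H^{0}C)_{/\tor}^{\LDual}\tensor\Adele^{\infty}$ is likewise characterized as the part built from $H^{0}C$ via $\SDual$; the duality morphism $C\tensor^{L}D \to \Z$ is constructed degreewise compatibly with the Weil-restriction/height structure, and the identification in Prop.\ \ref{prop: component group compatibitily morphism} and assumption \eqref{item: height pairing for C and D} of \S\ref{sec: Formal steps towards duality for Neron models} pin down that the induced map between these two lattice-times-$\Adele^{\infty}$ summands is the $\Adele^{\infty}$-linearization of the height pairing, which is injective with finite cokernel, hence an isomorphism after $\tensor\Adele^{\infty}$. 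Therefore the two summands are Pontryagin dual, and by taking the complementary summands (or directly from the diagram), $V(H^{1}D)_{\divis}$ and $V(H^{1}C)_{\divis}$ are Pontryagin dual as well. I expect the main obstacle to be the canonicity of the splitting: one must check that the direct sum decomposition of $H^{0}(D\ctensor\Adele^{\infty})$ is natural enough to be respected by the abstract isomorphism \eqref{eq: adelic duality for Neron}, rather than merely existing abstractly; this is handled by realizing both summands as images/coimages of maps that exist already at the level of $D$ (resp.\ $C^{\SDual}$) before applying $\ctensor\Adele^{\infty}$, namely the maps out of $R\invlim_{n}D \tensor \Q$ and into $D\tensor\Q$ in the triangle of Prop.\ \ref{prop: direct inverse adelic rationalization triangle}, whose formation is functorial and hence transported by \eqref{eq: adelic duality for Neron}.
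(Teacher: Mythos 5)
Your argument does land on the right fact --- the map $(\pi_0 H^0 D)_{/\tor}\otimes\Adele^\infty \to (\pi_0 H^0 C)_{/\tor}^\LDual\otimes\Adele^\infty$ is an isomorphism because $\delta_\Height$ is finite and $\delta_\Height\otimes\Adele^\infty = 0$ --- but the pre-splitting machinery you wrap around it is both unnecessary and, as set up, incorrect. The paper's proof is just this one sentence. Since $(\pi_0 H^0 D)_{/\tor}\otimes\Adele^\infty$ is canonically a \emph{sub} of $H^0(D\ctensor\Adele^\infty)$ while $(\pi_0 H^0 C)_{/\tor}^\LDual\otimes\Adele^\infty$ is canonically a \emph{quotient} of $H^0(C^\SDual\ctensor\Adele^\infty)$ (Prop.\ \ref{prop: each part of adelic duality}), there is a canonical composite sub$\,\to\,$iso$\,\to\,$quotient map that requires no splitting to write down. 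Once that composite is an isomorphism, the inverse of the composite precomposed with the inclusion and the isomorphism \eqref{eq: adelic duality for Neron} furnishes a section of the surjection onto $(\pi_0 H^0 C)_{/\tor}^\LDual\otimes\Adele^\infty$, and symmetrically a retraction of the injection from $(\pi_0 H^0 D)_{/\tor}\otimes\Adele^\infty$; so both sequences split canonically, the lattice-adelic summands are identified, and the complementary summands $V(H^1D)_{\divis}$ and $(V(H^1C)_{\divis})^\PDual$ are forced to be identified as well. Nothing else needs to be characterized ``intrinsically''.

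The intrinsic pre-splitting you propose has a genuine gap, not just inelegance. The triangle $R\invlim_n \to \otimes\Q \to \ctensor\Adele^\infty$ filters $H^0(D^{\SDual\SDual}\ctensor\Adele^\infty)$ by the image of $H^0(D^{\SDual\SDual}\otimes\Q)=(\pi_0H^0D)_{/\tor}\otimes\Q$ with quotient $H^1(R\invlim_n D^{\SDual\SDual})$; comparing with Prop.\ \ref{prop: each part of inverse by multiplication duality} and Prop.\ \ref{prop: each part of adelic duality}, this is a strictly finer three-step filtration with graded pieces $(\pi_0H^0D)_{/\tor}\otimes\Q$, $(\pi_0H^0D)_{/\tor}\otimes\Adele^\infty/\Q$, and $VH^1D$, and the triangle does not split it. Worse, on the $C^\SDual$ side the image of $H^0(C^\SDual\otimes\Q)$ in $H^0(C^\SDual\ctensor\Adele^\infty)$ already contains all of the \emph{complementary} sub $(V(H^1C)_{\divis})^\PDual$ (cf.\ Prop.\ \ref{prop: each part of Q tensor duality}: $(VH^1 C)^\PDual$ sits inside $H^0(C^\SDual\otimes\Q)$), so ``the summand built from $H^0$ via $\otimes\Q$'' does not pick out the lattice-adelic piece on that side, and your claim that the abstract isomorphism ``must respect the two distinguished summands'' by this characterization fails. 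Delete the pre-splitting and go straight at the sub-to-quotient composite.
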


\begin{proof}
	The induced morphism
		\[
					(\pi_{0} H^{0} D)_{/ \tor}
				\tensor
					\Adele^{\infty}
			\to
				(\pi_{0} H^{0} C)_{/ \tor}^{\LDual}
			\tensor
				\Adele^{\infty}
		\]
	is an isomorphism.
	The rest follows from this.
\end{proof}

\begin{Prop} \label{prop: each term of duality for Neron}
	The cohomology objects $H^{n}$ of the complex $D^{\SDual \SDual}$
	are described as follows:
		\begin{gather*}
					H^{-1}
				=
					T (H^{0} D)_{0},
			\quad
					H^{0}
				=
					\pi_{0} H^{0} D,
			\\
					H^{1}
				=
					H^{1} D,
			\quad
					H^{2}
				=
					H^{2} D,
		\end{gather*}
	and $H^{n} = 0$ for other values of $n$.
	The cohomology objects $H^{n}$ of the complex $C^{\SDual}$
	are described as follows:
		\begin{gather*}
					H^{-1}
				=
					(H^{2} C)^{\PDual},
			\\
				\begin{aligned}
							0
					&	\to
							(\pi_{0} H^{1} C)^{\PDual}
						\to
							H^{0}
						\to
							(\pi_{0} H^{0} C)_{/ \tor}^{\LDual}
					\\
					&	\to
							(H^{1} C)_{0}^{\SDual'}
						\to
							H^{1}
						\to
							(\pi_{0} H^{0} C)_{\tor}^{\PDual}
						\to
							0,
				\end{aligned}
			\\
					H^{2}
				=
					(T (H^{0} C)_{0})^{\PDual},
		\end{gather*}
	and $H^{n} = 0$ for other values of $n$.
\end{Prop}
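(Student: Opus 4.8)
The plan is to compute both complexes from the canonical truncation triangles of $C$ and $D$, by transporting the explicit Serre duals of the elementary constituents recorded in \cite[Prop.\ (2.4.1)]{Suz14} and the list of examples following its proof. The constituents needed, with their duals, are: a lattice $M$, with $M^{\SDual} = M^{\LDual}[0]$; a torsion \'etale group $F$ of cofinite type (in particular an object of $\Ind \FEt / k$, or a finite \'etale group), which is Serre reflexive with $F^{\SDual} = F^{\PDual}[-1]$, and dually a pro-finite-\'etale group $P$, Serre reflexive with $P^{\SDual} = P^{\PDual}[-1]$; a connected unipotent quasi-algebraic group $G$, Serre reflexive with $G^{\SDual} = G^{\SDual'}[-2]$ and $G^{\SDual'}$ again connected unipotent quasi-algebraic; and an abelian variety $B$, with $B^{\SDual} = (T B)^{\PDual}[-2]$, hence $B^{\SDual \SDual} = ((T B)^{\PDual})^{\SDual}[2] = T B[1]$, so $B^{\SDual \SDual}$ is concentrated in degree $-1$ with $H^{-1} = T B$. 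Since Serre reflexivity is preserved by distinguished triangles (apply the five lemma in $D(k)$ to the morphism of triangles furnished by the evaluation map $\mathrm{id} \to (\var)^{\SDual \SDual}$), the hypotheses \eqref{item: structure of cohom of C} and \eqref{item: structure of cohom of D}, together with the \'etaleness of $H^{2} C$ and $H^{2} D$ (Prop.\ \ref{prop: each part of Q mod Z duality for Neron models}), imply that $\pi_{0} H^{0} C$ and $\pi_{0} H^{0} D$ (finitely generated \'etale, hence extensions of a lattice by a finite \'etale group), the objects $H^{1} C, H^{1} D \in \Loc^{f} \Alg_{\uc} / k$ (extensions of a torsion \'etale group of cofinite type by a connected unipotent quasi-algebraic group), and $H^{2} C, H^{2} D$ are all Serre reflexive; consequently $\tau_{\ge 1} C$ and $\tau_{\ge 1} D$ are Serre reflexive.

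For $D^{\SDual \SDual}$: apply $(\var)^{\SDual \SDual}$ to the canonical triangle $H^{0} D[0] \to D \to \tau_{\ge 1} D$. The third term is unchanged and concentrated in degrees $1, 2$ with cohomology $H^{1} D$ and $H^{2} D$. For the first term, apply $(\var)^{\SDual \SDual}$ to $0 \to (H^{0} D)_{0} \to H^{0} D \to \pi_{0} H^{0} D \to 0$; using $((H^{0} D)_{0})^{\SDual \SDual} = T(H^{0} D)_{0}[1]$ (the identity component being an abelian variety) and the reflexivity of $\pi_{0} H^{0} D$, the long exact sequence gives $H^{-1}((H^{0} D)^{\SDual \SDual}) = T(H^{0} D)_{0}$ and $H^{0}((H^{0} D)^{\SDual \SDual}) = \pi_{0} H^{0} D$ and nothing else, the connecting map being forced to vanish for degree reasons. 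As these two pieces occupy disjoint degree ranges, the triangle for $D$ yields $H^{-1} D^{\SDual \SDual} = T(H^{0} D)_{0}$, $H^{0} D^{\SDual \SDual} = \pi_{0} H^{0} D$, $H^{1} D^{\SDual \SDual} = H^{1} D$, $H^{2} D^{\SDual \SDual} = H^{2} D$, and $H^{n} D^{\SDual \SDual} = 0$ otherwise.

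For $C^{\SDual}$: apply $(\var)^{\SDual} = R \sheafhom_{k}(\var, \Z)$ to the triangle $H^{0} C[0] \to C \to \tau_{\ge 1} C$, obtaining a distinguished triangle $(\tau_{\ge 1} C)^{\SDual} \to C^{\SDual} \to (H^{0} C)^{\SDual}$, with connecting morphism into $(\tau_{\ge 1} C)^{\SDual}[1]$. From the triangle $H^{1} C[-1] \to \tau_{\ge 1} C \to H^{2} C[-2]$ and the duals above (so that $\sheafext^{1}_{k}(H^{1} C, \Z) = (\pi_{0} H^{1} C)^{\PDual}$, $\sheafext^{2}_{k}(H^{1} C, \Z) = (H^{1} C)_{0}^{\SDual'}$, and $(H^{2} C)^{\SDual} = (H^{2} C)^{\PDual}[-1]$), one finds $(\tau_{\ge 1} C)^{\SDual}$ concentrated in degrees $-1, 0, 1$ with cohomology $(H^{2} C)^{\PDual}$, $(\pi_{0} H^{1} C)^{\PDual}$, $(H^{1} C)_{0}^{\SDual'}$. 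From $0 \to (H^{0} C)_{0} \to H^{0} C \to \pi_{0} H^{0} C \to 0$ (with $\sheafhom_{k}((H^{0}C)_{0}, \Z) = \sheafext^{1}_{k}((H^{0}C)_{0}, \Z) = 0$ and $\sheafext^{2}_{k}((H^{0}C)_{0}, \Z) = (T(H^{0}C)_{0})^{\PDual}$) one finds $(H^{0} C)^{\SDual}$ concentrated in degrees $0, 1, 2$ with cohomology $(\pi_{0} H^{0} C)_{/ \tor}^{\LDual}$, $(\pi_{0} H^{0} C)_{\tor}^{\PDual}$, $(T(H^{0} C)_{0})^{\PDual}$. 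Feeding these into the long exact cohomology sequence of $(\tau_{\ge 1} C)^{\SDual} \to C^{\SDual} \to (H^{0} C)^{\SDual}$, and observing that among the connecting maps $H^{n}((H^{0} C)^{\SDual}) \to H^{n + 1}((\tau_{\ge 1} C)^{\SDual})$ the only one not forced to vanish by degree is $(\pi_{0} H^{0} C)_{/ \tor}^{\LDual} \to (H^{1} C)_{0}^{\SDual'}$, one reads off $H^{-1} C^{\SDual} = (H^{2} C)^{\PDual}$, $H^{2} C^{\SDual} = (T(H^{0} C)_{0})^{\PDual}$, the vanishing of $H^{n} C^{\SDual}$ for $n \ne -1, 0, 1, 2$, and the displayed six-term exact sequence relating $H^{0} C^{\SDual}$ and $H^{1} C^{\SDual}$, whose middle arrow is precisely that non-vanishing connecting map.

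The one genuinely delicate point is the reflexivity bookkeeping in the first step: to conclude that $D^{\SDual \SDual}$ has $H^{1} = H^{1} D$ and $H^{2} = H^{2} D$ on the nose (and not merely objects with the same constituents), one must know that the extension classes defining $\pi_{0} H^{0} D$, the $\Loc^{f} \Alg_{\uc}$-objects and the truncation $\tau_{\ge 1} D$ are carried isomorphically to themselves by $(\var)^{\SDual \SDual}$. This is exactly what the naturality of the evaluation map, combined with the five lemma in the triangulated category $D(k)$, provides once the reflexivity of the elementary constituents is granted. Everything after that is the routine degree-counting in the two long exact sequences described above.
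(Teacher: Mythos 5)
Your proof is correct, and the two pieces compare to the paper's as follows.

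For $D^{\SDual\SDual}$ you are doing essentially the same thing as the paper, just staged differently: the paper forms the cone $\bigl[(H^{0}D)_{0}\to D\bigr]$ in one step (its cohomology is $\pi_{0}H^{0}D$, $H^{1}D$, $H^{2}D$, all Serre reflexive) and then invokes $\bigl((H^{0}D)_{0}\bigr)^{\SDual\SDual}=T(H^{0}D)_{0}[1]$, whereas you first split off all of $H^{0}D$ via the truncation triangle and then decompose $H^{0}D$ by its identity component. The content — isolate the abelian-variety constituent, double-dualize it to a Tate module in degree $-1$, check everything else is Serre reflexive, and conclude by degree disjointness — is identical.

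For $C^{\SDual}$ the paper uses the hyperext spectral sequence $E_{2}^{ij}=\sheafext_{k}^{i}(H^{-j}C,\Z)$ and reads the answer off the $E_{2}$-page, while you unwind the same filtration by hand via the two truncation triangles $H^{1}C[-1]\to\tau_{\ge 1}C\to H^{2}C[-2]$ and $H^{0}C\to C\to\tau_{\ge 1}C$, computing each constituent's Serre dual explicitly and tracking the long exact sequences. These are the same argument in different packaging: your connecting map $(\pi_{0}H^{0}C)_{/\tor}^{\LDual}\to(H^{1}C)_{0}^{\SDual'}$ is exactly the $E_{2}$-differential $\sheafhom_{k}(H^{0}C,\Z)\to\sheafext_{k}^{2}(H^{1}C,\Z)$ the paper later uses in Prop.\ \ref{prop: period map and its completion}. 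Your version is a bit more elementary (no spectral-sequence formalism, just two triangles and degree bookkeeping) and has the side benefit of exhibiting the middle arrow of the six-term sequence directly as a truncation connecting map, which is the description needed for Prop.\ \ref{prop: period map and its completion}; the paper's version is shorter to state. One minor note: the paper's list of nonzero $E_{2}$-slots has a typo ($(-1,-2)$, $(-1,-1)$ should be $(1,-2)$, $(1,-1)$, and $(1,0)$ is missing); your by-hand computation lands on the correct set of six positions, so this does not affect you.
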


\begin{proof}
	First we treat $D^{\SDual \SDual}$.
	The cohomology objects of the mapping cone
		\[
			\bigl[
					(H^{0} D)_{0}
				\to
					D
			\bigr]
		\]
	are in $\FGEt / k$ in degree $0$, in $\Ind \Alg_{\uc} / k$ in degrees $1$, $2$
	and zero in other degrees.
	Each of them is Serre reflexive by \cite[Prop.\ (2.4.1) (b)]{Suz14}.
	Hence this mapping cone itself is Serre reflexive.
	On the other hand, the double Serre dual of the abelian variety
	$(H^{0} C)_{0}$ over $k$
	is its Tate module placed in degree $-1$ by \cite[Prop.\ (2.4.1) (c)]{Suz14}.
	Therefore we have a distinguished triangle
		\[
				T (H^{0} D)_{0}[1]
			\to
				D^{\SDual \SDual}
			\to
				\bigl[
						(H^{0} D)_{0}
					\to
						D
				\bigr].
		\]
	The desired description follows from this.
	
	For $C^{\SDual}$, consider the hyperext spectral sequence
		\[
					E_{2}^{i j}
				=
					\sheafext_{k}^{i} \bigl(
						H^{- j} C, \Z
					\bigr)
			\Longrightarrow
				H^{i + j}
				R \sheafhom_{k} \bigl(
					C, \Z
				\bigr).
		\]
	For any object $E$ of $\Ind \Alg / k$ or $\FGEt / k$,
	we have
		\begin{gather*}
					\sheafhom_{k}(E, \Z)
				=
					(\pi_{0} E)_{/ \tor}^{\LDual},
				\quad
					\sheafext_{k}^{1}(E, \Z)
				=
					(\pi_{0} E)_{\tor}^{\PDual},
			\\
					\sheafext_{k}^{2}(E, \Z)
				=
					E_{0}^{\SDual'},
				\quad
					\sheafext_{k}^{\ge 3}(E, \Z)
				=
					0
		\end{gather*}
	by \cite[Prop.\ (2.4.1) (a), (b)]{Suz14}.
	We have
		\[
				(H^{1} C)_{/ \tor}
			=
				(H^{2} C)_{/ \tor}
			=
				(H^{2} C)_{0}
			=
				0
		\]
	by Prop.\ \ref{prop: each part of Q mod Z duality for Neron models}.
	Hence the $E_{2}^{i j}$-term of the above spectral sequence is zero
	unless $(i, j)$ is $(-1, -2)$, $(-1, -1)$, $(0, 0)$, $(2, -1)$ or $(2, 0)$.
	We have
		\[
				(H^{0} C)_{0}^{\SDual'}
			=
				(T (H^{0} C)_{0})^{\PDual}
		\]
	by \cite[Prop.\ (2.4.1) (c)]{Suz14}.
	Therefore the $E_2$-sheet gives the desired description.
\end{proof}

\begin{Prop} \label{prop: period map and its completion}
	The morphism
		\[
				(\pi_{0} H^{0} C)_{/ \tor}^{\LDual}
			\to
				(H^{1} C)_{0}^{\SDual'}
		\]
	in the previous proposition and the morphism
		\[
				(\pi_{0} H^{0} C)_{/ \tor}^{\LDual \wedge}
			\to
				(H^{1} C)_{0}^{\SDual'}
		\]
	in (the proof of) Prop.\ \ref{prop: cohom of derived completion of Neron}
	are compatible under the natural morphism $(\var) \to (\var)^{\wedge}$.
\end{Prop}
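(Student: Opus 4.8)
The plan is to exhibit both morphisms as obtained from a single datum --- the attaching morphism of the canonical filtration of $C$ --- by applying two related functors, and then to deduce the compatibility from functoriality. Write $\kappa \in \Hom_{D(k)}(H^{1} C, H^{0} C[2])$ for the attaching morphism of the filtration $\tau_{\le 0} C \to \tau_{\le 1} C \to \tau_{\le 2} C = C$ (the composite $H^{1} C[-1] \to \tau_{\ge 1} C \to (\tau_{\le 0} C)[1] = H^{0} C[1]$, shifted by one); put $L = (\pi_{0} H^{0} C)_{/ \tor}$ and let $q \colon H^{0} C \onto \pi_{0} H^{0} C \onto L$ be the projection. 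First I would check that the morphism $(\pi_{0} H^{0} C)_{/ \tor}^{\LDual} \to (H^{1} C)_{0}^{\SDual'}$ of Prop.\ \ref{prop: each term of duality for Neron} is, under the identifications $\sheafhom_{k}(H^{0} C, \Z) = (\pi_{0} H^{0} C)_{/ \tor}^{\LDual}$ and $\sheafext_{k}^{2}(H^{1} C, \Z) = (H^{1} C)_{0}^{\SDual'}$ of \cite[Prop.\ (2.4.1)]{Suz14}, exactly the differential $d_{2} \colon E_{2}^{0, 0} \to E_{2}^{2, -1}$ of the hyperext spectral sequence $E_{2}^{i j} = \sheafext_{k}^{i}(H^{- j} C, \Z) \Longrightarrow H^{i + j} C^{\SDual}$ used in that proof. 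By the standard description of this $d_{2}$ via Postnikov attaching maps, it is the effect on cohomology sheaves of $R \sheafhom_{k}(\kappa, \Z) \colon R \sheafhom_{k}(H^{0} C[2], \Z) \to R \sheafhom_{k}(H^{1} C, \Z)$ in $D(k)$; thus the integral period map is $H^{2} R \sheafhom_{k}(\kappa, \Z)$, i.e.\ ``evaluation against $\kappa$ with coefficients in $\Z$''.

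\textbf{Step 2: the completed period map.} Next I would check that the morphism $(\pi_{0} H^{0} C)_{/ \tor}^{\LDual \wedge} \to (H^{1} C)_{0}^{\SDual'}$ occurring in (the proof of) Prop.\ \ref{prop: cohom of derived completion of Neron} is the connecting morphism of the long exact sequence of $\sheafext_{k}^{\var}(\var, \Q / \Z)$ attached to the short exact sequence
	\[
			0
		\to
			L \tensor \Q / \Z
		\to
			H^{0}(C \tensor^{L} \Q / \Z)
		\to
			H^{1} C
		\to
			0
	\]
(the structure of $H^{0}(C \tensor^{L} \Q / \Z)$ recorded in the proof of Prop.\ \ref{prop: each part of Q mod Z duality for Neron models}), after the identifications $\sheafhom_{k}(L \tensor \Q / \Z, \Q / \Z) = \sheafhom_{k}(L, \Hat{\Z}) = L^{\LDual \wedge}$ and, since $\pi_{0} H^{1} C$ is \'etale torsion and $(H^{1} C)_{0}$ connected unipotent, $\sheafext_{k}^{1}(H^{1} C, \Q / \Z) = (H^{1} C)_{0}^{\SDual'}$. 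Hence the completed period map is $H^{1} R \sheafhom_{k}(e, \Q / \Z)$, where $e \colon H^{1} C \to (L \tensor \Q / \Z)[1]$ in $D(k)$ is the class of that extension, i.e.\ ``evaluation against $e$ with coefficients in $\Q / \Z$''.

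\textbf{Step 3: comparison and conclusion.} The heart of the matter, and the step I expect to be the main obstacle, is the identity $e = \delta^{-1}(q_{\ast} \kappa)$, where $q_{\ast} \colon \Hom_{D(k)}(H^{1} C, H^{0} C[2]) \to \Hom_{D(k)}(H^{1} C, L[2])$ is induced by $q$ and $\delta \colon \Hom_{D(k)}(H^{1} C, (L \tensor \Q / \Z)[1]) \isomto \Hom_{D(k)}(H^{1} C, L[2])$ is the connecting isomorphism for $L \to L \tensor \Q \to L \tensor \Q / \Z$ (an isomorphism because $R \sheafhom_{k}(H^{1} C, L \tensor \Q) = 0$, $H^{1} C$ being torsion). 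To prove it I would reduce to the case $C = \tau_{[0, 1]} C$: this is harmless, since $H^{2} C$, the abelian-variety and torsion-\'etale parts of $H^{0} C$, and the divisible part of $H^{1} C$ contribute nothing to any of the four corners of the square, by the vanishing facts from the proof of Prop.\ \ref{prop: each part of Q mod Z duality for Neron models} ($A \tensor \Q / \Z = 0$ for an abelian variety $A$, $N \tensor \Q / \Z = 0$ for torsion $N$, $H^{1} C \tensor^{L} \Q / \Z = H^{1} C[1]$, and $H^{0}(H^{0} C \tensor^{L} \Q / \Z) = L \tensor \Q / \Z$). With $C = \tau_{[0, 1]} C$ the object $C$ itself sits in the distinguished triangle $H^{0} C \to C \to H^{1} C[-1] \to H^{0} C[1]$ whose last arrow is $\kappa$; applying $\tensor^{L} \Q / \Z$, taking $H^{0}$ and using the above vanishings turns this triangle into the extension defining $e$, and the identity drops out of a chase of the resulting morphism of distinguished triangles. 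Granting it, the square asserted in the Proposition commutes by functoriality of $R \sheafhom_{k}(\var, \var)$: its left vertical arrow $(\pi_{0} H^{0} C)_{/ \tor}^{\LDual} \to (\pi_{0} H^{0} C)_{/ \tor}^{\LDual \wedge}$ is, under $\sheafhom_{k}(H^{0} C, \Z) \isomfrom \sheafhom_{k}(L, \Z)$ and $\sheafhom_{k}(L \tensor \Q / \Z, \Q / \Z) = \sheafhom_{k}(L, \Hat{\Z})$, the map induced by $\Z \to \Hat{\Z}$; its right vertical arrow is the canonical isomorphism $\sheafext_{k}^{2}(H^{1} C, \Z) \isomto \sheafext_{k}^{1}(H^{1} C, \Q / \Z)$ coming from $\Z \to \Q \to \Q / \Z$; and the two horizontal arrows are $H^{2} R \sheafhom_{k}(\kappa, \Z)$ and $H^{1} R \sheafhom_{k}(e, \Q / \Z)$. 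Its commutativity is then precisely the compatibility of ``evaluation against $\kappa$'' and ``evaluation against $e$'' with the maps $\Z \to \Q \to \Q / \Z$, which is the content of $e = \delta^{-1}(q_{\ast} \kappa)$.
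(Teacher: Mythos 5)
Your proof follows the same route as the paper's: identify the first map as the $E_{2}$-differential of the hyperext spectral sequence, recognize it as induced by the attaching morphism of the Postnikov truncation triangle $H^{0}C \to \tau_{\le 1}C \to (H^{1}C)[-1]$, recover the short exact sequence defining the connecting map by applying $\tensor^{L}\Q/\Z$ and $H^{0}$ to that same triangle, and conclude by functoriality of $R\sheafhom_{k}(\var, \Z)$. Your write-up is more explicit (naming $\kappa$, $e$, $q$, $\delta$ and carrying out the reduction to $\tau_{[0,1]}C$), but the argument is the one the paper gives.
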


\begin{proof}
	The first morphism is the differential
		\begin{equation} \label{eq: comparison of periods, E two differential}
				\sheafhom_{k}(H^{0} C, \Z)
			\to
				\sheafext_{k}^{2}(H^{1} C, \Z)
		\end{equation}
	of the spectral sequence with $E_{2}^{i j} = \sheafext_{k}^{i}(H^{-j} C, \Z)$.
	The second is the connecting morphism
		\[
				\sheafhom_{k}((H^{0} C) \tensor \Q / \Z, \Q / \Z)
			\to
				\sheafext_{k}^{1}(H^{1} C, \Q / \Z)
		\]
	for the short exact sequence
		\begin{equation} \label{eq: comparison of periods, connecting for derived tensor}
				0
			\to
				(H^{0} C) \tensor \Q / \Z
			\to
				H^{0}(C \tensor^{L} \Q / \Z)
			\to
				H^{1} C
			\to
				0.
		\end{equation}
	We want to show that these morphisms are compatible under the natural morphism
		\[
				\sheafhom_{k}(H^{0} C, \Z)
			\to
				\sheafhom_{k}((H^{0} C) \tensor \Q / \Z, \Q / \Z)
		\]
	and the isomorphism
		\[
				\sheafext_{k}^{1}(H^{1} C, \Q / \Z)
			=
				\sheafext_{k}^{2}(H^{1} C, \Z).
		\]
	
	By general nonsense,
	\eqref{eq: comparison of periods, E two differential}
	comes from the (shifted) connecting morphism
	$(H^{1} C)[-2] \to H^{0} C$
	of the truncation distinguished triangle
	$H^{0} C \to \tau_{\le 1} C \to (H^{1} C)[-1]$.
	This triangle induces a distinguished triangle
		\[
				(H^{0} C) \tensor^{L} \Q / \Z
			\to
				(\tau_{\le 1} C) \tensor^{L} \Q / \Z
			\to
				(H^{1} C) \tensor^{L} \Q / \Z[-1].
		\]
	Taking $H^{0}$, we have an exact sequence
		\[
				0
			\to
				(H^{0} C) \tensor \Q / \Z
			\to
				H^{0}((\tau_{\le 1} C) \tensor^{L} \Q / \Z)
			\to
				H^{1} C
			\to
				0,
		\]
	which recovers \eqref{eq: comparison of periods, connecting for derived tensor}.
	From these observations,
	we can finish the comparison
	by applying $\sheafext_{k}^{\var}(\var, \Z)$.
\end{proof}

\begin{Prop} \label{prop: double dual dies after tensoring Q mod Z or adeles}
	We have
		\begin{gather*}
					R \invlim_{n} (C \tensor^{L} \Z / n \Z)
				=
					R \invlim_{n} (C^{\SDual \SDual} \tensor^{L} \Z / n \Z)
			\\
					C \ctensor \Adele^{\infty}
				=
					C^{\SDual \SDual} \ctensor \Adele^{\infty}.
		\end{gather*}
	The same is true for $D$.
\end{Prop}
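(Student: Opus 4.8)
The plan is to deduce both equalities from the single fact that the canonical biduality morphism $C \to C^{\SDual\SDual}$ induces an isomorphism $C \tensor^{L} \Z / n \Z \isomto C^{\SDual\SDual} \tensor^{L} \Z / n \Z$ for every $n \ge 1$. Granting this, Prop.\ \ref{prop: all mod n isom imply derived completion isom} applied to $C \to C^{\SDual\SDual}$ gives an isomorphism $R \invlim_{n}(C \tensor^{L} \Z / n \Z) \isomto R \invlim_{n}(C^{\SDual\SDual} \tensor^{L} \Z / n \Z)$, which is the first equality; tensoring this isomorphism with $\Q$ and using the definition $A \ctensor \Adele^{\infty} = \bigl(R \invlim_{n}(A \tensor^{L} \Z / n \Z)\bigr) \tensor \Q$ from Prop.\ \ref{prop: direct inverse adelic rationalization triangle} then gives the second. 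Since $D$ satisfies the same hypotheses \eqref{item: structure of cohom of D} as $C$, the statements for $D$ follow by the identical argument.

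To establish the mod-$n$ isomorphism, I would recall, as in the proof of Prop.\ \ref{prop: duality for Neron after derived completion}, the formal identity $A^{\SDual} \tensor^{L} \Z / n \Z = (A \tensor^{L} \Z / n \Z)^{\SDual}[1]$, valid for any $A \in D(k)$ and obtained by writing $A \tensor^{L} \Z / n \Z$ as the cone of $A \overset{n}{\to} A$ and applying the contravariant triangulated functor $R \sheafhom_{k}(\var, \Z)$. Applying it twice yields a natural isomorphism $C^{\SDual\SDual} \tensor^{L} \Z / n \Z \cong (C \tensor^{L} \Z / n \Z)^{\SDual\SDual}$ under which the morphism $C \tensor^{L} \Z / n \Z \to C^{\SDual\SDual} \tensor^{L} \Z / n \Z$ is identified with the biduality morphism of $C \tensor^{L} \Z / n \Z$. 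Now, as noted at the start of \S\ref{sec: Formal steps towards duality for Neron models}, hypothesis \eqref{item: structure of cohom of C} forces $C \tensor^{L} \Z / n \Z \in D^{b}(\Alg_{\uc} / k)$ — in particular the abelian variety part of $H^{0} C$, being divisible, disappears — and every object of $D^{b}(\Alg_{\uc} / k)$ is Serre reflexive: its cohomology sheaves lie in $\Alg_{\uc} / k$, each of which is an extension of a finite \'etale group by a connected unipotent quasi-algebraic group, and both of these are Serre reflexive by \cite[Prop.\ (2.4.1)]{Suz14} (as recalled in \S\ref{sec: The ind-rational pro-etale site and some derived limits}: a connected unipotent quasi-algebraic $G$ satisfies $G \isomto G^{\SDual' \SDual'}$, while a finite \'etale $G$ has $G^{\SDual\SDual} = G^{\PDual\PDual} = G$), whence one passes to the extension by the five lemma and to bounded complexes exactly as in the proof of Prop.\ \ref{prop: each term of duality for Neron}. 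Hence the biduality morphism of $C \tensor^{L} \Z / n \Z$ is an isomorphism, as required.

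The only step that needs care is the identification in the second paragraph, namely that under the natural isomorphism $C^{\SDual\SDual} \tensor^{L} \Z / n \Z \cong (C \tensor^{L} \Z / n \Z)^{\SDual\SDual}$ assembled from the double application of $A^{\SDual} \tensor^{L} \Z / n \Z = (A \tensor^{L} \Z / n \Z)^{\SDual}[1]$, the tensored biduality morphism of $C$ really becomes the biduality morphism of $C \tensor^{L} \Z / n \Z$; this is a naturality verification that must keep track of the shift introduced at each step, and it follows from the description of the biduality morphism $\eta_{A} \colon A \to A^{\SDual\SDual}$ as the adjoint of the evaluation pairing $A \tensor^{L} A^{\SDual} \to \Z$ together with the naturality of the cone construction. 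If one prefers to bypass this, an alternative is to compute the cone $E$ of $C \to C^{\SDual\SDual}$ directly: applying $\SDual\SDual$ to the triangle $(H^{0} C)_{0} \to C \to [(H^{0} C)_{0} \to C]$ and using that $[(H^{0} C)_{0} \to C]$ is Serre reflexive (as in the proof of Prop.\ \ref{prop: each term of duality for Neron}) identifies $E$ with the cone of $(H^{0} C)_{0} \to (H^{0} C)_{0}^{\SDual\SDual}$, which is concentrated in degree $-1$ with uniquely divisible cohomology (an extension of the abelian variety $(H^{0} C)_{0}$ by its Tate module); then $E \tensor^{L} \Z / n \Z = 0$ for all $n$ and $E \ctensor \Adele^{\infty} = 0$, which gives both equalities at once.
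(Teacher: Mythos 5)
Your proposal is correct, and your primary route is genuinely different from the paper's, while your ``alternative'' in the last paragraph is essentially the paper's actual proof. The paper argues directly on the mapping fiber: it invokes Prop.~\ref{prop: each term of duality for Neron} to see that $C$ and $C^{\SDual\SDual}$ agree in cohomological degrees $\ge 1$, differing only by the passage from the abelian variety $(H^0 C)_0$ to its double Serre dual, and then observes that the resulting fiber is uniquely divisible, so it dies under $R\invlim_n(\var\tensor^L\Z/n\Z)$. Your primary route instead bypasses any identification of the fiber: you note that the formal identity $A^{\SDual}\tensor^L\Z/n\Z\cong(A\tensor^L\Z/n\Z)^{\SDual}[1]$, applied twice, turns the tensored biduality into the biduality of $C\tensor^L\Z/n\Z$, which is an isomorphism because $C\tensor^L\Z/n\Z\in D^b(\Alg_{\uc}/k)$ and objects there are Serre reflexive by d\'evissage from \cite[Prop.~(2.4.1)]{Suz14}; then Prop.~\ref{prop: all mod n isom imply derived completion isom} and the formula $A\ctensor\Adele^{\infty}=\bigl(R\invlim_n(A\tensor^L\Z/n\Z)\bigr)\tensor\Q$ finish the argument. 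What each buys: the paper's proof is shorter and avoids your naturality check (whether $\eta_C\tensor\Z/n\Z$ matches $\eta_{C\tensor\Z/n\Z}$ under the canonical identifications), but it pays for that by leaning on an explicit description of the fiber of $C\to C^{\SDual\SDual}$, which is a bit delicate — the fiber is concentrated in degree $0$ with $H^0$ the canonical uniquely divisible extension of $(H^0 C)_0$ by its Tate module, and one must know this extension is the ``correct'' one (the split one would fail). Your primary route trades that for the naturality verification, which you correctly flag as the only nontrivial step; this is a routine check in a closed symmetric monoidal derived category, and your sketch of it via the evaluation pairing is the right idea. In your alternative, you say the cone is concentrated in degree $-1$ with cohomology ``an extension of the abelian variety by its Tate module''; this is accurate (and more precise than the paper's phrasing, which writes the fiber as $V(H^0 C)_0[1]$), but you should be aware that unique divisibility of that extension is not automatic for an arbitrary extension of $(H^0 C)_0$ by $T(H^0 C)_0$ — it holds here because the extension class is the one coming from biduality, and one way to see it is exactly by reducing mod $n$, which brings you back to your primary route.
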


\begin{proof}
	By Prop.\ \ref{prop: each term of duality for Neron},
	we have a distinguished triangle
		\[
				V (H^{0} C)_{0}[1]
			\to
				C
			\to
				C^{\SDual \SDual}.
		\]
	We have
		\[
				R \invlim_{n} \bigl(
					V (H^{0} C)_{0} \tensor^{L} \Z / n \Z
				\bigr)
			=
				0
		\]
	since $V (H^{0} C)_{0}$ is uniquely divisible.
	This implies the result.
\end{proof}

\begin{Prop} \label{prop: each part of Q tensor duality}
	The cohomology objects $H^{n}$ of the complex
	$D^{\SDual \SDual} \tensor \Q$
	are described as follows:
		\[
					H^{-1}
				=
					V (H^{0} D)_{0},
			\quad
					H^{0}
				=
					(\pi_{0}H^{0} D)_{/ \tor} \tensor \Q,
		\]
	and $H^{n} = 0$ for other values of $n$.
	The cohomology objects $H^{n}$ of the complex
	$C^{\SDual} \tensor \Q$
	are described as follows:
		\[
					H^{-1}
				=
					(V H^{2} C)^{\PDual},
			\quad
					0
				\to
					(V H^{1} C)^{\PDual}
				\to
					H^{0}
				\to
					( \pi_{0} H^{0} C)_{/ \tor}^{\LDual} \tensor \Q
				\to
					0,
		\]
	and $H^{n} = 0$ for other values of $n$.
\end{Prop}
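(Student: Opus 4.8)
The plan is to derive everything from Prop.~\ref{prop: each term of duality for Neron} by rationalizing, i.e.\ applying the exact functor $(\var) \tensor \Q$ and discarding torsion contributions. Since $\Q$ is flat over $\Z$, tensoring with $\Q$ commutes with taking cohomology, so it suffices to compute $(H^n) \tensor \Q$ of $D^{\SDual \SDual}$ and of $C^{\SDual}$ using the descriptions already obtained there.

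First I would treat $D^{\SDual \SDual}$. By hypothesis~\eqref{item: structure of cohom of D} together with Prop.~\ref{prop: each part of Q mod Z duality for Neron models}, the sheaves $H^1 D \in \Loc^{f} \Alg_{\uc} / k$ and $H^2 D \in \Ind \FEt / k$ are torsion over $\Spec k^{\ind\rat}_{\pro\et}$ (every object of $\Alg_{\uc}/k$ being killed by multiplication by a positive integer), hence are annihilated by $(\var) \tensor \Q$; this gives $H^{n}(D^{\SDual \SDual}) \tensor \Q = 0$ for $n = 1, 2$ and for $n \ne -1, 0, 1, 2$. In degree $-1$ one has $H^{-1}(D^{\SDual \SDual}) \tensor \Q = T (H^0 D)_0 \tensor \Q = V (H^0 D)_0$ by the definition of the rational Tate module, and in degree $0$ one has $H^{0}(D^{\SDual \SDual}) \tensor \Q = (\pi_0 H^0 D) \tensor \Q = (\pi_0 H^0 D)_{/ \tor} \tensor \Q$ since $\pi_0 H^0 D$ is finitely generated \'etale.

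Then I would treat $C^{\SDual}$. In degree $-1$, $H^{-1}(C^{\SDual}) \tensor \Q = (H^2 C)^{\PDual} \tensor \Q = (V H^2 C)^{\PDual}$, using the elementary fact that for a torsion \'etale sheaf $G$ of cofinite type the natural map identifies $G^{\PDual} \tensor \Q$ with $(V G)^{\PDual}$; in degree $2$, $H^{2}(C^{\SDual}) = (T (H^0 C)_0)^{\PDual}$ is again a torsion \'etale sheaf of cofinite type, hence dies after $(\var) \tensor \Q$. For the middle degrees I would tensor the six-term exact sequence of Prop.~\ref{prop: each term of duality for Neron} with $\Q$ (still exact): the term $(H^1 C)_0^{\SDual'}$, being connected unipotent ind-algebraic and $p$-power torsion, and the term $(\pi_0 H^0 C)_{\tor}^{\PDual}$, being finite, both become zero, so the sequence collapses to $H^{1}(C^{\SDual}) \tensor \Q = 0$ together with a short exact sequence $0 \to (\pi_0 H^1 C)^{\PDual} \tensor \Q \to H^{0}(C^{\SDual}) \tensor \Q \to (\pi_0 H^0 C)_{/ \tor}^{\LDual} \tensor \Q \to 0$. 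Finally I would rewrite the leftmost term as $(V H^1 C)^{\PDual}$, using $V H^1 C = V(\pi_0 H^1 C)$: the sequence $0 \to (H^1 C)_0 \to H^1 C \to \pi_0 H^1 C \to 0$ stays exact under the exact functor $V$, and $V (H^1 C)_0 = 0$ because $(H^1 C)_0$ is a bounded-torsion object of $\Alg_{\uc}/k$ and so has vanishing Tate module.

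There is no real obstacle here; the argument is pure bookkeeping with the exact functor $(\var) \tensor \Q$. The only points requiring a little care are distinguishing which of the pro- and ind-algebraic sheaves occurring in Prop.~\ref{prop: each term of duality for Neron} are torsion over $\Spec k^{\ind\rat}_{\pro\et}$ (and hence killed by rationalization) from those contributing a nonzero rational part, together with the two routine identifications $T(\var) \tensor \Q = V(\var)$ and $(\var)^{\PDual} \tensor \Q = (V \var)^{\PDual}$ on torsion \'etale sheaves of cofinite type.
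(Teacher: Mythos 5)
Your proposal is correct and takes essentially the same approach as the paper, whose proof is a one-line instruction to tensor the descriptions in Prop.~\ref{prop: each term of duality for Neron} with $\Q$; you have simply carried out the implied bookkeeping (exactness of $\tensor\Q$, identification of which terms are torsion and hence killed, the identities $T(\var)\tensor\Q = V(\var)$ and $(\var)^{\PDual}\tensor\Q = (V\var)^{\PDual}$, and $V H^1 C = V(\pi_0 H^1 C)$) explicitly and accurately.
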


\begin{proof}
	Tensor $\Q$ with the isomorphisms and exact sequences in
	Prop.\ \ref{prop: each term of duality for Neron}.
\end{proof}

\begin{Prop} \label{prop: description of duality morphism tensor Q}
	The morphism
		\[
				D^{\SDual \SDual} \tensor \Q
			\to
				C^{\SDual} \tensor \Q
		\]
	induces an injection onto a direct summand in degree $0$
	with cokernel $(V H^{1} C)^{\PDual}$
	and isomorphisms on cohomology in other degrees.
\end{Prop}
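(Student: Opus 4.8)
The plan is as follows. By Prop.\ \ref{prop: each part of Q tensor duality}, both $D^{\SDual \SDual} \tensor \Q$ and $C^{\SDual} \tensor \Q$ are concentrated in degrees $-1$ and $0$, so the morphism in question, which we abbreviate to $\phi$, is determined by $H^{-1}(\phi)$ and $H^{0}(\phi)$; it suffices to show that $H^{-1}(\phi)$ is an isomorphism and that $H^{0}(\phi)$ is injective onto a direct summand with cokernel $(V H^{1} C)^{\PDual}$.

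For the degree $-1$ part I would identify $H^{-1}(\phi) \colon V (H^{0} D)_{0} \to (V H^{2} C)^{\PDual}$ with $V$ applied to the Pontryagin duality $T (H^{0} D)_{0} \isomto (H^{2} C)^{\PDual}$ of Prop.\ \ref{prop: each part of Q mod Z duality for Neron models}\eqref{item: formal duality between TGamma of A dual and H2 of A} (recalling that $H^{2} C$ is \'etale and divisible, so that $(V H^{2} C)^{\PDual} = (H^{2} C)^{\PDual} \tensor \Q$, consistent with Prop.\ \ref{prop: each term of duality for Neron}). This amounts to checking that the degree $-1$ identifications of $D^{\SDual \SDual}$ from Prop.\ \ref{prop: each term of duality for Neron} and of $R \invlim_{n}(D \tensor^{L} \Z / n \Z)$ from Prop.\ \ref{prop: cohom of derived completion of Neron} (both equal to $T(H^{0} D)_{0}$), resp.\ the corresponding ones for $C^{\SDual}$, are compatible with the duality morphisms $D^{\SDual \SDual} \to C^{\SDual}$ and $R \invlim_{n}(D \tensor^{L} \Z / n \Z) \isomto R \invlim_{n}(C^{\SDual} \tensor^{L} \Z / n \Z)$ of Prop.\ \ref{prop: duality for Neron after derived completion}, via the canonical map $D^{\SDual \SDual} \to R \invlim_{n}(D^{\SDual \SDual} \tensor^{L} \Z / n \Z) = R \invlim_{n}(D \tensor^{L} \Z / n \Z)$ provided by Prop.\ \ref{prop: double dual dies after tensoring Q mod Z or adeles} and \ref{prop: all mod n isom imply derived completion isom}. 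Granting that compatibility, $H^{-1}(\phi)$ is an isomorphism. (Alternatively one can pass to $\ctensor \Adele^{\infty}$ and invoke Prop.\ \ref{prop: adelic duality for Neron} directly.)

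For the degree $0$ part I would work with the exact sequence
\[
		0
	\to
		(V H^{1} C)^{\PDual}
	\to
		H^{0}(C^{\SDual} \tensor \Q)
	\overset{p}{\to}
		(\pi_{0} H^{0} C)_{/ \tor}^{\LDual} \tensor \Q
	\to
		0
\]
of Prop.\ \ref{prop: each part of Q tensor duality}. The key point to verify is that $p \compose H^{0}(\phi) \colon (\pi_{0} H^{0} D)_{/ \tor} \tensor \Q \to (\pi_{0} H^{0} C)_{/ \tor}^{\LDual} \tensor \Q$ is the $\Q$-rationalization of the morphism of hypothesis \eqref{item: height pairing for C and D}; this should follow from the descriptions of $H^{0} D^{\SDual \SDual}$ and $H^{0} C^{\SDual}$ in Prop.\ \ref{prop: each term of duality for Neron} together with the fact that $\phi$ is obtained from $D \to C^{\SDual}$ by applying $\SDual \SDual$ (and $C^{\SDual \SDual \SDual} = C^{\SDual}$). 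Since the cokernel $\delta_{\Height}$ of that morphism is finite, $p \compose H^{0}(\phi)$ is then an isomorphism; hence $H^{0}(\phi)$ is injective, $H^{0}(\phi) \compose (p \compose H^{0}(\phi))^{-1}$ is a section of $p$, the extension splits as $H^{0}(C^{\SDual} \tensor \Q) = \Im H^{0}(\phi) \oplus (V H^{1} C)^{\PDual}$, and therefore $H^{0}(\phi)$ is an injection onto the direct summand $\Im H^{0}(\phi)$ with cokernel $(V H^{1} C)^{\PDual}$, as claimed.

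The main obstacle I anticipate is precisely these two compatibility checks: confirming that $H^{-1}(\phi)$ is the Pontryagin duality and that $p \compose H^{0}(\phi)$ is the height-pairing map, rather than merely some maps between the correct objects. Both are bookkeeping with the canonical descriptions of $C^{\SDual}$ and $D^{\SDual \SDual}$ in Prop.\ \ref{prop: each term of duality for Neron} (truncation triangles and the hyperext spectral sequence) under the morphism $D^{\SDual \SDual} \to C^{\SDual}$; the cleanest route is to reduce everything to the mod $n$ pairing underlying Prop.\ \ref{prop: each part of Q mod Z duality for Neron models} and the adelic pairing underlying Prop.\ \ref{prop: adelic duality for Neron}, which by construction are compatible with the given pairing $C \tensor^{L} D \to \Z$.
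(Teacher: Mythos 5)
Your proposal is correct, and for degree $0$ it is the paper's argument: both hinge on the composite
$(\pi_{0} H^{0} D)_{/ \tor}\tensor\Q \to H^{0}(C^{\SDual}\tensor\Q) \to (\pi_{0} H^{0} C)_{/ \tor}^{\LDual}\tensor\Q$
being the rationalization of the morphism in hypothesis~\eqref{item: height pairing for C and D} (and hence an isomorphism since $\delta_{\Height}$ is finite), after which the splitting is immediate.
The paper, like you, takes that identification for granted rather than re-tracing it through the $E_{2}$-sheet, so the ``key point'' you isolate is no more of a gap in your write-up than in the paper's.
For degree $-1$, your primary route --- matching $H^{-1}(\phi)$ against the Pontryagin duality via the map $D^{\SDual\SDual}\to R\invlim_{n}(D\tensor^{L}\Z/n\Z)$ --- works, but carries the compatibility burden you flag.
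The paper instead uses exactly your parenthetical alternative: in the commutative square comparing $\tensor\Q$ with $\ctensor\Adele^{\infty}$, both vertical arrows are isomorphisms on $H^{-1}$ because their fibers $R\invlim_{n} D^{\SDual\SDual}$ and $R\invlim_{n} C^{\SDual}$ are concentrated in degrees $1,2$ (Prop.~\ref{prop: each part of inverse by multiplication duality}), and the bottom arrow is an isomorphism on $H^{-1}$ by Prop.~\ref{prop: adelic duality for Neron}; hence the top arrow is one too.
That route is cleaner because it never has to say explicitly which isomorphism $H^{-1}(\phi)$ is.
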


\begin{proof}
	From degree zero, we have a morphism
		\[
				(\pi_{0} H^{0} D)_{/ \tor} \tensor \Q
			\to
				(\pi_{0} H^{0} C)_{/ \tor}^{\LDual} \tensor \Q.
		\]
	This is an isomorphism since $\delta_{\Height}$ is finite.
	Together with the previous proposition, the statement about degree zero follows.
	For degree $-1$, consider the commutative diagram
		\[
			\begin{CD}
					D^{\SDual \SDual} \tensor \Q
				@>>>
					C^{\SDual} \tensor \Q
				\\
				@VVV
				@VVV
				\\
					D^{\SDual \SDual} \ctensor \Adele^{\infty}
				@>>>
					C^{\SDual} \ctensor \Adele^{\infty}.
			\end{CD}
		\]
	The vertical morphisms induce isomorphisms in degree $-1$
	by the previous proposition and Prop.\ \ref{prop: adelic duality for Neron}.
	The last mentioned proposition also shows that
	the lower horizontal morphism induces an isomorphism in degree $-1$.
	Hence so is the upper morphism.
\end{proof}

In particular, we have two canonical morphisms
	\[
			D^{\SDual \SDual} \tensor \Q
		\to
			C^{\SDual} \tensor \Q
		\to
			(V H^{1} C)^{\PDual}.
	\]

\begin{Prop} \label{prop: duality triangle tensor Q}
	There exists a unique morphism
		\[
				(V H^{1} C)^{\PDual}[-1]
			\to
				D^{\SDual \SDual} \tensor \Q
		\]
	such that the resulting triangle
		\[
				(V H^{1} C)^{\PDual}[-1]
			\to
				D^{\SDual \SDual} \tensor \Q
			\to
				C^{\SDual} \tensor \Q
			\to
				(V H^{1} C)^{\PDual}
		\]
	is distinguished.
\end{Prop}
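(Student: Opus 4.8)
The plan is to realize the required third morphism as the rotated connecting morphism of the cone of the duality morphism, and then to obtain uniqueness from a degree count.

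First I would complete the morphism $D^{\SDual \SDual} \tensor \Q \to C^{\SDual} \tensor \Q$ studied in Prop.\ \ref{prop: description of duality morphism tensor Q} to a distinguished triangle
\[
	D^{\SDual \SDual} \tensor \Q
	\to C^{\SDual} \tensor \Q
	\to E
	\to (D^{\SDual \SDual} \tensor \Q)[1].
\]
By Prop.\ \ref{prop: each part of Q tensor duality} both outer terms are concentrated in degrees $-1, 0$, so the long exact cohomology sequence together with Prop.\ \ref{prop: description of duality morphism tensor Q} (an isomorphism on cohomology outside degree $0$, and in degree $0$ a split injection with cokernel $(V H^{1} C)^{\PDual}$) shows that $E$ is concentrated in degree $0$ with $H^{0}(E) \cong (V H^{1} C)^{\PDual}$, canonically. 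Since $C^{\SDual} \tensor \Q$ lives in degrees $\le 0$, the morphism $C^{\SDual} \tensor \Q \to E$ factors through $\tau_{\ge 0}(C^{\SDual} \tensor \Q) = H^{0}(C^{\SDual} \tensor \Q)$, and the induced map on $H^{0}$ is the cokernel projection, i.e.\ the canonical morphism $v \colon C^{\SDual} \tensor \Q \to (V H^{1} C)^{\PDual}$. Transporting the triangle along the isomorphism $E \cong (V H^{1} C)^{\PDual}$ and rotating then produces a distinguished triangle of the asserted shape, which settles existence.

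For uniqueness, I would take two morphisms $g, g'$ completing the triangle, rotate so that they become the third morphisms of distinguished triangles sharing the first two morphisms $D^{\SDual \SDual} \tensor \Q \to C^{\SDual} \tensor \Q \xrightarrow{v} (V H^{1} C)^{\PDual}$, and apply the triangulated axioms to get an automorphism $\gamma$ of $(V H^{1} C)^{\PDual}$ with $\gamma \compose v = v$ and $g'[1] \compose \gamma = g[1]$. Then $(\gamma - \id) \compose v = 0$, so, since any morphism $\psi$ out of $(V H^{1} C)^{\PDual}$ with $\psi \compose v = 0$ factors through the third morphism $(V H^{1} C)^{\PDual} \to (D^{\SDual \SDual} \tensor \Q)[1]$ of the first triangle, $\gamma - \id$ factors through an element of $\Hom_{D(k)}((D^{\SDual \SDual} \tensor \Q)[1], (V H^{1} C)^{\PDual}) = \Hom_{D(k)}(D^{\SDual \SDual} \tensor \Q, (V H^{1} C)^{\PDual}[-1])$. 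This group vanishes: the source is concentrated in degrees $-1, 0$ while the target is the sheaf $(V H^{1} C)^{\PDual}$ placed in degree $1$, so truncating the source reduces every contribution to some $\Ext^{j}_{\Ab(k)}$ with $j < 0$. Hence $\gamma = \id$ and $g = g'$.

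The cohomology bookkeeping in the first step is immediate from Prop.\ \ref{prop: each part of Q tensor duality} and Prop.\ \ref{prop: description of duality morphism tensor Q}, so the only point that needs care is the vanishing used for uniqueness. But that is precisely the ``disjointness of degree ranges'' noted in the outline: everything here lives in degrees $\le 0$ apart from the single degree $0$, so nothing can map to $(V H^{1} C)^{\PDual}$ sitting in degree $1$. I therefore expect no real obstacle; the same mechanism, transposed to degrees $1, 2$, will govern the analogous statement obtained after applying $R \invlim_{n}$ and, ultimately, the canonicity of the mapping cone in the main theorem.
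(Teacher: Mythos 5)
Your argument is correct and is essentially the same as the paper's: existence is deduced by taking a cone of $D^{\SDual\SDual}\tensor\Q \to C^{\SDual}\tensor\Q$, identifying it with $(V H^{1}C)^{\PDual}$ via the cohomology description in Prop.~\ref{prop: description of duality morphism tensor Q}, and rotating; and uniqueness is obtained from the triangulated axioms together with the vanishing of $\Hom$ from a complex in degrees $\le 0$ (shifted by $[1]$) to a sheaf in degree $0$. The paper merely packages the uniqueness step into a separate general lemma, but the content is identical.
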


\begin{proof}
	The existence is clear from the previous proposition.
	The uniqueness follows from the following general lemma below.
\end{proof}

\begin{Lem}
	If $E \to F \to G$ are two morphisms in the derived category of an abelian category
	such that $E$ (resp.\ $G$) is concentrated in non-positive (resp.\ non-negative) degrees
	and if there exists a morphism $G[-1] \to E$ that yields a distinguished triangle
	$G[-1] \to E \to F \to G$,
	then such a morphism $G[-1] \to E$ is unique.
\end{Lem}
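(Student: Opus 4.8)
The plan is to reduce the uniqueness to the single vanishing statement $\Hom_{D}(E[1], G) = 0$, which is immediate from the amplitude hypotheses. Denote the two fixed morphisms by $u \colon E \to F$ and $v \colon F \to G$. A completing morphism $h \colon G[-1] \to E$ is the same datum as its shift $w = h[1] \colon G \to E[1]$ making $E \xrightarrow{u} F \xrightarrow{v} G \xrightarrow{w} E[1]$ a distinguished triangle, and $h$ is recovered from $w$; so it suffices to show that any two such $w_1, w_2$ coincide.

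First I would apply the octahedral/morphism-of-triangles axiom (TR3) to the commutative square with rows $E \xrightarrow{u} F$ taken twice and vertical identities on $E$ and $F$: it produces a morphism $\phi \colon G \to G$ with $\phi \circ v = v$ and $w_1 = w_2 \circ \phi$. Then $(\phi - \mathrm{id}_G) \circ v = 0$, and applying the cohomological functor $\Hom_{D}(-, G)$ to the rotated distinguished triangle $F \xrightarrow{v} G \xrightarrow{w_1} E[1] \xrightarrow{-u[1]} F[1]$ gives an exact sequence $\Hom_{D}(E[1], G) \xrightarrow{\, \circ\, w_1} \Hom_{D}(G, G) \xrightarrow{\, \circ\, v} \Hom_{D}(F, G)$; hence $\phi - \mathrm{id}_G = \psi \circ w_1$ for some $\psi \colon E[1] \to G$.

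Finally — the only point where the hypothesis enters — one has $\Hom_{D}(E[1], G) = \Hom_{D}(E, G[-1]) = 0$, since $E$ has cohomology concentrated in degrees $\le 0$ while $G[-1]$ has cohomology concentrated in degrees $\ge 1$, and there are no nonzero morphisms between such objects in the derived category (orthogonality for the canonical $t$-structure). Therefore $\psi = 0$, so $\phi = \mathrm{id}_G$ and $w_1 = w_2 \circ \phi = w_2$; unwinding the identification, the completing morphism $G[-1] \to E$ is unique. I anticipate no genuine obstacle here: the argument is purely formal, and the only steps requiring a moment's care are keeping track of the direction of the arrow supplied by TR3 and identifying $E[1]$ with a cone of $v$, so that the factorization of $\phi - \mathrm{id}_G$ indeed lands in $\Hom_{D}(E[1], G)$ rather than in some other Hom-group.
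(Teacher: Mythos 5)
Your argument is correct and follows the same route as the paper's own proof: invoke TR3 with identities on $E$ and $F$ to obtain an automorphism of $G$, observe that its difference from the identity kills $v$ and hence factors through the connecting map into $E[1]$, and conclude from $\Hom(E[1],G)=0$ (which is exactly the $t$-structure orthogonality forced by the amplitude hypotheses). The only cosmetic difference is that you phrase the factorization via the long exact $\Hom$-sequence while the paper reads it off the commutative square with zero verticals, but these are the same step.
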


\begin{proof}
	Let $f, g \colon G[-1] \rightrightarrows E$ be two morphisms
	such that the two triangles $G[-1] \rightrightarrows E \to F \to G$ are both distinguished.
	By an axiom of triangulated category,
	there exists an automorphism $h$ on $G$ such that the diagram
		\[
			\begin{CD}
				E @>>> F @>>> G @>> f > E[1] \\
				@| @| @VV h V @| \\
				E @>>> F @>>> G @> g >> E[1]
			\end{CD}
		\]
	is commutative.
	Hence the diagram
		\[
			\begin{CD}
				E @>>> F @>>> G @>> f > E[1] \\
				@VV 0 V @VV 0 V @VV h - 1 V @VV 0 V \\
				E @>>> F @>>> G @> g >> E[1]
			\end{CD}
		\]
	is commutative.
	Therefore there exists a morphism $r \colon E[1] \to G$ such that
	$h - 1 = r \compose f$.
	But the assumptions on $E$ and $G$ imply that $\Hom(E[1], G) = 0$.
	Hence $r = 0$, $h = 1$ and thus $f = g$.
\end{proof}

\begin{Prop} \label{prop: each part of inverse by multiplication duality}
	The cohomology objects $H^{n}$ of the complex
	$R \invlim_{n} D^{\SDual \SDual}$
	are described as follows:
		\[
					0
				\to
						(\pi_{0} H^{0} D)_{/ \tor}
					\tensor
						\Adele^{\infty} / \Q
				\to
					H^{1}
				\to
					V H^{1} D
				\to
					0,
			\quad
					H^{2}
				=
					V H^{2} D,
		\]
	and $H^{n} = 0$ for other values of $n$.
	The cohomology objects $H^{n}$ of the complex
	$R \invlim_{n} C^{\SDual}$
	are described as follows:
		\[
					H^{1}
				=
						(\pi_{0} H^{0} C)_{/ \tor}^{\LDual}
					\tensor
						\Adele^{\infty} / \Q,
			\quad
					H^{2}
				=
					(V (H^{0} C)_{0})^{\PDual},
		\]
	and $H^{n} = 0$ for other values of $n$.
\end{Prop}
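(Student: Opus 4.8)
The plan is to apply the triangulated functor $R \invlim_{n}$ of \S \ref{sec: The ind-rational pro-etale site and some derived limits} to $D^{\SDual \SDual}$ and to $C^{\SDual}$ and read off the cohomology from the descriptions in Prop.\ \ref{prop: each term of duality for Neron}. Since $R \invlim_{n}$ has cohomological dimension $\le 1$ by Prop.\ \ref{prop: derived limit for multiplication}, the hyper-derived functor spectral sequence degenerates at $E_{2}$ into short exact sequences
\[
		0
	\to
		R^{1} \invlim_{n} H^{i - 1}(A)
	\to
		H^{i}(R \invlim_{n} A)
	\to
		\invlim_{n} H^{i}(A)
	\to
		0
\]
for any $A \in D(k)$ (here all complexes in sight are bounded, so no issues arise), and the whole statement reduces to computing $\invlim_{n}$ and $R^{1} \invlim_{n}$ of each cohomology sheaf listed in Prop.\ \ref{prop: each term of duality for Neron}.

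These cohomology sheaves fall into three types, and for each I would quote the corresponding computation from \S \ref{sec: The ind-rational pro-etale site and some derived limits}. First, the pro-finite-\'etale torsion-free sheaves: $T (H^{0} D)_{0}$, and on the $C^{\SDual}$-side the terms $(H^{2} C)^{\PDual}$ and $(\pi_{0} H^{1} C)^{\PDual}$ (the latter two being pro-finite-\'etale up to a finite subgroup, using that $H^{1} C$ is divisibly ML by Prop.\ \ref{prop: each part of Q mod Z duality for Neron models}). These have vanishing $\invlim_{n}$, since they are torsion-free with $\bigcap_{n} n (\var) = 0$, and vanishing $R^{1} \invlim_{n}$: the latter is the sheaf-theoretic incarnation of $R \invlim_{n} \Hat{\Z} = 0$, which I would prove from the explicit two-term complex for $R \invlim_{n}$ in Prop.\ \ref{prop: proetale topos has enough projectives} together with the fact that the ``geometric series'' $\sum_{k} (m_{n} \cdots m_{n + k - 1}) b_{n + k}$ converges in a pro-finite sheaf for the cofinal sequence of increasingly divisible integers $m_{n}$ of \S \ref{sec: The ind-rational pro-etale site and some derived limits}. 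Hence $R \invlim_{n}$ kills these terms, which is exactly what forces $H^{-1}$ and $H^{0}$ of both $R \invlim_{n} D^{\SDual \SDual}$ and $R \invlim_{n} C^{\SDual}$ to vanish. Second, the finitely generated \'etale group $\pi_{0} H^{0} D$, resp.\ the dual lattice $(\pi_{0} H^{0} C)_{/ \tor}^{\LDual}$ inside $H^{0}(C^{\SDual})$ (more precisely the kernel of the period map $(\pi_{0} H^{0} C)_{/ \tor}^{\LDual} \to (H^{1} C)_{0}^{\SDual'}$, a finite-index sublattice since its cokernel $\delta_{\CT}$ is finite by assumption \eqref{item: height pairing for C and D} via $\delta_{\Height}$): these contribute $(\pi_{0} H^{0} D)_{/ \tor} \tensor \Adele^{\infty} / \Q$, resp.\ $(\pi_{0} H^{0} C)_{/ \tor}^{\LDual} \tensor \Adele^{\infty} / \Q$, in degree $+ 1$ by Prop.\ \ref{prop: inverse limit of multiplication for finite gen etale} (the sublattice and the full lattice become identified after $\tensor \Adele^{\infty} / \Q$). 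Third, the divisibly ML object $H^{1} D$ gives $V H^{1} D$ in degree $+ 1$ with no $R^{1}$-term by Prop.\ \ref{prop: inverse limit of multiplication for divisibly ML}; the torsion divisible group $H^{2} D$ gives $V H^{2} D$ in degree $+ 2$ by Prop.\ \ref{prop: derived limit for multiplication}; and on the $C^{\SDual}$-side the five-term exact sequence of Prop.\ \ref{prop: each term of duality for Neron} shows $H^{1}(C^{\SDual})$ is killed by a fixed integer, so $R \invlim_{n}$ annihilates it, leaving $H^{2}(R \invlim_{n} C^{\SDual}) = \invlim_{n} (T (H^{0} C)_{0})^{\PDual} = V \bigl( (T (H^{0} C)_{0})^{\PDual} \bigr) = (V (H^{0} C)_{0})^{\PDual}$, the last identification being the compatibility of $V$, $T$ and Pontryagin duality already used in Prop.\ \ref{prop: adelic duality for Neron}.

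Feeding these inputs into the short exact sequences above yields the stated descriptions. For $D^{\SDual \SDual}$ one can organize the argument more cleanly by first using the distinguished triangle $T (H^{0} D)_{0}[1] \to D^{\SDual \SDual} \to [(H^{0} D)_{0} \to D]$ from the proof of Prop.\ \ref{prop: each term of duality for Neron}: since $R \invlim_{n} T (H^{0} D)_{0} = 0$ one gets $R \invlim_{n} D^{\SDual \SDual} = R \invlim_{n} [(H^{0} D)_{0} \to D]$, and the remaining computation involves only $\pi_{0} H^{0} D$, $H^{1} D$, $H^{2} D$, with no Tate-module bookkeeping. The main obstacle I anticipate is precisely that bookkeeping around the ``infinite'' terms: establishing cleanly that $R \invlim_{n}$ annihilates the pro-finite-\'etale pieces --- so that the degree-$0$ contributions genuinely vanish rather than being merely bounded torsion --- and correctly matching $\invlim_{n}$ of the torsion divisible pieces with the $V$'s in the asserted degrees, while keeping the connecting maps under control throughout.
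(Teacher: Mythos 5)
Your computation is correct, and for $R \invlim_{n} D^{\SDual \SDual}$ it is the same term-by-term application of $R \invlim_{n}$ to the cohomology sheaves of Prop.~\ref{prop: each term of duality for Neron} that the paper carries out (with the useful extra detail you supply: the vanishing of $R \invlim_{n}$ on the pro-finite-\'etale torsion-free piece $T(H^{0}D)_{0}$ via the convergent geometric series, a point the paper leaves implicit since Prop.~\ref{prop: inverse limit of multiplication for divisibly ML} and~\ref{prop: inverse limit of multiplication for finite gen etale} do not literally cover profinite groups like $\Hat{\Z}$). Where you genuinely diverge from the paper is the $R \invlim_{n} C^{\SDual}$ half. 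You run the same term-by-term analysis directly on the cohomology of $C^{\SDual}$, which forces you to track the five-term sequence of Prop.~\ref{prop: each term of duality for Neron} and argue that the kernel of the period map $(\pi_{0}H^{0}C)_{/\tor}^{\LDual} \to (H^{1}C)_{0}^{\SDual'}$ is a finite-index sublattice, then that the difference disappears after $\tensor\,\Adele^{\infty}/\Q$. The paper instead short-circuits all of this with the chain of identifications
\[
R \invlim_{n} R \sheafhom_{k}(C, \Z)
= R \sheafhom_{k}(C \tensor \Q, \Z)
= R \sheafhom_{k}(H^{0}C \tensor \Q, \Z)
= R \invlim_{n} R \sheafhom_{k}(H^{0}C, \Z),
\]
using \cite[Prop.~(2.3.3)(c)]{Suz14} and the torsionness of $H^{\ge 1}C$ from Prop.~\ref{prop: each part of Q mod Z duality for Neron models}, reducing to the three-term complex $R \sheafhom_{k}(H^{0}C, \Z)$ whose $R \invlim_{n}$ is immediate. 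The paper's route eliminates all the period-map and sublattice bookkeeping and, for $C^{\SDual}$, replaces the profinite-vanishing step by the divisibly-ML case already covered by Prop.~\ref{prop: inverse limit of multiplication for divisibly ML}; your route is more hands-on but yields the same answer and, as a by-product, makes explicit the identification of the cokernel of the sublattice inclusion with $\delta_{\CT}$, which you correctly justify (the period maps from $L$ and $L^{\wedge}$ have the same image because $(H^{1}C)_{0}^{\SDual'}$ is killed by a power of $p$).
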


\begin{proof}
	For $R \invlim_{n} D^{\SDual \SDual}$,
	apply $R \invlim_{n}$ to the groups in Prop.\ \ref{prop: each term of duality for Neron}
	and use Prop.\ \ref{prop: inverse limit of multiplication for divisibly ML}
	and \ref{prop: inverse limit of multiplication for finite gen etale}.
	
	For $R \invlim_{n} C^{\SDual}$, we have
		\begin{align*}
					R \invlim_{n}
						R \sheafhom_{k} \bigl(
							C, \Z
						\bigr)
			&	=
					R \sheafhom_{k} \bigl(
						C \tensor \Q, \Z
					\bigr)
			\\
			&	=
					R \sheafhom_{k} \bigl(
						H^{0} C \tensor \Q, \Z
					\bigr)
			\\
			&	=
					R \invlim_{n}
						R \sheafhom_{k} \bigl(
							H^{0} C, \Z
						\bigr)
		\end{align*}
	by \cite[Prop.\ (2.3.3) (c)]{Suz14} and
	the torsionness result of higher cohomology in
	Prop.\ \ref{prop: each part of Q mod Z duality for Neron models}.
	The cohomology objects $H^{n}$ of the complex
		$
			R \sheafhom_{k} \bigl(
				H^{0} C, \Z
			\bigr)
		$
	are
		\[
				H^{0}
			=
				(\pi_{0} H^{0} C)_{/ \tor}^{\LDual},
			\quad
				H^{1}
			=
				(\pi_{0} H^{0} C)_{\tor}^{\PDual},
			\quad
				H^{2}
			=
				(T (H^{0} C)_{0})^{\PDual}
		\]
	by the same argument as the proof of Prop.\ \ref{prop: each term of duality for Neron}.
	Applying $R \invlim_{n}$ to these groups and
	using Prop.\ \ref{prop: inverse limit of multiplication for divisibly ML}
	and \ref{prop: inverse limit of multiplication for finite gen etale},
	we get the desired description.
\end{proof}

\begin{Prop} \label{prop: description of invserse multplication duality morphism}
	The morphism
		\[
				R \invlim_{n}
					D^{\SDual \SDual}
			\to
				R \invlim_{n}
					C^{\SDual}
		\]
	is a surjection onto a direct summand on cohomology in degree $1$
	with kernel $V H^{1} D$
	and an isomorphism on cohomology in any other degree.
\end{Prop}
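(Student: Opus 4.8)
The plan is to study the morphism $R \invlim_{n} D^{\SDual \SDual} \to R \invlim_{n} C^{\SDual}$ by first computing its mapping cone and then reading off cohomology. Write $u \colon D^{\SDual \SDual} \to C^{\SDual}$ for the morphism underlying this subsection (obtained from the pairing $C \tensor^{L} D \to \Z$ by double Serre duality and $C^{\SDual \SDual \SDual} = C^{\SDual}$). By Prop.\ \ref{prop: direct inverse adelic rationalization triangle}, for each of $A = D^{\SDual \SDual}$ and $A = C^{\SDual}$ there is a distinguished triangle $R \invlim_{n} A \to A \tensor \Q \to A \ctensor \Adele^{\infty}$, and $u$ induces a morphism between the two triangles. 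The component on the third terms, $D^{\SDual \SDual} \ctensor \Adele^{\infty} \to C^{\SDual} \ctensor \Adele^{\infty}$, is an isomorphism: precomposing $u$ with the natural morphism $D \to D^{\SDual \SDual}$ recovers the morphism $D \to C^{\SDual}$, which is an isomorphism after $\ctensor \Adele^{\infty}$ by \eqref{eq: adelic duality for Neron}, while $D \to D^{\SDual \SDual}$ is an isomorphism after $\ctensor \Adele^{\infty}$ by Prop.\ \ref{prop: double dual dies after tensoring Q mod Z or adeles}. Since the mapping cones of the three vertical morphisms of a morphism of distinguished triangles themselves form a distinguished triangle, the vanishing of the cone on the third terms forces the mapping cone of $R \invlim_{n} u$ to be isomorphic to that of $u \tensor \Q$, which by Prop.\ \ref{prop: description of duality morphism tensor Q} (equivalently Prop.\ \ref{prop: duality triangle tensor Q}) is $(V H^{1} C)^{\PDual}$ placed in degree $0$. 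Thus there is a distinguished triangle $R \invlim_{n} D^{\SDual \SDual} \to R \invlim_{n} C^{\SDual} \to (V H^{1} C)^{\PDual} \to R \invlim_{n} D^{\SDual \SDual}[1]$.

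Next I would take the long exact cohomology sequence of this triangle. By Prop.\ \ref{prop: each part of inverse by multiplication duality} both $R \invlim_{n} D^{\SDual \SDual}$ and $R \invlim_{n} C^{\SDual}$ are concentrated in degrees $1$ and $2$, whereas the cone is in degree $0$; the sequence therefore collapses, yielding an isomorphism on $H^{n}$ for every $n \ne 1$ and, in degree $1$, a short exact sequence $0 \to (V H^{1} C)^{\PDual} \to H^{1}(R \invlim_{n} D^{\SDual \SDual}) \to H^{1}(R \invlim_{n} C^{\SDual}) \to 0$. So the morphism is surjective on $H^{1}$ with kernel $(V H^{1} C)^{\PDual}$. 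To put this in the stated form I would identify $(V H^{1} C)^{\PDual}$ with $V H^{1} D$: by Prop.\ \ref{prop: each part of Q mod Z duality for Neron models} the groups $H^{1} C, H^{1} D$ lie in $\Loc^{f} \Alg_{\uc} / k$, so $V H^{1} C = V (H^{1} C)_{\divis}$ and $V H^{1} D = V (H^{1} D)_{\divis}$ by Prop.\ \ref{prop: Tate module exact sequence}, and these two are Pontryagin dual by Prop.\ \ref{prop: adelic duality for Neron}.

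It then remains to see the surjection in degree $1$ is onto a direct summand. For this I would invoke the other presentation of its source from Prop.\ \ref{prop: each part of inverse by multiplication duality}, namely the exact sequence $0 \to (\pi_{0} H^{0} D)_{/ \tor} \tensor \Adele^{\infty} / \Q \to H^{1}(R \invlim_{n} D^{\SDual \SDual}) \to V H^{1} D \to 0$, together with the identification $H^{1}(R \invlim_{n} C^{\SDual}) = (\pi_{0} H^{0} C)_{/ \tor}^{\LDual} \tensor \Adele^{\infty} / \Q$. The composite of the inclusion of $(\pi_{0} H^{0} D)_{/ \tor} \tensor \Adele^{\infty} / \Q$ with $H^{1}(R \invlim_{n} u)$ is $\Adele^{\infty}/\Q$ tensored with the height-pairing morphism $(\pi_{0} H^{0} D)_{/ \tor} \to (\pi_{0} H^{0} C)_{/ \tor}^{\LDual}$, which is injective with finite cokernel $\delta_{\Height}$; since a finite group is annihilated by tensoring with $\Adele^{\infty}/\Q$ and $\Tor_{1}$ against $\Adele^{\infty}/\Q$ vanishes on finite groups and on lattices, this tensored morphism is an isomorphism. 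It therefore gives a section of $H^{1}(R \invlim_{n} D^{\SDual \SDual}) \onto H^{1}(R \invlim_{n} C^{\SDual})$, so the surjection splits and its kernel $V H^{1} D$ is a direct summand.

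The step I expect to be the main obstacle is the compatibility used in the last paragraph: that $H^{1}(R \invlim_{n} u)$, restricted to the subobject $(\pi_{0} H^{0} D)_{/ \tor} \tensor \Adele^{\infty} / \Q$, really is the height-pairing morphism tensored with $\Adele^{\infty}/\Q$, and not merely some morphism with the same source and target. Verifying it requires unwinding the identifications in the proofs of Prop.\ \ref{prop: each term of duality for Neron} and Prop.\ \ref{prop: each part of inverse by multiplication duality} and checking that, after passing to $H^{0}$ and then $R \invlim_{n}$, the morphism $u$ is governed on the relevant subquotient by the effect of the pairing $C \tensor^{L} D \to \Z$ on degree-zero cohomology. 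Everything else in the argument is formal derived-category manipulation already packaged by the cited propositions.
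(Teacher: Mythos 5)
Your proof is correct and follows essentially the same route as the paper's, which is merely a terse pointer to ``the same argument as the proof of Prop.~\ref{prop: description of duality morphism tensor Q}'' together with the isomorphism $(\pi_{0} H^{0} D)_{/ \tor} \tensor \Adele^{\infty}/\Q \isomto (\pi_{0} H^{0} C)_{/ \tor}^{\LDual} \tensor \Adele^{\infty}/\Q$ and the commutative square involving $\ctensor\Adele^{\infty}$; you have simply repackaged the off-degree isomorphisms via a mapping-cone comparison rather than checking them degree-by-degree. The compatibility you flag at the end (that the restriction of $H^{1}(R\invlim_{n} u)$ to the subobject $(\pi_{0} H^{0} D)_{/ \tor} \tensor \Adele^{\infty}/\Q$ is the height-pairing morphism tensored with $\Adele^{\infty}/\Q$) is indeed the crux of the degree-one claim, but it is a routine naturality of the identifications in Prop.~\ref{prop: each term of duality for Neron} and Prop.~\ref{prop: each part of inverse by multiplication duality}, and the paper leaves exactly the same check implicit in the model proof of Prop.~\ref{prop: description of duality morphism tensor Q}.
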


\begin{proof}
	The same argument as the proof of Prop.\ \ref{prop: description of duality morphism tensor Q} works,
	this time using the isomorphism
		\[
					(\pi_{0} H^{0} D)_{/ \tor}
				\tensor
					\Adele^{\infty} / \Q
			\isomto
					(\pi_{0} H^{0} C)_{/ \tor}^{\LDual}
				\tensor
					\Adele^{\infty} / \Q
		\]
	and the commutative diagram
		\[
			\begin{CD}
					D^{\SDual \SDual} \ctensor \Adele^{\infty}
				@>>>
					C^{\SDual} \ctensor \Adele^{\infty}.
				\\
				@VVV
				@VVV
				\\
					R \invlim_{n}
						D^{\SDual \SDual}[1]
				@>>>
					R \invlim_{n}
						C^{\SDual}[1].
			\end{CD}
		\]
\end{proof}

In particular, we have two canonical morphisms
	\[
			V H^{1} D[-1]
		\to
			R \invlim_{n}
				D^{\SDual \SDual}
		\to
			R \invlim_{n}
				C^{\SDual}
	\]

\begin{Prop} \label{prop: duality triangle after inverse multiplication}
	There exists a unique morphism
		\[
				R \invlim_{n}
					C^{\SDual}
			\to
				V H^{1} D
		\]
	such that the resulting triangle
		\[
				V H^{1} D[-1]
			\to
				R \invlim_{n}
					D^{\SDual \SDual}
			\to
				R \invlim_{n}
					C^{\SDual}
			\to
				V H^{1} D
		\]
	is distinguished.
\end{Prop}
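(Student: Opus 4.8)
The plan is to argue exactly as in the proof of Prop.\ \ref{prop: duality triangle tensor Q}, the two structural inputs being Prop.\ \ref{prop: description of invserse multplication duality morphism} and the disjointness of the cohomological degrees of the complexes involved. Write $\phi$ for the canonical morphism $R \invlim_{n} D^{\SDual \SDual} \to R \invlim_{n} C^{\SDual}$ and $\alpha$ for the canonical morphism $V H^{1} D[-1] \to R \invlim_{n} D^{\SDual \SDual}$ recalled just before the statement; recall that $\alpha$ is obtained from the inclusion of $V H^{1} D = \Ker H^{1}(\phi)$ as a direct summand of $H^{1}(R \invlim_{n} D^{\SDual \SDual})$ (Prop.\ \ref{prop: description of invserse multplication duality morphism}) by means of the truncation map $\tau_{\le 1} R \invlim_{n} D^{\SDual \SDual} = H^{1}(R \invlim_{n} D^{\SDual \SDual})[-1] \to R \invlim_{n} D^{\SDual \SDual}$. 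By Prop.\ \ref{prop: each part of inverse by multiplication duality}, both $R \invlim_{n} D^{\SDual \SDual}$ and $R \invlim_{n} C^{\SDual}$ are concentrated in degrees $1$ and $2$, whereas $V H^{1} D$ is a sheaf, concentrated in degree $0$. First I would record the two consequences of this that drive everything: $\Hom_{D(k)}(V H^{1} D, R \invlim_{n} C^{\SDual}) = 0$, and any morphism out of $V H^{1} D[-1]$ into $R \invlim_{n} D^{\SDual \SDual}$ or $R \invlim_{n} C^{\SDual}$ is determined by the map it induces on $H^{1}$. In particular $\phi \circ \alpha$ is detected on $H^{1}$, where it is the composite of the inclusion $V H^{1} D = \Ker H^{1}(\phi) \into H^{1}(R \invlim_{n} D^{\SDual \SDual})$ with $H^{1}(\phi)$, hence zero; so $\phi \circ \alpha = 0$.

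For existence, since $\phi \circ \alpha = 0$, I would form the mapping cone of $\alpha$, giving a distinguished triangle $V H^{1} D[-1] \to R \invlim_{n} D^{\SDual \SDual} \to Q \to V H^{1} D$, through which $\phi$ factors as $\phi = \psi \circ \iota$ with $\iota \colon R \invlim_{n} D^{\SDual \SDual} \to Q$ and $\psi \colon Q \to R \invlim_{n} C^{\SDual}$. Reading off the long exact cohomology sequence of this triangle — using that $H^{1}(\alpha)$ is (split) injective — one obtains that $Q$ is concentrated in degrees $1$ and $2$, with $H^{1} Q \cong H^{1}(R \invlim_{n} D^{\SDual \SDual})/V H^{1} D$ and $H^{2} Q \cong H^{2}(R \invlim_{n} D^{\SDual \SDual})$. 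Comparing with the description of $\phi$ in Prop.\ \ref{prop: description of invserse multplication duality morphism} shows that $\psi$ induces isomorphisms on $H^{1}$ and $H^{2}$, hence is an isomorphism in $D(k)$. Transporting the cone triangle across $\psi$ then produces a distinguished triangle whose first two morphisms are literally $\alpha$ and $\phi$, with a third morphism $\beta \colon R \invlim_{n} C^{\SDual} \to V H^{1} D$. The one point requiring this slightly roundabout route — rather than simply taking the cone of $\phi$ — is precisely to ensure that the first morphism of the resulting triangle is the prescribed canonical $\alpha$ and not merely some connecting morphism of a cone; that is the only mildly delicate point, and it is handled by the factorization above.

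For uniqueness, given another $\beta'$ making $V H^{1} D[-1] \xrightarrow{\alpha} R \invlim_{n} D^{\SDual \SDual} \xrightarrow{\phi} R \invlim_{n} C^{\SDual} \xrightarrow{\beta'} V H^{1} D$ distinguished, I would rotate both triangles to the form $R \invlim_{n} D^{\SDual \SDual} \xrightarrow{\phi} R \invlim_{n} C^{\SDual} \to V H^{1} D \to R \invlim_{n} D^{\SDual \SDual}[1]$ (with connecting morphism induced by $\alpha$), apply the axioms of triangulated categories to the pair of identity morphisms on $R \invlim_{n} D^{\SDual \SDual}$ and $R \invlim_{n} C^{\SDual}$ to produce $h \colon V H^{1} D \to V H^{1} D$ with $h \circ \beta = \beta'$ and $\alpha[1] \circ h = \alpha[1]$, and then observe that $\alpha[1] \circ (h - 1) = 0$ forces $h - 1$ to factor through $\beta$, so that $h - 1 \in \Hom_{D(k)}(V H^{1} D, R \invlim_{n} C^{\SDual}) = 0$, i.e.\ $\beta = \beta'$. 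This is, in rotated form, exactly the argument of the lemma following the proof of Prop.\ \ref{prop: duality triangle tensor Q}; the only hypothesis it needs is the vanishing of that $\Hom$-group, which we have from the degree disjointness. In short, no step is a genuine obstacle: the proposition is a formal consequence of Prop.\ \ref{prop: description of invserse multplication duality morphism} together with the degree bookkeeping.
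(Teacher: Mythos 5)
Your proof is correct, and in substance it is the same as the paper's: the paper's proof of this proposition reads only ``Similar to the proof of Prop.~\ref{prop: duality triangle tensor Q}'', and that model proof has exactly your two ingredients --- existence from the cohomological description of the duality morphism (here Prop.~\ref{prop: description of invserse multplication duality morphism}), and uniqueness from the general lemma whose only hypothesis is the $\Hom$-vanishing coming from the disjointness of the cohomological degrees. Your uniqueness argument is the rotated form of that lemma, with the vanishing $\Hom(V H^{1} D, R \invlim_{n} C^{\SDual}) = 0$ in place of $\Hom(E[1], G) = 0$, which is exactly what the paper intends.

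One small remark on your existence step: the precaution you take (cone of $\alpha$ rather than cone of $\phi$, ``to ensure that the first morphism of the resulting triangle is the prescribed canonical $\alpha$'') is unnecessary, though harmless. If $W$ is any cone of $\phi$, then $W$ is concentrated in degree $0$ with $H^{0} W \cong \Ker H^{1}(\phi) = V H^{1} D$, and the connecting morphism $W[-1] \to R\invlim_{n} D^{\SDual\SDual}$ induces on $H^{1}$ precisely the kernel inclusion. Since the source is concentrated in degree $1$ and the target in degrees $1$ and $2$, the isomorphism $\Hom(V H^{1} D[-1], R\invlim_{n} D^{\SDual\SDual}) \cong \Hom(V H^{1} D, H^{1} R\invlim_{n} D^{\SDual\SDual})$ shows the connecting morphism is pinned down by its $H^{1}$, hence coincides with $\alpha$ under the canonical identification $W \cong V H^{1} D$. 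So the cone of $\phi$ already yields the desired triangle directly. Your route via the cone of $\alpha$ and a factoring $\psi$ is a valid alternative, and your verification that $\psi$ is an isomorphism is correct; it just does a bit more work than the degree bookkeeping requires.
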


\begin{proof}
	Similar to the proof of Prop.\ \ref{prop: duality triangle tensor Q}.
\end{proof}

\begin{Prop} \label{prop: uniqueness of the mapping cone of the duality}
	Consider the natural commutative diagram
		\[
			\begin{CD}
					R \invlim_{n}
						D^{\SDual \SDual}
				@>>>
					R \invlim_{n}
						C^{\SDual}
				@>>>
					V H^{1} D
				\\
				@VVV
				@VVV
				@.
				\\
					D^{\SDual \SDual} \tensor \Q
				@>>>
					C^{\SDual} \tensor \Q
				@>>>
					(V H^{1} C)^{\PDual},
			\end{CD}
		\]
	where the rows are distinguished.
	The isomorphism
	$V H^{1} D \isomto (V H^{1} C)^{\PDual}$
	in Prop.\ \ref{prop: adelic duality for Neron} is the unique morphism
	that completes the above diagram into a morphism of distinguished triangles.
\end{Prop}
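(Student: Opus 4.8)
The plan is to prove existence of a completion, show that \emph{every} completion is an isomorphism, identify the completion with the Pontryagin pairing of Prop.~\ref{prop: adelic duality for Neron}, and finally prove uniqueness; the last point will be the real work.

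\emph{Existence and isomorphy.} The left square of the diagram commutes because both vertical arrows are the canonical morphism $R\invlim_{n}(\var)\to(\var)\tensor\Q$ of the triangle in Prop.~\ref{prop: direct inverse adelic rationalization triangle}, which is natural in the argument, and $D^{\SDual\SDual}\to C^{\SDual}$ is the fixed morphism dual to the duality pairing. I would then apply the standard construction of a $3\times 3$ diagram of distinguished triangles from a commutative square (in the sense of the Notation, using \cite[\S 10.5]{KS06}): it extends this square to a commutative $3\times 3$ diagram whose first two rows are the two rows of our diagram — the third terms being $V H^{1} D$ and $(V H^{1} C)^{\PDual}$ by Prop.~\ref{prop: duality triangle after inverse multiplication} and~\ref{prop: duality triangle tensor Q} — whose first two columns are the triangles of Prop.~\ref{prop: direct inverse adelic rationalization triangle}, and whose third row is a distinguished triangle $D^{\SDual\SDual}\ctensor\Adele^{\infty}\to C^{\SDual}\ctensor\Adele^{\infty}\to Z$, where $Z$ is a cone of the third vertical map $w\colon V H^{1} D\to(V H^{1} C)^{\PDual}$ that the construction produces. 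Since the first arrow of this third row is the isomorphism~\eqref{eq: adelic duality for Neron}, we get $Z=0$, so $w$ is an isomorphism. The same device applied to an \emph{arbitrary} completion $(u,v,w)$ of our diagram produces a distinguished triangle on the cones of $u,v,w$ whose first map is again~\eqref{eq: adelic duality for Neron}; hence every completion is an isomorphism.

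\emph{Identification.} To see that $w$ is the isomorphism of Prop.~\ref{prop: adelic duality for Neron}, I would unwind the identifications of Prop.~\ref{prop: each part of inverse by multiplication duality} and~\ref{prop: each part of Q tensor duality}: $V H^{1} D$ is the image of $H^{0}$ of the top-row third term inside $H^{1}(R\invlim_{n}D^{\SDual\SDual})$, and similarly $(V H^{1} C)^{\PDual}$ appears in $H^{0}(C^{\SDual}\tensor\Q)$; chasing commutativity of the $3\times 3$ diagram reduces the claim to the statement, established in Prop.~\ref{prop: adelic duality for Neron}, that~\eqref{eq: adelic duality for Neron} induces on the corresponding subquotients of $H^{0}(D^{\SDual\SDual}\ctensor\Adele^{\infty})$ and $H^{0}(C^{\SDual}\ctensor\Adele^{\infty})$ precisely the Pontryagin pairing between $V(H^{1}D)_{\divis}$ and $V(H^{1}C)_{\divis}$. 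This is a routine but careful diagram chase.

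\emph{Uniqueness and the main obstacle.} If $w_{1},w_{2}$ both complete the diagram, set $\delta=w_{1}-w_{2}\colon V H^{1} D\to(V H^{1} C)^{\PDual}$, a morphism of sheaves since source and target are concentrated in degree $0$. The left square is common to both $w_{i}$; the middle square forces $\delta\circ g=0$, where $g\colon R\invlim_{n}C^{\SDual}\to V H^{1} D$ is the connecting map of the top triangle of Prop.~\ref{prop: duality triangle after inverse multiplication}, and the right square forces $h'\circ\delta=0$, where $h'\colon(V H^{1} C)^{\PDual}\to D^{\SDual\SDual}\tensor\Q[1]$ is the connecting map of the bottom triangle of Prop.~\ref{prop: duality triangle tensor Q}. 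Hence $\delta$ factors through a cone of $g$, which the top triangle identifies with $R\invlim_{n}D^{\SDual\SDual}[1]$ (degrees $0,1$ by Prop.~\ref{prop: each part of inverse by multiplication duality}), and also through the homotopy fiber of $h'$, which the bottom triangle identifies with $C^{\SDual}\tensor\Q$ (degrees $-1,0$ by Prop.~\ref{prop: each part of Q tensor duality}). Note that $g$ and $h'$ are morphisms between complexes with disjoint cohomological support, hence vanish on cohomology and live in the positive part of the hyperext filtration; combining this with the splittings of Prop.~\ref{prop: description of invserse multplication duality morphism} and~\ref{prop: description of duality morphism tensor Q} reduces $\delta=0$ to the vanishing of certain higher $\Ext$-groups in $\Ab(k^{\ind\rat}_{\pro\et})$ between the uniquely divisible ind-pro-(finite-)\'etale sheaves occurring in $H^{*}(R\invlim_{n}D^{\SDual\SDual})$, $H^{*}(C^{\SDual}\tensor\Q)$, $V H^{1} D$ and $(V H^{1} C)^{\PDual}$ — equivalently, that no nonzero sheaf morphism $V H^{1} D\to(V H^{1} C)^{\PDual}$ is annihilated simultaneously by precomposition with $g$ and postcomposition with $h'$. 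I expect this to be the delicate step: it should follow by combining the explicit descriptions of all objects in play (they are rational Tate modules of \'etale groups, hence $\Q$-linear and built from pro-finite-\'etale pieces) with the existence of enough projectives in $\Ab(k^{\ind\rat}_{\pro\et})$ from Prop.~\ref{prop: proetale topos has enough projectives}, tracking exactly which filtration pieces of $g$ and $h'$ survive.
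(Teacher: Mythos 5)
You take a genuinely different route from the paper, and the divergence is exactly where your argument is incomplete. The paper's uniqueness step does not attempt to show $\delta := w_{1}-w_{2}=0$ directly; it works instead with the morphism of \emph{vertical} distinguished triangles
$R\invlim_{n}D^{\SDual\SDual}\to D^{\SDual\SDual}\tensor\Q\to D^{\SDual\SDual}\ctensor\Adele^{\infty}$
and its $C$-analogue, passes to $H^{0}$, where Prop.~\ref{prop: each part of inverse by multiplication duality}, \ref{prop: each part of Q tensor duality}, \ref{prop: each part of adelic duality} yield short exact sequences
$0\to H^{0}F\to H^{0}G\to H^{1}E\to 0$ with the middle vertical an isomorphism, and reads off from the snake lemma a single candidate connecting map $\Ker(H^{1}E\to H^{1}E')\isomto\Coker(H^{0}F\to H^{0}F')$; the identifications of $V H^{1}D$ and $(V H^{1}C)^{\PDual}$ coming from the two horizontal triangles, together with Prop.~\ref{prop: description of duality morphism tensor Q}, \ref{prop: description of invserse multplication duality morphism}, match this with the pairing of Prop.~\ref{prop: adelic duality for Neron}. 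Your proof, by contrast, proceeds by subtraction and vanishing.

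The subtraction argument stops precisely at the crux. You correctly deduce $\delta\circ g=0$ and $h'\circ\delta=0$, hence that $\delta$ factors both through $R\invlim_{n}D^{\SDual\SDual}[1]$ and through $C^{\SDual}\tensor\Q$; but the last step, that no non-zero $\delta$ can admit both factorisations, is left as a ``delicate step'' that you ``expect'' to follow from ``tracking exactly which filtration pieces survive.'' This is not routine bookkeeping. The two factoring complexes share cohomological degree $0$ (the first lives in degrees $0,1$, the second in $-1,0$), so the groups of morphisms involved are not killed on t-structure grounds. For instance, the degree-zero graded piece of $\Hom\bigl(R\invlim_{n}D^{\SDual\SDual}[1],(V H^{1}C)^{\PDual}\bigr)$ is a subgroup of the group of sheaf morphisms $H^{1}R\invlim_{n}D^{\SDual\SDual}\to (V H^{1}C)^{\PDual}$, which is non-zero: by Prop.~\ref{prop: each part of inverse by multiplication duality} $V H^{1}D$ is a quotient of $H^{1}R\invlim_{n}D^{\SDual\SDual}$, and $(V H^{1}C)^{\PDual}\cong V H^{1}D$. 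The subgroup your $\delta$ lives in is therefore an intersection of two honestly non-trivial subgroups, and proving it vanishes requires using the specific morphisms $g$ and $h'$, not generic Ext-vanishing. As written, your uniqueness paragraph is a statement of intent; the existence-and-isomorphy part, however plausible, does not substitute for the uniqueness that the proposition actually asserts, which is the part the paper's snake-lemma argument is designed to supply.
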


\begin{proof}
	A morphism $V H^{1} D \to (V H^{1} C)^{\PDual}$
	with the required property exists by an axiom of triangulated categories.
	We need to show that such a morphism has to be the isomorphism of
	Prop.\ \ref{prop: adelic duality for Neron}.
	
	Consider the morphism of distinguished triangles
		\[
			\begin{CD}
					R \invlim_{n}
						D^{\SDual \SDual}
				@>>>
					D^{\SDual \SDual} \tensor \Q
				@>>>
					D^{\SDual \SDual} \ctensor \Adele^{\infty}
				\\
				@VVV
				@VVV
				@V \wr VV
				\\
					R \invlim_{n}
						C^{\SDual}
				@>>>
					C^{\SDual} \tensor \Q
				@>>>
					C^{\SDual} \ctensor \Adele^{\infty}
			\end{CD}
		\]
	coming from Prop.\ \ref{prop: direct inverse adelic rationalization triangle},
	\ref{prop: adelic duality for Neron}
	and \ref{prop: double dual dies after tensoring Q mod Z or adeles}.
	Denote the upper triangle by $E \to F \to G$ and
	the lower by $E' \to F' \to G'$.
	By Prop.\ \ref{prop: each part of adelic duality},
	\ref{prop: each part of Q tensor duality} and
	\ref{prop: each part of inverse by multiplication duality},
	we have a commutative diagram with exact rows
		\[
			\begin{CD}
					0
				@>>>
					H^{0} F
				@>>>
					H^{0} G
				@>>>
					H^{1} E
				@>>>
					0
				\\
				@.
				@VVV
				@VV \wr V
				@VVV
				@.
				\\
					0
				@>>>
					H^{0} F'
				@>>>
					H^{0} G'
				@>>>
					H^{1} E'
				@>>>
					0.
			\end{CD}
		\]
	Hence any morphism $V H^{1} D \to (V H^{1} C)^{\PDual}$
	with the required property has to be the connecting morphism
		\[
				\Ker(H^{1} E \to H^{1} E')
			\to
				\Coker(H^{0} F \to H^{0} F')
		\]
	of the snake lemma for this diagram.
	Using Prop.\ \ref{prop: adelic duality for Neron},
	\ref{prop: description of duality morphism tensor Q}
	and \ref{prop: description of invserse multplication duality morphism},
	we can see that this is indeed the isomorphism of Prop.\ \ref{prop: adelic duality for Neron}.
\end{proof}

By Prop.\ \ref{prop: duality triangle tensor Q} and \ref{prop: duality triangle after inverse multiplication},
we have morphisms
	\begin{gather}
		\label{eq: canonical morphism to the cone of the duality morphism}
				C^{\SDual}
			\to
				C^{\SDual} \tensor \Q
			\to
				(V H^{1} C)^{\PDual},
		\\
		\label{eq: canonical morphism from the cone of the duality morphism}
				V H^{1} D[-1]
			\to
				R \invlim_{n}
					D^{\SDual \SDual}
			\to
				D^{\SDual \SDual}.
	\end{gather}
Together with the isomorphism
$(V H^{1} C)^{\PDual} \cong V H^{1} D$
of Prop.\ \ref{prop: adelic duality for Neron},
we have a triangle
	\[
			V H^{1} D[-1]
		\to
			D^{\SDual \SDual}
		\to
			C^{\SDual}
		\to
			V H^{1} D
	\]

\begin{Prop} \label{prop: formal duality for Neron}
	The above triangle is distinguished.
\end{Prop}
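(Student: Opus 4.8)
The plan is to realize the desired triangle as a transport of a genuine mapping cone of the duality morphism $\phi\colon D^{\SDual\SDual}\to C^{\SDual}$, after first showing that this cone is concentrated in degree zero and canonically isomorphic to $V H^{1} D$.

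First I would form a distinguished triangle $D^{\SDual\SDual}\xrightarrow{\phi}C^{\SDual}\xrightarrow{\psi}E\xrightarrow{\chi}D^{\SDual\SDual}[1]$ and check that $\phi\tensor^{L}\Z/n\Z$ is an isomorphism for every $n\ge 1$. Indeed, by the distinguished triangle $V(H^{0}D)_{0}[1]\to D\to D^{\SDual\SDual}$ from the proof of Prop.~\ref{prop: double dual dies after tensoring Q mod Z or adeles} and the unique divisibility of $V(H^{0}D)_{0}$, the natural morphism $D\to D^{\SDual\SDual}$ becomes an isomorphism after $\tensor^{L}\Z/n\Z$; since $\phi$ precomposed with $D\to D^{\SDual\SDual}$ is the pairing-induced morphism $D\to C^{\SDual}$ and $C^{\SDual}\tensor^{L}\Z/n\Z=(C\tensor^{L}\Z/n\Z)^{\SDual}[1]$, assumption \eqref{item: mod n duality for C and D} gives the claim. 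Hence $E\tensor^{L}\Z/n\Z=0$ for all $n$, so each cohomology sheaf of $E$ is uniquely divisible and both natural morphisms $R\invlim_{n}E\to E$ and $E\to E\tensor\Q$ are isomorphisms.

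Next I would identify $E$ in two ways. Tensoring the cone triangle with $\Q$ and using $E=E\tensor\Q$ exhibits $E$ as a mapping cone of $\phi\tensor\Q$; comparing with Prop.~\ref{prop: duality triangle tensor Q} and using that $D^{\SDual\SDual}\tensor\Q$ is concentrated in degrees $-1,0$ while $(V H^{1}C)^{\PDual}$ sits in degree $0$ (Prop.~\ref{prop: each part of Q tensor duality}), one gets a \emph{unique} isomorphism $v\colon E\isomto(V H^{1}C)^{\PDual}$ with $v\circ(\psi\tensor\Q)$ equal to the canonical morphism $p$ in \eqref{eq: canonical morphism to the cone of the duality morphism} and with $\chi\tensor\Q$ equal to $v$ followed by the connecting morphism of that triangle. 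Applying $R\invlim_{n}$ to the cone triangle, using $R\invlim_{n}E\isomto E$, and comparing with Prop.~\ref{prop: duality triangle after inverse multiplication} yields by TR3 an isomorphism $u\colon E\isomto V H^{1}D$ with $u\circ(R\invlim_{n}\psi)$ equal to the structural morphism there and $R\invlim_{n}\chi=\beta_{0}[1]\circ u$, where $\beta_{0}\colon V H^{1}D[-1]\to R\invlim_{n}D^{\SDual\SDual}$ is the first map of \eqref{eq: canonical morphism from the cone of the duality morphism}. I would then prove $v=\gamma\circ u$, where $\gamma\colon V H^{1}D\isomto(V H^{1}C)^{\PDual}$ is the isomorphism of Prop.~\ref{prop: adelic duality for Neron}: applying the natural morphism $R\invlim_{n}(\var)\to(\var)\tensor\Q$ of Prop.~\ref{prop: direct inverse adelic rationalization triangle} to the cone triangle, and using that on the uniquely divisible object $E$ this morphism is the identity together with its naturality on $\psi$ and on $\chi$, one checks that $v\circ u^{-1}$ completes the commutative diagram of Prop.~\ref{prop: uniqueness of the mapping cone of the duality} into a morphism of distinguished triangles; the uniqueness asserted there forces $v\circ u^{-1}=\gamma$. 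In particular $E$ is concentrated in degree zero and $w:=\gamma^{-1}v=u$.

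Finally I would transport the cone triangle along $w\colon E\isomto V H^{1}D$, obtaining a distinguished triangle $D^{\SDual\SDual}\xrightarrow{\phi}C^{\SDual}\xrightarrow{w\psi}V H^{1}D\xrightarrow{\chi w^{-1}}D^{\SDual\SDual}[1]$, and verify that its morphisms are those of the statement. Since $\psi$ lands in the uniquely divisible $E$ it factors uniquely as $C^{\SDual}\to C^{\SDual}\tensor\Q\xrightarrow{\psi\tensor\Q}E$ (vanishing of $R\sheafhom_{k}$ from a torsion object into a uniquely divisible one gives both existence and uniqueness), so $w\psi=\gamma^{-1}\circ p\circ(C^{\SDual}\to C^{\SDual}\tensor\Q)$, which is the morphism \eqref{eq: canonical morphism to the cone of the duality morphism} followed by $\gamma^{-1}$; and naturality of $R\invlim_{n}(\var)\to(\var)$ on $\chi$ gives $\chi=(R\invlim_{n}D^{\SDual\SDual}\to D^{\SDual\SDual})[1]\circ R\invlim_{n}\chi=\beta[1]\circ w$ with $\beta$ the composite \eqref{eq: canonical morphism from the cone of the duality morphism}, so $\chi w^{-1}=\beta[1]$. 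This is precisely the rotation of the triangle in the statement, which is therefore distinguished. I expect the main obstacle to be the reconciliation $v=\gamma\circ u$: although Prop.~\ref{prop: uniqueness of the mapping cone of the duality} was set up for exactly this purpose, the verification requires careful bookkeeping to see that the natural morphism $R\invlim_{n}(\var)\to(\var)\tensor\Q$ carries the structure morphisms $\psi,\chi$ of the chosen (non-functorial) cone to the morphisms and connecting maps appearing in that diagram.
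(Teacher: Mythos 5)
Your proposal is correct and takes essentially the same route as the paper: both arguments observe that the cone of $\phi$ becomes zero after derived completion (you phrase this via the direct observation that $\phi\tensor^{L}\Z/n\Z$ is an isomorphism, the paper via showing the bottom row in the $3\times 3$ diagram is an isomorphism), identify the cone with $V H^{1}D$ through $R\invlim_{n}$ and Prop.~\ref{prop: duality triangle after inverse multiplication}, and then pin down the connecting morphism by routing through the $\tensor\Q$ picture and invoking Prop.~\ref{prop: uniqueness of the mapping cone of the duality}. Your formulation is somewhat more explicit about which isomorphisms of cones are being used and why they agree, which is a reasonable way to present the same argument.
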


\begin{proof}
	Let $W$ be any mapping cone of the morphism $D^{\SDual \SDual} \to C^{\SDual}$.
	Consider the commutative diagram of distinguished triangles
		\[
			\begin{CD}
					R \invlim_{n} D^{\SDual \SDual}
				@>>>
					R \invlim_{n} C^{\SDual}
				@>>>
					R \invlim_{n} W
				\\
				@VVV
				@VVV
				@VVV
				\\
					D^{\SDual \SDual}
				@>>>
					C^{\SDual}
				@>>>
					W
				\\
				@VVV
				@VVV
				@VVV
				\\
					R \invlim_{n}(D^{\SDual \SDual} \tensor^{L} \Z / n \Z)
				@>>>
					R \invlim_{n}(C^{\SDual} \tensor^{L} \Z / n \Z)
				@>>>
					R \invlim_{n}(W \tensor^{L} \Z / n \Z).
			\end{CD}
		\]
	The left lower horizontal morphism is an isomorphism
	by Prop.\ \ref{prop: duality for Neron after derived completion} and
	\ref{prop: double dual dies after tensoring Q mod Z or adeles}.
	The (canonical choice of a) mapping fiber of the left upper horizontal morphism is $V H^{1} D[-1]$
	by Prop.\ \ref{prop: duality triangle after inverse multiplication}.
	Therefore $W$ can actually be taken as $V H^{1} D$,
	i.e., there exists a morphism of distinguished triangle
		\[
			\begin{CD}
					V H^{1} D[-1]
				@>>>
					R \invlim_{n} D^{\SDual \SDual}
				@>>>
					R \invlim_{n} C^{\SDual}
				\\
				@|
				@VVV
				@VVV
				\\
					V H^{1} D[-1]
				@>>>
					D^{\SDual \SDual}
				@>>>
					C^{\SDual}.
			\end{CD}
		\]
	The lower horizontal morphism in the left square has to be
	the morphism \eqref{eq: canonical morphism from the cone of the duality morphism}.
	For this choice of $W$, we show that the connecting morphism $C^{\SDual} \to V H^{1} D$ for the lower triangle
	is equal to the morphism \eqref{eq: canonical morphism to the cone of the duality morphism}
	composed with the isomorphism
	$(V H^{1} C)^{\PDual} \cong V H^{1} D$
	of Prop.\ \ref{prop: adelic duality for Neron}.
	Consider the commutative diagram
		\[
			\begin{CD}
					D^{\SDual \SDual}
				@>>>
					C^{\SDual}
				@>>>
					V H^{1} D
				\\
				@VVV
				@VVV
				@.
				\\
					D^{\SDual \SDual} \tensor \Q
				@>>>
					C^{\SDual} \tensor \Q
				@>>>
					(V H^{1} C)^{\PDual}.
			\end{CD}
		\]
	By an axiom of triangulated categories, there exists a morphism
	$f \colon V H^{1} D \to (V H^{1} C)^{\PDual}$
	that completes the diagram into a morphism of distinguished triangles.
	Hence we have a morphism of distinguished triangles
		\[
			\begin{CD}
					R \invlim_{n}
						D^{\SDual \SDual}
				@>>>
					R \invlim_{n}
						C^{\SDual}
				@>>>
					V H^{1} D
				\\
				@VVV
				@VVV
				@V f VV
				\\
					D^{\SDual \SDual} \tensor \Q
				@>>>
					C^{\SDual} \tensor \Q
				@>>>
					(V H^{1} C)^{\PDual}.
			\end{CD}
		\]
	Applying Prop.\ \ref{prop: uniqueness of the mapping cone of the duality} to this diagram,
	we know that $f$ has to be the isomorphism
	given by Prop.\ \ref{prop: adelic duality for Neron}.
	Hence the morphism
	$C^{\SDual} \to (V H^{1} C)^{\PDual}$
	has to come from the morphism
	\eqref{eq: canonical morphism to the cone of the duality morphism}.
\end{proof}

Suppose that we have two other objects $C', D' \in D(k)$
and a morphism $C' \tensor^{L} D' \to \Z$
satisfying the same assumptions
\eqref{item: structure of cohom of C}--\eqref{item: mod n duality for C and D}
listed at the beginning of this subsection.
Suppose also that we have a perfect pairing $C'' \times D'' \to \Q / \Z$
of finite \'etale groups over $k$.
Suppose finally that we have two distinguished triangles
$C' \to C \to C''$ and $D \to D' \to D''$ such that
the morphisms
	\[
		\begin{CD}
			D @>>> D' @>>> D'' \\
			@VVV @VVV @V \wr VV \\
			C^{\SDual} @>>> C'^{\SDual} @>>> C''^{\PDual},
		\end{CD}
	\]
form a morphism of distinguished triangles.

\begin{Prop} \label{prop: formal component group compatibility diagram}
	The morphism $D \to D'$ induces an isomorphism
	$V H^{1} D \isomto V H^{1} D'$.
	The diagram
		\[
			\begin{CD}
				D^{\SDual \SDual} @>>> D'^{\SDual \SDual} @>>> D'' \\
				@VVV @VVV @V \wr VV \\
				C^{\SDual} @>>> C'^{\SDual} @>>> C''^{\PDual} \\
				@VVV @VVV @VVV \\
				V H^{1} D @> \sim >> V H^{1} D' @>>> 0
			\end{CD}
		\]
	is a commutative diagram of distinguished triangles.
\end{Prop}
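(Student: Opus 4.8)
The plan is to prove the easy isomorphism first, then to assemble the $3\times 3$ diagram out of distinguished triangles that are already available, and finally to pin down the commutativity of the bottom squares by a uniqueness/explicit‑computation argument of the kind used in Prop.\ \ref{prop: uniqueness of the mapping cone of the duality}.

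First I would verify that $D \to D'$ induces an isomorphism $V H^{1} D \isomto V H^{1} D'$. Since $C''$ and $D''$ are finite \'etale groups concentrated in cohomological degree $0$, the long exact sequence of the distinguished triangle $D \to D' \to D''$ reads in part $H^{0} D'' \to H^{1} D \to H^{1} D' \to 0$, so the surjection $H^{1} D \onto H^{1} D'$ has kernel a quotient of the finite group $H^{0} D''$, hence an object of $\Alg_{\uc} / k$. As $V$ is exact on $\Ind^{f} \Alg_{\uc} / k$ and annihilates every object of $\Alg_{\uc} / k$, the morphism $V H^{1} D \to V H^{1} D'$ is an isomorphism. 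The same argument applied to $C' \to C \to C''$ gives $V H^{1} C' \isomto V H^{1} C$, which I will use for the bottom‑left square.

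Next I would put the diagram together. The top row $D^{\SDual \SDual} \to D'^{\SDual \SDual} \to D'' \to D^{\SDual \SDual}[1]$ is the image of $D \to D' \to D''$ under the triangulated functor $\SDual \SDual$, together with the identification $D''^{\SDual \SDual} = D''$ coming from Serre reflexivity of the finite \'etale group $D''$ (\cite[Prop.\ (2.4.1) (b)]{Suz14}); the middle row is the hypothesis; the bottom row $V H^{1} D \isomto V H^{1} D' \to 0 \to V H^{1} D[1]$ is distinguished because its first map is an isomorphism. The left and middle columns are the duality triangles of Prop.\ \ref{prop: formal duality for Neron} (rotated) applied to $(C, D)$ and to $(C', D')$, which satisfy the same hypotheses \eqref{item: structure of cohom of C}--\eqref{item: mod n duality for C and D}; the right column $D'' \isomto C''^{\PDual} \to 0 \to D''[1]$ is distinguished because the perfect pairing makes $D'' \to C''^{\PDual}$ an isomorphism. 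For the top‑left square I would observe that the vertical duality morphisms $D^{\SDual \SDual} \to C^{\SDual}$ and $D'^{\SDual \SDual} \to C'^{\SDual}$ are, by construction, the Serre duals of the adjoint maps $C \to D^{\SDual}$ and $C' \to D'^{\SDual}$ of the two pairings, and that the commutativity of the left square of the middle row is, under the tensor--Hom adjunction, exactly the compatibility of the two pairings, hence equivalent to commutativity of the square with edges $C' \to C$, $C \to D^{\SDual}$, $C' \to D'^{\SDual}$, $D'^{\SDual} \to D^{\SDual}$; applying $\SDual$ gives the top‑left square. For the top‑middle square I would compose with the canonical map $D' \to D'^{\SDual \SDual}$: the upper path equals $D' \to D'' \to C''^{\PDual}$ by naturality of $G \to G^{\SDual \SDual}$, the lower path equals $D' \to C'^{\SDual} \to C''^{\PDual}$ by the transpose identity $\SDual(C' \to D'^{\SDual}) \compose (D' \to D'^{\SDual \SDual}) = (D' \to C'^{\SDual})$, and these two paths agree by the right square of the hypothesis; since the mapping cone of $D' \to D'^{\SDual \SDual}$ is $V(H^{0} D')_{0}[2]$ (concentrated in degree $-2$, by Prop.\ \ref{prop: double dual dies after tensoring Q mod Z or adeles}) while $C''^{\PDual}$ sits in degree $0$, the group $\Hom_{D(k)}(V(H^{0} D')_{0}[2], C''^{\PDual})$ vanishes, so precomposition with $D' \to D'^{\SDual \SDual}$ is injective on the relevant Hom‑group and the top‑middle square itself commutes.

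Finally, for the bottom row: the commutativity of the top‑left square, together with the distinguished left and middle columns, yields by TR3 a morphism $V H^{1} D \to V H^{1} D'$ fitting into a morphism of the column triangles; it remains to identify it with the isomorphism of the first part. For this I would run the argument of Prop.\ \ref{prop: uniqueness of the mapping cone of the duality}: the maps $C^{\SDual} \to V H^{1} D$ and $C'^{\SDual} \to V H^{1} D'$ factor through the rationalizations $C^{\SDual} \tensor \Q \to (V H^{1} C)^{\PDual}$ and $C'^{\SDual} \tensor \Q \to (V H^{1} C')^{\PDual}$ via \eqref{eq: canonical morphism to the cone of the duality morphism} and the isomorphism of Prop.\ \ref{prop: adelic duality for Neron}, where by Prop.\ \ref{prop: each part of Q tensor duality} and \ref{prop: each part of inverse by multiplication duality} all the maps are explicit in low degrees; comparing the completing maps there by a snake‑lemma chase, and using that $(V H^{1} C)^{\PDual} \to (V H^{1} C')^{\PDual}$ is the Pontryagin dual of the isomorphism $V H^{1} C' \isomto V H^{1} C$ noted above, one recognizes the completing map as the isomorphism $V H^{1} D \isomto V H^{1} D'$ of the first part. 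This in particular gives commutativity of the bottom‑left square, the bottom‑middle square being trivial. I expect this last identification to be the main obstacle: TR3 guarantees only existence, not canonicity of the completing map, so one genuinely has to descend to the explicit $\tensor \Q$ and $R \invlim_{n}$ pictures to recognize it; the remaining verifications are routine manipulations with Serre duals, the two adjoints of a pairing, and shifts.
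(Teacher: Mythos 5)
Your first step — showing $V H^{1} D \isomto V H^{1} D'$ from the finiteness of $D''$ — matches the paper exactly. For the $3 \times 3$ diagram, however, you take a noticeably heavier route than the paper does. The paper's entire verification of commutativity is the observation that the only squares not forced by functoriality of $\SDual$, $\SDual\SDual$ and the given morphism of triangles are the two "hidden" ones involving shifted boundary maps, namely the composites $V H^{1} D'[-1] \to D'^{\SDual\SDual} \to D''$ and $C''^{\PDual}[-1] \to C^{\SDual} \to V H^{1} D$; both vanish for the trivial reason that no non-zero morphism exists, in any shift, between a finite étale group and a uniquely divisible object. You do not use this degree/type disjointness at all. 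Instead, after carefully verifying the two top squares (your use of Serre reflexivity for the top-left, and of the fact that the fiber of $D' \to D'^{\SDual\SDual}$ is concentrated in degree $-2$ for the top-middle, are both correct and somewhat more explicit than what the paper writes), you invoke TR3 to produce a completing map and then plan to identify it with the canonical isomorphism by re-running the $\tensor\Q$ / $R\invlim_{n}$ comparison of Prop.~\ref{prop: uniqueness of the mapping cone of the duality}. That route is legitimate — TR3 gives all the hidden commutativities for free, and the identification argument you sketch does work — but it is the hard way around: the identification requires tracking the maps of Prop.~\ref{prop: each part of Q tensor duality} and \ref{prop: each part of inverse by multiplication duality} functorially in the pair $(C,D) \to (C',D')$, whereas the paper's "finite versus uniquely divisible" observation bypasses all of that in one line. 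You correctly flag the TR3 non-canonicity as the main obstacle, but the simpler escape is to avoid TR3 altogether and verify the two hidden vanishings directly.
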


\begin{proof}
	Since $C''$ and $D''$ are finite,
	the morphism $H^{1} D \to H^{1} D'$ is surjective with finite kernel.
	Hence $V H^{1} D \isomto V H^{1} D'$.
	For the commutativity, the only part to check is hidden in the above diagram:
	the composite $V H^{1} D'[-1] \to D'^{\SDual \SDual} \to D''$ is zero;
	and the composite $C''^{\PDual}[-1] \to C^{\SDual} \to V H^{1} D$ is zero.
	These are obvious since there are no non-zero morphisms between
	finite groups and uniquely divisible groups with any shift.
\end{proof}


\subsection{Main theorem}
\label{sec: Main theorem}

Now we apply the results of the previous subsection to $C = R \alg{\Gamma}(X, \mathcal{A})$
and $D = R \alg{\Gamma}(X, \mathcal{A}_{0}^{\vee})$.
Statement \eqref{item: main thm: distinguished triangle}
in the following theorem proves Thm.\ \ref{mainthm} in Introduction.

\begin{Thm} \label{thm: duality for Neron} \BetweenThmAndList
	\begin{enumerate}
		\item \label{item: main thm: cohomology representable, zero above degree two}
			We have $\alg{H}^{n}(X, \mathcal{A}) \in \Loc \Alg / k$ for any $n$
			and $\alg{H}^{n}(X, \mathcal{A}) = 0$ for $n \ne 0, 1, 2$.
		\item \label{item: main thm: structure of each part}
			\begin{itemize}
				\item
					$\alg{\Gamma}(X, \mathcal{A})_{0}$ is an abelian variety.
				\item
					$\pi_{0} \alg{\Gamma}(X, \mathcal{A}) \in \FGEt / k$.
				\item
					$\alg{H}^{1}(X, \mathcal{A})_{0}$ is unipotent quasi-algebraic.
				\item
					$\pi_{0} \alg{H}^{1}(X, \mathcal{A}) \in \Et / k$ is torsion of cofinite type.
				\item
					$\alg{H}^{1}(X, \mathcal{A})_{\divis} \in \Et / k$ is torsion of cofinite type.
				\item
					$\alg{H}^{1}(X, \mathcal{A})_{/ \divis} \in \Alg_{\uc} / k$.
				\item
					$\alg{H}^{2}(X, \mathcal{A}) \in \Et / k$ is divisible torsion of cofinite type.
			\end{itemize}
		\item \label{item: main thm: dual structure statement}
			The same statements as \eqref{item: main thm: cohomology representable, zero above degree two}
			and \eqref{item: main thm: structure of each part}
			hold with $\mathcal{A}$ replaced by $\mathcal{A}_{0}^{\vee}$.
		\item \label{item: main thm: first cohom is Tate Shafarevich}
			The group of $k'$-valued points of $\alg{H}^{1}(X, \mathcal{A})$ for $k' \in k^{\ind\rat}$ is given by
			$\Sha(\mathcal{A}_{\closure{k'}} / X_{\closure{k'}})^{G_{k'}}$
			in the notation of the paragraph before Prop.\ \ref{prop: first cohom is Tate Shafarevich}.
		\item \label{item: main thm: distinguished triangle}
			There exists a canonical distinguished triangle
				\[
						R \alg{\Gamma}(X, \mathcal{A}_{0}^{\vee})^{\SDual \SDual}
					\to
						R \alg{\Gamma}(X, \mathcal{A})^{\SDual}
					\to
						V \alg{H}^{1}(X, \mathcal{A}_{0}^{\vee})_{\divis}.
				\]
			The first morphism is induced from the morphism \eqref{eq: duality pairing for Neron over X}.
		\item \label{item: main thm: duality on each part}
			This distinguished triangle induces, on cohomology,
			the following duality pairings and morphisms:
				\begin{enumerate}
					\item \label{item: duality between TGamma of A dual and H2 of A}
						Pontryagin duality between
						$T (\alg{\Gamma}(X, \mathcal{A}_{0}^{\vee})_{0})$
						and
						$\alg{H}^{2}(X, \mathcal{A})$.
					\item \label{item: duality between TGamma of A and H2 of A dual}
						Pontryagin duality between
						$T (\alg{\Gamma}(X, \mathcal{A})_{0})$
						and
						$\alg{H}^{2}(X, \mathcal{A}_{0}^{\vee})$.
					\item \label{item: duality between torsion MW of A dual and finite quotient of H1 A}
						Pontryagin duality between
						$(\pi_{0} \alg{\Gamma}(X, \mathcal{A}))_{\tor}$ and
						$\pi_{0} \alg{H}^{1}(X, \mathcal{A}_{0}^{\vee})_{/ \divis}$.
					\item \label{item: duality between torsion MW of A and finite quotient of H1 A dual}
						Pontryagin duality between
						$(\pi_{0} \alg{\Gamma}(X, \mathcal{A}_{0}^{\vee}))_{\tor}$ and
						$\pi_{0} \alg{H}^{1}(X, \mathcal{A})_{/ \divis}$.
					\item \label{item: height pairing}
						An injection
							$
									\pi_{0}(
										\alg{\Gamma}(X, \mathcal{A}_{0}^{\vee})
									)_{/ \tor}
								\into
									\pi_{0}(
										\alg{\Gamma}(X, \mathcal{A})
									)_{/ \tor}^{\LDual}
							$
						whose cokernel $\delta_{\Height}$ is finite \'etale.
					\item \label{item: transcendental pairing}
						An injection
							$
									(\pi_{0} \alg{H}^{1}(X, \mathcal{A}))_{\divis}^{\PDual}
								\into
									T \alg{H}^{1}(X, \mathcal{A}_{0}^{\vee})_{\divis}
							$
						whose cokernel $\delta_{\Tran}$ is finite \'etale.
					\item \label{item: Cassels Tate pairing}
						A surjection
							$
									\alg{H}^{1}(X, \mathcal{A})_{0}^{\SDual'}
								\onto
									(\alg{H}^{1}(X, \mathcal{A}_{0}^{\vee})_{/ \divis})_{0}
							$
						whose kernel $\delta_{\CT}$ is finite \'etale.
					\item \label{item: discriminant exact sequence}
						An exact sequence
						$0 \to \delta_{\Tran} \to \delta_{\Height} \to \delta_{\CT} \to 0$.
				\end{enumerate}
			In particular, the morphism \eqref{item: transcendental pairing} induces a Pontryagin duality
			between $V \alg{H}^{1}(X, \mathcal{A})_{\divis}$ and
			$V \alg{H}^{1}(X, \mathcal{A}_{0}^{\vee})_{\divis}$.
		\item \label{item: main thm: agrees with height pairing}
			The morphism \eqref{item: height pairing} agrees with the height pairing.
		\item \label{item: main thm: component groups compatibility diagram}
			The distinguished triangle \eqref{item: main thm: distinguished triangle}
			and the corresponding triangle with $A, A^{\vee}$ switched
			fit in the commutative diagram of distinguished triangles
				\[
					\begin{CD}
							R \alg{\Gamma}(X, \mathcal{A}_{0}^{\vee})^{\SDual \SDual}
						@>>>
							R \alg{\Gamma}(X, \mathcal{A})^{\SDual}
						@>>>
							V \alg{H}^{1}(X, \mathcal{A}_{0}^{\vee})_{\divis}
						\\
						@VVV @VVV @V \wr VV
						\\
							R \alg{\Gamma}(X, \mathcal{A}^{\vee})^{\SDual \SDual}
						@>>>
							R \alg{\Gamma}(X, \mathcal{A}_{0})^{\SDual}
						@>>>
							V \alg{H}^{1}(X, \mathcal{A}^{\vee})_{\divis}
						\\
						@VVV @VVV @VVV
						\\
							\bigoplus_{x}
								\Res_{k_{x} / k} \pi_{0}(\mathcal{A}_{x}^{\vee})
						@> \sim >>
							\bigoplus_{x}
								\Res_{k_{x} / k} \pi_{0}(\mathcal{A}_{x})^{\PDual}
						@>>>
							0.
					\end{CD}
				\]
			The left lower horizontal morphism is the sum over all closed points $x \in X$ of
			the Weil restrictions of Grothendieck's pairings,
			which is an isomorphism \cite[Thm.\ C]{Suz14}.
	\end{enumerate}
\end{Thm}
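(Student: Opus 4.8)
The plan is to recognize that this theorem is simply the assembly of the structural results of \S\ref{sec: Mod n duality for Neron models and preliminary calculations} and the formal machinery of \S\ref{sec: Formal steps towards duality for Neron models}, applied to the two objects $C = R\alg{\Gamma}(X, \mathcal{A})$ and $D = R\alg{\Gamma}(X, \mathcal{A}_{0}^{\vee})$ of $D(k)$ together with the pairing $C \tensor^{L} D \to \Z$ of \eqref{eq: duality pairing for Neron over X}. First I would invoke Prop.~\ref{prop: results before the formal steps} to confirm that this data satisfies the four hypotheses \eqref{item: structure of cohom of C}--\eqref{item: mod n duality for C and D} listed at the beginning of \S\ref{sec: Formal steps towards duality for Neron models}; everything proved in that subsection then applies verbatim to $C$ and $D$.

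For \eqref{item: main thm: cohomology representable, zero above degree two} and \eqref{item: main thm: structure of each part} I would combine Prop.~\ref{prop: structure of cohomology of Neron models} (structure of $\alg{\Gamma}(X,\mathcal{A})$, the $\Ind^{f}\Alg_{\uc}$-property of the higher cohomology, divisibility of $\alg{H}^{2}$, vanishing above degree $2$), Prop.~\ref{prop: H1 is locally algebraic} (that $\alg{H}^{1}(X,\mathcal{A})\in\Loc^{f}\Alg_{\uc}/k$), and the extra structural output of the formal section: Prop.~\ref{prop: each part of Q mod Z duality for Neron models} shows that $\alg{H}^{2}(X,\mathcal{A})$ is étale and that $\alg{H}^{1}(X,\mathcal{A})$ is divisibly ML, with the pieces $\pi_{0}\alg{H}^{1}_{/\divis}$, $\alg{H}^{1}_{/\divis}$, $(\pi_{0}\alg{H}^{1})_{\divis}$, $\alg{H}^{1}_{\divis}$ of the asserted types. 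Part \eqref{item: main thm: dual structure statement}, the same statements with $\mathcal{A}_{0}^{\vee}$ in place of $\mathcal{A}$, follows identically, using Prop.~\ref{prop: cohomology of connected Neron and Neron} to pass between $\mathcal{A}^{\vee}$ and $\mathcal{A}_{0}^{\vee}$ (kernel and cokernel being finite étale) and Prop.~\ref{prop: H1 is locally algebraic} applied to the abelian variety $A^{\vee}$; indeed Prop.~\ref{prop: results before the formal steps} already records ``the same are true for $D$''. Part \eqref{item: main thm: first cohom is Tate Shafarevich} is exactly Prop.~\ref{prop: first cohom is Tate Shafarevich}.

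Parts \eqref{item: main thm: distinguished triangle}--\eqref{item: main thm: agrees with height pairing} are then direct translations. The canonical distinguished triangle of \eqref{item: main thm: distinguished triangle} is Prop.~\ref{prop: formal duality for Neron}, and its first morphism is the one induced by \eqref{eq: duality pairing for Neron over X} by construction of the formal triangle. The list in \eqref{item: main thm: duality on each part} is obtained by transporting Prop.~\ref{prop: each part of Q mod Z duality for Neron models}\eqref{item: formal duality between TGamma of A and H2 of A dual}--\eqref{item: formal discriminant exact sequence} and Prop.~\ref{prop: adelic duality for Neron} through the identifications $H^{n}C=\alg{H}^{n}(X,\mathcal{A})$, $H^{n}D=\alg{H}^{n}(X,\mathcal{A}_{0}^{\vee})$ and through the descriptions of $H^{n}C^{\SDual}$ and $H^{n}D^{\SDual\SDual}$ in Prop.~\ref{prop: each term of duality for Neron}; the finiteness of $\delta_{\Tran},\delta_{\CT},\delta_{\Height}$ and the exact sequence $0\to\delta_{\Tran}\to\delta_{\Height}\to\delta_{\CT}\to 0$ come along with it, and the $V$-level perfection claimed at the end of \eqref{item: main thm: duality on each part} is Prop.~\ref{prop: adelic duality for Neron}. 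That \eqref{item: height pairing} agrees with the height pairing is Prop.~\ref{prop: our duality contains height pairing}, the non-degeneracy there also being what makes $\delta_{\Height}$ finite.

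The one item that needs organizing is \eqref{item: main thm: component groups compatibility diagram}, and I expect this to be the main bookkeeping obstacle (all the genuine difficulty having been absorbed into the earlier sections). Here I would apply the formal Prop.~\ref{prop: formal component group compatibility diagram} with $C=R\alg{\Gamma}(X,\mathcal{A})$, $C'=R\alg{\Gamma}(X,\mathcal{A}_{0})$, $D=R\alg{\Gamma}(X,\mathcal{A}_{0}^{\vee})$, $D'=R\alg{\Gamma}(X,\mathcal{A}^{\vee})$, the two pairings being \eqref{eq: duality pairing for Neron over X} and its $A\leftrightarrow A^{\vee}$ analogue $\mathcal{A}_{0}\tensor^{L}\mathcal{A}^{\vee}\to\Gm[1]$, and $C''=\bigoplus_{x}\Res_{k_{x}/k}\pi_{0}(\mathcal{A}_{x})$, $D''=\bigoplus_{x}\Res_{k_{x}/k}\pi_{0}(\mathcal{A}_{x}^{\vee})$, with the perfect pairing $C''\times D''\to\Q/\Z$ the sum of Grothendieck's pairings, perfect by \cite[Thm.~C]{Suz14}. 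The distinguished triangles $C'\to C\to C''$ and $D\to D'\to D''$ together with their compatibility with the Serre-dual triangle are precisely Prop.~\ref{prop: cohomology of connected Neron and Neron} and Prop.~\ref{prop: component group compatibitily morphism}; the four hypotheses of \S\ref{sec: Formal steps towards duality for Neron models} for the switched pair $(C',D')$ are deduced by applying Prop.~\ref{prop: results before the formal steps} to $A^{\vee}$ in place of $A$ and comparing $\alg{H}^{n}(X,\mathcal{A}_{0})$ with $\alg{H}^{n}(X,\mathcal{A})$ via Prop.~\ref{prop: cohomology of connected Neron and Neron} (the hypothesis \eqref{item: mod n duality for C and D} for $(C',D')$ following from its $A^{\vee}$-counterpart by taking Serre duals, using that $R\alg{\Gamma}(X,\mathcal{A}_{0})\tensor^{L}\Z/n\Z$ lies in $D^{b}(\Alg_{\uc}/k)$ and is therefore Serre reflexive). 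Prop.~\ref{prop: formal component group compatibility diagram} then yields the displayed commutative diagram of distinguished triangles and identifies its lower-left morphism with the sum of the Weil-restricted Grothendieck pairings, an isomorphism by \cite[Thm.~C]{Suz14}.
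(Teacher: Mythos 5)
Your proposal matches the paper's proof: the theorem is indeed assembled by invoking the same chain of propositions — Prop.~\ref{prop: results before the formal steps} and Prop.~\ref{prop: each part of Q mod Z duality for Neron models} for parts \eqref{item: main thm: cohomology representable, zero above degree two}, \eqref{item: main thm: structure of each part} and \eqref{item: main thm: duality on each part}, Prop.~\ref{prop: cohomology of connected Neron and Neron} for \eqref{item: main thm: dual structure statement}, Prop.~\ref{prop: first cohom is Tate Shafarevich} for \eqref{item: main thm: first cohom is Tate Shafarevich}, Prop.~\ref{prop: formal duality for Neron} for \eqref{item: main thm: distinguished triangle}, Prop.~\ref{prop: our duality contains height pairing} for \eqref{item: main thm: agrees with height pairing}, and Prop.~\ref{prop: component group compatibitily morphism} together with Prop.~\ref{prop: formal component group compatibility diagram} for \eqref{item: main thm: component groups compatibility diagram}. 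Your expanded discussion of \eqref{item: main thm: component groups compatibility diagram} (including the Serre-duality twist needed to verify hypothesis \eqref{item: mod n duality for C and D} for the switched pair $(C',D')$) is a sound and more explicit rendering of the same argument the paper compresses into a single sentence.
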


\begin{proof}
	\eqref{item: main thm: cohomology representable, zero above degree two}
	and \eqref{item: main thm: structure of each part} follow from
	Prop.\ \ref{prop: results before the formal steps}
	and \ref{prop: each part of Q mod Z duality for Neron models}.
	\eqref{item: main thm: dual structure statement} follows from
	Prop.\ \ref{prop: cohomology of connected Neron and Neron}.
	\eqref{item: main thm: first cohom is Tate Shafarevich} is
	Prop.\ \ref{prop: first cohom is Tate Shafarevich}.
	\eqref{item: main thm: distinguished triangle} follows from
	Prop.\ \ref{prop: formal duality for Neron}.
	\eqref{item: main thm: duality on each part} follows from
	Prop.\ \ref{prop: each part of Q mod Z duality for Neron models}
	and \ref{prop: results before the formal steps}.
	\eqref{item: main thm: agrees with height pairing} is
	Prop.\ \ref{prop: our duality contains height pairing}.
	\eqref{item: main thm: component groups compatibility diagram} follows from
	Prop.\ \ref{prop: component group compatibitily morphism}
	and \ref{prop: formal component group compatibility diagram}.
\end{proof}

Several comments are in order.
We consider \eqref{item: Cassels Tate pairing} as a geometric analogue of the Cassels-Tate pairing
in view of Prop.\ \ref{prop: agrees with the Cassels Tate} below,
whence the symbol $\delta_{\CT}$.
The divisible part of the Tate-Shafarevich group when $k$ is algebraically closed
is of transcendental nature,
whence the symbol $\delta_{\Tran}$.
Confusingly, the morphisms in
\eqref{item: transcendental pairing} and \eqref{item: Cassels Tate pairing}
are \emph{from} the duals, not \emph{to}.

The exact sequence \eqref{item: discriminant exact sequence} is mysterious.
It came from applying the formal procedure of Lemma \ref{lem: discriminant exact sequence}
to the two exact sequences
\eqref{eq: source of discriminant ex seq, mapping from MW}
and \eqref{eq: source of discriminant ex seq, mapping to MW}
(where one should note that the central term $H^{0} X$ in the notation there
is the ``Selmer scheme'' explained below).
The morphism $\delta_{\Height} \onto \delta_{\CT}$ is induced by the morphism
	\[
			\pi_{0}(
				\alg{\Gamma}(X, \mathcal{A})
			)_{/ \tor}^{\LDual}
		\to
			\alg{H}^{1}(X, \mathcal{A})_{0}^{\SDual'}
	\]
of Prop.\ \ref{prop: each term of duality for Neron} and
\ref{prop: period map and its completion}
(i.e.\ the latter morphism factors through $\delta_{\Height} \onto \delta_{\CT}$).
This morphism is analogous to Artin's \emph{period map} \cite{Art74} for supersingular K3 surfaces.

Milne \cite[III, paragraph before Thm.\ 11.6]{Mil06} made the hypothesis that
the sheaf $R^{2} \pi_{X}^{\perf} \mathcal{A}$ on $\Spec k^{\perf}_{\et}$ has no connected part.
This sheaf restricted to $\Spec k^{\ind\rat}_{\et}$ is $\alg{H}^{2}(X, \mathcal{A})$
by Prop.\ \ref{prop: comparison with Artin Milne}.
Hence the result $\alg{H}^{2}(X, \mathcal{A}) \in \Et / k$ in
\eqref{item: main thm: structure of each part} above
says that his hypothesis is true at least ``birationally''.

Also \cite[III, Thm.\ 11.6]{Mil06}
(plus the fact that $H^{2}(K \tensor \closure{k}, N) = 0$ for finite flat $N$
and hence $H^{2}(K \tensor \closure{k}, A) = 0$) says that
$\alg{H}^{2}(X, \mathcal{A})$ injects into the Pontryagin dual of
$T (\alg{\Gamma}(X, \mathcal{A}_{0}^{\vee})_{0})$.
Thus \eqref{item: duality between TGamma of A dual and H2 of A} above
shows that this injection is actually bijective.

We have an exact sequence
	\[
			0
		\to
			\pi_{0}(\alg{\Gamma}(X, \mathcal{A}))_{/ \tor} \tensor \Q / \Z
		\to
			H^{0} \bigl(
				R \alg{\Gamma}(X, \mathcal{A}) \tensor^{L} \Q / \Z
			\bigr)
		\to
			\alg{H}^{1}(X, \mathcal{A})
		\to
			0
	\]
in $\Loc^{f} \Alg_{\uc} / k$.
The first term is the Mordell-Weil group tensored with $\Q / \Z$.
In Introduction, we called the last term the Tate-Shafarevich scheme.
This terminology is justified by \eqref{item: main thm: first cohom is Tate Shafarevich}.
Along this line, the middle term might be called the \emph{Selmer scheme}.

\begin{Rmk} \label{rmk: connected divisible part}
	As we saw after Lem.\ \ref{lem: discriminant exact sequence},
	if the finite \'etale $p$-groups $\alg{H}^{1}(X, \mathcal{A})_{0 \cap \divis}$
	and $\alg{H}^{1}(X, \mathcal{A}_{0}^{\vee})_{0 \cap \divis}$
	(intersection of connected part and divisible part) are zero,
	then the morphisms in \eqref{item: transcendental pairing}
	and \eqref{item: Cassels Tate pairing}
	are simplified to more symmetric expressions
		\[
					\alg{H}^{1}(X, \mathcal{A})_{\divis}^{\PDual}
				\into
					T \alg{H}^{1}(X, \mathcal{A}_{0}^{\vee})_{\divis}
			\quad \text{and} \quad
					\alg{H}^{1}(X, \mathcal{A})_{0}^{\SDual'}
				\onto
					\alg{H}^{1}(X, \mathcal{A}_{0}^{\vee})_{0}.
		\]
	It is not clear whether these vanishing conditions are always satisfied or not.
	To see cases where the conditions are indeed satisfied,
	let $A$ be the Jacobian
	of a proper smooth geometrically connected curve $C$ over $K$
	with a $K$-rational point (in particular, $A \cong A^{\vee}$)
	and $\mathcal{C}$ a proper flat regular model over $X$ of $C$.
	As we saw in the proof of Prop.\ \ref{prop: Sha does not have divisible connected unipotent part},
	there exists a canonical isomorphism $H^{1}(X, \mathcal{A}) \cong H^{2}(\mathcal{C}, \Gm)$.
	Let $\alg{H}^{2}(\mathcal{C}, \Gm)$ be the (pro-)\'etale sheafification of the presheaf
	$k' \in k^{\ind\rat} \mapsto H^{2}(\mathcal{C}_{k'}, \Gm)$,
	which is locally of finite presentation.
	Then the above isomorphism extends to an isomorphism
	$\alg{H}^{1}(X, \mathcal{A}) \cong \alg{H}^{2}(\mathcal{C}, \Gm)$.
	Note that $\alg{H}^{1}(X, \mathcal{A}_{0})$ surjects onto $\alg{H}^{1}(X, \mathcal{A})$
	with finite kernel by
	Prop.\ \ref{prop: cohomology of connected Neron and Neron}.
	
	If $\mathcal{C}$ is an Artin supersingular K3 surface (hence an elliptic surface fibered over $X$),
	then \cite[(4.2), (4.4)]{Art74} shows that
	$\mathcal{C}$ is Shioda supersingular (i.e.\ satisfies the \emph{Tate conjecture}) and
	$\alg{H}^{2}(\mathcal{C}, \Gm) \cong \Ga$.
	In particular, $\alg{H}^{2}(\mathcal{C}, \Gm)_{0 \cap \divis} = 0$.
	If $\mathcal{C}$ is an Artin non-supersingular K3 surface,
	then \cite[Prop.\ 4.7, Lem.\ 2.1]{MR15} applied to the de Rham-Witt complex of $\mathcal{C}$ with $r = 1$, $j = 3$
	together with \cite[II, (5.7.6), \S 7.2 (a)]{Ill79} (or \cite[Prop.\ (4.4)]{Yui86}) shows that
	$\alg{H}^{2}(\mathcal{C}, \Gm)$ is \'etale.
	Hence again $\alg{H}^{2}(\mathcal{C}, \Gm)_{0 \cap \divis} = 0$.
	In this case, \cite[II, Prop.\ 5.2]{Ill79} shows that
	the dimension of $V_{p} \alg{H}^{2}(\mathcal{C}, \Gm)$ over $\Q_{p}$
	is $22 - 2h - \rho$,
	where $22$ is the second Betti number of $\mathcal{C}$,
	$h$ the height of the formal Brauer group of $\mathcal{C}$ and
	$\rho$ the geometric Picard number of $\mathcal{C}$.
	As soon as the inequality $\rho \le 22 - 2 h$ is strict,
	the group $\alg{H}^{2}(\mathcal{C}, \Gm)_{\divis}$ is non-zero.
	
	Similar arguments apply to the case where $\mathcal{C}$ is an abelian surface, showing that
	either $\alg{H}^{2}(\mathcal{C}, \Gm)_{0} = 0$ (non-supersingular case) or
	$\alg{H}^{2}(\mathcal{C}, \Gm)_{\divis} = 0$ (supersingular case).
	Similarly, we know that $\alg{H}^{2}(\mathcal{C}, \Gm)_{\divis} = 0$
	whenever the geometric Picard number of $\mathcal{C}$ is equal to the second Betti number of $\mathcal{C}$.
	This includes the cases of rational surfaces, ruled surface,
	Enriques surfaces (\cite[Thm.\ 4]{BM76}; we drop the condition that $C$ has a $K$-rational point)
	and quasi-elliptic surfaces (\cite[Prop.\ 12]{BM76}; we drop the condition that $C$ is smooth over $K$).
	If $\mathcal{C}$ is the product of $X$ with another curve $Y$
	such that $X$ is a supersingular elliptic curve
	and $Y$ is a genus two curve whose Jacobian is absolutely simple with $p$-rank one
	(see \cite[\S 8 Examples; Cor.\ 3]{HOMNS11} for existence of such a curve;
	again we drop the condition that $C$ has a $K$-rational point),
	then a calculation similar to the above shows that $\alg{H}^{2}(\mathcal{C}, \Gm)_{0} \cong \Ga$
	and $T_{p} \alg{H}^{2}(\mathcal{C}, \Gm)_{\divis}$ has rank four.
	It is not clear whether $\alg{H}^{2}(\mathcal{C}, \Gm)_{0 \cap \divis}$ is trivial or not in this case.
	
	If there is a case where $\alg{H}^{1}(X, \mathcal{A})_{0 \cap \divis}$ is non-zero,
	it might mean that supersingularity and non-supersingularity
	are somehow ``mixed up'' in  $\mathcal{A}$ (or $\mathcal{C}$) in an interesting way.
	Perhaps it could be the Tate conjecture that does not allow such a mixing phenomenon.
\end{Rmk}


\section{Some more results}
\label{sec: Some more results}

Let $X$ and $A$ be as in the beginning of the previous section.

\subsection{Duality for N\'eron models over open curves}
\label{sec: Duality for Neron models over open curves}

In this subsection, we set
	\[
			V
		=
			V \alg{H}^{1}(X, \mathcal{A}_{0}^{\vee})_{\divis}
		=
			(V \alg{H}^{1}(X, \mathcal{A})_{\divis})^{\PDual},
	\]
the last isomorphism coming from
Thm.\ \ref{thm: duality for Neron} \eqref{item: main thm: duality on each part}.
Hence \eqref{item: main thm: distinguished triangle} of the same theorem
can be written as the canonical distinguished triangle
	\[
			R \alg{\Gamma}(X, \mathcal{A}_{0}^{\vee})^{\SDual \SDual}
		\to
			R \alg{\Gamma}(X, \mathcal{A})^{\SDual}
		\to
			V.
	\]

Let $U \subset X$ be a dense open subscheme.
We have a pairing
	\[
				R \alg{\Gamma}(U, \mathcal{A}_{0}^{\vee})
			\tensor^{L}
				R \alg{\Gamma}_{c}(U, \mathcal{A})
		\to
			R \alg{\Gamma}_{c}(U, \Gm)[1]
		\to
			\Z,
	\]
where the last morphism is the trace morphism \eqref{eq: global trace morphism}.
This induces a morphism
	\[
			R \alg{\Gamma}(U, \mathcal{A}_{0}^{\vee})
		\to
			R \alg{\Gamma}_{c}(U, \mathcal{A})^{\SDual}.
	\]

\begin{Prop} \label{prop: localization and duality for Neron over open curve}
	We have a canonical morphism between canonical distinguished triangles
		\[
			\begin{CD}
					\displaystyle
					\bigoplus_{x \not\in U}
						R \alg{\Gamma}_{x}(\Hat{\Order}_{x}, \mathcal{A}_{0}^{\vee})
				@>>>
					R \alg{\Gamma}(X, \mathcal{A}_{0}^{\vee})
				@>>>
					R \alg{\Gamma}(U, \mathcal{A}_{0}^{\vee})
				\\
				@VVV
				@VVV
				@VVV
				\\
					\displaystyle
					\bigoplus_{x \not\in U}
						R \alg{\Gamma}(\Hat{\Order}_{x}, \mathcal{A})^{\SDual}
				@>>>
					R \alg{\Gamma}(X, \mathcal{A})^{\SDual}
				@>>>
					R \alg{\Gamma}_{c}(U, \mathcal{A})^{\SDual}.
			\end{CD}
		\]
	The left vertical morphism is an isomorphism.
\end{Prop}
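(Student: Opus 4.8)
The plan is to build the right vertical morphism by the same pattern as in the proofs of Thm.\ \ref{thm: duality for finite flat over open curve} (Step 2) and Prop.\ \ref{prop: mod n duality for Neron models}, namely from the morphism of distinguished triangles furnished by Prop.\ \ref{prop: compatibility of local global pairings after cohom approx}. Concretely, the sheaves $\mathcal{A}_{0}^{\vee}$ and $\mathcal{A}$ are smooth group schemes over $U$, hence lie in $D(U_{\fppf})_{\ca}$ by Prop.\ \ref{prop: smooths and finite flats satisfy cohom approx}, and the canonical biextension $\mathcal{A}_{0}^{\vee} \tensor^{L} \mathcal{A} \to \Gm[1]$ over $X$ restricts to one over $U$. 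Applying Prop.\ \ref{prop: compatibility of local global pairings after cohom approx} with $A = \mathcal{A}_{0}^{\vee}$, $B = \mathcal{A}$, $C = \Gm[1]$ and composing the target with the global trace morphism \eqref{eq: global trace morphism} gives a morphism from the localization triangle
	\[
			\bigoplus_{x \not\in U}
				R \alg{\Gamma}_{x}(\Hat{\Order}_{x} / k, \mathcal{A}_{0}^{\vee})
		\to
			R \alg{\Gamma}(X, \mathcal{A}_{0}^{\vee})
		\to
			R \alg{\Gamma}(U, \mathcal{A}_{0}^{\vee})
	\]
(which is the excision triangle of Prop.\ \ref{prop: excision}, noting $R\alg{\Gamma}_x(\Order_x^h/k,\var)=R\alg{\Gamma}_x(\Hat{\Order}_x/k,\var)$ for smooth coefficients by Prop.\ \ref{prop: cohom with closed support doesnt change under completion}) to the triangle obtained by applying $\SDual$ to the compact-support localization triangle of Prop.\ \ref{prop: cpt supp triangle after cohom approx}, i.e.
	\[
			\bigoplus_{x \not\in U}
				R \alg{\Gamma}(\Hat{\Order}_{x} / k, \mathcal{A})^{\SDual}
		\to
			R \alg{\Gamma}(X, \mathcal{A})^{\SDual}
		\to
			R \alg{\Gamma}_{c}(U, \mathcal{A})^{\SDual}.
	\]
Here one must observe that $R\alg{\Gamma}_c(X,\var)=R\alg{\Gamma}(X,\var)$ so the ``$U=X$'' case of the compact-support triangle degenerates correctly, and that the local-to-global morphism \eqref{eq: local to global support cohomology morphism} together with the compatibility of the local and global trace morphisms (the proposition just before Tensoring $\Z/p\Z$, stating that \eqref{eq: local trace morphism, Weil restricted} and \eqref{eq: global trace morphism} agree via $R\alg{\Gamma}_x(\Hat{\Order}_x/k,\Gm)\to R\alg{\Gamma}_c(U,\Gm)$) makes the left square match the local duality pairing.

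Next I would identify the left vertical morphism with Weil restrictions of the local duality isomorphism of Prop.\ \ref{prop: local duality}. On the $x$-th summand, the pairing induced by $\mathcal{A}_{0}^{\vee}\tensor^L\mathcal{A}\to\Gm[1]$ restricted to $\Spec\Hat{\Order}_x$ and the local trace isomorphism \eqref{eq: local trace morphism} give exactly the morphism $R\alg{\Gamma}_x(\Hat{\Order}_x,\mathcal{A}_0^{\vee})\to R\alg{\Gamma}(\Hat{\Order}_x,\mathcal{A})^{\SDual_x}$ appearing in the left column of the first diagram of Prop.\ \ref{prop: local duality}; after applying $\Res_{k_x/k}$ and using the identity $\Res_{k_x/k}\compose\SDual_x=\SDual\compose\Res_{k_x/k}$ (the duality for finite \'etale morphisms recalled in \S\ref{sec: Local duality without relative sites}), this becomes the $x$-th component of the left vertical morphism in the statement. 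Since Prop.\ \ref{prop: local duality} asserts that morphism is an isomorphism, so is its Weil restriction, and hence the whole direct sum over $x\notin U$ is an isomorphism.

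The main obstacle I expect is bookkeeping: checking that the morphism of triangles produced by Prop.\ \ref{prop: compatibility of local global pairings after cohom approx} genuinely restricts on each local summand to the specific pairing used in Prop.\ \ref{prop: local duality}, rather than to some a priori different morphism with the same source and target. As in the proof of Prop.\ \ref{prop: local duality} itself, this is handled by the uniqueness statement in \cite[Prop.\ (4.2.3)]{Suz14}: the two morphisms $R\alg{\Gamma}_x(\Hat{\Order}_x,\mathcal{A}_0^{\vee})\to R\alg{\Gamma}(\Hat{\Order}_x,\mathcal{A})^{\SDual_x}$ have identifiable sources and targets and must coincide. I would also need to be a little careful that the cup-product morphism \eqref{eq: global cup product} used to build the global pairing is compatible, via the morphisms of Prop.\ \ref{prop: cpt support cohom triangle}, with the local cup products entering \eqref{eq: functoriality morphism over integers, variant}; but this compatibility is precisely the content of Prop.\ \ref{prop: compatibility of local global pairings without cohom approx}–\ref{prop: compatibility of local global pairings after cohom approx}, so no new commutativity check is required. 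Everything else — that the rows are the asserted canonical distinguished triangles — is immediate from Prop.\ \ref{prop: excision} and Prop.\ \ref{prop: cpt supp triangle after cohom approx}.
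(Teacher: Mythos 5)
Your proposal is correct and takes essentially the same approach as the paper, whose one-line proof simply cites the method of Prop.\ \ref{prop: mod n duality for Neron models} (which in turn is the method of Thm.\ \ref{thm: duality for finite flat over open curve}, Step 2, via Prop.\ \ref{prop: compatibility of local global pairings after cohom approx} applied to the extended Poincar\'e biextension and the global trace). One small slip in attribution: the morphism $R\alg{\Gamma}_x(\Hat{\Order}_x,\mathcal{A}_0^{\vee})\to R\alg{\Gamma}(\Hat{\Order}_x,\mathcal{A})^{\SDual_x}$ is the \emph{right}-hand vertical of the first diagram in Prop.\ \ref{prop: local duality} shifted by $[-1]$, not the left column, but the second diagram there shows precisely that vertical is an isomorphism, so your conclusion stands unchanged.
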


\begin{proof}
	This follows from the same method as the proof of
	Prop.\ \ref{prop: mod n duality for Neron models}.
\end{proof}

\begin{Prop} \label{prop: Q mod Z duality for Neron over open curve}
	The morphisms
		\begin{align*}
						R \alg{\Gamma}(U, \mathcal{A}_{0}^{\vee})
					\tensor^{L}
						\Q / \Z
			&	\to
						R \alg{\Gamma}(U, \mathcal{A}_{0}^{\vee})^{\SDual \SDual}
					\tensor^{L}
						\Q / \Z
			\\
			&	\to
						R \alg{\Gamma}_{c}(U, \mathcal{A})^{\SDual}
					\tensor^{L}
						\Q / \Z
		\end{align*}
	are isomorphisms.
\end{Prop}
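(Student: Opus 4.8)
The plan is to reduce the three displayed isomorphisms to mod-$n$ statements that are already available, and then to pass to the limit using the formalism of \S\ref{sec: The ind-rational pro-etale site and some derived limits}. First I would observe that it suffices to show that each of the two individual morphisms
$R\alg{\Gamma}(U,\mathcal{A}_{0}^{\vee})\tensor^{L}\Q/\Z\to R\alg{\Gamma}(U,\mathcal{A}_{0}^{\vee})^{\SDual\SDual}\tensor^{L}\Q/\Z$
and
$R\alg{\Gamma}(U,\mathcal{A}_{0}^{\vee})^{\SDual\SDual}\tensor^{L}\Q/\Z\to R\alg{\Gamma}_{c}(U,\mathcal{A})^{\SDual}\tensor^{L}\Q/\Z$
is an isomorphism. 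For the first, I would use the distinguished triangle
$V(\var)_{0}[1]\to R\alg{\Gamma}(U,\mathcal{A}_{0}^{\vee})\to R\alg{\Gamma}(U,\mathcal{A}_{0}^{\vee})^{\SDual\SDual}$
arising from the analogue of Prop.\ \ref{prop: each term of duality for Neron} (the cohomology of $R\alg{\Gamma}(U,\mathcal{A}_{0}^{\vee})$ has the structure \eqref{item: structure of cohom of C} of \S\ref{sec: Formal steps towards duality for Neron models}, which has been verified for $X$ and holds for $U$ by the same arguments, or can be bootstrapped from Prop.\ \ref{prop: localization and duality for Neron over open curve} together with the local structure results of Prop.\ \ref{prop: structure of local cohom of abelian varieties}); since the mapping fiber is a rational Tate module, it is uniquely divisible, hence killed by $\tensor^{L}\Q/\Z$.

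For the second morphism, the strategy is to compare it with the duality isomorphism of Prop.\ \ref{prop: localization and duality for Neron over open curve} after $\tensor^{L}\Z/n\Z$. Concretely, derived-tensoring the diagram of Prop.\ \ref{prop: localization and duality for Neron over open curve} with $\Z/n\Z$, the left vertical map becomes an isomorphism by Prop.\ \ref{prop: local duality} tensored with $\Z/n\Z$ (equivalently by the mod-$n$ local duality packaged there), the middle vertical map becomes the isomorphism of Prop.\ \ref{prop: mod n duality for Neron models}, and hence the right vertical map $R\alg{\Gamma}(U,\mathcal{A}_{0}^{\vee})\tensor^{L}\Z/n\Z\to R\alg{\Gamma}_{c}(U,\mathcal{A})^{\SDual}\tensor^{L}\Z/n\Z[\,\cdot\,]$ is an isomorphism for every $n\ge 1$. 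Since $R\alg{\Gamma}(U,\mathcal{A}_{0}^{\vee})\tensor^{L}\Z/n\Z$ and $R\alg{\Gamma}(U,\mathcal{A}_{0}^{\vee})^{\SDual\SDual}\tensor^{L}\Z/n\Z$ agree (the uniquely divisible mapping fiber $V(\var)_{0}[1]$ dies mod $n$, as in Prop.\ \ref{prop: double dual dies after tensoring Q mod Z or adeles}), the mod-$n$ version of the second morphism is an isomorphism for all $n$. Then Prop.\ \ref{prop: all mod n isom imply derived completion isom} upgrades this to $R\invlim_{n}(\var\tensor^{L}\Z/n\Z)$, and tensoring the resulting isomorphism appropriately — or better, reading off from Prop.\ \ref{prop: direct inverse adelic rationalization triangle} that a morphism inducing isomorphisms on all $\tensor^{L}\Z/n\Z$ induces an isomorphism on $\tensor^{L}\Q/\Z$ provided one controls the uniquely divisible discrepancy, which here is exactly the $V(\var)$ terms — yields the claim. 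The cleanest packaging is: a morphism in $D(k)$ whose mapping cone $W$ satisfies $W\tensor^{L}\Z/n\Z=0$ for all $n$ has $W$ uniquely divisible, and if moreover $W$ has cohomology that is a rational Tate module (or more simply is already known to be uniquely divisible) then $W\tensor^{L}\Q/\Z=0$; applying this to the mapping cone of the second morphism (which is a shift of $V(\var)_{0}$-type object, hence uniquely divisible and with $\tensor^{L}\Q/\Z$ vanishing) finishes it.

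The composite of the two isomorphisms is then the first displayed map, so all three are isomorphisms.

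The main obstacle I anticipate is not conceptual but bookkeeping: one must make sure that the structural facts \eqref{item: structure of cohom of C}--\eqref{item: mod n duality for C and D} of \S\ref{sec: Formal steps towards duality for Neron models}, proven for $C=R\alg{\Gamma}(X,\mathcal{A})$ and $D=R\alg{\Gamma}(X,\mathcal{A}_{0}^{\vee})$, are genuinely available (with the appropriate compact-support modification) for $U$ in place of $X$, so that the triangle $V(\var)_{0}[1]\to R\alg{\Gamma}(U,\mathcal{A}_{0}^{\vee})\to R\alg{\Gamma}(U,\mathcal{A}_{0}^{\vee})^{\SDual\SDual}$ and the vanishing of its fiber mod $n$ and mod $\Q/\Z$ really hold; here one uses Prop.\ \ref{prop: localization and duality for Neron over open curve} to relate the $U$-cohomology to the $X$-cohomology and the local pieces $R\alg{\Gamma}_{x}(\Hat{\Order}_{x},\mathcal{A}_{0}^{\vee})$, whose structure is known from Prop.\ \ref{prop: structure of local cohom of abelian varieties}. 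A secondary point requiring care is the compatibility of the trace morphisms — the global trace \eqref{eq: global trace morphism} restricted to $R\alg{\Gamma}_{c}(U,\Gm)$ versus its appearance inside Prop.\ \ref{prop: localization and duality for Neron over open curve} — but this is exactly the content of the proposition cited, so no new work is needed beyond invoking it.
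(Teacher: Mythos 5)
Your proposal has the right global shape — split into the two morphisms, handle the first via biduality and the second via passing to the limit of the mod-$n$ isomorphisms of Prop.\ \ref{prop: mod n duality for Neron models} — and the treatment of the second morphism matches the paper. But your primary route for the first morphism contains an overclaim that you should fix, and the paper's route is worth comparing because it sidesteps the issue entirely.

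You write that the cohomology of $R\alg{\Gamma}(U,\mathcal{A}_{0}^{\vee})$ ``has the structure \eqref{item: structure of cohom of C} of \S\ref{sec: Formal steps towards duality for Neron models}, which has been verified for $X$ and holds for $U$ by the same arguments.'' This is not true for a proper open $U \subsetneq X$: assumption \eqref{item: structure of cohom of C} demands $H^{1} \in \Loc^{f}\Alg/k$, i.e.\ no connected divisible unipotent part, and this can \emph{fail} for $U$. Indeed, the localization triangle $\bigoplus_{x\notin U} R\alg{\Gamma}_{x}(\Hat{\Order}_{x},\mathcal{A}_{0}^{\vee}) \to R\alg{\Gamma}(X,\mathcal{A}_{0}^{\vee}) \to R\alg{\Gamma}(U,\mathcal{A}_{0}^{\vee})$ shows that $\alg{H}^{1}(U,\mathcal{A}_{0}^{\vee})$ surjects onto the kernel of $\bigoplus_{x}\alg{H}^{1}(\Hat{K}_{x},A^{\vee}) \to \alg{H}^{2}(X,\mathcal{A}_{0}^{\vee})$; since $\alg{H}^{2}(X,\mathcal{A}_{0}^{\vee})$ is \'etale but $\alg{H}^{1}(\Hat{K}_{x},A^{\vee})$ is divisible in $\Ind\Alg_{\uc}$ and can have nontrivial connected part, that kernel (and hence $\alg{H}^{1}(U,\mathcal{A}_{0}^{\vee})$) can have connected divisible part, so $\alg{H}^{1}(U,\mathcal{A}_{0}^{\vee}) \notin \Loc^{f}\Alg_{\uc}/k$ in general. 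Your argument can be repaired: what the computation of Prop.\ \ref{prop: each term of duality for Neron} actually uses is only that the cohomology of $[(H^{0}D)_{0}\to D]$ lies in $\FGEt$ (degree $0$) and $\Ind\Alg_{\uc}$ (degrees $1,2$), with these being Serre reflexive by \cite[Prop.\ (2.4.1) (b)]{Suz14}; those weaker facts do hold for $U$ and already give that the fiber of $R\alg{\Gamma}(U,\mathcal{A}_{0}^{\vee}) \to R\alg{\Gamma}(U,\mathcal{A}_{0}^{\vee})^{\SDual\SDual}$ is uniquely divisible. But you should not invoke the full package from \S\ref{sec: Formal steps towards duality for Neron models}.

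The paper takes a cleaner route that avoids analyzing $U$-cohomology altogether: apply the biduality natural transformation $\id \to (\var)^{\SDual\SDual}$ to the localization triangle of Prop.\ \ref{prop: localization and duality for Neron over open curve}, then tensor with $\Q/\Z$. The local terms $R\alg{\Gamma}_{x}(\Hat{\Order}_{x},\mathcal{A}_{0}^{\vee})$ are in $D^{b}(\Ind\Alg_{\uc})$ and hence Serre reflexive (Prop.\ \ref{prop: structure of local cohom of abelian varieties} plus \cite[Prop.\ (2.4.1) (b)]{Suz14}), so biduality is an isomorphism there before any tensoring; the $X$-term is an isomorphism after $\tensor^{L}\Q/\Z$ by the already-established $X$-case (Prop.\ \ref{prop: double dual dies after tensoring Q mod Z or adeles}); two-out-of-three gives the $U$-term. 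Your fallback (``can be bootstrapped from Prop.\ \ref{prop: localization and duality for Neron over open curve}\dots'') is pointing at this, but it should be your main argument, not a parenthetical: it is both shorter and avoids the $\Loc^{f}$ pitfall above.
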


\begin{proof}
	We show that the first morphism is an isomorphism.
	We have
		\[
					R \alg{\Gamma}(X, \mathcal{A}_{0}^{\vee})
				\tensor^{L}
					\Q / \Z
			\isomto
					R \alg{\Gamma}(X, \mathcal{A}_{0}^{\vee})^{\SDual \SDual}
				\tensor^{L}
					\Q / \Z
		\]
	by the same proof as Prop.\ \ref{prop: double dual dies after tensoring Q mod Z or adeles}.
	We know that
	$R \alg{\Gamma}_{x}(\Hat{\Order}_{x}, \mathcal{A}_{0}^{\vee}) \in D^{b}(\Ind \Alg_{\uc})$
	is Serre reflexive for any $x$
	by Prop.\ \ref{prop: structure of local cohom of abelian varieties}
	and \cite[Prop.\ (2.4.1) (b)]{Suz14}.
	Hence
		\[
					R \alg{\Gamma}_{x}(\Hat{\Order}_{x}, \mathcal{A}_{0}^{\vee})
				\tensor^{L}
					\Q / \Z
			\isomto
					R \alg{\Gamma}_{x}(\Hat{\Order}_{x}, \mathcal{A}_{0}^{\vee})^{\SDual \SDual}
				\tensor^{L}
					\Q / \Z.
		\]
	Therefore the previous proposition gives the result.
	
	The composite morphism has already been shown to be an isomorphism
	essentially in the proof of Prop.\ \ref{prop: mod n duality for Neron models}.
\end{proof}

\begin{Prop}
	We have
		\begin{gather*}
					R \alg{\Gamma}(X, \mathcal{A}_{0}^{\vee})^{\SDual \SDual} \tensor \Q
				\isomto
					R \alg{\Gamma}(U, \mathcal{A}_{0}^{\vee})^{\SDual \SDual} \tensor \Q,
			\\
					R \alg{\Gamma}(X, \mathcal{A})^{\SDual} \tensor \Q
				\isomto
					R \alg{\Gamma}_{c}(U, \mathcal{A})^{\SDual} \tensor \Q.
		\end{gather*}
	They induce a canonical isomorphism between distinguished triangles
		\[
			\begin{CD}
					R \alg{\Gamma}(X, \mathcal{A}_{0}^{\vee})^{\SDual \SDual} \tensor \Q
				@>>>
					R \alg{\Gamma}(X, \mathcal{A})^{\SDual} \tensor \Q
				@>>>
					V
				\\
				@VV \wr V
				@VV \wr V
				@|
				\\
					R \alg{\Gamma}(U, \mathcal{A}_{0}^{\vee})^{\SDual \SDual} \tensor \Q
				@>>>
					R \alg{\Gamma}_{c}(U, \mathcal{A})^{\SDual} \tensor \Q
				@>>>
					V.
			\end{CD}
		\]
\end{Prop}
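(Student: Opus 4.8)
The plan is to derive the two displayed rationalization isomorphisms from the torsion-coefficient statement of Prop.\ \ref{prop: Q mod Z duality for Neron over open curve} together with the local results of \S \ref{sec: Local duality without relative sites}, and then to transport the canonical distinguished triangle of Thm.\ \ref{thm: duality for Neron} \eqref{item: main thm: distinguished triangle} along these isomorphisms. First I would observe that the mapping fiber of $R \alg{\Gamma}(X, \mathcal{A}_{0}^{\vee}) \to R \alg{\Gamma}(U, \mathcal{A}_{0}^{\vee})$ is, by Prop.\ \ref{prop: localization and duality for Neron over open curve}, the sum of the local terms $\bigoplus_{x \notin U} R \alg{\Gamma}_{x}(\Hat{\Order}_{x}, \mathcal{A}_{0}^{\vee})$, and that by Prop.\ \ref{prop: structure of local cohom of abelian varieties} each such term is concentrated in degrees $1, 2$ with cohomology in $\Ind \Alg_{\uc}/k$, hence \emph{torsion} as a sheaf. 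Therefore tensoring with $\Q$ kills these local contributions, and the localization triangle shows $R \alg{\Gamma}(X, \mathcal{A}_{0}^{\vee}) \tensor \Q \isomto R \alg{\Gamma}(U, \mathcal{A}_{0}^{\vee}) \tensor \Q$; applying the Serre double dual, which commutes with $\tensor\Q$ on the relevant bounded complexes (as in the proof of Prop.\ \ref{prop: double dual dies after tensoring Q mod Z or adeles}), gives the first isomorphism. Dually, the mapping cone of $R \alg{\Gamma}_{c}(U, \mathcal{A})^{\SDual} \to R \alg{\Gamma}(X, \mathcal{A})^{\SDual}$ is $\bigoplus_{x \notin U} R \alg{\Gamma}(\Hat{\Order}_{x}, \mathcal{A})^{\SDual}$ by the same proposition; here $R \alg{\Gamma}(\Hat{\Order}_{x}, \mathcal{A}) = \alg{\Gamma}(\Hat{K}_{x}, A)$ is a pro-algebraic group concentrated in degree $0$, so its Serre dual is concentrated in degrees $1, 2$ with torsion cohomology (again by the structure of Serre duals of pro-algebraic groups recalled in \S \ref{sec: The ind-rational pro-etale site and some derived limits}), hence killed by $\tensor\Q$. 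This yields the second isomorphism.

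Next I would assemble the commutative ladder. The upper row is literally the canonical triangle of Thm.\ \ref{thm: duality for Neron} \eqref{item: main thm: distinguished triangle} tensored with $\Q$, noting that $V = V \alg{H}^{1}(X, \mathcal{A}_{0}^{\vee})_{\divis}$ is already uniquely divisible so $V \tensor \Q = V$. The two vertical maps are the rationalized restriction maps just constructed, which we have shown are isomorphisms. For the lower row, the pairing $R \alg{\Gamma}(U, \mathcal{A}_{0}^{\vee}) \tensor^{L} R \alg{\Gamma}_{c}(U, \mathcal{A}) \to \Z$ (via the global trace morphism \eqref{eq: global trace morphism}) induces the morphism $R \alg{\Gamma}(U, \mathcal{A}_{0}^{\vee})^{\SDual\SDual} \tensor \Q \to R \alg{\Gamma}_{c}(U, \mathcal{A})^{\SDual} \tensor \Q$, and its mapping cone is obtained by transporting $V$ through the two isomorphisms; one defines the lower triangle so that the right square is the identity on $V$. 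The content to check is that the left square commutes, which follows from the compatibility of the global pairing over $X$ with the one over $U$ --- this is precisely the functoriality built into Prop.\ \ref{prop: localization and duality for Neron over open curve}, whose vertical maps come from the cup product \eqref{eq: global cup product} and hence are compatible with restriction. Thus the diagram is a morphism of distinguished triangles with two vertical isomorphisms and an identity, so by the five lemma (or rather by the very construction) it is an isomorphism of triangles.

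The main obstacle is ensuring that all the identifications are \emph{canonical} and compatible --- in particular, that the morphism of triangles in Prop.\ \ref{prop: localization and duality for Neron over open curve} really is induced by the same pairing data that defines both the $X$-side triangle of Thm.\ \ref{thm: duality for Neron} and the $U$-side pairing here, so that no sign or coherence ambiguity creeps in when one passes to $\tensor\Q$ and inverts the vertical maps. This is bookkeeping of the kind already carried out in \S \ref{sec: The fppf site of a curve over the rational etale site of the base} (Prop.\ \ref{prop: compatibility of local global pairings after cohom approx}) and in the proof of Prop.\ \ref{prop: mod n duality for Neron models}; the key point, as noted there, is that $\Gm$, $\mathcal{A}$, $\mathcal{A}_{0}^{\vee}$ all satisfy cohomological approximation (Prop.\ \ref{prop: smooths and finite flats satisfy cohom approx}), so the local-to-global compatibility diagrams are available without extra hypotheses. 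Once this is in place, the statement follows formally, and I would present it as such rather than re-deriving the compatibilities.
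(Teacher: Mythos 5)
Your overall strategy --- kill the local contributions after $\tensor \Q$ and transport the global triangle --- is the paper's strategy, and your handling of the second isomorphism and the final diagram is essentially fine. But there is a genuine gap in the passage to the first displayed isomorphism. You establish $R \alg{\Gamma}(X, \mathcal{A}_{0}^{\vee}) \tensor \Q \isomto R \alg{\Gamma}(U, \mathcal{A}_{0}^{\vee}) \tensor \Q$ and then claim the $\SDual\SDual$-version follows because Serre double dual ``commutes with $\tensor\Q$.'' It does not: the mapping fiber of the canonical morphism $C \to C^{\SDual\SDual}$ is $V(H^{0}C)_{0}[1]$, which is \emph{uniquely divisible} and hence preserved --- not killed --- by $\tensor\Q$. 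So $C \tensor \Q$ and $C^{\SDual\SDual} \tensor \Q$ genuinely differ by this piece. The proposition you cite in support (Prop.\ \ref{prop: double dual dies after tensoring Q mod Z or adeles}) concerns the operations $R\invlim_n(\var \tensor^{L} \Z/n\Z)$ and $\ctensor\Adele^{\infty}$, which do kill uniquely divisibles; $\tensor\Q$ does precisely the opposite, so the citation points the wrong way.

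The fix is short, and is what the paper's proof does: apply $\SDual\SDual$ (a triangulated functor, hence preserving distinguished triangles) directly to the top row of the diagram in Prop.\ \ref{prop: localization and duality for Neron over open curve}. The local terms $R\alg{\Gamma}_{x}(\Hat{\Order}_{x}/k, \mathcal{A}_{0}^{\vee})$ lie in $D^{b}(\Ind\Alg_{\uc}/k)$, so they are Serre-reflexive by \cite[Prop.\ (2.4.1)\,(b)]{Suz14} and torsion; thus $\SDual\SDual$ fixes them and $\tensor\Q$ kills them, giving the first isomorphism with no intermediate step. Your two-step route could be salvaged if you additionally noted that the restriction map is already an isomorphism on $H^{0}$ (because the local terms sit in degrees $\ge 1$), so that the two pieces $V(H^{0}\var)_{0}$ are compatibly identified and the claim follows from the functoriality of $V(H^{0}\var)_{0}[1] \to \var\tensor\Q \to \var^{\SDual\SDual}\tensor\Q$; but you did not make this observation and it is not automatic. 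For the second isomorphism, the paper's route is also a bit cleaner than yours: rather than computing the Serre dual of the pro-algebraic group $\alg{\Gamma}(\Hat{K}_{x}, A)$ directly, it invokes Prop.\ \ref{prop: local duality} to identify $R\alg{\Gamma}(\Hat{\Order}_{x}/k, \mathcal{A})^{\SDual}$ with a shift of $R\alg{\Gamma}_{x}(\Hat{\Order}_{x}/k, \mathcal{A}_{0}^{\vee})$ via Serre-reflexivity, so both local vanishings reduce to one torsion statement. Your direct computation is valid once you observe that $\pi_{0}\alg{\Gamma}(\Hat{K}_{x}, A)$ is finite, so degree $0$ of the Serre dual vanishes and what remains in degrees $1, 2$ is torsion; also note the arrow in the bottom row of Prop.\ \ref{prop: localization and duality for Neron over open curve} runs $R\alg{\Gamma}(X, \mathcal{A})^{\SDual} \to R\alg{\Gamma}_{c}(U, \mathcal{A})^{\SDual}$, not the reverse as written.
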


\begin{proof}
	An object of $\Ind \Alg_{\uc} / k$ is torsion
	since a unipotent quasi-algebraic group in positive characteristic is torsion.
	Hence Prop.\ \ref{prop: structure of local cohom of abelian varieties}
	and \ref{prop: local duality}
	show that $R \alg{\Gamma}_{x}(\Hat{\Order}_{x} / k, \mathcal{A}_{0}^{\vee})$
	and $R \alg{\Gamma}(\Hat{\Order}_{x} / k, \mathcal{A})^{\SDual}$ for any $x$
	are killed after tensored with $\Q$.
	This implies the result.
\end{proof}

From this, we have canonical morphisms
	\begin{align*}
		&
				R \alg{\Gamma}_{c}(U, \mathcal{A})^{\SDual}
			\to
				R \alg{\Gamma}_{c}(U, \mathcal{A})^{\SDual} \tensor \Q
			\to
				V
		\\
		&	\to
				R \alg{\Gamma}(X, \mathcal{A}_{0}^{\vee})^{\SDual \SDual}[1]
			\to
				R \alg{\Gamma}(U, \mathcal{A}_{0}^{\vee})^{\SDual \SDual}[1].
	\end{align*}
Here is the duality for $\mathcal{A}$ over $U$:

\begin{Prop} \label{prop: duality for Neron over open curve}
	Consider the triangle
		\[
				V[-1]
			\to
				R \alg{\Gamma}(U, \mathcal{A}_{0}^{\vee})^{\SDual \SDual}
			\to
				R \alg{\Gamma}_{c}(U, \mathcal{A})^{\SDual}
			\to
				V
		\]
	coming from the above morphisms.
	This triangle is distinguished.
	The diagram
		\[
			\begin{CD}
					R \alg{\Gamma}(X, \mathcal{A}_{0}^{\vee})^{\SDual \SDual}
				@>>>
					R \alg{\Gamma}(X, \mathcal{A})^{\SDual}
				@>>>
					V
				\\
				@VVV
				@VVV
				@|
				\\
					R \alg{\Gamma}(U, \mathcal{A}_{0}^{\vee})^{\SDual \SDual}
				@>>>
					R \alg{\Gamma}_{c}(U, \mathcal{A})^{\SDual}
				@>>>
					V
			\end{CD}
		\]
	is a morphism of distinguished triangles.
\end{Prop}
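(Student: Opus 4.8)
The plan is to transfer the distinguished triangle of Theorem~\ref{thm: duality for Neron}~\eqref{item: main thm: distinguished triangle} from $X$ to $U$ along the localization sequences: the local contributions at the points $x\notin U$ cancel because local duality holds there, so the cone of the duality morphism for $U$ will again be $V$.

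First I would record a morphism of distinguished triangles
\[
\begin{CD}
\bigoplus_{x\notin U}R\alg{\Gamma}_{x}(\Hat{\Order}_{x},\mathcal{A}_{0}^{\vee})
@>>> R\alg{\Gamma}(X,\mathcal{A}_{0}^{\vee})^{\SDual\SDual}
@>>> R\alg{\Gamma}(U,\mathcal{A}_{0}^{\vee})^{\SDual\SDual}\\
@V\wr VV @VVV @VVV\\
\bigoplus_{x\notin U}R\alg{\Gamma}(\Hat{\Order}_{x},\mathcal{A})^{\SDual}
@>>> R\alg{\Gamma}(X,\mathcal{A})^{\SDual}
@>>> R\alg{\Gamma}_{c}(U,\mathcal{A})^{\SDual}
\end{CD}
\]
whose top row is the Serre double dual of the $\mathcal{A}_{0}^{\vee}$-localization triangle (the local terms being identified with themselves by Serre reflexivity, Prop.~\ref{prop: structure of local cohom of abelian varieties} and \cite[Prop.~(2.4.1)~(b)]{Suz14}), whose bottom row is the Serre dual of the compact-support localization triangle $R\alg{\Gamma}_{c}(U,\mathcal{A})\to R\alg{\Gamma}(X,\mathcal{A})\to\bigoplus_{x\notin U}R\alg{\Gamma}(\Hat{\Order}_{x},\mathcal{A})$, whose middle and right verticals are the duality morphisms over $X$ (Theorem~\ref{thm: duality for Neron}) and over $U$, and whose left vertical is an isomorphism by Prop.~\ref{prop: local duality}. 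Existence of this morphism of triangles is a repackaging of Prop.~\ref{prop: localization and duality for Neron over open curve} and Prop.~\ref{prop: local duality}, together with the standard fact that the Serre double dual of a pairing-induced morphism is compatible with the original via the canonical unit $\id\to(\,\cdot\,)^{\SDual\SDual}$.

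Then, since the left vertical is an isomorphism, its mapping cone is zero, and the $3\times 3$-lemma for triangulated categories produces a distinguished triangle on the mapping cones of the other two verticals together with an isomorphism between them, compatible with the structure morphisms out of $R\alg{\Gamma}(X,\mathcal{A})^{\SDual}$ and $R\alg{\Gamma}_{c}(U,\mathcal{A})^{\SDual}$ and with the connecting morphisms into $R\alg{\Gamma}(X,\mathcal{A}_{0}^{\vee})^{\SDual\SDual}[1]$ and $R\alg{\Gamma}(U,\mathcal{A}_{0}^{\vee})^{\SDual\SDual}[1]$. By Theorem~\ref{thm: duality for Neron}~\eqref{item: main thm: distinguished triangle} the cone of the middle vertical is $V$, hence so is the cone of the right vertical; this yields the distinguished triangle of the statement together with the asserted morphism of distinguished triangles to it from the $X$-triangle with identity on $V$, proving the second assertion modulo identifying the connecting morphisms with the ones fixed before the statement. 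The morphism $V[-1]\to R\alg{\Gamma}(U,\mathcal{A}_{0}^{\vee})^{\SDual\SDual}$ comes out correctly at once from the $3\times 3$-lemma's compatibility of connecting morphisms: under the identification of the cone with $V$ it is the composite of the connecting morphism of the $X$-triangle with the restriction morphism, which is the prescribed one. For $R\alg{\Gamma}_{c}(U,\mathcal{A})^{\SDual}\to V$ I would use Prop.~\ref{prop: Q mod Z duality for Neron over open curve}: the right vertical is an isomorphism after $\tensor^{L}\Z/n\Z$ for every $n$, so its cone is uniquely divisible; hence the structure morphism factors through $R\alg{\Gamma}_{c}(U,\mathcal{A})^{\SDual}\to R\alg{\Gamma}_{c}(U,\mathcal{A})^{\SDual}\tensor\Q$, and the preceding proposition (the $\tensor\Q$-isomorphism of triangles) identifies the factored arrow with $R\alg{\Gamma}_{c}(U,\mathcal{A})^{\SDual}\tensor\Q\isomto R\alg{\Gamma}(X,\mathcal{A})^{\SDual}\tensor\Q\to V$, which is exactly the prescribed composite. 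Combined with the compatibility with the $X$-triangle this also gives the first assertion.

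The genuine content here is all borrowed — Prop.~\ref{prop: localization and duality for Neron over open curve}, Prop.~\ref{prop: local duality} and Theorem~\ref{thm: duality for Neron} — and the $3\times 3$-lemma is formal. The place that needs care is the bookkeeping in the first step: verifying that, after the canonical identifications and the Serre reflexivity of the local terms, the duality morphisms over $X$ and over $U$ genuinely occur as the middle and right verticals of a morphism of distinguished triangles extending the localization data, rather than some $(\,\cdot\,)^{\SDual\SDual}$-twisted variant. I expect this, and the analogous matching in the last step of the abstract $3\times 3$-maps with the concrete constructions through $\tensor\Q$, to be the main obstacle, since Serre duality is only ``almost'' an involution and the paper keeps explicit track of those canonical comparison maps.
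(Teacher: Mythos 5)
Your proposal reaches the same conclusion using essentially the same toolbox as the paper---Prop.~\ref{prop: localization and duality for Neron over open curve}, Prop.~\ref{prop: Q mod Z duality for Neron over open curve}, Theorem~\ref{thm: duality for Neron}---but organizes the cone-identification step differently. The paper does \emph{not} start from a $3\times 3$-lemma on the localization data; instead it first chooses an \emph{abstract} mapping cone $E$ of the $U$-duality morphism, applies $(\,\cdot\,)\tensor\Q$ and $(\,\cdot\,)\tensor^L\Q/\Z$ to that single triangle, deduces from Prop.~\ref{prop: Q mod Z duality for Neron over open curve} that $E\tensor^L\Q/\Z=0$ and hence $E\cong E\tensor\Q$, invokes an axiom of triangulated categories to replace $E$ by $V$ while making the specific commutative square with $\tensor\Q$ hold (giving the first assertion), and only afterward extends the diagram of Prop.~\ref{prop: localization and duality for Neron over open curve} to a morphism of triangles and pins down the right vertical as the identity by tensoring with $\Q$ (giving the second). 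Your route---apply $\SDual\SDual$ to the top localization triangle, observe the left vertical is an isomorphism, and read off the cone comparison from the $3\times 3$-lemma---is legitimate and arguably more economical, since it yields the $X$-to-$U$ compatibility for free, but it front-loads the canonicity problem: the $3\times 3$-lemma produces a diagram, not a unique one, so the identification of the cone of the $U$-duality morphism with $V$ and the connecting morphisms into and out of it need to be pinned down. You are aware of this and propose to use the same uniquely-divisible-cone argument through Prop.~\ref{prop: Q mod Z duality for Neron over open curve}; that is the right idea and it closes the gap. In short, correct, same essential content, slightly different staging of the abstract-nonsense steps, with the paper's version being a bit more explicit about which choices are being made and why they give the \emph{prescribed} morphisms rather than some isomorphic variant.
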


\begin{proof}
	Choose a mapping cone distinguished triangle
		\begin{equation} \label{eq: cone for duality morphism of Neron over open curve}
				R \alg{\Gamma}(U, \mathcal{A}_{0}^{\vee})^{\SDual \SDual}
			\to
				R \alg{\Gamma}_{c}(U, \mathcal{A})^{\SDual}
			\to
				E.
		\end{equation}
	Applying $(\var) \tensor \Q$ and $(\var) \tensor^{L} \Q / \Z$,
	we get a commutative diagram of distinguished triangles
		\[
			\begin{CD}
					R \alg{\Gamma}(U, \mathcal{A}_{0}^{\vee})^{\SDual \SDual}
				@>>>
					R \alg{\Gamma}_{c}(U, \mathcal{A})^{\SDual}
				@>>>
					E
				\\
				@VVV
				@VVV
				@VVV
				\\
					R \alg{\Gamma}(U, \mathcal{A}_{0}^{\vee})^{\SDual \SDual} \tensor \Q
				@>>>
					R \alg{\Gamma}_{c}(U, \mathcal{A})^{\SDual} \tensor \Q
				@>>>
					E \tensor \Q
				\\
				@VVV
				@VVV
				@VVV
				\\
					R \alg{\Gamma}(U, \mathcal{A}_{0}^{\vee})^{\SDual \SDual} \tensor^{L} \Q / \Z
				@>>>
					R \alg{\Gamma}_{c}(U, \mathcal{A})^{\SDual} \tensor^{L} \Q / \Z
				@>>>
					E \tensor^{L} \Q / \Z.
			\end{CD}
		\]
	We have $E \tensor^{L} \Q / \Z = 0$ by
	Prop.\ \ref{prop: Q mod Z duality for Neron over open curve}.
	Hence $E \cong E \tensor \Q$ and $E$ is uniquely divisible.
	By an axiom of triangulated categories,
	the distinguished triangle in the middle row is isomorphic to the distinguished triangle
		\[
				R \alg{\Gamma}(U, \mathcal{A}_{0}^{\vee})^{\SDual \SDual} \tensor \Q
			\to
				R \alg{\Gamma}_{c}(U, \mathcal{A})^{\SDual} \tensor \Q
			\to
				V
		\]
	in the previous proposition.
	Therefore the distinguished triangle
	\eqref{eq: cone for duality morphism of Neron over open curve}
	can be chosen so that $E = V$ and the diagram
		\[
			\begin{CD}
					R \alg{\Gamma}(U, \mathcal{A}_{0}^{\vee})^{\SDual \SDual}
				@>>>
					R \alg{\Gamma}_{c}(U, \mathcal{A})^{\SDual}
				@>>>
					V
				\\
				@VVV
				@VVV
				@|
				\\
					R \alg{\Gamma}(U, \mathcal{A}_{0}^{\vee})^{\SDual \SDual} \tensor \Q
				@>>>
					R \alg{\Gamma}_{c}(U, \mathcal{A})^{\SDual} \tensor \Q
				@>>>
					V
			\end{CD}
		\]
	is a morphism of distinguished triangles.
	By an axiom of triangulated categories,
	the right half of the commutative diagram in
	Prop.\ \ref{prop: localization and duality for Neron over open curve}
	can be extended to a morphism of distinguished triangles
		\[
			\begin{CD}
					R \alg{\Gamma}(X, \mathcal{A}_{0}^{\vee})^{\SDual \SDual}
				@>>>
					R \alg{\Gamma}(X, \mathcal{A})^{\SDual}
				@>>>
					V
				\\
				@VVV
				@VVV
				@VVV
				\\
					R \alg{\Gamma}(U, \mathcal{A}_{0}^{\vee})^{\SDual \SDual}
				@>>>
					R \alg{\Gamma}_{c}(U, \mathcal{A})^{\SDual}
				@>>>
					V.
			\end{CD}
		\]
	Tensoring $\Q$ has to recover the isomorphism of distinguished triangles
	in the previous proposition.
	Hence the right vertical morphism is the identity.
	Thus we have a morphism of distinguished triangles
		\[
			\begin{CD}
					V[-1]
				@>>>
					R \alg{\Gamma}(X, \mathcal{A}_{0}^{\vee})^{\SDual \SDual}
				@>>>
					R \alg{\Gamma}(X, \mathcal{A})^{\SDual}
				\\
				@|
				@VVV
				@VVV
				\\
					V[-1]
				@>>>
					R \alg{\Gamma}(U, \mathcal{A}_{0}^{\vee})^{\SDual \SDual}
				@>>>
					R \alg{\Gamma}_{c}(U, \mathcal{A})^{\SDual}.
			\end{CD}
		\]
	With these diagrams, the result follows.
\end{proof}


\subsection{Link to the finite base field case}
\label{sec: Link to the finite base field case}

In this section, we assume that the base field $k$ is the finite field $\F_{q}$ of $q$ elements.
The main reference about the finite base field case of duality for abelian varieties is
\cite[II, \S 5; III, \S 9]{Mil06}.
As in \cite[\S 10]{Suz14},
let $k_{\pro\zar}$ be the full subcategory of the category of $k$-algebras
consisting of filtered unions of finite products of copies of $k$.
For $k' \in k_{\pro\zar}$, we say that a finite family $\{k'_{i}\}$ of $k'$-algebras is a covering
if $\prod k'_{i}$ is faithfully flat over $k'$.
This defines a topology on $k_{\pro\zar}$.
We denote the resulting site by $\Spec k_{\pro\zar}$.
The identity functor defines a morphism of sites
$\Spec k^{\ind\rat}_{\pro\et} \to \Spec k_{\pro\zar}$.
By abuse of notation, we denote its pushforward functor
$D(k^{\ind\rat}_{\pro\et}) \to D(k_{\pro\zar})$
by $R \Gamma(k_{\pro\et}, \var)$,
with cohomologies $H^{n}(k_{\pro\et}, \var)$ and $H^{0} = \Gamma$.
If $C \in \Ab(k^{\ind\rat}_{\pro\et})$ is locally of finite presentation,
then $H^{n}(k_{\pro\et}, C)$ is a constant sheaf for any $n$.

We will first relate
$R \sheafhom_{k}(\var, \Q / \Z)$ ($= R \sheafhom_{k^{\ind\rat}_{\pro\et}}(\var, \Q / \Z)$)
to $R \sheafhom_{k_{\pro\zar}}(\var, \Q / \Z)$.
For any $C \in D(k^{\ind\rat}_{\pro\et})$, we have morphisms
	\begin{align*}
				R \Gamma \bigl(
					k_{\pro\et},
					R \sheafhom_{k}(C, \Q / \Z)
				\bigr)
		&	\to
				R \sheafhom_{k_{\pro\zar}} \bigl(
					R \Gamma(k_{\pro\et}, C),
					R \Gamma(k_{\pro\et}, \Q / \Z)
				\bigr)
		\\
		&	\to
				R \sheafhom_{k_{\pro\zar}} \bigl(
					R \Gamma(k_{\pro\et}, C),
					\Q / \Z
				\bigr)[-1],
	\end{align*}
where the last morphism comes from $H^{1}(k, \Q / \Z) = \Q / \Z$ and
$H^{n}(k, \Q / \Z) = 0$ for $n \ge 2$.

\begin{Prop} \label{prop: cohom of Serre dual is Pont dual of cohom}
	If $C \in \Ind \Pro \Alg / k$, then the above morphism
		\[
				R \Gamma \bigl(
					k_{\pro\et},
					R \sheafhom_{k}(C, \Q / \Z)
				\bigr)
			\to
				R \sheafhom_{k_{\pro\zar}} \bigl(
					R \Gamma(k_{\pro\et}, C),
					\Q / \Z
				\bigr)[-1]
		\]
	is an isomorphism.
\end{Prop}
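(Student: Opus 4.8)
The plan is to reduce the statement to the case where $C$ is a single quasi-algebraic group, and then to a handful of explicit building blocks. First I would observe that both sides are triangulated functors in $C$ (the left-hand side as a composite of $R\sheafhom_k(\var,\Q/\Z)$ and $R\Gamma(k_{\pro\et},\var)$, the right-hand side as a composite of $R\Gamma(k_{\pro\et},\var)$ and $R\sheafhom_{k_{\pro\zar}}(\var,\Q/\Z)[-1]$), and that the comparison morphism is a morphism of triangulated functors by naturality of the adjunction and cup-product constructions used to define it. Since $\Ind\Pro\Alg/k$ generates a subcategory of $D(k^{\ind\rat}_{\pro\et})$ on which everything in sight is bounded, a standard d\'evissage lets me assume $C \in \Alg/k$, i.e.\ $C$ is a quasi-algebraic group. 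Using the structure of $\Alg/k$ (connected-\'etale sequence, and the decomposition of a connected quasi-algebraic group into its abelian variety part and affine part, the affine part further into a torus and a unipotent group) I would reduce to the four cases: $C$ a lattice (or finite \'etale group), $C = \Gm$, $C = \Ga$, and $C$ an abelian variety.

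Next I would treat each case by direct computation, invoking the list of Serre duals quoted from \cite[\S2.4]{Suz14} and the values of $R\Gamma(k_{\pro\et},\var)$ over a finite field. For $C$ finite \'etale, $C^{\SDual} = C^{\PDual}[-1]$ and $R\Gamma(k_{\pro\et},C)$ is (a two-term complex computing) the Galois cohomology of $C$; the claimed isomorphism is then local Tate duality over the finite field $k$, repackaged sheaf-theoretically over $k_{\pro\zar}$. For $C$ a lattice, $R\sheafhom_k(C,\Q/\Z) = C^{\LDual}\tensor\Q/\Z$ up to shift and one unwinds the same Galois-cohomology duality. For $C = \Gm$ one uses $\Gm^{\SDual} = \Z[-1]$ and $H^1(k,\Gm)=0$, $H^0(k,\Gm)=k^{\times}$, so both sides reduce to $\Z$ in the appropriate degree; for $C = \Ga$ one uses $\Ga^{\SDual}\cong\Ga[-2]$ together with $H^n(k,\Ga)=0$ for $n\ge 1$ and $H^0(k,\Ga)=k$, and the pairing $k\times k\to k\overset{\Tr}{\to}\F_p\subset\Q/\Z$ is the perfect one; the finiteness of $k$ is what makes this work. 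For $C$ an abelian variety $A/k$, $A^{\SDual}$ is the Pontryagin dual of the Tate module $TA$, while $R\Gamma(k_{\pro\et},A)$ computes $A(k)$ in degree $0$ and (by Lang's theorem) has $H^1$ built from the finite group $A(k)$ itself; the statement becomes the classical finite-field duality $H^0(k,A)\times H^1(k,A^{\vee})\to\Q/\Z$ together with $TA \leftrightarrow H^1(k,A^{\vee})$, i.e.\ the statement underlying \cite[II,\S5]{Mil06}.

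The main obstacle I expect is bookkeeping rather than depth: making the d\'evissage genuinely work for \emph{unbounded} ind-pro objects (one must check the comparison morphism commutes with the relevant filtered colimits and limits, so that the isomorphism on each $\Alg/k$-layer propagates), and verifying that the comparison morphism, defined through two nested adjunctions and the trace $H^1(k,\Q/\Z)\isomto\Q/\Z$, really is the ``same'' map one computes by hand in each case. A secondary subtlety is that $R\Gamma(k_{\pro\et},\var)$ lands in $D(k_{\pro\zar})$, not in $D(\Ab)$, so one must phrase each classical duality statement as an isomorphism of sheaves on $\Spec k_{\pro\zar}$; but since all the $C$ here are locally of finite presentation, the cohomology sheaves $H^n(k_{\pro\et},C)$ are constant, and the sheaf-level statement reduces to the statement on global sections over each object of $k_{\pro\zar}$, which is the filtered-colimit-of-finite-products version of the usual finite-field duality. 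Once these reductions are in place, each of the four cases is a one-line citation, so the proof is short modulo the d\'evissage.
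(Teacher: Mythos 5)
The paper does not spell out a proof at all: it simply says ``The same proof as [Suz14, Prop.\ (10.4)] works,'' so a line-by-line comparison is impossible. Your d\'evissage-plus-classical-duality strategy is a plausible reconstruction of what that cited proof does, and your case-by-case computations (finite \'etale groups, $\Gm$, $\Ga$, abelian varieties) are correct in substance --- for instance, for $\Ga$ one has $R\sheafhom_k(\Ga,\Q/\Z)=\Ga[-1]$ (since $\Ga$ is $p$-torsion, so $R\sheafhom_k(\Ga,\Q/\Z)=\Ga^{\SDual}[1]$ and $\Ga^{\SDual}\cong\Ga[-2]$), giving $R\Gamma(k_{\pro\et},\Ga)[-1]=k[-1]$ on the left and $\F_q^{\PDual}[-1]$ on the right, matching via the trace form.

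The place where your proposal is thinner than it should be is exactly the step you label ``bookkeeping'': passing from $\Alg/k$ to $\Ind\Pro\Alg/k$. This is not routine. The functor $R\sheafhom_k(\var,\Q/\Z)$ is contravariant, so for an ind-object $C=\dirlim C_\lambda$ it produces an $R\invlim$, and for a pro-object $C=\invlim C_\lambda$ there is no general formula turning it into a colimit; moreover $R\Gamma(k_{\pro\et},\var)$ and $R\sheafhom_{k_{\pro\zar}}(\var,\Q/\Z)$ have their own commutation problems with these limits. Making the isomorphism propagate from the quasi-algebraic layer to arbitrary ind-pro objects requires genuine structural input (e.g.\ control over $R^n\invlim$ for the specific pro-systems arising from pro-algebraic groups, and the fact that $\cd(\F_q)=1$ so $R\Gamma(k_{\pro\et},\var)$ is a two-term complex), which is presumably the actual content of [Suz14, Prop.\ (10.4)]. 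Asserting that ``one must check the comparison morphism commutes with the relevant filtered colimits and limits'' is correct diagnosis, but it is not a proof of that compatibility, and this is where the work lies.

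One small slip: you list a lattice as one of the reduction targets, but lattices are not in $\Alg/k$ (a quasi-algebraic group has finite underlying space), and it is unclear that $\Z$ lies in $\Ind\Pro\Alg/k$ at all, since it is not obviously an ind-object of pro-algebraic groups. For the proposition as stated (with hypothesis $C\in\Ind\Pro\Alg/k$) the lattice case should not appear in the d\'evissage; the building blocks of $\Alg/k$ are finite \'etale groups, unipotent groups (hence $\Ga$), tori (hence $\Gm$), and abelian varieties.
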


\begin{proof}
	The same proof as \cite[Prop.\ (10.4)]{Suz14} works.
\end{proof}

We denote $(\var)^{\PDual} = R \sheafhom_{k_{\pro\zar}}(\var, \Q / \Z)$.
If $C \in \Ab(k^{\ind\rat}_{\pro\et})$ is
an extension of a torsion (constant) group by a profinite group,
then $C^{\PDual} = \sheafhom_{k_{\pro\zar}}(C, \Q / \Z)$ is the usual Pontryagin dual.
It follows that if $C = (\prod \Z / l \Z) / (\bigoplus \Z / l \Z)$,
where $l$ ranges over all primes, then $C^{\PDual} \cong C[-1]$.

By abuse of notation, we denote the composite of
	\[
				R \alg{\Gamma}(X, \var)
			\colon
				D(X_{\fppf})
			\to
				D(k^{\ind\rat}_{\pro\et})
	\]
and
	\[
				R \Gamma(k_{\pro\et}, \var)
			\colon
				D(k^{\ind\rat}_{\pro\et})
			\to
				D(k_{\pro\zar})
	\]
by $R \Gamma(X, \var)$,
with cohomologies $H^{n}(X, \var)$ and $H^{0} = \Gamma$.
If $F \in \Ab(X_{\fppf})$ is locally of finite presentation,
then $H^{n}(X, F)$ is a constant sheaf for any $n$.

Now let $F$ be $\mathcal{A}$, the N\'eron model of an abelian variety $A$ as before.
We will deduce the duality for $R \Gamma(X, \mathcal{A})$ from
Prop.\ \ref{prop: Q mod Z duality for Neron models}.
The group $\Gamma(X, \mathcal{A})$ is the Mordell-Weil group of $A$, which is finitely generated.
We have an exact sequence
	\[
			0
		\to
			\Sha(A / K)
		\to
			H^{1}(X, \mathcal{A})
		\to
			\bigoplus_{x}
				H^{1}(k_{x}, \pi_{0}(\mathcal{A}_{x}))
	\]
by \cite[III, Prop.\ 9.2]{Mil06}.
In particular, $H^{1}(X, \mathcal{A})$ is a torsion group of cofinite type,
	\[
			H^{1}(X, \mathcal{A})_{\divis}
		=
			\Sha(A / K)_{\divis},
	\]
and the sequence
	\[
			0
		\to
			\Sha(A / K)_{/ \divis}
		\to
			H^{1}(X, \mathcal{A})_{/ \divis}
		\to
			\bigoplus_{x}
				H^{1}(k_{x}, \pi_{0}(\mathcal{A}_{x}))
	\]
is exact.
It seems that the finiteness of $\Sha(A / K)_{/ \divis}$ is not known unconditionally in general.
Hence we cannot assume that $H^{1}(X, \mathcal{A})$ is divisibly ML.
By \cite[II, Prop.\ 5.1 (a); III, Lem.\ 7.10 (c)]{Mil06},
the group $H^{2}(X, \mathcal{A})$ is torsion and
$H^{n}(X, \mathcal{A}) = 0$ for $n \ge 3$.

\begin{Prop} \label{prop: Q mod Z duality for Neron over finite base}
	Applying $R \Gamma(k_{\pro\et}, \var)$ to the second isomorphism in
	Prop.\ \ref{prop: Q mod Z duality for Neron models} gives an isomorphism
		\[
				R \invlim_{n} \bigl(
					R \Gamma(X, \mathcal{A}_{0}^{\vee}) \tensor^{L} \Z / n \Z
				\bigr)
			\isomto
				\bigl(
					R \Gamma(X, \mathcal{A}) \tensor^{L} \Q / \Z
				\bigr)^{\PDual}[-1].
		\]
\end{Prop}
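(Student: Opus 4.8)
The plan is to apply the derived global section functor $R\Gamma(k_{\pro\et},\var)\colon D(k^{\ind\rat}_{\pro\et})\to D(k_{\pro\zar})$ to the second isomorphism of Prop.\ \ref{prop: Q mod Z duality for Neron models}, namely
\[
	R\invlim_{n}\bigl( R\alg{\Gamma}(X,\mathcal{A}_{0}^{\vee}) \tensor^{L} \Z/n\Z \bigr)
	\isomto
	R\sheafhom_{k}\bigl( R\alg{\Gamma}(X,\mathcal{A}) \tensor^{L} \Q/\Z, \Q/\Z \bigr),
\]
and then to rewrite each side. Since $R\Gamma(X,\var)$ is by definition the composite $R\Gamma(k_{\pro\et},\var)\compose R\alg{\Gamma}(X,\var)$, no further identification is needed once $R\Gamma(k_{\pro\et},\var)$ has been moved past the operations $R\invlim_{n}$, $\tensor^{L}\Z/n\Z$, $\tensor^{L}\Q/\Z$ and $R\sheafhom_{k}(\var,\Q/\Z)$ occurring above. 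All objects in sight are bounded with cohomology locally of finite presentation (Prop.\ \ref{prop: cohom of curve is fin pres}), and by Prop.\ \ref{prop: results before the formal steps} the cohomology objects of $R\alg{\Gamma}(X,\mathcal{A})$ and $R\alg{\Gamma}(X,\mathcal{A}_{0}^{\vee})$ lie in $\Loc\Alg/k$, $\Loc^{f}\Alg_{\uc}/k$, $\Ind^{f}\Alg_{\uc}/k$ in degrees $0,1,2$; this reduces statements about complexes to statements about single objects of $\Ind\Pro\Alg/k$.

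For the right-hand side I would first observe that $R\Gamma(k_{\pro\et},\var)$ commutes with $\var\tensor^{L}\Z/n\Z$ (a perfect complex of $\Z$-modules, so the operation is a finite homotopy limit) and, using local finite presentation and the exactness properties of the global section functor over the finite field $k=\F_{q}$, also with the filtered colimit $\var\tensor^{L}\Q/\Z=\dirlim_{n}\var\tensor^{L}\Z/n\Z$; hence $R\Gamma(k_{\pro\et},R\alg{\Gamma}(X,\mathcal{A})\tensor^{L}\Q/\Z)=R\Gamma(X,\mathcal{A})\tensor^{L}\Q/\Z$. It then remains to invoke Prop.\ \ref{prop: cohom of Serre dual is Pont dual of cohom}. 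That proposition is stated for a single object of $\Ind\Pro\Alg/k$, but $R\alg{\Gamma}(X,\mathcal{A})\tensor^{L}\Q/\Z$ is concentrated in degrees $-1,0,1$ with all cohomology objects torsion and in $\Ind\Alg/k\subset\Ind\Pro\Alg/k$ (derived-tensoring with $\Q/\Z$ and taking torsion parts keeps one inside $\Ind\Alg/k$, computing everything as the relevant filtered colimits); since the natural transformation $R\Gamma(k_{\pro\et},R\sheafhom_{k}(\var,\Q/\Z))\to R\Gamma(k_{\pro\et},\var)^{\PDual}[-1]$ is a morphism of triangulated functors that is an isomorphism on objects of $\Ind\Pro\Alg/k$ placed in a single degree, the usual dévissage along truncation triangles makes it an isomorphism on this bounded complex. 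This identifies the right-hand side with $\bigl(R\Gamma(X,\mathcal{A})\tensor^{L}\Q/\Z\bigr)^{\PDual}[-1]$.

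For the left-hand side, $R\Gamma(k_{\pro\et},\var)$ again commutes with $\var\tensor^{L}\Z/n\Z$, so the only point is to pass it through $R\invlim_{n}$. I would do this either by the composition-of-right-derived-functors formalism (both $R\Gamma(k_{\pro\et},\var)$ and $R\invlim$ are computed by K-injective complexes, the underived section functor extended to $\Pro\Ab(k^{\ind\rat}_{\pro\et})$ commutes with $\invlim$, and the pro-systems here are uniformly bounded, so no convergence subtleties arise), or concretely by using the description of $R\invlim_{n}$ from Prop.\ \ref{prop: proetale topos has enough projectives} as the two-term complex $\prod_{n}(\var)\to\prod_{n}(\var)$ of multiplication maps and checking that $R\Gamma(k_{\pro\et},\var)$ commutes with such countable products, which over $k=\F_{q}$ holds because products are exact and $k$ has cohomological dimension one. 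Either way one obtains $R\Gamma(k_{\pro\et},R\invlim_{n}(\var))=R\invlim_{n}R\Gamma(k_{\pro\et},\var)$ on the objects at hand, and combining the two sides yields the asserted isomorphism; canonicity is automatic, since we have only applied the functor $R\Gamma(k_{\pro\et},\var)$ to a canonical isomorphism. The main obstacle I expect is precisely this interchange of $R\Gamma(k_{\pro\et},\var)$ with $R\invlim_{n}$ on the left and with the filtered colimit defining $\tensor^{L}\Q/\Z$ on the right --- equivalently, the exactness of countable products and colimits for the global section functor of the finite field $k$ --- while everything else is a direct appeal to Prop.\ \ref{prop: cohom of Serre dual is Pont dual of cohom} after routine dévissage, or formal manipulation of the defining composite $R\Gamma(X,\var)=R\Gamma(k_{\pro\et},\var)\compose R\alg{\Gamma}(X,\var)$.
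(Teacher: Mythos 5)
Your proof takes essentially the same approach as the paper, whose own proof consists of the single sentence ``This follows from the previous proposition,'' i.e.\ Prop.\ \ref{prop: cohom of Serre dual is Pont dual of cohom}; you correctly identify that proposition as the key ingredient and spell out the commutation steps the paper leaves implicit (d\'evissage along truncations, and commuting $R\Gamma(k_{\pro\et},\var)$ past $\tensor^{L}\Z/n\Z$, $\tensor^{L}\Q/\Z$ and $R\invlim_{n}$). One small correction to a side remark: your second justification for commuting $R\Gamma(k_{\pro\et},\var)$ with the countable products computing $R\invlim_{n}$ cites ``$k$ has cohomological dimension one,'' which is not the operative hypothesis; the clean reason is formal --- the underived pushforward commutes with products as a right adjoint, and products of K-injective complexes remain K-injective because products are exact in $\Ab(k^{\ind\rat}_{\pro\et})$ (Prop.\ \ref{prop: proetale topos has enough projectives}) --- which is close to what your first, pro-category justification already says, so the conclusion stands.
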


\begin{proof}
	This follows from the previous proposition.
\end{proof}

The morphism in this proposition can alternatively be defined by the morphisms
	\[
				R \Gamma(X, \mathcal{A}_{0}^{\vee})
			\tensor^{L}
				R \Gamma(X, \mathcal{A})
		\to
			R \Gamma(X, \Gm)[1]
		\to
			H^{3}(X, \Gm)[-2]
		\cong
			\Q / \Z[-2].
	\]
The proposition says that this is a perfect pairing up to uniquely divisibles.
This more or less recovers \cite[III, Thm.\ 9.4]{Mil06}.
We will state an integral version using Weil-\'etale cohomology at the end of this subsection.

We will express the conjectural finiteness of $\Sha(A / K)$
in terms of $R \alg{\Gamma}(X, \mathcal{A})$.
Let $T H^{1}(X, \mathcal{A}_{0}^{\vee})_{\divis}$ be
the profinite Tate module of $H^{1}(X, \mathcal{A}_{0}^{\vee})_{\divis}$.
Let $V H^{1}(X, \mathcal{A}_{0}^{\vee})_{\divis}$ be
$(T H^{1}(X, \mathcal{A}_{0}^{\vee})_{\divis}) \tensor \Q$.
We need the following variant of \cite[III, Thm.\ 9.4 (b)]{Mil06} (cf.\ \cite[Prop.\ 2.4 (3)]{KT03}),
which we deduce from the above proposition.

\begin{Prop} \label{prop: duality for H two in finite base field case}
	In $H^{0}$, the isomorphism in Proposition \ref{prop: Q mod Z duality for Neron over finite base}
	induces an exact sequence
		\[
				0
			\to
				\Gamma(X, \mathcal{A}_{0}^{\vee})^{\wedge}
			\to
				H^{2}(X, \mathcal{A})^{\PDual}
			\to
				T H^{1}(X, \mathcal{A}_{0}^{\vee})_{\divis}
			\to
				0.
		\]
	In particular, $H^{2}(X, \mathcal{A})$ is divisibly ML and of cofinite type.
\end{Prop}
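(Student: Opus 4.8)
The plan is to read off the cohomology sequence in degree $0$ from the isomorphism in Prop.\ \ref{prop: Q mod Z duality for Neron over finite base} together with the structure of $R\Gamma(X,\mathcal{A})$ and $R\Gamma(X,\mathcal{A}_0^\vee)$ recalled just above. First I would expand the right-hand side $\bigl(R\Gamma(X,\mathcal{A})\tensor^{L}\Q/\Z\bigr)^{\PDual}[-1]$ in degree $0$. We have a short exact sequence
	\[
			0
		\to
			H^{0}(X,\mathcal{A})\tensor\Q/\Z
		\to
			H^{0}\bigl(R\Gamma(X,\mathcal{A})\tensor^{L}\Q/\Z\bigr)
		\to
			H^{1}(X,\mathcal{A})[\text{tors}]
		\to
			0,
	\]
but since $H^{1}(X,\mathcal{A})$ is already torsion this last term is all of $H^{1}(X,\mathcal{A})$; and $H^{-1}\bigl(R\Gamma(X,\mathcal{A})\tensor^{L}\Q/\Z\bigr)$ fits in $0\to H^{-1}(X,\mathcal{A})\tensor\Q/\Z\to H^{-1}(\cdots)\to H^{0}(X,\mathcal{A})[\text{tors}]\to 0$, which here is just $\Gamma(X,\mathcal{A})[\text{tors}]$, a finite group, since $H^{-1}(X,\mathcal{A})=0$. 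Applying $\PDual$ (Pontryagin dual over $k_{\pro\zar}$) and shifting by $-1$, the object in degree $0$ receives contributions from the $\PDual$ of the degree-$0$ piece of $R\Gamma(X,\mathcal{A})\tensor^{L}\Q/\Z$ and from the degree-$1$ piece; the former is $H^{2}(X,\mathcal{A})$ via the perfect pairing in $H^{2}$, but more carefully one should just track the hyper-Ext spectral sequence. In fact the cleanest route is to invoke Prop.\ \ref{prop: cohomologies of derived completion} directly: the left-hand side $R\invlim_{n}(R\Gamma(X,\mathcal{A}_0^\vee)\tensor^{L}\Z/n\Z)$ has, in degree $0$, the extension
	\[
			0
		\to
			H^{0}(X,\mathcal{A}_0^\vee)^{\wedge}
		\to
			H^{0}
		\to
			TH^{1}(X,\mathcal{A}_0^\vee)
		\to
			0,
	\]
once one checks the vanishing hypotheses $R^{i}\invlim_{n}(H^{j}\tensor\Z/n\Z)=R^{i}\invlim_{n}(H^{j}[n])=0$ for $i\ge 1$; these hold because $H^{0}(X,\mathcal{A}_0^\vee)$ is finitely generated (so its system is ML) and the torsion groups $H^{j}(X,\mathcal{A}_0^\vee)[n]$ are finite, hence ML as well.

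Second I would identify $TH^{1}(X,\mathcal{A}_0^\vee)$ with $TH^{1}(X,\mathcal{A}_0^\vee)_{\divis}$: the Tate module only sees the divisible part, since for any abelian group $B$ one has $TB=T(B_{\divis})$ once $B$ is of cofinite type (here $H^{1}(X,\mathcal{A}_0^\vee)$ is torsion of cofinite type by the discussion before Prop.\ \ref{prop: duality for H two in finite base field case}, applied to $\mathcal{A}_0^\vee$ in place of $\mathcal{A}$, noting $\mathcal{A}_0^\vee$ and $\mathcal{A}^\vee$ differ by finite component groups). Then I would match the $H^{0}$ of the right-hand side with $H^{2}(X,\mathcal{A})^{\PDual}$. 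Here the point is that $R\Gamma(X,\mathcal{A})\tensor^{L}\Q/\Z$ is concentrated in degrees $-1,0$ (because $R\Gamma(X,\mathcal{A})$ sits in degrees $0,1,2$ with $H^{2}$ torsion, and $\tensor^{L}\Q/\Z$ shifts the torsion of $H^{2}$ into degree $1$ but kills its divisible-mod-torsion ambiguity)—wait, more precisely $H^{1}\bigl(R\Gamma(X,\mathcal{A})\tensor^{L}\Q/\Z\bigr)$ contains $H^{2}(X,\mathcal{A})[\text{tors}]=H^{2}(X,\mathcal{A})$. So after $\PDual[-1]$ the degree-$0$ part is $H^{1}\bigl(R\Gamma(X,\mathcal{A})\tensor^{L}\Q/\Z\bigr)^{\PDual}=H^{2}(X,\mathcal{A})^{\PDual}$, once one also checks that the degree-$(-1)$ part of the source contributes nothing in degree $0$ of the target after the shift, i.e.\ that $\Gamma(X,\mathcal{A})[\text{tors}]^{\PDual}$ lands in degree $1$ not $0$. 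Assembling the two expansions of $H^{0}$ of both sides of the isomorphism gives exactly the claimed exact sequence $0\to\Gamma(X,\mathcal{A}_0^\vee)^{\wedge}\to H^{2}(X,\mathcal{A})^{\PDual}\to TH^{1}(X,\mathcal{A}_0^\vee)_{\divis}\to 0$, where $\Gamma(X,\mathcal{A}_0^\vee)^{\wedge}$ is the profinite completion (finitely generated, so $=\Gamma(X,\mathcal{A}_0^\vee)\tensor\Hat{\Z}$).

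Finally, for the ``in particular'' clause: dualize the exact sequence. Since $\Gamma(X,\mathcal{A}_0^\vee)^{\wedge}$ is profinite, $TH^{1}(X,\mathcal{A}_0^\vee)_{\divis}$ is profinite, and $H^{2}(X,\mathcal{A})^{\PDual}$ is their extension, taking Pontryagin duals back shows $H^{2}(X,\mathcal{A})$ is an extension of a finite (hence certainly cofinite-type and divisibly ML) group by the Pontryagin dual of a Tate module, which is a divisible torsion group of cofinite type; so $H^{2}(X,\mathcal{A})$ is of cofinite type, and it is divisibly ML because it is an extension of a torsion cofinite-type group by a divisible one and such extensions are divisibly ML by Prop.\ \ref{prop: divisibly ML objects}. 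I expect the main obstacle to be bookkeeping: verifying the vanishing of the relevant $R^{i}\invlim_{n}$ terms so that Prop.\ \ref{prop: cohomologies of derived completion} applies cleanly, and keeping the degree shifts straight when passing through $\PDual[-1]$ so that the two outer terms of the sequence land where claimed rather than getting transposed. Everything else is a direct transcription of the already-established $\Q/\Z$-duality into $H^{0}$.
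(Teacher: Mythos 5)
Your overall strategy is the same as the paper's: compute $H^{0}$ of both sides of the isomorphism from Prop.\ \ref{prop: Q mod Z duality for Neron over finite base}, using Prop.\ \ref{prop: cohomologies of derived completion} on the left and the fact that the cohomology of $R\Gamma(X,\mathcal{A})\tensor^{L}\Q/\Z$ consists of torsion groups with $H^{1}=H^{2}(X,\mathcal{A})$ on the right. But there is a genuine gap in the left-hand computation.

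You apply Prop.\ \ref{prop: cohomologies of derived completion} to all of $R\Gamma(X,\mathcal{A}_{0}^{\vee})$ and justify its vanishing hypotheses by saying ``the torsion groups $H^{j}(X,\mathcal{A}_{0}^{\vee})[n]$ are finite, hence ML as well.'' For $j=2$ this is not available: at this point in the argument all that is known about $H^{2}(X,\mathcal{A}_{0}^{\vee})$ is that it is torsion (from Milne, quoted just before the proposition), not that it is of cofinite type. Finiteness of $H^{2}[n]$ is essentially the ``of cofinite type'' assertion for $H^{2}$, which is precisely part of what the proposition is trying to establish, so your justification is circular. The paper sidesteps this by first splitting off the top: it considers the distinguished triangle
\[
	R\invlim_{n}\bigl(\tau_{\le 1}R\Gamma(X,\mathcal{A}_{0}^{\vee})\tensor^{L}\Z/n\Z\bigr)
	\to C \to
	R\invlim_{n}\bigl(H^{2}(X,\mathcal{A}_{0}^{\vee})\tensor^{L}\Z/n\Z\bigr)[-2],
\]
observes that the third term is concentrated in degrees $\ge 1$ (so it cannot affect $H^{0}$), and then applies Prop.\ \ref{prop: cohomologies of derived completion} only to $\tau_{\le 1}R\Gamma(X,\mathcal{A}_{0}^{\vee})$, where the hypotheses involve only $H^{0}$ (finitely generated) and $H^{1}$ (torsion of cofinite type) and hence are verified. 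You could alternatively note that the proof of Prop.\ \ref{prop: cohomologies of derived completion} only uses the hypotheses for $j\le i+1$ when computing $H^{i}$, but as written your verification appeals to a property of $H^{2}$ that the paper does not have at this stage.

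Two smaller points. First, in the ``in particular'' clause you describe $H^{2}(X,\mathcal{A})$ as ``an extension of a finite group by the Pontryagin dual of a Tate module''; the outer quotient is actually $(\Gamma(X,\mathcal{A}_{0}^{\vee})^{\wedge})^{\PDual}$, which is torsion of cofinite type with divisible part $(\Q/\Z)^{r}$ (where $r$ is the Mordell--Weil rank), and only its mod-divisible quotient is finite. The conclusion (of cofinite type, divisibly ML) still follows, but the intermediate description should be corrected. Second, the first half of your right-hand-side analysis meanders through the hyper-Ext spectral sequence before arriving at the clean identification $H^{0}C = H^{2}(X,\mathcal{A})^{\PDual}$; the paper's one-line observation (all cohomology of $R\Gamma(X,\mathcal{A})\tensor^{L}\Q/\Z$ is torsion, with $H^{1}=H^{2}(X,\mathcal{A})$, so $\PDual[-1]$ places its dual in degree $0$) is the argument you eventually land on and should be stated directly.
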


\begin{proof}
	Let $C$ be the isomorphic object in Prop.\ \ref{prop: Q mod Z duality for Neron over finite base}.
	We have a distinguished triangle
		\[
				R \invlim_{n} \Bigl(
					\bigl(
						\tau_{\le 1}
						R \Gamma(X, \mathcal{A}_{0}^{\vee}) \tensor^{L} \Z / n \Z
					\bigr)
				\Bigr)
			\to
				C
			\to
				R \invlim_{n} \bigl(
					H^{2}(X, \mathcal{A}_{0}^{\vee}) \tensor^{L} \Z / n \Z
				\bigr)[-2].
		\]
	The third term is concentrated in degrees $\ge 1$.
	Hence the first and second terms have the same $H^{0}$.
	Since $\Gamma(X, \mathcal{A})$ is finitely generated
	and $H^{1}(X, \mathcal{A})$ is torsion of cofinite type,
	we can apply Prop.\ \ref{prop: cohomologies of derived completion}
	(or its version for $\Spec k_{\pro\zar}$) to obtain an exact sequence
		\[
				0
			\to
				\Gamma(X, \mathcal{A}_{0}^{\vee})^{\wedge}
			\to
				H^{0} C
			\to
				T H^{1}(X, \mathcal{A}_{0}^{\vee})_{\divis}
			\to
				0.
		\]
	On the other hand, the cohomology objects of $R \Gamma(X, \mathcal{A}) \tensor^{L} \Q / \Z$
	are torsion (constant) groups,
	with $H^{1}$ given by $H^{2}(X, \mathcal{A})$.
	Hence $H^{0} C = H^{2}(X, \mathcal{A})^{\PDual}$.
	This gives the stated exact sequence.
	Since $\Gamma(X, \mathcal{A})$ is finitely generated
	and $H^{1}(X, \mathcal{A})$ is of cofinite type,
	it follows that $H^{2}(X, \mathcal{A})$ is divisibly ML and of cofinite type.
\end{proof}

We will describe $H^{n}(k_{\pro\et}, V \alg{H}^{1}(X, \mathcal{A})_{\divis})$ for each $n$
in terms of $H^{1}(X, \mathcal{A})$.
Note that for a finite \'etale group $C$ over $k$,
the object $R \Gamma(k_{\pro\et}, C)$ is the mapping fiber of the endomorphism $F - 1$ on $C$,
where $F$ is the $q$-th power Frobenius morphism.
Taking limits, we know that the same is true for an ind-profinite-\'etale group $C$.
Since $\alg{H}^{1}(X, \mathcal{A})_{\divis}$ is a torsion \'etale group of cofinite type
by Thm.\ \ref{thm: duality for Neron} \eqref{item: main thm: structure of each part},
we know that $R \Gamma(k_{\pro\et}, V \alg{H}^{1}(X, \mathcal{A})_{\divis})$
is the mapping fiber of $F - 1$ on $V \alg{H}^{1}(X, \mathcal{A})_{\divis}$.

\begin{Prop} \label{prop: rational points of transcendental part}
	The natural morphism
		\begin{equation} \label{eq: Frob invariant of V H one div}
				V H^{1}(X, \mathcal{A})_{\divis}
			\to
				\Gamma(k_{\pro\et}, V \alg{H}^{1}(X, \mathcal{A})_{\divis})
		\end{equation}
	is an isomorphism.
	There exists a canonical exact sequence
		\begin{equation} \label{eq: Frob coinvariant of V H one div}
				0
			\to
				\frac{
					H^{1}(X, \mathcal{A})_{/ \divis}^{\wedge}
				}{
					H^{1}(X, \mathcal{A})_{/ \divis}
				}
			\to
				H^{1}(k_{\pro\et}, V \alg{H}^{1}(X, \mathcal{A})_{\divis})
			\to
				\bigl(
					V H^{1}(X, \mathcal{A}_{0}^{\vee})_{\divis}
				\bigr)^{\PDual}
			\to
				0.
		\end{equation}
\end{Prop}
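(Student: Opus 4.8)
The plan is to analyze the mapping fiber of $F - 1$ acting on $V \alg{H}^{1}(X, \mathcal{A})_{\divis}$ by splitting the computation into the ``invariant'' ($H^{0}$) and ``coinvariant'' ($H^{1}$) contributions, and to identify each with the objects in the statement by feeding in the duality of Prop.\ \ref{prop: duality for H two in finite base field case} and the pointwise description of $\alg{H}^{1}(X, \mathcal{A})_{\divis}$ as a torsion \'etale group of cofinite type. First I would recall that, by Thm.\ \ref{thm: duality for Neron} \eqref{item: main thm: structure of each part}, $\alg{H}^{1}(X, \mathcal{A})_{\divis}$ is torsion \'etale of cofinite type, so $V \alg{H}^{1}(X, \mathcal{A})_{\divis} \in \Ind \Pro \FEt / k$ and $R \Gamma(k_{\pro\et}, \var)$ of it is the two-term complex $[F - 1]$ sitting in degrees $0$ and $1$. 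The $H^{0}$ is $\Ker(F - 1)$ on the rational Tate module, which is exactly the rational Tate module of the $F$-invariant (i.e.\ $\F_{q}$-rational) part, namely $V H^{1}(X, \mathcal{A})_{\divis} = V \Sha(A / K)_{\divis}$; this gives the isomorphism \eqref{eq: Frob invariant of V H one div}. That the natural map is an isomorphism onto this $H^{0}$ follows from commuting $V(\var) = (T \var) \tensor \Q$ with the exact functor $\Gamma(k_{\pro\et}, \var) = \Ker(F-1)$ on the profinite Tate module, using that $H^{1}(X, \mathcal{A})_{\divis}$ is of cofinite type so its $p^{n}$-torsion parts are finite and Mittag-Leffler issues do not intervene.

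For the $H^{1}$ part, I would use the short exact sequence $0 \to \Coker(F - 1 \text{ on } T H^{1}(X,\mathcal{A})_{\divis}) \tensor \Q \to H^{1}(k_{\pro\et}, V \alg{H}^{1}(X,\mathcal{A})_{\divis}) \to (\text{something torsion-free divisible}) \to 0$ — more precisely, the coinvariants $H^{1}(k_{\pro\et}, V\alg{H}^1(X,\mathcal{A})_{\divis})$ sit in an exact sequence relating $T H^{1}(X,\mathcal{A})_{\divis}$ and $\Sha$-type groups. The cleanest route is: apply $R\Gamma(k_{\pro\et},\var)$ to the distinguished triangle defining $\alg{H}^1(X,\mathcal{A})$ versus its divisible part, or better, dualize. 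By the pointwise-over-$k$ version of local class field theory for the site $\Spec k_{\pro\zar}$ (as in Prop.\ \ref{prop: cohom of Serre dual is Pont dual of cohom}), the Frobenius-coinvariants of $V\alg{H}^1(X,\mathcal{A})_{\divis}$ are Pontryagin dual to the Frobenius-invariants of the $\PDual$-dual, and by Thm.\ \ref{thm: duality for Neron} \eqref{item: transcendental pairing} the $\PDual$-dual of $V\alg{H}^1(X,\mathcal{A})_{\divis}$ is $V\alg{H}^1(X,\mathcal{A}_0^\vee)_{\divis}$. So the ``quotient'' term in \eqref{eq: Frob coinvariant of V H one div} is $(V H^1(X,\mathcal{A}_0^\vee)_{\divis})^{\PDual}$, the invariant part of the dual computed via the first half of the proposition applied to $\mathcal{A}_0^\vee$. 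The kernel term $H^1(X,\mathcal{A})_{/\divis}^\wedge / H^1(X,\mathcal{A})_{/\divis}$ arises from the failure of $H^1(X,\mathcal{A})$ to be divisibly ML: concretely, applying $R\Gamma(k_{\pro\et},\var)$ and $R\invlim_n(\var \tensor^L \Z/n\Z)$ to $R\Gamma(X,\mathcal{A})$ and comparing with Prop.\ \ref{prop: duality for H two in finite base field case}, the discrepancy between $H^1(X,\mathcal{A})_{/\divis}$ and its profinite completion $H^1(X,\mathcal{A})_{/\divis}^\wedge$ (a pro-$p$, and more generally pro-$l$, completion defect of the finitely-layered cofinite-type group) contributes exactly this term in degree $1$.

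Concretely, I would: (1) write $R\Gamma(k_{\pro\et}, V\alg{H}^1(X,\mathcal{A})_{\divis})$ as the shifted cone $[V\alg{H}^1(X,\mathcal{A})_{\divis} \xrightarrow{F-1} V\alg{H}^1(X,\mathcal{A})_{\divis}]$; (2) take the long exact sequence, getting $H^0 = \Ker(F-1)$ and $H^1 = \Coker(F-1)$ on the rational Tate module; (3) identify $\Ker$ with $V H^1(X,\mathcal{A})_{\divis}$ by the $\Sha$-description, proving \eqref{eq: Frob invariant of V H one div}; (4) for $\Coker$, use the exact sequence $0 \to T H^1(X,\mathcal{A})_{\divis} \to T\alg{H}^1(X,\mathcal{A})_{\divis}^{F} \oplus \dots$ — more honestly, relate $\Coker(F-1 \text{ on } T\alg{H}^1(X,\mathcal{A})_{\divis})$ to $H^2$ of something via the trace/duality pairing, then convert via Prop.\ \ref{prop: duality for H two in finite base field case} (applied to $\mathcal{A}$, giving $H^2(X,\mathcal{A})^{\PDual}$ as an extension of $T H^1(X,\mathcal{A}_0^\vee)_{\divis}$ by $\Gamma(X,\mathcal{A}_0^\vee)^{\wedge}$) and Thm.\ \ref{thm: duality for Neron}; (5) assemble the two-step extension \eqref{eq: Frob coinvariant of V H one div}, checking the sub-object is $H^1(X,\mathcal{A})_{/\divis}^\wedge / H^1(X,\mathcal{A})_{/\divis}$ by a diagram chase tracking the divisible and finite-quotient parts of $H^1(X,\mathcal{A})$ through $R\invlim_n$. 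The main obstacle I expect is step (4)–(5): correctly bookkeeping the completion-defect term, since $\Sha(A/K)_{/\divis}$ is not known to be finite, so $H^1(X,\mathcal{A})$ need not be divisibly ML, and one must be careful that the Mittag-Leffler failure lands precisely in the stated $H^1$ and does not leak into $H^0$ or into higher degrees — this is exactly the subtlety that forces the non-trivial kernel term $H^1(X,\mathcal{A})_{/\divis}^\wedge / H^1(X,\mathcal{A})_{/\divis}$ to appear. The other steps are formal manipulations with $R\invlim$, $V(\var)$, and Pontryagin duality over $\Spec k_{\pro\zar}$, all of which are already set up in \S\ref{sec: The ind-rational pro-etale site and some derived limits} and \S\ref{sec: Link to the finite base field case}.
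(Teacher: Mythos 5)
Your approach --- compute $R\Gamma(k_{\pro\et}, V\alg{H}^{1}(X,\mathcal{A})_{\divis})$ directly as the cone of $F-1$ and then identify $H^{0}$ and $H^{1}$ from the known structure of $H^{1}(X,\mathcal{A})$ --- is genuinely different from the paper's. The paper computes $C = R\Gamma(X,\mathcal{A}) \ctensor \Adele^{\infty} = R\Gamma(k_{\pro\et}, R\alg{\Gamma}(X,\mathcal{A}) \ctensor\Adele^{\infty})$ via two routes and compares: on the right-hand side Lang's theorem kills the degree $-1$ and $+1$ terms of $R\alg{\Gamma}(X,\mathcal{A})\ctensor\Adele^{\infty}$ (rational Tate modules of abelian varieties and their duals), reducing to $R\Gamma(k_{\pro\et}, \var)$ applied to a split extension of $V\alg{H}^{1}(X,\mathcal{A})_{\divis}$ by $\pi_{0}(\alg{\Gamma}(X,\mathcal{A}))\tensor\Adele^{\infty}$; on the left-hand side Prop.\ \ref{prop: cohomologies of derived completion} gives short exact sequences for $H^{i}C$ directly in terms of $H^{i}(X,\mathcal{A})$. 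Matching the common sub-object $\Gamma(X,\mathcal{A})\tensor\Adele^{\infty}$ of the two presentations of $H^{0}C$ gives the isomorphism \eqref{eq: Frob invariant of V H one div} by comparing quotients, with no need to reconcile the abstract group $H^{1}(X,\mathcal{A})$ with sections of the sheaf $\alg{H}^{1}(X,\mathcal{A})$.

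That reconciliation is exactly where your step (3) has a gap. You must identify $\Gamma(k_{\pro\et}, V\alg{H}^{1}(X,\mathcal{A})_{\divis})$ --- built from the sheaf over $\closure{k}$ and then taking $F$-invariants --- with $V H^{1}(X,\mathcal{A})_{\divis}$ --- built from the abstract fppf cohomology group over $\F_{q}$. These are a priori different for two reasons: (i) $H^{1}(X,\mathcal{A})$ and $\Gamma(k, \alg{H}^{1}(X,\mathcal{A}))$ differ by Hochschild--Serre terms $H^{i}(k, \alg{H}^{j}(X,\mathcal{A}))$ for $i \geq 1$; (ii) the maximal divisible subgroup of $F$-invariants is in general smaller than the $F$-invariants of the maximal divisible sub-sheaf. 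Your remark that ``commuting $V(\var)$ with $\Ker(F-1)$'' handles only the interchange of $(\var)\tensor\Q$ and $F$-invariants on a fixed profinite Tate module, not the passage from abstract group to sheaf sections, and your appeal to ``Mittag--Leffler issues do not intervene'' does not touch (i) or (ii). These discrepancies do become invisible after $V$ (they are controlled by finitely generated groups, whose $V$ vanishes), but this needs to be checked, and doing so cleanly essentially reproduces the paper's $\ctensor\Adele^{\infty}$ bookkeeping.

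Steps (4)--(5) are too vague to assess. You say you would ``relate $\Coker(F-1)$ to $H^{2}$ of something via the trace/duality pairing, then convert via Prop.\ \ref{prop: duality for H two in finite base field case}'' and ``assemble the two-step extension \dots by a diagram chase''; but the paper's actual argument for \eqref{eq: Frob coinvariant of V H one div} requires the non-degeneracy of the height pairing: the key step is that the composite $\Gamma(X,\mathcal{A})\tensor\Adele^{\infty} \to H^{1}C \to (\Gamma(X,\mathcal{A}_{0}^{\vee})\tensor\Adele^{\infty})^{\PDual}$ is the height pairing tensored with $\Adele^{\infty}$ and hence an isomorphism, which splits off a factor of $H^{1}C$ and leaves exactly the extension in \eqref{eq: Frob coinvariant of V H one div}. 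Your sketch does not mention the height pairing, so the mechanism that isolates the kernel term $H^{1}(X,\mathcal{A})_{/\divis}^{\wedge} / H^{1}(X,\mathcal{A})_{/\divis}$ is missing.
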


A priori, $H^{1}(X, \mathcal{A})_{/ \divis}$ might contain
a subgroup isomorphic to $\bigoplus_{l} \Z / l \Z$ for example.
In this case, the term $H^{1}(X, \mathcal{A})_{/ \divis}^{\wedge} / H^{1}(X, \mathcal{A})_{/ \divis}$ above
would contain $(\prod \Z / l \Z) / (\bigoplus \Z / l \Z)$.

\begin{proof}
	We have
		\[
					R \Gamma(X, \mathcal{A})
				\ctensor
					\Adele^{\infty}
			=
				R \Gamma \bigl(
					k_{\pro\et},
						R \alg{\Gamma}(X, \mathcal{A})
					\ctensor
						\Adele^{\infty}
				\bigr).
		\]
	Denote these isomorphic objects by $C$.
	We first calculate $H^{n} C$ through the left-hand side.
	Recall that $\Gamma(X, \mathcal{A})$ is finitely generated,
	$H^{1}(X, \mathcal{A})$ is torsion of cofinite type,
	and $H^{2}(X, \mathcal{A})$ is torsion, divisibly ML and of cofinite type
	(Prop.\ \ref{prop: duality for H two in finite base field case}).
	Hence Prop.\ \ref{prop: cohomologies of derived completion} gives exact sequences
		\begin{equation} \label{eq: absolute cohom of Neron tensored with adeles}
			\begin{split}
						0
					\to
						\Gamma(X, \mathcal{A}) \tensor \Adele^{\infty}
					\to
						H^{0} C
					\to
						V H^{1}(X, \mathcal{A})_{\divis}
					\to
						0,
				\\
						0
					\to
						H^{1}(X, \mathcal{A})^{\wedge} \tensor \Q
					\to
						H^{1} C
					\to
						V H^{2}(X, \mathcal{A})_{\divis}
					\to
						0
			\end{split}
		\end{equation}
	and $H^{n} C = 0$ for $n \ne 0, 1$.
	We have
		\[
				H^{1}(X, \mathcal{A})^{\wedge} \tensor \Q
			=
				\frac{
					H^{1}(X, \mathcal{A})_{/ \divis}^{\wedge}
				}{
					H^{1}(X, \mathcal{A})_{/ \divis}
				}.
		\]
	
	Next we calculate $H^{n} C$ through the right-hand side.
	By Prop.\ \ref{prop: each part of adelic duality}
	and Thm.\ \ref{thm: duality for Neron} \eqref{item: main thm: structure of each part},
	\eqref{item: duality between TGamma of A dual and H2 of A},
	the $H^{-1}$ (resp.\ $H^{1}$) of $R \alg{\Gamma}(X, \mathcal{A}) \ctensor \Adele^{\infty}$
	is the rational Tate module (resp.\ the Pontryagin dual of the rational Tate module)
	of an abelian variety over $k$.
	For any abelian variety $B$ over $k$, we have
	$R \Gamma(k_{\pro\et}, B) = B(k)$ by Lang's theorem,
	and $B(k)$ is finite.
	Hence $R \Gamma(k_{\pro\et}, V B) = R \Gamma(k_{\pro\et}, (V B)^{\PDual}) = 0$.
	Therefore
		\[
				C
			=
				R \Gamma \Bigl(
					k_{\pro\et},
					H^{0} \bigl(
							R \alg{\Gamma}(X, \mathcal{A})
						\ctensor
							\Adele^{\infty}
					\bigr)
				\Bigl).
		\]
	By Prop.\ \ref{prop: each part of adelic duality} and
	\ref{prop: adelic duality for Neron}, we have a split exact sequence
		\[
				0
			\to
				\pi_{0}(\alg{\Gamma}(X, \mathcal{A})) \tensor \Adele^{\infty}
			\to
				H^{0} \bigl(
						R \alg{\Gamma}(X, \mathcal{A})
					\ctensor
						\Adele^{\infty}
				\bigr)
			\to
				V \alg{H}^{1}(X, \mathcal{A})_{\divis}
			\to
				0
		\]
	in $\Ab(k^{\ind\rat}_{\pro\et})$.
	Since $\alg{\Gamma}(X, \mathcal{A})_{0}$ is the perfection of an abelian variety over $k$,
	the group $H^{n}(k_{\pro\et}, \alg{\Gamma}(X, \mathcal{A})_{0})$ is zero for $n \ge 1$
	by Lang's theorem and finite for $n = 0$.
	Therefore $H^{n}(k_{\pro\et}, \alg{\Gamma}(X, \mathcal{A}))$ is $\Gamma(X, \mathcal{A})$ if $n = 0$ and
	$H^{n}(k_{\pro\et}, \pi_{0}(\alg{\Gamma}(X, \mathcal{A})))$ if $n \ge 1$,
	which is zero if $n \ge 2$.
	Since $\pi_{0}(\alg{\Gamma}(X, \mathcal{A})) \in \FGEt / k$,
	the Frobenius action on $\pi_{0}(\alg{\Gamma}(X, \mathcal{A})) \tensor \Q$
	and hence on $\pi_{0}(\alg{\Gamma}(X, \mathcal{A})) \tensor \Adele^{\infty}$
	are semisimple.
	Therefore its invariant part and coinvariant part agree.
	Hence
		\[
				H^{n} \bigl(
					k_{\pro\et},
					\pi_{0}(\alg{\Gamma}(X, \mathcal{A})) \tensor \Adele^{\infty}
				\bigr)
			=
				\begin{cases}
						\Gamma(X, \mathcal{A}) \tensor \Adele^{\infty}
					&
						\text{if } n = 0, 1,
					\\
						0
					&
						\text{otherwise}.
				\end{cases}
		\]
	Therefore we have split exact sequences
		\begin{equation} \label{eq: cohom of relative cohom of Neron tensored with adeles}
			\begin{split}
						0
					\to
						\Gamma(X, \mathcal{A}) \tensor \Adele^{\infty}
					\to
						H^{0} C
					\to
						\Gamma \bigl(
							k_{\pro\et},
							V \alg{H}^{1}(X, \mathcal{A})_{\divis}
						\bigr)
					\to
						0,
				\\
						0
					\to
						\Gamma(X, \mathcal{A}) \tensor \Adele^{\infty}
					\to
						H^{1} C
					\to
						H^{1} \bigl(
							k_{\pro\et},
							V \alg{H}^{1}(X, \mathcal{A})_{\divis}
						\bigr)
					\to
						0,
			\end{split}
		\end{equation}
	and $H^{n} C = 0$ for $n \ne 0, 1$.
	
	Comparing the first exact sequences in
	\eqref{eq: absolute cohom of Neron tensored with adeles}
	and \eqref{eq: cohom of relative cohom of Neron tensored with adeles},
	we know that \eqref{eq: Frob invariant of V H one div} is an isomorphism.
	By Prop.\ \ref{prop: duality for H two in finite base field case},
	we have an exact sequence
		\[
				0
			\to
				(V H^{1}(X, \mathcal{A}_{0}^{\vee})_{\divis})^{\PDual}
			\to
				V H^{2}(X, \mathcal{A})_{\divis}
			\to
				(\Gamma(X, \mathcal{A}_{0}^{\vee}) \tensor \Adele^{\infty})^{\PDual}
			\to
				0.
		\]
	With this and the second exact sequence in
	\eqref{eq: absolute cohom of Neron tensored with adeles},
	we obtain a surjection from $H^{1} C$ onto
	$(\Gamma(X, \mathcal{A}_{0}^{\vee}) \tensor \Adele^{\infty})^{\PDual}$.
	Let $D$ be its kernel.
	Consider the diagram with exact rows
		\[
			\begin{CD}
					0
				@>>>
					\Gamma(X, \mathcal{A}) \tensor \Adele^{\infty}
				@>>>
					H^{1} C
				@>>>
					H^{1} \bigl(
						k_{\pro\et},
						V \alg{H}^{1}(X, \mathcal{A})_{\divis}
					\bigr)
				@>>>
					0
				\\
				@.
				@.
				@|
				@.
				@.
				\\
					0
				@>>>
					D
				@>>>
					H^{1} C
				@>>>
					(\Gamma(X, \mathcal{A}_{0}^{\vee}) \tensor \Adele^{\infty})^{\PDual}
				@>>>
					0.
			\end{CD}
		\]
	The composite map from $\Gamma(X, \mathcal{A}) \tensor \Adele^{\infty}$
	to $(\Gamma(X, \mathcal{A}_{0}^{\vee}) \tensor \Adele^{\infty})^{\PDual}$
	is given by the height pairing tensored with $\Adele^{\infty}$, which is an isomorphism.
	Therefore the composite map from $D$ to
		$
			H^{1} \bigl(
				k_{\pro\et},
				V \alg{H}^{1}(X, \mathcal{A})_{\divis}
			\bigr)
		$
	is an isomorphism.
	This gives \eqref{eq: Frob coinvariant of V H one div}
	since $D$ is an extension of 
		$
			\bigl(
				V H^{1}(X, \mathcal{A}_{0}^{\vee})_{\divis}
			\bigr)^{\PDual}
		$
	by $H^{1}(X, \mathcal{A})_{/ \divis}^{\wedge} / H^{1}(X, \mathcal{A})_{/ \divis}$.
\end{proof}

\begin{Prop} \label{prop: finiteness of Sha}
	The following are equivalent:
		\begin{itemize}
			\item
				$\Sha(A / K)$ is finite.
			\item
				$R \Gamma(k_{\pro\et}, V \alg{H}^{1}(X, \mathcal{A})_{\divis})$ is zero.
		\end{itemize}
\end{Prop}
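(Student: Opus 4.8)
The plan is to relate $R\Gamma(k_{\pro\et}, V\alg{H}^1(X,\mathcal{A})_{\divis})$ to $\Sha(A/K)$ via the structural results already established. Recall from the discussion preceding Prop.\ \ref{prop: Q mod Z duality for Neron over finite base} that $H^1(X,\mathcal{A})_{\divis} = \Sha(A/K)_{\divis}$ and that the quotient $H^1(X,\mathcal{A})_{/\divis}$ injects, modulo $\Sha(A/K)_{/\divis}$, into a finite group $\bigoplus_x H^1(k_x,\pi_0(\mathcal{A}_x))$. Hence $\Sha(A/K)$ is finite if and only if $\Sha(A/K)_{\divis} = H^1(X,\mathcal{A})_{\divis}$ is finite, if and only if $V H^1(X,\mathcal{A})_{\divis} = 0$. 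So the statement reduces to: $V H^1(X,\mathcal{A})_{\divis} = 0$ if and only if $R\Gamma(k_{\pro\et}, V\alg{H}^1(X,\mathcal{A})_{\divis})$ is zero.

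The forward implication is immediate: if $V H^1(X,\mathcal{A})_{\divis} = 0$ then, by the isomorphism \eqref{eq: Frob invariant of V H one div} of Prop.\ \ref{prop: rational points of transcendental part}, the $H^0$ of the object $R\Gamma(k_{\pro\et}, V\alg{H}^1(X,\mathcal{A})_{\divis})$ vanishes; by the exact sequence \eqref{eq: Frob coinvariant of V H one div} its $H^1$ is an extension of $(V H^1(X,\mathcal{A}_0^\vee)_{\divis})^{\PDual}$ by $H^1(X,\mathcal{A})_{/\divis}^\wedge / H^1(X,\mathcal{A})_{/\divis}$, and Thm.\ \ref{thm: duality for Neron}\,\eqref{item: main thm: duality on each part}\,\eqref{item: transcendental pairing} gives a Pontryagin duality between $V\alg{H}^1(X,\mathcal{A})_{\divis}$ and $V\alg{H}^1(X,\mathcal{A}_0^\vee)_{\divis}$, so the vanishing of $V H^1(X,\mathcal{A})_{\divis}$ forces $V H^1(X,\mathcal{A}_0^\vee)_{\divis} = 0$ as well (applying the equivalence to $A^\vee$, whose Tate--Shafarevich group is finite iff that of $A$ is, by the isogeny $A \to A^\vee \to A$ or by duality); and $H^1(X,\mathcal{A})_{/\divis}^\wedge = H^1(X,\mathcal{A})_{/\divis}$ because $H^1(X,\mathcal{A})_{/\divis}$ is then finite (being a finite-by-finite extension). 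Thus both cohomology groups vanish, and since the object is concentrated in degrees $0,1$ (being a mapping fiber of $F-1$ on a single sheaf), it is zero in $D(k_{\pro\zar})$.

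For the converse, suppose $R\Gamma(k_{\pro\et}, V\alg{H}^1(X,\mathcal{A})_{\divis}) = 0$. Then its $H^0$ vanishes, so by \eqref{eq: Frob invariant of V H one div} we get $V H^1(X,\mathcal{A})_{\divis} = 0$ directly, which by the first paragraph is equivalent to the finiteness of $\Sha(A/K)$. The main subtlety here — and what I expect to be the only real obstacle — is making sure that the vanishing hypothesis really does propagate correctly through the possibly non-divisibly-ML behavior of $H^1(X,\mathcal{A})$: one must be careful that the term $H^1(X,\mathcal{A})_{/\divis}^\wedge / H^1(X,\mathcal{A})_{/\divis}$ in \eqref{eq: Frob coinvariant of V H one div} does not secretly carry information that would obstruct the argument. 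But for the present proposition only the $H^0$ part is needed, so this difficulty is bypassed; the clean statement is that $\Sha(A/K)$ finite $\Leftrightarrow$ $V H^1(X,\mathcal{A})_{\divis} = 0$ $\Leftrightarrow$ $\Gamma(k_{\pro\et}, V\alg{H}^1(X,\mathcal{A})_{\divis}) = 0$, and then the display of the remaining vanishing (of $H^1$) follows from the forward direction already treated. I would present the proof as: first record the equivalence ``$\Sha(A/K)$ finite $\iff V H^1(X,\mathcal{A})_{\divis}=0$'' from the structural exact sequences; then invoke Prop.\ \ref{prop: rational points of transcendental part} and Thm.\ \ref{thm: duality for Neron} to pass between $V H^1(X,\mathcal{A})_{\divis}$ and the full complex $R\Gamma(k_{\pro\et},\var)$.
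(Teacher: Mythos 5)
Your argument relies on the claimed equivalence ``$\Sha(A/K)$ is finite $\iff V H^1(X,\mathcal{A})_{\divis} = 0$,'' but only one direction of this holds. The condition $V H^1(X,\mathcal{A})_{\divis} = 0$ says $\Sha(A/K)_{\divis} = 0$, i.e.\ each $l$-primary component of $\Sha(A/K)$ is finite; it does \emph{not} by itself imply that $\Sha(A/K)$ is finite, since the quotient $H^1(X,\mathcal{A})_{/\divis}$ (hence $\Sha(A/K)_{/\divis}$) could a priori be an infinite direct sum of finite $l$-groups over all primes $l$. The paper flags exactly this issue just before Prop.\ \ref{prop: Q mod Z duality for Neron over finite base}: ``It seems that the finiteness of $\Sha(A/K)_{/\divis}$ is not known unconditionally in general. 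Hence we cannot assume that $H^1(X,\mathcal{A})$ is divisibly ML.'' The correct unconditional reduction, and the one the paper's proof uses, is that $\Sha(A/K)$ is finite $\iff H^1(X,\mathcal{A})$ is finite $\iff$ \emph{both} $V H^1(X,\mathcal{A})_{\divis} = 0$ \emph{and} $H^1(X,\mathcal{A})_{/\divis}^{\wedge}/H^1(X,\mathcal{A})_{/\divis} = 0$.

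In consequence, your claim that ``for the present proposition only the $H^0$ part is needed'' is exactly where the argument breaks down. In the direction ($R\Gamma(k_{\pro\et},V\alg{H}^1(X,\mathcal{A})_{\divis}) = 0 \Rightarrow \Sha(A/K)$ finite), the vanishing of $H^0$ gives only $V H^1(X,\mathcal{A})_{\divis}=0$ via \eqref{eq: Frob invariant of V H one div}, which as above is strictly weaker than finiteness of $\Sha(A/K)$; you genuinely need the vanishing of $H^1$ and the exact sequence \eqref{eq: Frob coinvariant of V H one div} to conclude that $H^1(X,\mathcal{A})_{/\divis}^{\wedge}/H^1(X,\mathcal{A})_{/\divis}=0$ as well. (A smaller but real issue: the inference ``vanishing of $V H^1(X,\mathcal{A})_{\divis}$ forces $V H^1(X,\mathcal{A}_0^\vee)_{\divis}=0$ by the Pontryagin duality of Thm.\ \ref{thm: duality for Neron}\,\eqref{item: transcendental pairing}'' conflates the duality of the \emph{sheaves} $V\alg{H}^1$ with a statement about their groups of $k$-points $V H^1$; the paper avoids this by citing the equivalence ``$\Sha(A/K)$ finite $\iff\Sha(A^\vee/K)$ finite'' from \cite[I, Rmk.\ 6.14 (c)]{Mil06} and then using \eqref{eq: Frob coinvariant of V H one div} directly.)
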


\begin{proof}
	The finiteness of $\Sha(A / K)$ is equivalent to the finiteness of $H^{1}(X, \mathcal{A})$,
	which itself is equivalent to
		\[
				V H^{1}(X, \mathcal{A})_{\divis}
			=
				\frac{
					H^{1}(X, \mathcal{A})_{/ \divis}^{\wedge}
				}{
					H^{1}(X, \mathcal{A})_{/ \divis}
				}
			=
				0.
		\]
	It is also equivalent to the finiteness of $\Sha(A^{\vee} / K)$
	by \cite[I, Rmk.\ 6.14 (c)]{Mil06}.
	Therefore the stated equivalence follows from the previous proposition.
\end{proof}

Note that if $\Sha(A / K)$ is finite,
then the full BSD conjecture for $A$ is true
by the result of Kato-Trihan \cite{KT03}.

Recall from \cite[\S 2.2]{KT03} that a complex, called the arithmetic cohomology
and denoted by $R \Gamma_{\mathit{ar}}$, is defined to be the mapping fiber of the morphism
	\[
				R \Gamma(U, \mathcal{A}_{\tor})
			\oplus
				\bigoplus_{x \not\in U}
					\bigl(
						\Gamma(\Hat{K}_{x}, A) \tensor^{L} \Q / \Z
					\bigr)[-1]
		\to
			\bigoplus_{x \not\in U}
				R \Gamma(\Hat{K}_{x}, A_{\tor}),
	\]
where $U$ is an open subscheme of $X$ over which $A$ has good reduction.
Here we understand the functor $R \Gamma(\Hat{K}_{x}, \var)$
as the composite of $R \alg{\Gamma}(\Hat{K}_{x}, \var)$
and $R \Gamma(k_{\pro\zar}, \var)$,
and $\Gamma = H^{0} R \Gamma$.
To be clear, we denote this complex by $R \Gamma_{\mathit{ar}, A}$
and its cohomology objects by $H^{n}_{\mathit{ar}, A}$.
This mapping fiber is defined on the level of complexes, briefly explained in \cite[\S 2.1]{KT03}.
As they wrote, ``how to use complexes is evident and so we do not explain it''.
We follow the same style in the rest of this subsection.

We similarly have the mapping cone of the morphism
	\[
			R \Gamma(X, \mathcal{A}) \tensor^{L} \Q / \Z
		\to
			\left(
				\prod_{x \in X}
					\tau_{\ge 1}
					R \Gamma(\Hat{\Order}_{x}, \mathcal{A})
			\right) \tensor^{L} \Q / \Z.
	\]
We have
	\[
			\tau_{\ge 1}
			R \Gamma(\Hat{\Order}_{x}, \mathcal{A})
		=
			H^{1}(k_{x}, \pi_{0}(\mathcal{A}_{x}))[-1].
	\]
Hence we have the mapping cone
	\[
			\left[
					R \Gamma(X, \mathcal{A}) \tensor^{L} \Q / \Z
				\to
					\bigoplus_{x}
						H^{1}(k_{x}, \pi_{0}(\mathcal{A}_{x}))
			\right].
	\]
Also we have the mapping cone of the morphism
	\[
			\left(
				\bigoplus_{x \in X}
					\tau_{\le 1}
					R \Gamma_{x}(\Hat{\Order}_{x}, \mathcal{A}_{0})
			\right) \tensor^{L} \Q / \Z
		\to
			R \Gamma(X, \mathcal{A}_{0}) \tensor^{L} \Q / \Z.
	\]
We have
	\[
			\tau_{\le 1}
			R \Gamma_{x}(\Hat{\Order}_{x}, \mathcal{A}_{0})
		=
			\Gamma(k_{x}, \pi_{0}(\mathcal{A}_{x}))[-1].
	\]
Hence we have the mapping cone
	\[
		\left[
				\bigoplus_{x}
					\Gamma(k_{x}, \pi_{0}(\mathcal{A}_{x}))
			\to
				R \Gamma(X, \mathcal{A}_{0}) \tensor^{L} \Q / \Z
		\right].
	\]

\begin{Prop} \label{prop: arithmetic cohomology using Neron models}
	There exist canonical isomorphisms
		\begin{align*}
					R \Gamma_{\mathit{ar}, A}
			&	\cong
					\left[
							R \Gamma(X, \mathcal{A}) \tensor^{L} \Q / \Z
						\to
							\bigoplus_{x}
								H^{1}(k_{x}, \pi_{0}(\mathcal{A}_{x}))
					\right][-2]
			\\
			&	\cong
					\left[
							\bigoplus_{x}
								\Gamma(k_{x}, \pi_{0}(\mathcal{A}_{x}))
						\to
							R \Gamma(X, \mathcal{A}_{0}) \tensor^{L} \Q / \Z
					\right][-1].
		\end{align*}
\end{Prop}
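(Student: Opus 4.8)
The plan is to compare the three mapping cones by realizing all of them as cohomology-with-support complexes over $X$, tensored with $\Q/\Z$, and relating each to $R\Gamma_{\mathit{ar},A}$ via the local cohomology structure from Prop.\ \ref{prop: structure of local cohom of abelian varieties}. First I would fix an open $U\subset X$ over which $A$ has good reduction (so $\mathcal{A}|_U=\mathcal{A}_0|_U$ is an abelian scheme and $\mathcal{A}_{\tor}|_U$ is finite flat). For such $U$, over each closed point $x\notin U$ the triangle $R\alg{\Gamma}_x(\Hat{\Order}_x,\mathcal{A}_0)\to R\alg{\Gamma}(\Hat{\Order}_x,\mathcal{A})\to R\alg{\Gamma}(\Hat{K}_x,A)$ together with Prop.\ \ref{prop: structure of local cohom of abelian varieties} identifies the terms that enter the definition of $R\Gamma_{\mathit{ar},A}$; and over $x\in U$ the good reduction hypothesis makes the local contributions to the arithmetic cohomology vanish. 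So the plan is to unwind the Kato--Trihan mapping fiber using the excision/localization triangle of Prop.\ \ref{prop: excision} (after applying $R\Gamma(k_{\pro\et},\var)$) to pass between $R\Gamma(U,\var)$ and $R\Gamma(X,\var)$, and rewrite $R\Gamma_{\mathit{ar},A}$ entirely in terms of cohomology over all of $X$.

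The key computational inputs are the two truncation identities already displayed in the statement: $\tau_{\ge 1}R\Gamma(\Hat{\Order}_x,\mathcal{A})=H^1(k_x,\pi_0(\mathcal{A}_x))[-1]$ and $\tau_{\le 1}R\Gamma_x(\Hat{\Order}_x,\mathcal{A}_0)=\Gamma(k_x,\pi_0(\mathcal{A}_x))[-1]$, both consequences of Prop.\ \ref{prop: structure of local cohom of abelian varieties}, plus the fact that $R\Gamma(\Hat{\Order}_x,\mathcal{A})$ is concentrated in degree $0$ and $R\Gamma_x(\Hat{\Order}_x,\mathcal{A}_0)$ in degrees $1,2$. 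Next I would show that derived-tensoring with $\Q/\Z$ converts the "degree $0$ part'' contributions (the Mordell--Weil type groups and the $\Gamma(\Hat{\Order}_x,\mathcal{A})$) into shifts of the same torsion data that appears in the Kato--Trihan definition, and that the higher-degree discrepancies are exactly the component-group terms $\bigoplus_x H^1(k_x,\pi_0(\mathcal{A}_x))$, resp.\ $\bigoplus_x\Gamma(k_x,\pi_0(\mathcal{A}_x))$. Concretely: $R\Gamma(X,\mathcal{A})\tensor^L\Q/\Z$ has cohomology in degrees $-1,0,1$ (by Prop.\ \ref{prop: mod n duality for Neron models} applied after $R\invlim$, or directly since $\Gamma(X,\mathcal{A})$ is finitely generated and $H^1,H^2$ are torsion of cofinite type), and the cone against $\bigoplus_x H^1(k_x,\pi_0(\mathcal{A}_x))$ shifted by $-2$ matches the mapping fiber defining $R\Gamma_{\mathit{ar},A}$ degree by degree. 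The isomorphism between the first displayed cone and $R\Gamma_{\mathit{ar},A}$ thus comes from the short exact sequences $0\to\Gm\to\mathcal{G}_m\to\bigoplus_x i_{x*}\Z\to 0$-type devissage relating $\mathcal{A}$, $\mathcal{A}_0$ and $\bigoplus_x i_{x*}\pi_0(\mathcal{A}_x)$ (Prop.\ \ref{prop: cohomology of connected Neron and Neron}), tensored with $\Q/\Z$; the second displayed cone is then obtained from the first by the same devissage, switching $\mathcal{A}$ for $\mathcal{A}_0$ and $H^1$-component groups for $\Gamma$-component groups, which is exactly the long exact sequence of Prop.\ \ref{prop: cohomology of connected Neron and Neron} after $\tensor^L\Q/\Z$.

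I would carry this out in three steps: (i) rewrite the Kato--Trihan $R\Gamma_{\mathit{ar},A}$, using Prop.\ \ref{prop: excision} and the vanishing of the local terms over the good-reduction locus $U$, as a cone involving only $R\Gamma(X,\var)$, $R\Gamma(\Hat{\Order}_x,\var)$ and $R\Gamma(\Hat{K}_x,\var)$ over all $x$; (ii) apply the truncation identities and $\tensor^L\Q/\Z$ to collapse the completed-local-field terms, producing the first displayed mapping cone with the shift $[-2]$; (iii) apply the $\mathcal{A}_0$-versus-$\mathcal{A}$ devissage (Prop.\ \ref{prop: cohomology of connected Neron and Neron}) to pass to the second displayed cone with shift $[-1]$. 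The functoriality of the mapping-fiber/mapping-cone construction on the level of complexes, in the informal style Kato--Trihan use, makes these identifications canonical. The main obstacle I anticipate is bookkeeping of shifts and signs when moving between "mapping fiber'' and "mapping cone'' presentations and when commuting $\tensor^L\Q/\Z$ past the truncation functors $\tau_{\le 1}$, $\tau_{\ge 1}$ — in particular one must check that $\tau_{\ge 1}R\Gamma(\Hat{\Order}_x,\mathcal{A})\tensor^L\Q/\Z = H^1(k_x,\pi_0(\mathcal{A}_x))\tensor\Q/\Z[-1]$ really is a direct summand that splits off cleanly (it does, since $H^1(k_x,\pi_0(\mathcal{A}_x))$ is finite, hence its $\Q/\Z$-tensor is it again in degree $0$ up to a single shift), and that the independence of the chosen good-reduction locus $U$ holds, which follows because enlarging $U$ only removes finitely many $x$ at which all the relevant local terms already vanish after tensoring with $\Q/\Z$.
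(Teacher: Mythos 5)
Your plan matches the paper's proof in both structure and key ingredients: pass from $U$ to $X$ by the localization triangle, collapse the completed-local-field terms using the truncation identity $\tau_{\ge 1}R\Gamma(\Hat{\Order}_x,\mathcal{A}) = H^1(k_x,\pi_0(\mathcal{A}_x))[-1]$ together with $R\Gamma(\Hat{K}_x,A_{\tor})=R\Gamma(\Hat{K}_x,A)\tensor^L\Q/\Z[-1]$ (so the inner cone in the Kato--Trihan fiber is $H^1(\Hat{K}_x,A)\tensor^L\Q/\Z[-2]$), and then obtain the second display via the triangle of Prop.\ \ref{prop: cohomology of connected Neron and Neron} tensored with $\Q/\Z$. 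Two slips you should fix before writing this up: $R\alg{\Gamma}_x(\Hat{\Order}_x,\mathcal{A}_0)\to R\alg{\Gamma}(\Hat{\Order}_x,\mathcal{A})\to R\alg{\Gamma}(\Hat{K}_x,A)$ is not a distinguished triangle (use $\mathcal{A}$ or $\mathcal{A}_0$ consistently in the first two slots), and your middle paragraph attributes the \emph{first} isomorphism to the $\mathcal{A}_0$-versus-$\mathcal{A}$ devissage, contradicting your own steps (i)--(iii), which correctly reserve the devissage for the second isomorphism only.
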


\begin{proof}
	We first show the first isomorphism.
	The mapping cone of
		\[
				\Gamma(\Hat{K}_{x}, A) \tensor^{L} \Q / \Z[-1]
			\to
				R \Gamma(\Hat{K}_{x}, A_{\tor})
		\]
	is $H^{1}(K_{x}, A) \tensor^{L} \Q / \Z[-2]$.
	Hence
		\[
				R \Gamma_{\mathit{ar}, A}
			=
				\left[
						R \Gamma(U, \mathcal{A}) \tensor^{L} \Q / \Z
					\to
						\bigoplus_{x \not\in U}
						H^{1}(K_{x}, A) \tensor^{L} \Q / \Z[-1]
				\right][-2]
		\]
	The morphism
		\[
				R \Gamma(X, \mathcal{A})
			\to
				R \Gamma(U, \mathcal{A})
		\]
	and the morphism
		\[
				\bigoplus_{x \not\in U}
					H^{1}(\Hat{\Order}_{x}, \mathcal{A})[-1]
			\to
				\bigoplus_{x \not\in U}
					H^{1}(\Hat{K}_{x}, A)[-1]
		\]
	have the same mapping cone
	$\bigoplus_{x \not\in U} R \Gamma_{x}(\Hat{\Order}_{x}, \mathcal{A})[1]$.
	This gives the desired result.
	
	For the second isomorphism, use the distinguished triangle
		\[
				R \Gamma(X, \mathcal{A}_{0})
			\to
				R \Gamma(X, \mathcal{A})
			\to
				\bigoplus_{x}
					R \Gamma(k_{x}, \pi_{0}(\mathcal{A}_{x})).
		\]
\end{proof}

\begin{Prop} \label{prop: duality for arithmetic cohom}
	Consider the diagram
		\[
			\begin{CD}
					\bigoplus_{x}
						\Gamma(k_{x}, \pi_{0}(\mathcal{A}_{x}^{\vee}))[-1]
				@>>>
					R \invlim_{n}(
						R \Gamma(X, \mathcal{A}_{0}^{\vee}) \tensor^{L} \Z / n \Z
					)
				\\
				@VV \wr V
				@V \wr VV
				\\
					\bigoplus_{x}
						H^{1}(k_{x}, \pi_{0}(\mathcal{A}_{x})^{\PDual}[-1]
				@>>>
					(R \Gamma(X, \mathcal{A}) \tensor^{L} \Q / \Z)^{\PDual}[-1],
			\end{CD}
		\]
	where the horizontal morphisms come from the morphisms in the previous proposition
	(with $R \invlim_{n}(\var \tensor^{L} \Z / n \Z)$ applied),
	the left vertical one from Grothendieck's pairing
	and the right vertical one from
	Prop.\ \ref{prop: Q mod Z duality for Neron over finite base}.
	This diagram is commutative and canonically induces an isomorphism
		\[
				R \invlim_{n} \bigl(
					R \Gamma_{\mathit{ar}, A^{\vee}} \tensor^{L} \Z / n \Z
				\bigr)
			\isomto
				(R \Gamma_{\mathit{ar}, A})^{\PDual}[-2]
		\]
	on the mapping cones.
\end{Prop}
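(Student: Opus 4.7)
The strategy is to exhibit each of the two sides as the mapping cone of one of the horizontal arrows in the diagram, so that commutativity of the diagram directly produces the desired isomorphism on cones.

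First I would use the second description of arithmetic cohomology in Prop.~\ref{prop: arithmetic cohomology using Neron models} for $A^{\vee}$, namely
\[
	R\Gamma_{\mathit{ar},A^{\vee}}
	\cong
	\Bigl[
		\bigoplus_{x}
			\Gamma(k_{x}, \pi_{0}(\mathcal{A}_{x}^{\vee}))
		\to
		R\Gamma(X, \mathcal{A}_{0}^{\vee}) \tensor^{L} \Q/\Z
	\Bigr][-1],
\]
and then apply the triangulated functor $R\invlim_{n}(\var \tensor^{L} \Z/n\Z)$. On the local summands, $R\Gamma(k_{x}, \pi_{0}(\mathcal{A}_{x}^{\vee}))$ has finite cohomology in two degrees, so Prop.~\ref{prop: cohomologies of derived completion} identifies its derived $n$-adic completion with itself. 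On the global summand, the distinguished triangle $D \to D \tensor \Q \to D \tensor^{L} \Q/\Z$ with $D = R\Gamma(X, \mathcal{A}_{0}^{\vee})$, combined with the vanishing of derived $n$-adic completion on the uniquely divisible $D \tensor \Q$, gives a canonical isomorphism
\[
	R\invlim_{n}\bigl((D \tensor^{L} \Q/\Z) \tensor^{L} \Z/n\Z\bigr)
	\isomto
	R\invlim_{n}(D \tensor^{L} \Z/n\Z)[1].
\]
Together these identify $R\invlim_{n}(R\Gamma_{\mathit{ar},A^{\vee}} \tensor^{L} \Z/n\Z)$ with the mapping cone of the top row of the diagram.

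Next, using the first description of Prop.~\ref{prop: arithmetic cohomology using Neron models} for $A$ and the formula $[A \to B]^{\PDual} = [B^{\PDual} \to A^{\PDual}][-1]$ for Pontryagin duals of cones, I would realize $(R\Gamma_{\mathit{ar},A})^{\PDual}[-2]$ as the mapping cone of the bottom row. The left vertical arrow of the square is an isomorphism because for finite $k_{x}$, local Tate duality combined with Grothendieck's pairing identifies $\Gamma(k_{x}, \pi_{0}(\mathcal{A}_{x}^{\vee}))$ with $H^{1}(k_{x}, \pi_{0}(\mathcal{A}_{x}))^{\PDual}$; the right vertical arrow is the isomorphism of Prop.~\ref{prop: Q mod Z duality for Neron over finite base}.

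The main obstacle is the commutativity of the square. This amounts to the assertion that two pairings coincide: the global Poincar\'e pairing \eqref{eq: duality pairing for Neron over X} precomposed with the boundary map from $\bigoplus_{x} \pi_{0}(\mathcal{A}_{x}^{\vee})$, and the sum of the local Grothendieck pairings at each closed point. This compatibility is precisely the $k$-sheaf-level content of Prop.~\ref{prop: component group compatibitily morphism}. I would transfer it to absolute cohomology by applying $R\Gamma(k_{\pro\et}, \var)$ and using Prop.~\ref{prop: cohom of Serre dual is Pont dual of cohom} to convert Serre duals over $\Spec k^{\ind\rat}_{\pro\et}$ into Pontryagin duals over $\Spec k_{\pro\zar}$. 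Once commutativity is established, a morphism of distinguished triangles with isomorphisms in the two outer columns induces an isomorphism in the mapping-cone column, and this isomorphism is canonical because both mapping-cone identifications were produced from the intrinsic mapping-cone descriptions of $R\Gamma_{\mathit{ar}}$ in Prop.~\ref{prop: arithmetic cohomology using Neron models} at the level of complexes, with no ambiguity in the choice of cone representative.
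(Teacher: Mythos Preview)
Your overall plan is sound, and the identifications of the two cones with the two sides of the claimed isomorphism are correct. The genuine gap is in the final step: you establish commutativity of the square only in the derived category (by applying $R\Gamma(k_{\pro\et},\var)$ to Prop.~\ref{prop: component group compatibitily morphism}), and then invoke the fill-in axiom for triangulated categories to produce a morphism on cones. That fill-in is in general not unique. Your justification---that each cone is itself defined as a mapping cone at the level of complexes---does not resolve the ambiguity: the problem is not the representation of the individual cones but the non-canonicity of the induced arrow between them when the commuting square is only given up to homotopy. So as written you obtain \emph{an} isomorphism, not \emph{the canonical} isomorphism the proposition asserts.

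The paper's proof avoids this by constructing the entire commutative square at the level of complexes before passing to cones. Concretely, it starts with the morphism $\bigoplus_{x}\tau_{\le 1}R\Gamma_{x}(\Hat{\Order}_{x},\mathcal{A}_{0}^{\vee})\to R\Gamma(X,\mathcal{A}_{0}^{\vee})$, maps it via the Poincar\'e biextension to the analogous morphism with $R\sheafhom(\mathcal{A},\Gm)[1]$ in place of $\mathcal{A}_{0}^{\vee}$, then uses the cup product/functoriality morphisms \eqref{eq: derived functoriality morphism over integers, variant}, \eqref{eq: global cup product} and the local-to-global trace $R\Gamma_{x}(\Hat{\Order}_{x},\Gm)\to R\Gamma(X,\Gm)$ to reach the Pontryagin-dual side. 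The key line is ``These are functorial on the level of complexes'': this yields an honest morphism of complexes between the two cones, and only then does one apply $R\invlim_{n}(\var\tensor^{L}\Z/n\Z)$. A secondary point: the compatibility you need is not quite Prop.~\ref{prop: component group compatibitily morphism} directly (which compares the triangles $\mathcal{A}_{0}^{\vee}\to\mathcal{A}^{\vee}\to\Phi_{A^{\vee},X}$ on $X$), but rather the compatibility of the local support-cohomology inclusion for $\mathcal{A}_{0}^{\vee}$ with the Pontryagin dual of the restriction $R\Gamma(X,\mathcal{A})\to\tau_{\ge 1}R\Gamma(\Hat{\Order}_{x},\mathcal{A})$; this is what the paper's complex-level construction delivers.
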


\begin{proof}
	Consider the morphism
		\[
				\bigoplus_{x}
					\tau_{\le 1}
					R \Gamma_{x}(\Hat{\Order}_{x}, \mathcal{A}_{0}^{\vee})
			\to
				R \Gamma(X, \mathcal{A}_{0}^{\vee}).
		\]
	We have a natural morphism from this morphism to the morphism
		\[
				\bigoplus_{x}
					\tau_{\le 1}
					R \Gamma_{x} \bigl(
						\Hat{\Order}_{x},
						R \sheafhom_{\Hat{\Order}_{x}}(\mathcal{A}^{\vee}, \Gm)
					\bigr)[1]
			\to
				R \Gamma \bigl(
					X, R \sheafhom_{\Hat{\Order}_{x}}(\mathcal{A}^{\vee}, \Gm)
				\bigr)[1]
		\]
	(i.e.\ have a commutative square whose upper and lower sides are these morphisms).
	Using the functoriality/cup product morphisms similar to
	\eqref{eq: derived functoriality morphism over integers, variant}
	and \eqref{eq: global cup product}
	and the morphism $R \Gamma_{x}(\Hat{\Order}_{x}, \Gm) \to R \Gamma(X, \Gm)$
	we have a morphism from this morphism to the morphism
		\begin{align*}
			&
					\bigoplus_{x}
						R \sheafhom_{k_{\pro\zar}} \bigl(
							R \Gamma(\Hat{\Order}_{x}, \mathcal{A}),
							\tau_{\ge 3}
							R \Gamma(X, \Gm)
						\bigr)[1]
			\\
			&	\to
					R \sheafhom_{k_{\pro\zar}} \bigl(
						R \Gamma(X, \mathcal{A}),
						\tau_{\ge 3}
						R \Gamma(X, \Gm)
					\bigr)[1].
		\end{align*}
	We have $\tau_{\ge 3} R \Gamma(X, \Gm) = \Q / \Z[-3]$.
	Hence the mapping cone of this morphism can be written as
		\[
			R \sheafhom_{k_{\pro\zar}} \Bigl(
				\bigl[
						R \Gamma(X, \mathcal{A})
					\to
						\prod_{x}
							\tau_{\ge 1}
							R \Gamma(\Hat{\Order}_{x}, \mathcal{A})
				\bigr],
				\Q / \Z
			\Bigr)[-1].
		\]
	These are functorial on the level of complexes.
	Hence we have a morphism
		\begin{align*}
			&
					\Bigl[
							\bigoplus_{x}
								\tau_{\le 1}
								R \Gamma_{x}(\Hat{\Order}_{x}, \mathcal{A}_{0}^{\vee})
						\to
							R \Gamma(X, \mathcal{A}_{0}^{\vee})
					\Bigr]
			\\
			&	\to
					R \sheafhom_{k_{\pro\zar}} \Bigl(
						\bigl[
								R \Gamma(X, \mathcal{A})
							\to
								\prod_{x}
									\tau_{\ge 1}
									R \Gamma(\Hat{\Order}_{x}, \mathcal{A})
						\bigr],
						\Q / \Z
					\Bigr)[-1].
		\end{align*}
	Applying $R \invlim_{n}(\var \tensor^{L} \Z / n \Z)$,
	we get the result.
\end{proof}

This form of duality on $R \Gamma_{\mathit{ar}, A}$ is also given in \cite[Cor.\ 4.11, Rmk.\ 4.13]{TV17}.

For a torsion group $C$ of cofinite type,
we call $(C_{/ \divis})^{\PDual}$ the essentially torsion part of the profinite group $C^{\PDual}$,
which is the product over all primes $l$ of the torsion part of the $l$-adic component of $C^{\PDual}$.
For example, $\prod_{l} \Z / l \Z$ is the essentially torsion part of itself.
By \cite[Prop.\ 2.4]{KT03}, we know that
$H^{1}_{\mathit{ar}, A}$ is isomorphic to the Selmer group of $A$.
Hence the isomorphism in the above proposition in degree $1$ on the essentially torsion parts gives a perfect pairing
	\[
			\Sha(A^{\vee} / K)_{/ \divis}^{\wedge} \times \Sha(A / K)_{/ \divis}
		\to
			\Q / \Z
	\]
between a profinite group and a torsion group.
For any prime $l$, this pairing on $l$-primary parts is a pairing between finite $l$-groups.

\begin{Prop} \label{prop: agrees with the Cassels Tate}
	This pairing agrees with the Cassels-Tate pairing.
\end{Prop}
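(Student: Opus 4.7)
The plan is to reduce the comparison to a level-by-level match at each integer $n \ge 1$, using the fact that both pairings admit an explicit description at the $n$-torsion level via the Poincar\'e biextension. The point is that, once the identifications are unwound, the Cassels-Tate pairing is by its very definition a cup-product pairing against the Weil pairing coming from Poincar\'e, composed with a trace; and the pairing coming from Prop.\ \ref{prop: duality for arithmetic cohom} has the same structure by construction.

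First I would unwind the derived-limit construction. The pairing in Prop.\ \ref{prop: duality for arithmetic cohom} originates from the cup-product pairing
\[
		R \Gamma(X, \mathcal{A}_{0}^{\vee})
	\tensor^{L}
		R \Gamma(X, \mathcal{A})
	\to
		R \Gamma(X, \Gm)[1]
	\to
		\Q / \Z[-2]
\]
induced by the Poincar\'e biextension $\mathcal{A}_{0}^{\vee} \tensor^{L} \mathcal{A} \to \Gm[1]$ together with $H^{3}(X, \Gm) \cong \Q / \Z$, after applying $R \invlim_{n}(\var \tensor^{L} \Z / n \Z)$. Using Prop.\ \ref{prop: cohomologies of derived completion} for $\Spec k_{\pro\zar}$, as in the proof of Prop.\ \ref{prop: duality for H two in finite base field case}, the induced pairing in degree one on the essentially torsion parts is assembled from the compatible system of bilinear pairings
\[
		H^{1}(X, \mathcal{A}_{0}^{\vee})[n]
	\times
		H^{1}(X, \mathcal{A})[n]
	\to
		H^{3}(X, \mu_{n})
	\cong
		\Z / n \Z
\]
that arise from the $n$-torsion level $\mathcal{A}_{0}^{\vee}[n] \tensor^{L} \mathcal{A}[n] \to \mu_{n}[1]$ of the Poincar\'e biextension (i.e.\ the derived Weil pairing) together with the trace map \eqref{eq: global trace morphism}.

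Next I would bring in the classical side. The group $\Sha(A / K)$ sits inside $H^{1}(X, \mathcal{A})$ via Prop.\ \ref{prop: first cohom is Tate Shafarevich} (plus the exact sequence recalled before that proposition), and an element of $\Sha(A / K)_{/ \divis}$ of order dividing $n$ lifts, after replacing $n$ by a suitable multiple, to a class in $H^{1}(X, \mathcal{A}[n])$ via the Kummer sequence $0 \to \mathcal{A}[n] \to \mathcal{A} \to \mathcal{A} \to 0$; similarly for $A^{\vee}$. The classical Cassels-Tate pairing on $\Sha(A^{\vee} / K)_{/ \divis}^{\wedge} \times \Sha(A / K)_{/ \divis}$ admits precisely the description of cupping such lifts along the Weil pairing $\mathcal{A}_{0}^{\vee}[n] \tensor^{L} \mathcal{A}[n] \to \mu_{n}[1]$ and composing with the trace $H^{3}(X, \mu_{n}) \to \Z / n \Z$; this is the formulation used, for instance, in \cite[III, Thm.\ 9.4]{Mil06}, and it is exactly the $n$-torsion-level shadow of the derived pairing above.

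The main obstacle lies in the careful bookkeeping that identifies the two constructions, rather than in any substantive new input. Concretely, one must verify (i) that the $n$-torsion reduction of the Poincar\'e biextension morphism $\mathcal{A}_{0}^{\vee} \tensor^{L} \mathcal{A} \to \Gm[1]$, as a morphism in $D(X_{\fppf})$, produces the standard Weil pairing on $\mathcal{A}_{0}^{\vee}[n] \tensor^{L} \mathcal{A}[n]$; and (ii) that the trace map in \eqref{eq: global trace morphism}, induced by the degree map on $\Pic_{X}$, agrees via the Kummer sequence with the trace map used in the classical definition of Cassels-Tate, namely the isomorphism $H^{3}(X, \mu_{n}) \isomto \Z / n \Z$ arising from $\Br(X) = 0$ for the proper smooth curve $X$ over the finite field $k$. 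Once (i) and (ii) are in place, the commutativity provided by Prop.\ \ref{prop: compatibility of local global pairings after cohom approx} and compatibility of cup products with derived completion give the agreement of the pairings at each $n$, and hence, by passing to the limit on the essentially torsion parts, the identification with the Cassels-Tate pairing as desired.
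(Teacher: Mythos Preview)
Your approach differs from the paper's and has a technical gap. The paper's proof is quite short: it notes that the isomorphism in Prop.\ \ref{prop: duality for arithmetic cohom} yields a Pontryagin duality between the exact sequences
\[
\bigoplus_{x} \Gamma(k_{x}, \pi_{0}(\mathcal{A}_{x}^{\vee})) \to H^{1}(X, \mathcal{A}_{0}^{\vee})_{/ \divis} \to (H^{1}_{\mathit{ar}, A^{\vee}})_{/ \divis} \to 0
\]
and
\[
0 \to (H^{1}_{\mathit{ar}, A})_{/ \divis}^{\wedge} \to H^{1}(X, \mathcal{A})_{/ \divis}^{\wedge} \to \bigoplus_{x} H^{1}(k_{x}, \pi_{0}(\mathcal{A}_{x})),
\]
and then invokes the proof of \cite[III, Cor.\ 9.5]{Mil06}, which already identifies the induced pairing on the $\Sha$-pieces as Cassels--Tate. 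This is efficient because Milne's argument is precisely the comparison of the cup-product-against-Poincar\'e pairing on $H^{1}(X,\mathcal{A})$ with Cassels--Tate, so nothing new needs to be checked once one sees that the present pairing sits in that framework.

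Your level-by-level plan has the right spirit but a concrete obstruction: the Kummer sequence $0 \to \mathcal{A}[n] \to \mathcal{A} \overset{n}{\to} \mathcal{A} \to 0$ is \emph{not} exact over $X$ in general, since multiplication by $n$ on the N\'eron model fails to be surjective at places of bad reduction (the cokernel is governed by the component groups $\pi_{0}(\mathcal{A}_{x})$). Consequently $\mathcal{A}[n]$ is not finite flat over $X$ and the global lift to $H^{1}(X, \mathcal{A}[n])$ that you need is not available as stated. One can work over a good-reduction open $U \subset X$ and then patch using the local contributions, but once the component-group terms at $x \notin U$ are correctly bookkept (exactly the exact sequences displayed above), you are reproducing the proof of \cite[III, Cor.\ 9.5]{Mil06}; the paper's route of quoting Milne directly is therefore cleaner. (Two minor points: the target of the Weil pairing is $\mu_{n}$, not $\mu_{n}[1]$; and the relevant reference for the cup-product description of Cassels--Tate is \cite[III, Cor.\ 9.5]{Mil06} rather than Thm.\ 9.4.)
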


\begin{proof}
	The isomorphism in the proposition gives a Pontryagin duality between
	the exact sequences
		\[
				\bigoplus_{x}
					\Gamma(k_{x}, \pi_{0}(\mathcal{A}_{x}^{\vee}))
			\to
				H^{1}(X, \mathcal{A}_{0}^{\vee})_{/ \divis}
			\to
				(H^{1}_{\mathit{ar}, A^{\vee}})_{/ \divis}
			\to
				0
		\]
	and
		\[
				0
			\to
				(H^{1}_{\mathit{ar}, A})_{/ \divis}^{\wedge}
			\to
				H^{1}(X, \mathcal{A})_{/ \divis}^{\wedge}
			\to
				\bigoplus_{x}
					H^{1}(k_{x}, \pi_{0}(\mathcal{A}_{x})).
		\]
	Hence the proof of \cite[III, Cor.\ 9.5]{Mil06} shows that
	the paring induced on the third term of the first exact sequence
	and the first term of the second exact sequence is
	the Cassels-Tate pairing.
\end{proof}

We briefly recall the triangulated functor
	\[
			R \Gamma(k_{W}, \var)
		\colon
			D(k^{\ind\rat}_{\pro\et})
		\to
			D(k_{\pro\zar})
	\]
defined in \cite[\S 10]{Suz14}.
Let $F$ be the $q$-th power Frobenius morphism for any $k$-algebra,
which induces an action on any object of $\Ab(k^{\ind\rat}_{\pro\et})$.
The functor $k' \mapsto k' \tensor_{k} \closure{k}$
defines a premorphism of sites
$f \colon \Spec k^{\ind\rat}_{\pro\et} \to \Spec k_{\pro\zar}$.
Then
	\[
			R \Gamma(k_{W}, C)
		=
			f_{\ast} [C \overset{F - 1}{\to} C][-1]
	\]
for $C \in D(k^{\ind\rat}_{\pro\et})$.

We define
	\[
			R \Gamma(X_{W}, \var)
		=
			R \Gamma(k_{W}, R \alg{\Gamma}(X, \var))
		\colon
			D(X_{\fppf})
		\to
			D(k_{\pro\zar}).
	\]
We denote $(\var)^{\LDual} = R \sheafhom_{k_{\pro\zar}}(\var, \Z)$.

\begin{Prop}
	We have
		\begin{gather*}
					R \Gamma \bigl(
						k_{W},
						R \alg{\Gamma}(X, \mathcal{A}_{0}^{\vee})^{\SDual \SDual}
					\bigr)
				=
					R \Gamma(X_{W}, \mathcal{A}_{0}^{\vee}),
			\\
					R \Gamma \bigl(
						k_{W},
						R \alg{\Gamma}(X, \mathcal{A})^{\SDual}
					\bigr)
				=
					R \Gamma(X_{W}, \mathcal{A})^{\LDual}[-1],
			\\
					R \Gamma(k_{W}, V \alg{H}^{1}(X, \mathcal{A}))
				=
					R \Gamma(k_{\pro\et}, V \alg{H}^{1}(X, \mathcal{A})).
		\end{gather*}
\end{Prop}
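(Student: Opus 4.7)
The plan is to treat the three claimed equalities separately. All three rest on the definition $R\Gamma(k_W, \var) = f_{\ast}[\var \xrightarrow{F-1} \var][-1]$, together with the structural information on the cohomology objects of $R\alg{\Gamma}(X, \mathcal{A})$ and $R\alg{\Gamma}(X, \mathcal{A}_0^{\vee})$ established in Thm.\ \ref{thm: duality for Neron}.

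For the first equation, I would first produce a distinguished triangle
\[
V(H^0 D)_0 \to D \to D^{\SDual \SDual}
\]
where $D = R\alg{\Gamma}(X, \mathcal{A}_0^{\vee})$. This is obtained by combining, via a $3 \times 3$ / octahedral argument, the triangle $T(H^0 D)_0[1] \to D^{\SDual \SDual} \to [(H^0 D)_0 \to D]$ extracted from the proof of Prop.\ \ref{prop: each term of duality for Neron}, with the standard triangle $V(H^0 D)_0 \to (H^0 D)_0 \to T(H^0 D)_0[1]$ coming from $0 \to TB \to VB \to B \to 0$ for $B = (H^0 D)_0$. Now $(H^0 D)_0$, the maximal connected quasi-algebraic subgroup of $\alg{\Gamma}(X, \mathcal{A}_0^{\vee})$, is isogenous to the $K/k$-trace $\Tr_{K/k} A^{\vee}$ over $k = \F_q$ by Prop.\ \ref{prop: structure of cohomology of Neron models}. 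By the Weil conjectures, the Frobenius eigenvalues on $V(H^0 D)_0$ have absolute value $\sqrt{q} \ne 1$, so $F - 1$ acts invertibly and $R\Gamma(k_W, V(H^0 D)_0) = 0$. The first equation follows.

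For the third equation, I would compute both sides as the mapping fibre of $F-1$ on $V\alg{H}^1(X, \mathcal{A})(\closure{k})$: for the Weil-\'etale side this is by definition, while on the pro-\'etale side, since $V\alg{H}^1(X, \mathcal{A}) = T\alg{H}^1(X, \mathcal{A})_{\divis} \tensor \Q$ with $T\alg{H}^1(X, \mathcal{A})_{\divis} \in \Pro \FEt / k$, the pro-\'etale cohomology reduces to continuous $\hat{\Z}$-Galois cohomology, which is again computed by the fibre of $F-1$. Rationalization commutes with this formation, giving agreement.

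For the second equation I would deduce it from the previous two together with the main duality triangle of Thm.\ \ref{thm: duality for Neron}. Applying $R\Gamma(k_W, \var)$ to the triangle $R\alg{\Gamma}(X, \mathcal{A}_0^{\vee})^{\SDual \SDual} \to R\alg{\Gamma}(X, \mathcal{A})^{\SDual} \to V\alg{H}^1(X, \mathcal{A}_0^{\vee})_{\divis}$ identifies its outer terms with $R\Gamma(X_W, \mathcal{A}_0^{\vee})$ and $R\Gamma(k_{\pro\et}, V\alg{H}^1(X, \mathcal{A}_0^{\vee})_{\divis})$ respectively. Independently, the cup product $R\alg{\Gamma}(X, \mathcal{A})^{\SDual} \tensor^{L} R\alg{\Gamma}(X, \mathcal{A}) \to \Z$ together with the Weil-\'etale trace $R\Gamma(k_W, \Z) = [\Z \xrightarrow{0} \Z][-1] \to \Z[-1]$ produces a natural morphism $R\Gamma(k_W, R\alg{\Gamma}(X, \mathcal{A})^{\SDual}) \to R\Gamma(X_W, \mathcal{A})^{\LDual}[-1]$. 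The main obstacle will be showing that this morphism is an isomorphism and is compatible with the two identifications of the outer terms; I expect this to follow from a general Weil-\'etale Serre duality on $\Spec \F_q$ of the form $R\Gamma(k_W, M^{\SDual}) \cong R\Gamma(k_W, M)^{\LDual}[-1]$, verified by d\'evissage across the bounded cohomology of $R\alg{\Gamma}(X, \mathcal{A})$ listed in the Main Theorem (abelian variety, finitely generated \'etale, connected unipotent quasi-algebraic, and torsion divisible \'etale pieces), each case being settled by the Serre-duality calculations of \cite[\S 2.4]{Suz14} combined with the invertibility of $F-1$ on rational Tate modules used above.
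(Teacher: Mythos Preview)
Your approach to the first and third equations is essentially the paper's. One minor difference on the first: the paper identifies the mapping fiber of $D \to D^{\SDual\SDual}$ as $\invlim_n \alg{\Gamma}(X, \mathcal{A}_0^{\vee})_0$ (not $V(H^0 D)_0$; these differ as sheaves, though both are uniquely divisible so the distinction is immaterial here) and kills it with Lang's theorem rather than the Weil conjectures --- since $F-1$ on the abelian variety $(H^0 D)_0$ is surjective with finite kernel, the induced map on $\invlim_n (H^0 D)_0$ is bijective. This is more elementary than invoking Weil eigenvalue bounds.

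For the second equation your route diverges from the paper's. The paper's proof is a one-line citation: \cite[Prop.\ (10.4)]{Suz14} gives $R\Gamma(k_W, M^{\SDual}) = R\Gamma(k_W, M)^{\LDual}[-1]$ directly, applied to $M = R\alg{\Gamma}(X, \mathcal{A})$. This is precisely the ``general Weil-\'etale Serre duality on $\Spec \F_q$'' you correctly guess at the end of your proposal, so your d\'evissage fallback is on target --- but it should be the proof, not a last resort. Your primary plan (deduce the second equation from the other two via the main duality triangle) is circuitous: the compatibility you flag as the ``main obstacle'' is genuine and not automatic, and the direct duality statement sidesteps it entirely. Moreover, the paper uses the three equations to \emph{produce} the Weil-\'etale form of the duality triangle in the next proposition, so deriving one of them from that triangle would reverse the logical flow.
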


\begin{proof}
	For the first isomorphism,
	note that the mapping fiber of
		\[
				R \alg{\Gamma}(X, \mathcal{A}_{0}^{\vee})
			\to
				R \alg{\Gamma}(X, \mathcal{A}_{0}^{\vee})^{\SDual \SDual}
		\]
	is concentrated in degree zero
	whose cohomology is $\invlim_{n} \alg{\Gamma}(X, \mathcal{A}_{0}^{\vee})_{0}$.
	Since the endomorphism $F - 1$ on the abelian variety
	$\alg{\Gamma}(X, \mathcal{A}_{0}^{\vee})_{0}$
	is surjective with finite kernel by Lang's theorem,
	we know that $\invlim_{n} \alg{\Gamma}(X, \mathcal{A}_{0}^{\vee})_{0}$
	is killed by applying $R \Gamma(k_{W}, \var)$.
	This gives the desired result.
	
	Th second isomorphism follows from
		\[
				R \Gamma \bigl(
					k_{W},
					R \alg{\Gamma}(X, \mathcal{A})^{\SDual}
				\bigr)
			=
				R \Gamma \bigl(
					k_{W},
					R \alg{\Gamma}(X, \mathcal{A})
				\bigr)^{\LDual}[-1],
		\]
	which is \cite[Prop.\ (10.4)]{Suz14}.
	For the third, we already saw in the proof of
	Prop.\ \ref{prop: finiteness of Sha} that
	$R \Gamma(k_{\pro\et}, V \alg{H}^{1}(X, \mathcal{A}))$
	is the mapping fiber of $F - 1$.
\end{proof}

Here is a Weil-\'etale analogue of
Prop.\ \ref{prop: Q mod Z duality for Neron over finite base}.

\begin{Prop}
	Applying $R \Gamma(k_{W}, \var)$ to the distinguished triangle in
	Theorem \ref{thm: duality for Neron},
	we have a canonical distinguished triangle
		\[
				R \Gamma(X_{W}, \mathcal{A}_{0}^{\vee})
			\to
				R \Gamma(X_{W}, \mathcal{A})^{\LDual}[-1]
			\to
				R \Gamma(k_{W}, V \alg{H}^{1}(X, \mathcal{A}_{0}^{\vee})).
		\]
	The third term is zero if and only if $\Sha(A / K)$ is finite.
\end{Prop}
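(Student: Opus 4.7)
The plan is to apply the triangulated functor $R\Gamma(k_W, \var)$ to the canonical distinguished triangle
\[
R\alg{\Gamma}(X, \mathcal{A}_0^\vee)^{\SDual\SDual} \to R\alg{\Gamma}(X, \mathcal{A})^\SDual \to V\alg{H}^1(X, \mathcal{A}_0^\vee)_\divis
\]
of Theorem \ref{thm: duality for Neron}\eqref{item: main thm: distinguished triangle}. Since $R\Gamma(k_W, \var)$ is triangulated, this produces a canonical distinguished triangle in $D(k_{\pro\zar})$, and the three isomorphisms established in the immediately preceding proposition identify its terms as $R\Gamma(X_W, \mathcal{A}_0^\vee)$, $R\Gamma(X_W, \mathcal{A})^\LDual[-1]$, and $R\Gamma(k_{\pro\et}, V\alg{H}^1(X, \mathcal{A}_0^\vee)_\divis)$ respectively. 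This gives the claimed triangle, and the canonicity follows from that of each of the inputs.

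For the second assertion, I will reduce to Prop.\ \ref{prop: finiteness of Sha} applied to the dual abelian variety $A^\vee$. By Prop.\ \ref{prop: cohomology of connected Neron and Neron} applied with $A^\vee$ in place of $A$, the sheaves $\alg{H}^1(X, \mathcal{A}_0^\vee)$ and $\alg{H}^1(X, \mathcal{A}^\vee)$ differ only by the quotient coming from the finite \'etale group $\bigoplus_x \Res_{k_x/k} \pi_0(\mathcal{A}_x^\vee)$, so their maximal divisible subsheaves coincide and hence $V\alg{H}^1(X, \mathcal{A}_0^\vee)_\divis = V\alg{H}^1(X, \mathcal{A}^\vee)_\divis$. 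Prop.\ \ref{prop: finiteness of Sha} applied to $A^\vee$ then states that the vanishing of $R\Gamma(k_{\pro\et}, V\alg{H}^1(X, \mathcal{A}_0^\vee)_\divis)$ is equivalent to the finiteness of $\Sha(A^\vee/K)$, which in turn is equivalent to the finiteness of $\Sha(A/K)$ by \cite[I, Rmk.\ 6.14 (c)]{Mil06}.

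No step in this argument poses any real obstacle; the entire statement is a formal consequence of the term-by-term identifications in the preceding proposition combined with Prop.\ \ref{prop: finiteness of Sha}. The only point worth noting is the small switch between $\mathcal{A}_0^\vee$ and $\mathcal{A}^\vee$, which is harmless since the difference on $H^1$ is finite \'etale and so disappears after passing to the divisible part of the rational Tate module.
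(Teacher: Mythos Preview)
Your proof is correct and follows the same approach as the paper, which simply cites the preceding proposition together with Prop.\ \ref{prop: finiteness of Sha}. You have spelled out the one small point the paper leaves implicit: Prop.\ \ref{prop: finiteness of Sha} is stated for $\mathcal{A}$, so to apply it to $V\alg{H}^{1}(X,\mathcal{A}_{0}^{\vee})$ one either replaces $A$ by $A^{\vee}$ and uses the identification $V\alg{H}^{1}(X,\mathcal{A}_{0}^{\vee})_{\divis} = V\alg{H}^{1}(X,\mathcal{A}^{\vee})_{\divis}$ coming from Prop.\ \ref{prop: cohomology of connected Neron and Neron}, together with the equivalence $\Sha(A/K)$ finite $\Leftrightarrow$ $\Sha(A^{\vee}/K)$ finite, exactly as you do. One cosmetic remark: the third term in the stated triangle is literally $R\Gamma(k_{W}, V\alg{H}^{1}(X,\mathcal{A}_{0}^{\vee}))$, so no identification is needed to obtain it; the third isomorphism of the preceding proposition is only used to pass to the $k_{\pro\et}$ form when invoking Prop.\ \ref{prop: finiteness of Sha}.
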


\begin{proof}
	This follows from the previous proposition
	and Prop.\ \ref{prop: finiteness of Sha}.
\end{proof}

If $\Sha(A / K)$ is finite,
then the cohomology objects of
$R \Gamma(X_{W}, \mathcal{A})$ and $R \Gamma(X_{W}, \mathcal{A}_{0}^{\vee})$
are finitely generated abelian groups,
and the above proposition gives a duality between these objects
via $R \Hom(\var, \Z)$.

We can also define a Weil-\'etale version $R \Gamma_{\mathit{ar}, A, W}$ of $R \Gamma_{\mathit{ar}, A}$ by
		\begin{align*}
					R \Gamma_{\mathit{ar}, A, W}
			&	=
					\left[
							R \Gamma(X_{W}, \mathcal{A})
						\to
							\bigoplus_{x}
								H^{1}(k_{x}, \pi_{0}(\mathcal{A}_{x}))[-1]
					\right][-1]
			\\
			&	\cong
					\left[
							\bigoplus_{x}
								\Gamma(k_{x}, \pi_{0}(\mathcal{A}_{x}))[-1]
						\to
							R \Gamma(X_{W}, \mathcal{A}_{0})
					\right]
		\end{align*}
and prove the existence of a canonical distinguished triangle
		\[
				R \Gamma_{\mathit{ar}, A^{\vee}, W}
			\to
				R \Gamma_{\mathit{ar}, A, W}^{\LDual}[-1]
			\to
				R \Gamma(k_{W}, V \alg{H}^{1}(X, \mathcal{A}_{0}^{\vee})).
		\]

\begin{Rmk} \label{rmk: duality for finite flat over open curve over finite field}
	We can also apply $R \Gamma(k_{\pro\et}, \var)$ to
	Thm.\ \ref{thm: duality for finite flat over open curve}
	to recover the duality $R \Gamma(U, N) \leftrightarrow R \Gamma_{c}(U, N)$
	for a finite flat group scheme $N$ over an open curve $U$ over $k = \F_{q}$
	stated in \cite[III, Thm.\ 8.2]{Mil06},
	including the topological group structures on the relevant cohomology groups.
	Define $R \Gamma_{c}(U, N) \in D(k_{\pro\zar})$
	by $R \Gamma(k_{\pro\et}, R \alg{\Gamma}_{c}(U, N)) = R \Gamma(k_{W}, R \alg{\Gamma}_{c}(U, N))$.
	Since $R \alg{\Gamma}_{c}(U, N) \in D^{b}(\Pro \Alg_{\uc} / k)$ by
	Thm.\ \ref{thm: duality for finite flat over open curve},
	we know that $R \Gamma_{c}(U, N) \in D^{b}(\Pro \Fin)$
	by \cite[Prop.\ (10.3) (b)]{Suz14},
	where $\Fin$ is the category of finite abelian groups.
	Therefore each cohomology object $H_{c}^{n}(U, N) := H^{n} R \Gamma_{c}(U, N)$
	is a profinite group.
	Now applying $R \Gamma(k_{\pro\et}, \var)$ to
	Thm.\ \ref{thm: duality for finite flat over open curve}
	and using Prop.\ \ref{prop: cohom of Serre dual is Pont dual of cohom},
	we obtain an isomorphism
		\[
				R \Gamma(U, N)
			\isomto
				R \Gamma_{c}(U, N)^{\PDual}[-3]
		\]
	in $D(k_{\pro\zar})$.
	Hence we have a Pontryagin duality
	$H^{n}(U, N) \leftrightarrow H_{c}^{3 - n}(U, N)$ for any $n$
	between the torsion group and the profinite group,
	which recovers \cite[III, Thm.\ 8.2]{Mil06}.
	
	A similar remark applies to Prop.\ \ref{prop: duality for Neron over open curve}
	and \cite[III, Thm.\ 9.4]{Mil06}.
\end{Rmk}


\appendix

\section{Comparison of local duality with and without relative sites}
\label{sec: Comparison of local duality with and without relative sites}

We continue the notation of \S \ref{sec: Local duality without relative sites}.
We recall the relative fppf site $\Spec \Hat{\Order}_{x, \fppf} / k^{\ind\rat}_{x, \et}$
of $\Hat{\Order}_{x}$ over $k_{x}$ from
\cite[Def.\ 2.3.2]{Suz13} and \cite[\S 3.3]{Suz14}.
(Here, again, we abbreviate $(k_{x})^{\ind\rat}$ by $k^{\ind\rat}_{x}$
and $(\Spec k_{x})^{\ind\rat}_{\et}$ by $\Spec k^{\ind\rat}_{x, \et}$.)
The category $\Hat{\Order}_{x} / k^{\ind\rat}_{x}$ is the category of pairs $(S, k_{S})$,
where $k_{S} \in k^{\ind\rat}_{x}$ and $S$ a $\Hat{\alg{O}}_{x}(k_{S})$-algebra.
(In \cite[\S 2.3]{Suz13} and \cite[\S 3.3]{Suz14},
$S$ is required to be finitely presented over $\Hat{\alg{O}}_{x}(k')$.
The generalization to arbitrary algebras here does not change cohomology theory and Ext groups/sheaves
as long as we treat sheaves representable by schemes locally of finite type over $\Hat{\Order}_{x}$.)
A morphism $(S, k_{S}) \to (S', S_{S'})$ consists of a $k_{x}$-algebra homomorphism $k_{S} \to k_{S'}$
and an $\Hat{\alg{O}}_{x}(k_{S})$-algebra homomorphism $S \to S'$,
with composition defined in the obvious way.
A finite family of morphisms $\{(S, k_{S}) \to (S_{i}, k_{S_{i}})\}$ is called an fppf/\'etale covering
if each $k_{S} \to k_{S_{i}}$ is \'etale
and $S \to \prod_{i} S_{i}$ faithfully flat.
This defines a site, which we denote by
$\Spec \Hat{\Order}_{x, \fppf} / k^{\ind\rat}_{x, \et}$.
Its category of sheaves is denoted by $\Ab(\Hat{\Order}_{x, \fppf} / k^{\ind\rat}_{x, \et})$.
The categories $\Ch(\Hat{\Order}_{x, \fppf} / k^{\ind\rat}_{x, \et})$,
$D(\Hat{\Order}_{x, \fppf} / k^{\ind\rat}_{x, \et})$
and the functors $\sheafhom_{\Hat{\Order}_{x, \fppf} / k^{\ind\rat}_{x, \et}}$,
$R \sheafhom_{\Hat{\Order}_{x, \fppf} / k^{\ind\rat}_{x, \et}}$
are defined in a similar way as \S \ref{sec: Local duality without relative sites}.

The functors
	\begin{gather*}
				k^{\ind\rat}_{x}
			\to
				\Hat{\Order}_{x} / k^{\ind\rat}_{x},
			\quad
				k'
			\mapsto
				(\Hat{\alg{O}}_{x}(k'), k'),
		\\
				k^{\ind\rat}_{x}
			\to
				\Hat{K}_{x} / k^{\ind\rat}_{x},
			\quad
				k'
			\mapsto
				(\Hat{\alg{K}}_{x}(k'), k'),
		\\
				\Hat{\Order}_{x} / k^{\ind\rat}_{x}
			\to
				\Hat{K}_{x} / k^{\ind\rat}_{x},
			\quad
				(S, k_{S})
			\mapsto
				(S \tensor_{\Hat{\Order}_{x}} \Hat{K}_{x}, k_{S})
	\end{gather*}
define morphisms of sites
	\begin{gather*}
				\pi_{\Hat{\Order}_{x}}'
			\colon
				\Spec \Hat{\Order}_{x, \fppf} / k^{\ind\rat}_{x, \et}
			\to
				\Spec k^{\ind\rat}_{x, \et},
		\\
				\pi_{\Hat{K}_{x}}'
			\colon
				\Spec \Hat{K}_{x, \fppf} / k^{\ind\rat}_{x, \et}
			\to
				\Spec k^{\ind\rat}_{x, \et},
		\\
				j'
			\colon
				\Spec \Hat{K}_{x, \fppf} / k^{\ind\rat}_{x, \et}
			\to
				\Spec \Hat{\Order}_{x, \fppf} / k^{\ind\rat}_{x, \et}
	\end{gather*}
such that $\pi_{\Hat{K}_{x}}' = \pi_{\Hat{\Order}_{x}}' \compose j'$.
We denote the composite of the derived pushforward
	\[
			R (\pi_{\Hat{\Order}_{x}}')_{\ast}
		\colon
			D(\Hat{\Order}_{x, \fppf} / k^{\ind\rat}_{\et})
		\to
			D(k^{\ind\rat}_{x, \et})
	\]
and the pro-\'etale sheafification
	$
			D(k^{\ind\rat}_{x, \et})
		\to
			D(k^{\ind\rat}_{x, \pro\et})
	$
by
	\[
			R \alg{\Gamma}'(\Hat{\Order}_{x}, \var)
		\colon
			D(\Hat{\Order}_{x, \fppf} / k^{\ind\rat}_{\et})
		\to
			D(k^{\ind\rat}_{x, \pro\et}).
	\]
Similarly, we denote the composite of the derived pushforward
	\[
			R (\pi_{\Hat{K}_{x}}')_{\ast}
		\colon
			D(\Hat{K}_{x, \fppf} / k^{\ind\rat}_{\et})
		\to
			D(k^{\ind\rat}_{x, \et})
	\]
and the pro-\'etale sheafification
	$
			D(k^{\ind\rat}_{x, \et})
		\to
			D(k^{\ind\rat}_{x, \pro\et})
	$
by
	\[
			R \alg{\Gamma}'(\Hat{K}_{x}, \var)
		\colon
			D(\Hat{K}_{x, \fppf} / k^{\ind\rat}_{\et})
		\to
			D(k^{\ind\rat}_{x, \pro\et}).
	\]
In \cite[\S 3.3, paragraph before Prop.\ (3.3.8)]{Suz14},
these functors were denoted by $R \Tilde{\alg{\Gamma}}(\Order_{x}, \var)$
and $R \Tilde{\alg{\Gamma}}(K_{x}, \var)$, respectively.

We compare the above morphisms of sites and the premorphisms of sites in
\S \ref{sec: Local duality without relative sites}.
The functor sending $(S, k_{S}) \in \Hat{\Order}_{x} / k^{\ind\rat}$ to $S$ defines a premorphism of sites
	\[
			\Bar{f}
		\colon
			\Spec \Hat{\Order}_{x, \fppf}
		\to
			\Spec \Hat{\Order}_{x, \fppf} / k^{\ind\rat}_{x, \et}.
	\]
Similarly, the functor sending $(S, k_{S}) \in \Hat{K}_{x} / k^{\ind\rat}$ to $S$
defines a premorphism of sites
	\[
			f
		\colon
			\Spec \Hat{K}_{x, \fppf}
		\to
			\Spec \Hat{K}_{x, \fppf} / k^{\ind\rat}_{x, \et}.
	\]
The pushforward functors $\Bar{f}_{\ast}$ and $f_{\ast}$ are exact.
They send the sheaf representable by a $\Hat{\Order}_{x}$-algebra (resp.\ $\Hat{K}_{x}$-algebra) $S$
to the sheaf representable by $(S, k)$.
Hence a scheme over $\Hat{\Order}_{x}$ (resp.\ $\Hat{K}_{x}$)
regarded as a sheaf on $\Spec \Hat{\Order}_{x, \fppf} / k^{\ind\rat}_{x, \et}$
(resp.\ $\Spec \Hat{K}_{x, \fppf} / k^{\ind\rat}_{x, \et}$)
mentioned in \cite[\S 2.3]{Suz13}
is nothing but its image by $\Bar{f}_{\ast}$ (resp.\ $f_{\ast}$).

\begin{PropApp} \label{prop: comparison with relative site formalism}
	We have $\pi_{\Hat{\Order}_{x}} = \pi_{\Hat{\Order}_{x}}' \compose \Bar{f}$
	and $\pi_{\Hat{K}_{x}} = \pi_{\Hat{K}_{x}}' \compose f$,
	and
		\[
					R \alg{\Gamma}(\Hat{\Order}_{x}, \var)
				=
					R \alg{\Gamma}' \bigl(
						\Hat{\Order}_{x},
						\Bar{f}_{\ast} \var
					\bigr),
			\quad
					R \alg{\Gamma}(\Hat{K}_{x}, \var)
				=
					R \alg{\Gamma}' \bigl(
						\Hat{\Order}_{x},
						f_{\ast} \var
					\bigr).
		\]
	In particular, for a group scheme $A$ over $\Hat{\Order}_{x}$ or $\Hat{K}_{x}$, we have
		\[
					R \alg{\Gamma}(\Hat{\Order}_{x}, A)
				=
					R \alg{\Gamma}' \bigl(
						\Hat{\Order}_{x}, A
					\bigr),
			\quad
					R \alg{\Gamma}(\Hat{K}_{x}, A)
				=
					R \alg{\Gamma}' \bigl(
						\Hat{\Order}_{x}, A
					\bigr).
		\]
\end{PropApp}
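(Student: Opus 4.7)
The plan is to reduce the derived statement to the equality of underlying functors, plus the K-limp machinery from \S\ref{sec: Premorphisms of sites}. The factorization $\pi_{\Hat{\Order}_{x}} = \pi_{\Hat{\Order}_{x}}' \compose \Bar{f}$ is a direct unwinding of definitions: $\Bar{f}$ is defined by $(S, k_{S}) \mapsto S$ and $\pi_{\Hat{\Order}_{x}}'$ by $k' \mapsto (\Hat{\alg{O}}_{x}(k'), k')$, so their composite sends $k'$ to $\Hat{\alg{O}}_{x}(k')$, which is the underlying functor of $\pi_{\Hat{\Order}_{x}}$; the generic-fiber case is identical. Passing to the induced pushforwards on abelian sheaves, one obtains $(\pi_{\Hat{\Order}_{x}})_{\ast} = (\pi_{\Hat{\Order}_{x}}')_{\ast} \compose \Bar{f}_{\ast}$ and $(\pi_{\Hat{K}_{x}})_{\ast} = (\pi_{\Hat{K}_{x}}')_{\ast} \compose f_{\ast}$.

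To upgrade to the derived level, I would take a K-injective resolution $F \isomto I$ in $\Ch(\Hat{\Order}_{x, \fppf})$. The premorphism $\Bar{f}$ arises from a pretopology-compatible functor: any fppf/\'etale covering $\{(S, k_{S}) \to (S_{i}, k_{S_{i}})\}$ in $\Hat{\Order}_{x} / k^{\ind\rat}_{x}$ becomes an fppf covering $\{S \to S_{i}\}$ of $\Hat{\Order}_{x}$-algebras, and the relative fiber product $(S', k_{S'}) \times_{(S, k_{S})} (S'', k_{S''}) = (S' \tensor_{S} S'', k_{S'} \tensor_{k_{S}} k_{S''})$ maps to the absolute fiber product $S' \tensor_{S} S''$. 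Hence Prop.\ \ref{prop: push sends K limp to K limp} applies, and $\Bar{f}_{\ast} I$ is K-limp on the relative site. Since $\Bar{f}_{\ast}$ is exact (as noted just before the statement), $\Bar{f}_{\ast} I$ represents $R \Bar{f}_{\ast} F$. Since $\pi_{\Hat{\Order}_{x}}'$ is an honest morphism of sites, Prop.\ \ref{prop: K-limp is push injective} yields
\[
R (\pi_{\Hat{\Order}_{x}}')_{\ast} \Bar{f}_{\ast} I = (\pi_{\Hat{\Order}_{x}}')_{\ast} \Bar{f}_{\ast} I = (\pi_{\Hat{\Order}_{x}})_{\ast} I = R (\pi_{\Hat{\Order}_{x}})_{\ast} F.
\]
Post-composing with the exact pro-\'etale sheafification gives the identity $R \alg{\Gamma}'(\Hat{\Order}_{x}, \Bar{f}_{\ast} F) = R \alg{\Gamma}(\Hat{\Order}_{x}, F)$. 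Repeating the argument with $(\Bar{f}, \pi_{\Hat{\Order}_{x}}, \pi_{\Hat{\Order}_{x}}')$ replaced by $(f, \pi_{\Hat{K}_{x}}, \pi_{\Hat{K}_{x}}')$ establishes the companion identity for $\Hat{K}_{x}$.

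The final clauses on group schemes follow immediately from the remark recorded just above the proposition: for $A$ a group scheme over $\Hat{\Order}_{x}$ (resp.\ $\Hat{K}_{x}$), the representable sheaf attached to $A$ on the relative site is by construction $\Bar{f}_{\ast} A$ (resp.\ $f_{\ast} A$), so substitution into the derived identities just proved gives the claimed equalities. The only non-formal step in this plan is the verification that $\Bar{f}$ and $f$ are premorphisms of sites in the strong pretopology-preserving sense required by Prop.\ \ref{prop: push sends K limp to K limp}; this is the place where the whole comparison hinges, but it reduces to the compatibility of coverings and fiber products spelled out above, both of which are immediate from how the relative sites are defined. Everything else is formal derived-functor bookkeeping.
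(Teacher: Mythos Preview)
Your proof is correct and follows essentially the same route as the paper: you unwind the factorization at the level of underlying functors, then use Prop.~\ref{prop: push sends K limp to K limp} (for $\Bar{f}$) together with Prop.~\ref{prop: K-limp is push injective} (for $\pi_{\Hat{\Order}_{x}}'$) to pass to derived pushforwards, which is exactly what the paper's appeal to those two propositions plus the composition theorem \cite[Prop.\ 10.3.5 (ii)]{KS06} amounts to. Your explicit check that $\Bar{f}$ preserves covering fiber products is a detail the paper leaves implicit, but otherwise the arguments coincide.
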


\begin{proof}
	The composite $\pi_{\Hat{\Order}_{x}}' \compose \Bar{f}$ is defined by
	the composite of the functors
	$k'_{x} \mapsto (\Hat{\alg{O}}_{x}(k'), k'_{x}) \mapsto \Hat{\alg{O}}_{x}(k'_{x})$.
	which defines $\pi_{\Hat{\Order}_{x}}$.
	Hence $\pi_{\Hat{\Order}_{x}} = \pi_{\Hat{\Order}_{x}}' \compose \Bar{f}$.
	Similar for $\pi_{\Hat{K}_{x}} = \pi_{\Hat{K}_{x}}' \compose f$.
	The equalities of the derived pushforwards follow from
	Prop.\ \ref{prop: K-limp is push injective}, \ref{prop: push sends K limp to K limp}
	and the theorem on derived functors of composition \cite[Prop.\ 10.3.5 (ii)]{KS06}.
\end{proof}

\begin{PropApp} \label{prop: comparison with relative site support cohmology}
	The mapping fiber functor
		\begin{gather*}
					\alg{\Gamma}_{x}'(\Order_{x}, \var)
				:=
					\bigl[
							\alg{\Gamma}'(\Order_{x}, \var)
						\to
							\alg{\Gamma}' \bigl(
								K_{x},
								j'^{\ast} \var
							\bigr)
					\bigr][-1]
				\colon
			\\
					\Ch(\Hat{\Order}_{x, \fppf} / k^{\ind\rat}_{\et})
				\to
					\Ch(k^{\ind\rat}_{x, \pro\et})
		\end{gather*}
	and the functor
		\begin{gather*}
					\alg{\Gamma}_{x}(\Order_{x}, \var)
				=
					\bigl[
							\alg{\Gamma}(\Order_{x}, \var)
						\to
							\alg{\Gamma} \bigl(
								K_{x},
								j^{\ast} \var
							\bigr)
					\bigr][-1]
				\colon
			\\
					\Ch(\Hat{\Order}_{x, \fppf})
				\to
					\Ch(k^{\ind\rat}_{x, \pro\et})
		\end{gather*}
	are compatible under the functor
		\[
				\Bar{f}_{\ast}
			\colon
				\Ch(\Hat{\Order}_{x, \fppf})
			\to
				\Ch(\Hat{\Order}_{x, \fppf} / k^{\ind\rat}_{x, \et}).
		\]
	The right derived functors
		\begin{gather*}
					R \alg{\Gamma}_{x}'(\Order_{x}, \var)
				=
					\bigl[
							R \alg{\Gamma}'(\Order_{x}, \var)
						\to
							R \alg{\Gamma}' \bigl(
								K_{x},
								j'^{\ast} \var
							\bigr)
					\bigr][-1]
				\colon
			\\
					D(\Hat{\Order}_{x, \fppf} / k^{\ind\rat}_{\et})
				\to
					D(k^{\ind\rat}_{x, \pro\et}).
		\end{gather*}
	and
		\begin{gather*}
					R \alg{\Gamma}_{x}(\Order_{x}, \var)
				=
					\bigl[
							R \alg{\Gamma}(\Order_{x}, \var)
						\to
							R \alg{\Gamma} \bigl(
								K_{x},
								j^{\ast} \var
							\bigr)
					\bigr][-1]
				\colon
			\\
					D(\Hat{\Order}_{x, \fppf})
				\to
					D(k^{\ind\rat}_{x, \pro\et})
		\end{gather*}
	are also compatible under the functor
		\[
				\Bar{f}_{\ast}
			\colon
				D(\Hat{\Order}_{x, \fppf})
			\to
				D(\Hat{\Order}_{x, \fppf} / k^{\ind\rat}_{x, \et}).
		\]
\end{PropApp}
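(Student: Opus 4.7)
The plan is to deduce the proposition formally from the preceding Proposition \ref{prop: comparison with relative site formalism} together with one additional sheaf-level identity comparing the two restriction functors $j^{\ast}$ and $j'^{\ast}$. The key compatibility to establish first is the identity
\[
        f_{\ast} \compose j^{\ast}
    =
        j'^{\ast} \compose \Bar{f}_{\ast}
\]
as functors $\Ab(\Hat{\Order}_{x,\fppf}) \to \Ab(\Hat{K}_{x,\fppf} / k^{\ind\rat}_{x,\et})$, hence also on complexes. This should be a direct unpacking of definitions on representable sheaves: for $F \in \Ab(\Hat{\Order}_{x,\fppf})$ and an object $(T, k_{T})$ of $\Hat{K}_{x} / k^{\ind\rat}_{x}$ with $T$ a $\Hat{\alg{K}}_{x}(k_{T})$-algebra (hence, via the composite $\Hat{\alg{O}}_{x}(k_{T}) \to \Hat{\alg{K}}_{x}(k_{T}) \to T$, also a $\Hat{\alg{O}}_{x}(k_{T})$-algebra), both sides evaluate to $F(T)$.

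For the non-derived part, I would combine this identity with the two identifications from Proposition \ref{prop: comparison with relative site formalism}, namely $\alg{\Gamma}(\Hat{\Order}_{x}, \var) = \alg{\Gamma}'(\Hat{\Order}_{x}, \Bar{f}_{\ast} \var)$ and $\alg{\Gamma}(\Hat{K}_{x}, \var) = \alg{\Gamma}'(\Hat{K}_{x}, f_{\ast} \var)$. Applying the second to $j^{\ast} F$ and using the sheaf-level identity yields $\alg{\Gamma}(\Hat{K}_{x}, j^{\ast} \var) = \alg{\Gamma}'(\Hat{K}_{x}, j'^{\ast} \Bar{f}_{\ast} \var)$. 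Forming the mapping fiber with the other identification gives $\alg{\Gamma}_{x}(\Hat{\Order}_{x}, \var) = \alg{\Gamma}_{x}'(\Hat{\Order}_{x}, \Bar{f}_{\ast} \var)$ as functors of additive categories with translation.

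For the derived statement, I would use that mapping fibers commute with right derivation via \cite[Thm.\ 14.3.1 (vi)]{KS06}, exactly as in the definitions of $R \alg{\Gamma}_{x}(\Hat{\Order}_{x}, \var)$ and $R \alg{\Gamma}_{x}'(\Hat{\Order}_{x}, \var)$ given in the body. It then suffices to establish $R \alg{\Gamma}(\Hat{K}_{x}, j^{\ast} \var) = R \alg{\Gamma}'(\Hat{K}_{x}, j'^{\ast} \Bar{f}_{\ast} \var)$. Since $j$ and $j'$ are localization morphisms, $j^{\ast}$ and $j'^{\ast}$ admit exact left adjoints $j_{!}$, $j'_{!}$, and so preserve K-injectives; combined with the exactness of $\Bar{f}_{\ast}$ and $f_{\ast}$ noted in the paragraph preceding Proposition \ref{prop: comparison with relative site formalism}, applying the derived version of Proposition \ref{prop: comparison with relative site formalism} to $j^{\ast} \var$ and then the sheaf identity $f_{\ast} j^{\ast} = j'^{\ast} \Bar{f}_{\ast}$ delivers the required equality.

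The main technical care is in this last step: one must verify that the sheaf-level identity $f_{\ast} j^{\ast} = j'^{\ast} \Bar{f}_{\ast}$ lifts to an isomorphism of triangulated functors, not merely object-wise. This is handled by observing that all four functors involved are either exact ($\Bar{f}_{\ast}$, $f_{\ast}$, $j^{\ast}$, $j'^{\ast}$) or their pushforward analogues send K-limp complexes to K-limp complexes (Propositions \ref{prop: K-limp is push injective}, \ref{prop: push sends K limp to K limp}), so K-injective replacements computed on $D(\Hat{\Order}_{x,\fppf})$ can be transported consistently through each side. Once this coherence is in place, the mapping fiber identification is canonical, completing the proof.
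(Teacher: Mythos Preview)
Your proposal is correct and follows essentially the same approach as the paper's proof. The paper packages your key identity $f_{\ast} j^{\ast} = j'^{\ast} \Bar{f}_{\ast}$ as the commutativity of a square of premorphisms of sites (with $j$, $j'$, $f$, $\Bar{f}$ as the four sides), then invokes Prop.\ \ref{prop: comparison with relative site formalism} for the non-derived statement and the localization property of $j$, $j'$ together with Prop.\ \ref{prop: K-limp is push injective} for the derived statement --- exactly the ingredients you assemble, only stated more tersely.
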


\begin{proof}
	We have a commutative diagram of premorphisms of sites
		\[
			\begin{CD}
					\Spec \Hat{K}_{x, \fppf}
				@>> f >
					\Spec \Hat{K}_{x, \fppf} / k^{\ind\rat}_{\et}
				\\
				@VV j V
				@V j' VV
				\\
					\Spec \Hat{\Order}_{x, \fppf}
				@> \Bar{f} >>
					\Spec \Hat{\Order}_{x, \fppf} / k^{\ind\rat}_{\et}.
			\end{CD}
		\]
	Hence the first statement follows from
	Prop.\ \ref{prop: comparison with relative site formalism}.
	We know that
	$j$ and $j'$ are localization morphisms
	\cite[III, \S 5]{AGV72a}.
	Hence their pullback functors admit exact left adjoints
	by \cite[IV, Prop.\ 11.3.1]{AGV72a}
	and hence send K-injectives to K-injectives.
	With Prop.\ \ref{prop: K-limp is push injective}, we know that the first statement implies the second.
\end{proof}

By \cite[Prop.\ (3.3.8)]{Suz14},
the functor $R \alg{\Gamma}_{x}'(\Order_{x}, \var)$ is isomorphic to
the functor denoted by
$R \Tilde{\alg{\Gamma}}_{x}(\Order_{x}, \var)$
in \cite[paragraph before Prop.\ (3.3.8)]{Suz14}.

We can define the morphism of functoriality of $\Bar{f}_{\ast}$
	\[
			\Bar{f}_{\ast} R \sheafhom_{\Order_{x}}(A, B)
		\to
			R \sheafhom_{\Order_{x, \fppf} / k^{\ind\rat}_{\et}}(
				\Bar{f}_{\ast} A,
				\Bar{f}_{\ast} B
			)
	\]
in $D(\Order_{x, \fppf} / k^{\ind\rat}_{\et})$,
functorial on $A, B \in D(\Order_{x, \fppf})$,
in a way similar to the definition of
\eqref{eq: derived functoriality morphism over integers}.
Hence we have a morphism
	\begin{align*}
				R \alg{\Gamma} \bigl(
					\Order_{x},
					R \sheafhom_{\Order_{x}}(A, B)
				\bigr)
		&	=
				R \alg{\Gamma}' \bigl(
					\Order_{x},
					\Bar{f}_{\ast}
					R \sheafhom_{\Order_{x}}(A, B)
				\bigr)
		\\
		&	\to
				R \alg{\Gamma}' \bigl(
					\Order_{x},
					R \sheafhom_{\Order_{x, \fppf} / k^{\ind\rat}_{\et}}(
					\Bar{f}_{\ast} A,
					\Bar{f}_{\ast} B
					)
				\bigr).
	\end{align*}
There is a similar morphism of functoriality of $f_{\ast}$.

\begin{PropApp} \label{prop: comparison with relative site functoriality morphisms}
	Let $A, B \in D(\Order_{x, \fppf})$.
	Under the above morphism,
	the morphism \eqref{eq: derived functoriality morphism over integers}
		\begin{align*}
			&
					R \alg{\Gamma} \bigl(
						\Hat{\Order}_{x},
						R \sheafhom_{\Hat{\Order}_{x}}(A, B)
					\bigr)
			\\
			&	\to
					R \sheafhom_{k^{\ind\rat}_{x, \pro\et}} \bigl(
						R \alg{\Gamma}_{x}(\Hat{\Order}_{x}, A),
						R \alg{\Gamma}_{x}(\Hat{\Order}_{x}, B)
					\bigr)
		\end{align*}
	and the morphism
		\begin{align*}
			&
					R \alg{\Gamma}' \bigl(
						\Hat{\Order}_{x},
						R \sheafhom_{\Hat{\Order}_{x, \fppf} / k^{\ind\rat}_{\et}}(
							\Bar{f}_{\ast} A,
							\Bar{f}_{\ast} B
						)
					\bigr)
			\\
			&	\to
					R \sheafhom_{k^{\ind\rat}_{x, \pro\et}} \bigl(
						R \alg{\Gamma}_{x}'(\Hat{\Order}_{x}, \Bar{f}_{\ast} A),
						R \alg{\Gamma}_{x}'(\Hat{\Order}_{x}, \Bar{f}_{\ast} B)
					\bigr)
		\end{align*}
	in \cite[Prop.\ (3.3.8)]{Suz14} are compatible
	under the isomorphisms in the previous two propositions.
	Similar compatibilities hold for the morphisms
	\eqref{eq: derived functoriality morphism over integers, variant},
	\eqref{eq: derived functoriality morphism over local field}
	and other morphisms in \cite[Prop.\ (3.3.8)]{Suz14}.
\end{PropApp}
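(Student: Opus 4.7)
The plan is to reduce the derived-categorical compatibility to a chain-level equality, which will then follow by unravelling the definitions. Both morphisms in question are constructed by deriving the same type of chain-level functoriality morphism coming from functoriality of $\alg{\Gamma}$ and $\alg{\Gamma}_x$ applied to $\Hom$-sets. Concretely, at each $k'_x \in k^{\ind\rat}_x$, the chain-level morphism on each side sends a morphism $\varphi \in \Hom_{\Hat{\alg{O}}_x(k'_x)}(A, B)$ (which is the same datum as a morphism $\Bar{f}_{\ast} A \to \Bar{f}_{\ast} B$ restricted to $(\Hat{\alg{O}}_x(k'_x), k'_x)$ in the relative site) to the induced morphism $\alg{\Gamma}_x(\Hat{\Order}_x, A) \to \alg{\Gamma}_x(\Hat{\Order}_x, B)$. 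Since $\Bar{f}_{\ast}$ is literally evaluation at the pairs $(\Hat{\alg{O}}_x(k'_x), k'_x)$ and induces an identification on the relevant $\Hom$-groups, the chain-level comparison square commutes on the nose, both before and after the pro-\'etale sheafification.

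Next, I would show that the two derived morphisms are characterized by the same universal property. Morphism \eqref{eq: derived functoriality morphism over integers} is defined by identifying its source $R \alg{\Gamma}(\Hat{\Order}_x, R\sheafhom_{\Hat{\Order}_x}(A,B))$ with the right derived functor of the left-hand side of \eqref{eq: functoriality morphism over integers}, using Prop.\ \ref{prop: sheaf-Hom to K-injective is K-limp}, \ref{prop: K-limp is push injective} and the theorem on derived functors of composition, and then taking the unique morphism extending the chain-level one. The analogous morphism in \cite[Prop.\ (3.3.8)]{Suz14} is constructed in the exact same way on the relative site. The identifications of Prop.\ \ref{prop: comparison with relative site formalism} and Prop.\ \ref{prop: comparison with relative site support cohmology} themselves come from the same K-limp/derived-composition formalism (applied to $R(\pi_{\Hat{\Order}_x}')_{\ast}$ and $\Bar{f}_{\ast}$), so they identify not just the derived pushforwards but also their universal properties. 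By uniqueness in the universal property of right derived functors, the two derived morphisms must agree.

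The variants \eqref{eq: derived functoriality morphism over integers, variant}, \eqref{eq: derived functoriality morphism over local field}, and the remaining morphisms in \cite[Prop.\ (3.3.8)]{Suz14} are treated in exactly the same way. The only additional ingredient needed is the compatibility of the mapping-fibre functors $\alg{\Gamma}_x$ with pushforward, which follows from the commutativity of the square of premorphisms $j \compose \Bar{f} = f \compose j'$ (as already used in Prop.\ \ref{prop: comparison with relative site support cohmology}) and from $j$, $j'$ being localization morphisms whose pullbacks preserve K-injectives. Once the chain-level compatibility for the variant of \eqref{eq: functoriality morphism over integers} with support is established (again tautologically from the pointwise description at $(\Hat{\alg{O}}_x(k'_x), k'_x)$), the universal property argument transports it to the derived level.

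The main obstacle is bookkeeping rather than substance: one must verify that the several applications of \cite[Prop.\ 10.3.5 (ii)]{KS06} and of Prop.\ \ref{prop: K-limp is push injective}, \ref{prop: push sends K limp to K limp} that enter into the identifications of Prop.\ \ref{prop: comparison with relative site formalism} and \ref{prop: comparison with relative site support cohmology} interact coherently with the derived tensor-Hom adjunction used to pass from \eqref{eq: functoriality morphism over integers} to \eqref{eq: derived functoriality morphism over integers}, so that the universal property on each side is being invoked for the \emph{same} right derived functor after the identification. Since $\Bar{f}_{\ast}$ sends K-limp complexes to K-limp complexes (Prop.\ \ref{prop: push sends K limp to K limp}) and pro-\'etale sheafification is exact, this bookkeeping goes through without genuine difficulty; all the diagrams ultimately reduce to the evident chain-level identity at each $(\Hat{\alg{O}}_x(k'_x), k'_x)$.
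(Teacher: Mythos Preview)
Your proposal is correct and follows essentially the same approach as the paper: reduce to the non-derived chain-level statement, evaluate at each $k'_{x} \in k_{x}^{\ind\rat}$, and check the compatibility there. The paper's proof is terser and leaves the passage from chain level to derived level implicit, whereas you spell out the universal-property argument and the K-limp bookkeeping; but the substance is the same.
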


\begin{proof}
	This reduces to a corresponding non-derived statement about morphisms between
		\begin{gather*}
				\alg{\Gamma} \bigl(
					\Hat{\Order}_{x},
					\sheafhom_{\Hat{\Order}_{x}}(A, B)
				\bigr),
			\\
				\alg{\Gamma}' \bigl(
					\Hat{\Order}_{x},
					\sheafhom_{\Hat{\Order}_{x, \fppf} / k^{\ind\rat}_{\et}}(
						\Bar{f}_{\ast} A,
						\Bar{f}_{\ast} B
					)
				\bigr),
			\\
				\sheafhom_{k^{\ind\rat}_{x, \pro\et}} \bigl(
					\alg{\Gamma}_{x}(\Hat{\Order}_{x}, A),
					\alg{\Gamma}_{x}(\Hat{\Order}_{x}, B)
				\bigr)
		\end{gather*}
	on the level of complexes of sheaves.
	Evaluate these complexes at each $k'_{x} \in k_{x}^{\ind\rat}$
	and then check the compatibility.
\end{proof}


\end{document}